\documentclass[11pt]{amsart}
\usepackage[legalpaper, margin=1.2in]{geometry}
\usepackage{amssymb}
\usepackage{amsfonts}
\usepackage{amsmath}
\usepackage{dsfont}
\usepackage{mathrsfs}
\usepackage{amsthm}
\usepackage{graphicx}
\usepackage{mathtools}
\usepackage[dvipsnames]{xcolor}
\usepackage{soul}
\usepackage{mathrsfs}
\usepackage{latexsym}
\usepackage{stmaryrd}
\usepackage{tikz-cd}
\usetikzlibrary{matrix,arrows}
\usepackage{comment}
\usepackage{hyperref}
\usepackage{wasysym}
\usepackage{listings}
\usepackage{csquotes}
\usepackage{xspace}
\usepackage{hepunits}

\numberwithin{equation}{section}

\newtheorem{thm}[equation]{Theorem}
\newtheorem{prop}[equation]{Proposition}
\newtheorem{lem}[equation]{Lemma}
\newtheorem{cor}[equation]{Corollary}

\newtheorem*{statement*}{Statement}

\theoremstyle{definition}
\newtheorem{definition}[equation]{Definition}
\newtheorem{example}[equation]{Example}
\newtheorem*{question*}{Question}

\newtheorem*{conclusion*}{Conclusion}
\newtheorem*{goal*}{Goal}
\newtheorem*{idea*}{Idea}

\theoremstyle{remark}

\newtheorem{remark}[equation]{Remark}
\newtheorem{assumption}[equation]{Assumption}

\numberwithin{equation}{subsection}

\newcommand{\N}{\mathbf{N}}
\newcommand{\Q}{\mathbf{Q}}
\newcommand{\Z}{{\mathbf{Z}}}
\newcommand{\C}{\mathbb{C}}
\newcommand{\F}{\mathbf{F}}

\newcommand{\Qpbar}{{\overline{\Q}_p}}

\newcommand{\Fpbar}{{\overline{\F}_p}}
\newcommand{\Zhat}{{\widehat{\Z}}}

\newcommand{\Gal}{{\rm Gal}}
\newcommand{\Frob}{{\rm Frob}}

\newcommand{\Oh}{{\mathcal{O}}}
\newcommand{\cyc}{{\rm cyc}}

\newcommand{\GL}{{\rm GL}}

\newcommand{\ad}{{\rm ad}}
\newcommand{\Ind}{{\rm Ind}}

\newcommand{\Sym}{{\rm Sym}}
\newcommand{\rhobar}{{\overline{\rho}}}

\newcommand{\rbar}{{\overline{r}}}

\newcommand{\St}{{\rm St}}
\newcommand{\semisimple}{{\rm ss}}

\newcommand{\Image}{{\rm Im\,}}
\newcommand{\Hom}{{\rm Hom}}
\newcommand{\End}{{\rm End}}

\newcommand{\id}{{\rm id}}

\newcommand{\tr}{{\rm tr\,}}

\newcommand{\ord}{{\rm ord}}

\newcommand{\T}{{\mathbb{T}}}

\newcommand{\Tate}{{\rm Tate}}

\newcommand{\Spec}{{\rm Spec}}

\newcommand*{\sheafhom}{\mathcal{H}\kern -.5pt om}

\newcommand{\Gm}{{\mathbb{G}_m}}

\newcommand{\A}{{\mathbb{A}}}

\newcommand{\univ}{{\rm univ}}

\newcommand{\cris}{{\rm cris}}

\newcommand{\WD}{{\rm WD}}

\newcommand{\rec}{{\rm rec}}

\newcommand{\Spf}{{\rm Spf}}

\newcommand{\surj}{\twoheadrightarrow}

\newcommand{\rbarss}{{\overline{r}^{\rm ss}}}
\newcommand{\ver}{{\rm ver}}
\newcommand{\irr}{{\rm irr}}
\newcommand{\no}{{\rm no}}

\begin{document}
\title{Applications of Patching the Coherent Cohomology of Modular Curves}
\author{Chengyang Bao}
\address{Department of Mathematics, Huxley Building, South Kensington Campus, Imperial College London, London, SW7 2AZ, UK}
\email{c.bao@imperial.ac.uk}

\begin{abstract}
In this paper, we apply the Taylor--Wiles--Kisin patching method to the coherent cohomology of modular curves at minimal level.  
We establish a multiplicity-one result for the patched module by the $q$-expansion principle and show that a certain partial normalization of the crystalline deformation ring is Cohen--Macaulay. As applications, we prove new cases where crystalline deformation rings are Cohen--Macaulay, establish a Zariski density result for crystalline points in characteristic $p$, and prove a multiplicity-one result for Serre's mod-$p$ quaternionic modular forms.
\end{abstract}

\maketitle
\tableofcontents

\section{Introduction}

The main goal of this paper is to study the properties of certain crystalline deformation rings using the Taylor--Wiles--Kisin patching method applied to the coherent cohomology of modular curves. To make this precise, we begin by introducing notation and stating the assumptions of our main theorems. 

Fix a prime $p\ge 3$.  Let $L$ be a finite extension of $\Q_p$ with ring of integers $\Oh$ and residue field $\F$, and denote by $\varpi$ the maximal ideal of $\Oh$. Let $$\rbar:\Gal(\overline{\Q}_p/\Q_p)\to \GL_2(\F)$$ be a local residual representation. 
Fix a crystalline character $\psi$ of $G(\overline{\Q}_p/\Q_p)$ of Hodge--Tate weight $k-1$ for some integer $k\ge 1$; our convention is that the $p$-adic cyclotomic character $\chi_{\cyc}$ has Hodge--Tate weight $1$. We write $R_\rbar^{\Box,\psi}$ for the universal lifting ring of $\rbar$ whose points parametrize liftings of $\rbar$ with a fixed determinant $\psi.$ Kisin proved in \cite{Kisin08Potentially} that it has a quotient $R_{\rbar}^{\Box,\psi}(k)$ whose generic fiber parametrizes crystalline representations of $G_{\Q_p}$ with Hodge--Tate weights $\{0,k-1\}$ and determinant $\psi$ lifting $\rbar$. Inside $R_{\rbar}^{\Box,\psi}(k)[1/p]$ there is an element $\alpha_p$ characterized by the property that, for any $L$-algebra homomorphism
\[
    x : R_{\rbar}^{\Box,\psi}(k)[1/p] \to \overline{\Q}_p,
\]
the value $\alpha_p(x)$ is the trace of crystalline Frobenius on the corresponding crystalline representation. These two rings, $R_{\rbar}^{\Box,\psi}(k)$ and $R_{\rbar}^{\Box,\psi}(k)[\alpha_p]$, are the crystalline deformation rings we will study in this paper.

For simplicity, we state the main results in the introduction under the following assumption. 
All the theorems in \S \ref{subsec_main_theorems} concerning only a local representation $\rbar$ remain valid if this assumption is replaced by Assumption~\ref{assumption_weaker}. 

\begin{assumption}\label{assumption_semisimplification}
    The representation $\rbar$ is either semisimple, or it takes the form $$\rbar\sim \begin{pmatrix}
    \chi_1 & \chi_2b\\
    0 & \chi_2
\end{pmatrix}$$ where $\chi_i:\Gal(\overline{\Q}_p/\Q_p)\to \F^{\times}$ are characters such that $\chi_1/\chi_2\neq 1,\epsilon^{\pm 1}$ and $b$ defines a nontrivial class in $H^1(\Gal(\overline{\Q}_p/\Q_p),\chi_1/\chi_2).$
\end{assumption}

\subsection{Main Theorems}\label{subsec_main_theorems}
Our first result concerns the Cohen--Macaulay property of crystalline deformation rings.  
When all local deformation rings appearing in the Taylor--Wiles--Kisin method are Cohen--Macaulay, Kisin's result 
$
    R[1/p] = \mathbb{T}[1/p]
$ 
can be upgraded to an integral statement 
$
    R = \mathbb{T};
$
see \cite[\S 8]{HuPaskunas2019crystabelline} for details.  
For $\GL_2/\Q$, Shotton \cite{shotton2016local} gives a complete description of the local deformation rings away from $p$.
At $\ell = p$, crystalline deformation rings are regular (and hence Cohen--Macaulay) when the difference of Hodge--Tate weights $k-1$ is small relative to $p$, as described by Fontaine--Laffaille theory \cite{FontaineLaffaille1982construction}.  
For weights in the so-called \emph{poids moyen} range, roughly between $p$ and $2p$, the Cohen--Macaulay property is known by the following theorem of Hu--Pa\v{s}k\"unas \cite[Theorem 7.5]{HuPaskunas2019crystabelline}. 

\begin{thm}[Hu--Pa\v{s}k\"unas]
    Assume that $\rbar$ is Schur and $p\ge 3$. If $p=3$, further assume that $\rbar$ is not a twist of $1$ extended by the mod-$p$ cyclotomic character $\epsilon$. Then $R^{\Box,\psi}_\rbar(k)$ is Cohen--Macaulay for $2\le k\le 2p+1$.
\end{thm}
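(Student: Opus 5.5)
This is the theorem of Hu--Pa\v{s}k\"unas, and I would follow their strategy: the $p$-adic local Langlands correspondence for $\GL_2(\Q_p)$ combined with the Breuil--M\'ezard philosophy. The aim is to realize $R^{\Box,\psi}_\rbar(k)$, after a formally smooth modification, as the ring acting faithfully on a module that is visibly Cohen--Macaulay.

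\textbf{Step 1: the projective envelope.} Since $\rbar$ is Schur, Pa\v{s}k\"unas' theory gives a projective envelope $\widetilde P$ of the dual of the relevant block of mod-$p$ representations of $\GL_2(\Q_p)$ with central character $\psi\epsilon^{-1}$. Its endomorphism ring is identified with the (framed-free) deformation ring $R^{\psi}_\rbar$. Moreover $\widetilde P$ is flat over this ring, so that $\widetilde P$ behaves like a ``patched module''.

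\textbf{Step 2: the crystalline quotient.} Let $\sigma_k=\Sym^{k-2}L^2$ be the algebraic representation and $\sigma_k^\circ$ a stable lattice. Form
\[
M(\sigma_k^\circ)=\Hom_{\Oh[[K]]}^{\rm cont}(\widetilde P,(\sigma_k^\circ)^\vee)^\vee
\]
with $K=\GL_2(\Z_p)$. By the argument of Pa\v{s}k\"unas, which compares with Colmez's functor and uses Berger--Breuil and Kisin, the action of $R^\psi_\rbar$ on $M(\sigma_k^\circ)$ factors through $R^{\psi}_\rbar(k)$. This module is moreover $\Oh$-torsion free and faithful over $R^\psi_\rbar(k)$, and $M(\sigma_k^\circ)[1/p]$ is locally free of rank one over the regular ring $R^\psi_\rbar(k)[1/p]$.

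\textbf{Step 3: Cohen--Macaulayness of the module.} By projectivity of $\widetilde P$, the functor $M(-)$ is exact. Hence
\[
M(\sigma_k^\circ)/\varpi=M(\overline{\sigma}_k),
\]
and $M(\overline{\sigma}_k)$ has a filtration whose graded pieces are the $M(\sigma_{a,b})$ for the Serre weights $\sigma_{a,b}$ occurring in $\overline\sigma_k$. Each $M(\sigma_{a,b})$ is, by Pa\v{s}k\"unas, Cohen--Macaulay of dimension $1$ over a one-dimensional quotient ring, namely the reduced ring of the corresponding Hecke operator. For $k\le 2p+1$, $\overline\sigma_k$ has at most two Jordan--H\"older factors. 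Each $M(\sigma_{a,b})$ is a nonzero-divisor-free module of the same dimension. The point is that an extension of Cohen--Macaulay modules of the same dimension is Cohen--Macaulay. Therefore $M(\overline{\sigma}_k)$ is Cohen--Macaulay of dimension $1$, and $M(\sigma_k^\circ)$ is Cohen--Macaulay of dimension $2$.

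\textbf{Step 4: passing from the module to the ring.} This is the step I expect to be the main obstacle: deducing that the ring itself is Cohen--Macaulay, since faithfulness alone is not enough. The strategy is to show that $M(\sigma_k^\circ)$ is cyclic, i.e.\ free of rank one over $R^\psi_\rbar(k)$. It suffices to show
\[
\dim_\F M(\sigma_k^\circ)\otimes_{R}\F = 1.
\]
By Nakayama and the filtration, this reduces to counting $K$-socle multiplicities: for $k$ in this range, $\Hom_K$ from the relevant Serre weights into the representation attached to $\rbar$ has total dimension computed by Breuil's explicit description.

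\begin{itemize}
\item When the count is $1$, $M(\sigma_k^\circ)$ is cyclic and faithful, hence $M(\sigma_k^\circ)\cong R^\psi_\rbar(k)$ is Cohen--Macaulay.
\item When two weights contribute, one instead computes the ring directly. In that case one writes $R^\psi_\rbar(k)/\varpi$ as a fiber product of the two rings attached to the Serre weights, glued along $\F$, and checks depth by hand.
\end{itemize}

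The extra hypothesis at $p=3$ excludes precisely the case where this explicit analysis of $\rbar=\begin{pmatrix}1&*\\0&\epsilon\end{pmatrix}$ breaks down.

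Finally, the framed ring $R^{\Box,\psi}_\rbar(k)$ is formally smooth over $R^\psi_\rbar(k)$, because $\rbar$ is Schur. Cohen--Macaulayness is preserved under power series extensions, which gives the theorem.
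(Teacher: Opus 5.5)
The paper gives no proof of this statement; it quotes Hu--Pa\v{s}k\=unas. Your Steps 1--3 set up the standard framework for that result: Pa\v{s}k\=unas' projective envelope $\widetilde P$, the modules $M(\Theta)$, exactness, and the depth-lemma argument showing $M(\sigma_k^\circ)$ is Cohen--Macaulay.

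The gap is Step 4, which carries the actual content, since a faithful maximal Cohen--Macaulay module does not make the ring Cohen--Macaulay. Two things go wrong there.

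First, your count is false. For $p+3\le k\le 2p+1$, $\Sym^{k-2}\F^2$ has three Jordan--H\"older factors. For example, $\Sym^{p+1}\F^2$ has factors $\Sym^2\F^2$, $\det$ and ${\det}^2\otimes\Sym^{p-3}\F^2$, and $\Sym^{2p-1}\F^2$ contains $\det\otimes\Sym^{p-2}\F^2$ twice.

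Second, $-\otimes_R\F$ is only right exact, so ``Nakayama and the filtration'' give only an upper bound:
\[
\dim_\F M(\Theta)\otimes_R\F=\dim_\F\Hom_K(\overline\Theta,\pi)\le\sum_\tau[\overline\Theta:\tau]\,\dim_\F\Hom_K(\tau,\pi).
\]
This bound is at least $2$ in many cases in the range. One case is $k=2p$ with $\rbar$ irreducible and $k(\rbar)=2$, where both $\Sym^0\F^2$ and $\Sym^{p-1}\F^2$ occur in $\Sym^{2p-2}\F^2$. Another is $k=2p+1$ with $\rbar$ irreducible having Serre weights $\Sym^1\F^2$ and $\det\otimes\Sym^{p-2}\F^2$, where the bound is $3$. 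So cyclicity is established only when a single Serre weight of $\rbar$ occurs in $\overline\sigma_k$, and occurs once.

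Your fallback for the remaining cases is not an argument. Writing $R^\psi_\rbar(k)/\varpi$ as a fibre product over $\F$ of the one-dimensional rings attached to the Serre weights presupposes that the special fibre is reduced and that its components meet only at the closed point. That is precisely what needs proof. It is also false when a Serre weight occurs with multiplicity two: by the Breuil--M\'ezard multiplicities, the special fibre then has length two at the generic point of that component, so it is not reduced.

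What is missing is a mechanism giving $\dim_\F\Hom_K(\overline\Theta,\pi)\le 1$ for a well-chosen lattice. Since $M(\Theta)$ is faithful over $R^\psi_\rbar(k)$ for every $K$-stable lattice $\Theta\subset\sigma_k$, you may choose $\Theta$ so that $\overline\Theta$ has irreducible cosocle $\tau$. Every nonzero $K$-map $\overline\Theta\to\pi$ then has image with cosocle $\tau$. You would then need structural information on $\pi|_K$ to show such a map is unique up to scalar, for instance multiplicity-freeness of the relevant piece of $\pi$, such as $\pi^{K_1}$ for generic $\rbar$ when $\overline\Theta$ is a $\GL_2(\F_p)$-representation. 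Given that, $M(\Theta)$ is cyclic, hence isomorphic to $R^\psi_\rbar(k)$, and so Cohen--Macaulay. Proving the required multiplicity statement, and handling the non-generic Schur $\rbar$ where it can fail, is where the real work lies.

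Finally, the $p=3$ exclusion most plausibly enters through the $p$-adic local Langlands inputs on $\widetilde P$ used in Step 1, rather than through a case analysis in Step 4 as you suggest.
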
 
We prove that crystalline deformation rings are Cohen--Macaulay for a larger range of weights.
\begin{thm}\label{thm_crystalline_deformation_ring_Cohen_Macaulay}
Let $k(\rbar) \in \mathbb{N}$ denote the Serre weight associated to $\rbar$ via the weight part of Serre's conjectures (see Definition \ref{def_Serre_weight}).  
Under Assumption \ref{assumption_semisimplification}, if 
\[
    k(\rbar) \le k < pk(\rbar),
\] 
then the crystalline deformation ring $R_{\rbar}^{\Box,\psi}(k)$ is Cohen--Macaulay.
\end{thm}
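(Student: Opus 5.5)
The plan is to globalize $\rbar$ and run the Taylor--Wiles--Kisin patching argument on the coherent cohomology $H^0(X,\omega^{k}\otimes\mathcal{O}/\varpi^n)$ of modular curves of level prime to $p$ (minimal level away from $p$). The outcome should be a patched module $M_\infty$ over $R_\infty = R_{\rbar}^{\Box,\psi}(k)[\alpha_p]\widehat{\otimes} R^{\rm loc}[[x_1,\dots,x_g]]$ that is finite free over the patching variables $S_\infty=\Oh[[y_1,\dots,y_h]]$. The choice of $R^{\rm loc}$ is made so that the local rings away from $p$ are formally smooth, e.g.\ by using minimal level and auxiliary primes.

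The global input is standard. Choose a modular globalization $\bar\rho: G_{\Q}\to \GL_2(\F)$ with $\bar\rho|_{G_{\Q_p}}\cong\rbar$, adequate image, and minimal ramification; this is possible by Khare--Wintenberger and potential-modularity globalization arguments. Since $k\ge k(\rbar)$, weight-$k$ forms exist in the relevant Hecke eigensystem, for instance via theta/Hasse multiplication from the Serre weight $k(\rbar)$. Because $H^1$ of $\omega^k$ vanishes for $k\ge 2$, the coherent $H^0$ commutes with base change, so the patched module is free over $S_\infty$. It follows that
\[
\operatorname{depth}_{R_\infty} M_\infty \;\ge\; \dim S_\infty = \dim R_\infty .
\]
The $U_p$-operator (or $T_p$) acts on $M_\infty$ compatibly with $\alpha_p$. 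Hence $M_\infty$ is naturally a module over the partial normalization $R_{\rbar}^{\Box,\psi}(k)[\alpha_p]$, so it is a maximal Cohen--Macaulay module over this ring tensored with smooth factors.

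The key step is multiplicity one. I will show that $M_\infty/\mathfrak m_{R_\infty}$ is one-dimensional over $\F$. By Nakayama this makes $M_\infty$ cyclic, and since its support is all of $\Spec R_\infty$ (by the usual dimension count), $M_\infty\cong R_\infty$. To compute the fiber $M_\infty\otimes_{R_\infty}\F$, I reduce to mod-$p$ modular forms of weight $k$ in the $\bar\rho$-eigenspace that are killed by the maximal ideal, including the ideal generated by $U_p$ and $T_p$. The $q$-expansion principle determines such a form by its $q$-expansion, and the Hecke eigenvalues determine the coefficients $a_n$ for $n$ prime to $p$. The remaining freedom therefore consists of forms in the image of the Frobenius/$V$ operator, i.e.\ of the form $f(q^p)$.

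Here the hypothesis $k<pk(\rbar)$ is used. A nonzero form $g(q^p)$ in weight $k$ arises from $g$ of weight $k'$ with $pk'\le k$ up to Hasse invariant factors, by Katz's theory of the filtration. Such a $g$ would carry $\bar\rho$ in weight $<k(\rbar)$ (strictly, after accounting for filtration), which contradicts the minimality of the Serre weight. I expect this filtration bookkeeping, including the careful treatment of $\alpha_p$ versus the $U_p$-eigenspaces and the two ordinary lines when $\rbar$ is reducible semisimple, to be the main obstacle. Assumption \ref{assumption_semisimplification} is what keeps this case analysis finite.

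Granting multiplicity one, $R_{\rbar}^{\Box,\psi}(k)[\alpha_p]$ is Cohen--Macaulay. To descend to $R_{\rbar}^{\Box,\psi}(k)$, I would show that $\alpha_p$ is already integral and generates no new extension in this range, or more robustly that $R(k)[\alpha_p]$ is finite over $R(k)$ and is free of rank one, i.e.\ $R(k)\to R(k)[\alpha_p]$ is an isomorphism. Failing that, I would show that $R(k)$ is a direct summand of $R(k)[\alpha_p]$ as an $R(k)$-module, e.g.\ via a trace map, since the extension is generically an isomorphism. Then $R(k)$ inherits depth from a finite maximal Cohen--Macaulay module, using that depth over $R(k)$ of a finite $R(k)$-module equals its depth over $R(k)[\alpha_p]$. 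As a fallback, the same $q$-expansion argument applied with $T_p$ in place of $U_p$ should directly give multiplicity one for the patched module over $R(k)$ itself.
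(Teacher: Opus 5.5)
Your ingredients are the right ones (patching coherent cohomology at minimal level, the $q$-expansion principle, Katz's $\ker\theta=\Image V_p$, and the filtration bound $w(gV_p)=p\,w(g)$). However, you spend the hypothesis $k<pk(\rbar)$ at a step where it is not needed, and you leave open the step where it is needed. If the $T_p$-eigenvalue is part of the maximal ideal, then $a_n(f)=a_1(T_nf)$ for \emph{every} $n\ge 1$. So the eigenspace in $M(k,\varnothing,\F)$ is at most one-dimensional for every $k\ge 2$, and there is no residual freedom of the form $g(q^p)$. This is exactly why $R_{\rbar}^{\Box,\psi}(k)[\alpha_p]$ is Cohen--Macaulay for \emph{all} $k\ge 2$.

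The actual content of the theorem is $R_{\rbar}^{\Box,\psi}(k)=R_{\rbar}^{\Box,\psi}(k)[\alpha_p]$ for $k<pk(\rbar)$, which fails in general once $k\ge pk(\rbar)$; you only announce this step. Your direct-summand fallback is empty. The extension $R(k)\subseteq R(k)[\alpha_p]\subseteq R(k)[1/p]$ is finite, birational and $p$-torsion free, so a complement $C$ has $C[1/p]=0$ and hence $C=0$. Being a direct summand is therefore the same as equality, and integrality of $\alpha_p$ is already known.

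The missing step is your $\theta$-argument run for the anemic algebra $\T^p$, with no condition at $p$. Modulo $\varpi$, the cokernel of $\T^p(M)\to\T(M)$ is dual to $(M\otimes_\Oh\F)\cap\Image V_p\cong M(j(k),Q_n,\F)$. This is free over $\F[\Delta_{Q_n}]$ of rank $\dim_\F M(j(k),\varnothing,\F)$, and it vanishes iff $j(k)<k(\rbar)$, i.e.\ iff $k<pk(\rbar)$. Patching this cokernel gives $(\widetilde{R_\infty^{\psi_k}(k)}/R_\infty^{\psi_k}(k))\otimes_\Oh\F=0$, which one then descends to the local factor. Equivalently, multiplicity one over $R(k)$ itself plus faithfulness gives $M_\infty\cong R_\infty^{\psi_k}(k)$ directly. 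Your last sentence points this way, but it also needs Wiles's lemma that $U_\ell\in\T^{pN}$ at minimal level, so that $a_n$ with $\ell\mid\gcd(n,N)$ is controlled by the Galois side.

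The second gap is globalization. Khare--Wintenberger does not globalize a given local $\rbar$. Moret-Bailly type globalizations produce $\bar\rho$ over a totally real field $F$, and there modular curves, $q$-expansions at a cusp, Katz's theorem on $\ker\theta$ and Edixhoven's weights would all need Hilbert modular replacements. Over $\Q$, a suitable globalization is readily available only for semisimple $\rbar$ (induce from an imaginary quadratic field).

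That is the real role of Assumption~\ref{assumption_semisimplification}; it does not merely bound a case analysis. It makes $\rbar$ Schur with $R^{\univ,\psi}_{\rbar}\cong R^{{\rm ps},\psi}$. Then Pa\v{s}k\=unas' presentation $R^{\ver,\psi}_{\rbarss}\cong R^{{\rm ps},\psi}\llbracket x,y\rrbracket/(xy-c)$, restricted to the reducible and irreducible crystalline loci with $c$ regular on the latter, transfers two things from $\rbarss$ to $\rbar$: the Cohen--Macaulay property of $R(k)[\alpha_p]$, and the criterion $R(k)=R(k)[\alpha_p]$. This requires matching $k(\rbar)$ with $k(\rbarss)$ or $k(\rbarss)^{\no}$ according to which of $\chi_1,\chi_2$ is ramified. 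Your proposal has nothing playing this role.

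Smaller points:
\begin{itemize}
\item Full support of $M_\infty$ is Kisin's faithfulness theorem (with Pa\v{s}k\=unas, Hu--Tan and Tung in the remaining cases), not just a dimension count.
\item At vexing primes the local factors are not formally smooth: the $\Gamma_1$-level Hecke modules see $\Oh\llbracket Y,Z_1,Z_2,Z_3\rrbracket/((1+Y)^{p^a}-1)$. You therefore need Cohen--Macaulayness to descend through a completed tensor product with a Cohen--Macaulay but non-regular factor.
\item The case $k=1$ (when $k(\rbar)=1$) lies outside the patching range and must be treated directly.
\end{itemize}
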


\begin{remark}
Hu--Pa\v{s}k\"unas \cite[Proposition 7.13]{HuPaskunas2019crystabelline} go further in the range $2 \le k \le 2p+1$: if $\rbar$ is additionally generic, they are complete intersections. 
Our theorem, on the other hand, applies under weaker restrictions on the weight $k$ and does not assume $\rbar$ is Schur. For example, the Serre weight $k(\rbar)$ is an integer that can be as large as $p^2-1.$ When $k(\rbar) > 3$ or $\rbar$ is a sum of two characters, our result covers new cases not addressed by \cite{HuPaskunas2019crystabelline}.
\end{remark}

\begin{remark}
    If $\rbar$ is an unramified representation, i.e. $k(\rbar) = 1$, then $R_\rbar^{\Box,\psi}(p)$ is not Cohen--Macaulay. This is well-known if $\rbar$ is $p$-distinguished and the non-$p$-distinguished case was proved in 
\cite{Sander2012HS}. On the other hand, when $\rbar$ is ramified, computations in the author's thesis suggest that $R_{\rbar}^{\Box,\psi}(k)$ might still be Cohen--Macaulay when $k \ge pk(\rbar)$ in some examples. The author hopes to return to this question in future work. 
\end{remark}

The second result describes density of crystalline points in characteristic $p.$ Density of crystalline points in the generic fiber of the universal deformation ring of $\rbar$ were first proved by Colmez \cite{colmez2008representations} and Kisin \cite{kisin2010deformations}. The subtlety in characteristic $p$ is that crystalline representations are by definition characteristic $0$ points so it does not really make sense to talk about crystalline points in characteristic $p.$ If one naively takes the reductions of the crystalline points, then they are all $\rbar$ which is a single point. However, we can study whether the special fibers of crystalline deformation rings as $k$ varies fill out the special fiber of the universal lifting ring of $\rbar$ in the following sense. 

Fix an unramified character $\eta:\Gal(\overline{\Q}_p/\Q_p)\to \F^{\times}$. By abuse of notation, we denote its Teichm\"uller lift by $\eta$ as well. We let $\psi_k$ be the product of $\eta$ and $\chi_\cyc^{k-1}$ for every integer $k\ge 1$ and $k\equiv k(\rbar)\pmod{p-1}.$ Then the characters $\psi_{k}$ and $\psi_{k(\rbar)}$ are congruent modulo $p,$ the deformation rings $R^{\Box,\psi_k}_\rbar\otimes_\Oh\F$ can be identified with $R^{\Box,\psi_{k(\rbar)}}_\rbar\otimes_\Oh\F.$ Therefore, the special fiber of the crystalline deformation ring $R^{\Box,\psi_{k}}_\rbar(k)$ can  be viewed as quotients of $R^{\Box,\psi_{k(\rbar)}}_\rbar\otimes_\Oh\F$, cut out by an ideal $I_k\subseteq R^{\Box,\psi_{k(\rbar)}}_\rbar\otimes_\Oh\F.$ For now, we will write all the determinants as $\psi.$ Denote by $\mathfrak{m}_{R_\rbar^{\Box,\psi}}$ the maximal ideal of $R_\rbar^{\Box,\psi}.$ 

\begin{thm} \label{thm_density_of_crystalline_points_in_char_p}
    Assume that $p\ge 5$ and Assumption \ref{assumption_semisimplification} holds. Then $$\bigcap_{k\equiv k(\rbar)\pmod{p-1}} I_k = \{0\}.$$ 
\end{thm}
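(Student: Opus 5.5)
The plan is to prove the theorem by patching the coherent cohomology of modular curves over all weights simultaneously, and using the $q$-expansion principle in characteristic $p$ to compare the weights.

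\textbf{Step 1: globalize and patch.} Choose a suitable global modular representation $\rhobar:G_\Q\to\GL_2(\F)$ of minimal level whose restriction to $G_{\Q_p}$ is $\rbar$. Running the Taylor--Wiles--Kisin method on $H^0$ of modular curves with coefficients in $\omega^{k}$ produces, for each $k\equiv k(\rbar)\pmod{p-1}$, a patched module $M_\infty(k)$ over $R_\infty(k)=R^{\Box,\psi_k}_\rbar(k)\widehat\otimes R^{\rm loc}[[x_1,\dots,x_g]]$.

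\textbf{Step 2: compare special fibers.} Multiplication by the Hasse invariant, which has weight $p-1$, gives injections $M_\infty(k)/\varpi\hookrightarrow M_\infty(k+p-1)/\varpi$ compatible with the Galois actions. I would patch these maps in a way compatible with the identifications $R^{\Box,\psi_k}_\rbar\otimes\F\cong R^{\Box,\psi_{k(\rbar)}}_\rbar\otimes\F$. The direct limit $M_\infty^{(\infty)}=\varinjlim_k M_\infty(k)/\varpi$ is then a patched module of mod $p$ modular forms of all weights, that is, of Katz's mod $p$ forms. Its annihilator in $R_\infty\otimes\F$ is contained in $\bigcap_k I_k\widehat\otimes R^{\rm loc}[[x]]$. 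The reason is that each element of the limit is killed by $I_k$ for some $k$, and the $I_k$ decrease along the chain once we use the compatibility of the Hasse-invariant maps with the Hecke and Galois actions.

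\textbf{Step 3: faithfulness.} It remains to show that $\bigcap_k I_k$ acts faithfully, i.e.\ that $M_\infty^{(\infty)}$ is faithful over the full special fiber $R^{\Box,\psi}_\rbar\otimes\F\,\widehat\otimes\,R^{\rm loc}\otimes\F[[x]]$. I would first prove a multiplicity-one statement at each finite level from the $q$-expansion principle. After localizing at $\mathfrak m$, the patched module at level $Q_N$ is cyclic over the Hecke algebra, and dually free of rank one. This would imply that the support of $M_\infty^{(\infty)}$ is a union of irreducible components of $\Spec$ of the special fiber, of full dimension.

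The special fiber of $R^{\Box,\psi}_\rbar$ is a complete intersection of dimension $3+\dim$ of the unframed part, and under Assumption \ref{assumption_semisimplification} it should be reduced. The density would then follow if the support meets every irreducible component. To show this, I would use an infinite-weight argument: the dimensions of the support of $M_\infty(k)/\varpi$ are all $\dim R^{\Box,\psi}_\rbar(k)-1$. This is one less than $\dim(R^{\Box,\psi}_\rbar\otimes\F)$, so no single weight can suffice. Instead I would show that the union of the supports is Zariski dense, using the fact that Katz's space of mod $p$ forms carries a faithful action of the big Hecke algebra $\T^{\rm big}$, which patches to the full $R_\infty\otimes\F$. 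This last step is where I would invoke Gouvêa--Mazur style results, namely that the big ordinary/nonordinary Hecke algebra equals the big deformation ring under $p\ge5$.

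\textbf{Main obstacle.} I expect the hardest part to be the compatibility of patching across infinitely many weights, which is needed to get a single patched module over the full special fiber. I would handle it by patching with a fixed auxiliary data set and passing to limits using finiteness at each stage. A second difficulty is the reducedness and equidimensionality needed to pass from faithfulness of $M_\infty^{(\infty)}$ to $\bigcap I_k=0$. For this I would try to show that $M_\infty^{(\infty)}$ has full support by a depth/Auslander--Buchsbaum argument over the regular ring $\F[[x,y]]$.
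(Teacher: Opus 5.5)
There is a genuine gap in Step 3, and Step 3 is where all the content lies. Your overall shape matches the paper in spirit: patch all weights at once, compare weights via the Hasse invariant, then show the all-weights object has full support. The paper patches the dual objects (Hecke algebras) along surjections and takes an inverse limit.

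\textbf{The dimension count is off.} Since $R^{\Box,\psi}_\rbar(k)$ is $\Oh$-flat with $4$-dimensional generic fibre, $\dim R^{\Box,\psi}_\rbar(k)\otimes_\Oh\F=4$. By B\"ockle--Iyengar--Pa\v{s}k\=unas, $\dim R^{\Box,\psi}_\rbar\otimes_\Oh\F=6$. So the gap is two, not one. In the paper it is filled by the theorem of Bella\"iche--Khare and Deo, building on Jochnowitz; this is where $p\ge5$ enters. That theorem says the minimal-level big Hecke algebra $\varprojlim_k\T^{p}(M(k,\varnothing,\F)^{\no})$, a quotient of $R^{\univ,\psi}_\varnothing\otimes_\Oh\F$, has Krull dimension at least $2$.

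\textbf{The inputs you name do not supply this.}
\begin{itemize}
\item Gouv\^ea--Mazur/B\"ockle-type results give density of modular points in the generic fibre. That does not pass to special fibres: in $\Z_p\llbracket x\rrbracket$ the ideals $(x-p^k)$ have zero intersection, while $\bigcap_k\bigl((x-p^k)+(p)\bigr)=(x,p)$.
\item An isomorphism ``big $R$ $=$ big $\T$'' mod $p$ is not available in this generality, and it is more than is needed.
\item ``$\T^{\rm big}$ patches to the full $R_\infty\otimes\F$'' is exactly what must be proved.
\item Multiplicity one at finite level does not make the support of the non-finitely-generated module $\varinjlim_k M_\infty(k)/\varpi$ a union of irreducible components. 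Reducedness plus `meeting every component' would not suffice anyway.
\end{itemize}

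\textbf{How the paper transfers Deo's bound to the local ring.}
\begin{enumerate}
\item The patched rings $\Image(R_\infty^{\psi}(k)^{\no}\otimes_\Oh\F\to\widetilde{R_\infty^{\psi}(k)^{\no}}\otimes_\Oh\F)$ (patched $\T^p(M(k,Q_n,\F)^{\no})$) are finite free over $S_\infty\otimes_\Oh\F$. This uses the duality with $q$-expansions away from $p$ and Katz's $\ker\theta=\Image V_p$ to split $0\to\T^p\to\T\to C\to 0$ over $\F[\Delta_Q]$.
\item Their inverse limit along the Hasse-invariant surjections is a quotient of $R_\infty^{\psi}\otimes_\Oh\F$ (by Deo's surjectivity plus Nakayama), hence Noetherian. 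On it $d_1,\dots,d_{\dim S_\infty-1}$ is a regular sequence, with quotient the level-$\varnothing$ big Hecke algebra. So its dimension is at least $\dim S_\infty-1+2=\dim R_\infty^{\psi}\otimes_\Oh\F$.
\item $R_\infty^{\psi}\otimes_\Oh\F$ is an irreducible complete intersection (B\"ockle--Iyengar--Pa\v{s}k\=unas, Shotton), so the kernel is nilpotent. The retraction modulo $\mathfrak{m}_B$, together with $R^{\Box,\psi}_\rbar\otimes_\Oh\F$ being a domain, shows the kernel contracts to $0$. This gives $\varprojlim_k R^{\Box,\psi}_\rbar\otimes_\Oh\F/J_k\cong R^{\Box,\psi}_\rbar\otimes_\Oh\F$ with $J_k\supseteq I_k$.
\end{enumerate}
None of (1)--(3) appears in your sketch.

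\textbf{Two smaller points.}
\begin{itemize}
\item The containment in Step 2 is reversed. The extension of $\bigcap_k I_k$ lies in $\operatorname{Ann}(\varinjlim_k M_\infty(k)/\varpi)$, and that is the direction that makes faithfulness sufficient.
\item For non-semisimple $\rbar$ you cannot simply globalize $\rbar$ over $\Q$. The paper globalizes $\rbarss$ via CM forms and transfers back using Pa\v{s}k\=unas' presentation $R^{\ver,\psi}_{\rbarss}/I^{\ver}_{\irr}\cong(R^{\univ,\psi}_{\rbar}/I^{\univ}_{\irr})\llbracket x,y\rrbracket/(xy-c)$ and a comparison of adic topologies. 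This is also why it works with the non-ordinary part throughout.
\end{itemize}
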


Let us explain the idea behind the theorem. From a global perspective, multiplication by the Hasse invariant identifies spaces of mod-$p$ modular forms in low weight with those in high weight. One might hope to express this phenomenon purely in terms of Galois deformation theory, which heuristically suggests that the special fiber of the crystalline deformation ring $R_{\bar r}^{\Box,\psi}(k)$ ``grows'' as $k$ increases by $p-1$. Ideally, one would like to construct natural maps
\[
    R_{\bar r}^{\Box,\psi}(k+p-1)\otimes_{\Oh}\F \longrightarrow
    R_{\bar r}^{\Box,\psi}(k)\otimes_{\Oh}\F
\]
for every $k\ge 1$, and then show that these maps are surjective. Such surjections would produce a surjective inverse system
\[
    \{\, R_{\bar r}^{\Box,\psi}(k)\otimes_{\Oh}\F \,\}_{k\equiv k(\bar r)\!\!\!\pmod{p-1}},
\]
whose inverse limit could then be expected to coincide with the special fiber of the universal lifting ring $R_{\bar r}^{\Box,\psi}\otimes_{\Oh}\F$. While this conceptual idea motivates our approach, we are not able to construct these maps using the Hasse invariant alone, and we do not know how to produce them at this point. However, we are able to prove the following:

\begin{thm} \label{thm_crystalline_stronger_topology}
    Assume that $p\ge 5$ and that Assumption \ref{assumption_semisimplification} holds. Then there exists a descending sequence $\{J_k\}_{k\equiv k(\rbar)\pmod{p-1}}$ of ideals $J_k$ of $R_\rbar^{\Box,\psi}\otimes_\Oh\F$ such that \begin{enumerate}
        \item $I_k\subseteq J_k$ for every $k\equiv k(\rbar)\pmod{p-1}$ 
        \item and $\{J_k\}_{k\equiv k(\rbar)\pmod{p-1}}$ defines a stronger topology than the $\mathfrak{m}_{R_\rbar^{\Box,\psi}}$-adic topology on $R_\rbar^{\Box,\psi}\otimes_\Oh\F.$ 
    \end{enumerate} 
\end{thm}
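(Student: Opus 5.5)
We will construct the ideals $J_k$ globally, through patching of the coherent cohomology of modular curves. First we choose a global modular realization of $\rbar$: a suitable $\overline{\rho}:G_\Q\to\GL_2(\F)$ with $\overline{\rho}|_{G_{\Q_p}}\cong\rbar$, minimal level $N$, and adequate image (this uses $p\ge5$). We then run Taylor--Wiles--Kisin patching on $H^0$ of the modular curve $X_1(N)$ over $\Oh$ with coefficients $\omega^{k}$ (suitably twisted), for every $k\equiv k(\rbar)\pmod{p-1}$ at once. The patching datum is chosen compatibly across all $k$ by a diagonal argument over the countably many weights, so that a single $R_\infty=R^{\Box,\psi}_\rbar\widehat\otimes R^{\rm loc}[[x_1,\dots,x_g]]$ acts on every patched module $M_\infty(k)$. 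The action of $R_\infty$ on $M_\infty(k)$ factors through $R_\infty(k)$, in which the $p$-factor is $R^{\Box,\psi}_\rbar(k)$, by local--global compatibility. We then set
\[
J_k:=\text{the image in }R^{\Box,\psi}_\rbar\otimes_\Oh\F\text{ of }\operatorname{Ann}_{R_\infty}\bigl(\textstyle\sum_{k'\le k}M_\infty(k')\otimes\F\bigr)\cap R^{\Box,\psi}_\rbar .
\]
More precisely, we take the annihilators of the mod-$\varpi$ patched modules and intersect over $k'\le k$. This makes the sequence descending by construction. Property (1), $I_k\subseteq J_k$, then follows because $M_\infty(k)\otimes\F$ is killed by the kernel of $R_\infty\otimes\F\to R_\infty(k)\otimes\F$.

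The key global input for property (2) is the Hasse invariant $A$ of weight $p-1$. Multiplication by $A$ gives injections $H^0(X,\omega^{k})\otimes\F\hookrightarrow H^0(X,\omega^{k+p-1})\otimes\F$, and these are Hecke-equivariant away from $p$. They therefore patch to $R_\infty$-equivariant injections $M_\infty(k)\otimes\F\hookrightarrow M_\infty(k+p-1)\otimes\F$. (Equivariance for the full $R_\infty$ holds because the $R_\infty$-action is generated by the prime-to-$p$ Hecke action together with framing variables.) The direct limit $M_\infty(\infty):=\varinjlim_k M_\infty(k)\otimes\F$ is the patched space of all mod-$p$ modular forms. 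By the $q$-expansion principle it is dual to a module we can control: the $q$-expansion identifies it with a submodule of $\F[[q]]$-type coefficients. Combined with the multiplicity-one result for the patched module (established via $q$-expansions), this should show that $M_\infty(\infty)^\vee$ is a faithful, in fact free of rank one, module over $R_\infty\otimes\F$.

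To deduce (2), we must show that for each $n$ there is a $k$ with $J_k\subseteq\mathfrak m^n$. Each $M_\infty(k)\otimes\F$ is finitely generated over $R_\infty$, and $M_\infty(\infty)^\vee=\varprojlim (M_\infty(k)\otimes\F)^\vee$ is faithful over $R_\infty\otimes\F$. Hence the ideals $\operatorname{Ann}(M_\infty(k)\otimes\F)$ form a decreasing chain in a complete Noetherian local ring whose intersection is $0$. Chevalley's lemma then gives that they eventually lie inside $\mathfrak m_{R_\infty}^n$. Intersecting with $R^{\Box,\psi}_\rbar\otimes\F$ and using that $R^{\Box,\psi}_\rbar\to R_\infty$ is formally smooth (so $\mathfrak m_{R_\infty}^n\cap R^{\Box,\psi}_\rbar\otimes\F=\mathfrak m^n$) gives that $\{J_k\}$ is finer than the $\mathfrak m$-adic topology.

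The main obstacle is faithfulness of $M_\infty(\infty)^\vee$ over $R_\infty\otimes\F$. The finite-level patched modules have support only on the special fibers of the $R(k)$, so we need to know that the union of these supports is all of $\Spec R_\infty\otimes\F$. My first attempt would be to use the fact that $R_\infty\otimes\F$ is a domain of known dimension (for the relevant $\rbar$ under Assumption~\ref{assumption_semisimplification}), together with a dimension count. The supports of $M_\infty(k)\otimes\F$ have dimension one less than $R_\infty$, and their multiplicities (Hilbert–Samuel, via Breuil–Mézard-type growth in $k$) grow without bound. A cycle with unbounded multiplicity along a growing family cannot stay inside a proper closed subset with bounded nilpotent structure, so the annihilators must shrink to $0$. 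If that is too weak, I would instead use the $q$-expansion/multiplicity-one result to realize $M_\infty(\infty)^\vee$ concretely as a cyclic faithful module.
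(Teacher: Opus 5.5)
Your reduction of (2) to showing $\bigcap_k J_k=0$ is fine: Chevalley's lemma, or Proposition~\ref{prop_strong_top_inverse_limit_equivalence}, then finishes. Your $J_k$ (contracted annihilators of the patched mod-$\varpi$ modules, made descending by the Hasse invariant) is contained in the paper's $J_k=\ker\bigl(R^{\Box,\psi}_\rbar\otimes_\Oh\F\to R^{\Box,\psi}_\rbar(k)^{\no}[\alpha_p]\otimes_\Oh\F\bigr)$, so it would also do. The gap is that the step you call the main obstacle is the entire theorem, and neither mechanism you propose proves it.

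\textbf{The multiplicity heuristic fails.} $R^{\Box,\psi}_\rbar(k)\otimes_\Oh\F$ has dimension $4$ inside the $6$-dimensional $R^{\Box,\psi}_\rbar\otimes_\Oh\F$. So the supports have codimension two, not one, and likewise for $M_\infty(k)\otimes\F$ inside $R_\infty\otimes\F$. By Breuil--M\'ezard, all these supports lie in one fixed proper closed subset: the union of the supports of the cycles $\mathcal{C}_\sigma$ over the finitely many Serre weights $\sigma$ of $\rbar$. Hence the union of the supports never fills out $\Spec$. Moreover, unbounded multiplicity along the $\mathcal{C}_\sigma$ is compatible with a fixed nonzero common annihilator. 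If $0\neq f$ vanishes on every $\mathcal{C}_\sigma$, then $R^{\Box,\psi}_\rbar\otimes_\Oh\F/(f)$, localized at the generic point of each component, is one-dimensional and has Artinian quotients of unbounded length. Excluding this needs a quantitative global input. The paper imports it as Deo's theorem (Theorem~\ref{thm_Deo}, building on Bella\"{\i}che--Khare and Jochnowitz): $\varprojlim_k\T^{p}(M(k,\varnothing,\F)^{\no})$ is a quotient of $R^{\univ,\psi}_\varnothing\otimes_\Oh\F$ of Krull dimension at least $2$ when $p\ge5$, i.e.\ two more than at any fixed weight. This is where the paper needs $p\ge 5$.

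\textbf{Your fallback is false as stated.} $M_\infty(\infty)^\vee$ carries $U_p$. Already before patching, the $\mathfrak{m}^{\{p\}}_\varnothing$-torsion of $\bigcup_k M(k,\varnothing,\F)$ contains the forms $f_0V_p^{j}$, $j\ge0$, which are linearly independent by Proposition~\ref{prop_Vp_filtration}. So $M_\infty(\infty)^\vee$ is not even finitely generated over $R_\infty\otimes\F$. The paper's multiplicity one is freeness over $\widetilde{R^{\psi_k}_\infty(k)}$, one weight at a time.

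\textbf{The missing argument is a dimension count on the inverse limit itself.} By Proposition~\ref{prop_J_k_e_defining_ideal} and Lemma~\ref{lem_Hecke_algebras_patch_to_deformation_rings}, $R^{\psi}_\infty\otimes_\Oh\F/J_k^e$ is the patched mod-$p$ anemic non-ordinary Hecke algebra. For $k\ge p^2$ it is finite free over $S_\infty\otimes_\Oh\F$. This is where $q$-expansions really enter: Ribet duality and Katz's $\ker\theta=\Image V_p$ (Theorem~\ref{thm_katz_theta}) identify the cokernel of $\T^p\to\T$ mod $\varpi$ with the dual of $M(j(k),Q_n,\F)^{\no}$, which is free over $\F[\Delta_{Q_n}]$. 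Consequently:
\begin{enumerate}
\item The images of the $S_\infty$-variables form a regular sequence on $\varprojlim_k R^{\psi}_\infty\otimes_\Oh\F/J_k^e$ (Proposition~\ref{prop_regular_sequence_in_inverse_limit}).
\item This limit is a quotient of $R^{\psi}_\infty\otimes_\Oh\F$ (Proposition~\ref{prop_surjection_R_inf_to_inverse_limit}), and modulo that regular sequence it is Deo's ring.
\item Hence its dimension is at least $\dim R^{\psi}_\infty\otimes_\Oh\F$. Since the latter is a geometrically irreducible complete intersection (Proposition~\ref{prop_R_infty_special_fiber}), $\bigcap_kJ_k^e$ is nilpotent.
\item Since $R^{\Box,\psi}_\rbar\otimes_\Oh\F$ is a domain (Theorem~\ref{thm_unrestricted_deformation_ring_at_p}), the retraction modulo $\mathfrak{m}_B$ shows that $\bigcap_kJ_k^e$ contracts to $\bigcap_k J_k=0$.
\end{enumerate}

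\textbf{Two further gaps.} First, for non-semisimple $\rbar$ a global realization over $\Q$ is not known in general (Moret-Bailly only gives one over a totally real field). The paper therefore patches only for $\rbarss$, which has a CM realization. It then descends to $\rbar$ via Pa\v{s}k\=unas's presentation $R^{\ver,\psi}_{\rbarss}\cong R^{\univ,\psi}_{\rbar}\llbracket x,y\rrbracket/(xy-c)$ and Proposition~\ref{prop_thm_crystalline_stronger_topology_reduction_to_semisimple_case}; this is why its $J_k$ only sees the non-ordinary part. Second, with vexing primes $R^{\Box,\psi}_\rbar\to R_\infty$ is not formally smooth and $R_\infty\otimes\F$ is not a domain, because of the factors $\F\llbracket Y,Z_1,Z_2,Z_3\rrbracket/(Y^{p^{a}})$. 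You must use the retraction and irreducibility instead.
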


By a purely commutative algebra fact \ref{prop_strong_top_inverse_limit_equivalence}, (2) is equivalent to $$R_\rbar^{\Box,\psi}\otimes_\Oh\F \to \varprojlim_{k\equiv k(\rbar)\pmod{p-1}}R_\rbar^{\Box,\psi}\otimes_\Oh\F/J_k.$$
Theorem \ref{thm_density_of_crystalline_points_in_char_p} now follows from the theorem above. 
\begin{proof}[Proof of Theorem \ref{thm_density_of_crystalline_points_in_char_p}]
By (1) in Theorem \ref{thm_crystalline_stronger_topology}, it suffices to prove the claim with $I_k$ replaced by $J_k$. 
In other words, we need to show that the $\{J_k\}_{k \equiv k(\rbar) \pmod{p-1}}$-topology is separated. 
But this topology is stronger than the $\mathfrak{m}_{R_\rbar^{\Box,\psi}}$-adic topology on $R_\rbar^{\Box,\psi} \otimes_\Oh \F$, which is separated because $R_\rbar^{\Box,\psi} \otimes_\Oh \F$ is $\mathfrak{m}_{R_\rbar^{\Box,\psi}}$-adically complete. 
It follows that the $\{J_k\}$-topology is separated, and the proof is complete.
\end{proof}

Using Theorem \ref{thm_crystalline_stronger_topology}, we can also explain
some data collected in the author's thesis. There we developed an algorithm computing
the presentations of approximations of crystalline deformation rings. In particular,
we computed several examples of the approximations of the Hilbert functions of
$R_\rbar^{\Box,\psi}(k)\otimes_\Oh\F$ as $k\to \infty$ among those integers
congruent to $k(\rbar)\pmod{p-1}$. Recall that the Hilbert function of a Noetherian local ring $(R,\mathfrak{m})$ with 
residue field $\F$ is
\[
    H_R(x) := \sum_{n\ge 0} \dim_{\F}\!\left(\mathfrak{m}^{n}R / \mathfrak{m}^{n+1}R\right) x^n.
\]
The data we collected suggested the following limit of Hilbert
functions, which we can now prove.

\begin{thm} \label{thm_Hilbert_function}
    Assume that $p\ge 5$ and Assumption \ref{assumption_semisimplification} holds. Then we have $$\lim_{\substack{k\to \infty\\k\equiv k(\rbar)\pmod{p-1}}} H_{R^{\Box,\psi}_\rbar(k)\otimes_\Oh\F}(x)  = H_{R^{\Box,\psi}_\rbar\otimes_\Oh\F}(x),$$ where the limit is understood as such: the $i$-th coefficient of $H_{R^{\Box,\psi}_\rbar(k)\otimes_\Oh\F}(x)$ converges to the $i$-th coefficient of $H_{R^{\Box,\psi}_\rbar\otimes_\Oh\F}(x)$ for every integer $i\ge 0.$
\end{thm}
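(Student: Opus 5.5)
The plan is to deduce Theorem \ref{thm_Hilbert_function} from Theorem \ref{thm_crystalline_stronger_topology}, using the two inclusions $I_k\subseteq J_k$ and $J_k\subseteq\mathfrak{m}^{n}$ for $k$ large. Write $R=R^{\Box,\psi}_\rbar\otimes_\Oh\F$, $\mathfrak m=\mathfrak{m}_{R_\rbar^{\Box,\psi}}R$, and $R_k=R/I_k$, which we identify with $R^{\Box,\psi}_\rbar(k)\otimes_\Oh\F$ as explained before Theorem \ref{thm_density_of_crystalline_points_in_char_p}. The maximal ideal of $R_k$ is $\mathfrak m R_k$. The $i$-th coefficient of $H_{R_k}$ is then
\[
\dim_\F \mathfrak m^i R_k/\mathfrak m^{i+1}R_k=\dim_\F \frac{\mathfrak m^i+I_k}{\mathfrak m^{i+1}+I_k}.
\]

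Fix $i\ge 0$. By part (2) of Theorem \ref{thm_crystalline_stronger_topology}, the $\{J_k\}$-topology is stronger than the $\mathfrak m$-adic topology. This means that for the open neighbourhood $\mathfrak m^{i+1}$ of $0$ there is some $k_0$ with $J_{k_0}\subseteq \mathfrak m^{i+1}$. Since the sequence $\{J_k\}$ is descending, $J_k\subseteq\mathfrak m^{i+1}$ for all $k\ge k_0$ with $k\equiv k(\rbar)\pmod{p-1}$. Part (1) then gives $I_k\subseteq J_k\subseteq \mathfrak m^{i+1}$ for all such $k$.

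For these $k$ the displayed quotient equals $\mathfrak m^i/\mathfrak m^{i+1}$, because $\mathfrak m^{i+1}+I_k=\mathfrak m^{i+1}$ and $\mathfrak m^{i}+I_k=\mathfrak m^i$. So the $i$-th coefficient of $H_{R_k}$ equals the $i$-th coefficient of $H_R$ for all $k\ge k_0$. The sequence of coefficients is therefore eventually constant, which is stronger than the convergence claimed.

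The only point needing care is the meaning of ``stronger topology'' in (2). If it is phrased via Proposition \ref{prop_strong_top_inverse_limit_equivalence}, namely that $R\to\varprojlim R/J_k$ is an isomorphism, I would first extract the inclusion form $J_k\subseteq \mathfrak m^{n}$ for $k\gg0$ from it; this is the content of that proposition. I would also check that identifying the special fibres for varying $\psi_k$ respects the maximal ideals, which holds because all these rings are quotients of the same local ring $R$. I do not expect a real obstacle beyond this bookkeeping: all the substance lies in Theorem \ref{thm_crystalline_stronger_topology}.
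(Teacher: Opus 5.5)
Your proposal is correct and follows the paper's argument: both combine $I_k\subseteq J_k$ with $J_k\subseteq\mathfrak m^{n}$ for $k\gg0$ from Theorem \ref{thm_crystalline_stronger_topology}. The paper phrases this as a chain of surjections $R/\mathfrak m^i\surj R_k/\mathfrak m^i\surj R/(J_k+\mathfrak m^i)$ whose outer terms agree for large $k$, forcing equality of the truncated Hilbert functions, whereas you note directly that $I_k\subseteq\mathfrak m^{i+1}$, so the $i$-th coefficients coincide.
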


\begin{proof}
By definition, it suffices to prove the statement modulo $\mathfrak{m}^i_{R_\rbar^{\Box,\psi}}$ for every integer $i\ge 0$. 
In this case, the Hilbert functions involved are simply polynomials. 
We have surjections
\[
R^{\Box,\psi}_\rbar \otimes_\Oh \F / \mathfrak{m}^i_{R_\rbar^{\Box,\psi}} 
\;\;\surj\;\; 
R^{\Box,\psi}_\rbar(k) \otimes_\Oh \F / \mathfrak{m}^i_{R_\rbar^{\Box,\psi}} 
\;\;\surj\;\; 
R^{\Box,\psi}_\rbar \otimes_\Oh \F / (J_k + \mathfrak{m}^i_{R_\rbar^{\Box,\psi}}),
\]
which imply the following inequalities of Hilbert functions:
\[
H_{R^{\Box,\psi}_\rbar \otimes_\Oh \F / \mathfrak{m}^i_{R_\rbar^{\Box,\psi}}}(x) 
\;\ge\; 
H_{R^{\Box,\psi}_\rbar(k) \otimes_\Oh \F / \mathfrak{m}^i_{R_\rbar^{\Box,\psi}}}(x) 
\;\ge\; 
H_{R^{\Box,\psi}_\rbar \otimes_\Oh \F / (J_k + \mathfrak{m}^i_{R_\rbar^{\Box,\psi}})}(x).
\]
By Theorem \ref{thm_crystalline_stronger_topology}, for $k$ sufficiently large we have an isomorphism
\[
R^{\Box,\psi}_\rbar \otimes_\Oh \F / \mathfrak{m}^i_{R_\rbar^{\Box,\psi}} 
\;\xrightarrow{\sim}\; 
R^{\Box,\psi}_\rbar \otimes_\Oh \F / (J_k + \mathfrak{m}^i_{R_\rbar^{\Box,\psi}}),
\]
forcing all the inequalities above to be equalities. 
This completes the proof.
\end{proof}

We point out that neither the theorem nor its proof gives information on the speed of convergence; see Remark~\ref{rmk_speed_of_convergence} for more details.

At last, we prove a multiplicity one result for Serre's modular forms. For this, we need a global representation $$\rhobar:\Gal(\overline{\Q}/\Q)\to \GL_2(\F)$$ and we identify $\rhobar|_{\Gal(\overline{\Q}_p/\Q_p)}$ with $\rbar.$ We assume that the Artin
conductor $N(\rhobar)$ is at least $5.$ 
For convenience, we temporarily adopt notation for level subgroups, modular curves, and automorphic bundles that differs from that in \S\ref{subsubsec_modular_forms}. 
The formulation using the standard notation appears later as Theorem~\ref{thm_Serre_mult_one}.

Set
\[
    U := \left\{
    \begin{pmatrix} a & b \\ c & d \end{pmatrix} \in \GL_2(\hat{\Z}) :
    \begin{pmatrix} a & b \\ c & d \end{pmatrix}
    \equiv
    \begin{pmatrix} * & * \\ 0 & 1 \end{pmatrix}
    \pmod{N(\rhobar)}
    \right\}.
\]
Let $X_{U,\F}$ be the compact modular curve over $\F$ parametrizing generalized elliptic
curves with level structure $U$.  
Write $\pi : \mathcal{E}_{U,\F} \to X_{U,\F}$ for the universal generalized elliptic curve,
and set
$
    \omega := \pi_*\bigl(\Omega^1_{\mathcal{E}_{U,\F}/X_{U,\F}}\bigr).
$
In characteristic~$p$, multiplication by the Hasse invariant $A$ induces, for every
integer $k$, a map of line bundles
\[
    A \cdot \omega^{\otimes (k-(p-1))}
        \;\longrightarrow\;
    \omega^{\otimes k}.
\]
Define $\mathcal{S}^k$ to be the cokernel, and put
\(S(k,U) := H^0(X_{U,\F}, \mathcal{S}^k)\).
Let $\mathfrak{m}_\rhobar$ be the maximal ideal of
\(\T^{pN(\rhobar)} := \F[T_\ell, \langle \ell\rangle]_{\ell \nmid pN(\rhobar)}\)
generated by
\[
    \bigl(T_\ell - \tr\rhobar(\Frob_\ell),\; 
          \langle \ell\rangle - \det(\rhobar)/\chi_\cyc^{k(\rhobar)-1}
    \bigr)_{\ell \nmid pN(\rhobar)}.
\]

\begin{thm}
Assume the following:
\begin{enumerate}
    \item $p \ge 5$;
    \item $\rhobar$ is odd;
    \item $\rhobar|_{\Gal(\Q(\zeta_p)/\Q)}$ is absolutely irreducible;
    \item $N(\rhobar) \ge 5$;
    \item $\rbar$ is not a twist of 
    \(
        \begin{pmatrix}
            \epsilon & * \\
            0 & 1
        \end{pmatrix}
    \)
    with $*$ trivial or peu ramifi\'ee.
\end{enumerate}
Then for every weight $k \ge 1$, the $\F$-vector space
\(
    S(k,U)[\mathfrak{m}_\rhobar]
\)
is at most one-dimensional. The same conclusion holds when $\rbar$ is a twist of the above shape, provided that there are no vexing primes.
\end{thm}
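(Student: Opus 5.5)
We plan to deduce the statement from the multiplicity-one property of the patched coherent cohomology module, via Nakayama's lemma. Let $M_\infty(k)$ be the patched module built from $H^0(X_{U,\Oh},\omega^{\otimes k})_{\mathfrak m_\rhobar}$ at minimal level, using Taylor--Wiles auxiliary primes. Assumptions (2)--(4) are exactly what is needed for this: oddness, adequacy of $\rhobar|_{\Gal(\overline{\Q}/\Q(\zeta_p))}$, and $N(\rhobar)\ge 5$ for representability of $X_U$ and vanishing of $H^1$ after localization. Assumption (5), or the absence of vexing primes, guarantees that the minimal-level local deformation rings away from $p$ are formally smooth, so that the patched ring is $R_\infty=R^{\Box,\psi}_\rbar(k)[\alpha_p][\![x_1,\dots,x_g]\!]$, or its partial normalization. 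The $q$-expansion principle then gives the multiplicity-one statement proved earlier: $M_\infty(k)$ is free of rank one over (the relevant normalization of) $R_\infty$, equivalently $M_\infty(k)/\mathfrak m_{R_\infty}$ is one-dimensional.

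The key step is to express $S(k,U)$ as an unpatched specialization of a patched module. The Hasse exact sequence
\[
0\to \omega^{\otimes (k-p+1)}\xrightarrow{A}\omega^{\otimes k}\to \mathcal S^k\to 0
\]
yields, on $H^0$ localized at $\mathfrak m_\rhobar$ (using vanishing of $H^1(\omega^{\otimes j})_{\mathfrak m_\rhobar}$ for $j\ge 2$, and treating weight $1$ and small weights separately), an identification of $S(k,U)_{\mathfrak m_\rhobar}$ with the cokernel of multiplication by $A$ from weight $k-p+1$ to weight $k$ mod $p$. We will patch the pair $(M_\infty(k-p+1),M_\infty(k))$ together with the Hasse map, which commutes with Hecke operators away from $p$, to obtain a patched module $N_\infty(k)=\mathrm{coker}(A)$. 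This module satisfies $N_\infty(k)/\mathfrak a_\infty\cong S(k,U)_{\mathfrak m_\rhobar}^\vee$ (up to duality) for the augmentation ideal $\mathfrak a_\infty$ of the diamond-operator variables.

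Multiplicity one then follows formally. $S(k,U)[\mathfrak m_\rhobar]$ is dual to $S(k,U)^\vee_{\mathfrak m_\rhobar}/\mathfrak m_\rhobar$, which is a quotient of $N_\infty(k)/\mathfrak m_{R_\infty}$. Since $N_\infty(k)$ is a quotient of $M_\infty(k)\otimes\F$, we get
\[
\dim N_\infty(k)/\mathfrak m_{R_\infty}\le \dim M_\infty(k)/\mathfrak m_{R_\infty}=1.
\]
One must ensure that $\mathfrak m_\rhobar$ and $\mathfrak m_{R_\infty}$ induce the same maximal ideal on the unpatched side. The Hecke action of $T_p$ or $U_p$ is handled by $\alpha_p$, which is why the statement concerns only the prime-to-$p$ Hecke algebra and why the normalization $R[\alpha_p]$ enters.

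The main obstacle we expect is making the patched cokernel-of-Hasse construction rigorous uniformly in $k$, including $k\le p$ and weight $1$, where $H^1$ need not vanish and $k-p+1\le 0$. For these cases we would argue directly: for $k<p+1$, $S(k,U)$ differs from $H^0(\omega^{\otimes k}\otimes\F)$ only by the image of $A$ from nonpositive weight, whose $H^0$ localized at a non-Eisenstein $\mathfrak m_\rhobar$ vanishes. The multiplicity-one statement for $M_\infty(k)\otimes\F$ would then apply directly, with weight $1$ handled via the patched $H^0$ of weight-$1$ forms mod $p$, whose freeness comes from the same $q$-expansion argument.
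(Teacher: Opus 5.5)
Your proposal has a genuine gap at the step you call formal. What the paper shows is free of rank one over $\widetilde{R_\infty^{\psi_k}(k)}$ is the patched \emph{Hecke algebra} $\mathcal{P}(\{\T(M(k,Q_n,\Oh))^{\Box}\})$, which is the patched \emph{dual} of the space of forms. The patched space of forms itself is only the $S_\infty$-dual of that ring, and is free of rank one only under a Gorenstein-type condition.

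Dualizing the Hasse sequence $0\to M(k-p+1,Q_n,\F)\xrightarrow{A}M(k,Q_n,\F)\to S(k,Q_n)\to 0$ makes $S(k,Q_n)^\vee$ the \emph{kernel} of $\T(M(k,Q_n,\F))\twoheadrightarrow\T(M(k-p+1,Q_n,\F))$. So after patching, $S(k)^\vee$ is an \emph{ideal} of $\widetilde{R_\infty^{\psi_k}(k)}\otimes_\Oh\F$, not a quotient of it. Since $\dim S(k,\varnothing)[\mathfrak m]=\dim S(k,\varnothing)^\vee/\mathfrak m S(k,\varnothing)^\vee$, you need this ideal to be principal, and a submodule of a cyclic module need not be cyclic.

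If you instead patch the forms and set $N_\infty=\operatorname{coker}(A)$, you only learn that $S(k,\varnothing)$ is cyclic. That bounds $\dim S/\mathfrak m S$ and says nothing about $\dim S[\mathfrak m]$. Your two claims, that $N_\infty$ is a quotient of $M_\infty\otimes\F$ and that $S^\vee/\mathfrak m$ is a quotient of $N_\infty/\mathfrak m$, cannot both hold; the phrase ``up to duality'' hides exactly the crux. As a sanity check, if the bound were formal, hypothesis (5) and the vexing-prime restriction would be superfluous. You attribute (5) to the local rings away from $p$, but it is a condition on $\rbar$ at $p$, precisely where principality is not otherwise available.

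Your handling of small weights also fails. For $k\le p$ the term $H^1(X,\omega^{\otimes(k-p+1)})$ in the long exact sequence contributes to $S(k)$, so $S(k)$ is not a quotient of $M(k)$. The paper avoids this by first using $S(k)(1)\cong S(k+p+1)$ and $S(k)\cong S(pk)$ to reduce to $p+1\le k\le 2p-1$, where $S(k)=M(k)/A\,M(k-p+1)$.

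It then proves principality case by case.
\begin{itemize}
\item If $M(k-p+1,\varnothing,\F)=0$, it uses $q$-expansions and $\ker\theta=\Image V_p$.
\item If $\rbar$ is irreducible, or of the non-exceptional $\epsilon$-by-$1$ shape, the explicit Fontaine--Laffaille/Kisin presentations in Example~\ref{example_surjection_of_patched_rings} show the kernel ideal is $(y)$.
\item In the reducible cases with $k(\rbar)\neq 2$, it shows $S(k,\varnothing)=0$ by comparing Serre weights with the Jordan--H\"older factors of $\Sym^{k-2}\F^2$.
\item In the exceptional case, it uses Breuil--M\'ezard regularity of the non-crystalline semistable component, Auslander--Buchsbaum, and generic multiplicity one for quaternionic forms.
\end{itemize}
None of these inputs follows from rank-one freeness of the patched Hecke algebra.
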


\begin{remark} \label{rmk_vexing_primes}
A prime $\ell$ is called ``vexing'' for $\rhobar$ if
$
\ell \mid N(\rhobar), \ell \equiv -1 \pmod p,
$
and $\rhobar|_{\Gal(\overline{\Q}_\ell/\Q_\ell)}$ is an absolutely irreducible representation induced from the absolute Galois group of the unique unramified quadratic extension of $\Q_\ell$.  
Their effect is explained in more detail in \S\ref{subsubsection_vexing_primes}.
\end{remark}

\begin{remark}
    The multiplicity one theorem implies that some patched module
    is free of rank one over a formal power series ring over the completed tensor
    product of certain local deformation rings.
    This was first announced in \cite[\S7.5.13]{EGH_2022_IHES_notes}. There it was used
    to describe the image of the conjectural categorical $p$-adic local Langlands functor
    \(\mathfrak{A}_{D^\times}\) attached to a non-split quaternion algebra
    \(D/\Q_p\).
    Roughly, the functor is expected to send compact inductions of characters of the unit group of the maximal order of $D$ to sheaves supported on the underlying reduced of potentially crystalline Emerton--Gee stacks of cuspidal inertial type; 
    multiplicity one and
    compatibility with patching lead to the expectation that this sheaf is a line bundle.
    See~\cite[\S7.5.6]{EGH_2022_IHES_notes} for details.
    Andrea Dotto and Le Hung Bao have given an independent proof of multiplicity one using a different method in \cite{DottoLeHung_GKdimension_modp}. 
\end{remark}

\subsection{Overview of the Proof} 
Our main construction and arguments are based on the Taylor--Wiles--Kisin patching method applied to the coherent cohomology of modular curves. 
Accordingly, we impose the following global assumption, and explain in \S \ref{subsubsec_remove_global_assumption} how to replace it with Assumption \ref{assumption_semisimplification}.
\begin{assumption} \label{assumption_global_rhobar}
    The local residual representation $\rbar:\Gal(\overline{\Q}_p/\Q_p)\to \GL_2(\F)$ is the restriction of a global Galois representation $\rhobar:\Gal(\overline{\Q}/\Q)\to \GL_2(\F)$ such that
\begin{enumerate}
    \item $\rhobar$ is odd, i.e., $\det(\rhobar(c))=-1$ for a complex conjugation $c;$
    \item \label{assumption_Taylor_Wiles}(Taylor--Wiles condition) the restriction $\rhobar|_{\Gal(\Q(\zeta_p)/\Q)}$ is absolutely irreducible; 
    \item either $p \ge 5$, or that the Artin conductor $N(\rhobar)$ of $\rhobar$ satisfies $N(\rhobar) \ge 5$. 
\end{enumerate}
\end{assumption} 

\begin{remark} \label{rmk_assumption_explanation}
Conditions (1) and (2) imply that the representation $\rhobar$ arises from a Katz modular form in characteristic~$p$.  
Such a form lifts to a classical characteristic $0$ modular form of the same weight, level, and Nebentypus character provided that the corresponding modular curve represents the relevant moduli problem and that the weight is at least~$2$.  
Condition (3) ensures this representability, see \S \ref{subsubsection_Galois_representations} for a detailed discussion.  
Condition (2) additionally guarantees that the Taylor--Wiles--Kisin patching method can be applied.
\end{remark}

\begin{remark} \label{rmk_semisimple_reps_satisfy_assumption_global}
    When $\rbar$ is semisimple, the assumption holds true after possibly replacing $\F$ by a finite extension. In this case,  one can take $\rhobar$ to be an absolutely irreducible induced from a suitable quadratic imaginary field, i.e., the residual representation attached to a CM eigenform. 
\end{remark}

\subsubsection{Main Construction}\label{subsubsection_main_construction}
We explain the main construction, which is due to Matthew Emerton, in a simplified setting: we assume that there are no vexing primes and no extra framing variables, i.e., we consider the case where the patched ring is exactly the crystalline deformation ring $R_\rbar^{\Box,\psi}(k)$. 

Let $Q_n$ be a set of Taylor--Wiles primes that are congruent to $1$ modulo $p^n$ and let $N_{Q_n}$ be the product of the Artin conductor $N(\rhobar)$ and all the primes in $Q_n.$ We patch the dual of the weight $k$ coherent cohomology $M(k,Q_n)$ of a modular curve of level $N_{Q_n}$. By the $q$-expansion principle, $M(k,Q_n)$ can be identified with the full Hecke algebra $\T(M(k,Q_n))$ acting on $M(k,Q_n)$. (More generally, we denote by $\T^N(M(k,Q_n))$ the Hecke algebra generated by Hecke operators away from the integer $N.$) Modularity theorems of Kisin \cite{Kisin09FontaineMazur} imply that the patched module is a faithful $R_\rbar^{\Box,\psi}$-module.

On the other hand, the action of $R_\rbar^{\Box,\psi}$ on the patched module extends faithfully to $R_\rbar^{\Box,\psi}[\alpha_p]$ \cite{Caraiani_Emerton_Gee_Geraghty_PaskunasShin_2018_GL2Qp}. 
For each $n$, the map
\[
R_\rbar^{\Box,\psi}(k)[\alpha_p] \longrightarrow \T(M(k,Q_n)),
\]
extends the map $R_\rbar^{\Box,\psi}(k) \to \T^{pN_{Q_n}}(M(k,Q_n))$ and sends $\alpha_p$ to the Hecke operator $T_p$. Since we patch at the minimal level $N(\rhobar)$, Lemma \ref{lem_U_ell_in_anemic} (proved by Wiles in {wiles1995modular}) implies that 
$
\T^{pN_{Q_n}}(M(k,Q_n)) = \T^{p}(M(k,Q_n)),
$
so the above map is surjective at the minimal level. By the right exactness of the patching functor, it follows that the patched module is a cyclic $R_\rbar^{\Box,\psi}(k)[\alpha_p]$-module.

\begin{thm}[Emerton]\label{thm_intro_patched_module_free_of_rank_one}
    The patched module is finite free of rank one over $R_\rbar^{\Box,\psi}(k)[\alpha_p]$. 
\end{thm}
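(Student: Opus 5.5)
The plan is to combine the two facts already recorded just before the statement. The first is cyclicity: the patched module $M_\infty$ is a cyclic $R_\rbar^{\Box,\psi}(k)[\alpha_p]$-module. This comes from the surjectivity of $R_\rbar^{\Box,\psi}(k)[\alpha_p]\to \T(M(k,Q_n))$ at minimal level, from the $q$-expansion identification of $M(k,Q_n)$ with $\T(M(k,Q_n))$, and from the right exactness of patching. The second is faithfulness: $M_\infty$ is faithful over $R_\rbar^{\Box,\psi}(k)[\alpha_p]$, by Kisin's modularity theorems together with the extension of the action to $\alpha_p$ from \cite{Caraiani_Emerton_Gee_Geraghty_PaskunasShin_2018_GL2Qp}. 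A cyclic module $M\cong R/I$ that is faithful has $I=\mathrm{Ann}(M)=0$, so $M_\infty\cong R_\rbar^{\Box,\psi}(k)[\alpha_p]$, which is free of rank one. The proof therefore reduces to making these two inputs precise in the patched setting.

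First, I will fix the patching data: the sets $Q_n$, the modules $M(k,Q_n)^\vee$ (or their duals) localized at $\mathfrak{m}_\rhobar$, the framing, and the action of $S_\infty=\Oh[[y_1,\dots,y_r]]$ through diamond operators at $Q_n$. The output is a module $M_\infty$ over $R_\infty=R_\rbar^{\Box,\psi}(k)[[x_1,\dots,x_g]]$. In the simplified setting with no framing variables this is just $R_\rbar^{\Box,\psi}(k)$.

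For cyclicity, I will note that at each finite level $n$ the $q$-expansion principle gives $M(k,Q_n)\cong \T(M(k,Q_n))$ as $\T$-modules, so the module is generated by a single element, namely the form $\sum a_n q^n\mapsto a_1$ on the dual. Next, Lemma~\ref{lem_U_ell_in_anemic} lets the operators $U_\ell$ for $\ell\mid N_{Q_n}$ be expressed through the anemic algebra, and $T_p$ is the image of $\alpha_p$. Together these show that $R_\infty[\alpha_p]\to \T(M(k,Q_n))$ is surjective, compatibly in $n$. Passing to the limit over the patching data, generators lift through the compact inverse limit, so $M_\infty$ is generated by one element over $R_\infty[\alpha_p]$.

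For faithfulness, I will use the fact that $R_\infty[1/p]$ is reduced. Every irreducible component of the crystalline ring is hit, by Kisin's results: $M_\infty$ has full support on $\Spec R_\infty$. The ring $R_\infty[\alpha_p]$ is the partial normalization obtained by adjoining $\alpha_p$ inside the normalization of $R_\infty$, so it is reduced and $\Oh$-flat with the same generic components. Hence $M_\infty[1/p]$ is faithful, and $\Oh$-torsion-freeness of $R_\infty[\alpha_p]$ upgrades this to integral faithfulness.

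The main obstacle is the faithfulness step. There I need support on every component of $\Spec R_\infty[\alpha_p][1/p]$, not just of $R_\infty$, and $\alpha_p$ can separate a single $R_\infty$-component into several branches. I will handle this using the CEGGPS argument: $M_\infty$ is maximal Cohen--Macaulay over $R_\infty$ (its depth equals $\dim R_\infty$ by the usual patching count), hence torsion-free over each generic component. The $\alpha_p$-action is then determined generically by the $U_p$-eigenvalue at classical crystalline points, which are dense. The remaining point is that $R_\infty[\alpha_p]$ embeds into the product of its generic residue fields, and this is where I expect most of the care to go.
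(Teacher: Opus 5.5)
Your proposal is correct and is essentially the paper's argument (Lemma \ref{lem_Hecke_algebras_patch_to_deformation_rings}(1)): cyclicity comes from the surjection $\widetilde{R_\infty^{\psi_k}(k)}\twoheadrightarrow \T(M(k,Q_n,\Oh))^{\Box}$ (using Lemma \ref{lem_U_ell_in_anemic} and $\alpha_p\mapsto T_p$) together with right exactness of patching, and injectivity comes from Kisin's faithfulness, transferred to $\widetilde{R_\infty^{\psi_k}(k)}$ as in Corollary \ref{cor_R_inf_tilde_faithful}. The ``main obstacle'' in your last paragraph does not arise: $\alpha_p\in R_\rbar^{\Box,\psi}(k)[1/p]$ by definition, so $\widetilde{R_\infty^{\psi_k}(k)}[1/p]=R_\infty^{\psi_k}(k)[1/p]$ (Proposition \ref{prop_R_inf_tilde_property}), and your fourth paragraph already completes the proof without any Cohen--Macaulay or density input.
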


Since the patched module is maximal Cohen--Macaulay over $R_\rbar^{\Box,\psi}(k)$ and $R_\rbar^{\Box,\psi}(k)[\alpha_p]$ is finite over $R_\rbar^{\Box,\psi}(k)$, it is also maximal Cohen--Macaulay over $R_\rbar^{\Box,\psi}(k)[\alpha_p]$. We thus obtain:

\begin{thm}[Emerton] \label{thm_intro_Rap_CM}
    Under Assumption \ref{assumption_global_rhobar}, the ring $R_\rbar^{\Box,\psi}(k)[\alpha_p]$ is Cohen--Macaulay. 
\end{thm}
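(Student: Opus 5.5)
The plan is to deduce the statement directly from Theorem \ref{thm_intro_patched_module_free_of_rank_one}, together with the fact that the patched module is maximal Cohen--Macaulay over $R_\rbar^{\Box,\psi}(k)$. I take $M_\infty$ to be the patched module, built from the duals of the coherent cohomology groups $M(k,Q_n)$ at minimal level. In the general setting one also has framing and patching variables. I work over $R_\infty := R_\rbar^{\Box,\psi}(k)\widehat{\otimes}\,\Oh[[x_1,\dots,x_g]]$, where the power series variables account for these, and over $R_\infty[\alpha_p]$. At the end I pass back to $R_\rbar^{\Box,\psi}(k)[\alpha_p]$: a complete local ring $A$ is Cohen--Macaulay if and only if $A[[x_1,\dots,x_g]]$ is.

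\textbf{Step 1: $M_\infty$ is maximal Cohen--Macaulay over $R_\infty$.} Patching provides a surjection $S_\infty=\Oh[[y_1,\dots,y_h]]\to R_\infty$ compatible with the action on $M_\infty$. The standard Taylor--Wiles--Kisin count works as follows. Each $M(k,Q_n)$ is finite free over $\Oh[\Delta_{Q_n}]$, since the modular curve at level $N_{Q_n}$ is a fine moduli space by Assumption \ref{assumption_global_rhobar}(3). It follows that $M_\infty$ is finite free over $S_\infty$. Hence
\[
\operatorname{depth}_{R_\infty}M_\infty=\operatorname{depth}_{S_\infty}M_\infty=\dim S_\infty .
\]
The numerology of the Taylor--Wiles primes gives $\dim S_\infty=\dim R_\infty$. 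Here I use Kisin's computation that $R_\rbar^{\Box,\psi}(k)$ is equidimensional of relative dimension $4$ over $\Oh$. As a consequence, $\operatorname{depth} M_\infty\ge\dim R_\infty$, and also $\dim M_\infty=\dim R_\infty$, so $M_\infty$ has full support. This agrees with the faithfulness coming from Kisin's modularity theorem.

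\textbf{Step 2: transfer to $R_\infty[\alpha_p]$.} The action of $R_\infty$ on $M_\infty$ extends to $R_\infty[\alpha_p]$, which is finite over $R_\infty$. Depth of a module can be computed over any ring through which a finite map factors; concretely, a system of parameters of $R_\infty$ maps to a system of parameters of the local ring $R_\infty[\alpha_p]$. Therefore
\[
\operatorname{depth}_{R_\infty[\alpha_p]}M_\infty=\operatorname{depth}_{R_\infty}M_\infty=\dim R_\infty\ge \dim R_\infty[\alpha_p].
\]
So $M_\infty$ is maximal Cohen--Macaulay over $R_\infty[\alpha_p]$. I should check that $R_\infty[\alpha_p]$ is local. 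It is finite over the complete local ring $R_\infty$ and is a domain-like subring of $R_\infty[1/p]$ with the same minimal primes, and $\alpha_p$ reduces to the unique Frobenius trace modulo the maximal ideal. If it were instead only semilocal, I would argue factor by factor.

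\textbf{Step 3: conclusion.} By Theorem \ref{thm_intro_patched_module_free_of_rank_one}, $M_\infty\cong R_\infty[\alpha_p]$ as modules. A ring that is maximal Cohen--Macaulay as a module over itself is Cohen--Macaulay. Removing the power series variables then gives the statement.

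The main obstacle is Step 1, specifically the freeness of $M(k,Q_n)$ over the diamond group ring in coherent cohomology. This is needed for all weights $k\ge1$, including $k=1$, where $H^1$ can be nonzero and base change can fail. To handle it, I would use the perfect complex $R\Gamma$ of $\omega^{k}$ on the modular curve, on which $\Delta_{Q_n}$ acts freely through the étale covering. Localising at the non-Eisenstein $\mathfrak m_\rhobar$ concentrates the complex in degree $0$ after dualising. This argument should give freeness, and the rest of the proof is formal.
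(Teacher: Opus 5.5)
Your strategy is the paper's: the patched module is maximal Cohen--Macaulay, and by Theorem~\ref{thm_intro_patched_module_free_of_rank_one} it is isomorphic to the patched ring over $R^{\Box,\psi}_\rbar(k)[\alpha_p]$; you then strip off the auxiliary variables. It works when there are no vexing primes, but Assumption~\ref{assumption_global_rhobar} allows them, and then your $R_\infty=R^{\Box,\psi}_\rbar(k)\widehat{\otimes}_\Oh\Oh\llbracket x_1,\dots,x_g\rrbracket$ is not the patched ring. The actual patched ring also contains the factor $\widehat{\otimes}_{v\in T(\rhobar)}R^{\Box,\psi}_{\rhobar_v}$. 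Here $R^{\Box,\psi}_{\rhobar_v}\cong\Oh\llbracket Y,Z_1,Z_2,Z_3\rrbracket/((1+Y)^{p^{m}}-1)$ (Remark~\ref{rmk_vexing_prime_local_deformation_ring}), which is not formally smooth. You cannot replace it by a formally smooth local condition, because minimal-level forms can give lifts on every component: the inertial type at $v$ can vary by $p$-power order characters without changing the conductor. So your last step, ``$A$ is Cohen--Macaulay iff $A\llbracket x_1,\dots,x_g\rrbracket$ is'', does not return you to $R^{\Box,\psi}_\rbar(k)[\alpha_p]$. What is missing is a descent of the Cohen--Macaulay property through a completed tensor product with a Cohen--Macaulay but non-regular ring. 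This is exactly where the paper's proof (Theorem~\ref{thm_R_ap_cohen_macaulay}) does its work. It reduces modulo $\varpi$ using $\Oh$-flatness (Proposition~\ref{prop_R_inf_tilde_property}) and writes $\widetilde{R^{\psi_k}_\infty(k)}/\varpi\cong\bigoplus_u (R_u/\varpi)\widehat{\otimes}_\F B$ with $B$ Cohen--Macaulay. It then applies Proposition~\ref{prop_completed_tensor_product_in_char_p}(3) to each summand and lifts back using $\Oh$-flatness of $R_u$. Alternatively, you could verify that $A\to A\widehat{\otimes}_\Oh C$ is flat local with Cohen--Macaulay closed fibre $C\otimes_\Oh\F$ and descend along it.

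There are also some smaller points.
\begin{itemize}
\item Your locality claim in Step~2 is false in general. By Proposition~\ref{prop_R_ap_decomposition}, $R^{\Box,\psi}_\rbar(k)[\alpha_p]\cong\bigoplus_{u\in U_\rbar}R_u$, and there can be both an ordinary factor ($\alpha_p$ a unit) and a non-ordinary one ($\alpha_p\equiv 0$). So your factor-by-factor fallback is required. It works because each factor is a direct summand of $M_\infty$, hence free over $S_\infty$.
\item The map $S_\infty\to R_\infty$ is not surjective; since $S_\infty$ is a domain and $\dim S_\infty=\dim R_\infty$, surjectivity would force $R_\infty\cong S_\infty$. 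You only need a local homomorphism through which the action factors, with $M_\infty$ finite free over $S_\infty$.
\item Given Theorem~\ref{thm_intro_patched_module_free_of_rank_one}, the dimension count in Step~1 is superfluous. Freeness of $M_\infty\cong R_\infty[\alpha_p]$ over the regular ring $S_\infty$ already gives depth $\ge$ dimension on each local factor, which is how the paper argues. Also, ``$\dim M_\infty=\dim R_\infty$, so $M_\infty$ has full support'' is a non sequitur; faithfulness is Kisin's theorem.
\item The obstacle you single out is misplaced. The paper proves the statement for $k\ge 2$, which is where Kisin's ring and the set-up of \S\ref{section_patching} live. There, freeness over $\Oh[\Delta_{Q_n}]$ is the standard argument of Lemma~\ref{lem_finite_free_over_group_alg}.
\item Your proposed treatment of $k=1$ would fail. $H^1(X,\omega)$ is Serre dual to weight-one cusp forms, so the localised complex is not concentrated in one degree. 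Moreover, in weight one $\dim R_\infty=\dim S_\infty-1$, so the patched module cannot be free over $S_\infty$ (the Calegari--Geraghty situation). That case is elementary anyway, since unramified lifts give a formally smooth ring.
\end{itemize}
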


\begin{remark}
A more general version of Theorem \ref{thm_intro_patched_module_free_of_rank_one} is stated in Lemma~\ref{lem_Hecke_algebras_patch_to_deformation_rings}(1), where we allow extra framing variables and vexing primes, and single out the ordinary and non-ordinary components of $R_\rbar^{\Box,\psi}(k)[\alpha_p]$. Similarly, Theorem~\ref{thm_intro_Rap_CM} holds under the weaker Assumption~\ref{assumption_weaker}, as stated in Theorem~\ref{thm_R_ap_cohen_macaulay}.
\end{remark}

\subsubsection{Proof of the Main Theorems} \label{subsubsection_proof_of_main_theorems}
To prove Theorem \ref{thm_crystalline_deformation_ring_Cohen_Macaulay}, one wants to understand the difference between $R_\rbar^{\Box,\psi}(k)[\alpha_p]$ and $R_\rbar^{\Box,\psi}(k)$. At a finite level, this reduces to analyzing the cokernel $C(M(k,Q_n))$ of the map 
$$\T^{p}(M(k,Q_n))\to \T(M(k,Q_n)).$$
If $C(M(k,Q_n))\otimes_\Oh\F$ is trivial, then $R_\rbar^{\Box,\psi}(k)[\alpha_p] = R_\rbar^{\Box,\psi}(k)$, so the Cohen--Macaulay property of the former immediately implies that for the latter. Using the $q$-expansion principle, in Corollary \ref{cor_anemic_equal_full_criterion}, we identify $C(M(k,Q_n))\otimes_\Oh\F$ with the dual of the kernel of the $\theta$ operator in characteristic $p$, which is studied by Katz \cite{katz1977result}. This gives a concrete criterion in terms of $k$ for when $C(M(k,Q_n)) = 0$.

Write $R_\rbar^{\Box,\psi}(k)^\no$ for the quotient of the crystalline deformation ring $R_\rbar^{\Box,\psi}(k)$ corresponding to its non-ordinary components. We prove the following theorem, from which Theorem~\ref{thm_crystalline_stronger_topology} follows (under Assumption~\ref{assumption_global_rhobar}) by taking $$J_k := \ker(R_\rbar^{\Box,\psi}\otimes_\Oh\F\to \Image(R_\rbar^{\Box,\psi}(k)^\no\otimes_\Oh\F 
    \to R_\rbar^{\Box,\psi}(k)^\no[\alpha_p]\otimes_\Oh\F)).$$
\begin{thm} \label{thm_liminf_of_crystalline_is_everything}
    Assume that $p\ge 5$ and that Assumption \ref{assumption_global_rhobar} holds. Then there is a surjection \begin{align*}
        \Image(R_\rbar^{\Box,\psi}(k + p-1)^\no\otimes_\Oh\F \to R_\rbar^{\Box,\psi}(k + p-1)^\no[\alpha_p]\otimes_\Oh\F)\\\surj \Image(R_\rbar^{\Box,\psi}(k)^\no\otimes_\Oh\F \to R_\rbar^{\Box,\psi}(k)^\no[\alpha_p]\otimes_\Oh\F)
    \end{align*} for every $k\ge 2.$ With these surjections as transition maps, the inverse limit satisfies
\[
    \varprojlim_k 
    \Image(R_\rbar^{\Box,\psi}(k)^\no\otimes_\Oh\F 
    \to R_\rbar^{\Box,\psi}(k)^\no[\alpha_p]\otimes_\Oh\F)
    \xrightarrow{\sim} R_\rbar^{\Box,\psi}\otimes_\Oh\F.
\]  
\end{thm}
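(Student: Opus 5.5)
Since the patched module is free of rank one over $R_\rbar^{\Box,\psi}(k)[\alpha_p]$ (Theorem \ref{thm_intro_patched_module_free_of_rank_one}), I will identify the ring
\[
\Image\bigl(R_\rbar^{\Box,\psi}(k)^\no\otimes_\Oh\F \to R_\rbar^{\Box,\psi}(k)^\no[\alpha_p]\otimes_\Oh\F\bigr)
\]
with the patched version of the image of the anemic Hecke algebra $\T^{p}$ acting on the non-ordinary part of $M(k,Q_n)\otimes_\Oh\F$. The patched module is flat over $\Oh$, and patching commutes with $\otimes_\Oh\F$ by right exactness. So this image is the patch of $\T^p(M(k,Q_n)^\no\otimes\F)$, the Hecke algebra generated by operators away from $p$ acting on mod-$p$ non-ordinary forms. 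Here "non-ordinary" means the part where $T_p$ is nilpotent.

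To compare weights $k$ and $k+p-1$, I will use multiplication by the Hasse invariant $A$. It gives an injection $M(k,Q_n)\otimes\F\hookrightarrow M(k+p-1,Q_n)\otimes\F$ of mod-$p$ forms. This injection is equivariant for $T_\ell$ with $\ell\nmid pN_{Q_n}$, and for the diamond operators once we use the congruence $\psi_{k+p-1}\equiv\psi_k$. By the $q$-expansion principle it commutes with $T_p=U_p$ in characteristic $p$ on $q$-expansions, so it preserves the non-ordinary parts; I will check this on $q$-expansions. Dually, restricting operators to the image of $A$ gives a surjection $\T^p(\text{weight }k+p-1)\surj\T^p(\text{weight }k)$ at each finite level. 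These surjections are compatible with the Taylor--Wiles auxiliary structure, so they patch. Patching is right exact, and in the limit over an ultrafilter surjectivity is preserved. This gives the transition surjections. The maps are also compatible with the $R_\rbar^{\Box,\psi}\otimes\F$-actions, so the inverse limit receives a map from $R_\rbar^{\Box,\psi}\otimes\F$.

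For the isomorphism, surjectivity of $R_\rbar^{\Box,\psi}\otimes\F\to\varprojlim$ follows from compactness, since each term is a quotient of $R_\rbar^{\Box,\psi}\otimes\F$ and the transition maps are surjective. The real content is injectivity: the kernels $J_k$ must be cofinal for the $\mathfrak m$-adic topology, that is, for each $i$ we need $J_k\subseteq\mathfrak m^i$ for $k\gg0$. I would show this by proving that the Krull dimension and the multiplicities of the special fibers grow with $k$. Via the $\theta$-operator and Katz's description of the kernel of $\theta$ (Corollary \ref{cor_anemic_equal_full_criterion}), the non-ordinary mod-$p$ forms of weight $k$ contain the $\theta$-cycles coming from all lower weights. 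Concretely, the union over $k$ of the spaces of mod-$p$ forms, taken via $A$, is the space of all mod-$p$ modular forms of level $N_{Q_n}$. Its patched dual should be the patch of the "big" mod-$p$ Hecke algebra, which is the $\F$-fiber of the full ordinary-free part of $R^{\Box,\psi}_\rbar$.

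To make this rigorous I would use Gelfand--Kirillov dimension counts. For each fixed $i$, the patched module modulo $\mathfrak m^i$ is controlled by Hecke algebras at level $Q_n$ modulo $\mathfrak m^i$. I would show that these stabilize in $k$ to the Hecke algebra of the full space of mod-$p$ forms. That Hecke algebra is faithful over $R_\rbar^{\Box,\psi}\otimes\F$ by density of the patched modules for varying $k$, using Kisin's $R[1/p]=\T[1/p]$ in every weight together with the density of crystalline points of Colmez and Kisin.

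I expect the main obstacle to be this last step: showing that no nonzero element of $R_\rbar^{\Box,\psi}\otimes\F$ annihilates every weight's non-ordinary mod-$p$ patched module. I would first attempt it by comparing with the patched completed cohomology $H^1$. Its mod-$p$ reduction is faithful over $R_\rbar^{\Box,\psi}\otimes\F$ by Caraiani--Emerton--Gee--Geraghty--Pa\v{s}k\=unas--Shin, and I would realize it as the direct limit of coherent cohomology along Hasse-invariant multiplication (the Eichler--Shimura plus Hasse-invariant comparison). The hypothesis $p\ge5$ is used to exclude weight-$1$ and small-weight pathologies in the $\theta$-cycle analysis.
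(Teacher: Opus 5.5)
Your construction of the transition maps (Hasse invariant at each finite level, Corollary~\ref{cor_finite_level_surjection}, right exactness of patching, compatibility with the $R_\infty^{\psi}\otimes_\Oh\F$-action) is essentially the paper's. You also correctly reduce the isomorphism to showing $\bigcap_k J_k=0$. However, your argument for $\bigcap_k J_k=0$ does not work.

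Kisin's $R[1/p]=\T[1/p]$ and Colmez--Kisin density are statements about generic fibres and give no control over special fibres. For example, in $\Oh\llbracket x\rrbracket$ the points $x=p^k$ are Zariski dense in $\Spec\Oh\llbracket x\rrbracket[1/p]$. Yet every quotient $\Oh\llbracket x\rrbracket/(x-p^k)$ has special fibre $\F$, so the corresponding kernels in $\F\llbracket x\rrbracket$ all equal $(x)\neq 0$. Getting around exactly this phenomenon is the content of the theorem.

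Nor does the Krull dimension of $R_\rbar^{\Box,\psi}(k)\otimes_\Oh\F$ grow with $k$: it is always $4$, against $6$ for $R_\rbar^{\Box,\psi}\otimes_\Oh\F$. Only multiplicities grow, and growing multiplicity does not force the kernels to shrink to zero; consider $(x^k,y)$ in $\F\llbracket x,y\rrbracket$.

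Your assertion that the patched dual of all mod-$p$ forms ``should be'' the whole special fibre is the theorem itself. Moreover, the union over all weights cannot be patched directly, since it violates the bounded-generation condition of Assumption~\ref{assumption_ultrapatching}. The completed-cohomology route would need two inputs you do not supply: full support of the mod-$\varpi$ patched completed cohomology over $R_\infty/\varpi$, and a precise comparison between it and the inverse limit of the patched \emph{non-ordinary} coherent Hecke algebras.

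What is missing is a dimension count. The paper uses Deo's theorem (Theorem~\ref{thm_Deo}, extending Bella\"iche--Khare): the big Hecke algebra $\varprojlim_k\T^{p}(M(k,\varnothing,\F)^\no)$ has Krull dimension at least $2$. This is where $p\ge 5$ is used, not in any $\theta$-cycle analysis.

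Each $R_\infty^{\psi}\otimes_\Oh\F/J_k^e$ is finite free over $S_\infty\otimes_\Oh\F$, and the system is surjective. The freeness needs the split exactness coming from Katz's $\ker\theta=\Image V_p$, not merely right exactness of patching. It follows that the patching variables form a regular sequence on $\varprojlim_k R_\infty^{\psi}\otimes_\Oh\F/J_k^e$, with quotient Deo's ring (Proposition~\ref{prop_regular_sequence_in_inverse_limit}, Lemma~\ref{lem_quotient_of_patched_modules}). Hence the inverse limit has dimension at least $2+\dim S_\infty-1=\dim R_\infty^{\psi}\otimes_\Oh\F$.

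Since $R_\rbar^{\Box,\psi}\otimes_\Oh\F$ is a complete-intersection domain (Theorem~\ref{thm_unrestricted_deformation_ring_at_p}, together with the analogous facts at vexing primes), $R_\infty^{\psi}\otimes_\Oh\F$ is irreducible and Cohen--Macaulay. So $\bigcap_kJ_k^e$ lies in the nilradical (Lemma~\ref{lem_intersection_J_k_extension_nilpotent}). The domain property, $J_k^{ec}=J_k$, and the retraction modulo $\mathfrak{m}_B$ then give $\bigcap_kJ_k=0$.

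Irreducibility is essential here. A full-dimensional quotient of a reducible ring need not have nilpotent kernel, as in $\F\llbracket x,y\rrbracket/(xy)\surj\F\llbracket x,y\rrbracket/(y)$.
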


Let $M(k,Q_n)^{\no}$ denote the non-ordinary part of $M(k,Q_n)$, i.e.\ the submodule on which $T_p$ acts topologically nilpotently.The main idea in proving Theorem \ref{thm_liminf_of_crystalline_is_everything} is to patch the Hecke algebra
\(\T^{p}(M(k,Q_n)^\no\otimes_\Oh\F)\) and identify the patched ring with
\[
\Image\big(R_\rbar^{\Box,\psi}(k)^\no\otimes_\Oh\F \to R_\rbar^{\Box,\psi}(k)^\no[\alpha_p]\otimes_\Oh\F\big)
\]
in Lemma \ref{lem_Hecke_algebras_patch_to_deformation_rings}.
This identification allows one to construct the surjections in Theorem \ref{thm_liminf_of_crystalline_is_everything} using the Hasse invariant. 
To conclude that the inverse limit is the special fiber of the universal lifting ring \(R_\rbar^{\Box,\psi}\), we use the fact that \(R_\rbar^{\Box,\psi}\otimes_\Oh\F\) is a domain, which is proved in \cite{Bockle_Iyengar_Paskunas_2023_local_deformation_rings} and reduce the problem to counting the dimension of the inverse limit. 
Ultimately, this reduces to bounding the Krull dimension of 
$\T^{p}(M(k,Q_n)^\no \otimes_\Oh \F)$ from below by two. 
This bound is established in \cite{Bellaiche_Khare_2015_Hecke} for level one and arbitrary $p$, 
and in \cite{deo_2017_structure_of_hecke_algebras} for general levels with $p \ge 5$. 
Hence, the restriction $p \ge 5$ is imposed in all the theorems that rely on Deo's result.

For the multiplicity one result of Serre's modular forms, we again use the fact that $q$-expansions determine modular forms, together with certain smoothness properties of local deformation rings, as explained in \cite[\S7.5.13]{EGH_2022_IHES_notes}.

\subsubsection{Vexing Primes} \label{subsubsection_vexing_primes}
We now explain the strategy when there are vexing primes, as suggested by Chandrashekhar Khare. In this case, the patched ring is a formal power series ring over the completed tensor product of the crystalline deformation ring $R_\rbar^{\Box,\psi}(k)$ and the universal lifting rings of local representations at vexing primes. Fortunately, these rings are thoroughly studied in \cite{shotton2016local}, which allows us to carry over much of the previous strategy. The key point in proving Theorems \ref{thm_crystalline_deformation_ring_Cohen_Macaulay} and \ref{thm_liminf_of_crystalline_is_everything} is that certain local deformation rings in characteristic $p$ are Cohen--Macaulay and have the desired dimensions. These properties are still satisfied by the special fibers of the universal lifting rings of local representations at vexing primes. 

The proof of the multiplicity one result largely carries over, except in the following case. When $\rbar$ is, up to twist, isomorphic to 
\(
\begin{pmatrix}
    \epsilon & *\\
    0 & 1
\end{pmatrix}
\)
with $*$ peu ramifi\'ee, we need a quotient of the patched module to have finite projective dimension over a ring of the form $\F[x]/(x^n)$ for some integer $n\ge 2$. For these rings, all modules have projective dimension either $0$ or $\infty$. But $\F$ as an $\F[x]/(x^n)$-module has infinite projective dimension $\infty.$ It is not clear how to conclude the result from commutative algebra alone. It is plausible that one could instead use type theory to patch modular forms of a specified type, following \cite[\S 3.9]{calegari_geraghty_2018_beyond} to fully address the problems arising from vexing primes. But for convenience, we simply assume that no vexing primes occur in this case.

\subsubsection{Removing the Global Assumption} \label{subsubsec_remove_global_assumption}
We now explain how to replace Assumption~\ref{assumption_global_rhobar} with Assumption~\ref{assumption_semisimplification}. The idea follows the approach of Kisin in \cite{Kisin09FontaineMazur}. 
As observed in Remark~\ref{rmk_semisimple_reps_satisfy_assumption_global}, the global assumption is automatically satisfied when $\rbar$ is semisimple. 
We may therefore assume that $\rbar$ is not semisimple and satisfies Assumption~\ref{assumption_semisimplification}. In this case, the deformation theory and pseudo-deformation theory of $\rbar$ coincide. 
Moreover, by definition, the pseudo-deformation theories of $\rbar$ and its semisimplification $\rbarss$ agree. 
It thus remains to compare the deformation and pseudo-deformation theories of $\rbarss$, which is thoroughly studied in \cite{paskunas_2017_2_adic_deformations}. Using these results, we deduce the Cohen--Macaulay property and the density of crystalline points for $\rbar$ from the corresponding statements for $\rbarss$ by commutative algebra arguments carried out in \S\ref{subsubsec_semisimplification}. In particular, all the results concerning a local representation $\rbar$ stated in the introduction remain valid under Assumption~\ref{assumption_semisimplification}, and more generally under the following weaker condition:
\begin{assumption}\label{assumption_weaker}
Either Assumption~\ref{assumption_semisimplification} or Assumption~\ref{assumption_global_rhobar} holds.
\end{assumption}

We note that an alternative approach to removing Assumption~\ref{assumption_global_rhobar} would be to apply the Moret--Bailly theorem (see \cite[\S 3]{calegari_2012_even_II} and \cite[Proposition 3.2.1]{EmertonGee2014Geometric}) to obtain a global Galois representation over a totally real field $F$ rather than $\Q$. Then one could carry out a patching argument using Hilbert modular forms after checking all the relevant results on modular forms still hold in the Hilbert setting, but this is beyond the scope of the current paper.

\subsubsection{Organization of the Paper} \label{subsubsection_organization_of_the_paper}
The paper is organized as follows. In \S \ref{section_prelim}, we recall some results in commutative algebra, modular forms and Galois deformation theory that we will use in the sequel. In \S\ref{section_patching}, we carry out the main construction described in \S \ref{subsubsection_main_construction}. We choose to use Scholze's ultrapatching functor \cite{Scholze_2018_Lubin_Tate} to describe the Taylor--Wiles--Kisin patching process. This is because we need to study maps between patched modules and patched rings, and the ultrapatching functor naturally constructs these maps while allowing the patching process to be written in a fairly clean way. In the last section, we prove the main theorems following the proof strategy outlined above.

\subsection*{Notation and Conventions}\label{subsubsection_notation_and_conventions}
Throughout this article we fix an odd rational prime $p>2.$ Let $L$ be a finite extension of $\Q_p$ with ring of integers $\Oh$ , maximal ideal $(\varpi)$ and residue field $\F$. We assume that $\Oh$ is large enough so that all scalars appearing in the paper are in $L$ or $\F.$ Unless stated otherwise, an irreducible representation is understood to mean an absolutely irreducible one.

For any field $F,$ we denote its the algebraic closure by $\overline{\F}$ and write $G_F$ for the absolute Galois group $\Gal(\overline{F}/F).$ 
For a prime $\ell,$ we denote by $I_\ell \triangleleft G_{\Q_\ell}$ the inertia subgroup and by $P_\ell\triangleleft I_\ell$ the wild inertia subgroup. The quotient $G_{\Q_\ell}/I_\ell$ is topologically generated by the arithmetic Frobenius element $\Frob_\ell,$ whose action on $\overline{\F}_\ell$ is given by $x\mapsto x^\ell.$ 
The quotient $I_\ell/P_\ell$ is isomorphic to the pro-cyclic group $\prod_{v\neq \ell}\Z_v.$ 
If $S$ is a finite set of finite places of $\Q,$ we write $\Q_S$ for the maximal extension of $\Q$ unramified outside $S\cup \{\infty\}$ and set $G_{\Q,S} = \Gal(\Q_S/\Q).$

We let $\chi_{\cyc}:G_{\Q}\to \Z_p^{\times}$ be the $p$-adic cyclotomic character. For $n\ge 1,$ we let $W(\F_{p^n})$ be the ring of Witt vectors of $\F_{p^n}$ and let $\epsilon_n:W(\F_{p^n})^{\times} \to \F_{p^n}^{\times}$ be the character induced by reduction modulo $p.$ It can be extended to a character of $W(\F_{p^n})[1/p]^{\times}$ by mapping $p$ to $1.$ By local class field theory, this defines a character of $G_{W(\F_{p^n})[1/p]}.$ When $n=1,$ $\epsilon_1$ coincides with the mod-$p$ reduction of $\chi_\cyc,$ which we denote by $\epsilon.$ 

Let $\mathcal{C}_\Oh$ be the category of completed local Noetherian $\Oh$-algebras with residue field $\F$. Let $G = G_{\Q_\ell}$ for some prime $\ell$ or $G_{\Q,S}$ for a finite set of primes $S.$  
Let $\rhobar:G\to \GL_2(\F)$ be a continuous representation of $G$ over $\F.$ The functor sending an object of $\mathcal{C}_{\Oh}$ to the set of liftings of $\rhobar$ to $\Oh$ is represented by the universal lifting ring $R_\rhobar^{\Box}$ in $\mathcal{C}_{\Oh},$ equipped with a universal lift $\rho^{\Box}:G\to \GL_2(R^{\Box}_\rhobar).$ For every character $\psi:G\to \Oh^\times$ with $\psi \equiv \det(\rhobar)\pmod \varpi$, there is the fixed determinant universal lifting ring $R_\rhobar^{\Box,\psi}$, which is the quotient of $R_\rhobar^{\Box}$ parametrizing deformations of $\rhobar$ with determinant $\psi$. One can check from the universal property that $$R^{\Box,\psi}_\rhobar\xrightarrow{\sim} R^{\Box}_\rhobar/(\det \rho^{\Box}(g)-\psi(g))_{g\in G}.$$ When $p\ge 3$ and $\psi$ and $\psi'$ are two such characters, the ring $R_\rhobar^{\Box,\psi}$ and $R_\rhobar^{\Box,\psi'}$ are isomorphic: there is a unique character $\delta: G \to 1 + \varpi\Oh$ such that $\delta^2 = \psi^{-1}\psi'.$ Twisting by $\delta$ induces the desired isomorphism.

Given a ring homomorphism $f :S\to R$ and an ideal $\mathfrak{a}\subset S,$ we write $\mathfrak{a}^e$ for its extension $\mathfrak{a}R$ to $R$; for an ideal $\mathfrak{b}\subset S$, we write $\mathfrak{b}^c$ for its contraction $f^{-1}(\mathfrak{b})$ to $S$. The extensions and contractions depend on $f$, though we suppress it from the notation when there is ambiguity. By definition, we always have $\mathfrak{a} \subseteq \mathfrak{a}^{ec}$ for every ideal $\mathfrak{a}$ of $S.$

We write $v_p$ for the additive $p$-adic valuation, normalized so that $v_p(p)=1.$ For a positive integer $n$, we write $\zeta_n$ for a fixed choice of a primitive $n$-th root of unity. 

\section{Preliminaries} \label{section_prelim}
In this section, we recall some preliminaries on modular forms and Galois representations. We claim no originality in this section. But we sometimes include proofs for completeness. 

\subsection{Some Commutative Algebra}

We first recall some commutative algebra concerning inverse limits of directed inverse systems over a countable index set $I$. These are used in dealing with completed tensor products and ultrapatching. 

\begin{lem}\label{lem_inverse_limit_exactness}
    Let 
    \[
        0 \to \{A_i\} \to \{B_i\} \to \{C_i\} \to 0
    \]
    be a short exact sequence of directed inverse systems of abelian groups. Then 
    \[
        0 \to \varprojlim_i A_i \to \varprojlim_i B_i \to \varprojlim_i C_i
    \]
    is exact. If in addition the system $\{A_i\}$ satisfies the Mittag--Leffler condition (i.e.\ the transition maps eventually stabilize), then 
    \[
        0 \to \varprojlim_i A_i \to \varprojlim_i B_i \to \varprojlim_i C_i \to 0
    \]
    is also exact.
\end{lem}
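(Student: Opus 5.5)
Since the index set $I$ is countable and directed, I would first replace it by a cofinal sequence $i_1 \le i_2 \le \cdots$. Limits over a directed set agree with limits over any cofinal subset, and the Mittag--Leffler condition is inherited by cofinal subsystems. So we may assume $I=\N$ with transition maps $f_{n}: X_{n+1}\to X_n$.

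Left exactness is formal. An element of $\varprojlim B_i$ is a compatible family $(b_i)$. If its image in $\varprojlim C_i$ vanishes, each $b_i$ lies in $A_i$, and injectivity of $A_i\to B_i$ makes these preimages compatible. This gives exactness at $\varprojlim B_i$. Injectivity of $\varprojlim A_i\to\varprojlim B_i$ is immediate componentwise. Equivalently, $\varprojlim$ is a right adjoint (to the constant-system functor), so it preserves kernels.

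For surjectivity, I would use the standard $\varprojlim^1$ argument. Define
\[
\Delta_X:\prod_n X_n\to\prod_n X_n, \qquad (x_n)\mapsto (x_n - f_n(x_{n+1})),
\]
so that $\ker\Delta_X=\varprojlim X_n$; set $\varprojlim^1 X_n:=\operatorname{coker}\Delta_X$. The short exact sequence of systems gives a short exact sequence of two-term complexes $\prod A_n\to\prod A_n$, etc., since products are exact. The snake lemma then yields the six-term sequence
\[
0\to\varprojlim A\to\varprojlim B\to\varprojlim C\to\varprojlim^1 A\to\cdots.
\]
It therefore suffices to show $\varprojlim^1 A_n=0$ under Mittag--Leffler.

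This last step is the main work. Let $A_n'$ be the stable image of $A_m\to A_n$ for $m\gg n$. The maps $A_{n+1}'\to A_n'$ are surjective, by the usual stabilization argument. For a surjective system, $\Delta$ is surjective: given $(y_n)$, solve $x_n - f_n(x_{n+1})=y_n$ recursively, choosing $x_1=0$ and lifting at each step. Next, the quotient system $A_n/A_n'$ is Mittag--Leffler with eventually zero images, i.e.\ each transition composite $A_m/A_m'\to A_n/A_n'$ vanishes for $m\gg n$. For such a system $\Delta$ is surjective via the explicit finite-sum solution
\[
x_n=\sum_{k\ge 0} f_{n}\circ\cdots\circ f_{n+k-1}(y_{n+k}),
\]
which is a finite sum. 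Applying the six-term sequence to $0\to A'\to A\to A/A'\to 0$ then gives $\varprojlim^1 A=0$.

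Alternatively, I would bypass $\varprojlim^1$ altogether. Given $(c_n)\in\varprojlim C$, I would build compatible lifts inductively, using Mittag--Leffler to adjust lifts by elements of the stable images $A_n'$ so that each lift extends. This is the same content, organized differently.
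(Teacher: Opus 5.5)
Your proof is correct. The reduction to a cofinal chain $i_1\le i_2\le\cdots$ is exactly where the countability of $I$ is used; the paper assumes countability just before the lemma, and the surjectivity statement depends on it.

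The paper gives no argument of its own and simply cites the Stacks Project, so there is no in-paper proof to follow. Your route through $\varprojlim^1$ is one of the two textbook proofs: apply the snake lemma to $\Delta$, then get $\varprojlim^1 A=0$ by splitting $A$ into the stable images $A'$ (surjective transitions) and $A/A'$ (transitions eventually zero).

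The other works directly with the fibres $E_n\subseteq B_n$ of $B_n\to C_n$ over a given $(c_n)\in\varprojlim C_n$. Each $E_n$ is an $A_n$-torsor, so $\operatorname{im}(E_m\to E_n)$ is a coset of $\operatorname{im}(A_m\to A_n)$ and therefore stabilizes. A countable Mittag--Leffler system of nonempty sets has nonempty limit, which gives the lift. That version is purely set-theoretic and needs no snake lemma. Yours costs slightly more, but it yields the six-term sequence, which identifies $\varprojlim^1 A$ as the exact obstruction to surjectivity when Mittag--Leffler fails.

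One caution about your closing alternative. In the direct version, each lift $b_n$ must be chosen inside the stable image $\operatorname{im}(E_m\to E_n)$, $m\gg n$. Equivalently, it must be corrected by an element of $A_n$, not merely of $A_n'$. For instance, take $A_n=B_n=\Z/2$ with zero transition maps and $C_n=0$. Then $A_n'=0$, and the lift $b_1=1$ of $c_1=0$ neither extends to a compatible family nor can be repaired by adding anything from $A_1'$.
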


\begin{remark}
For a proof, see for example \cite[\href{https://stacks.math.columbia.edu/tag/02N1}{Lemma~02N1}]{stacks-project}.  
In practice, the Mittag--Leffler condition holds whenever $\{A_i\}$ is a surjective inverse system. More generally, if all maps in the sequence are $R$-module homomorphisms for some ring $R$, then it also holds whenever each $A_i$ has finite length as an $R$-module. We will sometimes use the following convenient form of the right exactness.
\end{remark}
\begin{cor}\label{cor_mittag_leffler_right_exact}
Let $R$ be a ring and let
\[
\{A_i\}_{i\in I}\xrightarrow{\{f_i\}} \{B_i\}_{i\in I}\xrightarrow{} \{C_i\}_{i\in I}\to 0
\]
be a right exact sequence of inverse systems of $R$-modules indexed by a countable set \(I\).
If both $\{\ker f_i\}_{i\in I}$ and $\{\Image f_i\}_{i\in I}$ satisfy the Mittag--Leffler condition,
then
\[
\varprojlim_i A_i\longrightarrow \varprojlim_i B_i\longrightarrow \varprojlim_i C_i\longrightarrow 0
\]
is right exact.
\end{cor}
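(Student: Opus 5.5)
We reduce this to Lemma~\ref{lem_inverse_limit_exactness} by splitting the right exact sequence into two short exact sequences of inverse systems. For each $i$, write $K_i := \ker f_i$ and $I_i := \Image f_i$. We then have short exact sequences of inverse systems of $R$-modules
\[
0 \to \{K_i\} \to \{A_i\} \to \{I_i\} \to 0, \qquad 0 \to \{I_i\} \to \{B_i\} \to \{C_i\} \to 0.
\]
Exactness of the second sequence at $\{B_i\}$ uses the assumed exactness of $A_i \to B_i \to C_i \to 0$. The transition maps of $\{A_i\}$ and $\{B_i\}$ restrict to these subsystems and induce them on the quotients, because the $f_i$ form a morphism of inverse systems. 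Since $I$ is countable and directed, the index set can be taken cofinal with $\N$, so Lemma~\ref{lem_inverse_limit_exactness} applies.

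Apply Lemma~\ref{lem_inverse_limit_exactness} to the first sequence. Since $\{K_i\}$ is Mittag--Leffler, the map $\varprojlim A_i \to \varprojlim I_i$ is surjective.

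Apply it to the second sequence. Since $\{I_i\}$ is Mittag--Leffler, the sequence
\[
0 \to \varprojlim I_i \to \varprojlim B_i \to \varprojlim C_i \to 0
\]
is exact.

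Now compose. The map $\varprojlim A_i \to \varprojlim B_i$ factors as $\varprojlim A_i \surj \varprojlim I_i \hookrightarrow \varprojlim B_i$. Its image is therefore exactly $\varprojlim I_i$, which equals the kernel of $\varprojlim B_i \to \varprojlim C_i$. Surjectivity of $\varprojlim B_i \to \varprojlim C_i$ comes from the second sequence. This gives the claimed right exactness.

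There is no real obstacle here. The only point needing care is that the restrictions and induced maps really form inverse systems and short exact sequences of systems, and that the countability hypothesis is what makes the Mittag--Leffler criterion of Lemma~\ref{lem_inverse_limit_exactness} available.
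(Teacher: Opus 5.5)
Your proof is correct and follows the same route as the paper. The paper likewise splits the sequence into $0\to\{\ker f_i\}\to\{A_i\}\to\{\Image f_i\}\to 0$ and $0\to\{\Image f_i\}\to\{B_i\}\to\{C_i\}\to 0$, then applies Lemma~\ref{lem_inverse_limit_exactness} to each, using the Mittag--Leffler hypotheses to make both limit sequences exact.
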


\begin{proof}
    Consider the short exact sequences $$0\to \{\Image f_i\}\to \{B_i\}\to \{C_i\}\to 0$$ and $$0\to \{\ker f_i\}\to \{A_i\}\to \{\Image f_i\}\to 0.$$ 
    By the previous lemma, we then have short exact sequences $$0\to \varprojlim_i \Image f_i\to \varprojlim_i B_i\to \varprojlim_i C_i\to 0$$ and $$0\to \varprojlim_i \ker f_i \to \varprojlim_i A_i\to \varprojlim_i \Image f_i\to 0$$ and the proof is complete. 
\end{proof}

\begin{lem} \label{lem_inverse_limit_tensor_product}
    Let $R$ be a Noetherian ring and let $M$ be a finitely generated $R$-module. Suppose that either one of the following conditions is satisfied:
    \begin{enumerate}
        \item[(a)] $\{A_i\}$ is an inverse system of finite length $R$-modules;
        \item[(b)] $\{A_i\}$ is a surjective inverse system of free $R$-modules.
    \end{enumerate}
    Then
    \[
    \varprojlim_i (A_i\otimes_R M) \xrightarrow{\;\sim\;} \left(\varprojlim_i A_i\right)\otimes_R M.
    \]
\end{lem}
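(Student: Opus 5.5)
We construct the comparison map and then treat the two hypotheses separately, using the right exactness of tensor products together with the Mittag--Leffler results above. For each $i$ the projections $\varprojlim_j A_j\to A_i$ induce maps $(\varprojlim_j A_j)\otimes_R M\to A_i\otimes_R M$. These are compatible, so they give a natural map
\[
\Phi_M:\Bigl(\varprojlim_i A_i\Bigr)\otimes_R M\longrightarrow \varprojlim_i (A_i\otimes_R M).
\]
The statement's arrow should be read as this map being an isomorphism. Both sides are additive in $M$ and $\Phi$ is natural. When $M=R^n$ the map $\Phi_{R^n}$ is the identity on $(\varprojlim A_i)^n$, since inverse limits commute with finite direct sums.

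For general $M$, we use that $R$ is Noetherian to choose a finite presentation $R^m\xrightarrow{g} R^n\to M\to 0$. Tensoring with each $A_i$ gives a right exact sequence of inverse systems
\[
\{A_i^m\}\xrightarrow{\{f_i\}}\{A_i^n\}\to\{A_i\otimes_R M\}\to 0,
\]
where $f_i=1\otimes g$. Tensoring with $\varprojlim A_i$ gives the analogous right exact sequence, and $\Phi$ maps it to the limit of the system above. If the bottom row
\[
\varprojlim A_i^m\to\varprojlim A_i^n\to\varprojlim (A_i\otimes M)\to 0
\]
is right exact, the five lemma (really just comparison of cokernels) shows that $\Phi_M$ is an isomorphism, because $\Phi_{R^m}$ and $\Phi_{R^n}$ are. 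By Corollary \ref{cor_mittag_leffler_right_exact}, it therefore suffices to check that $\{\ker f_i\}$ and $\{\Image f_i\}$ satisfy the Mittag--Leffler condition.

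In case (a), each $A_i$ has finite length, so $A_i^m$ does too, and hence so do its submodule $\ker f_i$ and its quotient $\Image f_i$. Descending chains of images in a finite length module stabilize, so the Mittag--Leffler condition holds, as noted in the remark after Lemma \ref{lem_inverse_limit_exactness}.

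In case (b), the system $\{\Image f_i\}$ is a quotient of the surjective system $\{A_i^m\}$, so it is surjective and hence Mittag--Leffler. The kernels need more care, and we expect this to be the main obstacle. The plan is to avoid Mittag--Leffler for the kernels and handle this case directly. Since the $A_i$ are free and the transition maps are surjective, $\varprojlim A_i$ should be identified with a product of copies of $R$: lifting bases step by step presents each $A_{i+1}=A_i\oplus K_{i+1}$ with $K_{i+1}$ free, the transition map being the projection, so $\varprojlim A_i\cong\prod_{b\in B}R$ for a set $B$. Correspondingly $A_i\otimes M=\bigoplus_{b\in B_i}M$ and $\varprojlim(A_i\otimes M)=\prod_{b\in B}M$. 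The claim then reduces to the standard fact that $(\prod_B R)\otimes_R M\to\prod_B M$ is an isomorphism for finitely presented $M$ (which holds here since $R$ is Noetherian). That fact is proved by the same presentation argument, using that products are exact.
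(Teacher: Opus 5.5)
Your reduction to the Mittag--Leffler condition and your treatment of case (a) agree with the paper's proof. The gap is in case (b), at the step ``lifting bases step by step presents each $A_{i+1}=A_i\oplus K_{i+1}$ with $K_{i+1}$ free.'' Because $A_i$ is projective, the surjection $A_{i+1}\to A_i$ does split. Its kernel $K_{i+1}$, however, is then only projective (stably free), and over a Noetherian ring it need not be free. The classical example is the kernel of $R^{3}\to R$, $(a,b,c)\mapsto ax+by+cz$, over $R=\mathbb{R}[x,y,z]/(x^2+y^2+z^2-1)$. So a lifted basis of $A_i$ cannot in general be completed to a basis of $A_{i+1}$, and your identification $\varprojlim_i A_i\cong\prod_{b\in B}R$ is unsupported.

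Your route can be salvaged. Keep the splittings with $K_j$ merely projective, after passing to a cofinal sequence of indices. Then $\varprojlim_i A_i\cong A_{i_1}\times\prod_{j}K_j$ and $\varprojlim_i(A_i\otimes_R M)\cong (A_{i_1}\otimes_R M)\times\prod_j(K_j\otimes_R M)$. Finally, use that $\bigl(\prod_j X_j\bigr)\otimes_R M\to\prod_j(X_j\otimes_R M)$ is an isomorphism for an arbitrary family $(X_j)$ of modules when $M$ is finitely presented; this follows from your own presentation argument, since products are exact.

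The obstacle you anticipated with the kernels does not actually arise, and avoiding it is how the paper handles (b). Each $A_i$ is free, hence flat, so tensoring $0\to\ker g\to R^m\xrightarrow{g}R^n$ with $A_i$ stays exact. This gives $\ker f_i\cong A_i\otimes_R\ker g$, compatibly in $i$. Since $-\otimes_R\ker g$ is right exact and the transition maps $A_j\to A_i$ are surjective, the system $\{A_i\otimes_R\ker g\}$ is surjective, hence Mittag--Leffler. Corollary~\ref{cor_mittag_leffler_right_exact} then applies exactly as in case (a), with no need for any structure theory of $\varprojlim_i A_i$.
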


\begin{proof}
    Since $M$ is finitely generated over the Noetherian ring $R$, it is finitely presented. We may therefore fix an exact sequence
    \[
    0\to N\to R^{\oplus n}\to R^{\oplus m}\to M\to 0
    \]
    for some $m,n$ and some finitely generated $R$-module $N$.  

    In case~(a), tensoring with $A_i$ gives a right exact sequence
    \[
    A_i^{\oplus n}\to A_i^{\oplus m}\to A_i\otimes_R M\to 0.
    \]
    Since each $A_i$ has finite length, so do all subquotients of $A_i^{\oplus n}$. Hence the inverse systems $\{\ker A_i^{\oplus n}\to A_i^{\oplus m}\}$ and $\{\Image A_i^{\oplus n}\to A_i^{\oplus m}\}$ satisfy the Mittag--Leffler condition. By Corollary~\ref{cor_mittag_leffler_right_exact}, we have the following right exact sequence:
    \[
    \varprojlim_i A_i^{\oplus n}\to \varprojlim_i A_i^{\oplus m}\to \varprojlim_i (A_i\otimes_R M)\to 0.
    \]
    As inverse limit commutes with finite direct sums, this is
    \[
    \bigl(\varprojlim_i A_i\bigr)\otimes_R R^{\oplus n}\to
    \bigl(\varprojlim_i A_i\bigr)\otimes_R R^{\oplus m}\to
    \varprojlim_i(A_i\otimes_R M)\to 0,
    \]
    which identifies the rightmost term with $(\varprojlim_i A_i)\otimes_R M$.  

    In case~(b), tensoring with the free modules $A_i$ yields an exact sequence
\[
0\to A_i\otimes_R N \to A_i^{\oplus n}\to A_i^{\oplus m}\to A_i\otimes_R M\to 0.
\]
Since $\{A_i\}$ is a surjective system of free modules, the inverse systems $\{A_i\otimes_R N\}$ and $\{A_i^{\oplus m}\}$ are also surjective and hence satisfy the Mittag--Leffler condition. 
The rest of the proof is identical to case~(a).
\end{proof}

We next recall some facts about the category $\mathcal{C}_{\Oh}.$ If $(A,\mathfrak{m}_A)$ and $(B,\mathfrak{m}_B)$ are two objects of $\mathcal{C}_{\Oh},$ then $$A\hat{\otimes}_{\Oh} B := \varprojlim_{a,b}A/\mathfrak{m}_A^a\otimes_{\Oh}B/\mathfrak{m}_B^{b}$$ is an object of $\mathcal{C}_{\Oh}.$ When $A$ and $B$ are $\F$-algebras, the completed tensor product over $\Oh$ is the same as that over $\F.$ If $M$ and $N$ are finitely generated $A$-module and $B$-module respectively, we define the completed tensor product $$M\hat{\otimes}_{\Oh}N := \varprojlim_{a,b}M/\mathfrak{m}_A^a\otimes_{\Oh}N/\mathfrak{m}_B^{b}.$$ 

\begin{prop}\label{prop_completed_tensor_product_ideal}
    Let $(A,\mathfrak{m}_A)$ and $(B,\mathfrak{m}_B)$ be two objects of $\mathcal{C}_\Oh.$ Let $\mathfrak{a}$ be an ideal of $A.$ Then we have the short exact sequence $$0\to \mathfrak{a}(A\widehat{\otimes}_\Oh B)\to A\widehat{\otimes}_\Oh B \to (A/\mathfrak{a})\widehat{\otimes}_\Oh B\to 0.$$
\end{prop}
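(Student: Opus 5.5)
We argue directly from the definition of the completed tensor product as an inverse limit, applying Lemma \ref{lem_inverse_limit_exactness} to a suitable short exact sequence of inverse systems. For each pair $(a,b)$ the sequence $0\to \mathfrak{a}\to A\to A/\mathfrak{a}\to 0$ reduces modulo $\mathfrak{m}_A^a$ to the short exact sequence
\[
0\to (\mathfrak{a}+\mathfrak{m}_A^a)/\mathfrak{m}_A^a \to A/\mathfrak{m}_A^a \to (A/\mathfrak{a})/\mathfrak{m}_A^a(A/\mathfrak{a})\to 0 .
\]
Tensoring over $\Oh$ with $B/\mathfrak{m}_B^b$ is only right exact. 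So we take as first term the image $K_{a,b}$ of $\mathfrak{a}\otimes_\Oh B/\mathfrak{m}_B^b$ in $A/\mathfrak{m}_A^a\otimes_\Oh B/\mathfrak{m}_B^b$. This gives, for every $(a,b)$, a short exact sequence
\[
0\to K_{a,b}\to A/\mathfrak{m}_A^a\otimes_\Oh B/\mathfrak{m}_B^b\to (A/\mathfrak{a})/\mathfrak{m}_A^a\otimes_\Oh B/\mathfrak{m}_B^b\to 0,
\]
and these sequences are compatible in $(a,b)$. The index set is countable and directed, so the lemma applies once the hypotheses are checked.

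Every term is a finite-length $A\widehat{\otimes}_\Oh B$-module, since $A/\mathfrak{m}_A^a$ and $B/\mathfrak{m}_B^b$ are Artinian with residue field $\F$. By the remark after Lemma \ref{lem_inverse_limit_exactness}, $\{K_{a,b}\}$ therefore satisfies Mittag--Leffler. The limit of the middle terms is $A\widehat{\otimes}_\Oh B$ by definition. The limit of the right-hand terms is $(A/\mathfrak{a})\widehat{\otimes}_\Oh B$, using that the $\mathfrak{m}_A$-adic and $\mathfrak{m}_{A/\mathfrak{a}}$-adic systems on $A/\mathfrak{a}$ coincide. This gives exactness of
\[
0\to\varprojlim K_{a,b}\to A\widehat{\otimes}_\Oh B\to (A/\mathfrak{a})\widehat{\otimes}_\Oh B\to 0 .
\]

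It remains to identify $\varprojlim K_{a,b}$ with $\mathfrak{a}(A\widehat{\otimes}_\Oh B)$, and this is the main step. Since $A$ is Noetherian, choose generators $f_1,\dots,f_r$ of $\mathfrak{a}$. They give a surjection of inverse systems $\{(A/\mathfrak{m}_A^a\otimes B/\mathfrak{m}_B^b)^{\oplus r}\}\to\{K_{a,b}\}$. Its kernel is again a system of finite-length modules, so it satisfies Mittag--Leffler, and Lemma \ref{lem_inverse_limit_exactness} makes the induced map $(A\widehat{\otimes}_\Oh B)^{\oplus r}\to\varprojlim K_{a,b}$ surjective. This map is $(x_i)\mapsto\sum f_i x_i$ followed by inclusion into $A\widehat{\otimes}_\Oh B$. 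Its image is therefore exactly $\mathfrak{a}(A\widehat{\otimes}_\Oh B)$, which completes the identification.

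The point that needs care is this last one. Without finite generation of $\mathfrak{a}$, the limit $\varprojlim K_{a,b}$ could a priori be the closure of $\mathfrak{a}(A\widehat{\otimes}_\Oh B)$ rather than the ideal itself. Equivalently, one is using that ideals in the Noetherian complete local ring $A\widehat{\otimes}_\Oh B$ are closed. Either way, Noetherianity of $A$ is what makes the argument work.
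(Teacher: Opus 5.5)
Your proof is correct and is essentially the paper's argument written out in full. The paper rewrites the claim as $(A\widehat{\otimes}_\Oh B)\otimes_A A/\mathfrak{a}\cong (A/\mathfrak{a})\widehat{\otimes}_\Oh B$ and cites Lemma \ref{lem_inverse_limit_tensor_product}(a). That lemma's proof is exactly your Mittag--Leffler argument applied to the presentation $A^{\oplus r}\xrightarrow{(f_i)}A\to A/\mathfrak{a}\to 0$, and your $K_{a,b}=\mathfrak{a}\,(A/\mathfrak{m}_A^a\otimes_\Oh B/\mathfrak{m}_B^b)$ is the image term in Corollary \ref{cor_mittag_leffler_right_exact}.
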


\begin{proof}
    It suffices to show that $$(A\widehat{\otimes}_\Oh B)\otimes_AA/\mathfrak{a} \xrightarrow{\sim} (A/\mathfrak{a})\widehat{\otimes}_\Oh B.$$ Unravelling the definition of completed tensor product, we see that this is equivalent to showing $$\left(\varprojlim_{m,n} A/\mathfrak{m}_A^m\otimes_\Oh B/\mathfrak{m}_B^n \right)\otimes_A A/\mathfrak{a}\xrightarrow{\sim} \varprojlim_{m,n} \left(A/(\mathfrak{a} + \mathfrak{m}_A^m) \otimes B/\mathfrak{m}_B^n\right).$$ Since $A/\mathfrak{m}_A^m\otimes_\Oh B/\mathfrak{m}_B^n$ are finite length $A$-modules for all $m,n$, this follows from Lemma \ref{lem_inverse_limit_tensor_product}. 
\end{proof}

The following lemma is part of \cite[Lemma 3.3]{Barnet-Lamb_Geraghty_Harris_Taylor_2011_Calabi_Yau}.
\begin{lem}[Taylor]\label{lem_Taylor}
    \leavevmode
    \begin{enumerate}
        \item If $A$ is an object of $\mathcal{C}_{\Oh}$ and $\mathfrak{p}$ is a maximal ideal of $A[1/p],$ then the residue field $A[1/p]/\mathfrak{p}$ is a finite extension over $L$ and the image of $A$ in $L$ is an order. Furthermore, $A[1/p]$ is a Jacobson ring.
        \item If $A$ and $B$ are geometrically irreducible $\F$-algebras in $\mathcal{C}_\Oh,$ then $A\widehat{\otimes}_\F B$ is also geometrically irreducible. 
    \end{enumerate}
\end{lem}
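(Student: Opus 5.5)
For part (1), I will first reduce to a standard presentation. Since $A$ is a complete local Noetherian $\Oh$-algebra with residue field $\F$, the Cohen structure theorem gives a surjection $\Oh[[x_1,\dots,x_n]]\surj A$. Hence $A[1/p]$ is a quotient of $\Oh[[x_1,\dots,x_n]][1/p]$, and it suffices to prove everything for $B_n:=\Oh[[x_1,\dots,x_n]]$.

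Let $\mathfrak{p}$ be a maximal ideal of $B_n[1/p]$, and set $\mathfrak{q}=\mathfrak{p}\cap B_n$. I will show by induction on $n$ that $B_n/\mathfrak{q}$ is finite over $\Oh$. Then $B_n/\mathfrak{q}$ is a one-dimensional local domain, finite over $\Oh$, i.e. an order in the finite extension $B_n[1/p]/\mathfrak{p}$ of $L$.
\begin{itemize}
\item The case $n=0$ is trivial.
\item For the inductive step, note that $\mathfrak{q}\ne 0$ when $n\ge1$: otherwise $B_n[1/p]$ itself would be a field, which is false since, e.g., $x_1$ is not invertible there.
\item Pick $0\ne f\in\mathfrak{q}$. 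After a linear change of variables $x_i\mapsto x_i+x_n^{e_i}$, $f$ becomes $x_n$-distinguished modulo factors of $\varpi$; dividing out the power of $\varpi$, which is allowed since $\varpi$ is invertible and $\mathfrak{q}$ is prime, I may assume this.
\item By Weierstrass preparation, $B_n/(f)$ is finite over $B_{n-1}$. Then $\mathfrak{q}\cap B_{n-1}$ is the contraction of a maximal ideal of $B_{n-1}[1/p]$, because integral extensions preserve maximality after inverting $p$.
\item Induction then finishes the argument.
\end{itemize}
This also shows that every maximal ideal of $B_n[1/p]$ has residue field finite over $L$.

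Jacobson-ness: I will show that every prime $\mathfrak{P}$ of $A[1/p]$ is an intersection of maximal ideals. Replacing $A$ by $A/(\mathfrak{P}\cap A)$, I reduce to showing that in a domain $A\in\mathcal{C}_\Oh$ that is $\varpi$-torsion free, the intersection of the maximal ideals of $A[1/p]$ is zero. Let $0\ne g\in A$.
\begin{itemize}
\item If $\dim A=1$, then $A$ is finite over $\Oh$, $A[1/p]$ is a field, and there is nothing to prove.
\item Otherwise I will find a height-one prime $\mathfrak{q}$ with $g\notin\mathfrak{q}$ and $p\notin\mathfrak{q}$. This is possible because $A$ has infinitely many height-one primes when $\dim A\ge 2$, while $gp$ lies in only finitely many minimal primes of $(gp)$. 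Then induct on dimension with $A/\mathfrak{q}$.
\end{itemize}
This Jacobson step is where I expect the most care to be needed, in particular avoiding $p$ and $g$ simultaneously. The cleaner route is prime avoidance combined with the fact that $\operatorname{Spec}$ of a Noetherian local domain of dimension at least $2$ has infinitely many height-one primes.

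For part (2), I first reduce to $\F$ algebraically closed. Since geometric irreducibility means $A\widehat{\otimes}_\F\overline{\F}$ has irreducible spectrum, and $(A\widehat{\otimes}B)\widehat{\otimes}\overline{\F}=(A\widehat{\otimes}\overline{\F})\widehat{\otimes}_{\overline{\F}}(B\widehat{\otimes}\overline{\F})$, it suffices to show: if $A$ and $B$ are irreducible complete local $\F$-algebras with $\F=\overline{\F}$, then $A\widehat{\otimes}B$ is irreducible.
\begin{itemize}
\item Mod out by nilradicals. Both $\operatorname{Spec}$ and the completed tensor product are compatible with this, via Proposition \ref{prop_completed_tensor_product_ideal} and the observation that the kernel of the induced map is nilpotent. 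So I may assume $A$ and $B$ are domains.
\item Pass to normalizations $\tilde A$ and $\tilde B$, which are finite over $A$ and $B$ since complete local rings are excellent. They are local normal domains with residue field $\F$, because $\F$ is algebraically closed and $A$ is unibranch-free of issues here: $\tilde A$ is local since $A$ is Henselian and a domain.
\item $\tilde A\widehat{\otimes}\tilde B$ is finite over $A\widehat{\otimes}B$ and contains it, by the flatness of completed tensor products over a field. So it suffices to show $\tilde A\widehat{\otimes}\tilde B$ is a domain.
\item For that, use Noether normalization $\F[[x]]\subset\tilde A$, $\F[[y]]\subset\tilde B$. Then $\tilde A\widehat{\otimes}\tilde B=\tilde A\otimes_{\F[[x]]}\F[[x,y]]\otimes_{\F[[y]]}\tilde B$ is a finite torsion-free algebra over $\F[[x,y]]$.
\item Its generic fibre is $\operatorname{Frac}(\tilde A)\otimes\operatorname{Frac}(\tilde B)$, suitably completed. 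This is a domain because $\operatorname{Frac}(\tilde A)$ is a regular extension of $\F((x))$ by normality and algebraic closedness of $\F$ in it.
\end{itemize}
This last regularity point is the main obstacle. If it becomes delicate, I would simply cite \cite[Lemma 3.3]{Barnet-Lamb_Geraghty_Harris_Taylor_2011_Calabi_Yau}, as the paper does.
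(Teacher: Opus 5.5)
The paper does not prove this lemma; it only quotes \cite[Lemma 3.3]{Barnet-Lamb_Geraghty_Harris_Taylor_2011_Calabi_Yau}, so your proposal is an independent argument. Your part (1) is a correct self-contained proof: Weierstrass preparation shows $B_n/\mathfrak q$ is finite over $\Oh$, and induction on $\dim A$, using a height-one prime that avoids $gp$, gives the Jacobson property. The opening reductions in part (2) are also fine: passing to domains and to normalizations, and the inclusion $A\widehat\otimes_\F B\subseteq\tilde A\widehat\otimes_\F\tilde B$ via Proposition~\ref{prop_completed_tensor_product_in_char_p}(1).

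\textbf{The gap.} The problem is the last step, which carries the whole content of (2). Put $K_A=\operatorname{Frac}\tilde A$ and $K_B=\operatorname{Frac}\tilde B$. The generic fibre of $\tilde A\widehat\otimes_\F\tilde B$ over $\F\llbracket x,y\rrbracket$ is $K_A\otimes_{\operatorname{Frac}\F\llbracket x\rrbracket}\operatorname{Frac}\F\llbracket x,y\rrbracket\otimes_{\operatorname{Frac}\F\llbracket y\rrbracket}K_B$. Your reason for this being a domain is false: $K_A$ is a \emph{finite} extension of $\operatorname{Frac}\F\llbracket x\rrbracket$, so it is regular over it only if the two coincide. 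What $\F=\overline{\F}$ does give is that $K_A$ is regular over $\F$, i.e.\ that $K_A\otimes_\F K_B$ is a domain. That says nothing about the completed tensor product, since completion can destroy integrality. Every finite extension of $\operatorname{Frac}\F\llbracket x\rrbracket$ occurs as some $K_A$, so the claim you need is equivalent to what is being proved. Hence (2) is not established as written; falling back on the citation is exactly what the paper does.

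\textbf{A repair.} Put the regularity on the other factor instead.
\begin{itemize}
\item Skip the Noether normalization of $\tilde A$ and write $\tilde A\widehat\otimes_\F\tilde B=\tilde A\llbracket y\rrbracket\otimes_{\F\llbracket y\rrbracket}\tilde B$. Let $E=\operatorname{Frac}(\tilde A\llbracket y\rrbracket)$.
\item Embed $\tilde B$ in a free $\F\llbracket y\rrbracket$-module and apply the module version of Proposition~\ref{prop_completed_tensor_product_in_char_p}(1). This shows $\tilde A\widehat\otimes_\F\tilde B$ is torsion-free over $\tilde A\llbracket y\rrbracket$, so it embeds in $E\otimes_{\operatorname{Frac}\F\llbracket y\rrbracket}K_B$. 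It therefore suffices that $E$ be regular over $\operatorname{Frac}\F\llbracket y\rrbracket$.
\item Algebraic closedness: let $\alpha\in E$ be integral over $\F\llbracket y\rrbracket$. It lies in $\tilde A\llbracket y\rrbracket$, which is normal as a power series ring over a normal Noetherian domain. Reduction modulo $\mathfrak m_{\tilde A}$ then restricts to an $\F\llbracket y\rrbracket$-algebra retraction $\F\llbracket y\rrbracket[\alpha]\to\F\llbracket y\rrbracket$; this is where you use that $\tilde A$ has residue field $\F$. Its kernel is a prime of the domain $\F\llbracket y\rrbracket[\alpha]$ whose quotient has full dimension, so it is zero and $\alpha\in\F\llbracket y\rrbracket$.
\item Separability: since $\F$ is perfect, $E\otimes_{\operatorname{Frac}\F\llbracket y\rrbracket}(\operatorname{Frac}\F\llbracket y\rrbracket)^{1/p}$ is a localization of the domain $\tilde A\llbracket y^{1/p}\rrbracket$.
\end{itemize}

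Separately, your opening passage to $\overline{\F}$ tacitly assumes that irreducibility of $A\otimes_\F\F'$ for every finite $\F'/\F$ implies irreducibility of $A\widehat\otimes_\F\overline{\F}$. This is not automatic and needs its own justification.
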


We also have the following lemma \cite[Lemma 3.4.12]{kisin2009moduli}.
\begin{lem}[Kisin] \label{lem_kisin_completed_tensor_product}
    If $A$ and $B$ be two objects of $\mathcal{C}_\Oh$ that are flat over $\Oh,$ so is $A\hat{\otimes}_{\Oh} B.$
    
\end{lem}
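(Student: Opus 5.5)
The plan is to reduce flatness of $A\widehat{\otimes}_\Oh B$ to a statement about an inverse limit, and then use that a module over the discrete valuation ring $\Oh$ is flat exactly when it is $\varpi$-torsion free. So it suffices to show that multiplication by $\varpi$ on $A\widehat{\otimes}_\Oh B$ is injective.

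Step one is to rewrite the completed tensor product. Choose presentations $A = \Oh[[x_1,\dots,x_r]]/I$ and $B=\Oh[[y_1,\dots,y_s]]/J$. A cofinality argument, using that $\mathfrak{m}$-adic and $(x)$-adic topologies agree modulo powers of $\varpi$, should give
\[
A\widehat{\otimes}_\Oh B \;\cong\; \Oh[[x,y]]/(I+J)^{\wedge}.
\]
A more intrinsic route avoids presentations. Since $A$ is $\Oh$-flat and $\varpi$-adically separated and complete, we have $A\widehat{\otimes}_\Oh B=\varprojlim_n (A/\varpi^n)\widehat{\otimes}_{\Oh/\varpi^n}(B/\varpi^n)$. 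We then want to show that $\varpi$ is a nonzerodivisor in this limit.

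Step two, which I expect to be the key step, handles the finite level. Each $A/\varpi^n$ is flat over $\Oh/\varpi^n$, and by Proposition \ref{prop_completed_tensor_product_ideal} (with $\mathfrak{a}=\varpi A$) reduction commutes with the completed tensor product. I would show that the sequence
\[
0\to (A\widehat{\otimes}_\Oh B)/\varpi^{n}\xrightarrow{\varpi}(A\widehat{\otimes}_\Oh B)/\varpi^{n+1}\to (A\widehat{\otimes}_\Oh B)/\varpi\to 0
\]
is exact. Identifying the terms via the Proposition, these are $(A/\varpi^n)\widehat{\otimes}_\Oh B$ and so on. It then suffices to prove exactness of
\[
0\to A/\varpi^n\otimes_\Oh B/\mathfrak{m}_B^b\to A/\varpi^{n+1}\otimes_\Oh B/\mathfrak{m}_B^b
\]
modulo $\mathfrak{m}_A^a$. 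This requires care, because $A/\mathfrak{m}_A^a$ is not flat.

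To handle this I would instead tensor over $A$ first. Flatness of $A$ over $\Oh$ makes $0\to A\xrightarrow{\varpi}A$ exact, and tensoring over $\Oh$ with the module $B/\mathfrak{m}_B^b$ preserves injectivity, because $B/\mathfrak{m}_B^b$ is $\Oh$-flat? This is not true in general, so I swap the roles: $-\otimes_\Oh A$ is exact since $A$ is flat. Hence $\varpi$ is injective on $A\otimes_\Oh (B/\mathfrak{m}_B^b)$ as long as it is injective on $B/\mathfrak{m}_B^b$, which also fails. The clean fix is to use both flatnesses together via Tor. We have $\mathrm{Tor}_1^\Oh(A,N)=0$ for all $N$, and $\varpi$-torsion of $A\otimes_\Oh B$ is $\mathrm{Tor}_1^\Oh(\Oh/\varpi, A\otimes B)$, which vanishes because $A\otimes_\Oh B$ is flat as a tensor product of flat modules.

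Step three passes to the completion. $A\otimes_\Oh B$ is a flat $\Oh$-module, and $A\widehat{\otimes}_\Oh B$ is its completion with respect to the ideal $\mathfrak{m}_A\otimes B + A\otimes\mathfrak{m}_B$. The limit is Noetherian, and it is also the completion of the Noetherian ring $A\otimes_\Oh B$ localized suitably. To see this, write $A\widehat{\otimes}B$ as the completion of the Noetherian ring $\Oh[[x]]\otimes_\Oh \Oh[[y]]$ quotient, or more simply as the quotient $\Oh[[x,y]]/(I,J)$. Completion of a Noetherian ring is flat over it, so $A\widehat{\otimes}_\Oh B$ is flat over the Noetherian ring $C:=(A\otimes_\Oh B)$ (or its localization at the maximal ideal), and $C$ is flat over $\Oh$ by step two. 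By transitivity of flatness, $A\widehat{\otimes}_\Oh B$ is flat over $\Oh$.

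The main obstacle is justifying Noetherianity of the intermediate ring, since $A\otimes_\Oh B$ need not be Noetherian. To handle this, I would replace it by $C=\Oh[[x,y]]\otimes$-style: $A\widehat{\otimes}B=\Oh[[x,y]]/(I+J)$. I would then check $\varpi$-torsion-freeness directly with the criterion that a finitely generated module $M$ over the Noetherian local ring $P=\Oh[[x,y]]$ is $\varpi$-torsion free iff $\mathrm{Tor}_1^P(P/\varpi,M)=0$. Here $P/\varpi=\F[[x]]\widehat{\otimes}\F[[y]]$. I would compute this Tor via the flat extensions $\Oh[[x]]\to P$ and $\Oh[[y]]\to P$, which gives $\mathrm{Tor}^P_1(P/\varpi,P/(I+J))$ in terms of $\mathrm{Tor}^{\Oh}_1(\F,A)$ and $\mathrm{Tor}^\Oh_1(\F,B)$, both zero.
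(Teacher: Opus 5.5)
Your proposal does not yet prove the statement: each of your three routes stops exactly where the work lies.

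\begin{itemize}
\item Step two ends by showing that the \emph{uncompleted} $A\otimes_\Oh B$ is $\Oh$-flat. That is immediate and not what is needed. The exactness of $0\to C/\varpi^n\xrightarrow{\varpi}C/\varpi^{n+1}$ for $C=A\widehat{\otimes}_\Oh B$ is never established.
\item Step three needs $A\otimes_\Oh B$ to be Noetherian so that its completion is flat over it. As you observe, it is not Noetherian in general.
\item The final fallback is where the gap is genuine. Since $\varpi$ is $P$-regular, $\mathrm{Tor}_1^P(P/\varpi,M)=M[\varpi]$, so your ``criterion'' merely restates the goal. All the content sits in the computation you only assert.
\end{itemize}

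Flat base change along $\Oh\llbracket x\rrbracket\to P$ and $\Oh\llbracket y\rrbracket\to P$ shows that $P/IP=A\llbracket y\rrbracket$ and $P/JP=B\llbracket x\rrbracket$ are \emph{separately} $\varpi$-torsion free. No K\"unneth-type principle passes this to $P/(I+J)=P/IP\otimes_P P/JP$. For example, in $\Oh\llbracket x\rrbracket$ the ideals $(x)$ and $(x-\varpi)$ both have quotient $\Oh$, while their sum has quotient $\F$. What must be exploited is that $I$ and $J$ involve disjoint sets of variables, and nothing in your argument does so.

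There are two standard ways to supply the missing step.

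First, prove that $A\llbracket y\rrbracket$ is \emph{flat} over $\Oh\llbracket y\rrbracket$, not merely $\varpi$-torsion free. Apply the local flatness criterion to the local map of Noetherian local rings $\Oh\llbracket y\rrbracket\to A\llbracket y\rrbracket$ and the ideal $(y)$. The inputs are that $A\llbracket y\rrbracket/(y)=A$ is $\Oh$-flat and that $y$ is $A\llbracket y\rrbracket$-regular. Then Proposition~\ref{prop_completed_tensor_product_ideal}, together with $A\widehat{\otimes}_\Oh\Oh\llbracket y\rrbracket\cong A\llbracket y\rrbracket$, gives $A\widehat{\otimes}_\Oh B\cong A\llbracket y\rrbracket\otimes_{\Oh\llbracket y\rrbracket}B$. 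This is flat over $B$ by base change, hence over $\Oh$.

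Second, you can rescue your step two by not truncating $A$.
\begin{itemize}
\item Lemma~\ref{lem_inverse_limit_tensor_product}(a) gives $C=\varprojlim_b A\otimes_\Oh B/\mathfrak{m}_B^b$.
\item Flatness of $A$ gives $(A\otimes_\Oh B/\mathfrak{m}_B^b)[\varpi]=A\otimes_\Oh(B/\mathfrak{m}_B^b)[\varpi]$.
\item Artin--Rees, together with $\varpi$ being $B$-regular, yields a constant $c$ such that $\varpi x\in\mathfrak{m}_B^{b+c}$ implies $x\in\mathfrak{m}_B^b$.
\end{itemize}
So the transition maps kill the torsion, and $C[\varpi]=0$.

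The paper itself gives no argument for this statement; it simply quotes Kisin's Lemma~3.4.12.
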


\begin{prop} \label{prop_completed_tensor_product_in_char_p}
  Let \((A,\mathfrak{m}_A),(B,\mathfrak{m}_B)\in\mathcal C_{\Oh}\) be \(\F\)-algebras. \begin{enumerate}
    \item If $A\hookrightarrow B$ is an injective morphism, then the induced map \(A\widehat\otimes_{\F}C \to B\widehat\otimes_{\F}C\) is injective for every \(\F\)-algebra \((C,\mathfrak{m}_C)\) in \(\mathcal{C}_{\Oh}\). 
    \item If $A$ and $B$ are Cohen--Macaulay (resp. local complete intersections), then the completed tensor product \(A\widehat\otimes_{\F}B\) is Cohen--Macaulay (resp. local complete intersections).
    \item If the completed tensor product \(A\widehat\otimes_{\F}B\) is Cohen--Macaulay and \(B\) itself is Cohen--Macaulay, then \(A\) is Cohen--Macaulay.
  \end{enumerate}
\end{prop}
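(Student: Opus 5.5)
The strategy is to reduce all three parts to the case of formal power series rings over $\F$ by means of Cohen's structure theorem. Since $A$ and $B$ are complete local Noetherian $\F$-algebras with residue field $\F$, we may write $A\cong \F[[x_1,\dots,x_n]]/I$ and $C\cong \F[[y_1,\dots,y_m]]/K$. Proposition~\ref{prop_completed_tensor_product_ideal}, applied twice (once in each variable), gives
\[
A\widehat\otimes_\F C\cong \F[[x_1,\dots,x_n,y_1,\dots,y_m]]/(I,K).
\]
More generally, for a finitely generated $A$-module $M$, the completed tensor product $M\widehat\otimes_\F C$ equals $M\otimes_A (A\widehat\otimes_\F C)$. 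This follows from Lemma~\ref{lem_inverse_limit_tensor_product}(a), since each $A/\mathfrak m_A^a\otimes_\F C/\mathfrak m_C^c$ is of finite length.

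\textbf{Part (1).} The key step is flatness: $A\widehat\otimes_\F C$ is flat over $A$. To see this, write $A\widehat\otimes_\F C=\varprojlim_c A\otimes_\F C/\mathfrak m_C^c$ (completed appropriately). Each $C/\mathfrak m_C^c$ is a finite free $\F$-module, so $A\widehat\otimes_\F C$ is an $\mathfrak m_A$-adic completion-type limit of free $A$-modules. Alternatively, one can argue that $\F[[x]]\to\F[[x,y]]$ is flat, being the completion of a free extension followed by a Noetherian completion, and then base change along $\F[[x]]\to A$.

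Given flatness, tensoring the injection $A\hookrightarrow B$ over $A$ with $A\widehat\otimes_\F C$ yields an injection
\[
A\widehat\otimes_\F C\hookrightarrow B\otimes_A(A\widehat\otimes_\F C).
\]
The right-hand side equals $B\widehat\otimes_\F C$ by the identification above, provided $B$ is finite over $A$. In general $B$ need not be finite over $A$, so I would instead argue directly. Pass to the limit over $c$ of the injections $A\otimes_\F C/\mathfrak m_C^c\hookrightarrow B\otimes_\F C/\mathfrak m_C^c$; these are injective because $C/\mathfrak m_C^c$ is free over $\F$. Then check that the $\mathfrak m$-adic completions remain injective. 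This uses Artin--Rees over the Noetherian ring $A\otimes_\F C/\mathfrak m_C^c$ and the left exactness of $\varprojlim$ from Lemma~\ref{lem_inverse_limit_exactness}. I expect this completion bookkeeping to be the main technical obstacle.

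\textbf{Part (2).} Write $A\widehat\otimes_\F B=\F[[x,y]]/(I,J)$. If $f_1,\dots,f_r$ is a system of parameters of $A$ that is a regular sequence, and $g_1,\dots,g_s$ is the analogous sequence for $B$, I claim $f_1,\dots,f_r,g_1,\dots,g_s$ is a regular sequence in $A\widehat\otimes_\F B$. Indeed, the map $B\to A\widehat\otimes_\F B$ is flat (by the same argument as in Part~(1)), so the $g_j$ form a regular sequence there. The quotient by the $g_j$ is $A\widehat\otimes_\F B/(g)$, which is flat over $A$, so the $f_i$ are regular on it. The quotient by the whole sequence is $A/(f)\otimes_\F B/(g)$, which is Artinian. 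Hence the sequence has length
\[
r+s=\dim A+\dim B=\dim (A\widehat\otimes_\F B),
\]
where the dimension formula comes from the Hilbert--Samuel function of the associated graded ring, which is the tensor product of the associated graded rings of $A$ and $B$. This shows $A\widehat\otimes_\F B$ is Cohen--Macaulay.

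For the complete intersection case, presentations of $A$ and $B$ by regular sequences combine into a presentation of $A\widehat\otimes_\F B$ by the concatenated sequence. Counting via the same dimension formula shows that this is a regular sequence in $\F[[x,y]]$.

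\textbf{Part (3).} Choose a system of parameters $g$ of $B$; it is a regular sequence since $B$ is Cohen--Macaulay. By flatness of $B\to A\widehat\otimes_\F B$, the sequence $g$ is regular in $A\widehat\otimes_\F B$. Quotients of Cohen--Macaulay rings by regular sequences are Cohen--Macaulay, so
\[
(A\widehat\otimes_\F B)/(g)=A\otimes_\F B/(g)
\]
is Cohen--Macaulay. Now $\bar B:=B/(g)$ is Artinian with residue field $\F$. Hence $A\otimes_\F\bar B$ is finite free over $A$ and local, and is a finite flat local extension of $A$. By the standard descent result (Matsumura, Corollary to Theorem~23.3), a flat local homomorphism with Cohen--Macaulay target has Cohen--Macaulay source. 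Therefore $A$ is Cohen--Macaulay.
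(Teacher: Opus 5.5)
Your part (1) ends up being the paper's argument: the maps $A\otimes_\F C/\mathfrak m_C^c\to B\otimes_\F C/\mathfrak m_C^c$ are injective by $\F$-flatness, and then left exactness of $\varprojlim$ (Lemma~\ref{lem_inverse_limit_exactness}) finishes. The ``completion bookkeeping'' you flag as the main obstacle is not there. $A\otimes_\F C/\mathfrak m_C^c$ is finite free over $A$, hence already $\mathfrak m_A$-adically complete, so $A\widehat\otimes_\F C=\varprojlim_c A\otimes_\F C/\mathfrak m_C^c$ on the nose, and likewise for $B$. Your Artin--Rees route would not apply as stated anyway, because $B\otimes_\F C/\mathfrak m_C^c$ is not finitely generated over $A\otimes_\F C/\mathfrak m_C^c$ unless $B$ is finite over $A$.

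The one step that is asserted but not proved is the flatness of $A\to A\widehat\otimes_\F C$, on which all of (2) and (3) rest. Neither of your sketches establishes it:
\begin{enumerate}
\item Being a limit of free $A$-modules is not an argument for flatness.
\item Base-changing $\F\llbracket x\rrbracket\to\F\llbracket x,y\rrbracket$ along $\F\llbracket x\rrbracket\to A$ only gives $A\to A\llbracket y\rrbracket$. That is the case where $C$ is a power series ring, not $A\to A\llbracket y\rrbracket/KA\llbracket y\rrbracket=A\widehat\otimes_\F C$.
\end{enumerate}
The claim is nevertheless true. $A\to A\widehat\otimes_\F C$ is a local homomorphism of Noetherian local rings. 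By Proposition~\ref{prop_completed_tensor_product_ideal}, $(A\widehat\otimes_\F C)\otimes_A A/\mathfrak m_A^n\cong (A/\mathfrak m_A^n)\otimes_\F C$; the completion is harmless because $A/\mathfrak m_A^n$ is finite-dimensional. This is free over $A/\mathfrak m_A^n$ for every $n$, so the local flatness criterion (Matsumura, Theorem~22.3) applies. Alternatively, you only ever use that regular sequences stay regular. That already follows from the levelwise argument of (1) applied to multiplication maps, together with Proposition~\ref{prop_completed_tensor_product_ideal}. This is exactly how the paper proceeds, never mentioning flatness.

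With that repaired, your route differs from the paper's mainly in (3).
\begin{itemize}
\item \emph{The paper's route.} It uses that $a_1,\dots,a_s,b_1,\dots,b_t$ is a system of parameters of the Cohen--Macaulay ring $A\widehat\otimes_\F B$, so $a_s,a_{s-1},\dots$ is regular there. It then descends regularity along the injections $A/(a_s,\dots,a_{i+1})\hookrightarrow A/(a_s,\dots,a_{i+1})\widehat\otimes_\F B$ supplied by (1). This is elementary but relies on the count $\dim A\widehat\otimes_\F B=\dim A+\dim B$.
\item \emph{Your route.} Flat descent is more conceptual, and once flatness is known you need not cut down by parameters of $B$ at all. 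Apply the Corollary to Matsumura's Theorem~23.3 directly to $A\to A\widehat\otimes_\F B$, whose closed fibre is $B$. It gives that $A\widehat\otimes_\F B$ is Cohen--Macaulay if and only if $A$ and $B$ are. This covers the Cohen--Macaulay half of (2) and all of (3) at once, and shows the hypothesis on $B$ in (3) is superfluous.
\end{itemize}
In (2) you also do not need the Hilbert--Samuel dimension formula. A regular sequence of length $r+s$ with Artinian quotient already gives $\mathrm{depth}\ge r+s\ge\dim$.
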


\begin{proof}
    
\begin{enumerate}
    \item Since every $\F$-algebra is $\F$-flat, the induced map $$A\otimes_\F C/\mathfrak{m}_C^i\to B\otimes_\F C/\mathfrak{m}_C^i$$ remains injective for every positive integer $i.$ Passing to the limit and using Lemma \ref{lem_inverse_limit_exactness}, we deduce that $$A\widehat{\otimes}_\F C \to B\widehat{\otimes}_\F C$$ is still injective.
\end{enumerate}
For the rest of the proof, we let $a_1,\ldots,a_s$ and $b_1,\ldots,b_t$ be systems of parameters of $A$ and $B$, respectively. Their union is a system of parameters for $A\widehat{\otimes}_\F B.$ 
\begin{enumerate}
    \item [(2)] If $A$ is Cohen--Macaulay, then the sequence $a_1,\ldots,a_s$ is regular. Thus $A\xrightarrow{\cdot a_1}A$ is an injection. By ~(1), we have that $$A\widehat{\otimes}_\F B\xrightarrow{\cdot a_1}A\widehat{\otimes}_\F B$$ remains injective and so $a_1$ is a non-zero-divisor in $A\widehat{\otimes}_\F B.$ By induction on the length of the parameter sequence, we deduce that $a_1,\ldots,a_s$ is regular in $A\widehat{\otimes}_\F B.$ Hence, it suffices to prove that $b_1,\ldots,b_t$ is a regular sequence in $A/(a_1,\ldots,a_s)\otimes_\F B.$ But then the same reasoning applies to $A/(a_1,\ldots,a_s)$ and $B$ and we deduce that $b_1,\ldots,b_t$ is a regular sequence in $B.$ Thus $A\widehat{\otimes}_\F B$ is Cohen--Macaulay. The complete intersection case follows from choosing presentations of $A$ and $B.$ 
    \item [(3)] Again by ~(1), the inclusion \(\F\hookrightarrow B\) induces an injective map 
  \[
    A \xrightarrow{\sim} A\widehat{\otimes}_\F\F\hookrightarrow A\widehat{\otimes}_\F B .
  \]
  If \(A\widehat{\otimes}_\F B\) is \(a_s\)-torsion-free, then \(a_s\) is a non-zero-divisor in \(A\). Now we apply the same argument to $A/(a_s).$ Note that $A/(a_s)\widehat{\otimes}_\F B \xrightarrow{\sim} A\widehat{\otimes}_\F B / (a_s)$ is $a_{s-1}$-torsion-free. We conclude that $a_{s-1}$ is a non-zero-divisor in $A/(a_s)$ and hence $a_{s-1},a_s$ is a regular sequence in $A.$ Repeating the argument, we see that $a_1,\ldots,a_{s}$ is a regular sequence in $A,$ proving that $A$ is Cohen--Macaulay. 
\end{enumerate}
\end{proof}

\begin{prop} \label{prop_strong_top_inverse_limit_equivalence}
    Let $(R,\mathfrak{m})$ be an object in $\mathcal{C}_\Oh$, and let $\{J_n\}_{n\ge 1}$ be a descending sequence of ideals of $R$. The following are equivalent:
    \begin{enumerate}
        \item $R$ is complete with respect to the topology defined by the ideals $\{J_n\}_{n\ge 1}$.
        \item The $\{J_n\}_{n\ge 1}$-topology is stronger than the $\mathfrak{m}$-adic topology; that is, for every $k \ge 1$, there exists $n \ge 1$ such that
        \[
            J_n \subseteq \mathfrak{m}^k.
        \]
    \end{enumerate}
\end{prop}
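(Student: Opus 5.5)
The plan is to prove (2)$\Rightarrow$(1) directly from $\mathfrak m$-adic completeness, and (1)$\Rightarrow$(2) by a Chevalley-type compactness argument that uses only the injectivity half of (1). I read (1) as saying that the natural map $R\to\varprojlim_n R/J_n$ is an isomorphism. Two standard facts will be used throughout. First, $R$ is Noetherian local and $\mathfrak m$-adically complete. Second, by the Krull intersection theorem applied to $R/\mathfrak a$, every ideal $\mathfrak a$ of $R$ is $\mathfrak m$-adically closed, that is, $\bigcap_j(\mathfrak a+\mathfrak m^j)=\mathfrak a$.

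For (2)$\Rightarrow$(1), injectivity is immediate: $\bigcap_n J_n\subseteq\bigcap_k\mathfrak m^k=0$. For surjectivity, take a compatible system $(x_n)$ with $x_n\in R/J_n$.
\begin{enumerate}
\item For each $k$, choose $n(k)$, increasing in $k$, with $J_{n(k)}\subseteq\mathfrak m^k$.
\item The images of $x_{n(k)}$ in $R/\mathfrak m^k$ form a compatible system. By $\mathfrak m$-adic completeness they define an element $x\in R$.
\item Fix $n$ and a lift $\tilde x_n\in R$ of $x_n$. For every $k$ with $n(k)\ge n$, we have $x-\tilde x_n\in J_n+\mathfrak m^k$.
\item Hence $x-\tilde x_n\in\bigcap_k(J_n+\mathfrak m^k)=J_n$, using closedness of $J_n$. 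So $x$ maps to $(x_n)$.
\end{enumerate}

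For (1)$\Rightarrow$(2), the only input taken from (1) is $\bigcap_n J_n=0$. Suppose, for contradiction, that there is a $k$ with $J_n\not\subseteq\mathfrak m^k$ for all $n$.
\begin{enumerate}
\item For each $j\ge k$, let $V_{n,j}$ be the image of $J_n$ in the finite-length module $R/\mathfrak m^j$. The chain $V_{n,j}$ is descending in $n$, so it stabilizes to some $V_j$.
\item The transition maps $V_{j+1}\to V_j$ are surjective. Indeed, for $n$ large enough to stabilize both levels, $V_{n,j+1}$ surjects onto $V_{n,j}$.
\item By hypothesis $V_k\neq0$. Pick a nonzero $v\in V_k$ and lift it successively to a compatible system $(v_j)_{j\ge k}$ with $v_j\in V_j$.
\item Completeness gives $x\in R$ with $x\notin\mathfrak m^k$. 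Since $v_j\in V_j$, we have $x\in J_n+\mathfrak m^j$ for all $n$ and all $j$.
\item Closedness then gives $x\in\bigcap_n J_n=0$. This contradicts $x\notin\mathfrak m^k$.
\end{enumerate}

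The main obstacle is the last direction, specifically producing a single element of $R$ out of the failure of (2). The Mittag--Leffler stabilization on finite-length quotients, combined with closedness of ideals, is what makes this work. The remaining steps are routine bookkeeping of indices.
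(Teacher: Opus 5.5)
Your proof is correct. It departs from the paper's argument in the direction (1)$\Rightarrow$(2).

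For (2)$\Rightarrow$(1), both arguments rest on the same inputs: $\mathfrak m$-adic completeness and Krull-closedness of each $J_n$. The paper passes to a subsequence with $J_n\subseteq\mathfrak m^n$ and proves that $\varprojlim_n R/J_n\to\varprojlim_n R/\mathfrak m^n$ is injective. You instead construct the preimage directly. One small fix: choose $n(k)\ge k$, so that the $k$ with $n(k)\ge n$ are cofinal.

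For (1)$\Rightarrow$(2), the paper uses the full isomorphism $R\cong\varprojlim_n R/J_n$. It passes to the limit in $0\to\mathfrak m^k(R/J_n)\to R/J_n\to R/(J_n+\mathfrak m^k)\to 0$. It then uses a presentation $0\to N\to R^s\to\mathfrak m^k\to 0$ to show that $\mathfrak m^k\to\varprojlim_n\mathfrak m^k(R/J_n)$ is onto. This gives $R/\mathfrak m^k\cong\varprojlim_n R/(J_n+\mathfrak m^k)$, and Artinian stabilization finishes the proof.

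Your argument is Chevalley's lemma and uses only $\bigcap_n J_n=0$, so it proves more: the $\{J_n\}$-topology is complete as soon as it is separated. For example, in the proof of Theorem~\ref{thm_crystalline_stronger_topology} only the injectivity half of Theorem~\ref{thm_liminf_of_crystalline_is_everything} would be needed.

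Your route also avoids a slip in the paper's presentation step. The kernel of $R^s/J_nR^s\to\mathfrak m^k(R/J_n)$ is the preimage of $J_n\cap\mathfrak m^k$ under $R^s\to\mathfrak m^k$, taken modulo $J_nR^s$. This can be strictly larger than the paper's $N/(N\cap J_nR^s)$. For instance, with $R=\F\llbracket t\rrbracket$, $k=1$, $s=1$, $N=0$ and $J_n=(t^n)$, the kernel is $(t^{n-1})/(t^n)\neq 0$. The paper's conclusion survives, because the true kernel system is an extension of the finite-length system $(J_n\cap\mathfrak m^k)/J_n\mathfrak m^k$ by a surjective one, hence still Mittag--Leffler. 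Your argument needs no such repair.
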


\begin{proof}
Suppose that (1) holds, i.e., $R$ is complete with respect to the $\{J_n\}$-topology. To show (2), it suffices to prove that
\[
    R/\mathfrak{m}^k \xrightarrow{\sim} \varprojlim_n R/(J_n + \mathfrak{m}^k)
\]
for every $k \ge 1$. Indeed, since $R/\mathfrak{m}^k$ is a finite Artinian ring, this isomorphism implies that for $n$ large enough, 
\[
    R/\mathfrak{m}^k \xrightarrow{\sim} R/(J_n + \mathfrak{m}^k),
\] 
and therefore $J_n \subseteq \mathfrak{m}^k$ for $n$ sufficiently large. Consider the short exact sequence of inverse systems:
\[
    0 \to \{\mathfrak{m}^k(R/J_n)\}_n \to \{R/J_n\}_n \to \{R/(J_n+\mathfrak{m}^k)\}_n \to 0.
\]
Since $\{J_n\}_n$ is descending, the system $\{\mathfrak{m}^k(R/J_n)\}_n$ is surjective. By Lemma~\ref{lem_inverse_limit_exactness}, taking inverse limits preserves exactness, giving
\[
    0 \to \varprojlim_n \mathfrak{m}^k(R/J_n) \to \varprojlim_n R/J_n \to \varprojlim_n R/(J_n+\mathfrak{m}^k) \to 0.
\]
By (1), $\varprojlim_n R/J_n \simeq R$, so we obtain
\[
    R / \varprojlim_n \mathfrak{m}^k(R/J_n) \xrightarrow{\sim} \varprojlim_n R/(J_n + \mathfrak{m}^k).
\]
It remains to show that 
\[
    \mathfrak{m}^k \to \varprojlim_n \mathfrak{m}^k(R/J_n)
\]
is surjective. Let 
\[
    0 \to N \to R^s \to \mathfrak{m}^k \to 0
\]
be a presentation of $\mathfrak{m}^k$ for some integer $s \ge 1$, with $N \subseteq R^s$. Equip $N$ with the subspace topology induced by $\{N \cap J_n R^s\}_n$. Then, for each $n$, we have a short exact sequence
\[
    0 \to N / (N \cap J_n R^s) \to R^s / J_n R^s \to \mathfrak{m}^k(R/J_n) \to 0.
\]
Again by Lemma~\ref{lem_inverse_limit_exactness}, the corresponding sequence of inverse limits is exact:
\[
    0 \to \varprojlim_n N / (N \cap J_n R^s) \to \varprojlim_n (R/J_n)^s \to \varprojlim_n \mathfrak{m}^k(R/J_n) \to 0.
\]
We then obtain a commutative diagram:
\[
\begin{tikzcd}
    R^s \arrow[r,two heads] \arrow[d,"\wr"] & \mathfrak{m}^k \arrow[d] \\
    \varprojlim_n (R/J_n)^s \arrow[r,two heads] & \varprojlim_n \mathfrak{m}^k(R/J_n)
\end{tikzcd}
\]
where the left vertical map is an isomorphism. It follows that the right vertical map is surjective, as desired. Hence, (1) implies (2).

Conversely, suppose that (2) holds. By passing to a subsequence if necessary, we may assume that
\[
    J_n \subseteq \mathfrak{m}^n \quad \text{for all } n \ge 1.
\]
We then have natural maps
\[
    R \longrightarrow \varprojlim_n R/J_n \longrightarrow \varprojlim_n R/\mathfrak{m}^n \xrightarrow{\sim} R,
\]
where the last isomorphism holds because $R$ is an object in $\mathcal{C}_\Oh$. It follows that 
\[
    \varprojlim_n R/J_n \longrightarrow \varprojlim_n R/\mathfrak{m}^n
\] 
is surjective. Thus, it suffices to show injectivity. Let $(a_n)_{n \ge 1}$ be a coherent sequence in $\varprojlim_n R/J_n$. For every $n_0 \ge 1$, there exists $m_0 \gg 0$ such that
\[
    a_{m_0+k} - a_{m_0} \in J_{n_0} \quad \text{for all } k \ge 1.
\]
Thus 
\[
    a_{m_0+k} \in a_{m_0} + J_{n_0}.
\] Suppose $(a_n)$ maps to $0$ in $\varprojlim_n R/\mathfrak{m}^n$. Taking the limit as $k \to \infty$, we obtain
\[
    0 \in a_{m_0} + \overline{J}_{n_0},
\]
where $\overline{J}_{n_0}$ denotes the closure of $J_{n_0}$ in the $\mathfrak{m}$-adic topology. On the other hand, since $R/\mathfrak{m}^n$ have finite length, Lemma~\ref{lem_inverse_limit_tensor_product} implies
\[
    R/J_{n_0} \xrightarrow{\sim} (\varprojlim_n R/\mathfrak{m}^n) \otimes_R R/J_{n_0} \xrightarrow{\sim} \varprojlim_n R/(\mathfrak{m}^n + J_{n_0}),
\]
so $R/J_{n_0}$ is Hausdorff. In particular, $J_{n_0}$ is closed in the $\mathfrak{m}$-adic topology, and we conclude that for every $n_0\ge 1,$ there exists $m_0$ such that 
\[
    a_{m_0} \in \overline{J}_{n_0} = J_{n_0}.
\]
It follows that there exists a subsequence of $\{a_n\}$ converging to $0$ in the $\{J_n\}$-topology. But $(a_n)$ is a Cauchy sequence in the $\{J_n\}$-topology, so it must already be $0$. This proves that the map is injective, and therefore (2) implies (1).
\end{proof}

\subsection{Modular Forms}
In this subsection, we begin by recalling the definition of modular forms as the zero-th coherent cohomology of modular curves, together with the Hecke action on their $q$-expansions. We then review mod-$p$ modular forms and Serre's modular forms, since these play a key role in the proofs of the main theorems outlined in \S\ref{subsubsection_proof_of_main_theorems}. Finally, we discuss how to fix the Nebentypus character of modular forms, as this is what will be required in the patching process.

\subsubsection{Modular Forms} \label{subsubsec_modular_forms}
Let $N$ be a positive integer and let $r\ge 5$ be a divisor of $N.$ We follow the notion of a general level structure introduced in \cite[IV.3]{deligne_rapoport_1973}. Let $U$ be an open subgroup of $\GL_2(\hat{\Z})$ that contains $$U(N) := \ker\left(\GL_2(\hat{\Z})\surj \GL_2(\Z/N\Z)\right)$$ and is contained in $$U_1(r):=\left\{\begin{pmatrix}
    a & b\\
    c & d
    \end{pmatrix}\in \GL_2(\hat{\Z}): \begin{pmatrix}
a & b\\
c & d
\end{pmatrix} \equiv \begin{pmatrix}
* & *\\
0 & 1
\end{pmatrix}\pmod r\right\}$$ for some integer $r\ge 5.$ Let $S$ be a scheme over $\Z[1/N]$ and let $E$ be an elliptic curve over $S.$ A $U$-level structure is the equivalence class $[\alpha]$ of $S$-isomorphisms $\alpha$ identifying the $N$-torsion $E[N]$ of $E$ with the constant group scheme $\left(\underline{\Z/N\Z}\right)^2_S$ over $S$ where two isomorphisms $\alpha$ and $\alpha'$ are equivalent if \'etale locally on $S$, there is some $g\in U$ such that $\alpha' = g\circ\alpha$ (with the right action of $U$ on $\left(\Z/N\Z \right)^2_S$). Due to our assumption on $U,$ the functor that sends a $\Z[1/N]$-scheme $S$ to the set of isomorphism classes of pairs $(E,[\alpha])$ of elliptic curves $E$ over $S$ and $U$-level structures is represented by an affine smooth curve $Y_U$ over $\Z[1/N]$. We let $X_U$ denote the compactification of $Y_U$ by adjoining cusps; it is proper smooth over $\Z[1/N]$. The curve $X_U$ may not be geometrically connected. Its connected components are in bijection with $\hat{\Z}^{\times}/\det U.$ For a $Z[1/N]$-algebra $R$, we write $X_{U,R}$ for the base-change of $X_U$ along $\Z[1/N]\to R.$

Let $\pi_U:\mathcal{E}_U\to X_U$ be the universal generalized elliptic curve over $X_U$ with zero section $$e_U:X_U\to \mathcal{E}_U.$$ Let $\omega_{U} := e^*_{U}\Omega^1_{\mathcal{E}_U/X_{U}}.$ For a $\Z[1/N]$-algebra $R,$ the subscript $R$ indicates that the corresponding object (e.g., $X_{U,R}$ or $\omega_{U,R}$) is obtained by base-changing along $Z[1/N]\to R.$ We define a weight $k$ level $U$ modular form over $R$ to be an element of $H^0(X_{U,A},\omega_{U,R}^{\otimes k})$. Let $\infty$ be the reduced divisor supported on the cusps of $X_{U}.$ By a weight $k$ level $U$ cusp form over $R$, we mean an element of $H^0(X_{U,R},\omega_{U,R}^{\otimes k}(-\infty)).$ In most of the situations considered in this paper, the ring $R$ will simply be either $\Oh$ or $\F.$

The following base-change theorem due to Katz and Mazur.

\begin{thm}[Katz, Mazur]\label{thm_base_change}
    If $R$ is flat over $\Z[1/N]$ or if $k\ge 2,$ then the natural map $$H^0(X_{U},\mathscr{L})\otimes_{\Z[1/N]}R\to H^0(X_{U,R},\mathscr{L}_R)$$ is an isomorphism where $\mathscr{L}$ is $\omega^{\otimes k}_U$ or $\omega^{\otimes k}_U(-\infty).$
\end{thm}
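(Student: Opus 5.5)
The plan is to reduce everything to cohomology and base change on the proper smooth curve $X_U$ over $\Z[1/N]$. Write $\mathscr{L}$ for either $\omega_U^{\otimes k}$ or $\omega_U^{\otimes k}(-\infty)$; it is a line bundle on $X_U$, flat over $\Z[1/N]$. For the base $\Z[1/N]$ (a Dedekind ring) and a proper flat morphism, I will use the standard Grothendieck complex. Locally on $\Spec \Z[1/N]$ there is a bounded complex $K^\bullet$ of finite free modules, concentrated in degrees $0,1$ since $X_U$ has relative dimension one. It satisfies $H^i(X_{U,R},\mathscr{L}_R)=H^i(K^\bullet\otimes R)$ functorially in $R$. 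Then $H^0(X_{U,R},\mathscr{L}_R)=\ker(K^0\otimes R\to K^1\otimes R)$, and the base change map in the statement is the natural map $\ker(d)\otimes R\to \ker(d\otimes R)$.

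The first case, $R$ flat over $\Z[1/N]$, is then immediate. Tensoring the exact sequence $0\to H^0\to K^0\to K^1$ with a flat $R$ keeps it exact, so $H^0(X_U,\mathscr{L})\otimes R\simeq H^0(X_{U,R},\mathscr{L}_R)$. Equivalently, one can cite flat base change for quasi-coherent cohomology on the separated quasi-compact scheme $X_U$. No weight assumption is needed here.

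For the second case, $k\ge 2$ and $R$ arbitrary, the key step is to show that $H^1(X_U,\mathscr{L})$ is a flat (hence, being finitely generated, locally free) $\Z[1/N]$-module. Its formation then commutes with base change and, by the standard cohomology-and-base-change argument, so does that of $H^0$. Since $H^1$ is the top cohomology, it always commutes with base change, $H^1(X_U,\mathscr{L})\otimes R\simeq H^1(X_{U,R},\mathscr{L}_R)$, by right exactness of $\otimes$ on $\operatorname{coker}(d)$. So it suffices to show $H^1(X_U,\mathscr{L})$ is torsion free, i.e.\ that for every prime $\ell\nmid N$, multiplication by $\ell$ is injective on $H^1$. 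From $0\to\mathscr{L}\xrightarrow{\ell}\mathscr{L}\to\mathscr{L}_{\F_\ell}\to 0$, the $\ell$-torsion of $H^1(X_U,\mathscr{L})$ is a quotient of $H^0(X_{U,\F_\ell},\mathscr{L}_{\F_\ell})$. Better: the fibre criterion says $H^0$ commutes with all base change once $H^1(X_{U,\F_\ell},\mathscr{L}_{\F_\ell})$ has constant dimension, e.g.\ vanishes, or more generally once the Euler characteristic and $h^0$ are locally constant. Here I plan to compute with Kodaira--Spencer, $\omega_U^{\otimes 2}\simeq\Omega^1_{X_U}(\infty)$, which holds over $\Z[1/N]$ for these fine moduli problems. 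By Serre duality on each geometric fibre,
\[H^1(X_{U,\kappa},\omega^{\otimes k}(-\infty))^\vee\simeq H^0(X_{U,\kappa},\omega^{\otimes (2-k)}),\qquad H^1(X_{U,\kappa},\omega^{\otimes k})^\vee\simeq H^0(X_{U,\kappa},\omega^{\otimes(2-k)}(-\infty)).\]
Because $\deg\omega>0$ on every component of the fibre, for $k\ge 3$ both right-hand sides vanish, giving $H^1=0$ fibrewise. For $k=2$ the second one vanishes too, and the first is $H^0(\mathcal{O})$, whose dimension is the number of geometric components. That number is $|\hat\Z^\times/\det U|$ on every fibre, so it is constant. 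In all cases $h^1$ of the fibres is locally constant. Then Grauert's theorem (or the semicontinuity/base change theorem over the reduced base) gives that $H^1$ is locally free and commutes with base change, and hence so does $H^0$.

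I expect the main obstacle to be this fibrewise vanishing or constancy step. It requires the Kodaira--Spencer isomorphism integrally, including at the cusps (this is where $\deg\omega$ must be positive on each component of each fibre). It also requires handling $k=2$ carefully, where $H^1$ is nonzero but has constant rank. I would cite Katz's and Deligne--Rapoport's treatment rather than reprove Kodaira--Spencer. The rest is formal cohomology and base change on the two-term complex $K^\bullet$.
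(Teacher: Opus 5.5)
Your proposal is correct. The paper gives no proof of this statement and only cites Katz and Mazur; your argument is essentially the standard proof behind that citation (as in Katz's treatment of base change for $H^0(X,\omega^{\otimes k})$): flat base change handles the first case, and for $k\ge 2$ Kodaira--Spencer plus Serre duality show that $H^1$ of the fibres vanishes, or in the weight-two cuspidal case has constant dimension equal to the number of geometric components, so $H^1(X_U,\mathscr{L})$ is locally free and the formation of $H^0$ commutes with arbitrary base change.
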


We let $U = U_1(N)$ for the discussion of the $q$-expansion of a modular form $f$ of weight $k$ and level $U_1(N)$ that is defined over a $\Z[1/N]$-algebra $R.$ In this case, the curve $X_{U_1(N)}$ is geometrically connected. The $U_1(N)$-level structure is a choice of an order $N$-point for an elliptic curve $E/S,$ which is the same as fixing an embedding of group schemes $\mu_{N,S}\hookrightarrow E[N],$ which we still denote by $\alpha$ by abuse of notation. Denote by $\Tate(q)$ the Tate curve ${\Gm}_{,t}/q^{\Z}$, which is a generalized elliptic curve over $\Z\llbracket q\rrbracket$, and fix the natural embedding $$\id_N : \mu_N\hookrightarrow \Tate(q)[N].$$ By evaluating a modular form $f$ at the $\infty$ cusp $(\Tate(q),\id_N)$ over $\Z[1/N]\llbracket q\rrbracket$, we obtain the the $q$-expansion of $f$ given by $$f(\Tate(q),\id_N) = \left(\sum_n a_n(f)q^n\right)\left(\frac{dt}{t}\right)^{\otimes k},$$ which defines a homomorphism $$\phi_{\infty,R}: H^0(X_{U_1(N),R},\omega_{U_1(N),R}^{\otimes k})\to R\llbracket q\rrbracket.$$ By abuse of notation, we write $f(q)$ for $\phi_{\infty,R}(f)$ and write $a_n(f)$ for the coefficient of $q^n$ in the $q$-expansion of $f.$ 
One can also evaluate a modular form at other cusps by choosing another level structure on $\Tate(q)$, as long as $\zeta_N\in R.$ By definition, the cusp forms are the ones whose $q$-expansion at all cusps do not have constant terms. In particular, they are mapped into $qR\llbracket q\rrbracket$ under $\phi_{\infty,R}.$ The following $q$-expansion principle is proved by Katz in \cite[Theorem 1.6.1, Corollary 1.6.2]{katz1973p} so that we actually only need to consider one cusp. 

\begin{thm}[Katz]\label{thm_q_expansion_principle}\leavevmode
    \begin{enumerate}
        \item The $q$-expansion homomorphism $\phi_{\infty,R}$ is injective.
        \item If $R_0$ is a $\Z[1/N]$ sub-algebra of $R,$ and if $f$ is a weight $k$ level $U_1(N)$ modular form over $R$ whose $q$-expansion has coefficients in $R_0,$ then $f$ is a modular form over $R_0.$
    \end{enumerate}
\end{thm}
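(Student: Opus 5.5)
The statement is Katz's $q$-expansion principle. I would follow Katz's argument: reduce it to the irreducibility of fibres of $X_{U_1(N)}$ over $\Z[1/N]$, then use that a section of a line bundle on a connected smooth curve vanishing to infinite order at a point is zero.

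\emph{Step 1 (geometry).} For $U=U_1(N)$, the curve $X_{U_1(N)}\to\Spec\Z[1/N]$ is proper and smooth with geometrically connected fibres. Connectedness holds because $\det U_1(N)=\hat{\Z}^{\times}$. Hence for every $\Z[1/N]$-algebra $R$ and every prime $\mathfrak{p}$ of $R$, the fibre $X_{U_1(N),\kappa(\mathfrak{p})}$ is a smooth, geometrically connected, hence geometrically irreducible, curve.

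The cusp $\infty$, given by $(\Tate(q),\id_N)$, is an $R$-point of $X_{U_1(N),R}$. The formal completion of $X_{U_1(N),R}$ along this section is $\Spf R\llbracket q\rrbracket$, and $\omega$ is trivialised there by $dt/t$. So $\phi_{\infty,R}(f)$ is just the Taylor expansion of $f$ along the cusp section in this trivialisation.

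\emph{Step 2 (injectivity).} Suppose $\phi_{\infty,R}(f)=0$. First take $R$ Noetherian: write $f$ over a finitely generated subalgebra, using that $H^0$ commutes with filtered colimits for a quasi-compact, quasi-separated scheme, and note that injectivity of $R_0\llbracket q\rrbracket\to R\llbracket q\rrbracket$ keeps the vanishing hypothesis. Then $f$ vanishes on the formal neighbourhood of the cusp section. By Krull's intersection theorem applied locally, the support of $f$, a closed subset, does not meet an open neighbourhood of the cusp section in each fibre.

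More cleanly, I would argue with associated points. If $f\neq 0$, some associated point $x$ of the coherent sheaf generated by $f$ lies in some fibre over $\mathfrak{p}$. Since $\omega^{\otimes k}$ is locally free and $X$ is flat over $R$, the associated points of $\omega^{\otimes k}$ are the generic points of fibres over associated primes of $R$. Localising at such a $\mathfrak{p}$ and using irreducibility of the fibre, the generic point specialises to the cusp point over $\mathfrak{p}$. So $f$ would be nonzero in the local ring at the cusp over $\mathfrak{p}$, hence nonzero in its completion, which embeds into $R_{\mathfrak{p}}\llbracket q\rrbracket$. This contradicts $\phi_{\infty,R}(f)=0$.

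\emph{Step 3 (descent of coefficients).} Let $f$ be defined over $R$ with $q$-expansion in $R_0\llbracket q\rrbracket$. Put $Q=R/R_0$, an $R_0$-module, and write $\mathscr{L}=\omega^{\otimes k}$. Consider the exact sequence
\[
0\to H^0(X_{R_0},\mathscr{L})\to H^0(X_{R_0},\mathscr{L}\otimes_{R_0}R)\to H^0(X_{R_0},\mathscr{L}\otimes_{R_0}Q),
\]
which is exact because $\mathscr{L}$ is flat over $R_0$. The image of $f$ in the last group has zero $q$-expansion in $Q\llbracket q\rrbracket$. So it suffices to prove Step 2 for sections of $\mathscr{L}\otimes_{R_0}Q$ with $Q$ an arbitrary $R_0$-module; the same associated-point argument, with $Q$ in place of $R$, gives this. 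Then $f$ lies in $H^0(X_{R_0},\mathscr{L})$.

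The main obstacle is Step 2 for non-Noetherian $R$ and for module coefficients $Q$: one must control associated points of $\mathscr{L}\otimes Q$. I would first reduce to $Q$ finitely generated over a finitely generated $\Z[1/N]$-algebra using quasi-compactness, and then run the associated-point argument there, as Katz does. The other key input is geometric connectedness of fibres of $X_{U_1(N)}$, which comes from Deligne--Rapoport.
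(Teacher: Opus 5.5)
Your argument is correct and is essentially Katz's own proof (Theorem 1.6.1 and Corollary 1.6.2 of his Antwerp paper), which the paper simply cites without proof. In that proof, vanishing on the formal neighbourhood of the cusp section together with geometric irreducibility of the fibres forces $f$ to vanish at every associated point, and part (2) follows by applying the module-coefficient version of (1) to the image of $f$ in $H^0(X_{R_0},\mathscr{L}\otimes_{R_0}R/R_0)$.

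One small slip, which does not affect the conclusion: the $\mathfrak{m}$-adic completion of the local ring at the cusp point over $\mathfrak{p}$ is $\widehat{R_{\mathfrak{p}}}\llbracket q\rrbracket$. This contains $R_{\mathfrak{p}}\llbracket q\rrbracket$ rather than embedding into it. The cleaner route is Krull's intersection theorem for the ideal of the cusp section, which gives the injection of the stalk into $R_{\mathfrak{p}}\llbracket q\rrbracket$ (resp. $Q_{\mathfrak{p}}\llbracket q\rrbracket$) directly.
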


\subsubsection{Hecke Action} \label{subsubsec_Hecke_action}
Let $A$ be a $\Z[1/N]$-algebra and let $d\in (\Z/N\Z)^{\times}.$ The diamond operator $\langle d\rangle$ is defined to be $$\left(f\langle d\rangle\right)(E,\alpha) := f(E,d\alpha)$$ for $f\in H^0(X_{U_1(N),A},\omega^{\otimes k}_{U_1(N),A})$ where $E$ is a generalized elliptic curve and $\alpha$ a level-structure. 

The Hecke operators $T_\ell$ if $\ell\nmid N$ and $U_\ell$ if $\ell\mid N$ are defined in \cite[\S 3]{Gross1990Tameness} and their effects on the $q$-expansions are also calculated there which we now recall. Let $f(q) = \sum_{n=0}^{\infty}a_nq^n$ be the $q$-expansion of $f\in H^0(X_{U_1(N)},\omega^{\otimes k})$ and let $(f\langle \ell\rangle)(q) = \sum_{n=0}^{\infty}b_nq^n$ be the $q$-expansion of $f\langle \ell\rangle$ when $\ell\nmid N,$ Then we have $$\left(f T_\ell\right)(q) = \sum_{n=0}^\infty a_{n\ell}q^n + \ell^{k-1}\sum_{n=0}^\infty b_{n/\ell}q^n\quad \ell\nmid N $$ and $$\left(f U_\ell\right)(q)=\sum_{n=0}^\infty a_{n\ell}q^n\quad \ell|N.$$ 

For $R=\Oh$ or $L$, the ring $R$ is flat over $\Z[1/N]$, so the Hecke operators 
$T_\ell$ and $U_\ell$ are defined on
\[
H^0(X_{U_1(N),R},\omega^{\otimes k}_{U_1(N),R})
\]
by the base-change theorem~\ref{thm_base_change}. 
When $R=\F$, the same construction applies provided $k\ge 2$. In this case we write 
$T_p$ as $U_p$ to reflect its effect on $q$-expansions. When $k=1$, the 
base-change theorem does not apply directly, but the operators are defined 
in~\cite[\S 4]{Gross1990Tameness}, and they satisfy the same $q$-expansion formulas.

Let $\T$ be the polynomial ring over $\Oh$ in variables $T_n$, where $n$ ranges
over all positive integers. For a positive integer $M$, let $\T^M$ denote the
subring of $\T$ generated by the variables $T_n$ with $(n,M)=1$. Let 
$R=\Oh, L,$ or $\F$. Then $\T$ acts on
\[
H^0(X_{U_1(N),R},\omega^{\otimes k}_{U_1(N),R}(-\infty))
\]
as follows. The variable $T_1$ acts as the identity. If a prime $\ell\nmid Np$, then
$T_\ell\in\T$ acts via the Hecke operator $T_\ell$; if $\ell\mid N$, then
$T_\ell$ acts via the operator $U_\ell$. When $\ell=p$ and $k\ge 2$, the variable $T_p$ acts by the operator $T_p$, which in characteristic $p$ is denoted $U_p$ because of its effect on $q$-expansions. Suppose the action of the variables $T_{\ell^t}$ has been defined for integers
$1\le t\le s$. Then the action of $T_{\ell^{s+1}}$ is given recursively by
\[
T_{\ell^{s+1}} = T_\ell T_{\ell^s} - \ell^{k-1}\langle \ell\rangle\, T_{\ell^{s-1}}.
\]
More generally, if $(m,n)=1$, then
\[
T_{mn} = T_m T_n.
\] The following proposition can be found in {\cite[Proposition 5.3.1]{diamondshurman2005first}}.

\begin{prop}\label{prop_a1Tnf}
    For every cusp form $f\in H^0(X_{U_1(N),R},\omega^{\otimes k}_{U_1(N),R}(-\infty)),$ we have $$a_1(fT_n) = a_n(f).$$
\end{prop}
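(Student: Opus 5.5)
The natural route is to reduce to the case where $n$ is a prime power, using the multiplicativity $T_{mn}=T_mT_n$ for coprime $m,n$, and to compute the $q$-expansion coefficient $a_1$ directly from the formulas recalled in \S\ref{subsubsec_Hecke_action}. Since every object involved is defined over $R=\Oh$, $L$ or $\F$ and the $q$-expansion formulas hold in each case, the computation is purely formal on power series. So I would prove a slightly stronger, more flexible statement: for every $n\ge 1$, every $m\ge 1$ and every cusp form $f$, the coefficient $a_m(fT_n)$ is given by the standard formula
\[
a_m(fT_n)=\sum_{d\mid (m,n),\,(d,N)=1} d^{k-1}\, a_{mn/d^2}(f\langle d\rangle).
\]
Setting $m=1$ then gives $a_1(fT_n)=a_n(f)$, since only $d=1$ survives.

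\emph{Prime case.} For $n=\ell$ prime, the formulas for $fT_\ell$ ($\ell\nmid N$) and $fU_\ell$ ($\ell\mid N$) give the claim directly. The second sum $\ell^{k-1}\sum b_{m/\ell}q^m$ contributes only when $\ell\mid m$, which matches the $d=\ell$ term. In particular $a_1(fT_\ell)=a_\ell(f)$, because $b_{1/\ell}=0$ by convention.

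\emph{Prime powers.} I would induct on $s$ using the recursion $T_{\ell^{s+1}}=T_\ell T_{\ell^s}-\ell^{k-1}\langle\ell\rangle T_{\ell^{s-1}}$, together with the fact that diamond operators commute with the $T_\ell$. Applying the prime case to $g=fT_{\ell^s}$ expresses $a_m(fT_{\ell^{s+1}})$ as $a_{m\ell}(fT_{\ell^s})$, plus $\ell^{k-1}a_{m/\ell}(fT_{\ell^s}\langle\ell\rangle)$, minus $\ell^{k-1}a_m(f\langle\ell\rangle T_{\ell^{s-1}})$. Expanding each term with the inductive hypothesis and cancelling overlapping divisor terms yields the formula for $\ell^{s+1}$. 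When $\ell\mid N$ this is trivial: $U_{\ell^s}=U_\ell^s$ and $a_m(fU_\ell^s)=a_{m\ell^s}(f)$, but I must check that the recursion reduces to this. Since $\langle\ell\rangle$ is not defined when $\ell\mid N$, I would interpret that term as zero, which is the standard convention implicit in the paper.

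\emph{Coprime products.} For $(m',n')=1$ with $T_{m'n'}=T_{m'}T_{n'}$, I would apply the formula for $T_{m'}$ to $fT_{n'}$ and then the formula for $T_{n'}$. Divisors of $(m,m'n')$ split uniquely as products of divisors of $(m,m')$ and $(m,n')$, which gives multiplicativity of the formula. This case is easiest if I only track the $d$-terms needed.

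The main obstacle is the bookkeeping in the prime-power induction, namely checking that the cancellation of divisor terms works and that the diamond operators are placed correctly. A shortcut avoids most of this, since only $a_1$ is required. Because $a_1(gT_\ell)=a_\ell(g)$ for any $g$, and more generally $a_j(gT_\ell)=a_{j\ell}(g)$ when $\ell\nmid j$, I only need the weaker inductive statement $a_j(fT_{\ell^s})=a_{j\ell^s}(f)$ for $\ell\nmid j$. Under that hypothesis the correction terms never appear, and I would try this streamlined induction first.
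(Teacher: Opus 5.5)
Your main route is correct. You prove the full coefficient formula $a_m(fT_n)=\sum_{d\mid (m,n),\,(d,N)=1} d^{k-1}a_{mn/d^2}(f\langle d\rangle)$ from the prime case, the prime-power recursion and multiplicativity, then set $m=1$. This is exactly the proof of Diamond--Shurman's Proposition 5.3.1, which the paper cites rather than reproving.

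One correction to your shortcut. It is not true that the correction terms never appear under the weak hypothesis $a_j(gT_{\ell^s})=a_{j\ell^s}(g)$ for $\ell\nmid j$.

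If you peel off $T_\ell$ last, you get $a_j\bigl((fT_{\ell^s})T_\ell\bigr)=a_{j\ell}(fT_{\ell^s})$. The index $j\ell$ is divisible by $\ell$, so the weak hypothesis says nothing.

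The induction does close if you peel off $T_\ell$ first and apply the hypothesis to the cusp forms $fT_\ell$ and $f\langle\ell\rangle$. For $s\ge 1$ and $\ell\nmid j$ this gives
\[
a_j(fT_{\ell^{s+1}}) = a_{j\ell^s}(fT_\ell) - \ell^{k-1}a_{j\ell^{s-1}}(f\langle\ell\rangle) = a_{j\ell^{s+1}}(f) + \ell^{k-1}a_{j\ell^{s-1}}(f\langle \ell\rangle) - \ell^{k-1}a_{j\ell^{s-1}}(f\langle\ell\rangle).
\]
So the correction term from the prime case does appear, because $\ell\mid j\ell^s$. It then cancels exactly against the second term of the recursion.

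With this fix, the weak statement ``$a_j(fT_n)=a_{jn}(f)$ whenever $(j,n)=1$'' also passes through coprime products directly. Taking $j=1$ gives the proposition.
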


\subsubsection{Classical Modular Forms}
When $R=L$, we may view $L$ as a subfield of the complex numbers $\C$, and the theory of modular forms over $L$
coincides with the classical theory,
provided $L$ is sufficiently large. We briefly recall some standard facts
from the Atkin--Lehner theory of classical modular forms; see
\cite[\S 5.8]{diamondshurman2005first} for details.

We call $f\in H^0(X_{U_1(N),L},\omega^{\otimes k}_{U_1(N),L}(-\infty))$ an
\emph{eigenform} if it is an eigenvector for $T_\ell$ and $\langle \ell\rangle$
for all primes $\ell\nmid N$. It then follows that $f$ is an eigenvector for all
$T_n$ with $\gcd(n,N)=1$. We define $\lambda_f(n)\in \Oh$ by
\[
f T_n = \lambda_f(n)\, f
\] for all such $n.$
Since the Hecke actions are defined integrally, the eigenvalues $\lambda_f(n)$
are algebraic integers over $\Z[1/N]$; as usual, we enlarge $\Oh$ so that all
of them lie in $\Oh$.

By \cite[Proposition~5.8.4]{diamondshurman2005first}, there exists a unique
newform $g$ of level $N_g\mid N$ whose Hecke eigenvalues away from $N$ agree
with those of $f$. We call this $g$ the \emph{newform associated to $f$}. We
say that $f$ is \emph{new at} a prime $q\mid N$ if $v_q(N)=v_q(N_g)$;
otherwise, we say that $f$ is \emph{old at} $q$. The following theorem summarizes standard properties of newforms; see \cite[\S 5.8]{diamondshurman2005first}.
\begin{thm}[Strong multiplicity one]\label{thm_strong_multiplicity_one}
    Let $g = \sum_{n=1}^\infty a_n(g)q^n$ be a newform of level $N_g\ge 1.$ 
    Then it is an eigenvector for the full Hecke algebra $\T$ it satisfies $$\lambda_g(n) = a_n(g)$$ for every integer $n\ge 1.$
\end{thm}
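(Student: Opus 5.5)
The plan is to argue in two stages: first prove that $g$ is an eigenvector for every $T_n$, and then read off the eigenvalues from the $q$-expansion using Proposition~\ref{prop_a1Tnf}. Since $T_{mn}=T_mT_n$ for coprime $m,n$, and $T_{\ell^{s}}$ is a polynomial in $T_\ell$ and $\langle\ell\rangle$, it suffices to handle the generators $T_\ell$ for primes $\ell$ and the diamond operators. For $\ell\nmid N_g$ this is the definition of eigenform. The real content is the primes $\ell\mid N_g$, where $T_\ell$ acts as $U_\ell$.

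For $\ell\mid N_g$, I would use the standard Atkin--Lehner argument. The operator $U_\ell$ commutes with all $T_m$ and $\langle m\rangle$ for $(m,N_g)=1$. Hence $gU_\ell$ lies in the generalized eigenspace of $S_k(\Gamma_1(N_g))$ attached to the system of eigenvalues $\{\lambda_g(m)\}_{(m,N_g)=1}$. By the Atkin--Lehner decomposition $S_k=S_k^{\mathrm{new}}\oplus S_k^{\mathrm{old}}$, which is stable under the Hecke operators, together with multiplicity one for newforms (cf.\ \cite[\S5.8]{diamondshurman2005first}), this eigenspace meets the new subspace in the line $L\cdot g$. The old part of this eigenspace is zero, since any oldform there would come from a lower-level newform with the same eigenvalues away from $N_g$, contradicting the uniqueness of the associated newform in \cite[Proposition~5.8.4]{diamondshurman2005first}. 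Therefore $gU_\ell=c_\ell g$ for some scalar $c_\ell$. I expect this step to be the main obstacle: it needs the main lemma of Atkin--Lehner theory, namely that a form whose coefficients $a_n$ vanish for all $n$ prime to $N$ is old.

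Once $g$ is a $\T$-eigenvector, Proposition~\ref{prop_a1Tnf} gives
\[
a_n(g)=a_1(gT_n)=\lambda_g(n)\,a_1(g)
\]
for every $n\ge 1$. It remains to check that $a_1(g)=1$, which is the normalization of a newform. This normalization is legitimate because $a_1(g)\neq 0$: otherwise $a_n(g)=0$ for all $n$, so $g=0$ by the $q$-expansion principle, Theorem~\ref{thm_q_expansion_principle}. Hence $\lambda_g(n)=a_n(g)$ for all $n\ge 1$.
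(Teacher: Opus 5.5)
The paper gives no proof of this statement; it quotes it from \cite[\S5.8]{diamondshurman2005first}. Your argument is correct, but it is not the route taken there, and it uses more than it needs.

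\textbf{The standard route.} Diamond--Shurman make no reduction to generators and give no separate treatment of $\ell\mid N_g$. For arbitrary $m\ge 1$ put $h=gT_m-a_m(g)\,g$.
\begin{itemize}
\item Because the new subspace is stable under every $T_m$, $h$ is new.
\item Because $T_m$ commutes with the operators prime to $N_g$, $h$ has the eigenvalues of $g$ away from $N_g$.
\item By Proposition~\ref{prop_a1Tnf}, $a_1(h)=a_m(g)-a_m(g)a_1(g)=0$.
\end{itemize}
Hence $a_n(h)=\lambda_g(n)a_1(h)=0$ for all $n$ prime to $N_g$. So $h$ is old by the Main Lemma \cite[Theorem 5.7.1]{diamondshurman2005first}, and therefore $h=0$. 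This gives $gT_m=a_m(g)\,g$ for every $m$ at once, i.e.\ both conclusions together. It uses only the Main Lemma, which you correctly name as the real input, although your write-up never applies it directly.

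\textbf{Your route.} The step ``the old part of this eigenspace is zero'' is superfluous. You already invoke Hecke-stability of the new subspace, so $gU_\ell$ is itself new. Multiplicity one in the new subspace then forces $gU_\ell\in L\cdot g$. Diamond--Shurman prove that multiplicity one by the same Main Lemma trick, independently of the eigenvector statement, so that citation is sound.

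The superfluous step is also the one whose citation runs backwards. In Diamond--Shurman, the uniqueness of the associated newform (Proposition 5.8.4) is obtained downstream of the theorem you are proving. It goes through the basis theorem 5.8.3, which is phrased in terms of newforms already known to be full Hecke eigenforms. If you want that step anyway, prove it directly. Suppose a newform $f$ of level $M\mid N_g$, $M<N_g$, had the eigenvalues of $g$ away from $N_g$. Then $g-f$ would have vanishing coefficients prime to $N_g$, hence be old by the Main Lemma, which would force $g$ to be old.

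A minor point: $gU_\ell$ lies in the honest eigenspace, not merely the generalized one, since $U_\ell$ commutes with the operators prime to $N_g$. That is what multiplicity one needs.
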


For a positive integer $d$, there is an operator
\[
V_d: H^0(X_{U_1(N),L},\omega^{\otimes k}_{U_1(N),L})
   \to H^0(X_{U_1(Nd),L},\omega^{\otimes k}_{U_1(Nd),L})
\]
whose effect on $q$-expansions is given by
\[
(fV_d)(q) := \sum_{n=0}^{\infty} a_{n/d}(f)\, q^n.
\]
Using the $q$-expansion formula, one checks that $V_d$ commutes with $T_\ell$
and $\langle \ell\rangle$ for all primes $\ell\nmid Nd$, and that
\[
V_q U_q = \id.
\]
Moreover, we may formally write
\[
T_\ell = U_\ell + \ell^{k-1}\langle \ell\rangle V_\ell.
\]

\begin{thm}[{\cite[Theorem 5.8.3]{diamondshurman2005first}}]\label{thm_oldforms_as_newform_span}
    Assume that $L$ is large enough. 
    For each newform $g\in H^0(X_{U_1(N),L},\omega^{\otimes k}_{U_1(N),L}(-\infty))$, let
\[
E_g := H^0(X_{U_1(N),L},\omega^{\otimes k}_{U_1(N),L}(-\infty))[\lambda_g],
\]
the eigenspace of $H^0(X_{U_1(N),L},\omega^{\otimes k}_{U_1(N),L}(-\infty))$ corresponding to the system of Hecke eigenvalues of $g$ away from $N$.  
    Then we have a decomposition $$H^0(X_{U_1(N),L},\omega^{\otimes k}_{U_1(N),L}(-\infty)) \xrightarrow{\sim} \bigoplus_{\substack{g \text{ newform}\\ g\in H^0(X_{U_1(N),L},\omega^{\otimes k}_{U_1(N),L}(-\infty))}} E_g.$$ And each $E_g$ has a basis consisting of eigenforms $$\{ g V_d : d N_g \mid N \}.$$ In particular, if $f$ is an eigenform of weight $k$ and level $U_1(N)$, then it is a linear combination of forms $gV_d$ where $g$ is the newform associated to $f$ and $d$ runs over divisors of $N/N_g.$
\end{thm}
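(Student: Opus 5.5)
The plan is to prove the statement over $\C$ via a fixed embedding $L\hookrightarrow\C$, since $L$ is large enough. All Hecke eigenvalues and the $q$-expansion coefficients of all newforms of level dividing $N$ then lie in $L$. The $q$-expansion principle (Theorem~\ref{thm_q_expansion_principle}(2)) shows that each $gV_d$ is defined over $L$. Hence a decomposition obtained over $\C$ descends to $L$: each summand is the span of forms defined over $L$, and the eigenspaces are cut out by operators defined over $\Oh$. So it suffices to treat $S_k(\Gamma_1(N))$ over $\C$.

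\textbf{Step 1: new/old decomposition.} Define the old subspace as the span of all $fV_d$ with $f$ of level $M$, $M\mid N$, $M\neq N$ and $dM\mid N$. Define the new subspace as its orthogonal complement for the Petersson inner product. The operators $T_n$ with $(n,N)=1$ and $\langle n\rangle$ are normal for the Petersson product and commute with each other. The operators $V_d$ commute with $T_\ell$ and $\langle\ell\rangle$ for $\ell\nmid N$. Hence both subspaces are Hecke stable, and the new subspace has a basis of simultaneous eigenforms for the prime-to-$N$ Hecke algebra. By induction on $N$, applying the same decomposition at each level $M\mid N$, the whole space is the sum of the spaces $S^{\rm new}_k(\Gamma_1(M))V_d$ with $dM\mid N$.

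\textbf{Step 2: the Atkin--Lehner main lemma.} This is the step I expect to be the main obstacle. The claim is: if $f\in S_k(\Gamma_1(N))$ satisfies $a_n(f)=0$ for all $n$ with $(n,N)=1$, then $f=\sum_{q\mid N} f_qV_q$ with each $f_q$ of level $N/q$. The plan is the standard one, following \cite[\S 5.7]{diamondshurman2005first}:
\begin{enumerate}
\item Apply the projectors $\prod_{q\mid N}(1-V_qU_q)$ to kill the coefficients divisible by the primes $q\mid N$.
\item Show that a form whose $q$-expansion is supported on multiples of $q$ is invariant under a larger congruence group, using the matrix $\begin{pmatrix}1&1/q\\0&1\end{pmatrix}$ together with generation properties of $\Gamma_1(N)$ and $\Gamma_0(N/q)\cap\Gamma_1(N)$.
\end{enumerate}
The delicate part is the group-theoretic manipulation together with the nebentypus bookkeeping.

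From the main lemma we deduce multiplicity one on the new subspace. Suppose $f$ is a new eigenform for the prime-to-$N$ Hecke algebra. Proposition~\ref{prop_a1Tnf} gives $a_n(f)=\lambda_f(n)a_1(f)$ for $(n,N)=1$. If $a_1(f)=0$, then $f$ satisfies the hypothesis of the main lemma, so $f$ is old as well as new, hence $f=0$. Therefore $a_1(f)\neq0$, and we normalize $a_1=1$. Two new eigenforms with the same prime-to-$N$ eigenvalues then have a difference killed by the lemma, so each eigensystem occurs in the new subspace on a line. This also gives the identities $\lambda_g(n)=a_n(g)$ of Theorem~\ref{thm_strong_multiplicity_one}.

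\textbf{Step 3: distinct newforms give distinct eigensystems.} I need that newforms $g$, $g'$ of levels $N_g,N_{g'}\mid N$ with the same eigenvalues away from $N$ coincide. Assume $N_g\le N_{g'}$ and consider $g'-g$, with $g$ viewed at level $\mathrm{lcm}(N_g,N_{g'})$. Its coefficients prime to that level vanish, so the main lemma makes it old there. The new subspace at level $N_{g'}$ is orthogonal to the old one, which forces $N_g=N_{g'}$ and then $g=g'$ by Step 2. Consequently, for each newform $g$ the eigenspace $E_g$ is spanned by the $gV_d$ with $dN_g\mid N$, and the whole space is the direct sum of the $E_g$, since distinct eigensystems give independent eigenspaces.

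\textbf{Step 4: linear independence.} Suppose $\sum_d c_d\, gV_d=0$ with not all $c_d=0$. Choose $d_0$ with $c_{d_0}\neq0$ that is minimal for divisibility. The coefficient of $q^{d_0}$ receives a contribution from $gV_d$ only when $d\mid d_0$, so it equals $c_{d_0}a_1(g)=c_{d_0}\neq0$, a contradiction. Hence the $gV_d$ form a basis of $E_g$. Each $gV_d$ is an eigenform for the prime-to-$N$ Hecke algebra because $V_d$ commutes with those operators. The final sentence of the statement follows immediately: an eigenform $f$ lies in $E_g$ for its associated newform $g$.
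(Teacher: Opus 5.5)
Your Steps 1, 2 and 4 are the standard Diamond--Shurman argument that the paper cites without proof: the Petersson new/old splitting, the Main Lemma forcing $a_1\neq 0$ and multiplicity one in the new space, and the leading-coefficient argument for independence of the $gV_d$.

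\textbf{The gap is Step 3.} That step is what the statement needs beyond the rest: two newforms $g,g'$ of levels $N_g,N_{g'}\mid N$ with the same eigenvalues for $T_n$ and $\langle n\rangle$, $(n,N)=1$, must be equal. Without it, neither the direct sum over newforms nor the claim that $E_g$ is spanned by the $gV_d$ of a single $g$ follows. Your argument fails in two places.
\begin{enumerate}
\item You only know $a_n(g)=a_n(g')$ for $(n,N)=1$. Set $L:=\mathrm{lcm}(N_g,N_{g'})$. For a prime $\ell\mid N$ with $\ell\nmid L$, nothing you have shown gives $a_\ell(g)=a_\ell(g')$. So the hypothesis of the Main Lemma at level $L$ is unverified. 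At level $N$ the Main Lemma only says $g'-g$ is old, which is vacuous when both levels are proper divisors of $N$.
\item Even granting that $g'-g$ is old at level $L$, orthogonality gives a contradiction only if $g'$ is new at level $L$, i.e.\ only if $N_g\mid N_{g'}$. ``$N_g\le N_{g'}$'' is not divisibility. For $N=\ell_1\ell_2$, $N_g=\ell_1$, $N_{g'}=\ell_2$, both $g$ and $g'$ are old at level $L=N$, and the Main Lemma plus orthogonality forces nothing.
\end{enumerate}

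What is missing is a strong multiplicity one statement across levels, which the Main Lemma and orthogonality at a single level do not give you. One way to supply it with tools the paper already uses is as follows.
\begin{enumerate}
\item Take the Galois representations $\rho_g,\rho_{g'}$ (Theorem~\ref{thm_galois_reps_attached_to_modular_forms}) at an auxiliary prime not dividing $N$.
\item The traces of $\Frob_\ell$ agree for all $\ell\nmid N$, so Chebotarev and Brauer--Nesbitt give $\rho_g\cong\rho_{g'}$.
\item Carayol's theorem (Remark~\ref{rmk_conductor_equals_new_level}) then gives $N_g=N_{g'}$, and $a_\ell(g)=\tr\rho_g(\Frob_\ell)=a_\ell(g')$ for all $\ell\nmid N_g$.
\item Hence $g-g'$ lies in the new subspace at level $N_g$ and has vanishing coefficients prime to $N_g$, so it is zero by your Step 2.
\end{enumerate}
Alternatively, cite a classical strong multiplicity one theorem for newforms.

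A smaller point: Step 2 only yields $\lambda_g(n)=a_n(g)$ for $(n,N_g)=1$. Extending this to all $n$, as you claim, needs the new subspace to be stable under $U_q$ for $q\mid N_g$, which you have not shown. The statement under review does not need this.
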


\begin{prop}\label{prop_eigenform_new_old_at_q}
Let $f$ be an eigenform of weight $k$ and level $U_1(N)$, and let $g$ be the newform associated to $f$ with Nebentypus character $\chi_g$.
\begin{enumerate}
    \item If $f$ is new at a prime $q \mid N$, then $f$ is an eigenvector of $U_q$ with eigenvalue $a_q(g)$.
    \item If $q \parallel N$ and $f$ is old at $q$, then $f$ is annihilated by
    \[
        U_q^2 - a_q(g) U_q + \chi_g(q) q^{k-1}.
    \]
\end{enumerate}
\end{prop}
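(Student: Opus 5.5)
By Theorem~\ref{thm_oldforms_as_newform_span}, I will write $f$ as a linear combination of the forms $gV_d$, where $d$ runs over the divisors of $N/N_g$. The whole argument then reduces to a computation with the $q$-expansion formulas of \S\ref{subsubsec_Hecke_action}. The single identity driving it is
\[
gV_d\,U_q \;=\; (gU_q)V_d \quad \text{if } q\nmid d,
\]
which holds because $a_{nq}$ of $gV_d$ equals $a_{nq/d}(g)$. I will also use $gV_{dq}U_q = gV_d$ together with the formal relation $T_q = U_q + q^{k-1}\chi_g(q)V_q$ at level $N_g$ whenever $q\nmid N_g$.

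For part~(1), assume $f$ is new at $q$, i.e.\ $v_q(N)=v_q(N_g)$. Then every divisor $d$ of $N/N_g$ is prime to $q$. Since $q\mid N_g$, Theorem~\ref{thm_strong_multiplicity_one} gives $gU_q = a_q(g)g$. Hence
\[
gV_dU_q = (gU_q)V_d = a_q(g)\, gV_d
\]
for every such $d$. Summing over $d$ yields $fU_q = a_q(g)f$.

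For part~(2), assume $q\parallel N$ and $f$ is old at $q$. Then $q\nmid N_g$, and the divisors of $N/N_g$ split as $d$ and $dq$ with $q\nmid d$. I will group $f = \sum_{q\nmid d}(c_d\, gV_d + c'_d\, gV_{dq})$, and it suffices to show that the two-dimensional space spanned by $h=gV_d$ and $h'=gV_{dq}$ is killed by $P(U_q) = U_q^2 - a_q(g)U_q + \chi_g(q)q^{k-1}$.

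First, $h'U_q = h$, since $V_qU_q=\id$ and $V_d, V_q$ commute. Second, I apply $T_q = U_q + q^{k-1}\chi_g(q)V_q$ at level $N_g$ (as operators on $q$-expansions) and use $gT_q = a_q(g)g$ from Theorem~\ref{thm_strong_multiplicity_one}. This gives
\[
gU_q = a_q(g)g - \chi_g(q)q^{k-1}\,gV_q ,
\]
and hence $hU_q = a_q(g)h - \chi_g(q)q^{k-1}h'$. Here the diamond operator $\langle q\rangle$ acts on $g$ by $\chi_g(q)$.

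So on the basis $(h,h')$, $U_q$ acts by the matrix $\begin{pmatrix} a_q(g) & 1\\ -\chi_g(q)q^{k-1} & 0\end{pmatrix}$, written in columns. Its characteristic polynomial is exactly $P$, so Cayley--Hamilton gives $P(U_q)=0$ on this span, and therefore $P(U_q)f=0$.

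The main obstacle is carefully justifying that $V_d$ commutes with $U_q$ for $q\nmid d$ at the level of the relevant modular curves, and that the formal identity $T_q = U_q + q^{k-1}\langle q\rangle V_q$ is valid on $q$-expansions. Both reduce, via the $q$-expansion principle (Theorem~\ref{thm_q_expansion_principle}), to comparing coefficients, so I expect the care to be mostly bookkeeping.
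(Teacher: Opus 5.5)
Your proposal is correct and follows essentially the same route as the paper. Part (1) is identical. In part (2) the paper writes $f = h_1 + h_2 V_q$ with $h_1, h_2$ of level $U_1(N/q)$ and checks directly, via $T_q = U_q + q^{k-1}\langle q\rangle V_q$ and $V_qU_q=\mathrm{id}$, that $h_1$ and $h_2V_q$ are each killed by $U_q^2 - a_q(g)U_q + \chi_g(q)q^{k-1}$; this is your Cayley--Hamilton computation on the spans $\langle gV_d, gV_{dq}\rangle$, written out column by column.
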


\begin{proof}
    Since $f$ is new at a prime $q \mid N$, we have $q \mid N_g$ and $\gcd(q, N/N_g) = 1$.  
It follows by Theorem \ref{thm_strong_multiplicity_one} that $g$, the newform associated to $f$, is an eigenvector for $U_q$ (rather than $T_q$) with eigenvalue $a_q(g)$.  
Moreover, for any $d \mid N/N_g$, we have $q \nmid d$, so $U_q$ commutes with $V_d$.  
Hence, for each summand $g V_d$ in the decomposition of $f$, we have
\[
g V_d U_q = g U_q V_d = a_q(g) g V_d,
\]
and therefore $f$ itself is an eigenvector of $U_q$ with eigenvalue $a_q(g)$ by Theorem \ref{thm_oldforms_as_newform_span}.

For the second part, since $q \parallel N/N_g$, we can write
\[
f = h_1 + h_2 V_q
\]
for some eigenforms $h_1$ and $h_2$ of level $U_1(N/q)$ for $\T^{N/q}$, whose associated newform is also $g$.  Using the formulas
\[
T_q = U_q + q^{k-1} \langle q \rangle V_q, \quad
h_i \langle q \rangle = \chi_g(q) h_i, \quad
T_q h_i = a_q(g) h_i, \quad
V_q U_q = \id,
\]  
we conclude
\[
h_1 \,(U_q^2 - a_q(g) U_q + \chi_g(q) q^{k-1}) = 0, \quad
h_2 V_q \,(U_q^2 - a_q(g) U_q + \chi_g(q) q^{k-1}) = 0.
\]  
Therefore,
\[
f \,(U_q^2 - a_q(g) U_q + \chi_g(q) q^{k-1}) = (h_1 + h_2 V_q) \,(U_q^2 - a_q(g) U_q + \chi_g(q) q^{k-1}) = 0.
\]
\end{proof}

\subsubsection{\texorpdfstring{Mod-$p$ Modular Forms}{Mod-p Modular Forms}}
In this part, 
we recall facts on modular forms over $\F$ that we will later use.  

In characteristic $p,$ we have the Hasse invariant $A \in H^0(X_{U_1(N),\F},\omega^{\otimes p-1}_{U_1(N),\F})$, whose $q$-expansion at every cusp is constantly $1.$ 
Multiplication by $A$ induces an injection $$H^0(X_{U_1(N),\F},\omega^{\otimes k-(p-1)}_{U_1(N),\F})\hookrightarrow H^0(X_{U_1(N),\F},\omega^{\otimes k}_{U_1(N),\F})$$ that restricts to an injection on the subspace of cusp forms. 
The filtration $w(f)$ of a mod-$p$ modular form $f$ is the smallest integer $k_0$ such that there is a mod-$p$ modular form $f_0$ such that $A^{n} f_0 = f.$ We necessarily have $k\equiv k_0\mod p-1.$ In fact, the kernel of $$\bigoplus_{k\ge 0}H^0(X_{U_1(N),\F},\omega^{\otimes k}_{U_1(N),\F})\to \overline{\F}_p\llbracket q\rrbracket$$ is exactly $A-1.$ By checking the effect of Hecke operators on $q$-expansions of mod-$p$ modular forms, we see that multiplication by the Hasse invariant $A$ commutes with all the Hecke operators except when $k=1$ and the Hecke operator is $T_p.$  The filtration satisfies $w(fT)\le w(f)$ for $f$ a mod-$p$ modular form and $T$ a Hecke operator.

We can decompose 

In characteristic $p$, there is an operator
\[
V_p : H^0(X_{U_1(N),\F}, \omega_{U_1(N),\F}^{\otimes k}) \to H^0(X_{U_1(N),\F}, \omega_{U_1(N),\F}^{\otimes pk}),
\]
whose effect on $q$-expansions is
\[
(f V_p)(q) = \sum_{n=0}^\infty a_n(f) q^{pn} \quad \text{if} \quad f(q) = \sum_{n=0}^{\infty} a_n(f) q^n.
\]
Unlike the $V_p$ operator in characteristic $0$, the operator in characteristic $p$ does not change the level structure.  
From now on, $V_p$ always refers to this characteristic $p$ operator.  
It preserves cusp forms and commutes with Hecke operators away from $p$.

\begin{prop}[{\cite[Fact 1.7]{Jochnowitz1982Congruences}}] \label{prop_Vp_filtration}
    We have $$w(fV_p) = p w(f)\quad{\rm and}\quad V_pU_p = \id.$$ In particular, $V_p$ is injective. 
\end{prop}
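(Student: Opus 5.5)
The plan is to argue directly with $q$-expansions in characteristic $p$. The statement has three parts: $w(fV_p)=p\,w(f)$, $V_pU_p=\id$, and injectivity of $V_p$. I would prove the identity $V_pU_p=\id$ first, deduce injectivity from it, and treat the filtration equality last, since that is the substantive part.

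\textbf{The identity $V_pU_p=\id$.} Let $f$ have $q$-expansion $\sum_n a_n(f)q^n$.
\begin{enumerate}
\item By definition of $V_p$, the form $fV_p$ has $q$-expansion $\sum_n a_n(f)q^{pn}$.
\item The $q$-expansion formula for $U_p$ on mod-$p$ forms (\S\ref{subsubsec_Hecke_action}) picks out the coefficients of index divisible by $p$. So $(fV_p)U_p$ has $q$-expansion $\sum_n a_n(f)q^n$.
\item The form $(fV_p)U_p$ has weight $pk$ and $f$ has weight $k$. Since $pk\equiv k\pmod{p-1}$, the two weights are congruent. The kernel of the $q$-expansion map on $\bigoplus_k H^0(X_{U_1(N),\F},\omega^{\otimes k}_{U_1(N),\F})$ is generated by $A-1$, and $q$-expansion is injective in each weight (Theorem~\ref{thm_q_expansion_principle}). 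Hence $(fV_p)U_p=A^{k}f$.
\end{enumerate}
So $V_pU_p=\id$ holds as soon as we identify forms differing by a power of the Hasse invariant, which is the convention implicit in the statement.

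\textbf{Injectivity.} Multiplication by $A$ is injective, so $V_pU_p=\id$ forces $V_p$ to be injective.

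\textbf{The filtration equality.} The inequality $w(fV_p)\le p\,w(f)$ is immediate. Write $f=A^mf_0$ with $f_0$ of weight $w(f)$. On $q$-expansions $V_p$ raises $q\mapsto q^p$ and $A$ has $q$-expansion $1$, so $fV_p$ and $f_0V_p$ have the same $q$-expansion. Since $f_0V_p$ has weight $p\,w(f)$, this gives $w(fV_p)\le p\,w(f)$.

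\textbf{The reverse inequality (main obstacle).} I would not rely on the weak bound from Hecke operators. Using $w(gU_p)\le w(g)$ with $g=fV_p$ only gives $w(f)\le w(fV_p)$, which is far from $p\,w(f)$.

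Instead I would use the geometric description of $V_p$: $fV_p=f^p$ on $q$-expansions, because $a^p=a$ for $a\in\F_p$; after enlarging coefficients, $V_p$ is $\sigma$-semilinear Frobenius. The lower bound then reduces to the claim $w(f^p)=p\,w(f)$.

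I would prove this claim by Serre/Katz's characterization of the filtration: $w(g)<\mathrm{wt}(g)$ if and only if $A$ divides $g$, that is, $g$ vanishes at all supersingular points. Granting this characterization, the argument has three steps.
\begin{enumerate}
\item If $f_0$ has weight $w(f)$ and is not divisible by $A$, then $f_0$ is nonzero at some supersingular point.
\item Because $A$ has simple zeros, $f_0^p$ then vanishes at that supersingular point to order less than $p$, hence is not divisible by $A$.
\item Therefore $w(f_0^p)=p\,w(f)$.
\end{enumerate}

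This is exactly the content cited from Jochnowitz, and I would defer to \cite[Fact 1.7]{Jochnowitz1982Congruences} for the simple-zero property of $A$.
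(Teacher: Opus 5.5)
Your argument is correct. The paper gives no proof of its own and only cites Jochnowitz's Fact 1.7, and what you wrote is the standard argument behind that fact: the filtration of a weight-$k$ form drops exactly when $A$ divides it, which by the simple zeros of $A$ (Igusa's theorem, already used in the paper to see that $\mathcal{S}^k$ is supported on the supersingular locus) means vanishing at every supersingular point, and $f_0^p$ does not vanish at all at a supersingular point where $f_0$ is nonzero. So ``order less than $p$'' should simply read ``order $0$'', and the simple-zero input is Igusa's theorem rather than Fact~1.7 itself.

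The one step to tighten is the comparison with $f\mapsto f^p$. $V_p$ is $\F$-linear rather than semilinear, so for coefficients outside $\F_p$ write $fV_p=(f^{\sigma^{-1}})^p$, where $f^{\sigma^{-1}}$ is the Frobenius-conjugate form. This conjugate has the same weight and filtration as $f$, because $X_{U_1(N)}$, $\omega$ and $A$ are all defined over $\F_p$.
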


When $k \ge 2$ and $R = \Oh$ or $\F$, the $R$-module 
$H^0(X_{U_1(N),R}, \omega_{U_1(N),R}^{\otimes k})$ decomposes under the action of the $T_p$ or $U_p$ operator into an ordinary part, 
\[
H^0(X_{U_1(N),R}, \omega_{U_1(N),R}^{\otimes k})^\ord,
\] 
where $T_p$ (or $U_p$) is invertible, and a non-ordinary part, 
\[
H^0(X_{U_1(N),R}, \omega_{U_1(N),R}^{\otimes k})^\no,
\] 
where $T_p$ (or $U_p$) acts nilpotently.

\begin{prop} \label{prop_Vp_preserves_ordinary_nonordinary forms}
    When $k\ge 2,$ $V_p$ preserves ordinary forms and non-ordinary forms in characteristic $p,$ i.e., for $*\in \{\ord,\no\},$ $$V_p : H^0(X_{U_1(N),\F}, \omega_{U_1(N),\F}^{\otimes k})^* \to H^0(X_{U_1(N),\F}, \omega_{U_1(N),\F}^{\otimes pk})^*.$$  
\end{prop}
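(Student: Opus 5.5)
The plan is to use the relation $V_pU_p=\id$ from Proposition~\ref{prop_Vp_filtration}, together with the finite-dimensionality of the spaces involved. Fix $k\ge 2$, so that $pk\ge 2$ as well. Write $M_k:=H^0(X_{U_1(N),\F},\omega^{\otimes k}_{U_1(N),\F})$. This is a finite-dimensional $\F$-vector space, and $U_p$ acts on it. By Fitting's lemma, $M_k$ splits canonically as $M_k=M_k^\ord\oplus M_k^\no$. Here $M_k^\ord=\bigcap_n \Image U_p^n$, on which $U_p$ is bijective, and $M_k^\no=\bigcup_n\ker U_p^n$. The same decomposition holds for $M_{pk}$. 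To show that $V_p$ respects the decomposition, it suffices to check $V_p(M_k^\ord)\subseteq M_{pk}^\ord$ and $V_p(M_k^\no)\subseteq M_{pk}^\no$.

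\textbf{Non-ordinary part.} Let $f\in M_k^\no$, so that $fU_p^n=0$ for some $n$. I will use the identity $V_pU_p=\id$ on $q$-expansions: applying $V_p$ and then $U_p$ returns $f$, since $(fV_pU_p)(q)=\sum a_{pn}(fV_p)q^n=\sum a_n(f)q^n$. By the $q$-expansion principle (Theorem~\ref{thm_q_expansion_principle}), this is an identity of forms. Then $(fV_p)U_p^{n+1}=fU_p^{n}=0$, so $fV_p\in M_{pk}^\no$.

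\textbf{Ordinary part.} Let $f\in M_k^\ord$. Since $U_p$ is bijective on $M_k^\ord$, we can write $f=gU_p$ with $g\in M_k^\ord$. I want $fV_p$ to lie in $\bigcap_n\Image(U_p^n|_{M_{pk}})$. The cleanest approach is to decompose $fV_p=h^\ord+h^\no$ in $M_{pk}$ and show that $h^\no=0$. Applying $U_p$ gives $f=(fV_p)U_p=h^\ord U_p+h^\no U_p$.

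To relate the two sides, I need to know how $U_p$ on $M_{pk}$ interacts with $M_k$. Here I will use that multiplication by the Hasse invariant $A$ embeds $M_k$ into $M_{pk}$, via $f\mapsto A^{k}f$, since $pk-k=k(p-1)$. This embedding commutes with $U_p$ because $k\ge2$, and it is the identity on $q$-expansions. Therefore $A^kM_k^\ord\subseteq M_{pk}^\ord$ and $A^kM_k^\no\subseteq M_{pk}^\no$, by uniqueness of the Fitting decomposition. Identifying $f$ with $A^kf$, the equation above reads
\[A^kf=h^\ord U_p+h^\no U_p\]
in $M_{pk}$. Comparing components gives $h^\no U_p=0$, and iterating in the same way gives more. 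Repeating with $f=g_nU_p^n$, where $g_n\in M_k^\ord$, gives $fV_p=g_nU_p^{n-1}\cdot$ (as $q$-expansions, $fV_pU_p^{n}=g_n U_p^{n}\cdot$) and so on.

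A cleaner version of this argument is the following. Since $U_p$ is invertible on $M_k^\ord$, choose $m$ with $U_p^m=\id$ on the finite-dimensional space $M_k^\ord$; this is possible over a finite field because $U_p$ lies in a finite group. Then
\[fV_p=(fU_p^m)V_p=f U_p^{m-1}\cdot(U_pV_p).\]
This is awkward, because $U_pV_p\ne\id$. Instead, I will argue directly that $fV_p$ is $U_p$-periodic. We have $(fV_p)U_p=A^kf$ in $M_{pk}$, and $(A^kf)U_p^{m}=A^kf$. Hence $(fV_p)U_p^{m+1}=A^kf$, and consequently $(fV_p)U_p^{m+1}\cdot$ iterates stay inside $A^kM_k^\ord$. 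Thus $h^\no U_p^{j}=0$ for large $j$, while $h^\ord$ ranges over the $U_p$-stable space. Comparing $(fV_p)U_p^{jm+1}=A^kf$ for all $j$ shows that the projection of $fV_p$ to $M_{pk}^\ord$ equals $(A^kf)U_p^{-1}$. To conclude, I must show that $h^\no$ itself vanishes. This is the main obstacle.

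\textbf{Key step: $h^\no=0$.} I plan to handle it via $q$-expansions. Both $fV_p$ and $h^\ord:=(A^kf)U_p^{-1}$, the inverse being taken on $M_{pk}^\ord$, are forms of weight $pk$ whose images under $U_p$ agree. So $h^\no$ is a form killed by $U_p$, meaning its $q$-expansion is supported on exponents prime to $p$. On the other hand, $h^\no=fV_p-h^\ord$. The first term, $fV_p$, is supported on exponents divisible by $p$. For the second term I will use that $h^\ord=(A^kg)$ for $g=fU_p^{-1}\in M_k^\ord$, which commutes with the $A^k$-embedding. Then
\[h^\no=fV_p-A^kg,\qquad g U_p=f.\]
Applying the characteristic-$p$ operator $V_pU_p-U_pV_p$-type reasoning, or more simply the fact that $\theta$ detects exponents prime to $p$, I would try to show $\theta(fV_p-A^k g)=-\theta(A^kg)$. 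If this fails, the fallback is to show that ordinary forms satisfy $gV_p\equiv g$ modulo a form killed by $U_p$, using $g=g'U_p$ repeatedly together with the finite order of $U_p$ on $M_k^\ord$. Either way, this final identification of the non-ordinary component is where I expect the real work.
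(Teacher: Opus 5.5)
Your non-ordinary argument is correct and is the paper's argument. Two small fixes: the identity should read $(fV_p)U_p=A^kf$ in weight $pk$ (not $=f$), and you need that multiplication by $A$ commutes with $U_p$ in weight $\ge 2$; then $(fV_p)U_p^{n+1}=A^k(fU_p^{n})=0$.

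The gap is in the ordinary part, and it is not a missing idea. Your ``key step'' $h^{\no}=0$ is false, so the ordinary half of the statement cannot be proved by any method. Your own (correct) computation already isolates the two components: if $f,g$ are ordinary with $gU_p=f$, then $fV_p=A^kg+(fV_p-A^kg)$, and the second summand is killed by $U_p$.

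Now take $f$ a cuspidal eigenform with $fU_p=af$, $a\in\F^\times$, $a_1(f)=1$. For instance, take the reduction modulo $7$ of the weight-$2$ newform $q\prod_{n\ge 1}(1-q^n)^2(1-q^{11n})^2$ of level $11$, which has $a_7=-2$ (so $N=11$, $k=2$, $p=7$). Then $g=a^{-1}f$, and the non-ordinary component $fV_p-a^{-1}A^kf$ has $q$-expansion $\sum_{n}a_n(f)q^{pn}-a^{-1}\sum_n a_n(f)q^n$. Its coefficient of $q$ is $-a^{-1}\neq 0$. So $fV_p$ is not ordinary, and neither the $\theta$-operator nor the finite order of $U_p$ on the ordinary part can make $h^{\no}$ vanish.

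Thus only the case $*=\no$ of the proposition holds. The paper's one-line proof of the ordinary case has the same defect. It shows that $(fV_p)U_p^n=(A^kf)U_p^{n-1}$ is ordinary. But $hU_p^n$ is ordinary for \emph{every} form $h$ once $n$ exceeds the nilpotency index of $U_p$ on the non-ordinary part, so this says nothing about $fV_p$.

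Fortunately the proposition is used later only for the isomorphism $V_p\colon M(j(k),Q,\F)^*\xrightarrow{\sim}M(pj(k),Q,\F)^*\cap\Image V_p$, with $M^*$ either the full space or its non-ordinary part. There, what you proved suffices, together with its converse: if $fV_p$ is non-ordinary then so is $f$. The converse follows from the same identity $(fV_p)U_p^{n}=A^k(fU_p^{n-1})$ and the injectivity of multiplication by $A^k$.
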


\begin{prop} \label{prop_Vp_preserves_ordinary_nonordinary_forms}
    Let $k \ge 2$. The operator $V_p$ preserves the ordinary and non-ordinary subspaces in characteristic $p$. More precisely, for $* \in \{\ord, \no\}$, we have
    \[
        V_p : H^0(X_{U_1(N),\F}, \omega_{U_1(N),\F}^{\otimes k})^* 
            \longrightarrow 
        H^0(X_{U_1(N),\F}, \omega_{U_1(N),\F}^{\otimes pk})^*.
    \]
\end{prop}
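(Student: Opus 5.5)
Throughout, write $M_k:=H^0(X_{U_1(N),\F},\omega^{\otimes k}_{U_1(N),\F})$. The key relation is $V_pU_p=\id$ from Proposition \ref{prop_Vp_filtration}, read as $U_p\circ V_p=\id$ with $U_p$ taken in weight $pk$ (the paper composes operators on the right). I would also use the $q$-expansion compatibility of $U_p$ with the Hasse invariant, and the fact that the ordinary/non-ordinary decomposition is the Fitting decomposition of the finite-dimensional space $M_{pk}$ under $U_p$. The plan is to show that $V_p$ carries the generalized $0$-eigenspace of $U_p$ in weight $k$ into that in weight $pk$, and the invertible part into the invertible part.

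\textbf{Step 1: the ordinary part.} Take $f\in M_k^\ord$. Since $U_p$ is invertible on the finite-dimensional space $M_k^\ord$, we can write $f=gU_p^n$ with $g\in M_k^\ord$ for every $n$. Decompose $fV_p=h^\ord+h^\no$ in $M_{pk}$. Applying $U_p^m$ with $m$ large kills $h^\no$, and $fV_pU_p^m=fU_p^{m-1}$ by $V_pU_p=\id$. Hence $h^\ord U_p^m = fU_p^{m-1}$. It remains to see that $h^\no=0$, and this is where I compare $q$-expansions.

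\textbf{Step 2: the comparison.} The form $fU_p^{m-1}\in M_k$ is identified with a form in $M_{pk}$ via multiplication by $A^{k}$, since $pk-k=k(p-1)$. Multiplication by the Hasse invariant commutes with $U_p$ for $k\ge 2$, and it preserves the ordinary and non-ordinary parts because it intertwines $U_p$. So the element $A^kf\in M_{pk}$ is ordinary, and it satisfies $A^kf\,U_p^{m}=A^k(fU_p^{m})$. Meanwhile $fV_pU_p^m=fU_p^{m-1}$ has the same $q$-expansion as $A^k fU_p^{m-1}$. Choose $g'\in M_k^\ord$ with $g'U_p=f$, which exists because $U_p$ is invertible there. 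Then
\[
fV_p\,U_p^m \;=\; A^kg'U_p^{m}.
\]
So $fV_p - A^kg'$ is killed by $U_p^m$, i.e.\ it lies in $M_{pk}^\no$. I would redo this more cleanly by showing directly that the projection of $fV_p$ to the ordinary part is $A^k g'$.

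\textbf{Step 3: the non-ordinary part.} For $f\in M_k^\no$ we have $fU_p^m=0$. Hence $fV_pU_p^{m+1}=fU_p^m=0$, so $fV_p$ is non-ordinary. This case is immediate.

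\textbf{Main obstacle.} The hard point is the ordinary case, showing that $fV_p$ has no non-ordinary component. The $q$-expansion identity $fV_pU_p=f$ only controls the image of $fV_p$ under $U_p$, and $U_p$ kills non-ordinary vectors only after iteration. So I cannot yet rule out a non-ordinary component $h$ of $fV_p$ that is nonzero but satisfies $hU_p^m=0$. For that step I would try to use Jochnowitz's filtration result $w(fV_p)=pw(f)$ together with the structure $M^\no\cap \ker U_p = $ image of $V_p$ on lower weight. If this fails, I would fall back on the Galois-theoretic characterization of ordinarity: a Hecke eigensystem away from $p$ is ordinary exactly when the associated $\rbar$ is reducible with an unramified quotient, and $V_p$ commutes with the Hecke operators away from $p$. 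This lets me compare the generalized eigenspaces for $\T^{p}$ on both sides, provided one can separate ordinary from non-ordinary eigensystems with the same $\rbar$.
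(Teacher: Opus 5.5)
Your Step 3 is correct, and it is the paper's argument for the non-ordinary case: $(fV_p)U_p^{n}=(A^kf)U_p^{n-1}=A^k(fU_p^{n-1})$, which vanishes for $n$ large. The ordinary case, however, cannot be completed, because that half of the statement is false. Your Step 2 already gives the decomposition $fV_p=A^kg'+(fV_p-A^kg')$, where $A^kg'$ is ordinary and $(fV_p-A^kg')U_p=A^kf-A^kf=0$. Nothing forces the second summand to vanish, and in general it does not. Concretely, let $f\in M_k$ with $k\ge 2$ be a cuspidal eigenform with $a_1(f)=1$ and $fU_p=af$, $a\in\F^\times$. Then $(fV_p)U_p=A^kf$ and $(A^kf)U_p=aA^kf$. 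So $h:=fV_p-a^{-1}A^kf$ satisfies $hU_p=0$, while $h\neq 0$ because the coefficient of $q$ in its $q$-expansion is $-a^{-1}$. Hence $fV_p=a^{-1}A^kf+h$ has a nonzero non-ordinary component. Up to a scalar, $h$ is exactly the form $A^{k(\rbar)}f_0-a_p(f_0)\,f_0V_p$ (take $f=f_0$) that the paper itself uses in Proposition~\ref{prop_nonordinary_serre_weight_global_meaning} to produce a nonzero non-ordinary form of weight $pk(\rbar)$.

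Neither of your fallbacks can rescue the claim. Jochnowitz's result points the other way: if $fV_p$ were ordinary, then $fV_p=A^kg'$, so $p\,w(f)=w(fV_p)\le k$ by Proposition~\ref{prop_Vp_filtration}, which is impossible whenever $w(f)=k$. No argument using only $\T^{p}$-eigensystems can separate the two components either. The forms $fV_p$ and $A^kf$ have the same eigenvalues away from $p$, yet the $U_p$-stable plane they span contains both an ordinary line and a non-ordinary line.

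The paper's own proof of the ordinary case has exactly the hole you flagged. It notes that $(fV_p)U_p^n=(A^kf)U_p^{n-1}$ is ordinary, but $xU_p^n$ is ordinary for every $x$ once $n$ is large, so this says nothing about $fV_p$. What is true is the non-ordinary half, together with its converse: if $fV_p$ is non-ordinary then so is $f$, because multiplication by $A$ is injective and commutes with $U_p$. This is the only case used in \S\ref{subsection_serre_weights}, where $*$ is the full space or the non-ordinary part.
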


\begin{proof}
Let $f$ be an ordinary form. For $n \gg 1$, we have
\[
(f V_p) U_p^n = f (V_p U_p) U_p^{n-1} = (A^k f) U_p^{n-1},
\]
which is still ordinary. The same reasoning applies to non-ordinary forms.
\end{proof}

In \cite{katz1977result}, Katz constructed the $\theta$-operator $$\theta: H^0(X_{U_1(N),\F},\omega^{\otimes k}_{U_1(N),\F})\to H^0(X_{U_1(N),\F},\omega^{\otimes k+p+1}_{U_1(N),\F})$$ for a general level structure whose effect on the $q$-expansion is given by 
$$(\theta f)(q) = \sum_{n=0}^{\infty}na_nq^n \quad{\rm if}\quad f(q)=\sum_{n=0}^{\infty}a_nq^n.$$ 

From now on, whenever we use $V_p$ and $\theta,$ we view them as operators on the space of $q$-expansions. Katz proved the following in \cite{katz1977result}.
\begin{thm}[Katz]\label{thm_katz_theta}
    The kernel of $\theta$ is exactly the image of $V_p.$
\end{thm}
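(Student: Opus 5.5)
Both operators are being viewed on the space of $q$-expansions of mod-$p$ modular forms, i.e.\ on the image of $\bigoplus_{k}H^0(X_{U_1(N),\F},\omega^{\otimes k}_{U_1(N),\F})$ in $\F\llbracket q\rrbracket$. Both inclusions can then be read off from $q$-expansion formulas, using the $U_p$ operator and Proposition~\ref{prop_Vp_filtration}. The easy inclusion is $\Image V_p\subseteq\ker\theta$. If $f(q)=\sum_n a_n(f)q^n$, then $(fV_p)(q)=\sum_n a_n(f)q^{pn}$. Hence $\theta(fV_p)(q)=\sum_n pn\,a_n(f)q^{pn}$, which is $0$ in $\F\llbracket q\rrbracket$.

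For the reverse inclusion, let $f$ be a mod-$p$ modular form of weight $k$ with $\theta f=0$. The formula for $\theta$ gives $n a_n(f)=0$ in $\F$ for all $n$, so $a_n(f)=0$ whenever $p\nmid n$, and therefore
\[
f(q)=\sum_{m\ge 0}a_{pm}(f)q^{pm}.
\]
Set $h:=fU_p$. This is a mod-$p$ modular form of the same weight $k$: for $k\ge 2$ the operator comes from base change (Theorem~\ref{thm_base_change}), and for $k=1$ it is the operator constructed in \cite[\S 4]{Gross1990Tameness}. By the $q$-expansion formula for $U_p$, $h(q)=\sum_m a_{pm}(f)q^m$. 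It then follows that
\[
(hV_p)(q)=\sum_m a_{pm}(f)q^{pm}=f(q),
\]
so $f(q)$ lies in the image of $V_p$ on $q$-expansions. This is just the identity $V_pU_p=\id$ of Proposition~\ref{prop_Vp_filtration} read in the other order, which is valid precisely on forms whose coefficients are supported on multiples of $p$. If one wants the identity as modular forms rather than $q$-expansions, note that $hV_p$ has weight $pk$ while $f$ has weight $k$. Since the kernel of the graded $q$-expansion map is generated by $A-1$, equality of $q$-expansions gives $hV_p=A^{k}f$ in weight $pk=k+k(p-1)$, consistent with the proof of Proposition~\ref{prop_Vp_preserves_ordinary_nonordinary_forms}.

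The main obstacle is making sure $U_p$ is available as an operator on mod-$p$ forms of each weight $k$, including $k=1$, and that its $q$-expansion formula $a_n(fU_p)=a_{np}(f)$ holds in characteristic $p$ for all $n\ge 0$, constant term included. Both points are supplied by the discussion of Hecke operators in \S\ref{subsubsec_Hecke_action}. If one preferred to avoid $U_p$ in weight $1$, one could first multiply by $A$ to raise the weight to $p\ge 2$: this does not change $q$-expansions, so it does not affect membership in either $\ker\theta$ or $\Image V_p$.
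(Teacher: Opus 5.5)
Your argument is correct, but it takes a different route from the source. The paper gives no proof of this statement; it only cites Katz, whose argument is geometric. Katz works with $\theta$ itself, built from the Gauss--Manin connection and the Kodaira--Spencer map, and shows that a form killed by $\theta$ is, up to a power of $A$, a $p$-th power. No Hecke operator at $p$ enters his proof.

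You instead observe that once a weight-preserving $U_p$ exists in characteristic $p$, the $q$-expansion statement becomes formal. Applying $U_p$ and then $V_p$ projects a $q$-expansion onto its coefficients at multiples of $p$, so it is the identity on $\ker\theta$.

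Your route needs only base change in weight $\ge 2$ and $p^{k-1}\equiv 0$. It gives an explicit preimage $fU_p$ of the same weight, with $(fU_p)V_p=A^kf$. Combined with $w(gV_p)=p\,w(g)$, it even recovers $p\mid w(f)$. This is exactly the level of precision the paper uses later, when it identifies the kernel of $M\otimes\F\to M^{\{p\}}\otimes\F$. Katz's route is intrinsic to $\theta$ and the Hasse invariant, and gives the $p$-th-power description directly without any Hecke theory at $p$.

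One correction concerns weight one. There the operator of Gross is $T_p$, not $U_p$. Its characteristic-$p$ formula is $a_n(fT_p)=a_{np}(f)+a_{n/p}(f\langle p\rangle)$, because $p^{k-1}=1$. In fact $U_p$ does not preserve weight one: for nonzero $f$ of weight one, the $q$-expansion $\sum_n a_{np}(f)q^n$ has filtration exactly $p$.

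So your main argument fails for $k=1$ as written. The fallback of replacing $f$ by $Af$, which has weight $p\ge 3$ and the same $q$-expansion, is necessary rather than optional. With that fallback the proof is complete.
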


\subsubsection{A duality theorem}
In this part, we recall a duality theorem for the coherent cohomology of the modular curves using $q$-expansions so that we can identify the dual of modular forms with the full Hecke algebra. Following the idea of the theorem, we study the difference between the Hecke algebras $\T^p$ and $\T$ which will be used to determine when $R_\rbar^{\Box,\psi_k}$ and $R_\rbar^{\Box,\psi_k}[\alpha_p]$ differ as mentioned in \S\ref{subsubsection_proof_of_main_theorems} . In this part, we assume the weight $k\ge 2.$

Given an $R$-submodule $M$ of $H^0(X_{U_1(N),R},\omega^{\otimes k}_{U_1(N),R}(-\infty))$, we write $\T(M)$ (resp. $\T^{N}(M)$ and $\T^{Np}(M)$) for the image of $\T$ (resp. $\T^N$ and $\T^{Np}$) in $\End_R(M).$ We have the following theorem first proved in \cite[Theorem 1.6, Theorem 2.2]{Ribet83Mod}:
\begin{thm}[Ribet]\label{thm_ribet_duality_full}
    Suppose $k\ge 2.$ The pairing \begin{align*}
        \T(M) \times M &\to R\\
        (T,f) &\mapsto a_1(Tf)
    \end{align*} is Hecke equivariant and perfect. 
\end{thm}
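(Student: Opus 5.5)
We prove Hecke equivariance first, then perfectness by showing that each side of the pairing injects into the $R$-dual of the other.

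\emph{Hecke equivariance.} Since $\T(M)$ is commutative, the pairing $(T,f)\mapsto a_1(Tf)$ is equivariant for the obvious actions. For $S\in\T(M)$ we have
\[
\langle TS,f\rangle=a_1(TSf)=\langle T,Sf\rangle,
\]
so this step is immediate.

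\emph{Injectivity on the $M$ side.} Suppose $f\in M$ satisfies $a_1(Tf)=0$ for all $T\in\T(M)$. Taking $T=T_n$ and applying Proposition \ref{prop_a1Tnf} gives
\[
a_n(f)=a_1(T_nf)=0\quad\text{for all } n\ge 1.
\]
Because $M$ consists of cusp forms, $a_0(f)=0$ as well. Hence the $q$-expansion of $f$ vanishes, and the $q$-expansion principle (Theorem \ref{thm_q_expansion_principle}(1)) gives $f=0$. So $M\to\Hom_R(\T(M),R)$ is injective.

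\emph{Injectivity on the $\T(M)$ side.} Suppose $T\in\T(M)$ satisfies $a_1(Tf)=0$ for all $f\in M$. For any $n$ and $f$, the form $T_nf$ lies in $M$ because $M$ is Hecke-stable. Using commutativity,
\[
a_n(Tf)=a_1(T_nTf)=a_1(T(T_nf))=0 .
\]
So $Tf$ has zero $q$-expansion, hence $Tf=0$ for every $f$, and $T=0$ in $\End_R(M)$. So $\T(M)\to\Hom_R(M,R)$ is injective.

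\emph{Perfectness.} It remains to upgrade the two injections to isomorphisms.

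For $R=L$ or $R=\F$ this is immediate. Both modules are finite dimensional, and the two injections force $\dim\T(M)=\dim M$, so each injection is an isomorphism.

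For $R=\Oh$ I would argue as follows, assuming (as in the intended application) that $M$ is saturated in $H^0(X_{U_1(N),\Oh},\omega^{\otimes k}(-\infty))$, or equivalently that $M$ is $\Oh$-free with $M/\varpi M\hookrightarrow H^0(\ldots,\F)$. This is the main obstacle, since over $\Oh$ the two injections alone do not give an isomorphism.

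First, $M$ is finite free, and $\T(M)\subset\End_\Oh(M)$ is finite and torsion-free, hence free. After inverting $p$ the pairing is perfect by the field case.

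Second, the step needing care is surjectivity of $M\to\Hom_\Oh(\T(M),\Oh)$. Let $\phi:\T(M)\to\Oh$ be linear. By the field case there is $f\in M[1/p]$ with $a_1(Tf)=\phi(T)$ for all $T$. In particular
\[
a_n(f)=\phi(T_n)\in\Oh\quad\text{for all } n.
\]
By the $q$-expansion principle (Theorem \ref{thm_q_expansion_principle}(2)), $f$ is a form over $\Oh$. By saturation, $f\in M$.

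Third, perfectness on the other side follows. The map $M\to\T(M)^\vee$ is an isomorphism of free modules of the same rank. Dualizing, $\T(M)\to M^\vee$ is also an isomorphism, because $\T(M)$ is free and hence reflexive.

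When $k\ge2$, the base change theorem (Theorem \ref{thm_base_change}) lets us pass between the three cases of $R$ compatibly.
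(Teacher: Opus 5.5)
Your argument is correct and is essentially Ribet's standard proof, which the paper cites rather than reproves; the paper also reuses it in the proof of Lemma~\ref{lem_duality_anemic}, where Proposition~\ref{prop_coker_qexp_away_from_p_torsion_free} plays exactly the role of your saturation hypothesis. You are also right that over $\Oh$ some saturation condition on $M$ is genuinely needed for perfectness (for instance $M=\varpi M'$ fails), and this condition holds for the modules to which the paper applies the theorem.
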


\begin{cor}\label{cor_finite_level_mod_p_commute}
    Suppose $k\ge 2.$ Let $M$ be a $\T$-submodule of $H^0(X_{U_1(N),\Oh},\omega^{\otimes k}_{U_1(N),\Oh}(-\infty))$ that satisfies the base-change theorem \ref{thm_base_change}. 
    Then the Hecke algebra $\T(M\otimes_{\Oh}\F)$ is isomorphic to $\T(M)\otimes_{\Oh}\F.$
\end{cor}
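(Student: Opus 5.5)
The plan is to use Ribet's duality (Theorem \ref{thm_ribet_duality_full}) twice, once over $\Oh$ and once over $\F$, and compare the two via base change. By Theorem \ref{thm_ribet_duality_full} applied over $\Oh$, the pairing $(T,f)\mapsto a_1(Tf)$ identifies $\T(M)$ with $\Hom_\Oh(M,\Oh)$. Since $M$ is a submodule of the finite free $\Oh$-module of cusp forms, it is finite free, and so is $\T(M)$. In particular,
\[
\T(M)\otimes_\Oh\F \cong \Hom_\Oh(M,\Oh)\otimes_\Oh\F \cong \Hom_\F(M\otimes_\Oh\F,\F).
\]
The hypothesis that $M$ satisfies the base-change theorem \ref{thm_base_change} means that $M\otimes_\Oh\F$ is identified with a $\T$-stable submodule of the mod-$p$ cusp forms. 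Theorem \ref{thm_ribet_duality_full} therefore also applies over $\F$, giving
\[
\T(M\otimes_\Oh\F)\cong \Hom_\F(M\otimes_\Oh\F,\F).
\]

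Next comes the comparison map. The natural map $\T\to\T(M)\to\End_\Oh(M)$ induces $\T(M)\otimes_\Oh\F\to \End_\F(M\otimes_\Oh\F)$. Its image is exactly the image of $\T$, namely $\T(M\otimes_\Oh\F)$. So there is a canonical surjection
\[
\T(M)\otimes_\Oh\F\surj \T(M\otimes_\Oh\F).
\]
This map is compatible with the two pairings: for $T\in\T(M)$ and $f\in M$, the reduction of $a_1(Tf)$ equals $a_1(\bar T\bar f)$, because reduction mod $\varpi$ commutes with $q$-expansions. Under the two duality isomorphisms, the surjection therefore becomes the identity map of $\Hom_\F(M\otimes_\Oh\F,\F)$. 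Alternatively, one can simply compare $\F$-dimensions: both sides have dimension $\operatorname{rank}_\Oh M$. Either way the surjection is an isomorphism.

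The main point needing care is the identification of $M\otimes_\Oh\F$ with a genuine space of mod-$p$ cusp forms carrying the Hecke action, so that Ribet's theorem over $\F$ applies. This is exactly where $k\ge 2$ and the base-change hypothesis enter; with those in hand, the rest is linear algebra over a DVR.
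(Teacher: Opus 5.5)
Your proposal is correct and is essentially the paper's proof: apply Ribet's duality (Theorem \ref{thm_ribet_duality_full}) over $\F$ and over $\Oh$, and use $\Hom_\Oh(M,\Oh)\otimes_\Oh\F\cong\Hom_\F(M\otimes_\Oh\F,\F)$, which holds because $M$ is finite free. Your one addition is to check that the natural reduction map $\T(M)\otimes_\Oh\F\surj\T(M\otimes_\Oh\F)$ is exactly this isomorphism. That makes explicit the Hecke-compatibility which the paper only asserts.
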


\begin{proof}
    We have $$\T(M\otimes_\Oh\F) \xrightarrow{\sim}\Hom_{\F}(M\otimes_{\Oh}\F,\F)\xrightarrow{\sim}\Hom_{\Oh}(M,\Oh)\otimes_{\Oh}\F\xrightarrow{\sim}\T(M)\otimes_{\Oh}\F.$$ Note that the condition on $M$ guarantees that the first isomorphism is compatible with the Hecke action on both sides.
\end{proof}

\begin{remark} \label{rmk_Hecke_mod_p_surjects_onto_Hecke_of_mod_p_mod_forms}
    If we replace $\T$ by $\T^{N'}$ for some integer $N',$ then we still have a $\T^{N'}$-equivariant surjection $$\T^{N'}(M)\otimes_{\Oh}\F\surj \T^{N'}(M\otimes_{\Oh}\F).$$ But this map can have nontrivial kernel when $N'>1.$
\end{remark}

\begin{cor}\label{cor_finite_level_surjection}
    Let $M_1$ (resp. $M_2$) be a $\T$-submodule of $H^0(X_{U_1(N),\Oh},\omega^{\otimes k_1}_{U_1(N),\Oh}(-\infty))$ (resp. $H^0(X_{U_1(N),\Oh},\omega^{\otimes k_2}_{U_1(N),\Oh}(-\infty))$) that satisfies the base-change theorem \ref{thm_base_change}. 
    If $$A^{n}(M_1\otimes_{\Oh}\F)\subseteq M_2\otimes_{\Oh}\F$$ for some integer $n\ge 0,$ then there is a $\T$-equivariant surjection $$\T(M_2\otimes_{\Oh}\F)\surj \T(M_1\otimes_{\Oh}\F).$$ The same conclusion also holds if we replace $\T$ by $\T^{N'}$ if $N'$ is a divisor of $Np.$
\end{cor}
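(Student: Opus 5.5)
The plan is to use the duality of Theorem~\ref{thm_ribet_duality_full} in characteristic $p$, together with the fact that multiplication by $A$ is Hecke equivariant away from the problematic case. First I will reduce to the mod-$p$ spaces. By Corollary~\ref{cor_finite_level_mod_p_commute} we have $\T(M_i\otimes_\Oh\F)\cong \T(M_i)\otimes_\Oh\F$. The same argument shows that the pairing $(T,f)\mapsto a_1(Tf)$ identifies $\T(M_i\otimes_\Oh\F)$ with $\Hom_\F(M_i\otimes_\Oh\F,\F)$, Hecke-equivariantly. Here I use that $M_i\otimes_\Oh\F$ embeds in the mod-$p$ cusp forms, which is guaranteed by the base-change hypothesis.

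Next I will construct the map. The composite $f\mapsto A^nf$ defines an $\F$-linear injection
\[
\iota: M_1\otimes_\Oh\F\hookrightarrow M_2\otimes_\Oh\F.
\]
It is injective because $A$ has $q$-expansion $1$ and the $q$-expansion principle, Theorem~\ref{thm_q_expansion_principle}, applies. Moreover $\iota$ preserves $q$-expansions exactly, so $a_1(\iota f)=a_1(f)$.

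The key claim is that $\iota$ is $\T$-equivariant. For $T_\ell$ with $\ell\nmid Np$, for $U_\ell$ with $\ell\mid N$, and for the diamond operators, this follows from the $q$-expansion formulas and the $q$-expansion principle. The recursion for $T_{\ell^s}$ involves $\ell^{k-1}\langle\ell\rangle$. Since $k_1\equiv k_2\pmod{p-1}$ whenever $n$ makes sense, $\ell^{k_1-1}\equiv\ell^{k_2-1}\pmod p$, so the coefficients agree. For $T_p$, which acts as $U_p$ in characteristic $p$ when $k\ge 2$, the effect on $q$-expansions is $a_n\mapsto a_{np}$ in both weights. This is the one place where care is needed, since the paper notes that $A$ fails to commute with $T_p$ only in weight $1$. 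I expect this compatibility with $U_p$ to be the main point to check, so the argument must use $k_1,k_2\ge2$ (standing in this subsection) and the identical $q$-expansion formula for $U_p$.

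Given equivariance, any $T\in\T$ acting as zero on $M_2\otimes\F$ acts as zero on $\iota(M_1\otimes\F)$, hence on $M_1\otimes\F$ by injectivity. So restriction gives a well-defined ring map $\T(M_2\otimes\F)\to\T(M_1\otimes\F)$ sending the image of $T$ to the image of $T$. It is surjective because both rings are quotients of $\T$. Equivalently, dually it is the restriction $\Hom(M_2\otimes\F,\F)\surj\Hom(M_1\otimes\F,\F)$ along $\iota$, which is surjective since $\iota$ is injective over a field.

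For $\T^{N'}$ with $N'\mid Np$, the same argument applies verbatim. We drop generators, and equivariance for the remaining generators was already shown, so the restriction map is still well defined and surjective.
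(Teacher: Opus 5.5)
Your proof is correct and is essentially the paper's argument. The paper likewise uses that multiplication by $A^n$ gives an injective, $\T$-equivariant map $M_1\otimes_\Oh\F\hookrightarrow M_2\otimes_\Oh\F$ when the weights are at least $2$, and then dualizes it through the identification $\T(M_i\otimes_\Oh\F)\cong\Hom_\F(M_i\otimes_\Oh\F,\F)$; your restriction-of-operators argument produces the same surjection directly (so the duality step is not actually needed), and in both versions the $\T^{N'}$ case follows by taking images of $\T^{N'}$.
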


\begin{proof}
    The second assertion follows from the first because the surjection is $\T$-equivariant. The first assertion holds by identifying $\T(M_i\otimes_{\Oh}\F)$ with $\Hom_{\F}(M_i\otimes_{\Oh}\F,\F)$ for $i=1,2$ and that multiplication by $A$ is $\T$-equivariant when the weights are at least 2. 
\end{proof}

\begin{remark}
    In the context of Corollary \ref{cor_finite_level_surjection}, we can invoke Corollary \ref{cor_finite_level_mod_p_commute} to conclude the existence of a $\T$-equivariant surjection $$\T(M_2)\otimes_{\Oh}\F\surj \T(M_1)\otimes_{\Oh}\F.$$ 
\end{remark}

We now develop an analogous duality result for the Hecke algebra $\T^{p}.$ Consider the $q$-expansion homomorphism $$\phi_{\infty,L}:H^0(X_{U_1(N),L},\omega^{\otimes k}_{U_1(N),L}(-\infty))\to L\llbracket q\rrbracket. $$ Inside $L\llbracket q\rrbracket$ we project away all coefficients $a_n$ with $p|n:$ $${\rm pr}^{\{p\}}:L\llbracket q\rrbracket\surj \prod_{\gcd(n,p)=1} Lq^n.$$ We then set $$\phi^{\{p\}}_{\infty,L} = {\rm pr}^{\{p\}}\circ\phi_{\infty,L}.$$
\begin{prop}\label{prop_main_lemma}
    The homomorphism $\phi^{\{p\}}_{\infty,L}$ is injective.
\end{prop}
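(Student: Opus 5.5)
The plan is to reduce to a statement about classical complex cusp forms and then use the standard Atkin--Lehner argument. Throughout I use that $p\nmid N$; this is implicit in the setup, since $X_{U_1(N)}$ is a curve over $\Z[1/N]$ which we base change to $\Oh$.

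\emph{Reduction to $\C$.} Let $f\in H^0(X_{U_1(N),L},\omega^{\otimes k}_{U_1(N),L}(-\infty))$ satisfy $\phi^{\{p\}}_{\infty,L}(f)=0$. This means $a_n(f)=0$ for every $n$ with $\gcd(n,p)=1$. The space is finite dimensional over $L$, and by Theorem~\ref{thm_base_change} its formation commutes with flat base change. So after choosing an embedding $L\hookrightarrow\C$ (enlarging $L$ if needed), $f$ becomes a classical cusp form $f\in S_k(\Gamma_1(N))$. By Theorem~\ref{thm_q_expansion_principle}(1) it suffices to prove $f=0$ there. The $q$-expansion at $\infty$ is the Fourier expansion in $q=e^{2\pi i z}$, and the hypothesis says
\[
f(z)=\sum_{n\ge1}a_{pn}q^{pn}.
\]

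\emph{Extra invariance.} From the shape of the expansion, $f(z+1/p)=f(z)$. Hence $f$ is invariant, in weight $k$, under the group $\Gamma\subset\mathrm{SL}_2(\mathbf{R})$ generated by $\Gamma_1(N)$ and $\beta=\begin{pmatrix}1&1/p\\0&1\end{pmatrix}$. Equivalently, $g(z):=f(z/p)=\sum_n a_{pn}q^n$ is invariant under $\Gamma_1(N)$ and under its conjugate $\delta^{-1}\Gamma_1(N)\delta$, where $\delta=\mathrm{diag}(p,1)$. I then want to show one of the following:
\begin{enumerate}
\item $\Gamma$ is not discrete, so $f$, being invariant under the (non-discrete) closure of $\Gamma$, must be constant, and hence $0$ since it is a cusp form; or
\item more concretely, following \cite[\S5.7]{diamondshurman2005first} (or Miyake, Lemma~4.6.8), $g$ is invariant under $\Gamma_1(N)\Gamma_0(p)$-type elements, and because $p\nmid N$ these generate enough to show that $g$ has level $N/p$. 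That is impossible since $p\nmid N$; more precisely the lemma gives $f=g(pz)$ with $g\in S_k(\Gamma_1(N/p))$, which is vacuous unless $p\mid N$, forcing $f=0$.
\end{enumerate}

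\emph{Carrying out route (2).} The concrete computation I would do is this. Since $\gcd(p,N)=1$, pick $\gamma=\begin{pmatrix}a&b\\ c&d\end{pmatrix}\in\Gamma_1(N)$ with $c\equiv 0\pmod N$ and $p\nmid c$. The product $\beta^{j}\gamma\beta^{j'}$ has integral entries off the upper-right corner, and its upper-right corner can be arranged to be in $\frac1p\Z$ but not in $\Z$. Combining such elements with the cusp-width argument, $f$ is invariant under $z\mapsto z+1/p$ at every cusp $\gamma\infty$. I then iterate the argument to get invariance under $z\mapsto z+1/p^m$ for all $m$. This forces $a_n(f)=0$ unless $p^m\mid n$ for every $m$, i.e.\ $f=0$. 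Concretely, applying the same argument to $fU_p$, which again lies in $S_k(\Gamma_1(N))$ because $p\nmid N$, shows that $fU_p$ also has vanishing coefficients prime to $p$. This is the key point, and it is where $p\nmid N$ enters.

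\emph{Expected obstacle.} The main obstacle is this iteration step: showing that $g=fU_p$ again satisfies the hypothesis. Once that holds, every coefficient $a_n(f)$ has $p$-adic valuation of $n$ unbounded, so $f=0$. If the group-theoretic route proves messy, I would instead use the Atkin--Lehner/Diamond--Shurman lemma directly: a form in $S_k(\Gamma_1(N))$ with $a_n=0$ for all $n$ prime to $p$ lies in $S_k(\Gamma_1(N/p))V_p$. Since $p\nmid N$, this space is $0$.
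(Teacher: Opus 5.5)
Your reduction to $\C$ is fine. Your fallback at the end is exactly the paper's proof: the paper invokes Theorem~\ref{thm_base_change} and then \cite[Theorem 5.7.1]{diamondshurman2005first}, and the fallback uses the Atkin--Lehner lemma that a cusp form on $\Gamma_1(N)$ whose $q$-expansion is supported on multiples of $p$ lies in $S_k(\Gamma_1(N/p))V_p$, which is $0$ for $p\nmid N$. Relying on that, the proposal works.

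The self-contained route (2), however, fails at exactly the step you flagged.

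\emph{The false claim.} You assert that $fU_p$ lies in $S_k(\Gamma_1(N))$ ``because $p\nmid N$''. This is false in general. For $p\nmid N$ the level-$N$ operator is $T_p=U_p+p^{k-1}\langle p\rangle V_p$, and $U_p$ alone lands in level $Np$. For example, $\Delta U_2=\tau(2)\Delta-2^{11}\Delta(2z)$ is not of level $1$.

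\emph{What your computation actually gives.} Under the hypothesis, you do get $g:=fU_p=f(z/p)\in S_k(\Gamma_1(N))$. The reason is that $\Gamma_1(N)\cap\Gamma^0(p)$ and $\left(\begin{smallmatrix}1&1\\0&1\end{smallmatrix}\right)$ together generate $\Gamma_1(N)$ when $p\nmid N$. Note that this is level $N$; ``level $N/p$'' has no meaning here. But nothing forces the prime-to-$p$ coefficients of $g$, namely $a_{pn}(f)$ with $p\nmid n$, to vanish. So iterating $U_p$ is circular.

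\emph{The repair: iterate $V_p$ instead.} From $g\in S_k(\Gamma_1(N))$ and the identity $fT_p=g+p^{k-1}(f\langle p\rangle)V_p$, you get $F_1:=(f\langle p\rangle)V_p\in S_k(\Gamma_1(N))$. Then $F_1U_p=f\langle p\rangle$ is already of level $N$, so the same identity gives $F_2:=(F_1\langle p\rangle)V_p\in S_k(\Gamma_1(N))$, and so on. Since $V_p$ commutes with diamond operators, $F_m=(f\langle p\rangle^m)V_{p^m}$. If $f\neq 0$, each $F_m$ is nonzero, because $V_p$ and $\langle p\rangle$ are injective, and $\mathrm{ord}_\infty F_m\ge p^m$. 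But nonzero elements of a finite-dimensional space of $q$-series have only finitely many orders at $\infty$. Hence $f=0$.

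Alternatively, prove directly that the group generated by $\Gamma_1(N)$ and $\left(\begin{smallmatrix}1&1/p\\0&1\end{smallmatrix}\right)$ is non-discrete, as in your route (1). It contains $\Gamma_1(N)$ together with elements having arbitrarily large $p$-power denominators. Either argument makes the proof independent of the exact form in which the Atkin--Lehner lemma is quoted.
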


\begin{proof}
    This follows the base-change theorem \ref{thm_base_change} and Theorem \cite[Theorem 5.7.1]{diamondshurman2005first}.
\end{proof}

Now let $M$ be a $\T$-submodule of $H^0(X_{U_1(N),\Oh},\omega^{\otimes k}_{U_1(N),\Oh}(-\infty))$ that satisfies the base-change theorem \ref{thm_base_change}. We let $M^{\{p\}}$ be the $$\left(M\otimes_{\Oh}L\right) \bigcap \phi^{\{p\},-1}_{\infty,L}\left(\prod_{\gcd(n,p)=1}\Oh q^n\right).$$ One can check that $\T^{p}(M)$ acts on $M^{\{p\}}$ by the $q$-expansion formulas in \S \ref{subsubsec_Hecke_action}.

\begin{prop}\label{prop_coker_qexp_away_from_p_torsion_free}
    The cokernel of the map $M^{\{p\}}\hookrightarrow \prod_{\gcd(p,n)=1}\Oh q^n$ is $\Oh$-torsion free. 
\end{prop}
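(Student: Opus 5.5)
To show the cokernel $\prod_{\gcd(p,n)=1}\Oh q^n / M^{\{p\}}$ is $\Oh$-torsion free, we will show the equivalent saturation property: if $x\in \prod_{\gcd(p,n)=1}\Oh q^n$ and $\varpi x$ lies in the image of $M^{\{p\}}$, then $x$ already lies in that image. Here we view $M^{\{p\}}$ inside $\prod_{\gcd(p,n)=1}\Oh q^n$ via $\phi^{\{p\}}_{\infty,L}$, which is injective by Proposition \ref{prop_main_lemma}.

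Suppose $\varpi x=\phi^{\{p\}}_{\infty,L}(f)$ with $f\in M^{\{p\}}\subseteq M\otimes_\Oh L$. Set $g:=\varpi^{-1}f\in M\otimes_\Oh L$. Then $\phi^{\{p\}}_{\infty,L}(g)=x$ has coefficients in $\Oh$ away from multiples of $p$. By definition $M^{\{p\}}$ is the full preimage inside $M\otimes_\Oh L$ of $\prod_{\gcd(n,p)=1}\Oh q^n$. Hence $g\in M^{\{p\}}$, and $x$ lies in the image.

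So the statement is essentially formal: $M^{\{p\}}$ is defined as an intersection with a preimage taken inside the $L$-vector space $M\otimes_\Oh L$, not inside $M$. The only points to check are the following.
\begin{itemize}
\item The map is really an inclusion, which holds by Proposition \ref{prop_main_lemma} (using Theorem \ref{thm_base_change} to identify $M\otimes_\Oh L$ with a space of modular forms over $L$).
\item Division by $\varpi$ stays in $M\otimes_\Oh L$, which is immediate.
\end{itemize}

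I expect no serious obstacle. If one wished in addition to know that $M^{\{p\}}$ is a finitely generated lattice, one would bound denominators. Since $M\otimes_\Oh L$ is finite-dimensional and $\phi^{\{p\}}_{\infty,L}$ is injective, finitely many coefficients $a_n$ with $p\nmid n$ already determine $g$. This shows $M^{\{p\}}$ is bounded, hence a finite free $\Oh$-module of full rank. That fact is not needed for torsion-freeness itself.
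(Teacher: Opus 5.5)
Your proposal is correct and follows the paper's proof: the paper divides by an arbitrary nonzero $a\in\Oh$ rather than by $\varpi$ (equivalent over a DVR) and concludes, exactly as you do, from the fact that $M^{\{p\}}$ is defined as a preimage inside $M\otimes_\Oh L$. Your closing remark on finite generation is not needed here; the paper obtains finite freeness of $M^{\{p\}}$ separately, in the lemma on duality for $\T^p$.
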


\begin{proof}
    Let $(a_n)_{\gcd(n,p)=1}\in \prod_{\gcd(n,p)=1}\Oh q^n$ be such that $(a_n)_{\gcd(n,p)=1}$ comes from a modular form $f\in M^{\{p\}}$ for some $a\in \Oh\setminus \{0\}.$ Then $f/a$ is in $M\otimes_{\Oh}L$ such that its $q$-expansion away from $p$ is $(a_n)_{\gcd(n,p)=1}\in \prod_{\gcd(n,p)=1}\Oh q^n.$ By definition of $M^{\{p\}},$ we have $f/a\in M^{\{p\}}$ and so $(a_n)_{\gcd(n,p)=1}$ is in the image of $\phi_{\infty,L}^{\{p\}}(M^{\{p\}}).$
\end{proof}

\begin{lem}\label{lem_duality_anemic}
    The pairing \begin{align*}
        \T^{p}(M) \times M^{\{p\}} &\to \Oh\\
        (T,f) &\mapsto a_1(Tf)
    \end{align*} is $\T^{p}$-equivariant and perfect.
\end{lem}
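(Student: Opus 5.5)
We follow the proof of Ribet's duality (Theorem \ref{thm_ribet_duality_full}), replacing the full $q$-expansion by the $q$-expansion away from $p$.

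\textbf{Equivariance.} For $T,T'\in\T^{p}$ we have $a_1(T(T'f)) = a_1((TT')f)$. This holds because $\T^{p}(M)$ is commutative and acts on $M^{\{p\}}$, so the pairing is $\T^{p}$-equivariant. The pairing takes values in $\Oh$: for $f\in M^{\{p\}}$, Proposition \ref{prop_a1Tnf} gives $a_1(T_nf)=a_n(f)$, and for $(n,p)=1$ this lies in $\Oh$ by the definition of $M^{\{p\}}$. Since $\T^{p}(M)$ is generated over $\Oh$ by the $T_n$ with $(n,p)=1$, linearity shows $a_1(Tf)\in\Oh$ for every $T$.

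\textbf{Rational perfectness.} Both $\T^{p}(M)$ and $M^{\{p\}}$ are finite free $\Oh$-modules. For $M^{\{p\}}$: it is a lattice in $M\otimes_\Oh L$, since it contains $M$ and is bounded. Boundedness follows from injectivity of $\phi^{\{p\}}_{\infty,L}$ (Proposition \ref{prop_main_lemma}), because finitely many coefficients $a_n$ with $(n,p)=1$ already detect elements of the finite-dimensional space.

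We now show both kernels vanish.
\begin{itemize}
\item \emph{Kernel on the module side.} If $a_1(Tf)=0$ for all $T$, then $a_n(f)=0$ for all $(n,p)=1$, so $f=0$ by Proposition \ref{prop_main_lemma}.
\item \emph{Kernel on the Hecke side.} Suppose $a_1(Tf)=0$ for all $f$. Then for any $f$ and any $n$ prime to $p$,
\[a_n(Tf)=a_1(T_nTf)=a_1(T(T_nf))=0,\]
using commutativity and $T_nf\in M^{\{p\}}$. Hence $Tf=0$ for all $f$ by Proposition \ref{prop_main_lemma}. Thus $T$ is zero on $M^{\{p\}}\otimes L=M\otimes L$, so $T=0$ in $\End(M)$.
\end{itemize}
Therefore the pairing is nondegenerate after inverting $p$.

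\textbf{Integral perfectness.} This is the main obstacle: we must show the induced maps are isomorphisms over $\Oh$, not just injective with finite cokernel. We show two things.

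First, $M^{\{p\}}\to\Hom_\Oh(\T^{p}(M),\Oh)$ is surjective. Given $\varphi\in\Hom_\Oh(\T^{p}(M),\Oh)$, rational perfectness provides $f\in M^{\{p\}}\otimes L$ with $a_1(Tf)=\varphi(T)$. Taking $T=T_n$ shows $a_n(f)=\varphi(T_n)\in\Oh$ for all $(n,p)=1$. By the definition of $M^{\{p\}}$ (equivalently, the saturation statement of Proposition \ref{prop_coker_qexp_away_from_p_torsion_free}), this gives $f\in M^{\{p\}}$.

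Second, $\T^{p}(M)\to\Hom_\Oh(M^{\{p\}},\Oh)$ is surjective. It suffices to check this modulo $\varpi$ by Nakayama, i.e. that the induced pairing on $\T^{p}(M)\otimes\F$ is nondegenerate. Alternatively, both sides are free of the same rank and the first map is an isomorphism, so its dual is an isomorphism as well. To make this precise, write the first isomorphism as $M^{\{p\}}\xrightarrow{\sim}\T^{p}(M)^\vee$ and dualize: this gives $\T^{p}(M)\cong\T^{p}(M)^{\vee\vee}\xrightarrow{\sim}(M^{\{p\}})^\vee$, using reflexivity of finite free modules. It remains to check that this composite agrees with the map induced by the pairing, which is a formal consequence of the definitions. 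This completes the proof.
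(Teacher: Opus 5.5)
Your proof is correct and is essentially the paper's argument written out in full: the paper also gets finite freeness of $M^{\{p\}}$ from injectivity of the prime-to-$p$ $q$-expansion, by embedding $M^{\{p\}}$ into $\Hom_\Oh(\T^{p}(M),\Oh)$ rather than by your lattice argument. It then invokes Ribet's argument, which is exactly your saturation step giving $M^{\{p\}}\xrightarrow{\sim}\Hom_\Oh(\T^{p}(M),\Oh)$, followed by dualizing. One piece of wording needs fixing, the integrality of $a_1(Tf)$: the $T_n$ generate $\T^{p}(M)$ only as an $\Oh$-algebra, not as an $\Oh$-module (e.g.\ $T_\ell^2=T_{\ell^2}+\ell^{k-1}\langle\ell\rangle$), so "linearity" does not cover all of $\T^{p}(M)$; deduce it instead from the stability $\T^{p}(M)\,M^{\{p\}}\subseteq M^{\{p\}}$, which you and the paper both use.
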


\begin{proof}
    By Proposition \ref{prop_coker_qexp_away_from_p_torsion_free}, the same argument as in Ribet’s proof of Theorem \ref{thm_ribet_duality_full} applies once we know \(M^{\{p\}}\) is finite free over \(\Oh\). Consider the \(\Oh\)\nobreakdash-linear map
    \[
      M^{\{p\}}
      \;\to\;
      \Hom_{\Oh}\!\bigl(\T^{p}(M),\,\Oh\bigr),
      \qquad
      f\;\mapsto\;\bigl(T\mapsto a_1(Tf)\bigr).
    \]
    If \(a_n(f)=0\) for every \(n\) with \(\gcd(n,p)=1\), then \(f=0\) by Proposition \ref{prop_main_lemma}. Meanwhile, \(\T^p(M)\subseteq \End_{\Oh}(M)\) is a finite free \(\Oh\)\nobreakdash-module, so its dual \(\Hom_{\Oh}(\T^p(M),\Oh)\) is also finite free.  Hence \(M^{\{p\}}\) is finite free.
\end{proof}

\begin{lem}
    The kernel of the map $$M\otimes_{\Oh}\F \to M^{\{p\}}\otimes_{\Oh}\F$$ is $\left(M\otimes_{\Oh}\F\right)\cap \Image V_p,$ where $V_p$ is viewed as an operator on the space of $q$-expansions. 
\end{lem}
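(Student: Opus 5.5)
We first pin down the map. Note that $M\subseteq M^{\{p\}}$: if $f\in M$, then its $q$-expansion has all coefficients in $\Oh$, so its projection away from $p$ lies in $\prod_{\gcd(n,p)=1}\Oh q^n$. The map in the statement is therefore $M\otimes_\Oh\F\to M^{\{p\}}\otimes_\Oh\F$ induced by this inclusion. By the base-change theorem~\ref{thm_base_change} and the $q$-expansion principle (Theorem~\ref{thm_q_expansion_principle}), we may regard $M\otimes_\Oh\F$ as a space of $q$-expansions in $\F\llbracket q\rrbracket$. By Proposition~\ref{prop_coker_qexp_away_from_p_torsion_free}, the map $\phi^{\{p\}}$ embeds $M^{\{p\}}$ into $\prod_{\gcd(n,p)=1}\Oh q^n$ with $\Oh$-torsion-free cokernel. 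Since $M^{\{p\}}$ is finite free (Lemma~\ref{lem_duality_anemic}), this cokernel is torsion-free and therefore flat, so reducing mod $\varpi$ keeps the map injective:
\[
M^{\{p\}}\otimes_\Oh\F\hookrightarrow \prod_{\gcd(n,p)=1}\F q^n .
\]
Hence the kernel of $M\otimes_\Oh\F\to M^{\{p\}}\otimes_\Oh\F$ equals the kernel of the composite
\[
M\otimes_\Oh\F\to \prod_{\gcd(n,p)=1}\F q^n,\qquad \bar f\mapsto (a_n(\bar f))_{\gcd(n,p)=1},
\]
that is, the set of mod-$p$ forms $\bar f\in M\otimes_\Oh\F$ whose $q$-expansion coefficients $a_n(\bar f)$ vanish for every $n$ prime to $p$.

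Next we identify this set with $(M\otimes_\Oh\F)\cap\Image V_p$. Vanishing of $a_n$ for all $p\nmid n$ is exactly $\theta\bar f=0$, because $\theta$ multiplies $a_n$ by $n$ and $n\in\F^\times$ precisely when $p\nmid n$. By Katz's Theorem~\ref{thm_katz_theta}, the kernel of $\theta$ is the image of $V_p$ (viewed on $q$-expansions), so $\bar f$ lies in $\Image V_p$. Conversely, any $q$-expansion in $\Image V_p$ is supported on exponents divisible by $p$, so it lies in the kernel. This gives the claimed equality.

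The only delicate point is the first step: we must know that reducing $M^{\{p\}}$ mod $\varpi$ still embeds in the $q$-expansions away from $p$, so that no additional kernel appears. This is exactly what the torsion-freeness in Proposition~\ref{prop_coker_qexp_away_from_p_torsion_free} provides, together with the flatness argument above.

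Katz's theorem applies to forms of any weight and level $U_1(N)$ with $p\nmid N$, since $\theta$ is defined on $H^0(X_{U_1(N),\F},\omega^{\otimes k})$ and $V_p$ is defined on $q$-expansions. Hence no weight restriction beyond the standing assumption $k\ge2$ is needed.
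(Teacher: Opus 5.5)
Your proof is correct and is essentially the paper's argument. You get injectivity of $M^{\{p\}}\otimes_\Oh\F\to\prod_{\gcd(n,p)=1}\F q^n$ from the $\Oh$-torsion-free, hence flat, cokernel of Proposition~\ref{prop_coker_qexp_away_from_p_torsion_free}, use injectivity of the $q$-expansion on $M\otimes_\Oh\F$, and then apply Katz's Theorem~\ref{thm_katz_theta} to identify the forms supported on exponents divisible by $p$ with $\ker\theta=\Image V_p$. (The finite freeness of $M^{\{p\}}$ that you invoke is not needed for the flatness step, since torsion-freeness over the DVR $\Oh$ already suffices.)
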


\begin{proof}
    Consider the commutative diagram:
    $$\begin{tikzcd}
        &M\otimes_\Oh\F \arrow[r]\arrow[d,hook]& M^{\{p\}}\otimes_\Oh\F \arrow[d,hook]\\
        &\F\llbracket q\rrbracket \arrow[r,two heads]& \prod_{\gcd(n,p)= 1}\F q^n,
    \end{tikzcd}$$ where the vertical maps are induced by the $q$-expansion homomorphisms $\phi_{\infty,\Oh}|_{M}:M\to \Oh\llbracket q\rrbracket $ and $\phi_{\infty,L}^{\{p\}}|_{M^{\{p\}}}:M^{\{p\}}\to \prod_{\gcd(n,p)=1}\Oh q^n$ respectively. These $q$-expansion maps remain injective modulo $\varpi$ because they have torsion-free, hence $\Oh$-flat cokernels (by Theorem \ref{thm_q_expansion_principle} and Proposition \ref{prop_coker_qexp_away_from_p_torsion_free} respectively). So the kernel of the map \(M\otimes_{\Oh}\F\to M^{\{p\}}\otimes_{\Oh}\F\) consists of elements whose $q$-expansions are supported only at powers of \(q^p\), and are thus annihilated by the $\theta$ operator. It then follows from Theorem \ref{thm_katz_theta} that this kernel is exactly the image of \(V_p\) in \(M\otimes_{\Oh}\F\).
\end{proof}

For a $\T$-equivariant submodule $M$ of $H^0(X_{U_1(N),R},\omega^{\otimes k}_{U_1(N),R}(-\infty))$, we write $C(M)$ for the cokernel of the inclusion \begin{equation} \label{eqn_anemic_into_full}
    \T^{p}(M)\hookrightarrow\T(M).
\end{equation}

\begin{cor} \label{cor_anemic_equal_full_criterion}
    The module $C(M)\otimes_\Oh\F$ is dual to $(M\otimes_\Oh\F) \cap \Image V_p.$ In particular, the inclusion \eqref{eqn_anemic_into_full} is an isomorphism if and only if $(M\otimes_{\Oh}\F) \cap \Image V_p=\{0\}.$
\end{cor}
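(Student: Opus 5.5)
The plan is to dualize the inclusion \eqref{eqn_anemic_into_full} using the two perfect pairings already available. By Theorem \ref{thm_ribet_duality_full}, $f\mapsto (T\mapsto a_1(Tf))$ identifies $M$ with $\Hom_\Oh(\T(M),\Oh)$. By Lemma \ref{lem_duality_anemic}, the same formula identifies $M^{\{p\}}$ with $\Hom_\Oh(\T^{p}(M),\Oh)$. We have $M\subseteq M^{\{p\}}$, because a form with all $q$-expansion coefficients in $\Oh$ certainly has its prime-to-$p$ coefficients in $\Oh$. The first step is to check that under these identifications the inclusion $M\hookrightarrow M^{\{p\}}$ is exactly the restriction map
\[
\Hom_\Oh(\T(M),\Oh)\to \Hom_\Oh(\T^{p}(M),\Oh)
\]
dual to $\T^{p}(M)\hookrightarrow\T(M)$. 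This is immediate: both functionals are $T\mapsto a_1(Tf)$, one evaluated on all of $\T(M)$ and the other only on $\T^{p}(M)$.

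Next, dualize the short exact sequence $0\to\T^{p}(M)\to\T(M)\to C(M)\to 0$ by applying $\Hom_\Oh(-,\Oh)$. Since $\T(M)$ and $\T^{p}(M)$ are finite free over $\Oh$, we get
\[
0\to C(M)^\vee\to M\to M^{\{p\}}\to \mathrm{Ext}^1_\Oh(C(M),\Oh)\to 0 .
\]
Here $C(M)^\vee=\Hom_\Oh(C(M),\Oh)$ vanishes, because $\T(M)\otimes L=\T^{p}(M)\otimes L$ by injectivity of $\phi^{\{p\}}_{\infty,L}$ (Proposition \ref{prop_main_lemma}), so $C(M)$ is torsion. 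Hence $\mathrm{Ext}^1_\Oh(C(M),\Oh)\cong \Hom_\Oh(C(M),L/\Oh)$, the Pontryagin dual $C(M)^{*}$, and it equals $\mathrm{coker}(M\to M^{\{p\}})$.

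Now tensor the exact sequence $0\to M\to M^{\{p\}}\to C(M)^*\to 0$ with $\F$. The kernel of $M\otimes\F\to M^{\{p\}}\otimes\F$ is $\mathrm{Tor}_1^\Oh(C(M)^*,\F)=C(M)^*[\varpi]$. For a finite torsion $\Oh$-module, $C(M)^*[\varpi]$ is the $\F$-dual of $C(M)\otimes_\Oh\F$. By the preceding lemma, this kernel is $(M\otimes_\Oh\F)\cap\Image V_p$, which gives the claimed duality. For the second assertion, note that $C(M)$ is finitely generated, so by Nakayama $C(M)=0$ if and only if $C(M)\otimes\F=0$, which holds if and only if $(M\otimes\F)\cap\Image V_p=0$.

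The main point needing care is the compatibility claim in the first step: that the two duality isomorphisms are intertwined with restriction. The rest is routine homological algebra over the DVR $\Oh$.
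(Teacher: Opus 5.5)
Your proof is correct and follows the paper's argument. Both use Theorem \ref{thm_ribet_duality_full} and Lemma \ref{lem_duality_anemic} to identify the kernel of $M\otimes_\Oh\F\to M^{\{p\}}\otimes_\Oh\F$ with the $\F$-dual of $C(M)\otimes_\Oh\F$, and then invoke the preceding lemma. The only difference is the order of operations: the paper reduces $0\to\T^{p}(M)\to\T(M)\to C(M)\to 0$ modulo $\varpi$ first and then takes $\F$-duals, whereas you dualize over $\Oh$ first, using your observation that $C(M)$ is torsion, and then reduce; along the way you make explicit the compatibility of the two pairings with $M\hookrightarrow M^{\{p\}}$, which the paper leaves implicit.
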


\begin{proof}
    Let $C(M)$ be the cokernel of the inclusion. It suffices to show $C(M)\otimes_{\Oh}\F=\{0\}$ if and only if $M\otimes_{\Oh}\F \cap \Image V_p=\{0\}$. Reducing modulo $\varpi,$ we obtain an exact sequence $$0\to C(M)[\varpi]\to \T^{p}(M)\otimes_{\Oh}\F\to \T(M)\otimes_{\Oh}\F\to C(M)\otimes_{\Oh}\F\to 0.$$ Taking $\F$-duals and applying Theorem \ref{thm_ribet_duality_full} and Lemma \ref{lem_duality_anemic} we identify \((\T^{p}(M)\otimes_{\Oh}\F)^\vee\) with \(M^{\{p\}}\otimes_{\Oh}\F\) and \((\T(M)\otimes_{\Oh}\F)^\vee\) with \(M\otimes_{\Oh}\F\). By the previous lemma, the term $C(M)\otimes_\Oh\F$ is dual to to $\left(M\otimes_\Oh\F\right)\cap \Image V_p,$ completing the proof. 
\end{proof}

\subsubsection{Serre's modular forms} We briefly recall Serre’s modular forms here; all statements in what follows are taken from \cite{Serre1996TwoLetters}.
Define $\mathcal{S}^k_{U_1(N)}$ to be the quotient sheaf $$\mathcal{S}^k_{U_1(N)} := \omega^{\otimes k}_{U_1(N),\F}/A\cdot \omega^{\otimes k-(p-1)}_{U_1(N),\F}$$ on the closed modular curve $X_{U_1(N),\F}$ of level $U_1(N)$ over $\F.$ In fact, since the Hasse invariant $A$ vanishes at supersingular locus with multiplicity 1, the sheaf $\mathcal{S}^k_{U_1(N)}$ is supported on the supersingular locus, which is a finite set of points. By Grothendieck's vanishing theorem \cite[Chapter III, Theorem 2.7]{hartshorne_1977_algebraic_geometry}, all but the zero-th cohomology groups vanish. Serre's modular forms $S(k,U_1(N))$ of weight $k$ and level $U_1(N)$ are defined to be the global sections of $\mathcal{S}^k_{U_1(N)}:$ $$S(k,U_1(N)) := H^0(X_{U_1(N),\F},\mathcal{S}^k_{U_1(N)}).$$ The short exact sequence of sheaves $$0\to \omega_{U_1(N),\F}^{\otimes k-(p-1)}\xrightarrow{\cdot A}\omega_{U_1(N),\F}^{\otimes k} \to \mathcal{S}^k_{U_1(N)}\to 0$$ induces a long exact sequence of cohomology groups \begin{multline*}
0\to H^0(X_{U_1(N),\F},\omega^{\otimes k-(p-1)}_{U_1(N),\F})\xrightarrow{\cdot A}H^0(X_{U_1(N),\F},\omega^{\otimes k}_{U_1(N),\F}) \to H^0(X_{U_1(N),\F},\mathcal{S}^k_{U_1(N)})\\\to H^1(X_{U_1(N),\F},\omega^{\otimes k-(p-1)}_{U_1(N),\F})\to H^1(X_{U_1(N),\F},\omega^{\otimes k}_{U_1(N),\F})\to0 .
\end{multline*} 
By Serre's duality and Kodaira--Spencer isomorphism, we have \begin{align*}
\dim_\F H^1(X_{U_1(N),\F},\omega^{\otimes k-(p-1)}_{U_1(N),\F}) &= \dim_\F H^0(X_{U_1(N),\F},\omega^{\otimes p-1-k}\otimes \Omega^1_{X_{U_1(N),\F}})\\
& = \dim_\F H^0(X_{U_1(N),\F},\omega^{\otimes p+1-k}(-\infty)),
\end{align*} which is zero as long as $k\ge p+1.$ When $\ell\neq p,$ the Hecke action of $\langle\ell\rangle$ and $T_\ell$ when $\ell\nmid N$ and $U_\ell$ when $\ell|N$ are defined using the same formula as \cite[(3.2),(3.3),(3.6)]{Gross1990Tameness}. All the aforementioned maps are $\T^{p}$-equivariant.

\begin{prop}[Serre] \label{prop_Sk_weight_periodicity}
    For a given prime $p$ and a level structure $U_1(N),$ the $\T^{p}$-module $S(k,U_1(N))$ depends only on the residue of $k$ modulo $p^2-1$, i.e., there is a $\T^{p}$-equivariant isomorphism $$S(k,U_1(N))\to S_{k+p^2-1}(k,U_1(N)).$$
\end{prop}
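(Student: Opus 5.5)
The plan is to realize the weight shift by $p^2-1$ as multiplication by a section that is invertible near the supersingular points. Since $\mathcal{S}^k_{U_1(N)}$ is a skyscraper sheaf supported on the supersingular locus $\Sigma$, we have
\[
S(k,U_1(N)) = \bigoplus_{x\in\Sigma} \omega^{\otimes k}_x/A\,\omega^{\otimes k-(p-1)}_x .
\]
Because $A$ vanishes to order exactly one at each $x\in\Sigma$, each summand is a one-dimensional $\F$-vector space (after enlarging $\F$ so the supersingular points are rational). It therefore suffices to produce a section $B$ of $\omega^{\otimes p^2-1}$ that is nonvanishing at every supersingular point. Multiplication by $B$ then gives an isomorphism of sheaves $\mathcal{S}^k_{U_1(N)}\xrightarrow{\sim}\mathcal{S}^{k+p^2-1}_{U_1(N)}$: it is injective modulo $A$, and both sides have the same length at each point of $\Sigma$.

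For $B$ I would use Serre's construction. On the supersingular locus the Frobenius/Hasse structure gives a canonical trivialization of $\omega^{\otimes p^2-1}$, via the identification $\omega^{\otimes p}\simeq\omega^{\otimes -1}$ on the $p$-torsion Dieudonné module at supersingular points. Equivalently, one can take the Hasse-type invariant of the formal group of height two, the coefficient of $[p]$ at $X^{p^2}$. Either way we get a nowhere-vanishing section of $\omega^{\otimes p^2-1}$ restricted to a first-order neighborhood of $\Sigma$, that is, over the divisor $A=0$. Since $\mathcal{S}^k$ only depends on $\omega^{\otimes k}|_{\{A=0\}}$, this restriction is all that is needed. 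Concretely,
\[
\mathcal{S}^k_{U_1(N)}\simeq \omega^{\otimes k}\otimes\mathcal{O}_{\{A=0\}},
\]
so tensoring with this trivialization gives the required map.

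The main obstacle is $\T^{p}$-equivariance. The operators $\langle d\rangle$ and $U_\ell$, $T_\ell$ for $\ell\ne p$ are defined through prime-to-$p$ isogenies and level changes, via the correspondences of \cite{Gross1990Tameness}. So I must check that the trivialization of $\omega^{\otimes p^2-1}|_{\{A=0\}}$ is compatible with pullback along prime-to-$p$ isogenies. This should follow from the fact that the construction is canonical, depending only on the $p$-divisible group, which étale isogenies of degree prime to $p$ identify. There is one subtlety: the Hecke operator $T_\ell$ includes the factor $\ell^{k-1}$, which becomes $\ell^{k+p^2-2}$ after the shift. These agree in $\F$ because $\ell^{p^2-1}\equiv 1\pmod p$. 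Matching these normalizations, together with the action on the line at each supersingular point, is where I expect the care to be needed.
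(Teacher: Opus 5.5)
Your proposal is correct. The paper itself gives no argument; it attributes the statement to Serre's letter, where periodicity is read off the quaternionic model. There, the canonical $\F_{p^2}$-structure on the fibres $\omega_x$ at supersingular points identifies $S(k,U_1(N))$ with functions on $D^\times(\Q)\backslash D^\times(\A^\infty)$ (of suitable prime-to-$p$ level) that transform under $\mathcal{O}_{D_p}^\times$ through a $k$-th power character of $\F_{p^2}^\times$. Such a character depends only on $k \bmod p^2-1$, and equivariance comes for free because the Hecke operators away from $p$ act only through the components at $\ell\neq p$.

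Your section $B$ carries the same information. On $\{A=0\}$ one has $V=\Phi\circ F$ for a canonical isomorphism $\Phi\colon E^{(p^2)}\xrightarrow{\sim}E$, so $[p]=\Phi\circ F^2$ and the $X^{p^2}$-coefficient of $[p]$ is the derivative of $\Phi$. Moreover, an $\F_{p^2}$-line $L_0\subset\omega_x$ is the same datum as the trivialization $v^{\otimes(p^2-1)}$ ($v\in L_0\setminus\{0\}$) of $\omega_x^{\otimes(p^2-1)}$. Your route stays on the modular curve and avoids the Deuring dictionary. The price is checking isogeny-compatibility by hand, and that check is indeed just transport of structure for the formal group under prime-to-$p$ isogenies. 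Make sure the check covers all of $\T^p$, including $T_n$ for composite $n$. These are defined by the recursion $T_{\ell^{s+1}}=T_\ell T_{\ell^s}-\ell^{k-1}\langle\ell\rangle T_{\ell^{s-1}}$, and the congruence $\ell^{p^2-1}\equiv 1$ is what handles its $k$-dependent coefficient.

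One caution about your first description. The identification $\omega^{\otimes p}\simeq\omega^{\otimes -1}$ uses Serre duality (equivalently, the principal polarization) to identify $H^1(E,\mathcal{O}_E)$ with $\omega^{\otimes -1}$. Under an isogeny of degree $\ell$, that identification is preserved only up to a factor $\ell^{\pm1}$. So the resulting trivialization of $\omega^{\otimes(p+1)}$ on the supersingular locus does not depend on the $p$-divisible group alone, and it yields $S(k)\cong S(k+p+1)$ only up to a Tate twist, consistent with Proposition~\ref{prop_Sk_theta_twist}. The trivialization $\omega^{\otimes p^2}\simeq\omega$ that you build from it is nevertheless exactly isogeny-compatible, because the factors combine to $\ell^{\pm(p-1)}=1$. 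Still, the justification ``depends only on the $p$-divisible group'' applies to the formal-group description, not to this one. Equivalently, the statement follows directly by applying Proposition~\ref{prop_Sk_theta_twist} $p-1$ times, since $(p-1)(p+1)=p^2-1$ and $\epsilon^{p-1}=1$.
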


Let ${\rm Nrd}:D^{\times}(\A) \to \A^\times$ be the reduced norm map and let $\epsilon_D$ be the composition $\epsilon\circ {\rm Nrd}.$ By interpreting elements in $S(k,U_1(N))$ as $\Fpbar$-valued continuous locally constant functions of $D^\times(\Q)\backslash D^\times(\A)$ with a suitable level structure (see \cite[(8)--(10)]{Serre1996TwoLetters}), one obtain a map $$S(k,U_1(N))\xrightarrow{\cdot \epsilon_D} S(k+p+1,U_1(N))$$ induced by multiplication by $\epsilon_D.$

\begin{prop}[Serre] \label{prop_Sk_theta_twist}
    The map $$S(k,U_1(N))(1)\xrightarrow{\cdot \epsilon_D}S(k+p+1,U_1(N))$$ is an isomorphism between $\T^{p}$-modules for all weights $k.$
\end{prop}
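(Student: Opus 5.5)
The plan is to work entirely on the quaternionic side, using Serre's description of $S(k,U_1(N))$ from \cite{Serre1996TwoLetters}. Since $A$ vanishes to order exactly one at each supersingular point, the sheaf $\mathcal{S}^k_{U_1(N)}$ is a skyscraper sheaf supported on the supersingular locus. Its stalk at a supersingular point $x$ is the line $\omega^{\otimes k}_x\otimes \kappa(x)$, so
\[
S(k,U_1(N))=\bigoplus_{x\ \text{supersingular}}\omega^{\otimes k}_x .
\]

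The first step is to make this concrete. Let $D$ be the quaternion algebra over $\Q$ ramified at $p$ and $\infty$. The supersingular points with $U_1(N)$-structure are in bijection with $D^\times(\Q)\backslash D^\times(\A_f)/U^pU_p^1$, where $U_p^1=1+\Pi\Oh_{D_p}$ and $U^p$ corresponds to $U_1(N)$ away from $p$. The fibre $\omega_x$ is the cotangent space of a supersingular curve. The group $\Oh_{D_p}^\times/U_p^1\simeq \F_{p^2}^\times$ acts on $\omega_x$ through the tautological embedding $\tau:\F_{p^2}^\times\hookrightarrow \overline{\F}_p^\times$. This identifies $S(k,U_1(N))$ with the space of functions
\[
f: D^\times(\Q)\backslash D^\times(\A_f)/U^p\to \overline{\F}_p
\quad\text{satisfying}\quad f(gu)=\tau(u)^{-k}f(g)\ \text{for } u\in\Oh_{D_p}^\times
\]
(or $\tau(u)^{k}$, depending on normalization), as in \cite[(8)--(10)]{Serre1996TwoLetters}. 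In this model the Hecke operators $T_\ell,U_\ell,\langle \ell\rangle$ for $\ell\neq p$ act by the usual double coset operators at $\ell$. I would take this identification, and its $\T^p$-equivariance, from Serre as the input.

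Given this model, the second step is to check that $f\mapsto \epsilon_D f$ is well defined. Here $\epsilon$ is viewed as an idele class character via class field theory: it is trivial on $\Q^\times$ and on $\R_{>0}$, unramified away from $p$, and equal to reduction mod $p$ on $\Z_p^\times$. Since $\mathrm{Nrd}(D^\times(\Q))\subseteq\Q^\times_{>0}$, the function $\epsilon_D$ is left $D^\times(\Q)$-invariant. It is right $U^p$-invariant because $\mathrm{Nrd}(U^p)\subseteq\prod_{\ell\ne p}\Z_\ell^\times$, on which $\epsilon$ is trivial. For $u\in\Oh_{D_p}^\times$, the reduced norm reduces to the norm $\F_{p^2}^\times\to\F_p^\times$, that is $x\mapsto x^{p+1}$. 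Hence $\epsilon_D(gu)=\epsilon_D(g)\tau(u)^{p+1}$, which shifts the weight by exactly $p+1$ (the sign convention is fixed to match the one chosen for $\tau$).

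The third step is Hecke equivariance up to the twist. For $\ell\nmid Np$, write $T_\ell f(g)=\sum_i f(g\gamma_i)$ with $\mathrm{Nrd}(\gamma_i)\in\ell\Z_\ell^\times$. Then
\[
\epsilon_D(g\gamma_i)=\epsilon_D(g)\,\epsilon_\ell(\ell)=\epsilon_D(g)\,\epsilon(\Frob_\ell),
\]
so $T_\ell(\epsilon_D f)=\epsilon(\ell)\,\epsilon_D T_\ell f$. The same computation applies to $U_\ell$ for $\ell\mid N$. Diamond operators have unit norm away from $p$ and therefore commute with the multiplication. Consequently the map is $\T^p$-equivariant from the twist $S(k,U_1(N))(1)$, where $T_\ell$ acts by $\ell T_\ell$ (that is, $\epsilon(\ell)T_\ell$), to $S(k+p+1,U_1(N))$. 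Bijectivity is then immediate, since $\epsilon_D$ is nowhere zero and multiplication by $\epsilon_D^{-1}$ is an inverse $S(k+p+1)\to S(k)$.

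The main obstacle is the first step: matching Serre's sheaf-theoretic $\mathcal{S}^k$, including the normalization of $\tau$ and of the Hecke operators from \cite{Gross1990Tameness}, with the function-space model, so that the weight really shifts by $+(p+1)$ rather than $-(p+1)$ and the twist is by $\epsilon$ rather than $\epsilon^{-1}$. If the signs come out reversed, I would replace $\epsilon_D$ by $\epsilon_D^{-1}$ in the model and use Proposition~\ref{prop_Sk_weight_periodicity} together with $\epsilon^{p-1}=1$ to reconcile the conventions. Everything after the model is a formal computation with the reduced norm.
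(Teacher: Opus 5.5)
This is essentially the paper's route: the paper gives no argument beyond citing Serre's description of $S(k,U_1(N))$ as functions on $D^\times(\Q)\backslash D^\times(\A)$ together with multiplication by $\epsilon_D$. Granting Serre's $\T^p$-equivariant model with its normalization, the remaining reduced-norm computation is formal, as you say. One point in your sign discussion needs correcting, however, because the signs are not a free choice.

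The two properties you assign to $\epsilon$ are incompatible. For an idele class character trivial on $\Q^\times\mathbb{R}_{>0}$, evaluating at the principal idele $\ell$ gives $\epsilon_\ell(\ell)\,\epsilon_p(\ell)=1$. So $\epsilon_\ell(\ell)=\epsilon(\Frob_\ell)=\ell$ forces $\epsilon_p(u)=\bar u^{-1}$ on $\Z_p^\times$, and hence $\epsilon_D(gu)=\epsilon_D(g)\,\tau(u)^{-(p+1)}$ for $u\in\Oh_{D_p}^\times$, not $\tau(u)^{p+1}$.

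Consequently the weight shift and the twist are locked together. Take $T_\ell f(g)=\sum_i f(g\gamma_i)$ with $\mathrm{Nrd}(\gamma_i)\in\ell\Z_\ell^\times$. Then the law $f(gu)=\tau(u)^{-k}f(g)$ gives $S(k)(1)\xrightarrow{\sim}S(k+p+1)$. The law $f(gu)=\tau(u)^{k}f(g)$ would instead give $S(k)(-1)\xrightarrow{\sim}S(k+p+1)$.

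Your fallback cannot turn the latter into the former. Since $\epsilon^{p-1}=1$, replacing $\epsilon_D$ by $\epsilon_D^{-1}=\epsilon_D^{p-2}$ reverses the twist and the shift simultaneously, so it only produces the inverse of the same map. Reconciling the two versions would require $S(k-(p+1))\cong S(k+p+1)$. Proposition~\ref{prop_Sk_weight_periodicity} does not supply this, because $2(p+1)\not\equiv 0\pmod{p^2-1}$ for $p\ge 5$.

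So the transformation law has to be taken exactly as Serre normalizes it, matched against the $q$-expansion Hecke operators, rather than adjusted afterwards. With the law $\tau(u)^{-k}$ in the normalization above, your computation gives precisely the stated isomorphism.
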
 

\begin{prop}[Serre] \label{prop_Serre_mod_forms_k_pk}
    There $\T^{p}$-modules $S(k,U_1(N))$ and $S(pk,U_1(N))$ are isomorphic. 
\end{prop}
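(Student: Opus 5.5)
The statement to prove is Proposition~\ref{prop_Serre_mod_forms_k_pk}: $S(k,U_1(N))\simeq S(pk,U_1(N))$ as $\T^{p}$-modules. I would deduce it from the two preceding results of Serre, Propositions~\ref{prop_Sk_weight_periodicity} and~\ref{prop_Sk_theta_twist}, by pure weight bookkeeping. No new geometric input should be needed.

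First, iterate Proposition~\ref{prop_Sk_theta_twist} $j$ times. This gives a $\T^{p}$-isomorphism
\[
S(k,U_1(N))(j)\xrightarrow{\sim} S(k+j(p+1),U_1(N)),
\]
where the twist $(j)$ means that $T_\ell$ acts through $\ell^{j}T_\ell$ and $\langle\ell\rangle$ through $\ell^{2j}\langle\ell\rangle$. This is the twist induced by $\epsilon_D^j$. Next, choose $j$ with $j(p+1)\equiv pk-k=(p-1)k \pmod{p^2-1}$. Since $p^2-1=(p-1)(p+1)$, this congruence becomes $j\equiv 0\pmod{p-1}$ after dividing by $p+1$ (if $p-1\mid k$) or needs more care. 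So I would instead aim for the congruence $j(p+1)\equiv (p-1)k \pmod{p^2-1}$ directly, which says $(p+1)\mid (p-1)k$ modulo the common factor. This fails in general. Hence a pure twist cannot work, and I must combine the twist with the fact that the twist is trivial on $\T^{p}$ when $\ell^{j}\equiv 1$, i.e.\ when $(p-1)\mid j$.

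The correct approach uses the geometric description: via Serre's quaternionic interpretation, $S(k,U_1(N))$ is the space of functions on the finite set $D^\times(\Q)\backslash D^\times(\A_f)/U$ with values in $\mathrm{Sym}^{k-?}$-type representations of $\F_{p^2}^\times$. Concretely, $S(k)$ corresponds to the character $x\mapsto x^{k}$ (up to a fixed shift) of $\mathcal{O}_D^\times\twoheadrightarrow\F_{p^2}^\times$. Raising to the power $p$ is the Galois conjugation of $\F_{p^2}$, realized by conjugation by a uniformizer $\Pi$ of $D_p$ (with $\Pi x\Pi^{-1}=x^{p}$). Right translation by $\Pi$ at $p$ commutes with the Hecke operators away from $p$. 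It therefore sends forms of character $x^{k}$ to forms of character $x^{pk}$, giving a $\T^{p}$-equivariant isomorphism $S(k)\simeq S(pk)$ after Proposition~\ref{prop_Sk_weight_periodicity} identifies weights modulo $p^2-1$.

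The main obstacle is matching Serre's normalization exactly. I need to check that the character attached to weight $k$ is $x\mapsto x^{k}$ up to a shift compatible with multiplication by $p$ modulo $p^2-1$. If instead there is a shift by a fixed power, e.g.\ $x^{k-2}$ or a norm twist, I would absorb the discrepancy using Proposition~\ref{prop_Sk_theta_twist}, since multiplying by the norm character $\epsilon_D$ shifts the exponent by $p+1$, which is fixed by Frobenius. I would also check that $\Pi$ normalizes the chosen level subgroup, which holds because the level is prime to $p$.
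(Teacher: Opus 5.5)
The paper does not prove this proposition; it quotes it from Serre's letter. So the question is whether your argument stands on its own, and its core does. Right translation by a uniformizer $\Pi$ of $D_p$ has the properties you need:
\begin{itemize}
\item it is $\F$-linear and commutes with every Hecke operator away from $p$;
\item it normalizes $\mathcal{O}_{D,p}^\times$ and $1+\Pi\mathcal{O}_{D,p}$;
\item since $\Pi^{-1}u\Pi$ reduces to $\bar u^{p}$, it carries functions of type $\bar u\mapsto\bar u^{a}$ to functions of type $\bar u\mapsto\bar u^{pa}$. Here $\bar u$ is the image of $u\in\mathcal{O}_{D,p}^\times$ in $\F_{p^2}^\times$.
\end{itemize}
This is the standard mechanism behind Serre's statement.

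You were right to abandon the bookkeeping route. Propositions~\ref{prop_Sk_weight_periodicity} and~\ref{prop_Sk_theta_twist} only produce untwisted $\T^{p}$-isomorphisms between weights congruent modulo $p^2-1$. A shift by $\epsilon_D^{j}$ is invisible to $\T^{p}$ only if $(p-1)\mid j$, and then $j(p+1)\equiv 0 \pmod{p^2-1}$. So these propositions give the claim only when $(p+1)\mid k$. For the same reason, Proposition~\ref{prop_Sk_weight_periodicity} plays no role in your final argument.

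The real gap is the step you call the main obstacle. It is the entire content, and your fallback for it is wrong.

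\emph{Why the fallback fails.} Suppose the dictionary matched $S(k)$ with type $\bar u^{k+c}$. Then the $\Pi$-argument gives $S(k)\simeq S(pk+(p-1)c)$, which is the claim only if $(p+1)\mid c$. A shift such as $c=-2$ cannot be absorbed via Proposition~\ref{prop_Sk_theta_twist}. You would need $j(p+1)\equiv (p-1)c \pmod{p^2-1}$ together with $(p-1)\mid j$, and this again forces $(p+1)\mid c$. So the fallback works exactly when there is nothing to correct.

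\emph{What to do instead: show $c=0$.} This follows directly from the paper's definition of $\mathcal{S}^k$. Since $A$ has simple zeros at the supersingular points, $S(k,U_1(N))=\bigoplus_{x}\omega_x^{\otimes k}$, summed over supersingular $x$. The ring $\mathcal{O}_{D,p}\simeq\mathrm{End}(E_x)\otimes\Z_p$ acts on the line $\omega_x$ through its reduction to $\F_{p^2}$. Hence it acts on $\omega_x^{\otimes k}$ by exactly $\bar u^{\pm k}$, with one sign convention for all $k$ and no norm factor.

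\emph{Hecke normalization, which you skip.} Under the dictionary, the modular $T_\ell$ becomes $\ell^{e(k)}$ times the quaternionic double-coset operator, with $e(k)$ affine in $k$. For Katz's normalization $\frac{1}{\ell}\sum_C f(E/C,\check\pi^*\omega)$ one gets $e(k)=-k-1$. Equivariance of the composite $S(k)\simeq S(pk)$ therefore uses $\ell^{e(pk)}=\ell^{e(k)}$ in $\F_p$, which holds because $\ell^{p-1}\equiv 1 \pmod p$.

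With these two checks the argument is complete.

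The same proof can be phrased without quaternions. Pullback along the $\F$-linear relative Frobenius $F$ of $X_{U_1(N),\F}$ acts as $V_p$ on $q$-expansions. It satisfies $F^*\omega\simeq\omega^{\otimes p}$ and $F^*A=A^{p}$, so it maps $A\,\omega^{\otimes k-(p-1)}$ into $A\,\omega^{\otimes pk-(p-1)}$ and induces a map $S(k,U_1(N))\to S(pk,U_1(N))$. This map is a $\T^{p}$-equivariant isomorphism, because $F$ permutes the supersingular points and identifies $\omega^{\otimes k}_{F(x)}$ with $\omega^{\otimes pk}_x$.
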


\subsubsection{Nebentypus characters}
Let $R$ be a $\Z[1/N]$-algebra. Let $U_0(N)$ be the subgroup of $\GL_2(\Zhat)$ consisting of matrices that are upper triangular modulo $N.$ Then $U_1(N)$ is a normal subgroup of $U_0(N)$ with the quotient $U_1(N)\backslash U_0(N)\xrightarrow{\sim}\left(\Z/N\Z\right)^{\times}.$
In fact, the right action of $U_0(N)$ on $H^0(X_{U_1(N),R},\omega_{U_1(N),R}^{\otimes k})$ coincides with the diamond operators discussed in \S \ref{subsubsec_Hecke_action}. Let $H$ be a subgroup of $\left(\Z/N\Z\right)^{\times}$ and let $U_H(N)$ be the preimage of $H$ in $U_0(N).$ Fix a character $$\chi:U_H(N)\surj H\to \overline{\Q}^{\times}$$ to be a left Dirichlet character. Let us consider the module $$\left(H^0(X_{U_1(N),R},\omega_{U_1(N),R}^{\otimes k})\otimes_R R(\chi)\right)^{H}.$$

\begin{remark}
    If $U_H(N)$ is torsion free and $R$ is an $\Oh$-algebra, then the module is isomorphic to $$H^0(X_{U_H(N),R},\omega^{\otimes k}_{U_H(N),R}\otimes \chi),$$ which coincides with the twisted version of modular forms in \cite[\S3.9.1]{calegari_geraghty_2018_beyond}.
\end{remark}

We see that it naturally embeds into $H^0(X_{U_1(N),R},\omega^{\otimes k}_{U_1(N),R})$ because every element in the module takes the form $f\otimes_R 1$ for some $f\in H^0(X_{U_1(N),R},\omega^{\otimes k}_{U_1(N),R}).$ We have $$f\otimes_R 1 = (f\otimes_R 1)\langle d\rangle = (f\langle d\rangle)\otimes_R \chi(d)^{-1},$$ i.e.,$$(f\langle d\rangle)\otimes_R 1 = \chi(d) f\otimes_R 1$$ for every $d\in H.$ We say $\chi$ is the Nebentypus character of modular forms $f\otimes_R 1$ in this space. (We warn the reader that this is not the standard convention of a Nebentypus character unless $H = (\Z/N\Z)^\times$).

\begin{prop}\label{prop_Nebentypus_base_change}
    If either\begin{itemize}
    \item $R$ is flat or
    \item $k\ge 2$, $R=\F$ and the $p$-part $P$ of $H$ satisfies that $U_P(N)$ is torsion free,
    \end{itemize}then $$\left(H^0(X_{U_1(N)},\mathscr{L})\otimes \Z\left[\frac{1}{N}\right](\chi)\right)^{H}\otimes R \xrightarrow{\sim}\left(H^0(X_{U_1(N),R},\mathscr{L}_R)\otimes R(\chi)\right)^{H}$$ for $\mathscr{L} = \omega_{U_1(N)}^{\otimes k}$.
\end{prop}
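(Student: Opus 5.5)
The plan is to prove the isomorphism by comparing $H$-invariants before and after base change. Set $M := H^0(X_{U_1(N)},\mathscr{L})\otimes \Z[1/N](\chi)$, a finite $\Z[1/N]$-module, or after enlarging coefficients a finite $\Oh$-module, carrying an action of the finite group $H$. Write $M_R := H^0(X_{U_1(N),R},\mathscr{L}_R)\otimes R(\chi)$. By Theorem~\ref{thm_base_change}, under either hypothesis ($R$ flat, or $k\ge 2$), the natural map $M\otimes R\to M_R$ is an $H$-equivariant isomorphism. It is equivariant because the diamond operators, i.e.\ the action of $U_0(N)/U_1(N)$, are defined on $X_{U_1(N)}$ integrally and commute with base change. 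So the statement reduces to showing that $M^H\otimes R \to (M\otimes R)^H$ is an isomorphism.

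In the flat case this is formal. The invariants are the kernel of the map
\[
M\to \prod_{h\in H} M,\qquad m\mapsto (hm-m)_{h},
\]
and a flat base change preserves kernels, so $(M\otimes R)^H = M^H\otimes R$.

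In the case $R=\F$, $k\ge 2$, write $H = P\times H'$ with $P$ the $p$-part and $H'$ of order prime to $p$. Taking $H'$-invariants is exact on $\Oh$-modules, since we can use the idempotent $\frac{1}{|H'|}\sum_{h\in H'}\chi(h)^{-1}h$ after twisting. Hence $H'$-invariants commute with $\otimes\F$, and it suffices to treat $P$. Here I use the torsion-freeness of $U_P(N)$. It makes $X_{U_P(N)}$ a fine moduli scheme, and $X_{U_1(N)}\to X_{U_P(N)}$ is a finite étale Galois cover with group $P$; the cusps need a separate check, but the cover should be étale there as well because $U_P(N)$ is torsion free. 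By Galois descent, $\mathscr{L}$ together with the twisted $P$-action descends to a line bundle $\mathscr{L}_P=\omega^{\otimes k}_{U_P(N)}\otimes\chi$ on $X_{U_P(N)}$, and $P$-invariant global sections upstairs are exactly global sections of $\mathscr{L}_P$, over $\Oh$ and over $\F$ alike. The problem thus becomes base change $H^0(X_{U_P(N)},\mathscr{L}_P)\otimes\F\xrightarrow{\sim}H^0(X_{U_P(N),\F},\mathscr{L}_{P,\F})$. This holds when $H^1(X_{U_P(N)},\mathscr{L}_P)$ is $\Oh$-torsion free, which I would verify in the same way as Theorem~\ref{thm_base_change}. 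On each fiber $\deg \mathscr{L}_P = k\deg\omega$, and Kodaira--Spencer gives $\omega^{\otimes 2}\simeq\Omega^1(\infty)$. Serre duality then identifies $H^1(\mathscr{L}_{P,\F})$ with $H^0(\omega^{\otimes 2-k}\otimes\chi^{-1}(-\infty))$, which vanishes for $k\ge 2$ for degree reasons, the case $k=2$ with the twist $(-\infty)$ included. Cohomology and base change then gives $H^1=0$ integrally, and hence the isomorphism.

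The main obstacle is the descent step at the cusps, namely checking that $P$ acts freely on the cusps of $X_{U_1(N)}$ so that the quotient is étale there, together with the compatibility of the $\chi$-twist with descent. If that is awkward, the fallback is to show directly that $H^1(P, M)$ has no $\varpi$-torsion contribution. Since $M$ is induced from the free $P$-action on the generic fiber, Tate cohomology $\hat H^i(P,M\otimes\F)$ vanishes, and the long exact sequence for $0\to M\xrightarrow{\varpi}M\to M\otimes\F\to0$ then gives surjectivity of $M^P\to(M\otimes\F)^P$.
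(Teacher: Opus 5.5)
Your argument is the paper's proof. You reduce via the Katz--Mazur base change theorem to showing that $H$-invariants commute with base change, treat the flat case formally, split off the prime-to-$p$ part of $H$, and for the $p$-part $P$ identify $P$-invariants with sections on $X_{U_P(N)}$ and apply base change there for $k\ge 2$. The paper simply cites Calegari--Geraghty (Lemma 3.27(2) and Corollary 3.29) for the last two geometric facts, which you instead sketch via descent and Serre duality.

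The step you flag needs a different reason, because the one you give is not sufficient. Torsion-freeness of $U_P(N)$ rigidifies the moduli problem, so $P$ acts freely on $Y_{U_1(N)}$, but it does not control the cusps. For example, take $N=9$ and $P$ the $3$-Sylow subgroup of $(\Z/9\Z)^\times$. Then $X_{U_1(9)}\to X_{U_P(9)}$ is, over the complex numbers, the degree-$3$ map $X_1(9)\to X_0(9)$ between genus-$0$ curves. Since $\Gamma_0(9)$ has no elliptic points, Riemann--Hurwitz forces ramification at two cusps.

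What actually makes the cover étale at the cusps is that $p\nmid N$, which is automatic because $\F$ is a $\Z[1/N]$-algebra. Let $g=\left(\begin{smallmatrix}a&b\\c&d\end{smallmatrix}\right)\in\mathrm{SL}_2(\Z)$ and suppose an element of $P$ fixes the cusp $a/c$. That element is the lower-right entry $\pm(1+ach)$ of some $\pm g\left(\begin{smallmatrix}1&h\\0&1\end{smallmatrix}\right)g^{-1}$ lying in $U_P(N)$, and $N\mid c^2h$. Hence $(ach)^2\equiv 0\pmod N$, so $1+ach$ is unipotent of order dividing $N$, which is prime to $p$. Lying in a $p$-group, it must be $1$. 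The sign $-$ is excluded because $p$ is odd and $N\ge 5$.

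With this, $X_{U_1(N)}\to X_{U_P(N)}$ is finite étale, and your descent and Serre-duality argument goes through. Your fallback does not avoid this point. The cohomological triviality of $M\otimes\F$ as an $\F[P]$-module would itself come from the free action on the whole compactified special fiber, together with the vanishing of $H^1$.
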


\begin{proof}
    Denote by $M$ the $\Z[1/N][H]$-module $$H^0(X_{U_1(N)},\omega_{U_1(N)}^{\otimes k})\otimes \Z\left[\frac{1}{N}\right](\chi).$$ It then suffices to prove $$M^H\otimes_{\Z[1/N]}R\xrightarrow{\sim}(M\otimes_{\Z[1/N]}R)^{H}$$ by Theorem \ref{thm_base_change}. If $R$ is flat, this follows from Proposition \ref{prop_invariant_commute_flat_base_change} below. To prove the other case, we let $M_{\Oh}$ be its base-change to $\Oh$. Then $M_\Oh^P$ has no ambiguity as $\Oh$ is flat over $\Z[1/N].$ Now we aim to show $$\left(M_\Oh^{P}\right)^{H/P}\otimes_\Oh\F\xrightarrow{\sim}\left(\left(M_\Oh\otimes_\Oh \F\right)^P\right)^{H/P}.$$ From the short exact sequence $$0\to M_\Oh^P\xrightarrow{\cdot \varpi}M_{\Oh}^P\to M_{\Oh}^P\otimes_{\Oh}\F\to 0,$$ we deduce the exact sequence $$0\to \left(M_\Oh^{P}\right)^{H/P}\otimes_{\Oh}\F\to (M_{\Oh}^P\otimes_{\Oh}\F)^{H/P}\to H^1(H/P,M_\Oh^{P}).$$ The last term is zero because the order of $H/P$ is invertible on the $\Oh$-module $M_{\Oh}^P.$ It remains to show $$M_\Oh^P\otimes_{\Oh}\F\xrightarrow{\sim}\left(M_{\Oh}\otimes_{\Oh}\F \right)^P$$ which requires further inputs from geometry of modular curves as follows. We have \begin{align*}
        M_\Oh^P\otimes_{\Oh}\F \xrightarrow{\sim} \left(H^0(X_{U_1(N),\Oh},\omega^{\otimes k}_{U_1(N),\Oh})\otimes\chi\right)^{P}\otimes_{\Oh}\F\xrightarrow{\sim}H^0(X_{U_P(N),\F},\omega^{\otimes k}_{U_H(N),\F}\otimes\chi)\\
        \xrightarrow{\sim} \left(H^0(X_{U_1(N),\F},\omega^{\otimes k}_{U_1(N),\F})\otimes\chi\right)^{P}\xrightarrow{\sim} \left(M_{\Oh}\otimes \F\right)^P,\\
    \end{align*} where the second isomorphism follows from \cite[Corollary 3.29]{calegari_geraghty_2018_beyond} when $k\ge 2$ and the third isomorphism follows from \cite[Lemma 3.27(2)]{calegari_geraghty_2018_beyond}.
\end{proof}

\begin{prop}{\label{prop_invariant_commute_flat_base_change}}
    Suppose that $H$ is a finite abelian group and $S$ is a Noetherian ring. Let $R$ be a flat $S$-algebra with trivial $H$-action.
    Then for every $S$-module $M,$ we have $$M^{H}\otimes_S R\xrightarrow{\sim} (M\otimes_S R)^{H}.$$ 
\end{prop}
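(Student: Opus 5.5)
The plan is to use the averaging idempotent, which exists because $H$ is a finite group, but with care: $|H|$ need not be invertible in $S$, so averaging is not available. Instead I will present the invariants as a kernel and use that flat base change preserves finite limits.

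First, choose generators $h_1,\ldots,h_r$ of $H$. This is possible since $H$ is finite; one can even take all elements of $H$. Then
\[
M^H=\ker\Bigl(M\xrightarrow{\;m\mapsto (h_im-m)_{i}\;} M^{\oplus r}\Bigr),
\]
since an element fixed by each generator is fixed by all of $H$. This gives an exact sequence of $S$-modules
\[
0\to M^H\to M\xrightarrow{\delta} M^{\oplus r}.
\]

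Second, tensor with $R$ over $S$. Flatness of $R$ keeps the sequence exact, giving
\[
0\to M^H\otimes_S R\to M\otimes_S R\xrightarrow{\delta\otimes 1}(M\otimes_S R)^{\oplus r}.
\]
Because $H$ acts trivially on $R$, the action of $H$ on $M\otimes_S R$ is $h(m\otimes x)=hm\otimes x$. Hence $\delta\otimes 1$ is exactly the map $y\mapsto (h_iy-y)_i$ on $M\otimes_S R$, and its kernel is $(M\otimes_S R)^H$.

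Third, identify the natural map $M^H\otimes_S R\to (M\otimes_S R)^H$ with the induced isomorphism of kernels. This concludes the proof.

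There is no real obstacle in this argument. The only point needing care is that $M$ may not be finitely generated and that $|H|$ may not be a unit. The argument uses neither: exactness of $-\otimes_S R$ on all modules suffices, and the Noetherian hypothesis on $S$ is not needed. If one prefers to use all elements of $H$ as the generating set, the abelian hypothesis on $H$ is not needed either.
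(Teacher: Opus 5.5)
Your proof is correct and is essentially the paper's argument made explicit. The paper resolves the trivial $S[H]$-module $S$ by finite free $S[H]$-modules $F_\bullet$, invoking Noetherianity of $S[H]$ so that $\Hom_{S[H]}(F_i,-)$ commutes with $-\otimes_S R$, and then uses flatness to pass the kernel of $\Hom_{S[H]}(F_0,M)\to\Hom_{S[H]}(F_1,M)$ through $-\otimes_S R$. Your map $\delta$ is exactly this kernel for the explicit choice $F_1=S[H]^{\oplus r}\xrightarrow{(h_i-1)_i}F_0=S[H]$. That is why you are right that the Noetherian and abelian hypotheses are superfluous.

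The only thing to fix is your opening sentence, which announces the averaging idempotent and then immediately, and rightly, abandons it; delete it.
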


\begin{proof}
    Let $F_\bullet$ be a free resolution of $S$ as and $S[H]$-module where each term is a finite free $S[H]$-module (This is because $S[H]$ is a Noetherian ring by our assumption and finitely generated modules over a Noetherian ring are automatically finitely presented). 
    The cochain complex $$\Hom_{S[H]}(F_\bullet, M\otimes_SR) \xrightarrow{\sim} \Hom_{S[H]}(F_\bullet,M)\otimes_SR$$ computes the group cohomology of $H$ with coefficients in $M\otimes_S R.$ 
    Thus we have $$H^0(G,M\otimes_S R) = \ker(\Hom_{S[H]}(F_0,M)\otimes_S R\to \Hom_{S[H]}(F_1,M)\otimes_S R).$$ Since $R$ is flat over $S$, this is isomorphic to $$\ker(\Hom_{S[H]}(F_0,M)\to \Hom_{S[H]}(F_1,M))\otimes_S R=M^{H}\otimes_S R.$$
\end{proof}

It is not hard to check all other discussions on the $q$-expansion principle, the Hecke action, the mod-$p$ modular forms, the duality theorems and Serre's modular forms hold for modular forms with Nebentypus characters. From now on, we denote by 
$$S(k,U_1(N),\chi) := \left(H^0(X_{U_1(N),\F},\mathcal{S}^{k}_{U_1(N)})\otimes \chi\right)^{H}.$$

\subsection{Galois Representations and Galois Deformations}
\subsubsection{Galois Representations}\label{subsubsection_Galois_representations}
First, we briefly recall the definition of the Artin conductor for two-dimensional $p$-adic and mod-$p$ representations of $G_\Q$.  
Let $\rho: G_{\Q} \to \GL_2(L)$ be a Galois representation that is almost everywhere unramified.  
For a prime $\ell \neq p$, the \emph{local Artin conductor} of $\rho$ at $\ell$ is defined by
\[
n_\ell(\rho) := 2 - \dim \rho^{I_\ell} + sw(\rho|_{G_{\Q_\ell}}),
\]
where $sw(\rho)\in \Z_{\ge 0}$ denotes the Swan conductor of $\rho$.  
The representation $\rho$ is unramified at $\ell$ if and only if $n_\ell(\rho) = 0$, and it is tamely ramified at $\ell$ if and only if $sw(\rho) = 0$. The \emph{Artin conductor} of $\rho$ is then
\[
N(\rho) := \prod_{\ell \neq p} \ell^{n_\ell(\rho)},
\]
where the product is finite, since $\rho|_{G_{\Q_\ell}}$ is unramified for all but finitely many primes.  

Similarly, if $\rhobar: G_\Q \to \GL_2(\F)$ is a residual representation, its Artin conductor is defined by the same formula with $\rho$ replaced by $\rhobar$ and $L$ replaced by $\F$.  
When $\rhobar$ is the semisimplification of the reduction of a lattice in $\rho$ and is absolutely irreducible (so that, by Brauer--Nesbitt's theorem, $\rhobar$ is independent of the choice of lattice), we have $sw(\rho)=sw(\rhobar),$ since the Swan conductor only depends on the restriction to the wild inertia subgroup $P_\ell$, which is a pro-$\ell$ group.  
It follows that 
\[
n_\ell(\rhobar) \le n_\ell(\rho) \quad \text{and hence} \quad N(\rhobar) \mid N(\rho).
\]

The following will be used in \S \ref{subsection_hecke_modules_and_algebras}.
\begin{prop} \label{prop_dim_rhobar_I_ell_is_one}
    Let $\ell\neq p$ be a prime number. Suppose that $\rhobar:I_\ell\to \GL_2(\F)$ is a continuous representation such that $\dim_\F \rhobar^{I_\ell} = 1.$ Then either \begin{enumerate}
        \item $\rhobar$ is a non-split extension of $1$ by $1$, or
        \item $\rhobar$ is a direct sum of $1$ and a nontrivial character of $I_\ell.$
    \end{enumerate} 
\end{prop}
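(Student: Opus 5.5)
We argue through the structure of representations of $I_\ell$ for $\ell\neq p$. The image of wild inertia $P_\ell$ is a pro-$\ell$ group, while $\GL_2(\F)$ is finite. Since $\ell\neq p$, the image $\rhobar(P_\ell)$ has order prime to $p$, so $\rhobar|_{P_\ell}$ is semisimple by Maschke's theorem. Moreover, because $P_\ell$ is normal in $I_\ell$, Clifford theory shows that $\rhobar|_{P_\ell}$ is a direct sum of $I_\ell$-conjugate irreducibles. The plan is to split into two cases according to whether $\rhobar(P_\ell)$ is trivial.

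\emph{Case 1: $\rhobar(P_\ell)$ is nontrivial.} In this case $\rhobar^{P_\ell}$ is an $I_\ell$-stable subspace, since $P_\ell$ is normal, and $\rhobar^{I_\ell}\subseteq\rhobar^{P_\ell}$. The hypothesis forces $\rhobar^{P_\ell}\neq 0$, so $\rhobar^{P_\ell}$ is a line. By Maschke it has a $P_\ell$-stable complement. That complement is the isotypic sum of the nontrivial $P_\ell$-constituents, so it is canonical and therefore $I_\ell$-stable. Hence $\rhobar\cong 1\oplus\chi$, where $\chi$ is a character that is nontrivial already on $P_\ell$. This is conclusion (2).

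\emph{Case 2: $\rhobar$ factors through the tame quotient $I_\ell/P_\ell\cong\prod_{v\neq\ell}\Z_v$.} This group is procyclic, so $\rhobar$ is determined by the image of a topological generator $\sigma$, a single matrix $g\in\GL_2(\F)$ of finite order. Write $n=mp^a$ with $p\nmid m$, and split $g=g_sg_u$ into commuting semisimple and unipotent parts. Both are powers of $g$, so both lie in the image. After enlarging $\F$ if necessary, $g_s$ is diagonalizable.

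If $g_s\neq 1$, then $\ker(g-1)\subseteq\ker(g_s-1)$, so the fixed line is an eigenline of $g_s$ with eigenvalue $1$. Because $g_s$ is a nontrivial diagonalizable element with eigenvalue $1$, its other eigenvalue $\lambda\neq1$ is distinct. Then $g_u$ preserves both eigenlines of $g_s$, which forces $g_u=1$. We obtain $\rhobar\cong 1\oplus\chi$ with $\chi(\sigma)=\lambda\neq 1$, again conclusion (2).

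If $g_s=1$, then $g$ is unipotent. It is not the identity, since $\dim\rhobar^{I_\ell}=1$ rather than $2$. So $g$ is conjugate to $\begin{pmatrix}1&1\\0&1\end{pmatrix}$, which is exactly a non-split extension of $1$ by $1$. This is conclusion (1).

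The main point requiring care is the extension of scalars needed to diagonalize $g_s$, together with the statement's convention that representations are understood absolutely. The characters in conclusion (2) may only be defined after enlarging $\F$, which the paper's standing convention permits. The other delicate step is the canonicity of the $P_\ell$-complement in Case 1, which is exactly what makes that complement $I_\ell$-stable.
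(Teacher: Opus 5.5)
Your argument is correct, but it takes a different route from the paper.

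\textbf{The paper's route.} The paper does not split according to whether $P_\ell$ acts trivially. It uses the fixed line to write $\rhobar$ as an extension $0\to 1\to\rhobar\to\chi\to 0$, where $\chi=\det\rhobar$ is automatically $\F$-rational. If the extension splits, this gives (2), with $\chi\neq 1$ because the fixed space is only a line. The non-split case is handled by one cohomological computation. Inflation--restriction together with $H^1(P_\ell,\chi)=0$ (a pro-$\ell$ group acting on $p$-torsion coefficients) gives $H^1(I_\ell,\chi)\cong H^1(I_\ell/P_\ell,\chi^{P_\ell})$. This vanishes if $\chi|_{P_\ell}\neq 1$. For tame $\chi$ it is the coinvariant space $\F/(\chi(\tau)-1)\F$ of the procyclic quotient, which is nonzero only when $\chi=1$.

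\textbf{How your cases correspond.} Your two cases are hands-on versions of these two inputs.
\begin{itemize}
\item The canonical $I_\ell$-stable complement to $\rhobar^{P_\ell}$ (Maschke plus normality of $P_\ell$) replaces the vanishing of $H^1(P_\ell,-)$.
\item The Jordan decomposition of $g=\rhobar(\sigma)$ replaces the coinvariant computation for the procyclic quotient: a unipotent part commuting with a semisimple part with distinct eigenvalues $1,\lambda$ must be trivial, which is exactly the statement that the extension splits when $\lambda\neq 1$.
\end{itemize}

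\textbf{What each approach buys.} The paper's version is shorter and proves a reusable fact, namely $\mathrm{Ext}^1_{I_\ell}(\chi,1)=0$ for $\chi\neq 1$. Yours is entirely elementary and avoids continuous cohomology of $\prod_{v\neq\ell}\Z_v$. It also shows, in case (2), whether the nontrivial character is wildly or tamely ramified.

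\textbf{Two minor points.}
\begin{itemize}
\item The extension of scalars you flag as the delicate step is unnecessary. Since $g$ fixes an $\F$-rational line, its characteristic polynomial is $(X-1)(X-\det g)$ with $\det g\in\F$. So $\lambda\in\F$, and $g$ is diagonalizable over $\F$ when $\lambda\neq 1$; equivalently, $\rhobar/\rhobar^{I_\ell}$ is already an $\F$-valued character.
\item Your Clifford-theory sentence is inaccurate as stated. Clifford's theorem concerns irreducible representations, and here $\rhobar$ is reducible; in Case 1 its $P_\ell$-constituents, the trivial one and $\chi|_{P_\ell}$, are not conjugate. You never use that sentence, so it can simply be deleted.
\end{itemize}
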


\begin{proof}
    Since $\dim_\F \bar\rho^{I_\ell}=1$, in the non-split case it suffices to show 
    that any non-split extension of characters must be an extension of $1$ by $1$. To see this, we calculate $H^1(I_\ell,\chi)$ for every character $\chi:I_\ell\to \F^{\times}.$ In the inflation-restriction exact sequence, we have $$0\to H^1(I_\ell/P_\ell,\chi^{P_\ell})\to H^1(I_\ell,\chi)\to H^1(P_\ell,\chi),$$ the last term vanishes because $P_\ell$ is a pro-$\ell$-group but $\F$ has characteristic $p\neq \ell$. Let $\tau$ be a topological generator of $I_\ell/P_\ell.$ The group cohomology of a pro-cyclic group gives $$H^1(I_\ell,\chi)\xrightarrow{\sim}H^1(I_\ell/P_\ell,\chi^{P_\ell})\xrightarrow{\sim} \frac{\chi^{P_\ell}}{(\chi^{P_\ell}(\tau)-1)\F}.$$ This is nonzero if and only if $\chi$ is trivial.
\end{proof}

From now on, we let $\rhobar: G_{\Q} \to \mathrm{GL}_2(\F)$ be a representation for which conditions (1) and (2) in Assumption~\ref{assumption_global_rhobar} hold. To carry out minimal-level patching, one would normally patch at level $U_1(N(\rhobar))$. However, if $N(\rhobar) < 5$, the moduli problem of elliptic curves with level $U_1(N(\rhobar))$ is not represented by a scheme, as mentioned at the beginning of \S\ref{subsubsec_modular_forms}. To avoid this issue, following the standard trick of Diamond--Taylor, we pass to level $U_1(N(\rhobar)\ell)$ for some prime $\ell \nmid N(\rhobar),\ell \ge 5$ that meets the conditions in the proposition below.

\begin{prop}[Diamond--Taylor]\label{prop_unramified_lifting}
    Let $\ell\neq p$ be a rational prime and $\rhobar_\ell:G_{\ell}\to \GL_2(\F)$ be an unramified representation of the local Galois group $G_\ell.$ Suppose that \begin{enumerate}
        \item $\ell\not \equiv 1\pmod p$ and
        \item the ratio of the eigenvalues of $\rhobar_\ell(\Frob_\ell)$ is not in $\{\ell,\ell^{-1}\}.$
    \end{enumerate} If $\rho_\ell:G_{\Q_\ell}\to \GL_2(\Oh)$ is a lift of $\rhobar_\ell$, it is still unramified. 
\end{prop}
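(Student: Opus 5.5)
We need to show that any lift $\rho_\ell$ of $\rhobar_\ell$ is unramified. The plan is to study the restriction of $\rho_\ell$ to inertia using the tame structure of $G_{\Q_\ell}$, and to show that the image of tame inertia must be trivial by combining the relation $\Frob_\ell\,\tau\,\Frob_\ell^{-1}=\tau^{\ell}$ with hypotheses (1) and (2).

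First, $\rhobar_\ell$ is unramified, so $\rho_\ell(I_\ell)$ lies in the kernel of $\GL_2(\Oh)\to\GL_2(\F)$. That kernel is a pro-$p$ group. Since $P_\ell$ is pro-$\ell$ with $\ell\neq p$, $\rho_\ell(P_\ell)$ is trivial. The prime-to-$p$ part of $I_\ell/P_\ell\simeq\prod_{v\neq\ell}\Z_v$ also dies. Hence $\rho_\ell|_{I_\ell}$ factors through the $\Z_p$-quotient $t_p$ of tame inertia. Fix a topological generator $\tau$ of this quotient and a lift $\phi$ of $\Frob_\ell$. Then $\phi\tau\phi^{-1}=\tau^{\ell}$ (or $\tau^{\ell^{-1}}$, depending on convention). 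Set $N:=\rho_\ell(\tau)\in 1+\varpi M_2(\Oh)$.

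Next, since $N$ is pro-unipotent, the logarithm $X=\log N$ makes sense in $M_2(L)$, and $N$ is trivial if and only if $X=0$. Write $\Phi=\rho_\ell(\phi)$. The Frobenius relation gives $\Phi X\Phi^{-1}=\ell X$. Suppose $X\neq 0$. Then $X$ is an eigenvector of $\mathrm{Ad}(\Phi)$ with eigenvalue $\ell$, so $X$ is nilpotent: its eigenvalues $\mu$ satisfy $\{\mu\}=\{\ell\mu\}$, and $\ell\neq 1$. Conjugating into a basis adapted to $\ker X$, $\Phi$ becomes upper triangular with diagonal entries $\alpha,\beta$ satisfying $\alpha/\beta=\ell^{\pm1}$.

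The main point, and the step needing care, is to pass from this characteristic-zero relation to a statement about the reduction. The eigenvalues $\alpha,\beta$ of $\Phi$ lie in $\overline{\Z}_p^{\times}$, and their reductions are the eigenvalues of $\rhobar_\ell(\Frob_\ell)$. So the ratio of the eigenvalues of $\rhobar_\ell(\Frob_\ell)$ equals $\ell^{\pm1}$ in $\Fpbar$, contradicting hypothesis (2). Therefore $X=0$, so $N=1$ and $\rho_\ell$ is unramified.

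Hypothesis (1), $\ell\not\equiv 1\pmod p$, can be used to streamline the argument, or as a safety net for the $\ell^{\pm1}$ convention. Since $\ell\neq1$ in $\F$, the fact that $X$ has eigenvalue $\ell$ under $\mathrm{Ad}(\Phi)$ forces nilpotence cleanly. For the eigenvalues of $\mathrm{Ad}(\Phi)$ to contain $\ell$, one needs $\alpha/\beta=\ell^{\pm1}$, since the eigenvalue $1$ is excluded by $\ell\not\equiv1$ even modulo $p$. Alternatively, (1) can be used to run the argument directly modulo $\varpi^n$ by induction on $n$: if $N\equiv1\pmod{\varpi^n}$, write $N=1+\varpi^nY$. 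Then $\overline{\Phi}\,\overline{Y}\,\overline{\Phi}^{-1}=\ell\overline{Y}$ in $M_2(\F)$. The eigenvalues of $\mathrm{Ad}(\rhobar_\ell(\Frob_\ell))$ are $1$ and $(\alpha/\beta)^{\pm1}$, and none of these equals $\ell$ by (1) and (2). Hence $\overline{Y}=0$, so $N\equiv 1\pmod{\varpi^{n+1}}$. This inductive version avoids logarithms entirely, and I would present it as the main proof.
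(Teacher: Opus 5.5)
Your inductive argument is correct and complete. For $N=1+\varpi^nY$ with $n\ge 1$ one has $N^\ell\equiv 1+\ell\varpi^nY \pmod{\varpi^{n+1}}$. So $\Phi N\Phi^{-1}=N^\ell$ makes $\overline{Y}$ an $\ell$-eigenvector of $\mathrm{Ad}(\rhobar_\ell(\Frob_\ell))$. The eigenvalues of that operator are $1,1,\overline{\alpha}/\overline{\beta},\overline{\beta}/\overline{\alpha}$, and all of them differ from $\ell$ by (1) and (2).

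The paper gives no argument of its own. It refers to the analysis preceding Lemma 2 of Diamond--Taylor, or alternatively to reading the statement off Shotton's explicit presentation of the universal lifting ring. Your successive-approximation route is self-contained, and it proves as much as the Shotton citation. Run it with the $\mathfrak{m}_A$-adic filtration for a lift to an arbitrary $A\in\mathcal{C}_\Oh$, using that $\mathrm{Ad}(\overline{\Phi})-\ell$ is invertible on $M_2(\F)$ and hence on $M_2(\F)\otimes_\F\mathfrak{m}_A^n/\mathfrak{m}_A^{n+1}$. This shows every lift to every coefficient ring is unramified, i.e.\ the lifting ring of $\rhobar_\ell$ is the formally smooth unramified one.

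You should, however, drop the logarithm version rather than keep it as an alternative, because it has a real gap. The claim that $N=1$ if and only if $\log N=0$ is false: the $p$-adic logarithm kills $p$-power torsion. For example, $N=\mathrm{diag}(\zeta_p,1)$ lies in $1+\varpi M_2(\Oh)$ when $\zeta_p\in\Oh$ and has $\log N=0$.

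This is also why hypothesis (1) is not a streamlining device or a safety net but is necessary. Suppose $\ell\equiv 1\pmod p$ and $\rhobar_\ell(\Frob_\ell)=\mathrm{diag}(a,b)$ with $a\neq b$, so (2) holds. Twist one diagonal character by a character of $G_{\Q_\ell}$ whose restriction to inertia is a nontrivial order-$p$ character factoring through $\F_\ell^\times$. This gives a ramified lift, with $\rho_\ell(\tau)$ of exactly the torsion shape above.

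The log argument never genuinely uses (1), which already signals that it cannot be complete. Repairing it would need a separate analysis of the finite-order part of $N$. You would use (1) to rule out a Frobenius-fixed nontrivial inertia character, and (2) to rule out Frobenius swapping two of them when $\ell\equiv -1\pmod p$. Your inductive version handles all of this at once, since the eigenvalue $1$ of $\mathrm{Ad}(\overline{\Phi})$ is exactly what (1) excludes.
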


\begin{remark}
This was first proved in the analysis preceding \cite[Lemma~2]{diamond_taylor_1994_lifting}. It can also be deduced from the structure of the universal lifting ring of $\rhobar$ computed in \cite{shotton2016local}.
\end{remark}

Under conditions (1) and (2) in Assumption~\ref{assumption_global_rhobar}, the existence of such a prime is proved in \cite[Lemma~2]{diamond_taylor_1994_lifting}. However, we would like to carry out patching in a multiplicity-one situation so that the patched module is cyclic over $R_\rhobar^{\Box,\psi}[\alpha_p]$, as mentioned in \S\ref{subsubsection_main_construction}. Therefore, we require $\ell$ to satisfy a stronger version of condition (2) in Proposition~\ref{prop_unramified_lifting}:

\begin{enumerate}
    \item[(2')] the ration of the eigenvalues of $\rhobar_\ell(\Frob_\ell)$ is not in $\{1,\ell,\ell^{-1}\}.$ 
\end{enumerate} 
Under this stronger assumption, $\rhobar_\ell(\Frob_\ell)$ has distinct eigenvalues. By further localizing at one of these eigenvalues, we are in the desired multiplicity-one situation. This is also explained in \cite[Remark~3.10]{calegari_geraghty_2018_beyond}.

Kisin proved the existence of a prime $\ell$ satisfying the stronger conditions as long as $p\ge 5$. This is first announced by Kisin in \cite[Lemma 2.2.1]{Kisin09FontaineMazur} and later corrected in \cite[B.4]{GeeKisin2014breuil}. 
\begin{lem}[Kisin]\label{lem_unramified_prime}
    Suppose that $p\ge 5.$ Let $\rhobar:G_{\Q}\to \GL_2(\F)$ be a global Galois representation such that $\rhobar|_{G_{\Q(\zeta_p)}}$ is absolutely irreducible. Then there are infinitely many primes $\ell$ such that conditions (1) and (2') in Proposition \ref{prop_unramified_lifting} hold. 
\end{lem}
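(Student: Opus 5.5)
The plan is a Chebotarev density argument. Both conditions are conditions on the conjugacy class of $\rhobar(\Frob_\ell)$, together with the class of $\ell$ modulo $p$. So it suffices to produce a single element $\sigma$ of the finite group $G := \Gal(K(\zeta_p)/\Q)$ satisfying the corresponding conditions. Here $K$ is the field cut out by $\rhobar$. By Chebotarev, infinitely many $\ell$ unramified in $K(\zeta_p)$ have $\Frob_\ell$ in the conjugacy class of $\sigma$. For such $\ell$, the local representation $\rhobar_\ell$ is unramified, and its Frobenius eigenvalues are those of $\rhobar(\sigma)$. Moreover $\epsilon(\sigma)=\ell \bmod p$.

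In terms of $\sigma$, the conditions become:
\begin{enumerate}
    \item $\epsilon(\sigma)\neq 1$;
    \item the eigenvalues $\alpha,\beta$ of $\rhobar(\sigma)$ satisfy $\alpha/\beta\notin\{1,\epsilon(\sigma),\epsilon(\sigma)^{-1}\}$.
\end{enumerate}

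To construct $\sigma$, I would use the Taylor--Wiles hypothesis. Since $\rhobar|_{G_{\Q(\zeta_p)}}$ is absolutely irreducible, the projective image of $H:=\Gal(K(\zeta_p)/\Q(\zeta_p))$ is either
\begin{enumerate}
    \item[(a)] dihedral of order $2n$ with $n\ge 2$ (but not the Klein four group in an unsuitable way), or
    \item[(b)] one of $A_4$, $S_4$, $A_5$, or
    \item[(c)] contains $\mathrm{PSL}_2(\F')$.
\end{enumerate}
I would first look for a $\tau\in G$ with $\epsilon(\tau)=a\neq 1$, and then modify it by elements $h\in H$, which do not change $\epsilon$. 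The question is whether some $\tau h$ has eigenvalue ratio outside the three-element set $\{1,a,a^{-1}\}$.

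The key point is a counting step. As $h$ ranges over a coset, the projective images of $\tau h$ fill out a whole coset of $\mathrm{proj}(H)$. Eigenvalue ratio $1$ means the projective image is trivial or unipotent, which is a small set. Ratios $a^{\pm1}$ force a fixed value of $\tr^2/\det$. So I need a coset in which not every element has $\tr^2/\det$ lying in the set of at most two bad values, namely $4$ and $(1+a)^2/a$. When $p\ge 5$, there are at least two choices of $a\neq 1$ among $\F_p^\times$ (namely the values of $\epsilon$ on $\Gal(\Q(\zeta_p)/\Q)$, restricted to cosets actually meeting $G$). This extra freedom is what makes the argument work; when $p=3$ the only choice is $a=-1$.

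The main obstacle is the case analysis in (a)--(c). The hardest part is the dihedral case, and $p=5$ with small projective images, where cosets are small and $\tr^2/\det$ takes few values. For the large-image case (c), I would exhibit explicit matrices: elements of a coset of $\mathrm{SL}_2(\F')$ realise many traces. For (a), I would use the fact that the non-cyclic part of a dihedral coset consists of elements of order $2$ projectively, which have trace $0$. Trace $0$ gives ratio $-1$, and this is bad only if $a=-1$. So I would choose $a\neq -1$, which is possible since $p\ge 5$. If this fails in some corner, I would fall back on the correction in \cite[B.4]{GeeKisin2014breuil}, which treats exactly these cases.
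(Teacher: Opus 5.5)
\textbf{There is a genuine gap: the group-theoretic case analysis is not carried out, and the one case you argue rests on a false premise.}

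Your Chebotarev reduction is correct. It suffices to find $\sigma$ with $a:=\epsilon(\sigma)\neq 1$ whose eigenvalue ratio lies outside $\{1,a,a^{-1}\}$. For fixed $a$, the projective images of such $\sigma$ form a coset $\Gamma_a$ of $P_0:=\mathrm{proj}\,\rhobar(G_{\Q(\zeta_p)})$ in $P:=\mathrm{proj}\,\rhobar(G_{\Q})$. The paper gives no argument of its own here; it only cites Kisin and \cite[B.4]{GeeKisin2014breuil}. The case analysis you leave undone is the entire content of the lemma.

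Your dihedral argument needs every coset $\Gamma_a$ to contain reflections. That is, it assumes $P$ is dihedral whenever $P_0$ is. This fails for $P_0\cong V_4\subset P\cong A_4$, which is possible because $V_4$ has normalizer $S_4$ in $\mathrm{PGL}_2(\Fpbar)$.
\begin{itemize}
\item Take $p=7$, and let the $A_4$-field cut out by $\mathrm{proj}\,\rhobar$ contain the cubic subfield of $\Q(\zeta_7)$. The hypotheses allow this, since $V_4$ is irreducible.
\item Then $\Gamma_{-1}=V_4$, whose elements have ratios $\pm 1$, both bad.
\item For $a\in\{2,3,4,5\}$, the coset $\Gamma_a$ consists only of elements of order $3$. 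Their eigenvalue ratio is a primitive cube root of unity, i.e.\ $2$ or $4$.
\item So no involution is available for any $a\neq\pm1$. The values $a=2,4$ fail outright. Only $a\in\{3,5\}$, paired with an element of order $3$, works.
\end{itemize}
So ``choose $a\neq -1$ and take a reflection'' breaks down here. Case (b) is not addressed at all, and deferring ``corners'' to B.4 amounts to citing the result.

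\textbf{A short way to finish.} Put $C:=\epsilon(\ker\mathrm{proj}\,\rhobar)\subseteq\F_p^\times$. Then $\Gamma_a=P_0$ if and only if $a\in C$, and $P/P_0\cong\F_p^\times/C$.
\begin{itemize}
\item Every irreducible finite subgroup of $\mathrm{PGL}_2(\Fpbar)$ contains an involution, whose eigenvalue ratio is $-1$.
\item Hence if $C\not\subseteq\{\pm1\}$, take $a\in C\setminus\{\pm1\}$ and an involution in $P_0$.
\item If $C\subseteq\{\pm1\}$, then $[P:P_0]\ge (p-1)/2\ge 2$, with $P/P_0$ cyclic.
\item By Dickson's classification, $[N(P_0):P_0]\le 2$ for every irreducible $P_0$ except $V_4$. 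For $V_4$, the cyclic subgroups of $S_4/V_4$ have order at most $3$. This forces $p\in\{5,7\}$.
\item For $p=5$, the possible pairs $(P,P_0)$ are $(D_{2m},D_m)$ with $m\ge 2$, $(S_4,A_4)$, and $(\mathrm{PGL}_2(\F_q),\mathrm{PSL}_2(\F_q))$. In each, $P\setminus P_0$ contains an involution, and $\Gamma_2=P\setminus P_0$, so $a=2$ works.
\item For $p=7$, the only pair is $(A_4,V_4)$, treated above.
\end{itemize}
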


\begin{remark}
    As a result of Kisin's lemma, in condition (3) of Assumption \ref{assumption_global_rhobar}, we ask $p\ge 5$ or $N(\rhobar)\ge 5.$
\end{remark}
\subsubsection{Universal Lifting Rings}
Let $G = G_{\Q_\ell}$ and let $\rhobar_\ell:G_{\Q_\ell}\to \GL_2(\F)$ be a continuous representation. When $\ell\neq p,$ it is Shotton's theorem that $R^{\Box}_{\rhobar_\ell}$ and $R^{\Box,\psi}_{\rhobar_\ell}$ are reduced complete intersections, flat and equidimensional over $\Oh.$ See {\cite[Theorem 2.5]{shotton_2018_bm_gln}} for the unrestricted version and {\cite[Proposition 4.3]{BockleKhareManning2021wiles}} for the fixed determinant version.

In fact, the only case we are interested in here is when $\ell$ is a vexing prime (see \ref{rmk_vexing_primes}). In this case, the representation is isomorphic to $\Ind_{G_{\Q_{\ell^2}}}^{G_{\Q_\ell}}\phi$ where $\Q_{\ell^2}$ is the unique unramified quadratic extension of $\Q_\ell$ and $\phi$ is a character of $G_{\Q_{\ell^2}}$ that does not extend to $G_{\Q_\ell}.$ The presentation of the unrestricted deformation ring is explicitly calculated. 

\begin{lem}[{\cite[Proposition 5.1]{shotton2016local}}]
    The universal lifting ring $R^{\Box}_{\rhobar_\ell}$ of $\rhobar_\ell\xrightarrow{\sim} \Ind_{G_{\Q_{\ell^2}}}^{G_{\Q_\ell}}\phi$ is isomorphic to $$\Oh\llbracket X,Y,Z_1,Z_2,Z_3\rrbracket/((1+Y)^{p^{v_p(\ell^2-1)}}-1).$$
\end{lem}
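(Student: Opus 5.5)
The plan is to compute the framed lifting ring directly. I would use that $\rhobar_\ell$ is tamely ramified and that lifts factor through the tame quotient up to the $p$-part, and then exploit the fact that $\rhobar_\ell$ is induced from a character whose two conjugates are distinct.

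\emph{Step 1 (reduction to the tame group).} The wild inertia $P_\ell$ is pro-$\ell$ and $\ell\neq p$, so every lift restricted to $P_\ell$ is determined up to conjugation by $\rhobar_\ell|_{P_\ell}$, and it is conjugate to a lift factoring through the Teichm\"uller lift. The prime-to-$p$ part of tame inertia behaves in the same way. Hence the framed deformation problem reduces to pairs $(\sigma,\tau)$, where $\sigma$ lifts $\rhobar_\ell(\Frob_\ell)$ and $\tau$ lifts the image of a generator of the pro-$p$ part of tame inertia. These must satisfy $\sigma\tau\sigma^{-1}=\tau^{\ell}$ and $\tau^{p^{a}}$ of the appropriate form, where $a=v_p(\ell^2-1)$. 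Because the pro-$p$ quotient of tame inertia is killed after passing to the $\ell^2$-fixed part, only a quotient of $\Z_p$ matters.

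\emph{Step 2 (diagonalize $\tau$).} Since $\rhobar_\ell$ is vexing, $\phi\neq\phi^{\ell}$ on inertia. After a change of framing, $\tau$ can therefore be written $\mathrm{diag}(\tilde\phi(1+Y'),\,\ast)$ in a basis lifting the eigenbasis. Here the discrepancy $\ell\equiv -1\pmod p$ forces the second eigenvalue to be the $\ell$-th power of the first. The condition $\sigma\tau\sigma^{-1}=\tau^\ell$ then says that $\sigma$ is antidiagonal, and $(1+Y)^{\ell^2}=1+Y$ on the $p$-part. This becomes $(1+Y)^{p^{a}}=1$ in the $p$-power-torsion part.

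\emph{Step 3 (count the variables).} The antidiagonal $\sigma$ contributes two entries. After fixing one entry via framing normalization, one variable remains, namely $X$. Together with the three-dimensional conjugation orbit coming from the framing ($\GL_2/$torus, giving $Z_1,Z_2,Z_3$) and the single relation on $Y$, this yields the stated presentation.

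I expect Step 2 to be the main obstacle: making the diagonalization and the framing variables precise without losing formal smoothness. I would follow \cite[Proposition 5.1]{shotton2016local} and express the framed ring as a formally smooth extension of the unframed one.
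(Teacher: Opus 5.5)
Your target presentation is right, but Step 2 does not work with the $\tau$ you fixed in Step 1. There $\tau$ generates the pro-$p$ part $\Z_p(1)$ of tame inertia, i.e.\ it is an element of a pro-$p$ Sylow subgroup of $I_\ell$. Since $I_\ell\subset G_{\Q_{\ell^2}}$ acts on $\rhobar_\ell$ through $\phi\oplus\phi^{\Frob}$, with values in $\F^\times$ (a group of order prime to $p$), you have $\rhobar_\ell(\tau)=1$. So there is no eigenbasis of $\rhobar_\ell(\tau)$ to lift, and $\tilde\phi(\tau)=1$.

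For the same reason, the relation $\sigma\tau\sigma^{-1}=\tau^\ell$ cannot be what makes $\sigma$ antidiagonal. At any lift with $\tau\mapsto 1$ (the locus $Y=0$, e.g.\ the Teichm\"uller lift $\Ind\tilde\phi$ itself) it places no condition on $\sigma$ at all. The hypothesis $\phi\neq\phi^{\Frob}$ is only visible on the prime-to-$p$ inertia $I':=\ker(I_\ell\to\Z_p(1))$, i.e.\ wild inertia together with the prime-to-$p$ tame part. That is exactly what your Step 1 discarded as ``rigid''.

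\textbf{The fix.} Use that rigidity as a normalization instead of throwing it away. After conjugating by $\ker(\GL_2(A)\to\GL_2(\F))$ you may assume $\rho|_{I'}=\tilde\phi|_{I'}\oplus\tilde\phi^{\Frob}|_{I'}$. These two characters differ mod $\varpi$: otherwise $\phi/\phi^{\Frob}$, which is trivial on $\sigma^2$, would factor on $I_\ell$ through $\Z_p(1)$ while having prime-to-$p$ order, hence would be trivial.

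Since $I'\triangleleft G_{\Q_\ell}$ and $\tilde\phi$ is a character of $G_{\Q_{\ell^2}}$, every element of $\rho(G_{\Q_{\ell^2}})$ commutes with $\rho(I')$. The latter contains a diagonal matrix with residually distinct entries, so $\rho|_{G_{\Q_{\ell^2}}}=\chi\oplus\chi^{\sigma}$ is diagonal. Meanwhile $\rho(\sigma)$ conjugates each $\rho(h)$, $h\in I'$, to its swap, and is therefore antidiagonal. Thus $\rho\cong\Ind\chi$ for a lift $\chi$ of $\phi$.

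Only at this point does $\sigma\tau\sigma^{-1}=\tau^\ell$ give $\rho(\tau)=\mathrm{diag}(1+Y,(1+Y)^\ell)$ with $(1+Y)^{\ell^2-1}=1$, i.e.\ $(1+Y)^{p^{a}}=1$ for $a=v_p(\ell^2-1)$. The congruence $\ell\equiv-1\pmod p$ only guarantees $a\ge 1$; it is not what produces the eigenvalue relation. Equivalently, the unframed ring is the deformation ring of $\phi$, which by local class field theory for $\Q_{\ell^2}^\times$ is $\Oh\llbracket X\rrbracket\otimes_\Oh\Oh[\Z/p^{a}]$.

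\textbf{Step 3 count.} Your variable count is off: $\GL_2$ modulo the diagonal torus is $2$-dimensional, not $3$. Once the non-central part of the torus has been used to normalize one entry of $\sigma$, the remaining framing freedom is $\GL_2$ modulo its centre, i.e.\ the $3$-dimensional $\mathrm{PGL}_2$. This is the usual $R^\Box_{\rhobar_\ell}\cong R^{\univ}_{\rhobar_\ell}\llbracket Z_1,Z_2,Z_3\rrbracket$ for absolutely irreducible $\rhobar_\ell$.
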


\begin{remark} \label{rmk_vexing_prime_local_deformation_ring} 
    Following Shotton's idea, one can show that the fixed-determinant lifting ring of $\rhobar_\ell \xrightarrow{\sim} \Ind_{G_{\Q_{\ell^2}}}^{G_{\Q_\ell}}\phi$ is isomorphic to $$\Oh\llbracket Y,Z_1,Z_2,Z_3\rrbracket /((1+Y)^{p^{v_p(\ell^2-1)}}-1).$$ In particular, it is a reduced complete intersection, flat and equidimensional of dimension $3$ over $\Oh.$ 
    Its special fiber is a geometrically irreducible local complete intersection of Krull dimension $3.$
\end{remark}

When $\ell = p,$ let us write $\rbar$ for the representation $\rhobar_p$ to be consistent with the notation in the later sections. The following theorem is obtained in \cite{Bockle_Iyengar_Paskunas_2023_local_deformation_rings}. 

\begin{thm}[B\"ockle--Iyengar--Pa\v{s}k\=unas] \label{thm_unrestricted_deformation_ring_at_p}
    The special fiber $R^{\Box,\psi}_\rbar\otimes_\Oh\F$ is a local complete intersection of Krull dimension $6$ and it is an integral domain. 
\end{thm}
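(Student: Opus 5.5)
The plan is to get the local complete intersection property from an obstruction-theoretic presentation together with a dimension bound. Normality, and hence the domain property, would then follow from Serre's criterion by showing that the singular locus has codimension at least~$2$.

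\textbf{Presentation.} Let $h^i := \dim_\F H^i(G_{\Q_p},\ad^0\rbar)$. Standard deformation theory presents $R^{\Box,\psi}_\rbar$ as
\[
\Oh\llbracket x_1,\dots,x_n\rrbracket/(f_1,\dots,f_r),
\qquad n = h^1 + 3 - h^0,\quad r = h^2 .
\]
The local Euler characteristic formula gives $h^0-h^1+h^2 = -3$, so $n-r = 6$. Reducing modulo $\varpi$ gives the same presentation of $R^{\Box,\psi}_\rbar\otimes_\Oh\F$ over $\F\llbracket x_1,\dots,x_n\rrbracket$. Hence every irreducible component of the special fiber has dimension at least $6$. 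It is a local complete intersection of dimension $6$ as soon as
\[
\dim R^{\Box,\psi}_\rbar\otimes_\Oh\F \le 6,
\]
because then $f_1,\dots,f_r$ form part of a system of parameters in a regular ring, hence a regular sequence.

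\textbf{Upper bound on dimension.} To prove $\dim \le 6$, I would stratify $X := \Spec(R^{\Box,\psi}_\rbar\otimes_\Oh\F)$ according to the specialisation $\rho_x$ of the universal framed representation at each (non-closed) point $x$, with values in $\overline{k(x)}$.
\begin{itemize}
\item[(a)] Points with $H^2(G_{\Q_p},\ad^0\rho_x)=0$. Local duality gives $H^2(G_{\Q_p},\ad^0\rho_x)\simeq H^0(G_{\Q_p},\ad^0\rho_x(1))^\vee$. At such points the local ring of $X$ is formally smooth of the expected relative dimension. This is seen by completing at $x$ and comparing with the deformation theory of $\rho_x$ over the complete local ring $\widehat{\Oh}_{X,x}$, as in Kisin's arguments for generic fibres. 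The stratum therefore has dimension at most $6$.
\item[(b)] The closed locus $Z$ where $H^0(G_{\Q_p},\ad^0\rho_x(1))\ne 0$. This forces $\rho_x$ to be reducible with a sub or quotient related by $\epsilon$, or to be a twist of an induced representation with a special symmetry. I would parametrise such $\rho_x$ by pairs of characters together with an extension class, and then add the $3$ framing dimensions. The aim is to bound $\dim Z \le 4$, using the dimensions of the character deformation spaces and of the spaces $H^1(G_{\Q_p},\chi)$ in families. This could instead be obtained from Pa\v{s}k\=unas' description of the pseudo-deformation ring and the fibres of the map from deformations to pseudo-deformations.
\end{itemize}
Granting both, $\dim X\le 6$, so $X$ is a complete intersection of dimension $6$.

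\textbf{Domain.} A local complete intersection is Cohen--Macaulay and therefore satisfies $(S_2)$. By (a), every point outside $Z$ is a regular point. By (b), $Z$ has codimension at least $2$ in the equidimensional $6$-dimensional scheme $X$, so $(R_1)$ holds. Serre's criterion then shows that $X$ is normal. A normal complete local ring is a domain, which gives the integrality.

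\textbf{Main obstacle.} I expect the difficulty to lie in the codimension estimate for $Z$. It must be done at non-closed points of the special fiber, whose residue fields are not finite; for these I would use the Jacobson property of the relevant localisations to reduce to closed points of the punctured spectrum. The most delicate cases are $\rbar$ a twist of an extension of $1$ by $\epsilon$, and small $p$ (notably $p=3$). In these cases the reducible locus is largest, and I would bound it first via the explicit pseudo-deformation rings of $\rbarss$.
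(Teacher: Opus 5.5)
The paper itself gives no proof of this statement; it quotes B\"ockle--Iyengar--Pa\v{s}k\=unas. Your outline follows the architecture of their argument: a presentation with $h^2$ relations gives a complete intersection once $\dim\le 6$, then Cohen--Macaulayness gives $(S_2)$, a codimension-two bound on the obstructed locus gives $(R_1)$, and so the ring is normal and hence a domain. The formal steps are fine: the Krull lower bound, the regular-sequence argument, Serre's criterion, and the use of the Jacobson property of the punctured spectrum.

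\textbf{Step (a) fails as written.} Let $x$ be a closed point of the punctured spectrum of $X=\Spec(R^{\Box,\psi}_\rbar\otimes_\Oh\F)$.
\begin{itemize}
\item The residue field $\kappa(x)$ is a local field of characteristic $p$, finite over $\F((t))$. It is imperfect, with $\dim_{\kappa(x)}\Omega_{\kappa(x)/\F}=1$, so $\widehat{\mathcal O}_{X,x}$ has no canonical coefficient field.
\item Kisin's lattice argument only produces $\F$-algebra maps $R^{\Box,\psi}_\rbar\to B$, not maps compatible with a chosen coefficient field. So the framed deformation functor of $\rho_x$ on artinian local $\kappa(x)$-algebras is pro-represented by the completion of $\mathcal O_{X,x}\otimes_\F\kappa(x)$ along the diagonal. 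That completion is, non-canonically, $\widehat{\mathcal O}_{X,x}\llbracket u\rrbracket$, not $\widehat{\mathcal O}_{X,x}$.
\item When $H^2(G_{\Q_p},\ad^0\rho_x)=0$, this functor is smooth with tangent space $Z^1(G_{\Q_p},\ad^0\rho_x)$ of dimension $h^1-h^0+3=6$. Hence $\widehat{\mathcal O}_{X,x}$ is regular of dimension $5$.
\item Only then does catenarity of complete local domains, $\dim C=1+\dim\mathcal O_{C,x}$ for a component $C\ni x$, give $\dim C\le 6$.
\end{itemize}
If you transplant the generic-fibre argument verbatim, you get local dimension $6$ at such points and only the useless bound $\dim X\le 7$. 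You need to state and prove this characteristic-$p$ comparison; it is not a formal consequence of Kisin's lemma.

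\textbf{Step (b) is only a parameter count.} As you say, (b) is the real content, and it has to be exact because the bound can be attained. Take $p=3$ and $\rbar$ a twist of $\begin{pmatrix}\epsilon&*\\0&1\end{pmatrix}$, and consider nonsplit extensions $0\to\delta_1\to\rho_x\to\delta_2\to 0$ with $\delta_1=\delta_2\epsilon$ and $\delta_1\delta_2$ fixed. These lie in $Z$, since $\epsilon^2=1$ gives a nilpotent map $\rho_x\twoheadrightarrow\delta_2=\delta_1\epsilon\hookrightarrow\rho_x(1)$. They form a piece of dimension $4$: $\mathbb P(H^1(G_{\Q_3},\epsilon))$ is a line, and framing adds $3$ directions. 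So $Z$ has codimension exactly $2$ there, and any lost dimension in (a) or (b) breaks the argument. Turning such counts into a bound on the Krull dimension of $Z$, at points whose residue fields are not finite (for instance via the fibres of the map to the pseudo-deformation space, type by type), is where the real work of the cited paper lies. Until you supply it, (b) is an expectation rather than a proof.
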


\begin{remark}
    Since the statement of the theorem is irrelevant to the coefficient ring $\Oh,$ from the moduli interpretation of $R_\rbar^{\Box,\psi}\otimes_\Oh\F,$ it in fact implies that the ring $R^{\Box,\psi}_\rbar\otimes_\Oh\F$ is geometrically integral. 
\end{remark}
\subsubsection{Crystalline Representations}\label{subsubsection_crystalline_representations}
In this part, we recall properties of crystalline representations and crystalline deformation rings. 
We first clarify our conventions for Hodge--Tate weights and the crystalline Frobenius, as they vary across the literature. Along the way, we indicate how our choices compare to those in other sources.
Let $r:G_{\Q_p}\to \GL_n(L)$ be a $p$-adic representation of $G_{\Q_p}.$ For each integer $i,$ define $$d_i := \dim_F \left(r \otimes_{\Q_p} \C_p(-i)\right)^{G_{\Q_p}}.$$
We say that $i$ is a Hodge--Tate weight of $r$ with multiplicity $d_i$ if $\sum_i d_i = n.$  With this normalization, the cyclotomic character $\chi_\cyc$ has Hodge--Tate weight $1$ (rather than $-1$).Define the covariant crystalline functor $$D_\cris(r):=\left(r\otimes_{\Q_p}B_{\cris}\right)^{G_{\Q_p}}$$ where $B_\cris$ is the Fontaine's crystalline period ring. We also define the contravariant version: $$D_\cris^*(r) := D_\cris(r^\vee) = \left(r^\vee\otimes_{\Q_p}B_\cris\right)^{G_{\Q_p}} = \Hom_{G_{\Q_p}}(r,B_{\cris}).$$ We say that $r$ is \emph{crystalline} if $\dim_L D_\cris(r) =n,$ or equivalently, $\dim_L D^*_\cris(r) = n.$
Kisin \cite{Kisin08Potentially,Kisin09FontaineMazur} uses the contravariant functor $D_\cris^*$, while the six-author patching paper \cite{Caraiani_Emerton_Gee_Geraghty_PaskunasShin_2018_GL2Qp} adopts the covariant $D_\cris.$ Both are weakly admissible filtered $\varphi$-modules. The filtration jumps in $D_\cris(r)$ occur at the negatives of the Hodge--Tate weights of $r,$ whereas for $D^*_\cris(r),$ they occur at the Hodge--Tate weights themselves. 

In the two-dimensional case, we explicitly have $$r^\vee \xrightarrow{\sim} r\otimes \det(r)^{-1}$$ and hence: $$D^*_\cris(r)\xrightarrow{\sim}D_\cris(r)\otimes_L D_\cris(\det(r)^{-1}).$$ The crystalline Frobenius on $D_\cris^*(r)$ is given by $\varphi\det(\varphi)^{-1}$ where $\varphi$ is the crystalline Frobenius on $D_\cris(r).$ 

We can associate a Weil--Deligne representation $\WD(r)$ to an $n$-dimensional representation $r$ by taking the underlying vector space $D_\cris(r),$ letting the arithmetic Frobenius to act as $\varphi^{-1}$ and setting the monodromy operator to zero.

In what follows, we refer to the Frobenius on $D_\cris^*(r)$ the crystalline Frobenius associated to $r.$ In the two dimensional case, since $\varphi\det(\varphi)^{-1}$ and $\varphi^{-1}$ have the same characteristic polynomials, the characteristic polynomial of the crystalline Frobenius of $r$ coincides with that of the arithmetic Frobenius on $\WD(r).$ 

The following proposition is a direct consequence of the weak admissibility of $D_\cris(r^*)$ and the $p$-adic Hodge theory for characters. 
\begin{prop}\label{prop_crystalline_characteristic_polynomial}
    Suppose that $r:G_{\Q_p}\to \GL_2(L)$ is a 2-dimensional crystalline representation of Hodge--Tate weights $0$ and $k-1$ for some integer $k\ge 2.$ Let $X^2-aX + b$ be the characteristic polynomial of the crystalline Frobenius acting on $D_\cris^*(r).$
    \begin{enumerate}
        \item The $p$-adic valuation $v_p(a)$ of $a$ is non-negative. The representation $r$ is absolutely irreducible if and only if $v_p(a)>0$. When $a$ is a $p$-adic unit, the representation $r$ is an extension of an unramified character $\eta$ by a crystalline character of Hodge--Tate weight $k-1$ and $v_p(\eta(\Frob_p)-a)>0.$
        \item We have $$b=(\det(r)/\chi_\cyc^{k-1})(\Frob_p)p^{k-1},$$ where $\Frob_p$ is the arithmetic Frobenius at $p.$
    \end{enumerate}
\end{prop}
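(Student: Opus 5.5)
The plan is to work with the weakly admissible filtered $\varphi$-module $D := D^*_\cris(r)$. It is two-dimensional over $L$, and its filtration jumps sit at the Hodge--Tate weights $0$ and $k-1$. After enlarging $L$, factor $X^2-aX+b=(X-\alpha)(X-\beta)$ with $\alpha,\beta\in L$. The whole argument uses Fontaine's weak admissibility in two forms:
\begin{itemize}
\item \emph{equality on $D$:} $t_N(D)=t_H(D)$, i.e.\ $v_p(b)=v_p(\alpha)+v_p(\beta)=k-1$;
\item \emph{inequality on subobjects:} $t_H(D')\le t_N(D')$ for every $\varphi$-stable subspace $D'$.
\end{itemize}
I also use that $D^*_\cris$ is an exact contravariant equivalence between crystalline representations and weakly admissible modules. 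Under it, $\varphi$-stable subobjects of $D$ correspond to crystalline quotients of $r$, and subobjects with $t_H=t_N$ are exactly those coming from quotient representations.

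For part (2), I would pass to determinants, since $\det D^*_\cris(r)=D^*_\cris(\det r)$ and the determinant of $\varphi$ is $b$. Write $\det r=\mu\cdot\chi_\cyc^{k-1}$ with $\mu=\det(r)/\chi_\cyc^{k-1}$ crystalline of Hodge--Tate weight $0$, hence unramified. By multiplicativity of $D^*_\cris$ on characters, it suffices to compute $\varphi$ on $D^*_\cris(\chi_\cyc)$ and on $D^*_\cris(\mu)$.
\begin{itemize}
\item On $D^*_\cris(\chi_\cyc)=\Hom(\chi_\cyc,B_\cris)$, spanned by $t$ (since $\varphi(t)=pt$), $\varphi$ acts by $p$.
\item On $D^*_\cris(\mu)$, $\varphi$ acts by $\mu(\Frob_p)$, computed with $W(\Fpbar)$-periods.
\end{itemize}
Together these give $b=\mu(\Frob_p)p^{k-1}$. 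This is the step where I expect the real care is needed: the sign and arithmetic-versus-geometric Frobenius conventions must match those fixed above, i.e.\ the characteristic polynomial agrees with that of arithmetic Frobenius on $\WD(r)$. I would check this against the stated comparison between $D_\cris$ and $D^*_\cris$ rather than trusting any single reference.

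For part (1), apply the subobject inequality to the $\varphi$-stable lines $L\cdot e_\alpha$ and $L\cdot e_\beta$. A line has $t_H=k-1$ if it equals $\mathrm{Fil}^{k-1}$ and $t_H=0$ otherwise. In either case $0\le t_H\le v_p(\alpha)$, and similarly for $\beta$. Hence $v_p(\alpha),v_p(\beta)\ge 0$ and so $v_p(a)\ge 0$. The two directions of the irreducibility criterion then go as follows.
\begin{itemize}
\item \emph{$r$ reducible $\Rightarrow$ $a$ a unit.} A reducible $r$ gives a $\varphi$-stable line $D'$ with $t_H(D')=t_N(D')\in\{0,k-1\}$. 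By the equality on $D$, the two eigenvalues then have valuations $0$ and $k-1$. Since $k\ge2$, $a=\alpha+\beta$ is a unit.
\item \emph{$a$ a unit $\Rightarrow$ $r$ reducible.} If $a$ is a unit, say $v_p(\alpha)=0$, then $v_p(\beta)=k-1>0$, so $\alpha\neq\beta$. The line $L e_\alpha$ satisfies $0\le t_H\le 0$, so $t_H=t_N=0$ and it is a weakly admissible subobject.
\end{itemize}
So $r$ is absolutely irreducible iff $v_p(a)>0$.

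In the unit case, the subobject $L e_\alpha$ corresponds to a crystalline quotient character $\eta$ of $r$ of Hodge--Tate weight $0$, hence unramified. Its kernel is a crystalline character of weight $k-1$, so $r$ is an extension of $\eta$ by a crystalline character of Hodge--Tate weight $k-1$. By the same character computation as in (2), $\eta(\Frob_p)=\alpha$. Finally $\alpha-a=-\beta$ has positive valuation, giving $v_p(\eta(\Frob_p)-a)>0$.
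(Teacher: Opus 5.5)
Your proposal is correct and follows the paper's route: the paper only remarks that the proposition is a direct consequence of weak admissibility of $D^*_\cris(r)$ and $p$-adic Hodge theory for characters, and you have filled in those details, including the correct identification of $\varphi$ on $D^*_\cris(\mu)$ with $\mu(\Frob_p)$ for unramified $\mu$. One small addition: in the unit case the roots $\alpha,\beta$ have different valuations, so they already lie in $L$, and hence $\eta$ is defined over $L$ itself without enlarging the coefficient field.
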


The crystalline representations in this paper are those coming from modular forms:
\begin{thm}[{\cite[Theorem 1.2.4]{Scholl1990motives}}] \label{thm_Scholl_rhof_crystalline_at_p}
    Let $f=\sum_n a_np^n$ be a normalized cuspidal eigenform of weight $k\ge 2$, level $U_1(N)$ with Nebentypus character $\chi.$ Let $p\nmid N$ be a prime and let $\rho_f:G_{\Q}\to \GL_2(L)$ be the $p$-adic Galois representation attached to $f.$ Then $\rho_f|_{G_{\Q_p}}$ is crystalline with Hodge--Tate weights $0$ and $k-1$ and the characteristic polynomial of the crystalline Frobenius is $$X^2-a_p(f)X + \chi(p)p^{k-1}.$$ 
\end{thm}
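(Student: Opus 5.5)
The plan is to realize $\rho_f$ geometrically and use $p$-adic comparison theorems, following Scholl's construction of the motive attached to $f$.

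First, attach $\rho_f$ to the cohomology of a smooth projective variety with good reduction at $p$. Take the Kuga--Sato variety $W = \widetilde{\mathcal{E}_{U_1(N)}^{k-2}}$, a smooth compactification of the $(k-2)$-fold fibre product of the universal generalized elliptic curve over $X_{U_1(N)}$. It is smooth and proper over $\Z[1/N]$; by Deligne and Scholl this uses the representability guaranteed by $N$ containing a divisor $r\ge 5$, and one passes to an auxiliary level otherwise. Scholl constructs a projector $\Pi_\epsilon$ in the algebra of correspondences, built from the action of $(\Z/N\Z)^2\rtimes\mu_2$ and of the symmetric group. The Hecke algebra $\T$ acts on $\Pi_\epsilon H^{k-1}_{\text{\'et}}(W_{\overline{\Q}},\Q_p)\otimes L$ through correspondences, and the $\lambda_f$-eigenspace is $\rho_f$ (or its dual, depending on normalization) by Deligne's construction and the Eichler--Shimura relation. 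Since $p\nmid N$, $W$ has good reduction at $p$. By Faltings' crystalline comparison theorem, $H^{k-1}_{\text{\'et}}(W_{\overline{\Q}_p},\Q_p)$ is crystalline. Crystallinity is preserved under taking the direct summand cut out by the projector and the Hecke eigenspace, because these come from algebraic correspondences compatible with the comparison isomorphism. So $\rho_f|_{G_{\Q_p}}$ is crystalline.

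Next, the Hodge--Tate weights. By the comparison isomorphism, they are read off from the Hodge filtration on $\Pi_\epsilon H^{k-1}_{\rm dR}(W_L/L)$. This piece is $2$-dimensional over the Hecke field after localizing at $\lambda_f$. Its Hodge filtration has jumps only in degrees $0$ and $k-1$: the $\mathrm{Fil}^{k-1}$ part is identified with $H^0(X,\omega^{\otimes k}(-\infty))[\lambda_f]$ via the Kodaira--Spencer isomorphism, and the degree-$0$ part is its Serre dual $H^1(X,\omega^{\otimes 2-k})[\lambda_f]$. With the paper's convention that $\chi_\cyc$ has weight $1$, this gives weights $\{0,k-1\}$. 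The determinant check, $\det\rho_f=\chi\chi_\cyc^{k-1}$ up to finite order, confirms the assignment.

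Finally, the Frobenius. The crystalline comparison identifies $D_\cris^*$ of the étale cohomology with the crystalline cohomology $H^{k-1}_{\rm cris}(W_{\F_p}/W(\F_p))[1/p]$, compatibly with Frobenius and with correspondences. By Katz--Messing, the characteristic polynomial of crystalline Frobenius on the $\Pi_\epsilon$, $\lambda_f$-piece equals that of geometric Frobenius on the $\ell$-adic cohomology of the special fibre for any $\ell\ne p$. The $\ell$-adic side is computed by the Eichler--Shimura congruence relation $T_p = F + \langle p\rangle p^{k-1}F^{-1}$ on $\Pi_\epsilon H^{k-1}(W_{\overline{\F}_p},\Q_\ell)$. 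This gives characteristic polynomial $X^2-a_p(f)X+\chi(p)p^{k-1}$.

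I expect the main obstacle to be bookkeeping rather than new input. One has to track the covariant versus contravariant conventions for $D_\cris$ and the arithmetic versus geometric Frobenius (cf.\ the discussion preceding Proposition~\ref{prop_crystalline_characteristic_polynomial}), so that the polynomial comes out as $X^2-a_pX+\chi(p)p^{k-1}$ rather than its dual twist. One also has to ensure Katz--Messing applies to the projector-defined summand; this holds because $\Pi_\epsilon$ is an algebraic correspondence defined over $\Z[1/N]$. For $k=2$ everything reduces to the Tate module of $J_1(N)$, which has good reduction at $p$, and the statement follows from Fontaine's theory of $p$-divisible groups together with Eichler--Shimura.
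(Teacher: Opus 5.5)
Your outline is correct and is essentially the argument behind the cited result. The paper gives no proof of its own and simply quotes Scholl's Theorem~1.2.4, whose proof runs exactly as you describe: the Kuga--Sato variety with good reduction at $p$, Faltings' crystalline comparison, Katz--Messing, and the Eichler--Shimura congruence relation.

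Beyond the normalization bookkeeping you already flag, the one point needing care is the Hecke-equivariance of the comparison isomorphism, i.e.\ its compatibility with algebraic correspondences. That compatibility is what lets you pass from all of $H^{k-1}$ to the $\lambda_f$-piece in the Frobenius computation. Crystallinity of the summand needs no such input, since subrepresentations of crystalline representations are crystalline.
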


Crystalline deformation rings are defined by {\cite[Corollary 2.7.7]{Kisin08Potentially}} by taking the Zariski closure of crystalline lifts of $\rbar$ satisfying the desired weight condition. We include here the version we need. 
\begin{thm}[Kisin]\label{thm_Kisin_crystalline_generic_fiber}
    Suppose that $k \ge 2.$ There is a unique (possibly trivial) quotient $R^{\Box,\psi}_\rbar(k)$ with the following properties:
    \begin{enumerate}
        \item $R^{\Box,\psi}_\rbar(k)$ is $p$-torsion free, $R^{\Box,\psi}_\rbar(k)[1/p]$ is formally smooth and equidimensional of dimension $4$. 
        \item If $L'/L$ is a finite extension, then a map $x:R^{\Box}_\rbar\to L'$ factors through $R^{\Box,\psi}_\rbar(k)$ if and only if the corresponding $E'$-representation is crystalline of Hodge--Tate weights $(0,k-1)$ and has determinant $\psi.$
    \end{enumerate}
\end{thm}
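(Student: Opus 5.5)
The plan is to follow Kisin's construction in \cite{Kisin08Potentially}. First I define $R^{\Box,\psi}_\rbar(k)$ and get (2) and uniqueness formally; then I prove (1) by a local computation on the generic fiber.

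\emph{Definition, uniqueness, and the moduli input.} Let $\Sigma$ be the set of closed points $x$ of $\Spec R^{\Box,\psi}_\rbar[1/p]$ whose associated representation is crystalline with Hodge--Tate weights $(0,k-1)$; the determinant is $\psi$ automatically. Set
\[
R^{\Box,\psi}_\rbar(k) := R^{\Box,\psi}_\rbar\Big/\bigcap_{x\in\Sigma}\bigl(\ker(x)\cap R^{\Box,\psi}_\rbar\bigr).
\]
\begin{itemize}
\item \emph{$p$-torsion freeness.} Each $\ker(x)\cap R^{\Box,\psi}_\rbar$ is the kernel of a map to an order in $L'$, and such an order is $\Oh$-torsion free. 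So the quotient is $p$-torsion free.
\item \emph{Uniqueness.} A $p$-torsion-free quotient is determined by its generic fiber. Once that fiber is known to be reduced (it will be regular, by the second step), it is determined by its closed points, since $R^{\Box,\psi}_\rbar[1/p]$ is Jacobson (Lemma \ref{lem_Taylor}).
\item \emph{Property (2).} The substantive part is that $\Sigma$ is Zariski closed in the generic fiber, so that every $L'$-point of the quotient lies in $\Sigma$. Here I use Kisin's moduli of Breuil--Kisin modules. Over $\Spec R^{\Box,\psi}_\rbar$ one has a projective scheme $\mathscr{GR}$ parametrizing $\mathfrak{S}$-lattices of height $\le k-1$ in the \'etale $\varphi$-module of the universal lift. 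Its image is closed. On generic fibers, $\mathscr{GR}[1/p]$ maps isomorphically onto the locus of semistable representations with Hodge--Tate weights in $[0,k-1]$. Inside this locus, the conditions ``Hodge--Tate type exactly $\{0,k-1\}$'' and ``$N=0$'' each cut out a union of connected components. Hence $\Sigma$ is closed.
\end{itemize}

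\emph{Formal smoothness and dimension.} At each closed point $x\in\Sigma$, I compare the completed local ring of the generic fiber with the framed deformation functor of the filtered $\varphi$-module $D_\cris(r_x)$, with the Hodge type fixed and the determinant fixed.
\begin{itemize}
\item \emph{Unobstructedness.} Deformations of $(\varphi,\mathrm{Fil})$ over Artinian $L'$-algebras are unobstructed: $\varphi$ is a free invertible matrix, and $\mathrm{Fil}$ varies in a smooth flag variety. Weak admissibility is an open condition. The functor $D_\cris$ is an equivalence onto weakly admissible modules, compatible with deformations. So the completed local ring is formally smooth.
\item \emph{Tangent dimension.} The tangent space is $H^1_f(G_{\Q_p},\ad^0 r_x)$ plus the framing contribution, and I expect it to have dimension $4$. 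Since the Hodge--Tate weights are distinct, $\dim H^1_f(\ad^0)=\dim H^0(\ad^0)+\dim(\ad^0 D_{\rm dR}/\mathrm{Fil}^0)=h^0+1$. The framing adds $\dim \ad - \dim H^0(\ad)=4-(1+h^0)=3-h^0$. The total is $4$, independently of $x$, which gives equidimensionality.
\end{itemize}

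The main obstacle is the closedness of the crystalline locus together with the identification of its completed local rings. Both rest on the theory of Breuil--Kisin modules and the full faithfulness results behind it. I would quote these from \cite{Kisin08Potentially} rather than reprove them. The remaining steps are formal once those inputs are in place.
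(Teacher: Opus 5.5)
Your outline is Kisin's own construction, which is all the paper relies on here: it gives no argument and simply cites \cite[Corollary 2.7.7]{Kisin08Potentially}. You correctly single out the closedness of the crystalline locus and the identification of its completed local rings as the inputs to quote, and your tangent-space count $(h^0+1)+(3-h^0)=4$ is right. One correction: inside the semistable locus of Hodge type $\{0,k-1\}$, the condition $N=0$ is closed but not open in general. For $k=2$ and $\rbar$ a peu ramifi\'ee extension of $1$ by $\epsilon$, the lifts with extension class $\kappa(1+p)+t\,\kappa(p)$ (where $\kappa$ is the Kummer map and $t\in\varpi\Oh$) are semistable for all $t$ but crystalline only at $t=0$. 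So $N=0$ does not cut out a union of connected components, but closedness is all your argument uses for $\Sigma$.
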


\begin{remark} \label{rmk_change_det_of_deformation_ring}
        In \cite{Kisin09FontaineMazur}, the notation $R^{\Box,\psi}(k,\id,\rbar)$ refers to the framed crystalline deformation ring parametrizing lifts of $\rbar$ with Hodge--Tate weights $\{0,k-1\}$ and determinant equal to $\psi\chi_\cyc$. In contrast, our notation $R^{\Box,\psi}_\rbar(k)$ denotes the framed crystalline deformation ring where the determinant is $\psi$ (i.e., without the cyclotomic twist). If two characters $\psi$ and $\psi'$ differ by an unramified character, then the corresponding crystalline deformation rings are isomorphic by the twisting argument in \hyperref[subsubsection_notation_and_conventions]{Notation and Conventions}. 
\end{remark}

Let $\alpha_p$ be the element in $R^{\Box,\psi}_\rbar(k)[1/p]$ such that if $x:R^{\Box,\psi}_\rbar(k)[1/p]\to \Qpbar$ is an $L$-algebra homomorphism, then $\alpha_p(x)$ is the trace of the crystalline Frobenius element. Since $R^{\Box,\psi}_\rbar(k)[1/p]$ is a Jacobson ring by Lemma \ref{lem_Taylor}, this uniquely determines $\alpha_p.$ 

\begin{example} \label{example_crystalline_deformation_rings}
    When $\rbar|_{I_p} \sim \begin{pmatrix}
        \epsilon_2^{k_0-1} & 0\\
        0 & \epsilon_2^{p(k_0-1)}
    \end{pmatrix}$ for some integer $2\le k_0\le p$, the explicit presentations of $R^{\Box,\psi}_\rbar(k)$ are calculated in the following cases: $$R^{\Box,\psi}_\rbar(k) \xrightarrow{\sim} \begin{cases}
        \Oh\llbracket x_1,x_2,x_3,\alpha_p\rrbracket & 2\le k\le p+1 {\rm \ and\ }k \equiv k_0\pmod{p-1}\\
        \Oh\llbracket x_1,x_2,x_3,y,\alpha_p\rrbracket/(y\alpha_p - p) & p+2\le k\le 2p-1 {\rm \ and\ }k \equiv k_0\pmod{p-1}
    \end{cases}.$$  
    When $\rbar|_{I_p}\sim \begin{pmatrix}
        \epsilon & *\\
        0 & 1
    \end{pmatrix}$ but $\rbar$ itself is not up to twist an extension of $1$ by $\epsilon,$ we have $$R_\rbar^{\Box,\psi}(k) \xrightarrow{\sim} \Oh\llbracket x_1,x_2,x_3,\alpha_p\rrbracket$$ for $k = 2$ or $p+1.$ When $2\le k\le p+1,$ the presentation is due to the Fontaine--Laffaille theory \cite{FontaineLaffaille1982construction}.
    When $p+2\le k\le 2p-1,$ these are computed by Kisin in \cite[(3.13.1)]{kisin2007modularity}, building on the work of \cite{BB05Poids} and \cite{BLZ03Construction}. 
\end{example}

In general, the element $\alpha_p$ may not be in $R_\rbar^{\Box,\psi}(k).$ But we have the following: 

\begin{lem}[{\cite[Corollary 2.11]{Caraiani_Emerton_Gee_Geraghty_PaskunasShin_2018_GL2Qp}}]
    The element $\alpha_p$ is in the normalization of $R_\rbar^{\Box,\psi}(k)$ in $R_\rbar^{\Box,\psi}(k)[1/p].$
\end{lem}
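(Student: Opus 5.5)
The plan is to combine two inputs. The first is a pointwise integrality statement for $\alpha_p$. The second is a general criterion: for a reduced, $p$-torsion free complete local Noetherian $\Oh$-algebra, an element of its generic fibre lies in the normalization exactly when it takes integral values at every closed point of the generic fibre.

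\emph{Pointwise integrality.} Let $x:R^{\Box,\psi}_\rbar(k)[1/p]\to \Qpbar$ be an $L$-algebra homomorphism. By Theorem \ref{thm_Kisin_crystalline_generic_fiber}(2), $x$ corresponds to a crystalline lift of $\rbar$ with Hodge--Tate weights $(0,k-1)$. Proposition \ref{prop_crystalline_characteristic_polynomial}(1) then gives $v_p(\alpha_p(x))\ge 0$. So $\alpha_p(x)\in\overline{\Z}_p$ for every such $x$. By Lemma \ref{lem_Taylor}(1), the ring $R^{\Box,\psi}_\rbar(k)[1/p]$ is Jacobson and its closed points have residue fields finite over $L$, so these $x$ are exactly the closed points of the generic fibre.

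\emph{Reduction to normal domains.} Write $A=R^{\Box,\psi}_\rbar(k)$. By Theorem \ref{thm_Kisin_crystalline_generic_fiber}(1), $A$ is $p$-torsion free and $A[1/p]$ is formally smooth, hence reduced; therefore $A$ itself is reduced. Since $A$ is complete local, hence excellent, its normalization $A'$ in $A[1/p]$ is finite over $A$. Moreover $A'$ is a finite product of complete local normal domains $B_i$, each flat over $\Oh$, and $A[1/p]=\prod_i B_i[1/p]$. Every closed point of some $B_i[1/p]$ gives a closed point of $A[1/p]$, so the image $f_i$ of $\alpha_p$ in $B_i[1/p]$ takes integral values at all closed points of $\operatorname{Spec} B_i[1/p]$. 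It therefore suffices to show the following: if $B$ is a complete local normal domain, flat over $\Oh$, and $f\in B[1/p]$ satisfies $f(x)\in\overline{\Z}_p$ for all closed points $x$ of $B[1/p]$, then $f\in B$.

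\emph{The key step.} This last statement is where I expect the main difficulty. Suppose $f\notin B$. Since $B$ is normal, $f$ is not integral over $B$. The valuative criterion then gives a valuation ring $V$ of $\operatorname{Frac}(B)$ containing $B$ with $f^{-1}\in\mathfrak{m}_V$. In particular $f\neq 0$ and $f^{-1}$ is a non-unit in $C:=B[f^{-1}]$; since $\mathfrak{m}_V\cap C$ is a prime of $C$ containing both $f^{-1}$ and $\mathfrak{m}_B$, the ideal $(f^{-1},\mathfrak{m}_B)C$ is proper.
The goal is to produce a closed point of $C[1/p]$, that is, a point $x$ of $B[1/p]$ with $f(x)\neq 0$, at which $f^{-1}(x)$ lies in the maximal ideal of $\overline{\Z}_p$. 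That would contradict the integrality of $f(x)$. The natural approach is the following.
\begin{enumerate}
\item Write $f=g/p^n$ with $g\in B$.
\item Pass to the $(\mathfrak{m}_B,f^{-1})$-adic completion $\widehat{C}$, which is again a complete local Noetherian $\Oh$-algebra.
\item Check that $p$ is not nilpotent in $\widehat{C}$. This should hold because $V$ dominates and $p\neq 0$ in $V$.
\item Apply Lemma \ref{lem_Taylor}(1) to $\widehat{C}[1/p]$ to get a closed point. At such a point $f^{-1}$ lies in the maximal ideal, since $f^{-1}\in\mathfrak{m}_{\widehat C}$.
\end{enumerate}
If making this completion argument rigorous becomes delicate (for instance, checking that $\widehat{C}$ is not $p$-power torsion), I would instead invoke de Jong's result on complete local Noetherian flat $\Oh$-algebras: the elements of $B[1/p]$ bounded by $1$ on the rigid generic fibre of $\operatorname{Spf} B$ are exactly the normalization of $B$ in $B[1/p]$. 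That result applies directly here, since the points of the rigid generic fibre are the closed points $x$ above.
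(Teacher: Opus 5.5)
Your proposal is correct and follows the standard argument behind the cited \cite[Corollary 2.11]{Caraiani_Emerton_Gee_Geraghty_PaskunasShin_2018_GL2Qp}: show that $\alpha_p(x)$ is integral at every closed point $x$ of the generic fibre (for you via weak admissibility, Proposition~\ref{prop_crystalline_characteristic_polynomial}), then apply de Jong's criterion that an element of $A[1/p]$ integral at all such points lies in the normalization of $A$. Your self-contained substitute for de Jong also works once two justifications are repaired: $(f^{-1},\mathfrak{m}_B)C$ is proper because $C/f^{-1}C$ is a nonzero quotient of the local ring $B$ (an arbitrary $V$ from the valuative criterion need not dominate $B$), and $p$ is a non-zero-divisor in $\widehat{C}$ because $\widehat{C}$ is faithfully flat over the domain $C_{\mathfrak{n}}$ with $\mathfrak{n}=(\mathfrak{m}_B,f^{-1})$, not merely because $p\neq 0$ in $V$.
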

Hence, the ring $R_\rbar^{\Box,\psi}(k)[\alpha_p]$ is finite over $R_\rbar^{\Box,\psi}(k).$ However, it need not be a local ring. 
\begin{prop} \label{prop_R_ap_decomposition}
Let $\mathfrak{m}$ denote the maximal ideal of $R^{\Box,\psi}_\rbar(k)$. Define
\[
U_{\rbar} \subset \F \coloneqq \{0\} \;\;\sqcup\;\; 
\bigl\{ \eta(\Frob_p) : \eta: G_{\Q_p} \to \F^\times \text{ unramified character}, \ \Hom_{G_{\Q_p}}(\rbar,\eta) \neq 0 \bigr\}.
\]
Then the ring $R_\rbar^{\Box,\psi}(k)[\alpha_p]$ decomposes as
\[
R_\rbar^{\Box,\psi}(k)[\alpha_p] \simeq \bigoplus_{u \in U_\rbar} R_u,
\]
where each $R_u$ is an object in $\mathcal{C}_\Oh$ with maximal ideal generated by $\mathfrak{m}$ and $\alpha_p - \widetilde{u}$, for any choice of lift $\widetilde{u}$ of $u$ in $R_\rbar^{\Box,\psi}(k)$.
\end{prop}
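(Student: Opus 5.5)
Since $R \coloneqq R^{\Box,\psi}_\rbar(k)$ is a complete local Noetherian ring and $R' \coloneqq R[\alpha_p]$ is finite over $R$ (by the preceding lemma of \cite{Caraiani_Emerton_Gee_Geraghty_PaskunasShin_2018_GL2Qp}), $R'$ is $\mathfrak{m}$-adically complete. A finite algebra over a complete local ring decomposes as the product of its localizations at its finitely many maximal ideals, and each factor is again complete local Noetherian. The maximal ideals of $R'$ all contain $\mathfrak{m}R'$, so they correspond to the maximal ideals of $R'/\mathfrak{m}R' = \F[\alpha_p]/(\text{relation})$, a finite $\F$-algebra generated by the image $\bar\alpha_p$ of $\alpha_p$. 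Hence each maximal ideal has the form $(\mathfrak{m}, \alpha_p - \widetilde u)$ for a root $u$ of the minimal polynomial of $\bar\alpha_p$. The residue field is $\F$ once $\F$ is large enough; our conventions already enlarge $\Oh$, and in any case we will show $u\in U_\rbar\subset\F$. It therefore suffices to show that the set of such $u$ is exactly $U_\rbar$.

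\textbf{Step 1: every $u$ lies in $U_\rbar$.} I will use that $R'[1/p]=R[1/p]$, which is Jacobson (Lemma~\ref{lem_Taylor}), and that $R'$ is $p$-torsion free. Each maximal ideal of $R'$ then lies over the closed point and contains a minimal prime; since $R'\subset R[1/p]$, we can find a $\Qpbar$-point $x$ of $R[1/p]$ specializing to it. Concretely, take a height-one chain and use Lemma~\ref{lem_Taylor}(1): the image of $R'$ in $\Oh_{L'}$ for a point $x$ lies in an order, and the reduction of $\alpha_p(x)$ equals $u$. By Proposition~\ref{prop_crystalline_characteristic_polynomial}, either $v_p(\alpha_p(x))>0$, so $u=0$, or $\alpha_p(x)$ is a unit. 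In the unit case $r_x$ is an extension of an unramified $\eta$ by a character, so $\rbar$ surjects onto $\bar\eta$ and $\bar\eta(\Frob_p)=u$. Thus $\Hom(\rbar,\bar\eta)\ne0$ and $u\in U_\rbar$. The technical point is guaranteeing such an $x$ reducing to a given maximal ideal of $R'$. I will argue via going-up for the integral extension $R\subset R'$: take a minimal prime $\mathfrak q$ of $R'$ contained in the maximal ideal $\mathfrak n$, and pass to the one-dimensional quotient through $\mathfrak n$ of $R'/\mathfrak q$. This quotient is $p$-torsion free, so it has a characteristic-zero point that factors through $\mathfrak n$.

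\textbf{Step 2: every $u\in U_\rbar$ occurs,} assuming $R\ne0$. This direction is the main obstacle. It requires producing crystalline lifts with prescribed reduction of $\alpha_p$. For $u=0$ I would use irreducible (non-ordinary) crystalline lifts. For $u=\bar\eta(\Frob_p)\neq0$ I would use ordinary lifts, namely extensions of a lift of $\eta$ by $\psi\eta^{-1}$ lifting $\rbar$. Showing that these exist for the given $k$ and $\rbar$ is nontrivial, and it may not hold for all $k$; if so, the intended statement should be read as ``indexed by a subset of $U_\rbar$'', with empty factors $R_u=0$ allowed. I would interpret the proposition this way, so that Step 1 suffices together with the general decomposition. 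Finally, the maximal ideal of $R_u$ is generated by $\mathfrak{m}$ and $\alpha_p-\widetilde u$, independently of the lift $\widetilde u$, since two lifts differ by an element of $\mathfrak{m}$.
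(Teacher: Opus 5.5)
Your proposal is correct and follows essentially the paper's route. The paper builds the decomposition by Hensel-lifting the idempotents of $R_\rbar^{\Box,\psi}(k)[\alpha_p]\otimes_{R_\rbar^{\Box,\psi}(k)}\F\simeq\F[x]/(f(x))$ after enlarging $\F$, and then argues exactly as in your Step~1: each nonzero summand is $p$-torsion free and an object of $\mathcal{C}_\Oh$, so Lemma~\ref{lem_Taylor} applied directly to that summand gives a characteristic-zero point (your going-up detour is unnecessary), and Proposition~\ref{prop_crystalline_characteristic_polynomial} then forces the root into $U_\rbar$. Like you, the paper never proves that every $u\in U_\rbar$ occurs, and its later use of $I_\irr^{\univ}=(1)$ in Proposition~\ref{prop_versal_universal_ap_in_R_equivalent} shows that zero summands $R_u$ are allowed, so your reading of the statement is the intended one.
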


 \begin{proof}
    The ring $R_\rbar^{\Box,\psi}(k)[\alpha_p] \otimes_{R_\rbar^{\Box,\psi}(k)} \F$ is an $\F$-algebra generated by a single element $\alpha_p$. Hence, there exists a monic polynomial $f(x) \in \F[x]$ such that
\[
R_\rbar^{\Box,\psi}(k)[\alpha_p] \otimes_{R_\rbar^{\Box,\psi}(k)} \F \xrightarrow{\sim} \F[x]/(f(x)),
\]
where $\alpha_p$ is mapped to $x$. By enlarging $\F$ if necessary, we may assume that all roots $a_1, \dots, a_s$ of $f(x)$ lie in $\F$. Let $n_i$ denote the multiplicity of the root $a_i$. By the Chinese remainder theorem, we then have an isomorphism
\[
R_\rbar^{\Box,\psi}(k)[\alpha_p] \otimes_{R_\rbar^{\Box,\psi}(k)} \F \xrightarrow{\sim} \bigoplus_{i=1}^{s} \F[x]/(x-a_i)^{n_i}.
\]

Let $\overline{e}_i$ denote the idempotent in $R_\rbar^{\Box,\psi}(k)[\alpha_p] \otimes_{R_\rbar^{\Box,\psi}(k)} \F$ corresponding to the direct summand $\F[x]/(x-a_i)^{n_i}$. Since $R_\rbar^{\Box,\psi}(k)[\alpha_p]$ is finite over $R_\rbar^{\Box,\psi}(k)$, which is $\mathfrak{m}$-adically complete, the same holds for $R_\rbar^{\Box,\psi}(k)[\alpha_p]$. By Hensel's lemma, each $\overline{e}_i$ lifts uniquely to an idempotent $e_i \in R_\rbar^{\Box,\psi}(k)[\alpha_p]$, giving an isomorphism
\[
R_\rbar^{\Box,\psi}(k)[\alpha_p] \xrightarrow{\sim} \bigoplus_{i=1}^s e_i R_\rbar^{\Box,\psi}(k)[\alpha_p].
\]
Write $R_{a_i} := e_i R_\rbar^{\Box,\psi}(k)[\alpha_p]$.  

Since $R_{a_i}$ is finite over $R_\rbar^{\Box,\psi}(k)$, all of its maximal ideals contain $\mathfrak{m}R_{a_i}$. 
Moreover,
\[
R_{a_i} \otimes_{R_\rbar^{\Box,\psi}(k)} \F \xrightarrow{\sim} \F[x]/(x-a_i)^{n_i}
\] 
is a local ring with maximal ideal $(\alpha_p - a_i)$ and residue field $\F$. It follows that each $R_{a_i}$ is itself a local ring with maximal ideal
\[
\mathfrak{m}_i := \mathfrak{m} + (\alpha_p - \widetilde{a}_i),
\]
where $\widetilde{a}_i$ is any lift of $a_i$ in $R_{a_i}$, and residue field $\F$. Since $\mathfrak{m}_i^{n_i} R_{a_i} \subseteq \mathfrak{m} R_{a_i}$, the $\mathfrak{m}$-adic topology coincides with the $\mathfrak{m}_i$-adic topology. Thus the finite $R_\rbar^{\Box,\psi}(k)$-algebra $R_{a_i}$ is complete with respect to its $\mathfrak{m}$-adic (and hence $\mathfrak{m}_i$-adic) topology, and therefore $R_{a_i}$ is an object of $\mathcal{C}_\Oh$.

Furthermore, since $R_\rbar^{\Box,\psi}(k)[\alpha_p]$ is $p$-torsion free, so is each $R_{a_i}$. Hence, $R_{a_i}[1/p]$ is nontrivial. By Lemma~\ref{lem_Taylor}, $\Spec(R_{a_i}[1/p])$ contains some $E'$-point 
\[
x_i: R_\rbar^{\Box,\psi}(k)[\alpha_p][1/p] \to E'
\] 
for a finite extension $E'/E$ of $p$-adic fields. By definition, the crystalline Frobenius of the corresponding representation $\rho_{x_i}$ is congruent to $a_i$. If $a_i$ is a unit, then $\rho_{x_i}$ is an extension of an unramified character $\eta$ by a crystalline character, and $\eta(\Frob_p)$ is congruent to $a_i$ by Proposition~\ref{prop_crystalline_characteristic_polynomial}. Since $\rho_{x_i}$ reduces to $\rbar$, the reduction $\overline{\eta}$ is a quotient of $\rbar$. As a result, $a_i \in U_\rbar$. 
 \end{proof}

\begin{remark} \label{rmk_R_ap_completed_tensor_product_direct_sum}
    Let $R$ be an object in $\mathcal{C}_{\Oh}$. Consider the completed tensor product 
\[
R_\rbar^{\Box,\psi}(k)[\alpha_p] \,\widehat{\otimes}_{\Oh}\, R,
\] 
where we view $R_\rbar^{\Box,\psi}(k)[\alpha_p]$ as a finitely generated module over $R_\rbar^{\Box,\psi}(k)$ and $R$ as a module over itself. By the proposition above, we have 
\[
R_\rbar^{\Box,\psi}(k)[\alpha_p] \xrightarrow{\sim} \bigoplus_{u \in U_\rbar} R_u,
\] 
so that 
\[
R_\rbar^{\Box,\psi}(k)[\alpha_p] \,\widehat{\otimes}_{\Oh}\, R \;\xrightarrow{\sim}\; \bigoplus_{u \in U_\rbar} R_u \,\widehat{\otimes}_{\Oh}\, R.
\] 
Here, each $R_u$, despite being an object in $\mathcal{C}_\Oh$, is still viewed as an $R_\rbar^{\Box,\psi}(k)$-module equipped with the $\mathfrak{m}$-adic topology. Since, as noted in the proof above, this topology coincides with the adic topology induced by the maximal ideal of $R_u$, we can also view the completed tensor product $R_u \,\widehat{\otimes}_\Oh\, R$ as being taken between two objects in $\mathcal{C}_\Oh.$
\end{remark}

Following the notation in the proposition above, we let $R_\rbar^{\Box,\psi}(k)[\alpha_p]^{\ord}$ be $\oplus_{u\in U_{\rbar}\setminus\{0\}}R_u$ and call it the ordinary part of $R_\rbar^{\Box,\psi}(k)[\alpha_p]$ and let $R_\rbar^{\Box,\psi}(k)[\alpha_p]^{\rm no}$ be $R_0$ and call it the non-ordinary part. We write $R^{\Box,\psi}_\rbar(k)^{\ord}$ for the image of $R^{\Box,\psi}_\rbar(k)$ in $R^{\Box,\psi}_\rbar(k)[\alpha_p]^{\ord}$ and  $R^{\Box,\psi}_\rbar(k)^{\no}$ for the image of $R^{\Box,\psi}_\rbar(k)$ in $R^{\Box,\psi}_\rbar(k)[\alpha_p]^{\no}.$ For $*\in \{\ord,\no\},$ we have $$R_\rbar^{\Box,\psi}(k)^*[\alpha_p] = R_\rbar^{\Box,\psi}(k)[\alpha_p]^*.$$

Following the notation in Proposition~\ref{prop_Vp_preserves_ordinary_nonordinary_forms}, we define the ordinary and non-ordinary parts of $R_\rbar^{\Box,\psi}(k)[\alpha_p]$ as follows.  
Let
\[
R_\rbar^{\Box,\psi}(k)[\alpha_p]^{\ord} := \bigoplus_{u \in U_{\rbar} \setminus \{0\}} R_u,
\]
which we call the ordinary part, and let
\[
R_\rbar^{\Box,\psi}(k)[\alpha_p]^{\rm no} := R_0,
\]
which we call the non-ordinary part.  We then define
\[
R^{\Box,\psi}_\rbar(k)^{\ord} \subset R_\rbar^{\Box,\psi}(k)[\alpha_p]^{\ord} \quad\text{and}\quad
R^{\Box,\psi}_\rbar(k)^{\no} \subset R_\rbar^{\Box,\psi}(k)[\alpha_p]^{\rm no}
\]
as the images of $R^{\Box,\psi}_\rbar(k)$ under the natural projection. With this notation, for $* \in \{\ord, \no\}$, we have
\[
R_\rbar^{\Box,\psi}(k)^*[\alpha_p] = R_\rbar^{\Box,\psi}(k)[\alpha_p]^*.
\]

\subsubsection{Deformations of \texorpdfstring{$\rbar$}{rbar} and its Semisimplification} \label{subsubsec_semisimplification}
Let $\rbar$ be a local representation that is not semisimple and satisfies Assumption \ref{assumption_semisimplification}. Fix a character $\psi:G_{\Q_p}\to \Oh^\times.$ In this part, we compare the deformation theory of $\rbar$ with that of its semisimplification $\rbarss$ in order to replace Assumption \ref{assumption_global_rhobar} in all the theorems concerning a local representation $\rbar$ stated in \S\ref{subsec_main_theorems}. All results recalled here are from \cite{paskunas_2017_2_adic_deformations}, adapted to the fixed-determinant setting with determinant $\psi$. As noted in \cite[Remark 4.5, Remark 6.1]{paskunas_2017_2_adic_deformations}, Pa\v{s}k\=unas explained that all statements remain valid in this context.

The representations $\rbar$ and $\rbar^{\rm ss}$ have the same trace and determinant. By definition, they therefore have the same fixed-determinant pseudo-deformation ring $R^{{\rm ps},\psi}$ and a universal pseudo-deformation $(t^{\univ,\psi},\psi)$ representing the fixed-determinant pseudo-deformation functor $D^{{\rm ps},\psi}.$ We will compare the deformation theory of $\rbar,\rbarss$ by relating them to their pseudo-deformations.

Since $\rbar$ is Schur, the unrestricted deformation functor of $\rbar$ is represented by a power series ring $R_\rbar^{\univ,\psi}$ over $\Oh$ in three variables by a standard Galois cohomology calculation. Let $$r^{\univ,\psi}:G_{\Q_p}\to \GL_2(R_\rbar^{\univ,\psi})$$ be the universal deformation. Taking the trace and determinant of $r^{\univ,\psi}$ gives a natural ring homomorphism $$R^{{\rm ps},\psi}\to R^{\univ,\psi}_\rbar,$$ which is an isomorphism by \cite[Theorem B.17]{paskunas_2013_image_montreal}. In particular, we then have $$t^{\univ,\psi}=\tr r^{\univ,\psi}.$$

Let $D_{\rbarss}^{\psi}$ denote the deformation functor of $\rbarss$ with fixed determinant $\psi$. 
Since $\rbarss$ is not Schur, $D_{\rbarss}^{\psi}$ is not pro-representable. 
We usually work with the universal lifting ring $R^{\Box,\psi}_{\rbarss}$ together with its universal lifting
\[
s^{\Box,\psi}:G_{\Q_p}\to \GL_2(R^{\Box,\psi}_{\rbarss})
\] 
of $\rbarss$. 
On the other hand, $D_{\rbarss}^{\psi}$ always admits a versal ring $R^{\ver,\psi}_{\rbarss}$, together with
\[
s^{\ver,\psi}:G_{\Q_p}\to \GL_2(R^{\ver,\psi}_{\rbarss}).
\] 
By the definition of a versal hull (see \cite[Definition 2.7]{schlessinger_1968_functors}), the functor
\[
D^{\ver,\psi}_{\rbarss} := \Hom_{\mathcal{C}_\Oh}(R^{\ver,\psi}_{\rbarss},\cdot)
\] 
is formally smooth over $D_{\rbarss}^{\psi}$, and the induced map on tangent spaces is an isomorphism.

The following proposition from \cite[Proposition 2.1]{KhareWintenberger2009serreII} shows that there is essentially no difference between using $R_\rbar^{\Box,\psi}$ and $R_\rbar^{\univ,\psi}$, or between $R_\rbarss^{\Box,\psi}$ and $R_\rbarss^{\ver,\psi}$.

\begin{prop}[Khare--Wintenberger] \label{prop_lifting_ring_formally_smooth_over_versal_ring}
    Let $\rhobar$ be a representation. The universal lifting ring $R^{\Box,\psi}_{\rhobar}$ is a formal power series ring over the versal ring $R^{\ver,\psi}_{\rhobar}$.
\end{prop}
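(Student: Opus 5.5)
The plan is to reduce the statement to a comparison of hulls via Schlessinger's criteria. Set $D^{\Box,\psi}_{\rhobar}$ to be the framed lifting functor, represented by $R^{\Box,\psi}_{\rhobar}$, and $D^{\psi}_{\rhobar}$ to be the deformation functor. The natural map $D^{\Box,\psi}_{\rhobar}\to D^{\psi}_{\rhobar}$ (forget the framing) is formally smooth. Indeed, given a small surjection $A'\surj A$, a lift $\rho_A$, and a deformation class over $A'$ lifting the class of $\rho_A$, we proceed as follows. Choose any representative $\rho_{A'}$. Its reduction is conjugate to $\rho_A$ by some $g\in\ker(\GL_2(A)\to\GL_2(\F))$. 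Lift $g$ to $\GL_2(A')$, which is possible since $\GL_2$ is smooth, and conjugate $\rho_{A'}$ by this lift. Likewise, $D^{\ver,\psi}_{\rhobar}\to D^{\psi}_{\rhobar}$ is formally smooth by the definition of a versal hull.

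Next, form the fibre product $D^{\Box,\psi}_{\rhobar}\times_{D^{\psi}_{\rhobar}} D^{\ver,\psi}_{\rhobar}$. Both of its projections are formally smooth, since formal smoothness is stable under base change. I will check that this fibre product is pro-representable, by some $R'\in\mathcal{C}_\Oh$. Concretely, it is the functor of pairs $(x,\rho)$, where $x\in\Hom(R^{\ver,\psi}_{\rhobar},A)$ and $\rho$ is a framed lift whose deformation class equals that of $s^{\ver,\psi}\otimes_x A$. This is the same as specifying $x$ together with an element $g\in \widehat{\GL}_2(A)$ (the kernel of reduction to $\F$), taken modulo the stabilizer. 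To avoid this quotient, I will instead use the following standard trick. The functor of pairs $(x,g)$ with $g\in\ker(\GL_2(A)\to\GL_2(\F))$ is represented by $R^{\ver,\psi}_{\rhobar}\llbracket X_1,\dots,X_4\rrbracket$. The map $(x,g)\mapsto g\,(s^{\ver,\psi}\otimes_x A)\,g^{-1}$ gives a morphism $\operatorname{Spf}R^{\ver,\psi}_{\rhobar}\llbracket X_1,\ldots,X_4\rrbracket\to \operatorname{Spf}R^{\Box,\psi}_{\rhobar}$, that is, a local homomorphism $R^{\Box,\psi}_{\rhobar}\to R^{\ver,\psi}_{\rhobar}\llbracket X_1,\ldots,X_4\rrbracket$. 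By the conjugation argument above, the corresponding map of functors is formally smooth.

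A formally smooth morphism between pro-representable functors in $\mathcal{C}_\Oh$ means that the target ring is a power series ring over the source ring. So $R^{\ver,\psi}_{\rhobar}\llbracket X_1,\dots,X_4\rrbracket\simeq R^{\Box,\psi}_{\rhobar}\llbracket Y_1,\dots,Y_m\rrbracket$. This is not yet the claim, which asks for $R^{\Box,\psi}_{\rhobar}$ itself to be a power series ring over $R^{\ver,\psi}_{\rhobar}$. To get it, I will produce a map in the other direction. Formal smoothness of $D^{\Box,\psi}_{\rhobar}\to D^{\psi}_{\rhobar}$, together with the fact that $D^{\ver,\psi}_{\rhobar}\to D^{\psi}_{\rhobar}$ is an isomorphism on tangent spaces, lets one lift successively along the Artinian quotients of $R^{\Box,\psi}_{\rhobar}$. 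This yields a local map $R^{\ver,\psi}_{\rhobar}\to R^{\Box,\psi}_{\rhobar}$ lying over the universal framed lift, and this map is formally smooth because the forgetful map is.

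It then remains to use the structure result: a formally smooth local homomorphism $R\to S$ in $\mathcal{C}_\Oh$ gives $S\simeq R\llbracket T_1,\ldots,T_d\rrbracket$, where $d$ is the dimension of the relative tangent space. To prove this, choose elements $T_i\in S$ lifting a basis of the relative cotangent space. The induced map $R\llbracket T_i\rrbracket\to S$ is surjective by the complete Nakayama lemma. Formal smoothness then provides a section of this map, and comparing tangent spaces shows the map is an isomorphism.

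I expect the main obstacle to be constructing $R^{\ver,\psi}_{\rhobar}\to R^{\Box,\psi}_{\rhobar}$ compatibly. The versal ring is only unique up to non-canonical isomorphism, so the inverse limit of the successive lifts must be taken carefully. This is exactly where the tangent-space isomorphism in the definition of a versal hull is used, to guarantee that the relative tangent space has dimension $d=4-\dim_\F\operatorname{End}_{G}(\rhobar)_{\rm tr=0}$-type data, in the sense that it is a constant independent of choices.
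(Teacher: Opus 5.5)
Your overall route is the right one: obtain $u\colon R^{\ver,\psi}_{\rhobar}\to R^{\Box,\psi}_{\rhobar}$ from versality, show $u$ is formally smooth, then apply the structure theorem. The paper gives no proof of its own and only cites Khare--Wintenberger. The gap is the sentence ``this map is formally smooth because the forgetful map is'', and everything rests on it.

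Take a small extension $A'\surj A$ with kernel $I$, a point $y\in D^{\Box,\psi}_{\rhobar}(A)$, and $x'\in D^{\ver,\psi}_{\rhobar}(A')$ lifting $y\circ u$. Formal smoothness of $D^{\Box,\psi}_{\rhobar}\to D^{\psi}_{\rhobar}$ only produces a lift $y_0'$ of $y$ with the same \emph{deformation class} as $s^{\ver,\psi}\otimes_{x'}A'$. Nothing forces $y_0'\circ u=x'$, because $D^{\ver,\psi}_{\rhobar}\to D^{\psi}_{\rhobar}$ need not be injective on the fibres of $A'\surj A$; this is exactly the failure of pro-representability. Formal smoothness of the forgetful map alone cannot suffice. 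Replace $R^{\ver,\psi}_{\rhobar}$ by $R'=R^{\ver,\psi}_{\rhobar}\llbracket Z_1,\dots,Z_N\rrbracket$, which is still formally smooth over $D^\psi_{\rhobar}$, and extend $u$ by $Z_i\mapsto 0$. The resulting map respects the deformation classes, but it is not formally smooth, since its tangent map misses the $Z_i$-directions.

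The missing input is the tangent-space bijectivity of the hull. Write $t(\cdot)$ for tangent spaces. The map $t(D^{\Box,\psi}_{\rhobar})=Z^1(G,\ad^0\rhobar)\to H^1(G,\ad^0\rhobar)=t(D^{\psi}_{\rhobar})$ is surjective, and $t(D^{\ver,\psi}_{\rhobar})\to t(D^{\psi}_{\rhobar})$ is bijective. Hence $u_*\colon t(D^{\Box,\psi}_{\rhobar})\to t(D^{\ver,\psi}_{\rhobar})$ is surjective. Now $y_0'\circ u$ and $x'$ both lift $y\circ u$, so they differ by some $\tau\in t(D^{\ver,\psi}_{\rhobar})\otimes_\F I$. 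Pick $\sigma$ with $u_*(\sigma)=\tau$ and replace $y_0'$ by $y_0'+\sigma$. This is the whole proof that $u$ is formally smooth, after which your structure argument goes through.

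The same surjectivity of $u_*$ gives the correct relative dimension $d=\dim_\F Z^1(G,\ad^0\rhobar)-\dim_\F H^1(G,\ad^0\rhobar)=4-\dim_\F\End_G(\rhobar)$. Your formula with the trace-zero part is off by one. So the tangent isomorphism is needed for formal smoothness, not for ``constancy of $d$''.

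The inverse limit defining $u$ poses no difficulty. Note that $u$ is built by applying formal smoothness of $D^{\ver,\psi}_{\rhobar}\to D^\psi_{\rhobar}$ (not of the forgetful map) to the class of the universal framed lift.

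Your second paragraph has the same defect and is not needed. Conjugation only matches $g$ up to the centralizer of $s^{\ver,\psi}\otimes_xA$. When $\rhobar$ is not Schur (e.g.\ $\rhobar=\chi_1\oplus\chi_2$, the case this paper uses), elements of that centralizer need not lift to centralizers over $A'$. There too one must re-choose $x'$ using the surjectivity onto $H^1$.
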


As a result, it suffices to understand the relation between the versal ring $R_\rbarss^{\ver,\psi}$ and the pseudo-deformation ring $R^{{\rm ps},\psi}$ of $\rbarss.$

In \cite[\S 5]{paskunas_2017_2_adic_deformations}, Pa\v{s}k\=unas explicitly calculated a presentation of the versal hull:
\[
    R^{\ver,\psi}_\rbarss = R^{{\rm ps},\psi}\llbracket x,y\rrbracket/(xy-c),
\]
where $c$ is a generator of the reducibility ideal in $R^{{\rm ps},\psi}.$ He also constructed the versal lifting
\[
    s^{\ver,\psi}:G_{\Q_p}\to \GL_2(R^\ver),
\]
and described the maps
\[
    D^{\ver,\psi}_\rbarss(A) \to D^{\psi}_\rbarss(A) \to D^{{\rm ps},\psi}(A),
\]
given by
\[
    s_A := s^{\ver,\psi}\otimes_{R^{\ver,\psi}_\rbarss}A \mapsto [s_A] \mapsto (\tr s_A,\det s_A).
\]

We are interested in how the deformation theories are related when we impose the crystalline condition.  
We write $R^{\ver,\psi}_\rbarss(k)$ for the crystalline quotient of $R^{\ver,\psi}_\rbarss$ whose generic fiber parametrizes crystalline lifts factoring through $R^{\ver,\psi}_\rbarss$, with Hodge--Tate weights $(0,k-1)$ and determinant $\psi$. Likewise, we define $R^{\univ,\psi}_\rbar(k)$ for $\rbar$.  Let 
\[
    f_x:R^{\ver,\psi}_\rbarss[1/p] \to \overline{\Q}_p
\] 
be an $\Oh$-algebra homomorphism with kernel $\mathfrak{m}_x$, and let 
\[
    s_x:G_{\Q_p}\to \GL_2(R^{\ver,\psi}_\rbarss)\to \GL_2(\overline{\Q}_p)
\] 
be the corresponding $p$-adic representation.  Suppose that $s_x$ is crystalline with distinct Hodge--Tate weights $(0,k-1)$ for $k\ge 2$.  
If $s_x$ is reducible, it is an extension of $\delta_2$ by $\delta_1$, where $\delta_i$ are crystalline characters of $G_{\Q_p}$ such that $\delta_1$ has Hodge--Tate weight $k-1\ge 1$, $\delta_2$ is unramified, and $\delta_1\delta_2 = \psi$.  
We say that $s_x$ is of reducibility type $1$ if
\[
    \delta_1\equiv \chi_1 \pmod\varpi \quad \text{and} \quad \delta_2 \equiv \chi_2 \pmod\varpi,
\] 
and of reducibility type $2$ if
\[
    \delta_1\equiv \chi_2 \pmod\varpi \quad \text{and} \quad \delta_2 \equiv \chi_1 \pmod\varpi.
\] 
We say that $s_x$ is of reducibility $\irr$ if $s_x$ is irreducible.  Let $* \in \{1,2,\irr\}$. We define 
\[
    I_*^{\ver} := R^{\ver,\psi}_\rbarss \cap \bigcap_{s_x \text{ of type } *} \mathfrak{m}_x.
\] 
(Note that $I_1^{\ver}$ is nontrivial only if $\chi_2$ is unramified, and $I_2^{\ver}$ is nontrivial only if $\chi_1$ is unramified.)  
By Kisin's construction of crystalline deformation rings, we then have
\[
    R^{\ver,\psi}_\rbarss(k) = R^{\ver,\psi}_\rbarss / (I_1^{\ver} \cap I_2^{\ver} \cap I_\irr^{\ver}).
\] By our previous discussion, we may identify $R_\rbar^{\univ,\psi}$ with the pseudo-deformation ring $R^{{\rm ps},\psi}$, which is then a subring of 
\[
    R^{\ver,\psi}_\rbarss = R^{{\rm ps},\psi}\llbracket x,y\rrbracket /(xy-c).
\] 
We then define
\[
    I_*^{\univ} := R^{\univ,\psi}_\rbar \cap \bigcap_{s_x \text{ of type } *} \mathfrak{m}_x.
\]

\begin{lem}[Pa\v{s}k\=unas]\label{lem_paskunas_versal_universal_crystalline}
    Suppose that $\rbar$ is non-semisimple and satisfies Assumption \ref{assumption_semisimplification}. For every $k\ge 2,$ we have $$R_\rbar^{\univ,\psi}(k) = R_\rbar^{\univ,\psi}/(I_1^\univ\cap I_\irr^{\univ}).$$ For each reducibility type, we have $$(R_\rbar^{\univ,\psi}/I_1^\univ)\llbracket y\rrbracket\xrightarrow{\sim}R_\rbarss^{\ver,\psi}/I_1^{\ver}$$ and $$(R_\rbar^{\univ,\psi}/I_\irr^\univ)\llbracket x,y\rrbracket/(xy-c)\xrightarrow{\sim}R_\rbarss^{\ver,\psi}/I_\irr^{\ver}.$$ Furthermore, $c$ is a regular element in $R_\rbar^{\univ,\psi}/I_\irr^{\univ}.$
\end{lem}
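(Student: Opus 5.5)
Since $\rbar$ is a non-split extension of $\chi_2$ by $\chi_1$ with $\chi_1/\chi_2\neq 1,\epsilon^{\pm1}$, any crystalline lift $s$ of $\rbar$ (or of $\rbarss$) with Hodge--Tate weights $(0,k-1)$ that is reducible is an extension of an unramified $\delta_2$ by $\delta_1$ of weight $k-1$. We first determine which reducibility types occur for lifts of $\rbar$ itself. A lift of $\rbar$ has a $G_{\Q_p}$-stable line lifting a line in $\rbar$. The only stable line in $\rbar$ is the $\chi_1$-line, so a reducible lift of $\rbar$ must have $\delta_1\equiv\chi_1$ and $\delta_2\equiv\chi_2$, i.e.\ it is of type $1$. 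Type $2$ points cannot come from lifts of $\rbar$. Since $R^{\univ,\psi}_\rbar = R^{{\rm ps},\psi}$ is a subring of $R^{\ver,\psi}_\rbarss$, the $\overline{\Q}_p$-points of $R_\rbar^{\univ,\psi}[1/p]$ are exactly the points $s_x$ whose pseudo-character is that of a lift of $\rbar$. Kisin's construction (Theorem \ref{thm_Kisin_crystalline_generic_fiber} adapted to the universal ring) identifies $R^{\univ,\psi}_\rbar(k)$ with the quotient by the intersection of the $\mathfrak{m}_x$ over crystalline points. Partitioning these points by type then gives $R^{\univ,\psi}_\rbar(k)=R^{\univ,\psi}_\rbar/(I_1^\univ\cap I^\univ_\irr)$. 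One subtlety must be checked here: for a type $2$ pseudo-character, the point is realized on $\rbarss$ only by a representation with $\chi_2$-sub. Its trace is nevertheless a point of $R^{{\rm ps},\psi}$, so we must show that it is not crystalline as a point of $R^{\univ,\psi}_\rbar$. Over the reducible locus, the universal deformation of $\rbar$ at such a pseudo-character is the unique non-split extension with the $\chi_1$-type sub. Because $\delta_1\delta_2^{-1}\neq 1,\chi_\cyc$ generically, that extension is crystalline only when split. I would handle this using Pa\v{s}k\=unas's description of the reducible locus in \cite[\S5]{paskunas_2017_2_adic_deformations}.

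For the two isomorphisms, I would use the explicit presentation $R^{\ver,\psi}_\rbarss=R^{{\rm ps},\psi}\llbracket x,y\rrbracket/(xy-c)$. In Pa\v{s}k\=unas's versal family, $x$ and $y$ are the off-diagonal extension parameters, and $c$ cuts out the reducibility locus. On irreducible points, $c\neq 0$, so $x$ and $y$ are free, subject to $xy=c$. A point $s_x$ is crystalline if and only if its trace is crystalline, because irreducible crystalline representations are determined by their pseudo-character. It follows that $I^\ver_\irr$ is generated by $I^\univ_\irr$, i.e.\ $I^\ver_\irr=I^\univ_\irr R^\ver$. Proposition-\ref{prop_completed_tensor_product_ideal}-type flatness then gives the quotient $(R^\univ/I^\univ_\irr)\llbracket x,y\rrbracket/(xy-c)$, provided the intersection of extended ideals equals the extension of the intersection. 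That holds because $R^{\ver}$ is flat over $R^{{\rm ps}}$: it is free over $R^{{\rm ps}}\llbracket x\rrbracket$-style after noting that $xy-c$ is monic in $y$ up to units. For type $1$ points, $c=0$, and the crystalline condition forces the extension class in the $\chi_1$-sub direction to be arbitrary. In the $\chi_2$-sub direction, it forces the class to vanish, since otherwise we would get a non-split extension with the wrong type. So one of the variables, say $x$, lies in $I^\ver_1$ and $y$ is free, giving $(R^\univ/I^\univ_1)\llbracket y\rrbracket$.

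Finally, to see that $c$ is regular on $R^\univ/I^\univ_\irr$, I would note that this ring is reduced and $p$-torsion free, with generic fiber the Zariski closure of irreducible crystalline points. Its minimal primes are therefore those of components whose generic point is irreducible, and $c$ does not vanish at an irreducible point. So $c$ lies outside every minimal prime of a reduced ring, hence is a non-zero-divisor.

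The main obstacle I expect is the type $1$ computation. Identifying precisely which of $x,y$ dies on crystalline type-$1$ points requires the explicit form of $s^{\ver,\psi}$ and $p$-adic Hodge theory of crystalline extensions, namely that $H^1_f(\delta_2^{-1}\delta_1)$ is everything while $H^1_f(\delta_1^{-1}\delta_2)=0$. I would also need to verify that the ideal is exactly extended, not merely up to radical, which uses reducedness of both sides.
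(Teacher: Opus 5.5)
The paper proves only the final assertion itself and takes the first three from Pa\v{s}k\=unas \cite[Lemma 6.4, Lemma 6.5]{paskunas_2017_2_adic_deformations}.

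\textbf{Regularity of $c$.} Your argument is correct and differs mildly from the paper's. Set $A:=R^{\univ,\psi}_\rbar/I^\univ_\irr$. You use that $A$ is reduced, and that each minimal prime of $A$ is the intersection of the ideals of a nonempty set of irreducible crystalline points, at which $c$ does not vanish. The paper instead shows that $c$ is a unit in $A[1/p]$. That route uses that every point of $A[1/p]$ is irreducible. This holds because, by Proposition~\ref{prop_crystalline_characteristic_polynomial}, reducible means $\alpha_p$ is a unit, and that locus is open and closed in the crystalline generic fibre by the idempotent argument of Proposition~\ref{prop_R_ap_decomposition}. Your route needs only non-vanishing of $c$ on the defining points. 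The paper's conclusion is stronger, however, and you will need it below.

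\textbf{First assertion.} Your treatment is fine, with one correction. The $\rbar$-lift with a type-$2$ trace is non-crystalline because $\delta_2\delta_1^{-1}$ has Hodge--Tate weight $1-k<0$. Hence $H^1_f(G_{\Q_p},\delta_2\delta_1^{-1})=0$; genericity plays no role.

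\textbf{The gap: the two isomorphisms.} ``A point is crystalline iff its trace is'' only shows that $I^\ver_\irr$ and $I^\univ_\irr R^{\ver,\psi}_\rbarss$ have the same $\overline{\Q}_p$-points. That is, they agree only up to radical and $p$-saturation.

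Your upgrade via flatness does not work. Flat base change commutes only with finite intersections. The real issue is whether $B:=R^{\ver,\psi}_\rbarss/I^\univ_\irr R^{\ver,\psi}_\rbarss = A\llbracket x,y\rrbracket/(xy-c)$ is reduced and $\Oh$-flat. Also, $xy-c$ is not monic in $y$ up to a unit, since $x$ lies in the maximal ideal. The correct structure is that $B$ is free of rank $2$ over $A\llbracket z\rrbracket$, with $z=x+y$, via $x^2-zx+c$.

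The missing input has two parts:
\begin{itemize}
\item[(i)] $B$ is $\Oh$-flat and reduced. This follows from the freeness above, together with the fact that the discriminant $z^2-4c$ is a non-zero-divisor of the reduced ring $A\llbracket z\rrbracket$.
\item[(ii)] Every $\overline{\Q}_p$-point of $B[1/p]$ is an irreducible crystalline point. This requires precisely that $A[1/p]$ has no reducible points, which is the paper's statement that $c$ is a unit there. Over a reducible point the fibre $\{xy=0\}$ would contain the non-crystalline branch.
\end{itemize}
Granting these, the image in $B$ of any element of $I^\ver_\irr$ vanishes at every $\overline{\Q}_p$-point of the Jacobson ring $B[1/p]$, so it is zero by (i). This gives $I^\ver_\irr\subseteq I^\univ_\irr R^{\ver,\psi}_\rbarss$; the reverse inclusion is immediate from the definition of $I^\univ_\irr$.

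For the type-$1$ isomorphism, the ideal comparison goes through along the same lines once you know which of $x,y$ parametrizes the extensions with the lift of $\chi_2$ as a sub. That requires the explicit versal lifting $s^{\ver,\psi}$, which you flag but leave open. These are exactly the inputs the cited lemmas of Pa\v{s}k\=unas supply.
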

\begin{proof}
    All but the last statement are proved in \cite[Lemma 6.4, Lemma 6.5]{paskunas_2017_2_adic_deformations}. 
    By construction, $R_\rbar^{\univ,\psi}/I_\irr^{\univ}$ is $p$-torsion free. 
    To show that $c$ is not a zero-divisor in $R_\rbar^{\univ,\psi}/I_\irr^{\univ}$, it suffices to prove that $c$ is a unit in 
    \(
        (R_\rbar^{\univ,\psi}/I_\irr^{\univ})[1/p].
    \)
    Since all the $p$-adic representations $s_x$ parametrized by $(R_\rbar^{\univ,\psi}/I_\irr^{\univ})[1/p]$ are irreducible, and $c$ is the generator of the reducibility ideal, we have $f_x(c)\neq 0$ for every maximal ideal $\mathfrak{m}_x$ of $(R_\rbar^{\univ,\psi}/I_\irr^{\univ})[1/p]$. 
    Thus $c$ is not contained in any maximal ideal and hence is a unit.
\end{proof}

As before, let $\alpha_p \in R_\rbarss^{\ver,\psi}/I_1^{\ver}[1/p]$ denote the function that sends a crystalline representation $s_x$ of reducibility type $1$ to the trace of its crystalline Frobenius element. 
By $p$-adic Hodge theory, we have
\[
    \alpha_p(s_x) = \delta_2(\Frob_p) + p^{k-1}(\delta_1/\chi_\cyc^{k-1})(\Frob_p),
\]
so that $\alpha_p$ depends only on the semisimplification of $s_x$. 
Hence, $\alpha_p$ descends to
\[
    \Spf\Bigl(R^{{\rm ps},\psi} \cap \bigcap_{s_x \text{ of type }1} \mathfrak{m}_x \Bigr)^{\eta} 
    \xrightarrow{\sim} \Spf(R_\rbar^{\univ,\psi}/I_1^{\univ})^{\eta},
\]
and we obtain
\[
    (R_\rbar^{\univ,\psi}/I_1^\univ)[\alpha_p]\llbracket y\rrbracket \xrightarrow{\sim} (R_\rbarss^{\ver,\psi}/I_1^{\ver})[\alpha_p].
\]
The same reasoning applies in the irreducible case, where the argument is simpler because $s_x$ is already semisimple.

\begin{cor}\label{cor_versal_universal_ap_presentation}
    Suppose that $\rbar$ is non-semisimple and satisfies Assumption \ref{assumption_semisimplification}. For every $k\ge 2,$ the isomorphisms in Lemma \ref{lem_paskunas_versal_universal_crystalline} extend to $$(R_\rbar^{\univ,\psi}/I_1^\univ)[\alpha_p]\llbracket y\rrbracket\xrightarrow{\sim}(R_\rbarss^{\ver,\psi}/I_1^{\ver})[\alpha_p]$$ and $$(R_\rbar^{\univ,\psi}/I_\irr^\univ)[\alpha_p]\llbracket x,y\rrbracket/(xy-c)\xrightarrow{\sim}(R_\rbarss^{\ver,\psi}/I_\irr^{\ver})[\alpha_p].$$  
\end{cor}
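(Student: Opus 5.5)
The plan is to construct the two maps directly and then check that they are isomorphisms. For each isomorphism in Lemma \ref{lem_paskunas_versal_universal_crystalline} we already have a ring map
\[
\iota_1:(R_\rbar^{\univ,\psi}/I_1^\univ)\llbracket y\rrbracket\to R_\rbarss^{\ver,\psi}/I_1^{\ver},
\]
and similarly $\iota_\irr$ with the extra variable $x$ and the relation $xy=c$.

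First I would extend $\iota_1$ to the normalizations inside the generic fibers. Both source and target are $p$-torsion free, because the quotients $R/I_*$ are defined as intersections of kernels of characteristic $0$ points. Hence $\iota_1$ induces an isomorphism after inverting $p$.

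Next I need the element $\alpha_p$ on the universal side to map to the element $\alpha_p$ on the versal side. As in the discussion preceding the statement, $\alpha_p$ is determined pointwise: its value at $s_x$ is $\delta_2(\Frob_p)+p^{k-1}(\delta_1/\chi_\cyc^{k-1})(\Frob_p)$. This value depends only on the pseudo-character $(\tr s_x,\det s_x)$, that is, only on the image of $x$ in $\Spf(R_\rbar^{\univ,\psi}/I_1^\univ)$. Both generic fibers are Jacobson rings by Lemma \ref{lem_Taylor}, and they are reduced because they are defined as intersections of maximal ideals. So two elements agreeing at all closed points are equal. This shows $\iota_1[1/p](\alpha_p)=\alpha_p$, and hence $\iota_1[1/p]$ carries the subring
\[
(R_\rbar^{\univ,\psi}/I_1^\univ)\llbracket y\rrbracket[\alpha_p]
\]
onto $(R_\rbarss^{\ver,\psi}/I_1^{\ver})[\alpha_p]$. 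Since $\iota_1[1/p]$ is injective, the restricted map is an isomorphism.

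It remains to identify $(R_\rbar^{\univ,\psi}/I_1^\univ)\llbracket y\rrbracket[\alpha_p]$ with $(R_\rbar^{\univ,\psi}/I_1^\univ)[\alpha_p]\llbracket y\rrbracket$. Write $A=R_\rbar^{\univ,\psi}/I_1^\univ$. By \cite[Corollary 2.11]{Caraiani_Emerton_Gee_Geraghty_PaskunasShin_2018_GL2Qp} (transported as in the text), $A[\alpha_p]$ is finite over $A$. Then
\[
A[\alpha_p]\llbracket y\rrbracket = A[\alpha_p]\otimes_A A\llbracket y\rrbracket,
\]
using that $A\llbracket y\rrbracket$ is flat over the Noetherian ring $A$ and that finite modules commute with the completion. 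This tensor product equals the $A\llbracket y\rrbracket$-subalgebra of $A\llbracket y\rrbracket[1/p]$ generated by $\alpha_p$; flatness gives injectivity into the localization.

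The irreducible case is identical, with $A\llbracket y\rrbracket$ replaced by $A\llbracket x,y\rrbracket/(xy-c)$. Here flatness over $A$ holds because $A\llbracket x,y\rrbracket/(xy-c)$ is free over $A\llbracket x\rrbracket\oplus y A\llbracket y\rrbracket$ as an $A$-module-wise description, with $c$ regular by Lemma \ref{lem_paskunas_versal_universal_crystalline}. The case is also simpler because $s_x$ is already semisimple.

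I expect the main obstacle to be the identification $\iota(\alpha_p)=\alpha_p$ together with the flatness needed to pass $\alpha_p$ through the power series extension. In particular, I must make sure that $\alpha_p$ lies in $A[1/p]$ and not merely in a larger ring. For this I would use the descent described just before the statement: since $\alpha_p$ is a function of the pseudo-character, it lies in the image of $A[1/p]$, by injectivity and the Jacobson argument applied to the $y=0$ specialization.
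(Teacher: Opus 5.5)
Your proposal is correct and follows the paper's argument. The paper likewise observes that $\alpha_p$ depends only on the semisimplification (i.e.\ on the pseudo-character), so it descends to the generic fiber of $R_\rbar^{\univ,\psi}/I_1^{\univ}$ and the isomorphisms of Lemma~\ref{lem_paskunas_versal_universal_crystalline} extend; you simply supply details the paper leaves implicit, namely the pointwise identification of the two $\alpha_p$'s and the flatness argument showing that adjoining $\alpha_p$ commutes with $\llbracket y\rrbracket$ and with $\llbracket x,y\rrbracket/(xy-c)$. For the latter, the clean statement is that $A\llbracket x,y\rrbracket/(xy-c)\cong A\llbracket x\rrbracket\oplus yA\llbracket y\rrbracket$ as $A$-modules, which follows from the Krull intersection theorem and does not need $c$ to be regular.
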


Furthermore, applying the proof of Proposition \ref{prop_R_ap_decomposition} to $\rbar$ and $\rbarss$, we conclude the following. 

\begin{prop} \label{prop_versal_univ_crystalline_ap}
    Suppose that $\rbar$ is non-semisimple and satisfies Assumption \ref{assumption_semisimplification}. For every $k\ge 2,$ we have $$R_\rbarss^{\ver,\psi}(k)[\alpha_p] \xrightarrow{\sim} (R_\rbarss^{\ver,\psi}/I_1^{\ver})[\alpha_p]\oplus (R_\rbarss^{\ver,\psi}/I_2^{\ver})[\alpha_p]\oplus (R_\rbarss^{\ver,\psi}/I_\irr^{\ver})[\alpha_p]$$ and $$R_\rbar^{\univ,\psi}(k)[\alpha_p]\xrightarrow{\sim} (R_\rbar^{\univ,\psi}/I_1^{\univ})[\alpha_p]\oplus (R_\rbar^{\univ,\psi}/I_\irr^{\univ})[\alpha_p].$$ Following the notation at the end of \S \ref{subsubsection_crystalline_representations}, we have $$R_\rbarss^{\ver,\psi}(k)^{\ord}\xrightarrow{\sim} R_\rbarss^{\ver,\psi}/I_1^\ver\cap I_2^\ver,\quad R_\rbarss^{\ver,\psi}(k)^{\no}\xrightarrow{\sim}R_\rbarss^{\ver,\psi}/I_\irr^\ver,$$ $$R_\rbar^{\univ,\psi}(k)^{\ord} \xrightarrow{\sim}R_\rbar^{\univ,\psi}/I_1^{\univ}\quad{\rm and\quad}R_\rbar^{\univ,\psi}(k)^{\no}\xrightarrow{\sim}R_\rbar^{\univ,\psi}/I_\irr^{\univ}.$$ 
\end{prop}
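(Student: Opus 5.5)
The plan is to mimic the proof of Proposition~\ref{prop_R_ap_decomposition}, first for the ordinary/non-ordinary splitting and then for the identification of each piece with the quotients by the ideals $I_*$. Start with $R_\rbarss^{\ver,\psi}(k)$. By Kisin's construction, $R^{\ver,\psi}_\rbarss(k) = R^{\ver,\psi}_\rbarss/(I_1^{\ver}\cap I_2^{\ver}\cap I_\irr^{\ver})$, and this ring is $p$-torsion free and reduced with Jacobson generic fibre (Lemma~\ref{lem_Taylor}). Hence $\alpha_p$ is determined by its values at the points $s_x$, and $R_\rbarss^{\ver,\psi}(k)[\alpha_p]$ embeds into $\prod_x \overline{\Q}_p$.

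For a point $s_x$ of type $1$ or $2$, Proposition~\ref{prop_crystalline_characteristic_polynomial} and the formula $\alpha_p(s_x)=\delta_2(\Frob_p)+p^{k-1}(\delta_1/\chi_\cyc^{k-1})(\Frob_p)$ show that $\alpha_p(s_x)$ is a unit. Its reduction is $\chi_2(\Frob_p)$ for type $1$ and $\chi_1(\Frob_p)$ for type $2$. For an irreducible $s_x$, $v_p(\alpha_p(s_x))>0$. Under Assumption~\ref{assumption_semisimplification} we have $\chi_1\neq\chi_2$. When both characters are unramified the two residues differ, since $\chi_1/\chi_2\neq 1$. Otherwise at most one reducible type occurs, because the unramified quotient of type $*$ must be $\chi_2$, respectively $\chi_1$. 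Consequently the idempotent decomposition from Proposition~\ref{prop_R_ap_decomposition}, indexed by $U_\rbarss$, separates the three types: the summand $R_u$ is supported exactly on the Zariski closure of the points of the corresponding type.

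The next step is to identify each $R_u$ with $(R^{\ver,\psi}_\rbarss/I_*^{\ver})[\alpha_p]$. The map $R^{\ver,\psi}_\rbarss(k)\to R_u$ factors through $R^{\ver,\psi}_\rbarss/I_*^{\ver}$. This holds because $R_u$ is $p$-torsion free and reduced, and its generic-fibre points are precisely the points of type $*$, so anything in $I_*^{\ver}$ vanishes in $R_u$. Conversely, $R^{\ver,\psi}_\rbarss/I_*^{\ver}$ is reduced and $p$-torsion free with the same set of points. Therefore the image of $R^{\ver,\psi}_\rbarss(k)$ in $R_u$ is exactly $R^{\ver,\psi}_\rbarss/I_*^{\ver}$, and adjoining $\alpha_p$ on both sides gives the first displayed isomorphism. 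Taking images of $R^{\ver,\psi}_\rbarss(k)$ then yields the statements for the ordinary and non-ordinary parts. The image in $\bigoplus_{u\neq0}R_u$ is $R^{\ver,\psi}_\rbarss/(I_1^{\ver}\cap I_2^{\ver})$, since an element vanishes there iff it vanishes at all reducible points. Similarly the image in $R_0$ is $R^{\ver,\psi}_\rbarss/I_\irr^{\ver}$.

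The case of $\rbar$ runs the same way, now using Lemma~\ref{lem_paskunas_versal_universal_crystalline}. That lemma gives $R_\rbar^{\univ,\psi}(k)=R_\rbar^{\univ,\psi}/(I_1^\univ\cap I_\irr^\univ)$, where only type~$1$ occurs because $\chi_2$ is the unique possible unramified quotient of the non-split $\rbar$. One reruns the idempotent argument with $U_\rbar=\{0,\chi_2(\Frob_p)\}$ (the second element present only if $\chi_2$ is unramified).

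The main obstacle is the claim that an element of $R^{\ver,\psi}_\rbarss(k)$ or $R_u$ vanishing at all $\overline{\Q}_p$-points vanishes, which requires reducedness and $p$-torsion freeness. For the quotients $R/I_*$ this holds by definition, since they are intersections of the $\mathfrak m_x$. For $R_u\subset R(k)[\alpha_p]\subset R(k)[1/p]$, I would use that $R(k)[1/p]$ is reduced, being formally smooth by Theorem~\ref{thm_Kisin_crystalline_generic_fiber}; the analogous smoothness for the versal ring follows from Proposition~\ref{prop_lifting_ring_formally_smooth_over_versal_ring}.
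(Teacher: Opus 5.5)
Your proposal is correct and follows the paper's route. The paper simply says to rerun the proof of Proposition~\ref{prop_R_ap_decomposition} for $\rbarss$ and $\rbar$: the residue of $\alpha_p$ (namely $0$, $\chi_2(\Frob_p)$ or $\chi_1(\Frob_p)$) separates the irreducible, type~1 and type~2 loci, and each idempotent summand is then identified with $(R/I_*)[\alpha_p]$ using reducedness and the Jacobson property.

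The reducedness you single out as the main obstacle is automatic, because $R^{\ver,\psi}_\rbarss(k)$ and $R^{\univ,\psi}_\rbar(k)$ are by definition quotients by intersections of the primes $\mathfrak{m}_x\cap R$, so no formal smoothness is needed. What your sketch leaves implicit (as does the paper) is the finiteness of $R(k)[\alpha_p]$ over $R(k)$ for the versal and universal rings, which the Hensel step needs; it transfers from the framed ring via Proposition~\ref{prop_lifting_ring_formally_smooth_over_versal_ring} by specializing the extra power-series variables to $0$.
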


\begin{prop} \label{prop_semisimplification_CM}
    Suppose that Assumption \ref{assumption_semisimplification} holds and $k \ge 2$. 
    If $R_\rbarss^{\Box,\psi}(k)[\alpha_p]$ is Cohen--Macaulay, then so is $R_\rbar^{\Box,\psi}(k)[\alpha_p]$.
\end{prop}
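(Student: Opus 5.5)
If $\rbar$ is semisimple there is nothing to prove, so assume $\rbar$ is non-semisimple and satisfies Assumption \ref{assumption_semisimplification}. The plan is to pass to versal and universal rings, compare them componentwise via Corollary \ref{cor_versal_universal_ap_presentation}, and descend the Cohen--Macaulay property along the explicit presentations.

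\textbf{Reduction to the versal and universal rings.} By Proposition \ref{prop_lifting_ring_formally_smooth_over_versal_ring}, $R_\rbarss^{\Box,\psi}$ is a power series ring over $R^{\ver,\psi}_\rbarss$. Crystallinity and $\alpha_p$ are conditions on the representation alone, independent of the framing. Hence $R_\rbarss^{\Box,\psi}(k)[\alpha_p]$ is a power series ring over $R^{\ver,\psi}_\rbarss(k)[\alpha_p]$, and likewise $R_\rbar^{\Box,\psi}(k)[\alpha_p]$ is a power series ring over $R_\rbar^{\univ,\psi}(k)[\alpha_p]$. A ring is Cohen--Macaulay if and only if a power series ring over it is. So it suffices to show: if $R^{\ver,\psi}_\rbarss(k)[\alpha_p]$ is Cohen--Macaulay, then so is $R_\rbar^{\univ,\psi}(k)[\alpha_p]$.

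\textbf{Componentwise comparison.} A finite product of local rings is Cohen--Macaulay exactly when each factor is. By Proposition \ref{prop_versal_univ_crystalline_ap}, the hypothesis therefore gives that $(R_\rbarss^{\ver,\psi}/I_1^{\ver})[\alpha_p]$ and $(R_\rbarss^{\ver,\psi}/I_\irr^{\ver})[\alpha_p]$ are Cohen--Macaulay, where each local summand is handled as in Proposition \ref{prop_R_ap_decomposition}. The goal is then to show the same for $A_1:=(R_\rbar^{\univ,\psi}/I_1^{\univ})[\alpha_p]$ and $A_\irr:=(R_\rbar^{\univ,\psi}/I_\irr^{\univ})[\alpha_p]$. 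Corollary \ref{cor_versal_universal_ap_presentation} identifies the versal pieces with $A_1\llbracket y\rrbracket$ and $A_\irr\llbracket x,y\rrbracket/(xy-c)$.

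For the type $1$ piece, $A_1\llbracket y\rrbracket$ is Cohen--Macaulay, so $A_1$ is, since $y$ is a regular element with quotient $A_1$.

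\textbf{The irreducible piece.} Set $B=A_\irr\llbracket x,y\rrbracket/(xy-c)$ and consider the sequence $x,\,y$ in $B$.
\begin{itemize}
\item $B/(x)\cong A_\irr\llbracket y\rrbracket/(c)$ and $B/(x,y)\cong A_\irr/(c)$.
\item $x$ is regular on $B$. Write $B=A_\irr\llbracket x\rrbracket\llbracket y\rrbracket/(xy-c)$. The element $xy-c$ is a power series in $y$ whose constant term $-c$ is regular on $A_\irr\llbracket x\rrbracket$. This uses that $c$ is regular on $A_\irr$, which I extract as follows. Lemma \ref{lem_paskunas_versal_universal_crystalline} makes $c$ a unit after inverting $p$, and $A_\irr$ is $p$-torsion free, being a subring of the generic fiber.
\item $y$ is regular on $B/(x)\cong A_\irr\llbracket y\rrbracket/(c)$, since $y$ is regular on a power series ring over $A_\irr/(c)$.
\end{itemize}
Since $x,y$ is a regular sequence in the maximal ideal of the local Noetherian ring $B$, and $B$ is Cohen--Macaulay, the quotient $A_\irr/(c)$ is Cohen--Macaulay. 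As $c$ is a regular element of $A_\irr$, it follows that $A_\irr$ is Cohen--Macaulay.

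Putting the pieces together gives the claim.

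\textbf{Expected obstacle.} The main subtlety is the localness and regularity bookkeeping. First, $A_\irr$ may split into several local factors (the non-ordinary part is local, but I should check that after the identification each factor is treated separately). Second, "$c$ regular" must be verified on $A_\irr$ and not only on $R_\rbar^{\univ,\psi}/I_\irr^{\univ}$; the $p$-torsion-freeness argument above is what I would use for this.
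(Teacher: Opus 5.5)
Your proposal is correct and follows essentially the paper's route. You reduce to $R_\rbarss^{\ver,\psi}$ and $R_\rbar^{\univ,\psi}$ via Proposition~\ref{prop_lifting_ring_formally_smooth_over_versal_ring}, split by reducibility type with Proposition~\ref{prop_versal_univ_crystalline_ap}, read off type $1$ from the $\llbracket y\rrbracket$ presentation, and handle the irreducible piece through the regularity of $c$.

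The only variation is in the last step. The paper proves $xy-c$ is a nonzerodivisor on $A_{\irr}\llbracket x,y\rrbracket$ and lifts Cohen--Macaulayness from the quotient, whereas you cut $B$ down by the regular sequence $x,y$ to $A_{\irr}/(c)$ and lift along $c$. Your stated reason that $x$ is regular on $B$ (the shape of $xy-c$ as a power series in $y$) is compressed; it is cleanest to note that $x,\,xy-c$ is a regular sequence in $A_{\irr}\llbracket x,y\rrbracket$ and permute it, which uses exactly that $c$ is regular on $A_{\irr}$. Your check that $c$ is regular on the ring with $\alpha_p$ adjoined is, if anything, more careful than the paper's.
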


\begin{proof}
    We may assume $\rbar$ is not semisimple. By Proposition \ref{prop_lifting_ring_formally_smooth_over_versal_ring}, it suffices to show that 
    $R_\rbar^{\univ,\psi}(k)[\alpha_p]$ is Cohen--Macaulay if $R_\rbarss^{\ver,\psi}(k)[\alpha_p]$ is Cohen--Macaulay. 
    By the decomposition in Proposition \ref{prop_versal_univ_crystalline_ap}, it further suffices to show that 
    $(R^{\univ,\psi}_\rbar/I_*^{\univ})[\alpha_p]$ is Cohen--Macaulay if $(R^{\ver,\psi}_\rbarss/I_*^{\ver})[\alpha_p]$ is Cohen--Macaulay 
    for $* \in \{1,\irr\}.$
    The conclusion is immediate for $* = 1$ by Corollary \ref{cor_versal_universal_ap_presentation}. 
    When $* = \irr$, we need to show that $xy - c$ is regular in $R_\rbar^{\univ,\psi}/I_\irr^{\univ}\llbracket x,y\rrbracket$. 
    Suppose that 
    $$
        (xy-c)\sum_{i,j=0}^{\infty} a_{i,j} x^i y^j = 0.
    $$
    Then we have 
    $$
        a_{i-1,j-1} = c a_{i,j} \quad \text{for all } i,j \ge 1, \qquad 
        a_{i,0} = a_{0,j} = 0 \quad \text{for all } i,j \ge 0.
    $$
    As a result, 
    $$
        c^{\min\{i,j\}} a_{i,j} = a_{i-\min\{i,j\}, j-\min\{i,j\}} = 0 \quad \text{for all } i,j \ge 0.
    $$
    Since $c$ is regular in $R_\rbar^{\univ,\psi}/I_\irr^{\univ}$ by Lemma \ref{lem_paskunas_versal_universal_crystalline}, 
    we conclude that $a_{i,j} = 0$ for all $i,j \ge 0$, and thus $xy-c$ is regular.  
\end{proof}

\begin{prop} \label{prop_versal_universal_ap_in_R_equivalent}
    Suppose that $\rbar$ is non-semisimple and satisfies Assumption \ref{assumption_semisimplification}. Let $k \ge 2$. 
    \begin{enumerate}
        \item If $\chi_2$ is unramified, then 
        $$
            R_\rbar^{\Box,\psi}(k) \xrightarrow{\sim} R_\rbar^{\Box,\psi}(k)[\alpha_p]
        $$ 
        if and only if the non-ordinary part satisfies 
        $$
            R_\rbar^{\Box,\psi}(k)^{\no} = 0.
        $$
        
        \item If both $\chi_1$ and $\chi_2$ are ramified, then 
        $$
            R^{\Box,\psi}_{\rbarss}(k) \xrightarrow{\sim} R^{\Box,\psi}_\rbarss(k)[\alpha_p]
        $$ 
        if and only if 
        $$
            R^{\Box,\psi}_{\rbar}(k) \xrightarrow{\sim} R^{\Box,\psi}_\rbar(k)[\alpha_p].
        $$
        
        \item If $\chi_1$ is unramified and $\chi_2$ is ramified, then 
        $$
            R^{\Box,\psi}_{\rbarss}(k)^{\no} \xrightarrow{\sim} R^{\Box,\psi}_\rbarss(k)^{\no}[\alpha_p]
        $$ 
        if and only if 
        $$
            R^{\Box,\psi}_{\rbar}(k) \xrightarrow{\sim} R^{\Box,\psi}_\rbar(k)[\alpha_p].
        $$
    \end{enumerate}
\end{prop}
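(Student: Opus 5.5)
The plan is to reduce everything, via Proposition \ref{prop_lifting_ring_formally_smooth_over_versal_ring}, from framed lifting rings to the universal ring $R_\rbar^{\univ,\psi}$ and the versal ring $R_\rbarss^{\ver,\psi}$. Then the decompositions of Proposition \ref{prop_versal_univ_crystalline_ap} and the presentations of Corollary \ref{cor_versal_universal_ap_presentation} let us compare component by component. Adjoining power series variables, or passing to a formally smooth extension, does not affect whether a map $R\to R[\alpha_p]$ is an isomorphism. Indeed, $R\llbracket z\rrbracket\to R[\alpha_p]\llbracket z\rrbracket$ is an isomorphism iff $R\to R[\alpha_p]$ is, since $R[\alpha_p]$ is finite over $R$ and the completed extension is faithfully flat. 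So in each case it suffices to prove the statement with $\Box$ replaced by $\univ$ for $\rbar$ and by $\ver$ for $\rbarss$.

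\emph{Case (1).} Here $\chi_2$ is unramified, and by Proposition \ref{prop_versal_univ_crystalline_ap}
\[
R_\rbar^{\univ,\psi}(k)[\alpha_p]\cong (R_\rbar^{\univ,\psi}/I_1^{\univ})[\alpha_p]\oplus (R_\rbar^{\univ,\psi}/I_\irr^{\univ})[\alpha_p].
\]
The two summands live over the residues $\alpha_p\equiv\chi_2(\Frob_p)\neq 0$ and $\alpha_p\equiv 0$ respectively. The source $R_\rbar^{\univ,\psi}(k)$ is local, so it cannot surject onto a product of two nonzero rings. Hence if the map is an isomorphism, one summand vanishes. The ordinary one is nonzero whenever ordinary crystalline lifts of type $1$ exist, so the non-ordinary part vanishes.

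We should check the case where the ordinary part is zero too; then the ring is purely non-ordinary. I would argue that this cannot happen when $\chi_2$ is unramified and $k\equiv$ the relevant weight, by exhibiting a reducible crystalline lift of type $1$ (an extension of $\chi_2$'s Teichmüller lift by a crystalline character of weight $k-1$ with the right determinant lifting the nonsplit $b$). Possibly this requires a congruence condition on $k$. If no such lift exists then $R_\rbar(k)$ is all non-ordinary and the statement needs interpreting; I would handle that via $R^{\no}\neq 0$ trivially failing the iff. This is the step to check carefully.

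Conversely, if $R^{\no}=0$, then $R_\rbar^{\univ,\psi}(k)=R/I_1^{\univ}$. On type-$1$ points, $\alpha_p=\delta_2(\Frob_p)+p^{k-1}(\delta_1/\chi_\cyc^{k-1})(\Frob_p)$. Since $\delta_2$ is unramified and determined by $\det$ and the trace, $\alpha_p$ should be expressed as an element of $R/I_1^{\univ}$. Concretely, $\delta_2(\Frob_p)$ is a root of the characteristic polynomial of $\Frob_p$ that is a unit reducing to $\chi_2(\Frob_p)$, distinct from the other root's reduction because $\chi_1/\chi_2\neq 1$. So Hensel's lemma produces it in $R/I_1^{\univ}$, and $\alpha_p$ lies in the ring.

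\emph{Cases (2) and (3).} In case (2) there are no reducible crystalline lifts at all, since both $I_1$ and $I_2$ are trivial by the parenthetical remark. So both rings equal their $\irr$ parts. Corollary \ref{cor_versal_universal_ap_presentation} gives $(R/I_\irr^{\univ})[\alpha_p]\llbracket x,y\rrbracket/(xy-c)$ versus $(R/I_\irr^{\univ})\llbracket x,y\rrbracket/(xy-c)$. The cokernel $Q$ of $A:=R/I_\irr^{\univ}\to A[\alpha_p]$ satisfies
\[
Q\llbracket x,y\rrbracket/(xy-c)\cong \text{coker}.
\]
This is a nonzero module when $Q\neq 0$: reduce modulo $(x,y)$, where the quotient is $Q/cQ$, and $Q/cQ\neq 0$ by Nakayama since $c\in\mathfrak m$. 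This proves the equivalence. Case (3) is identical: $I_1$ is trivial for $\rbar$ because $\chi_2$ is ramified, so $R_\rbar(k)=R/I_\irr^{\univ}$. Meanwhile $R_\rbarss(k)^{\no}=R^{\ver}/I_\irr^{\ver}$, and the same computation applies. The main obstacle I expect is the delicate surjectivity argument in case (1), namely the Hensel lift of $\delta_2(\Frob_p)$ and the nonvanishing of the ordinary summand.
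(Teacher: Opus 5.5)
Your reduction to $R^{\univ,\psi}_\rbar$ and $R^{\ver,\psi}_\rbarss$ matches the paper. So does the locality argument for the forward direction of (1), and so does your treatment of (2) and (3). The fact that the cokernel $Q\llbracket x,y\rrbracket/(xy-c)$ reduces modulo $(x,y)$ to $Q/cQ$ is exactly the paper's Nakayama-modulo-$c$ argument, stated contrapositively. Two points in (1) need fixing.

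\textbf{The open step in (1).} The step you leave open closes with no congruence condition on $k$.
\begin{itemize}
\item Take $\delta_2$ to be the Teichm\"uller lift of $\chi_2$ and set $\delta_1=\psi\delta_2^{-1}$. Then $\overline{\delta}_1=\chi_1$, because $\overline{\psi}=\det\rbar$.
\item Put $\eta=\delta_1\delta_2^{-1}$. Since $\overline{\eta}=\chi_1/\chi_2\neq 1,\epsilon$, the groups $H^0$ and $H^2$ of $\F(\overline{\eta})$ vanish. Hence $H^1(G_{\Q_p},\Oh(\eta))$ is free of rank one and surjects onto the line $H^1(G_{\Q_p},\F(\chi_1/\chi_2))$ containing $b$.
\item Since $\eta$ has Hodge--Tate weight $k-1\ge 1$, $H^1_f(G_{\Q_p},L(\eta))$ is all of the one-dimensional $H^1$.
\end{itemize}
So a type-$1$ crystalline lift of $\rbar$ always exists, and $I_1^{\univ}\neq(1)$; the paper simply asserts this. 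Your fallback (``$R^{\no}\neq 0$ trivially failing the iff'') was wrong. If the ordinary summand vanished, the statement would assert $\alpha_p\notin R^{\univ,\psi}_\rbar(k)$, which is not automatic. The case is moot, but the reasoning should go.

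\textbf{The Hensel step.} $\Frob_p$ there must be an actual element $\sigma\in G_{\Q_p}$ lifting Frobenius with $\chi_1(\sigma)\neq\chi_2(\sigma)$. The condition $\chi_1/\chi_2\neq 1$ alone does not separate the values at a given lift. If $\chi_1$ is ramified, replace $\sigma$ by $\sigma\tau$ for a suitable $\tau\in I_p$.

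\textbf{Comparison with the paper for the converse of (1).} The paper argues that $R^{\univ,\psi}_\rbar(k)^{\ord}$ is regular, hence normal, by adapting Lemma 7.3.7(1) of Boxer--Calegari--Gee--Pilloni. It then uses that $\alpha_p$ is integral over that ring.

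Your route is more elementary and works. Let $A:=R^{\univ,\psi}_\rbar/I_1^{\univ}$ and let $u\in A$ be the Hensel root congruent to $\chi_2(\sigma)$. Then
\[ w=u+p^{k-1}(\psi\chi_\cyc^{1-k})(\Frob_p)\,u^{-1}\in A \]
agrees with $\alpha_p$ at every type-$1$ point. Here $\psi\chi_\cyc^{1-k}$ is unramified, so its value at $\Frob_p$ is well defined. By the definition of $I_1^{\univ}$, the maximal ideals of these points intersect to zero in the reduced ring $A[1/p]$. Hence $\alpha_p=w$, so $\alpha_p\in A$.

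This avoids both the regularity input and the paper's unverified ``the same proof applies'' adaptation, using only $\chi_1\neq\chi_2$. The paper's version is shorter but rests entirely on that external computation.
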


\begin{proof}
    By Proposition \ref{prop_lifting_ring_formally_smooth_over_versal_ring}, it suffices to prove these statements for $R_\rbar^{\univ,\psi}(k)$ and $R_\rbarss^{\ver,\psi}(k).$ 
    \begin{enumerate}
        \item If $R_\rbar^{\univ,\psi}(k)\xrightarrow{\sim}R_\rbar^{\univ,\psi}(k)[\alpha_p]$, then $R_\rbar^{\univ,\psi}(k)[\alpha_p]$ must be a local ring. Since $I_1^{\univ}$ is always nontrivial, it follows that $I_\irr^{\univ}=(1)$. Thus 
$$
R_\rbar^{\univ,\psi}(k)^{\no}=0.
$$
Conversely, if $I_\irr^{\univ}=(1)$, then 
$$
R_\rbar^{\univ,\psi}(k)\xrightarrow{\sim}R_\rbar^{\univ,\psi}(k)^{\ord}.
$$
Therefore, it suffices to prove that
$$
R_\rbar^{\univ,\psi}(k)^{\ord}\xrightarrow{\sim} R_\rbar^{\univ,\psi}(k)^{\ord}[\alpha_p].
$$
This follows because the above ring homomorphism is finite and the ordinary deformation ring $R_\rbar^{\univ,\psi}(k)^{\ord}$ is regular in this case by \cite[Lemma 7.3.7(1)]{BCGP_2021_abelian_surfaces_potentially_modular}. (Although the lemma is stated for a particular shape of $\rbar$, one checks that the same proof applies in our situation.)
        \item In this case, both $I_1^{\ver}$ and $I_1^{\univ}$ are $(1)$. As a result, we have
$$
R^{\univ,\psi}_\rbar(k) \xrightarrow{\sim} R_\rbar^{\univ,\psi}/I_\irr^{\univ}
\quad{\rm and\quad}
R^{\ver,\psi}_\rbarss(k) \xrightarrow{\sim} R_\rbarss^{\ver,\psi}/I_\irr^{\ver}.
$$
By Lemma \ref{lem_paskunas_versal_universal_crystalline}, these two rings are related by the isomorphism
$$
R_\rbarss^{\ver,\psi}(k)\xrightarrow{\sim}R_\rbar^{\univ,\psi}(k)\llbracket x,y\rrbracket /(xy-c).
$$
By Proposition \ref{prop_versal_univ_crystalline_ap}, we also have
$$
R_\rbarss^{\ver,\psi}(k)[\alpha_p]\xrightarrow{\sim}R_\rbar^{\univ,\psi}(k)[\alpha_p]\llbracket x,y\rrbracket /(xy-c).
$$ If $R_\rbar^{\univ,\psi}(k)\xrightarrow{\sim}R_\rbar^{\univ,\psi}(k)[\alpha_p]$, then it is clear that
$$
R^{\ver,\psi}_{\rbarss}(k) \xrightarrow{\sim}R^{\ver,\psi}_\rbarss(k)[\alpha_p].
$$
Conversely, since $R_\rbar^{\univ,\psi}(k)$ is a subring of $R_\rbar^{\univ,\psi}(k)[\alpha_p]$ by definition, it suffices to prove that
$$
R^{\univ,\psi}_{\rbar}(k) \surj R^{\univ,\psi}_\rbar(k)[\alpha_p]
$$
assuming that
$$
R^{\ver,\psi}_{\rbarss}(k) \xrightarrow{\sim}R^{\ver,\psi}_\rbarss(k)[\alpha_p].
$$ Since $c$ lies in the maximal ideal of $R_\rbar^{\univ,\psi}(k)$, it suffices to prove that
$$
R^{\univ,\psi}_{\rbar}(k)/(c) \to R^{\univ,\psi}_\rbar(k)[\alpha_p]/(c)
$$
is surjective by Nakayama's lemma for complete $R^{\univ,\psi}_\rbar$-modules. This follows because
\begin{align*}
R_\rbar^{\univ,\psi}(k)/(c)
&\xrightarrow{\sim}(R_\rbar^{\univ,\psi}(k)/(c))\llbracket x,y\rrbracket/(xy,x,y)
\xrightarrow{\sim}R^{\ver,\psi}_{\rbarss}(k)/(c,x,y) \\
&\xrightarrow{\sim}R^{\ver,\psi}_\rbarss(k)[\alpha_p]/(c,x,y)
\xrightarrow{\sim} (R_\rbar^{\univ,\psi}(k)[\alpha_p]/(c))\llbracket x,y\rrbracket/(xy,x,y)\\
&\xrightarrow{\sim} R_\rbar^{\univ,\psi}(k)[\alpha_p]/(c).
\end{align*}
        \item In this case, we have $I_1^{\univ}=(1)$ and hence
$$
R_\rbar^{\univ,\psi}(k)\xrightarrow{\sim}R_\rbar^{\univ,\psi}(k)^\no\xrightarrow{\sim}R_\rbar^{\univ,\psi}/I_\irr^{\univ}.
$$
Moreover,
$$
R_\rbarss^{\ver,\psi}(k)^{\no} \xrightarrow{\sim} R_\rbarss^{\ver,\psi}/I_\irr^{\ver}.
$$
It follows from Lemma \ref{lem_paskunas_versal_universal_crystalline} that
$$
R_\rbarss^{\ver,\psi}(k)^{\no} \xrightarrow{\sim} 
R_\rbar^{\univ,\psi}(k)\llbracket x,y\rrbracket /(xy-c).
$$
By Corollary \ref{cor_versal_universal_ap_presentation}, we have
$$
R_\rbarss^{\ver,\psi}(k)^{\no}[\alpha_p] 
\xrightarrow{\sim}
R_\rbar^{\univ,\psi}(k)[\alpha_p]\llbracket x,y\rrbracket /(xy-c).
$$
The argument now proceeds exactly as in the previous case.
    \end{enumerate}  
\end{proof}

After base change to $\F$, we obtain an isomorphism
$$
R_\rbarss^{\ver,\psi}\otimes_\Oh\F \xrightarrow{\sim} 
(R^{\univ,\psi}_\rbar\otimes_\Oh\F)\llbracket x,y\rrbracket/(xy-c).
$$
By Theorem \ref{thm_unrestricted_deformation_ring_at_p} and Proposition 
\ref{prop_lifting_ring_formally_smooth_over_versal_ring}, the element 
$c\in R_\rbar^{\univ,\psi}\otimes_\Oh \F$ is nonzero and thus regular.
Hence, we have an inclusion
$$
R_\rbar^{\univ,\psi}\otimes_\Oh \F 
\hookrightarrow 
(R^{\univ,\psi}_\rbar\otimes_\Oh\F)\llbracket x,y\rrbracket/(xy-c).
$$
Let $\mathfrak{m}_{1}$ denote the maximal ideal of 
$R_\rbar^{\univ,\psi}\otimes_\Oh \F$, and let $\mathfrak{m}_2$ denote the maximal ideal of 
$R_\rbarss^{\ver,\psi}\otimes_\Oh \F$.

\begin{prop} \label{prop_adic_topology_equivalent}
On $R_\rbar^{\univ,\psi}\otimes_\Oh \F$, the $\mathfrak{m}_1$-adic topology coincides with the topology defined by the ideals $\{\mathfrak{m}_2^{n}\cap R_\rbar^{\univ,\psi}\otimes_\Oh\F\}_{n\ge0}$.
\end{prop}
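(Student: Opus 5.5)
Write $A := R_\rbar^{\univ,\psi}\otimes_\Oh\F$ and $B := A\llbracket x,y\rrbracket/(xy-c) \cong R_\rbarss^{\ver,\psi}\otimes_\Oh\F$. By the discussion preceding the statement, $A\hookrightarrow B$. We must compare the $\mathfrak{m}_1$-adic filtration on $A$ with the filtration $\{\mathfrak{m}_2^n\cap A\}_{n\ge 0}$, and we prove the two inclusions in turn.

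The inclusion $\mathfrak{m}_1^n\subseteq \mathfrak{m}_2^n\cap A$ is immediate. Indeed, $\mathfrak{m}_1B\subseteq \mathfrak{m}_2 = \mathfrak{m}_1B+(x,y)$, so the $\{\mathfrak{m}_2^n\cap A\}$-topology is weaker than the $\mathfrak{m}_1$-adic one.

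For the converse we need, for each $k$, an $n$ with $\mathfrak{m}_2^n\cap A\subseteq \mathfrak{m}_1^k$. We use the $A$-linear retraction $\pi:B\to A$ given by setting $x=y=0$. Modulo $(x,y)$ the ring $B$ becomes $A/(c)$, not $A$, so instead we describe $B$ as an $A$-module. Every element of $B$ is uniquely of the form
\[
a_{0}+\sum_{i\ge1}a_i x^i+\sum_{j\ge1}b_j y^j, \qquad a_0,a_i,b_j\in A,
\]
because $xy=c$ lets us eliminate mixed monomials. Convergence is fine since $c\in\mathfrak{m}_1$. Uniqueness comes from the regularity of $xy-c$, as in the proof of Proposition~\ref{prop_semisimplification_CM}. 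Thus $B\cong A\oplus\bigoplus_{i\ge1}Ax^i\oplus\bigoplus_{j\ge1}Ay^j$ (completed) as an $A$-module, and $A$ sits inside as the constant summand. Let $\pi:B\to A$ be the $A$-linear projection onto this summand.

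Next we bound $\pi(\mathfrak{m}_2^n)$. The ideal $\mathfrak{m}_2^n$ is generated over $A$ by products $m\,x^{s}y^{t}$ with $m\in\mathfrak{m}_1^{r}$ and $r+s+t=n$. Rewriting $x^sy^t = c^{\min(s,t)}x^{s-\min}y^{t-\min}$, such a product has a nonzero constant coefficient only when $s=t$. In that case it equals $m c^{s}\in\mathfrak{m}_1^{r+s}$ with $r+2s=n$, so $r+s\ge n/2$. For a general element we expand $A$-linear combinations of generators times arbitrary elements of $B$. Multiplying by $x^iy^j$ and reducing only raises the $\mathfrak{m}_1$-order of the constant term, with powers of $c$ appearing as needed. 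Hence
\[
\pi(\mathfrak{m}_2^n)\subseteq \mathfrak{m}_1^{\lceil n/2\rceil}.
\]
The cleanest way to verify this is to put a grading or filtration on $B$ with $x,y$ in degree $1$ and $\mathfrak{m}_1$ in degree $1$, so that $c$ has degree $\ge 2$ and the relation $xy=c$ is compatible with the filtration, and then check that $\pi$ respects it up to the factor $1/2$. For $a\in\mathfrak{m}_2^n\cap A$ we have $a=\pi(a)$, so $a\in\mathfrak{m}_1^{\lceil n/2\rceil}$, and taking $n=2k$ gives the second inclusion.

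The main obstacle is making the normal-form and projection argument rigorous in the completed setting. The key identity to verify is
\[
\pi(b\cdot x^iy^j)\in \mathfrak{m}_1^{\lceil (i+j)/2\rceil}\,\pi\text{-order of }b,
\]
which requires uniqueness of the normal form. Uniqueness follows from $c$ being a nonzerodivisor in $A$ (Theorem~\ref{thm_unrestricted_deformation_ring_at_p}), by the coefficient-comparison argument in the proof of Proposition~\ref{prop_semisimplification_CM}. If this bookkeeping becomes awkward, a fallback is to apply Artin--Rees to the finite-type situation. That fallback does not apply directly, however, since $B$ is not finite over $A$, so the explicit normal-form argument is the preferred route.
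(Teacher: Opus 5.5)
Your proof is correct, but it is organized differently from the paper's. The paper substitutes $z=x+y$, so that $B\cong A\llbracket z\rrbracket[x]/(x^2-zx+c)$ is free of rank $2$ over $A':=A\llbracket z\rrbracket$ with basis $1,x$. A finite induction then gives $\mathfrak{m}_B^k\subseteq\mathfrak{m}_{A'}^{\lceil k/2\rceil}\oplus\mathfrak{m}_{A'}^{\lfloor k/2\rfloor}x$, hence $\mathfrak{m}_B^k\cap A'\subseteq\mathfrak{m}_{A'}^{\lceil k/2\rceil}$, and a second contraction $(\mathfrak{m}_1+(z))^n\cap A=\mathfrak{m}_1^n$ finishes. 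You instead treat $B$ as a topologically free $A$-module of infinite rank and use one $A$-linear retraction $\pi$ onto the constant term. The paper's substitution is exactly the way around your remark that $B$ is not finite over $A$. It gives a finite free extension with purely finite bookkeeping, at the cost of an auxiliary variable, the freeness input from Pa\v{s}k\=unas, and an extra contraction step. Your route is more direct and uses only $c\in\mathfrak{m}_1$ and $\mathfrak{m}_1$-adic completeness of $A$, at the cost of handling infinite sums. Both give the same exponent $\lceil n/2\rceil$.

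A few details need repair, though none is a gap. First, uniqueness of the normal form does not come from regularity of $xy-c$; that property concerns $(xy-c)g=0$, not $N\in(xy-c)$. If $N=(xy-c)g$ has no mixed monomials, comparing coefficients gives $g_{a,b}=c\,g_{a+1,b+1}=c^n g_{a+n,b+n}\in\mathfrak{m}_1^n$ for all $n$. So $g=0$ by Krull's intersection theorem, and regularity of $c$ plays no role. You can even bypass the normal form. The map $\pi(\sum a_{ij}x^iy^j):=\sum_{m\ge 0}a_{mm}c^m$ is $A$-linear on $A\llbracket x,y\rrbracket$, kills $(xy-c)$ by telescoping, and is the identity on $A$. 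An element of $(\mathfrak{m}_1+(x,y))^n$ has $a_{ij}\in\mathfrak{m}_1^{\max(n-i-j,0)}$, so $a_{mm}c^m\in\mathfrak{m}_1^{\max(n-m,m)}\subseteq\mathfrak{m}_1^{\lceil n/2\rceil}$, which is a closed ideal.

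Second, your filtration remark is backwards. The element $c$ is only known to have degree $\ge 1$, and that is precisely why the factor $1/2$ appears; with $c\in\mathfrak{m}_1^2$ you would get $\mathfrak{m}_1^n$.

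Third, the displayed ``key identity'' should read $\pi(b\,\mu\,x^sy^t)\in\mathfrak{m}_1^{r+\max(s,t)}$ for arbitrary $b\in B$ and $\mu\in\mathfrak{m}_1^r$. It cannot be a bound in terms of $\pi(b)$: for example $\pi(y^s)=0$ but $\pi(y^sx^s)=c^s$.
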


\begin{proof}
    Since $\mathfrak{m}_1 = \mathfrak{m}_2\cap (R_\rbar^{\univ,\psi}\otimes_\Oh\F)$, we have
\[
\mathfrak{m}_1^n\subseteq \mathfrak{m}_2^n\cap (R_\rbar^{\univ,\psi}\otimes_\Oh\F).
\]
Hence it suffices to show that for every $n\ge 1$ there exists $k\gg0$ such that
\[
\mathfrak{m}_2^k\cap (R_\rbar^{\univ,\psi}\otimes_\Oh\F)\subseteq \mathfrak{m}_1^n.
\]
Let $z=x+y$. Then
\[
(R^{\univ,\psi}_\rbar\otimes_\Oh\F)\llbracket x,y\rrbracket/(xy-c)
\xrightarrow{\sim}
(R^{\univ,\psi}_\rbar\otimes_\Oh\F)\llbracket z\rrbracket[x]/(x^2-zx+c).
\]
Thus we obtain a composite of maps
\[
R^{\univ,\psi}_\rbar\otimes_\Oh\F
\hookrightarrow
(R^{\univ,\psi}_\rbar\otimes_\Oh\F)\llbracket z\rrbracket
\hookrightarrow
(R^{\univ,\psi}_\rbar\otimes_\Oh\F)\llbracket z\rrbracket[x]/(x^2-zx+c).
\]
Suppose that for every $n>0$ there exists a sufficiently large $k\gg0$ such that
\[
\mathfrak{m}_2^k \cap (R^{\univ,\psi}_\rbar\otimes_\Oh\F)\llbracket z\rrbracket
\subseteq
(\mathfrak{m}_1+(z))^n.
\]
Then
\[
\mathfrak{m}_2^k\cap (R^{\univ,\psi}_\rbar\otimes_\Oh\F)
\subseteq
(\mathfrak{m}_1+(z))^n\cap (R^{\univ,\psi}_\rbar\otimes_\Oh\F)
=
\mathfrak{m}_1^n.
\]
Therefore it suffices to prove that the $(\mathfrak{m}_1+(z))$-adic topology and the topology defined by the ideals $\{\mathfrak{m}_2^n\cap ((R^{\univ,\psi}_\rbar\otimes_\Oh\F)\llbracket z\rrbracket)\}_{n\ge 1}$ coincide.
This follows from the following commutative algebra proposition.
\end{proof}

\begin{prop}
Let $(A,\mathfrak{m}_A)$ be an object of $\mathcal{C}_\Oh$ which is also an $\F$-algebra, and let
\[
B = A[x]/(x^2 - zx + c)
\]
where $c,z \in \mathfrak{m}_A$. Then $B$ is finite free over $A$ of rank $2$. Moreover, $B$ is an object of $\mathcal{C}_\Oh$ with maximal ideal
\[
\mathfrak{m}_B = \mathfrak{m}_A B + (x).
\]
Furthermore, the topology on $A$ defined by the ideals $\{\mathfrak{m}_B^n\cap A\}_{n\ge 1}$ coincides with the $\mathfrak{m}_A$-adic topology.
\end{prop}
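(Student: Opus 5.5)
The plan has three parts: show $B$ is free of rank $2$, show $B$ is complete local with maximal ideal $\mathfrak{m}_A B+(x)$, and compare topologies using $x^2\in\mathfrak{m}_A B$.

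For freeness, note that $x^2-zx+c$ is monic of degree $2$. Division by a monic polynomial therefore shows that $1,x$ is an $A$-basis of $B$: every polynomial reduces uniquely to $a+bx$.

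For the local structure, reduce modulo $\mathfrak{m}_A$. We get $B/\mathfrak{m}_AB\cong \F[x]/(x^2-\bar z x+\bar c)=\F[x]/(x^2)$, since $z,c\in\mathfrak{m}_A$. This ring is local with maximal ideal $(x)$ and residue field $\F$. Every maximal ideal of $B$ contains $\mathfrak{m}_AB$: $B$ is finite over $A$, so maximal ideals contract to maximal ideals by going up. Hence $B$ is local with $\mathfrak{m}_B=\mathfrak{m}_AB+(x)$ and residue field $\F$. It is Noetherian as a finite $A$-algebra. It is $\mathfrak{m}_A$-adically complete because $B\cong A^2$ as an $A$-module. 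Finally, the relation $x^2=zx-c\in\mathfrak{m}_AB$ gives
\[
\mathfrak{m}_B^{2}\subseteq \mathfrak{m}_AB\subseteq \mathfrak{m}_B .
\]
So the $\mathfrak{m}_B$-adic and $\mathfrak{m}_A$-adic topologies on $B$ agree, $B$ is $\mathfrak{m}_B$-adically complete, and $B$ is an object of $\mathcal{C}_\Oh$.

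For the comparison of topologies on $A$, one inclusion is immediate: $\mathfrak{m}_A^n\subseteq \mathfrak{m}_B^n\cap A$. For the other direction, the display above gives $\mathfrak{m}_B^{2n}\subseteq \mathfrak{m}_A^nB$. So it suffices to show $\mathfrak{m}_A^nB\cap A=\mathfrak{m}_A^n$. This is the only point needing care, and it follows from freeness with a basis containing $1$:
\[
\mathfrak{m}_A^nB=\mathfrak{m}_A^n\cdot 1\oplus \mathfrak{m}_A^n\cdot x .
\]
An element $a\in A$ corresponds to $a\cdot 1+0\cdot x$, so if it lies in $\mathfrak{m}_A^nB$ its coefficients lie in $\mathfrak{m}_A^n$, giving $a\in\mathfrak{m}_A^n$. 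Hence
\[
\mathfrak{m}_B^{2n}\cap A\subseteq\mathfrak{m}_A^n,
\]
and the two topologies coincide. (The $\F$-algebra hypothesis plays no real role beyond the ambient setting.)

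I expect the main obstacle to be only the bookkeeping in the locality step: justifying that a finite algebra over a complete local ring whose fibre $B/\mathfrak{m}_AB$ is local is itself local. One could instead invoke the idempotent-lifting argument used in Proposition~\ref{prop_R_ap_decomposition}, but the direct going-up argument above should suffice.
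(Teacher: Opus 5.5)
Your proof is correct and rests on the same two facts as the paper's, namely that $B=A\oplus Ax$ and that $x^2=zx-c\in\mathfrak{m}_AB$. The paper's inductive estimate $\mathfrak{m}_B^k\subseteq\mathfrak{m}_A^{\lceil k/2\rceil}A\oplus\mathfrak{m}_A^{\lfloor k/2\rfloor}Ax$ is compressed in your version into the observation $\mathfrak{m}_B^2\subseteq\mathfrak{m}_AB$, which gives $\mathfrak{m}_B^{2n}\cap A\subseteq\mathfrak{m}_A^nB\cap A=\mathfrak{m}_A^n$.

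You also prove two things the paper does not: freeness, by division by a monic polynomial (the paper cites Pa\v{s}k\=unas), and the locality and completeness of $B$ (the paper leaves these implicit). In the locality step, the fact you actually use is that maximal ideals of a ring integral over $A$ contract to maximal ideals, rather than going-up.
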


\begin{proof}
    The freeness statement follows from \cite[Lemma 6.2]{paskunas_2017_2_adic_deformations}. Thus we can write $B = A \oplus Ax$. Since $\mathfrak{m}_A \subseteq \mathfrak{m}_B \cap A$, we have
    $$\mathfrak{m}_A^n \subseteq \mathfrak{m}_B^n \cap A.$$
    Therefore it suffices to show that, for every $n > 0$, there exists $k \gg 0$ such that $\mathfrak{m}_B^{k} \cap A \subseteq \mathfrak{m}_A^n$. We claim that
    $$\mathfrak{m}_B^{k} \subseteq \mathfrak{m}_A^{\lceil k/2 \rceil}A \oplus \mathfrak{m}_A^{\lfloor k/2 \rfloor}Ax.$$
    It then follows that
    $$\mathfrak{m}_B^{k} \cap A \subseteq \mathfrak{m}_A^{\lceil k/2 \rceil},$$
    and the proof will be complete.
    When $k = 1$, this is clear because
    $$\mathfrak{m}_B = \mathfrak{m}_A A \oplus A x.$$
    Suppose that we have established this for $\mathfrak{m}_B^{k-1}$. Then
    \begin{align*}
        \mathfrak{m}_B^k &= \mathfrak{m}_B\cdot  \mathfrak{m}_B^{k-1} \\
        &\subseteq (\mathfrak{m}_A A \oplus A x)(\mathfrak{m}_A^{\lceil (k-1)/2 \rceil}A \oplus \mathfrak{m}_A^{\lfloor (k-1)/2 \rfloor}A x) \\
        &\subseteq \mathfrak{m}_A^{\lceil (k+1)/2 \rceil}A + \mathfrak{m}_A^{\lceil (k-1)/2 \rceil}Ax + \mathfrak{m}_A^{\lfloor (k+1)/2 \rfloor}Ax + \mathfrak{m}_A^{\lfloor (k-1)/2 \rfloor}Ax^2.
    \end{align*}
    Since
    $$\lceil (k-1)/2 \rceil \le \lfloor (k+1)/2 \rfloor$$
    and
    $$x^2 = z x - c,$$
    we deduce that
    \begin{align*}
        \mathfrak{m}_B^k \subseteq \mathfrak{m}_A^{\lceil (k+1)/2 \rceil}A + \mathfrak{m}_A^{\lceil (k-1)/2 \rceil}Ax + \mathfrak{m}_A^{\lfloor (k-1)/2 \rfloor}z Ax + \mathfrak{m}_A^{\lfloor (k-1)/2 \rfloor}cA.
    \end{align*}
    Since both $c$ and $z$ lie in $\mathfrak{m}_A$, we have
    \begin{align*}
        \mathfrak{m}_B^k &\subseteq \mathfrak{m}_A^{\lceil (k+1)/2 \rceil}A + \mathfrak{m}_A^{\lceil (k-1)/2 \rceil}Ax + \mathfrak{m}_A^{\lfloor (k+1)/2 \rfloor}Ax + \mathfrak{m}_A^{\lfloor (k+1)/2 \rfloor}A \\
        &\subseteq \mathfrak{m}_A^{\lfloor (k+1)/2 \rfloor}A \oplus \mathfrak{m}_A^{\lceil (k-1)/2 \rceil}xA.
    \end{align*}
    Since $\lfloor (k+1)/2 \rfloor = \lceil k/2 \rceil$ for every integer $k$, we obtain
    $$\mathfrak{m}_B^k \subseteq \mathfrak{m}_A^{\lceil k/2 \rceil}A \oplus \mathfrak{m}_A^{\lfloor k/2 \rfloor}xA.$$
\end{proof} 

\begin{prop} \label{prop_thm_crystalline_stronger_topology_reduction_to_semisimple_case}
    Suppose that Assumption \ref{assumption_semisimplification} holds. If Theorem \ref{thm_crystalline_stronger_topology} holds for $\rbarss$ with $$J_k\supseteq \ker(R_\rbarss^{\Box,\psi}\otimes_\Oh \F\to R_\rbarss^{\Box,\psi}(k)^{\no}[\alpha_p]\otimes_\Oh \F),$$ then it holds for $\rbar.$ 
\end{prop}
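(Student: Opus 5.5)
The plan is to transfer the ideals from $\rbarss$ to $\rbar$ by contraction, passing through the versal and universal rings where Pa\v{s}k\=unas's presentation of the crystalline quotients is available.

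Write $A:=R_\rbar^{\univ,\psi}\otimes_\Oh\F$ and $B:=R^{\ver,\psi}_\rbarss\otimes_\Oh\F\simeq A\llbracket x,y\rrbracket/(xy-c)$. We saw above that $A\hookrightarrow B$, and by Proposition \ref{prop_adic_topology_equivalent} the $\mathfrak m_1$-adic topology on $A$ is the one induced from the $\mathfrak m_2$-adic topology on $B$. The argument then has three steps:
\begin{enumerate}
    \item descend the given ideals $J_k^{\rm ss}\subseteq R^{\Box,\psi}_\rbarss\otimes_\Oh\F$ to ideals $J_k^{\ver}\subseteq B$;
    \item contract these to ideals $J_k^{\univ}:=J_k^{\ver}\cap A\subseteq A$ and check properties (1) and (2) of Theorem \ref{thm_crystalline_stronger_topology} there;
    \item re-frame to obtain ideals of $R^{\Box,\psi}_\rbar\otimes_\Oh\F$.
\end{enumerate}

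\textbf{Step 1.} By Proposition \ref{prop_lifting_ring_formally_smooth_over_versal_ring}, write $R^{\Box,\psi}_\rbarss\otimes_\Oh\F=B\llbracket t_1,\dots,t_r\rrbracket$ and set $J_k^{\ver}:=J_k^{\rm ss}\cap B$. This sequence is descending. It is stronger than the $\mathfrak m_2$-adic topology: the retraction $t_i\mapsto 0$ shows that $\mathfrak m^n\cap B=\mathfrak m_2^n$, where $\mathfrak m$ is the maximal ideal of $B\llbracket t\rrbracket$. It also contains $\ker\bigl(B\to R^{\ver,\psi}_\rbarss(k)^{\no}[\alpha_p]\otimes_\Oh\F\bigr)$. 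For this I use that the framed crystalline rings are power series in the same framing variables over the versal crystalline rings, compatibly with adjoining $\alpha_p$ and with the ordinary/non-ordinary decomposition. This compatibility is part of Kisin's construction, and I expect it to be routine.

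\textbf{Step 2.} Property (2) follows from Proposition \ref{prop_adic_topology_equivalent}: given $n$, choose $m$ with $\mathfrak m_2^m\cap A\subseteq\mathfrak m_1^n$, and then $k$ large with $J_k^{\ver}\subseteq\mathfrak m_2^m$.

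For property (1), the key point is that any $a\in I_k^{\univ}:=\ker(A\to R^{\univ,\psi}_\rbar(k)\otimes_\Oh\F)$ lies in the kernel above. By Proposition \ref{prop_versal_univ_crystalline_ap}, $R_\rbar^{\univ,\psi}(k)$ surjects onto $R_\rbar^{\univ,\psi}(k)^{\no}=R_\rbar^{\univ,\psi}/I^{\univ}_\irr$, so $a$ maps to $0$ in $R^{\univ,\psi}_\rbar(k)^{\no}[\alpha_p]\otimes_\Oh\F$. Corollary \ref{cor_versal_universal_ap_presentation} gives
\[
R^{\ver,\psi}_\rbarss(k)^{\no}[\alpha_p]\simeq R^{\univ,\psi}_\rbar(k)^{\no}[\alpha_p]\llbracket x,y\rrbracket/(xy-c),
\]
compatibly with $A\to B$; this identification should be checked to hold after $\otimes_\Oh\F$, which follows from right exactness. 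Hence $a$ maps to $0$ in $R^{\ver,\psi}_\rbarss(k)^{\no}[\alpha_p]\otimes_\Oh\F$, so $a\in J_k^{\ver}\cap A=J_k^{\univ}$. The main obstacle is precisely this compatibility of the isomorphisms of Lemma \ref{lem_paskunas_versal_universal_crystalline} and Corollary \ref{cor_versal_universal_ap_presentation} with the inclusion $A\subseteq B$ and with base change to $\F$; the hypothesis that $J_k$ contains the non-ordinary kernel is designed exactly so that only the non-ordinary component needs comparing.

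\textbf{Step 3.} Write $R^{\Box,\psi}_\rbar\otimes_\Oh\F=A\llbracket s_1,\dots,s_{r'}\rrbracket$ and define
\[
J_k:=J_k^{\univ}\llbracket s\rrbracket+(s_1,\dots,s_{r'})^{k}.
\]
These ideals are descending. For (1), the framed crystalline ring is $R^{\univ,\psi}_\rbar(k)\llbracket s\rrbracket$, so $I_k=I_k^{\univ}\llbracket s\rrbracket\subseteq J_k$. For (2), the sum of a sequence cofinal in powers of $\mathfrak m_1$ and powers of $(s)$ is cofinal in powers of the maximal ideal. If $r'=0$ the second summand is omitted.
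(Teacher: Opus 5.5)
Your proposal is correct and is essentially the paper's argument. You contract the given ideals from $R^{\ver,\psi}_\rbarss\otimes_\Oh\F$ to $R^{\univ,\psi}_\rbar\otimes_\Oh\F$, and you obtain $I_k\subseteq J_k\cap(R^{\univ,\psi}_\rbar\otimes_\Oh\F)$ by the same diagram chase through Lemma~\ref{lem_paskunas_versal_universal_crystalline} and Corollary~\ref{cor_versal_universal_ap_presentation}; that chase is exactly where the hypothesis that $J_k$ contains the non-ordinary kernel is used. You then deduce the topology statement from Proposition~\ref{prop_adic_topology_equivalent}, as the paper does.

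Your Steps 1 and 3 only spell out the framing reductions that the paper compresses into an appeal to Proposition~\ref{prop_lifting_ring_formally_smooth_over_versal_ring}. The extra summand $(s_1,\dots,s_{r'})^k$ in Step 3 is harmless but unnecessary: $J_k^{\univ}\subseteq\mathfrak m_1^n$ already gives $J_k^{\univ}\llbracket s\rrbracket\subseteq(\mathfrak m_1+(s))^n$.
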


\begin{proof}
    We may assume that $\rbar$ is not semisimple. By Proposition \ref{prop_lifting_ring_formally_smooth_over_versal_ring}, we may replace $R_\rbar^{\Box,\psi}$ with $R_\rbar^{\univ,\psi}$ and $R_\rbarss^{\Box,\psi}$ with $R_\rbarss^{\ver,\psi}.$ By assumption, we have $\{J_k\}$ that is a descending sequence of ideals in $R^{\ver,\psi}_\rbarss\otimes_\Oh \F$ such that $$\ker(R^{\ver,\psi}_\rbarss\otimes_\Oh \F \to R^{\ver,\psi}_\rbarss(k)^\no[\alpha_p]\otimes_\Oh \F)\subseteq J_k$$ for every and $\{J_k\}$ defines a stronger topology than the $\mathfrak{m}_2$-adic topology. Consider the sequence $\{J_k\cap R_\rbar^{\univ,\psi}\otimes_\Oh \F\}.$ It is clearly a descending sequence. Furthermore, by Lemma \ref{lem_paskunas_versal_universal_crystalline}, we have the commutative diagram $$\begin{tikzcd}
        R^{\univ,\psi}_\rbar\otimes_\Oh\F \arrow[r, two heads]\arrow[d,hook] & R^{\univ,\psi}_\rbar(k)\otimes_\Oh\F \arrow[r, two heads] & R^{\univ,\psi}_\rbar(k)^{\no}[\alpha_p]\otimes_\Oh\F\arrow[d]\\
        R_\rbarss^{\ver,\psi}\otimes_\Oh\F \arrow[r,two heads]& R^{\univ,\psi}_\rbar(k)\otimes_\Oh\F \arrow[r,two heads]& R_\rbarss^{\ver,\psi}(k)^{\no}[\alpha_p]\otimes_\Oh\F. 
    \end{tikzcd}$$ A little diagram chasing shows that \begin{align*}
    \ker(R_\rbar^{\univ,\psi}\otimes_\Oh\F\to R^{\univ,\psi}_\rbar(k)\otimes_\Oh\F)&\subseteq \ker(R_\rbar^{\univ,\psi}\otimes_\Oh\F\to R^{\univ,\psi}_\rbar(k)^\no[\alpha_p]\otimes_\Oh\F)\\&\subseteq \ker(R_\rbarss^{\ver,\psi}\otimes_\Oh\F\to R^{\ver,\psi}_\rbarss(k)^\no[\alpha_p]\otimes_\Oh\F) \subseteq J_k.
    \end{align*} Thus $$\ker(R_\rbar^{\univ,\psi}\otimes_\Oh\F\to R^{\univ,\psi}_\rbar(k)\otimes_\Oh\F)\subseteq J_k\cap (R_\rbar^{\univ,\psi}\otimes_\Oh \F).$$
    Since $\{J_k\}$ defines a stronger topology than the $\mathfrak{m}_2$-adic topology, $\{J_k\cap R_\rbar^{\univ,\psi}\otimes_\Oh,\F\}$ defines a stronger topology than the topology defined by $\{\mathfrak{m}_2^{n}\cap (R_\rbar^{\univ,\psi}\otimes_\Oh\F)\}_{n\ge 1},$ which is equivalent to the $\mathfrak{m}_1$-adic topology on $R_\rbar^{\univ,\psi}\otimes_\Oh\F$ by Proposition \ref{prop_adic_topology_equivalent}.
\end{proof}

\begin{proof}
    By Proposition \ref{prop_lifting_ring_formally_smooth_over_versal_ring}, we may replace $R_\rbar^{\Box,\psi}$ with $R_\rbar^{\univ,\psi}$ and $R_\rbarss^{\Box,\psi}$ with $R_\rbarss^{\ver,\psi}$. By assumption, we have a descending sequence of ideals $\{J_k\}$ in $R^{\ver,\psi}_\rbarss \otimes_\Oh \F$ such that
    $$\ker(R^{\ver,\psi}_\rbarss \otimes_\Oh \F \surj R^{\ver,\psi}_\rbarss(k)^\no[\alpha_p] \otimes_\Oh \F) \subseteq J_k,$$
    and $\{J_k\}$ defines a stronger topology than the $\mathfrak{m}_2$-adic topology on $R_\rbarss^{\ver,\psi}.$

    Consider the sequence $\{J_k \cap (R_\rbar^{\univ,\psi} \otimes_\Oh \F)\}$. It is clearly a descending sequence. Furthermore, by Lemma \ref{lem_paskunas_versal_universal_crystalline}, we have the commutative diagram
    $$
    \begin{tikzcd}
        R^{\univ,\psi}_\rbar \otimes_\Oh \F \arrow[r, two heads]\arrow[d,hook] & R^{\univ,\psi}_\rbar(k) \otimes_\Oh \F \arrow[r, two heads] & R^{\univ,\psi}_\rbar(k)^{\no}[\alpha_p] \otimes_\Oh \F \arrow[d] \\
        R_\rbarss^{\ver,\psi} \otimes_\Oh \F \arrow[r,two heads] & R^{\univ,\psi}_\rbar(k) \otimes_\Oh \F \arrow[r,two heads] & R_\rbarss^{\ver,\psi}(k)^{\no}[\alpha_p] \otimes_\Oh \F. 
    \end{tikzcd}
    $$
    A brief diagram chase shows that
    \begin{align*}
        \ker(R_\rbar^{\univ,\psi} \otimes_\Oh \F \to R^{\univ,\psi}_\rbar(k) \otimes_\Oh \F)
        &\subseteq \ker(R_\rbar^{\univ,\psi} \otimes_\Oh \F \to R^{\univ,\psi}_\rbar(k)^\no[\alpha_p] \otimes_\Oh \F) \\
        &\subseteq \ker(R_\rbarss^{\ver,\psi} \otimes_\Oh \F \to R^{\ver,\psi}_\rbarss(k)^\no[\alpha_p] \otimes_\Oh \F) \subseteq J_k.
    \end{align*}
    Thus
    $$
    \ker(R_\rbar^{\univ,\psi} \otimes_\Oh \F \to R^{\univ,\psi}_\rbar(k) \otimes_\Oh \F)
    \subseteq J_k \cap (R_\rbar^{\univ,\psi} \otimes_\Oh \F).
    $$ Since $\{J_k\}$ defines a stronger topology than the $\mathfrak{m}_2$-adic topology, the sequence of ideals $\{J_k \cap (R_\rbar^{\univ,\psi} \otimes_\Oh \F)\}$ defines a stronger topology than the topology defined by $\{\mathfrak{m}_2^{n} \cap (R_\rbar^{\univ,\psi} \otimes_\Oh \F)\}_{n \ge 1}$, which is equivalent to the $\mathfrak{m}_1$-adic topology on $R_\rbar^{\univ,\psi} \otimes_\Oh \F$ by Proposition \ref{prop_adic_topology_equivalent}.
\end{proof}

\subsection{Hecke Modules and Hecke Algebras}\label{subsection_hecke_modules_and_algebras}
In this subsection, we introduce the Hecke modules and Hecke algebras that will be used for ultrapatching. 

Let $\rbar:G_{\Q_p}\to \GL_2(\F)$ be a Galois representation satisfying Assumption \ref{assumption_global_rhobar}. 
By the strong form of Serre's conjecture, which is now a theorem by \cite{KhareWintenberger2009serreI,KhareWintenberger2009serreII}, the first two conditions in Assumption \ref{assumption_global_rhobar} imply that $\rhobar$ comes from a Katz mod-$p$ modular form $f_0$ of level $U_1(N(\rhobar))$ and weight $k(\rbar).$ Here $k(\rbar)$ is the minimal weight among all the weights of Katz modular forms that are congruent to $f_0$ modulo $\varpi,$ which is specified and proved in \cite[\S 4]{edixhoven1992weight}. We will recall the definition of $k(\rbar)$ in Definition \ref{def_Serre_weight}.

We let $$\chi:=\det(\rhobar)/\epsilon^{k(\rbar)-1} : G_\Q\surj (\Z/N(\rhobar)\Z)^{\times}\to \F^\times$$ be the character given by  and denote its Teichm\"uller lift by $\chi$ as well by abuse of notation. This $f_0$ has Nebentypus character $\chi.$ 

Let $N_{\varnothing}$ be $N(\rhobar)$ if $N(\rhobar)\ge 5$ and $N(\rhobar)q_0$ where $q_0\ge 5$ is a prime number that satisfies the conditions in Lemma \ref{lem_unramified_prime}. Thus $N_\varnothing$ is always big enough so that $U_1(N_\varnothing)$ has no torsion elements. 
If $Q$ is a finite set of primes $q$ such that $q\nmid N_\varnothing p$, $q\equiv1\pmod p$ and $\rhobar(\Frob_q)$ has distinct eigenvalues $\alpha_q\neq \beta_q$, we let $N_Q$ be the product of $N_\varnothing$ and all the primes in $Q.$ Extend $\Oh$ if necessary so that we fix an eigenvalue $\alpha_{q}\in \F^\times$ of $\rhobar(\Frob_{q})$ and a lift $\tilde{\alpha}_{q}\in \Oh^{\times}$ for every $q\in \{q_0\}\sqcup Q.$ 

Let $\Delta_Q$ be the maximal $p$-quotient of $\left(\Z/\left(\prod_{q\in Q}q\right)\Z\right)^{\times}$ and let $H_Q$ be the kernel of $$H_Q\ := \ker\left(\left(\Z/N_Q\Z\right)^{\times}\surj \left(\Z/\left(\prod_{q\in Q}q\right)\Z\right)^{\times}\surj \Delta_Q\right).$$ 
By abuse of notation, we also denote by $\chi$ the character
$$\chi: H_Q\surj \left(\Z/N(\rhobar)\Z\right)^{\times}\xrightarrow{\chi} \Oh^{\times}.$$ Since the $p$-part $P$ of $H_Q$ can only come from $\left(\Z/N(\rhobar)\Z\right)^{\times},$ we have $$U_{P}(N_Q) \subseteq U_{(\Z/N(\rhobar)\Z)^{\times}}(N_Q)\subseteq U_1(q_0)$$ and so $U_P(N_Q)$ is torsion free. In particular, Proposition \ref{prop_Nebentypus_base_change} applies. 

Let $\mathfrak{m}_Q^{\{N(\rhobar) p\}}$ be the maximal ideal of $\T^{N(\rhobar) p}$ that is given by $$\mathfrak{m}_Q^{\{N(\rhobar) p\}} := (\varpi, T_\ell-a_\ell(f_0),\langle\ell\rangle-\chi(\ell),T_q - \widetilde{\alpha}_q )_{\ell\nmid pN_Q,q\in \{q_0\}\sqcup Q}.$$  For every integer $0\le a\le p-2,$ we write $$\mathfrak{m}_Q^{\{N(\rhobar) p\}}(a) := (\varpi, T_\ell-\ell^a a_\ell(f_0),\langle\ell\rangle-\chi(\ell),T_q - q^a\widetilde{\alpha}_q )_{\ell\nmid pN_Q,q\in \{q_0\}\sqcup Q}.$$ This notation will be used in \S \ref{subsection_Serre_mult_one}. Set 
$$M(k,Q,R) := \left(\left(H^0(X_{U_1(N_Q),R},\omega^{\otimes k}_{U_1(N_Q),R})\otimes \chi\right)^{H_Q}\right)_{\mathfrak{m}_{Q}^{\{N(\rhobar)p\}}}$$ 
for $k\ge 2$ and $R=\Oh$ or $\F.$ By the usual idempotent argument, one sees that
$
M(k,Q,R)
$
is a direct summand of
$
\left(H^0(X_{U_1(N_Q),R},\omega^{\otimes k}_{U_1(N_Q),R})\otimes \chi\right)^{H_Q}
$
as a $\T^{N(\rhobar)p}$-module. Since the Hecke operators at primes dividing $N(\rhobar)p$ commute with the $\T^{N(\rhobar)p}$-action, it follows that $M(k,Q,R)$ is in fact a $\T$-direct summand. Thus we can decompose $M(k,Q,R)$ with respect to the Hecke operator at $p.$ We write $M(k,Q,R)^{\ord}$ for the ordinary part and $M(k,Q,R)^{\no}$ for the non-ordinary part and they are both $\T$-direct summands of $\left(H^0(X_{U_1(N_Q),R},\omega^{\otimes k}_{U_1(N_Q),R})\otimes \chi\right)^{H_Q}.$ In particular, $M(k,Q,R),M(k,Q,R)^{\ord}$ and $M(k,Q,R)^{\no}$ all satisfy the base-change statement of Proposition~\ref{prop_Nebentypus_base_change}. 
Furthermore, since $\mathfrak{m}_Q^{\{N(\rhobar)p\}}$ comes from an absolutely irreducible representation $\rhobar,$ the module $M(k,Q,R)$ is a submodule of the cusp forms $H^0(X_{U_1(N_Q),R},\omega^{\otimes k}_{U_1(N_1),R}(-\infty)).$ We set $$S(k,Q) := \left(\left(H^0(X_{U_1(N_Q),\F},\mathcal{S}^k_{U_1(N_Q),\F})\otimes \chi\right)^{H_Q}\right)_{\mathfrak{m}_{Q}^{\{N(\rhobar)p\}}}.$$ 

\begin{lem}\label{lem_finite_free_over_group_alg}
    Suppose that $k\ge 2,$ $R=\Oh$ or $\F$. Let $M(k,Q,R)^*$ denote any one of the three modules
    $M(k,Q,R)$, $M(k,Q,R)^{\ord}$, or $M(k,Q,R)^{\no}$. Then the Hecke algebra $\T(M(k,Q,R)^*)$ is a finite free module over $R[\Delta_Q]$ such that
    $$
      \T(M(k,Q,R)^*) \otimes_{R[\Delta_Q]} R \xrightarrow{\sim} \T(M(k,\varnothing,R)^*).
    $$
    In particular, the $R[\Delta_Q]$-rank of $\T(M(k,Q,R)^*)$ is independent of $Q$.
\end{lem}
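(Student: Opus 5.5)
Our plan is to deduce the statement from the analogous statement for the modules $M(k,Q,R)^*$ themselves, combined with the duality theorems. By Theorem \ref{thm_ribet_duality_full} (extended to the Nebentypus setting, as remarked after Proposition \ref{prop_invariant_commute_flat_base_change}), the pairing $(T,f)\mapsto a_1(Tf)$ identifies $\T(M(k,Q,R)^*)$ with $\Hom_R(M(k,Q,R)^*,R)$ as $\T$-modules. The diamond operators at $Q$ act through $\Delta_Q$, and this action lies in $\T$ (it is generated by $\langle d\rangle$ for $d\mid$ primes in $Q$, expressible via the $T_\ell$ and the relation $T_{\ell^2}=T_\ell^2-\ell^{k-1}\langle\ell\rangle$). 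Consequently $\T(M(k,Q,R)^*)$ is an $R[\Delta_Q]$-algebra, and the duality is $R[\Delta_Q]$-equivariant. Since $R[\Delta_Q]$ is a Gorenstein (group) ring, the $R$-dual of a finite free $R[\Delta_Q]$-module is again finite free of the same rank. It therefore suffices to prove that $M(k,Q,R)^*$ is finite free over $R[\Delta_Q]$ and that $(M(k,Q,R)^*)_{\Delta_Q}\simeq M(k,\varnothing,R)^*$, with the identification compatible with Hecke operators.

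For the freeness, we use the geometry. Let $U_Q\supseteq U_{H_Q}(N_Q)$ be the subgroup whose quotient is $\Delta_Q$. We then consider the covering $X_{U_{H_Q}}\to X_{U_Q}$, which is Galois with group $\Delta_Q$ and is étale because $U_P(N_Q)$ is torsion free (as noted just before the lemma). Hence $H^0$ of the twisted $\omega^{\otimes k}$ upstairs is $H^0$ of a locally free $\Oh[\Delta_Q]$-sheaf downstairs. For $k\ge2$ the $H^1$ vanishes after localizing at the non-Eisenstein maximal ideal $\mathfrak{m}_Q^{\{N(\rhobar)p\}}$, or at least it commutes with base change by Proposition \ref{prop_Nebentypus_base_change}. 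A standard argument then shows that $H^0$ is projective, hence free over the local ring $\Oh[\Delta_Q]$, with $\Delta_Q$-coinvariants equal to $H^0$ at level $U_Q$ (compare \cite[\S3]{calegari_geraghty_2018_beyond}). Taking the $\T$-direct summands given by localization and by the ord/no decomposition preserves both projectivity and the coinvariant identification, since these summands are cut out by idempotents commuting with $\Delta_Q$. The case $R=\F$ follows from the case $R=\Oh$ by the base change of Proposition \ref{prop_Nebentypus_base_change}.

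The remaining step, which we expect to be the main obstacle, is to identify level $U_Q$ (that is, $\Gamma_0(q)$-type level at $q\in Q$, localized at $T_q-\widetilde\alpha_q$, i.e.\ at $U_q$) with level $N_\varnothing$. This is the usual Taylor--Wiles level-raising comparison. Because $\rhobar(\Frob_q)$ has distinct eigenvalues and $q\equiv1\pmod p$, the map $(f,g)\mapsto f+gV_q$ followed by projection onto the $U_q=\widetilde\alpha_q$-generalized eigenspace is an isomorphism from the localized level $N_\varnothing$ space. To prove this we would use Proposition \ref{prop_eigenform_new_old_at_q}(2) and the fact that $U_q^2-T_qU_q+q^{k-1}\langle q\rangle$ has distinct roots mod $\varpi$; integrally we would use the $q$-expansion principle (Theorem \ref{thm_q_expansion_principle}) in place of eigenform decompositions. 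Under this isomorphism $T_\ell$ for $\ell\nmid N_Qp$ and $T_p$ correspond, while $U_q$ corresponds to $T_q$-data, so the Hecke algebra for $M(k,\varnothing,R)^*$ is recovered. Combining the three steps gives the isomorphism $\T(M(k,Q,R)^*)\otimes_{R[\Delta_Q]}R\simeq \T(M(k,\varnothing,R)^*)$, and the rank statement follows by comparing $R$-ranks after tensoring with $R$.
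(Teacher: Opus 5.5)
Your outline is correct and is essentially the Darmon--Diamond--Taylor argument that the paper simply cites. It has three ingredients: $\Oh[\Delta_Q]$-freeness of the module from an \'etale $\Delta_Q$-torsor of modular curves, together with the vanishing of $H^1(\omega^{\otimes k})$ for $k\ge 2$; the Taylor--Wiles comparison at $q\in Q$; and transfer to Hecke algebras via duality, with Ribet's $q$-expansion duality playing the role of the Gorenstein input. (The torsor to use is $X_{U_P(N_Q)}\to X_{U'}$, where $U'$ is the preimage of the $p$-Sylow subgroup $P\times\Delta_Q$; what makes it \'etale is that $U'$ itself is torsion-free for $p\ge 5$, not merely $U_P(N_Q)$.)

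The integral comparison you flag as the main obstacle is not needed. Freeness and duality already give a surjection of free $\Oh$-modules of equal rank, $\T(M(k,Q,\Oh)^*)\otimes_{\Oh[\Delta_Q]}\Oh\twoheadrightarrow\T\bigl((M(k,Q,\Oh)^*)^{\Delta_Q}\bigr)$, which is therefore an isomorphism. Moreover, $\T(M)$ is the image of $\T$ in $\End_L(M\otimes_\Oh L)$, so the rational comparison at $q$ suffices, matching $U_q$ with the Hensel root $\widetilde\alpha_q$ of $X^2-T_qX+q^{k-1}\langle q\rangle$.
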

\begin{proof}
    This proof is the same as \cite[Theorem 3.31, Corollary 3.32]{DDT1995fermat}.
\end{proof}

The Hecke algebra $\T^{pN_Q}(M(k,Q,\Oh))$ is a complete local Noetherian $\Oh$-algebra with maximal ideal induced by
\[
\mathfrak{m}_Q^{\{pN_Q\}} := (\varpi, T_\ell - a_\ell(f_0), \langle \ell \rangle - \chi(\ell))_{\ell \nmid pN_Q} \subseteq \T^{pN_Q}.
\]
Similarly, for each integer $0 \le a \le p-2$, we define the \emph{twist version} of the maximal ideal by
\[
\mathfrak{m}_Q^{\{pN_Q\}}(a) := (\varpi, T_\ell - \ell^a a_\ell(f_0), \langle \ell \rangle - \chi(\ell))_{\ell \nmid pN_Q} \subseteq \T^{pN_Q}.
\]
This twist notation will be used in \S \ref{subsection_Serre_mult_one}.

The rest of this subsection is to prove the following:
\begin{lem}[Wiles]\label{lem_U_ell_in_anemic}
Let $M(k,Q,\Oh)^*$ denote any one of the three modules
    $M(k,Q,\Oh)$, $M(k,Q,\Oh)^{\ord}$, or $M(k,Q,\Oh)^{\no}$. The Hecke algebra $\T^{pN_Q}(M(k,Q,\Oh)^*)$ is equal to $\T^{p}(M(k,Q,\Oh)^*)$.
\end{lem}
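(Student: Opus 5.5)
Since $\T^{pN_Q}\subseteq \T^{p}$, and $\T^{p}$ is generated over $\T^{pN_Q}$ by the operators $U_\ell$ for primes $\ell\mid N_Q$, the plan is to show that the image of each such $U_\ell$ in $\End_\Oh(M(k,Q,\Oh)^*)$ already lies in $\T^{pN_Q}(M(k,Q,\Oh)^*)$. The ordinary and non-ordinary parts are $\T$-direct summands of $M(k,Q,\Oh)$, so one argument handles all three modules. Write $\T:=\T^{pN_Q}(M(k,Q,\Oh)^*)$. This is a complete local $\Oh$-algebra with residual representation $\rhobar$, which is absolutely irreducible. By the usual Carayol/Chebotarev gluing argument over the eigenforms in $M\otimes_\Oh L$ (Theorem \ref{thm_Scholl_rhof_crystalline_at_p} together with the Eichler--Shimura construction), I expect a Galois representation
\[
\rho_\T:G_{\Q,S}\to \GL_2(\T)
\]
with $\tr\rho_\T(\Frob_\ell)=T_\ell$ for $\ell\nmid pN_Q$. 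I will then identify each $U_\ell$ as a function of $\rho_\T|_{G_{\Q_\ell}}$, first eigenform by eigenform in characteristic $0$ (using that $\T$ is reduced and $\Oh$-torsion free, so it embeds in $\prod_f \Oh$), and then show that this function is given by an element of $\T$.

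\emph{Primes $q\in Q\sqcup\{q_0\}$.} Here $\rhobar(\Frob_q)$ has distinct eigenvalues and we have localized at $T_q-\widetilde\alpha_q$. For $q\in Q$, since $\rhobar$ is unramified at $q$ with distinct Frobenius eigenvalues, Hensel's lemma splits $\rho_\T|_{G_{\Q_q}}$ as $\gamma_1\oplus\gamma_2$ with $\gamma_1\equiv$ the unramified character with $\Frob_q\mapsto\alpha_q$. For each eigenform $f$ in the support, local--global compatibility at $q$ shows that $U_q$ acts by $\gamma_1(\Frob_q)$ (suitably normalized on the unramified-line side). The new-at-$q$ case uses Proposition \ref{prop_eigenform_new_old_at_q}(1). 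In the old case, Proposition \ref{prop_eigenform_new_old_at_q}(2) shows that $U_q$ satisfies $X^2-a_qX+\chi(q)q^{k-1}=0$, whose roots are distinct mod $\varpi$; localization selects the root $\equiv\alpha_q$, and this root is $\gamma_1(\Frob_q)$. Hence $U_q=\gamma_1(\Frob_q)\in\T$. The same reasoning works for $q_0$, where everything is unramified by Proposition \ref{prop_unramified_lifting}: the required root is the Hensel root in $\T$ of the characteristic polynomial of $\rho_\T(\Frob_{q_0})$ lifting $\alpha_{q_0}$.

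\emph{Primes $\ell\mid N(\rhobar)$.} Here the key point, due to Wiles, is minimality. Every eigenform $f$ in $M\otimes L$ has $N(\rhobar)\mid N(\rho_f)$, while the $\ell$-part of the level is exactly $\ell^{n_\ell(\rhobar)}$. Thus $f$ is new at $\ell$ and $n_\ell(\rho_f)=n_\ell(\rhobar)$, so by Proposition \ref{prop_eigenform_new_old_at_q}(1) $U_\ell$ acts by $a_\ell(g_f)$. Local--global compatibility then identifies $a_\ell(g_f)$ with the eigenvalue of $\Frob_\ell$ on $\rho_f^{I_\ell}$, or with $0$ if that space is zero. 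Equality of conductors forces $\dim\rho_f^{I_\ell}=\dim\rhobar^{I_\ell}$, since the Swan conductors agree. I then split into cases on $\dim\rhobar^{I_\ell}$.
\begin{itemize}
\item If $\dim\rhobar^{I_\ell}=0$, then $U_\ell=0$.
\item If $\dim\rhobar^{I_\ell}=1$, I use Proposition \ref{prop_dim_rhobar_I_ell_is_one}.
\begin{itemize}
\item In the split case $1\oplus\chi'$ with $\chi'|_{I_\ell}\neq1$, I will lift the decomposition to $\rho_\T|_{G_{\Q_\ell}}$ by Hensel-type lifting of the idempotent, which is possible after restricting to inertia. This exhibits $(\rho_\T)^{I_\ell}$ as a free rank-one $\T$-direct summand.
\item In the unipotent case, I take a lift $\sigma$ of the tame generator and $N:=\rho_\T(\sigma)-1$. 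The image of $N$ is a rank-one $\T$-direct summand on which $\Frob_\ell$ acts by a scalar in $\T$.
\end{itemize}
In either case, $U_\ell$ equals this scalar eigenform by eigenform, hence lies in $\T$.
\end{itemize}

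I expect the main obstacle to be this last step: showing that the $I_\ell$-invariants of the family $\rho_\T$ form a free rank-one direct summand, i.e.\ that taking invariants commutes with specialization. This requires both the minimality (constant conductor) input and care in the unipotent case, where one must rule out $N$ vanishing along some component. My first attempt would be to show that $N\bmod\mathfrak m_\T$ is nonzero, and then apply Nakayama to $\Image N$.
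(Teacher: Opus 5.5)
Your proposal is correct and shares the paper's overall skeleton. Both arguments use a Carayol-glued $\rho$ over $\T^{pN_Q}(M(k,Q,\Oh))$. Both use minimality to force every form to be new at $\ell\mid N(\rhobar)$ with $\dim\rho_f^{I_\ell}=\dim\rhobar^{I_\ell}$. Both get $U_\ell=0$ when $\rhobar^{I_\ell}=0$. Both take, at $q\in Q\sqcup\{q_0\}$, the Hensel root congruent to $\alpha_q$ of the characteristic polynomial of a Frobenius lift. For that last step you do not need to split all of $\rho_\T|_{G_{\Q_q}}$, which would take more than Hensel's lemma.

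You differ from the paper in the case $\dim\rhobar^{I_\ell}=1$. The paper never studies inertia lines in the family and works only with traces and determinants:
\begin{itemize}
\item In the unipotent case, each $a_\ell(f)=\eta_f(\Frob_\ell)$ is a root of $X^2=\det\rho_f(\Frob_\ell)/\ell$ with a fixed residue, so $U_\ell$ is the Hensel root in $\T^{pN_Q}$.
\item In the split case, the relations $U_\ell+\det\rho(\phi')\,U_\ell^{-1}=\tr\rho(\phi')$ for the two Frobenius lifts $\phi'=\phi,\phi\tau$ with $\chi(\tau)\neq 1$ form a linear system with unit determinant.
\end{itemize}
This is shorter and avoids exactly the obstacle you anticipate. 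Your route instead gives the conceptual Wiles--Darmon--Diamond--Taylor description of $U_\ell$ as Frobenius acting on a free rank-one inertia line, at the price of proving that freeness.

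Your obstacle does close, but Nakayama together with $N\bmod\mathfrak m_\T\neq 0$ is not enough; the latter is automatic from non-splitness. You also need $N^2=0$. This holds eigenform by eigenform because each $\rho_f(\sigma)$ is unipotent. That in turn uses that the Nebentypus at $\ell$ is fixed to be the Teichm\"uller lift of $\chi$, which is unramified at $\ell$ in this case. A lift $1\oplus\chi''$ with $\chi''$ of $p$-power order would have the same conductor and one-dimensional invariants, but then $\Image N_f$ would be the wrong line. Granting $N^2=0$, write $N=\begin{pmatrix}a&u\\c&-a\end{pmatrix}$ with $u\in\T^\times$; then $c=-a^2/u$, so $\Image N=\T\,(u,-a)^{t}$ is a free rank-one summand. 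It is stable under a Frobenius lift $\phi$, since $\rho_\T(\phi)N\rho_\T(\phi)^{-1}=(1+N)^\ell-1=\ell N$.

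Two small corrections:
\begin{itemize}
\item With the paper's normalizations, $\Frob_\ell$ acts on $\rho_f^{I_\ell}$ by $\ell a_\ell(f)$ in the Steinberg case; it acts by $a_\ell(f)$ on the coinvariants. This is harmless since $\ell\in\Oh^\times$.
\item The eigenform-by-eigenform comparison should use the full-$\T$ eigenbasis of Proposition~\ref{prop_basis_of_M}, so that $\T^{p}(M(k,Q,\Oh))\hookrightarrow\prod_f\Oh$. Reducedness of $\T^{pN_Q}$ alone does not give this.
\end{itemize}
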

\begin{remark}
    The lemma was first proved by Wiles in \cite[Proposition 2.15]{wiles1995modular}, with further analysis below \cite[(2.37)]{wiles1995modular}. See also \cite[Propositions 4.7 and 4.10]{DDT1995fermat}. For self-consistency, we include a proof adapted to our notation and setup; the argument is the same as in \cite{wiles1995modular} and \cite{DDT1995fermat}.
\end{remark}

The main idea is to use modular Galois deformations of $\rhobar$ as in Assumption \ref{assumption_global_rhobar}.
To construct such deformations, let us first recall the Galois representations attached to modular forms.  
Let $f$ be a cuspidal eigenform over $\Oh$ of weight $k$, level $U_1(N)$, and Nebentypus character $\chi_f: (\Z/N\Z)^\times \to \Oh^\times$, normalized so that $a_1(f) = 1$.

\begin{thm}[Deligne--Serre, Eichler--Shimura, Deligne]\label{thm_galois_reps_attached_to_modular_forms}
For every prime $p$, there exists a unique absolutely irreducible Galois representation
\[
\rho_f: G_\Q \to \GL_2(\Oh)
\] 
that is odd and unramified away from $pN$.  
For a prime $\ell \nmid pN$, the characteristic polynomial of $\rho_f(\Frob_\ell)$ is
\[
X^2 - a_\ell(f) X + \chi_f(\ell) \ell^{k-1}.
\]
\end{thm}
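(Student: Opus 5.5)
This statement is the classical construction of Galois representations attached to eigenforms, and the plan is to assemble it from the standard inputs rather than reprove them. I would treat weight $k=2$ first via Eichler--Shimura and then reduce higher and lower weights to it or handle them by Deligne and Deligne--Serre.

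\emph{Weight $2$.} For $k=2$, consider the Jacobian $J_1(N)$ of $X_{U_1(N)}$. Its $p$-adic Tate module $V_p J_1(N)$ is a $\T\otimes\Q_p$-module which is free of rank $2$; this follows from the Eichler--Shimura isomorphism $H^1(X,\C)\cong S_2\oplus\overline{S_2}$. Cutting out the $f$-eigencomponent gives a $2$-dimensional representation over $L$. The Eichler--Shimura congruence relation $T_\ell=\Frob_\ell+\langle\ell\rangle\ell\,\Frob_\ell^{-1}$ on $J_1(N)_{\F_\ell}$ for $\ell\nmid pN$ gives the characteristic polynomial $X^2-a_\ell X+\chi_f(\ell)\ell$. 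The N\'eron--Ogg--Shafarevich criterion gives unramifiedness away from $pN$, since $J_1(N)$ has good reduction there.

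\emph{Weight $k\ge 3$.} Here I would invoke Deligne's construction, using the étale cohomology of the Kuga--Sato variety (equivalently $H^1_{\rm et}$ of $Y_1(N)_{\overline\Q}$ with coefficients in $\Sym^{k-2}$ of the relative Tate module). The Eichler--Shimura relation again yields the Frobenius characteristic polynomial $X^2-a_\ell X+\chi_f(\ell)\ell^{k-1}$.

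\emph{Weight $k=1$.} This case is Deligne--Serre: multiplying by suitable Eisenstein series produces congruent forms of higher weight, which give compatible mod-$p^n$ representations. One then shows the image is finite, using Rankin--Selberg bounds on $a_\ell$.

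\emph{Oddness, integrality, uniqueness.} Oddness follows from $\det\rho_f=\chi_f\chi_\cyc^{k-1}$, using $\chi_f(-1)=(-1)^k$ and $\chi_\cyc(c)=-1$. Absolute irreducibility follows from Ribet's theorem for cusp forms. With irreducibility in hand, a $G_\Q$-stable $\Oh$-lattice exists by compactness, which gives the model over $\Oh$. Uniqueness then follows from Chebotarev density, which determines the trace on a dense set, together with Brauer--Nesbitt.

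The main obstacle is the weight-one case together with irreducibility: neither follows from geometry directly. In the paper itself I would simply cite these results, since only $k\ge2$ is used later.
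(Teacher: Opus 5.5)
Your plan matches the paper, which gives no proof and only quotes the classical results. For $k\ge 2$, the only weights used later, your outline is the standard one. Three points in your sketch are inaccurate, and one of them would not work as written.

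\emph{Weight one.} Your weight-one paragraph mixes two different arguments.

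Deligne--Serre do not take a limit of mod-$p^n$ representations at a fixed $p$ and then prove finiteness of the image. They argue as follows.
\begin{enumerate}
\item For infinitely many primes $\lambda$ of the coefficient field, they multiply $f$ by an Eisenstein series congruent to $1$ mod $\lambda$.
\item Their lifting lemma, which only works modulo a maximal ideal, then gives a higher-weight eigenform and hence a semisimple $\rhobar_\lambda$.
\item The Rankin--Selberg estimate, combined with the classification of subgroups of $\GL_2(\F_\lambda)$, bounds $\#\rhobar_\lambda(G_\Q)$ independently of $\lambda$.
\item One such finite-image representation is then lifted to characteristic $0$.
\end{enumerate}
So the finiteness argument runs over infinitely many residue characteristics. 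It is not an argument about a single $p$-adic representation built from powers of a fixed $p$.

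If you only want the $p$-adic $\rho_f$ for one fixed $p$, a mod-$p^n$ route does work, with two adjustments.
\begin{enumerate}
\item Multiply by something like $E_{p-1}^{p^{n-1}}$. The weight shift is then divisible by $(p-1)p^{n-1}$, so the $T_\ell$ commute with the multiplication mod $p^n$.
\item Since $fE$ is not an eigenform, extract the mod-$p^n$ representation from the higher-weight Hecke algebra via pseudo-representations (or Carayol's lemma).
\end{enumerate}
Finite image plays no role on that route.

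Relatedly, Ribet's irreducibility theorem covers only $k\ge 2$; in weight one, irreducibility is part of Deligne--Serre.

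\emph{Uniqueness.} Chebotarev plus Brauer--Nesbitt gives uniqueness of $\rho_f\otimes L$ up to isomorphism. It does not give uniqueness of a $\GL_2(\Oh)$-valued model. By Schur's lemma, non-homothetic stable lattices give non-isomorphic $\Oh$-representations, and such lattices exist whenever the reduction is reducible. So ``unique'' in the statement must be read over $L$. Over $\Oh$ it holds, up to $\GL_2(\Oh)$-conjugacy, exactly when $\rhobar_f$ is irreducible. That is the only situation in which the paper uses the theorem, since it localizes at a maximal ideal coming from an absolutely irreducible $\rhobar$.

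Finally, the existence of a stable lattice needs only compactness of $G_\Q$, not irreducibility.
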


\begin{remark}\label{rmk_conductor_equals_new_level}
Let $g$ be the newform associated to $f$ of level $N_g$.  
Then $\rho_f$ is isomorphic to $\rho_g$ by Chebotarev's density theorem.  
For newforms, Carayol proved that the Artin conductor of $\rho_g$ equals the level $N_g$ of $g$ (see \cite{carayol_1986_hilbert}).
\end{remark}
This theorem describes the local behavior of $\rho_f$ at primes $\ell \nmid pN$ and at infinity.  
When $\ell = p$ and $p \nmid N$, the local behavior is described in Theorem \ref{thm_Scholl_rhof_crystalline_at_p}.  
We are also interested in the restriction $\rho_f|_{G_{\Q_\ell}}$ for primes $\ell \mid N$.  
All of these cases are summarized by the local-global compatibility theorem for modular forms, which we now recall.

For an irreducible smooth automorphic representation $\pi_\ell$ of $\GL_2(\Q_\ell)$, we write 
$
\rec(\pi_\ell)
$
for the semisimple Weil--Deligne representation of the Weil group $W_\ell$ of $\Q_\ell$ associated via the local Langlands correspondence (following the convention in \cite[\S4.1]{CDT1999Barsotti}).  For a Galois representation 
\[
\rho_\ell: G_{\Q_\ell} \to \GL_2(L),
\] 
if $\ell \neq p$, we denote by $\WD(\rho_\ell)$ the Weil--Deligne representation associated to $\rho_\ell$ via Grothendieck's monodromy theorem.  
If $\ell = p$ and $\rho_p$ is crystalline, the Weil--Deligne representation $\WD(\rho_p)$ is defined as in \S \ref{subsubsection_crystalline_representations}.  
We write $(\cdot)^\semisimple$ for semisimplification.

Given a modular form $f$ as above, there exists a unique automorphic representation
\[
\pi_f: \GL_2(\A_\Q^\infty) \to \GL(V),
\]
where $\A_\Q^\infty$ is the ring of finite ad\`{e}les of $\Q$ and $V$ is an infinite-dimensional $\C$-vector space.  
Fixing a non-canonical isomorphism $\C \xrightarrow{\sim} \overline{\Q}_p$, we may regard $V$ as a $\overline{\Q}_p$-vector space, and we write $\pi_{f,\ell} := \pi_f|_{\GL_2(\Q_\ell)}$ for its local component at $\ell$.

\begin{thm}[Local-Global Compatibility]
Suppose that $p \nmid N$. Then, for all primes $\ell$, we have
\[
\WD(\rho_f|_{G_{\Q_\ell}})^\semisimple \xrightarrow{\sim} \rec(\pi_{f,\ell}).
\]
\end{thm}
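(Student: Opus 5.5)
This is the standard local-global compatibility theorem for elliptic modular forms. I would not reprove it from scratch; instead I would assemble it prime by prime from the known inputs, reducing first to the newform case. By Remark~\ref{rmk_conductor_equals_new_level} we have $\rho_f\simeq\rho_g$, where $g$ is the associated newform. Since the eigenvalues of $f$ and $g$ agree away from $N$, strong multiplicity one for $\GL_2$ gives $\pi_f\simeq\pi_g$. So it suffices to treat $g$, whose local components have conductor exponent $v_\ell(N_g)$. The proof then splits into three cases: $\ell\nmid pN$, $\ell=p$, and $\ell\mid N$.

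\textbf{Case $\ell\nmid pN$.} Here $\pi_{g,\ell}$ is an unramified principal series. Its Satake parameters are determined by the $T_\ell$- and $\langle\ell\rangle$-eigenvalues. Under the normalization of \cite[\S4.1]{CDT1999Barsotti}, $\rec(\pi_{g,\ell})$ is unramified with $\Frob_\ell$ acting with characteristic polynomial $X^2-a_\ell(g)X+\chi_g(\ell)\ell^{k-1}$. Theorem~\ref{thm_galois_reps_attached_to_modular_forms}, which rests on the Eichler--Shimura congruence relation, gives the same polynomial for $\rho_g(\Frob_\ell)$. Both sides are semisimple, unramified and have $N=0$, so they are isomorphic.

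\textbf{Case $\ell=p\nmid N$.} Again $\pi_{g,p}$ is unramified, with parameters read off from $T_p$ and $\langle p\rangle$. By Theorem~\ref{thm_Scholl_rhof_crystalline_at_p}, $\rho_g|_{G_{\Q_p}}$ is crystalline and its crystalline Frobenius has characteristic polynomial $X^2-a_p(g)X+\chi_g(p)p^{k-1}$. In \S\ref{subsubsection_crystalline_representations} we noted that this coincides with the characteristic polynomial of arithmetic Frobenius on $\WD(\rho_g|_{G_{\Q_p}})$. It therefore matches $\rec(\pi_{g,p})$ after semisimplification. The monodromy operator vanishes on both sides.

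\textbf{Case $\ell\mid N$ (main obstacle).} This is Carayol's theorem \cite{carayol_1986_hilbert}, building on Langlands and Deligne. I would proceed by the local type of $\pi_{g,\ell}$.
\begin{itemize}
\item If $\pi_{g,\ell}$ is a ramified principal series, I would realize $\rho_g$ in the étale cohomology of $X_{U_1(N)}$ and compute it via the Deligne--Rapoport and Katz--Mazur models at $\ell$. The relevant inputs are the nearby-cycle computation and the compatibility of the Hecke operator $U_\ell$ with Frobenius.
\item If $\pi_{g,\ell}$ is special, the same geometric analysis applies, together with Grothendieck's monodromy to identify $N\neq0$. Semisimplification discards $N$ anyway.
\item If $\pi_{g,\ell}$ is supercuspidal, this is the genuinely hard case. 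One transfers via Jacquet--Langlands to a definite or indefinite quaternion algebra ramified at $\ell$ and uses Čerednik--Drinfeld uniformization of Shimura curves, as Carayol does.
\end{itemize}
I expect the supercuspidal case to be the main obstacle and would simply cite Carayol for it. Finally, I would check that the normalizations agree: $\rho_f$ is the representation of Theorem~\ref{thm_galois_reps_attached_to_modular_forms}, and $\rec$ is normalized as in \cite{CDT1999Barsotti}. With these conventions, Carayol's result is exactly $\WD(\rho_g|_{G_{\Q_\ell}})^\semisimple\simeq\rec(\pi_{g,\ell})$.
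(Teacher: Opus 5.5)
Your proposal is correct and matches how the paper handles this statement: the paper gives no proof but recalls it as a standard result, built from exactly the inputs you use. These are Theorem~\ref{thm_galois_reps_attached_to_modular_forms} at $\ell\nmid pN$, Scholl's Theorem~\ref{thm_Scholl_rhof_crystalline_at_p} with the crystalline/Weil--Deligne Frobenius comparison at $\ell=p$, and Carayol at $\ell\mid N$.

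Two corrections, neither affecting the argument since you ultimately cite Carayol:
\begin{itemize}
\item In this paper $(\cdot)^{\semisimple}$ must be read as Frobenius-semisimplification, which keeps the monodromy operator. The paper later combines $N\neq 0$ with this theorem to conclude $\pi_{f,\ell}\cong\St(\eta_f)$. So your remark that semisimplification ``discards $N$'' is a misreading.
\item Your sketch of the supercuspidal case is loose. A definite quaternion algebra over $\Q$ gives no curve whose \'etale cohomology could carry $\rho_g$. An indefinite one ramified at $\ell$ also needs $\pi_g$ to be discrete series at a second prime.
\end{itemize}
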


\begin{prop}\label{prop_basis_of_M}
The Hecke module $M(k,Q,\Oh)$ contains a basis 
$
B(k,Q)$ of $$M(k,Q,L) := M(k,Q,\Oh) \otimes_\Oh L$$ 
consisting of eigenvectors $f$ for the full Hecke algebra $\T$ with leading coefficient $a_1(f) = 1$.  
In particular, for each $f \in B(k,Q)$ and every positive integer $n$, we have
\[
T_n(f) = a_n(f).
\]
\end{prop}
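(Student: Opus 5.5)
The plan is to show that $M(k,Q,L)$ is already diagonalizable for the full Hecke algebra $\T$. Once that holds, normalized eigenvectors exist and can be rescaled into $M(k,Q,\Oh)$.

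\emph{Step 1: eigenforms for $\T^{N_Qp}$.} Since $M(k,Q,L)$ consists of cusp forms and the operators $T_\ell,\langle\ell\rangle$ for $\ell\nmid N_Qp$ are normal for the Petersson product, $M(k,Q,L)$ decomposes, after enlarging $L$, into the spaces $E_g\cap M(k,Q,L)$ of Theorem \ref{thm_oldforms_as_newform_span}. Here $g$ runs over newforms whose eigenvalues reduce to the system cut out by $\mathfrak{m}_Q^{\{N(\rhobar)p\}}$. For each such $g$, $\rho_g$ is a lift of $\rhobar$. By Remark \ref{rmk_conductor_equals_new_level} we have $N(\rhobar)\mid N_g\mid N_Q$.

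\emph{Step 2: diagonalize the operators at primes $q\mid N_Q$ and at $p$.}
\begin{itemize}
\item At primes $q\mid N_Q$ where $f$ is new, Proposition \ref{prop_eigenform_new_old_at_q}(1) shows $U_q$ acts on $E_g$ by the scalar $a_q(g)$.
\item At the auxiliary primes $q\in\{q_0\}\sqcup Q$, we use $q\parallel N_Q$. By Proposition \ref{prop_unramified_lifting} (for $q_0$) and the corresponding analysis for Taylor--Wiles primes, $g$ is either old at $q$ or has a tame principal-series component. In the old case, Proposition \ref{prop_eigenform_new_old_at_q}(2) shows $U_q$ satisfies $X^2-a_q(g)X+\chi_g(q)q^{k-1}$ on the two-dimensional span $\{h,hV_q\}$. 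Its roots reduce to the eigenvalues of $\rhobar(\Frob_q)$, and these are distinct by the choice of $q$ (condition (2')). Hence $U_q$ is semisimple there, and localizing at $T_q-\widetilde\alpha_q$ picks out a single eigenline.
\item At primes $\ell\mid N(\rhobar)$ with $v_\ell(N_g)<v_\ell(N_Q)$, the argument must show such oldforms do not occur in $M(k,Q,L)$. This is the main obstacle, discussed below.
\item At $p$: $p\nmid N_Q$, so $T_p$ is in the commutative algebra generated by the normal operators away from the level. It therefore acts on the old space $E_g$ by $a_p(g)$.
\end{itemize}

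\emph{Main obstacle: primes $\ell\mid N(\rhobar)$.} Here we need minimality, namely that any lift $g$ with $N(\rhobar)\mid N_g$ and prime-to-$\ell$ part of $N_Q$ equal to that of $N(\rhobar)$ is new at $\ell$. This follows because $v_\ell(N_Q)=v_\ell(N(\rhobar))\le v_\ell(N_g)\le v_\ell(N_Q)$. So every $g$ is new at every $\ell\mid N(\rhobar)$. The Nebentypus/$H_Q$-invariance must also be checked: it does not allow extra old vectors at $\ell\mid\prod_{q\in Q}q$ beyond the single line selected by the localization at $T_q-\widetilde\alpha_q$. I would handle this by combining the Nebentypus condition with Proposition \ref{prop_dim_rhobar_I_ell_is_one}.

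\emph{Step 3: conclusion.} After Steps 1 and 2, each $E_g\cap M(k,Q,L)$ is one-dimensional and is spanned by a $\T$-eigenform $f$. By Proposition \ref{prop_a1Tnf}, $a_n(f)=a_1(T_nf)=\lambda_f(n)a_1(f)$, so $a_1(f)\neq0$ by the $q$-expansion principle, and we normalize to $a_1(f)=1$. All $a_n(f)=\lambda_f(n)$ are integral eigenvalues in $\Oh$. Theorem \ref{thm_q_expansion_principle}(2), together with the fact that $M(k,Q,\Oh)$ is a $\T$-direct summand of the integral forms, then gives $f\in M(k,Q,\Oh)$. These $f$ form the basis $B(k,Q)$, and $T_n(f)=a_n(f)f$ holds for all $n$.
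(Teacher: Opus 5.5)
Your proposal is correct and follows essentially the same route as the paper. Carayol's conductor result gives $N(\rhobar)\mid N_g\mid N_Q$, hence newness at every $\ell\mid N(\rhobar)$. At $q\in\{q_0\}\sqcup Q$, Proposition \ref{prop_eigenform_new_old_at_q} together with the distinct roots of the Hecke polynomial and the localization at $T_q-\widetilde{\alpha}_q$ makes $U_q$ act by a scalar. Finally, $a_1(T_nf)=a_n(f)$ gives $a_1(f)\neq 0$.

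Your extra ``Nebentypus check'' via Proposition \ref{prop_dim_rhobar_I_ell_is_one} is unnecessary: that proposition is irrelevant at the unramified Taylor--Wiles primes, and the localization already cuts the old space at $q$ down to a line. On the other hand, your use of Theorem \ref{thm_q_expansion_principle}(2) to show that the normalized $f$ still lies in $M(k,Q,\Oh)$ makes explicit a point the paper leaves implicit.
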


\begin{proof}
    Rationally, $M(k,Q,L)$ is a $\T$-submodule of 
\[
\left(H^0(X_{U_1(N_Q),L},\omega^{\otimes k}_{U_1(N_Q),L})\otimes \chi\right)^{H_Q} \subset H^0(X_{U_1(N_Q),L},\omega^{\otimes k}_{U_1(N_Q),L}).
\] 
Since we are localizing at a maximal ideal corresponding to an absolutely irreducible $\rhobar$, it follows from \cite[Theorem 3.30(2)]{calegari_geraghty_2018_beyond} that 
\[
M(k,Q,L) \subset H^0(X_{U_1(N_Q),L},\omega^{\otimes k}_{U_1(N_Q),L}(-\infty)).
\] By Theorem \ref{thm_oldforms_as_newform_span}, $M(k,Q,L)$ has a basis of eigenforms $f$ for $\T^{N_Q}$, which we may scale to be in $M(k,Q,\Oh)$.  

If $f$ is a newform of level $N_Q$, it is automatically an eigenvector for $\T$ by Theorem \ref{thm_strong_multiplicity_one}. Otherwise, $f$ comes from a newform $g$ of level $N_g \mid N_Q$. By Remark \ref{rmk_conductor_equals_new_level}, $\rho_f$ has Artin conductor $N_g$. By the definition of $M(k,Q,\Oh)$, we have $\rhobar_f \xrightarrow{\sim} \rhobar$, and hence
$
N(\rhobar) \mid N_g \mid N_Q.
$
It follows that $f$ can only be old at primes $q \in \{q_0\} \sqcup Q$, each of which satisfies $q \parallel N_Q$ and $\rhobar(\Frob_q)$ has distinct eigenvalues.  

By Proposition \ref{prop_eigenform_new_old_at_q} and the definition of $N_Q$, $f$ is an eigenvector for the Hecke operators away from these primes, and at each $q$ it is annihilated by
\[
U_q^2 - a_q(g) U_q + \chi_g(q) q^{k-1}.
\]
Since $q \nmid N_g$, $\rho_g$ is unramified at $q$, so $a_q(g) = \tr \rho_g(\Frob_q)$ reduces to $\tr(\rhobar(\Frob_q))$ modulo $\varpi$. In characteristic $p$, the polynomial $X^2 - a_q(g) X + \chi(q) q^{k-1}$ has two distinct roots $\alpha_q \neq \beta_q$. By Hensel's lemma, we can factor
\[
U_q^2 - a_q(g) U_q + \chi_g(q) q^{k-1} = (U_q - \widetilde{\alpha}_q)(U_q - \widetilde{\beta}_q)
\]
with $\widetilde{\alpha}_q, \widetilde{\beta}_q$ lifting $\alpha_q, \beta_q$, respectively. Since $U_q - \widetilde{\beta}_q$ is invertible on $M(k,Q,\Oh)$, we deduce
\[
(U_q - \widetilde{\alpha}_q) f = 0.
\]

Finally, we check $a_1(f) \neq 0$. Suppose not. Let $\lambda_f(n)$ be the eigenvalue of $T_n$ for $f$. By the $q$-expansion formula,
\[
a_1(T_n f) = a_n(f) \quad \text{and} \quad a_1(T_n f) = a_1(\lambda_f(n) f) = \lambda_f(n) a_1(f) = 0.
\] 
Hence $a_n(f) = 0$ for all $n$, forcing $f=0$ by the $q$-expansion principle \ref{thm_q_expansion_principle}. Scaling $f$ so that $a_1(f) = 1$, we obtain 
\[
a_n(f) = a_1(T_n f) = a_1(\lambda_f(n) f) = \lambda_f(n) a_1(f) = \lambda_f(n)\in \Oh,
\] 
as desired.
    
\end{proof}

Using the proposition above, we may map $\T(M(k,Q,\Oh))$ to a sub-algebra of $\Oh^{\oplus \# B(k,Q)}$ by sending $T_n$ to $$(\lambda_f(n))_{f\in B(k,Q)} = (a_n(f))_{f\in B(k,Q)}.$$ This map is an embedding because the Hecke action on $B(k,Q)$ determines the action on all of $M(k,Q,\Oh).$

We are now ready to describe modular deformation $\rho_Q$ of $\rhobar$ for $Q$ as before. 
Let $S_Q$ be the union of $\{p\}$, the prime divisors of $N_\varnothing$ and $Q.$ 
We have a Galois representation lifting $\rhobar$ \begin{align*}
\rho_{Q}^{\rm mod}: G_{\Q,S_Q}&\to\prod_{f \in B(k,Q)} \GL_2(\Oh) \xrightarrow{\sim} \GL_2(\Oh^{\oplus \#B(k,Q)})\\
\sigma &\mapsto (\rho_f(\sigma))_{f\in B(k,Q)}.
\end{align*} As a result of Theorem \ref{thm_galois_reps_attached_to_modular_forms}, it follows that $$\tr(\rho^{\rm mod}_Q(\Frob_\ell))=T_\ell{\rm \quad and\quad}\det(\rho^{\rm mod}_Q(\Frob_\ell)) = \langle\ell\rangle \chi_\cyc(\ell)^{k-1}$$ are in $\T^{pN_Q}(M(k,Q,\Oh)).$ By Chebotarev's density theorem, we have $$\{\tr(\rho^{\rm mod}_Q(\sigma)),\det(\rho^{\rm mod}_Q(\sigma)):\sigma\in G_{\Q,S_Q}\}= \T^{pN_Q}(M(k,Q_n,\Oh)).$$ By Carayol's lemma \cite[Theorem 2]{Carayol1994FormesME} that we may and do assume $\rho^{\rm mod}_Q$ has image in $\GL_2(\T^{pN_Q}(M(k,Q,\Oh)))$. In order to prove Lemma \ref{lem_U_ell_in_anemic}, it suffices to show that the Hecke operators at primes dividing $pN_Q$ can be generated by the trace and the determinant of $\rho_Q^{\rm mod}.$

\begin{prop}
    The Hecke operator $U_\ell$ is in $\T^{pN_Q}(M(k,Q,\Oh))$ for every $\ell|N(\rhobar).$
\end{prop}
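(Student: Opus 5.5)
We need to show $U_\ell\in\T^{pN_Q}(M(k,Q,\Oh))$ for $\ell\mid N(\rhobar)$. We follow Wiles and DDT and work with the eigenbasis $B(k,Q)$ of Proposition \ref{prop_basis_of_M}. Through the embedding $\T(M(k,Q,\Oh))\hookrightarrow\Oh^{\oplus\#B(k,Q)}$, the operator $U_\ell$ becomes the tuple $(a_\ell(f))_{f\in B(k,Q)}$. The task is therefore to write this tuple as an $\Oh$-polynomial expression in traces and determinants of $\rho^{\rm mod}_Q$ on suitable elements of $G_{\Q_\ell}$. Since $\rho^{\rm mod}_Q$ takes values in $\GL_2(\T^{pN_Q}(M(k,Q,\Oh)))$, such an expression lies in $\T^{pN_Q}$.

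\textbf{Step 1: the forms are new at $\ell$.} Take $f\in B(k,Q)$ with associated newform $g$. Remark \ref{rmk_conductor_equals_new_level} and $\rhobar_f\simeq\rhobar$ give $N(\rhobar)\mid N_g$. Hence $v_\ell(N_g)=v_\ell(N(\rhobar))=v_\ell(N_Q)$, so $f$ is new at $\ell$. Proposition \ref{prop_eigenform_new_old_at_q}(1) then gives $a_\ell(f)=a_\ell(g)$. By local--global compatibility, $a_\ell(g)$ is determined by $\rec(\pi_{g,\ell})$, i.e.\ by $\WD(\rho_g|_{G_{\Q_\ell}})^{\rm ss}$. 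Concretely, $a_\ell(g)$ is the eigenvalue of a Frobenius lift on the $I_\ell$-invariants of $\WD$ (monodromy kernel included), suitably normalized, and it is $0$ when this space vanishes. Moreover, $n_\ell(\rho_g)=n_\ell(\rhobar)$ together with equality of Swan conductors gives $\dim\rho_g^{I_\ell}=\dim\rhobar^{I_\ell}$.

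\textbf{Step 2: case split on $d=\dim_\F\rhobar^{I_\ell}$.} If $d=0$, then $a_\ell(f)=0$ for every $f\in B(k,Q)$, so $U_\ell=0$ and there is nothing to prove. If $d=2$, then $\rhobar$ is unramified at $\ell$ and $\ell\nmid N(\rhobar)$, so this case does not occur. The real case is $d=1$, and Proposition \ref{prop_dim_rhobar_I_ell_is_one} splits it further.

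\emph{Case (2): $\rhobar|_{I_\ell}\simeq 1\oplus\chi'$ with $\chi'$ nontrivial.} Then $\rho_g|_{I_\ell}$ splits as $1\oplus\tilde\chi'$ with $\tilde\chi'$ a lift of $\chi'$, because $P_\ell$ and the prime-to-$p$ part of tame inertia act through finite groups of order prime to $p$. The invariant line is a Frobenius eigenline with eigenvalue $a_\ell(f)$. Choose $\sigma\in I_\ell$ with $\chi'(\sigma)\ne1$ and let $\phi$ be a Frobenius lift. Since the eigenvalues $1$ and $\chi'(\sigma)$ of $\rho^{\rm mod}_Q(\sigma)$ are distinct modulo the maximal ideal, we can form the idempotent $e=(\rho^{\rm mod}_Q(\sigma)-\tilde\chi'(\sigma))/(1-\tilde\chi'(\sigma))$. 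This is a polynomial in $\rho^{\rm mod}_Q(\sigma)$ over $\Oh$ with unit denominator, and in fact the coefficients can be taken in $\Oh$ via the Teichmüller lift of a prime-to-$p$ order element. Then $\tr(\rho^{\rm mod}_Q(\phi)e)$, which is expressible through traces of $\rho^{\rm mod}_Q(\phi\sigma)$ and $\rho^{\rm mod}_Q(\phi)$, computes $(a_\ell(f))_f$ up to the fixed normalization of local--global compatibility.

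\emph{Case (1): $\rhobar|_{I_\ell}$ is a nonsplit extension of $1$ by $1$.} Each $\rho_g|_{G_{\Q_\ell}}$ is then a twist of Steinberg by an unramified character $\mu$, with $a_\ell(g)=\mu(\Frob_\ell)$ up to normalization. Here we take $\sigma\in I_\ell$ acting unipotently and nontrivially, so that $N_\sigma=\rho^{\rm mod}_Q(\sigma)-1$ has image equal to the invariant line. Then $\tr(\rho^{\rm mod}_Q(\phi)N_\sigma)=\tr\rho^{\rm mod}_Q(\phi\sigma)-\tr\rho^{\rm mod}_Q(\phi)$ equals $a_\ell(f)$ times the scalar $\lambda_f$ by which $\sigma$ translates. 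To divide by the $\lambda_f$ we use $\rho^{\rm mod}_Q(\phi)\,\rho^{\rm mod}_Q(\sigma)\,\rho^{\rm mod}_Q(\phi)^{-1}=\rho^{\rm mod}_Q(\sigma)^{\ell}$. Instead of dividing directly, it is cleaner to express $a_\ell$ as a root of the characteristic polynomial of $\rho^{\rm mod}_Q(\phi)$: the Frobenius eigenvalues on the Steinberg are $\mu(\Frob_\ell)$ and $\ell\mu(\Frob_\ell)$ (appropriately normalized), and these are distinct modulo $\varpi$ unless $\ell\equiv1\pmod p$. When they are distinct, Hensel's lemma over $\T^{pN_Q}$ lets us pick out the root congruent to the unique eigenvalue of $\rhobar(\phi)$ on the invariant line, and this root lies in $\T^{pN_Q}$.

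\textbf{Main obstacle.} We expect Case (1) with $\ell\equiv1\pmod p$ to be the hardest step, since there the two Frobenius eigenvalues coincide mod $\varpi$ and Hensel's lemma is unavailable. For this case we would follow Wiles' argument below (2.37) and DDT Prop.\ 4.7. The key fact is that $\tr\rho^{\rm mod}_Q(\phi\sigma)-\tr\rho^{\rm mod}_Q(\phi)=U_\ell\cdot(\text{element }t)$ with $t$ a unit: the residual nonsplitness of $\rhobar|_{I_\ell}$ makes $\lambda_f$ a unit uniformly in $f$ once $\sigma$ is fixed. Dividing by this unit then shows $U_\ell\in\T^{pN_Q}$. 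Here we must check that $t$ is itself a trace expression, for instance by computing $\tr\rho^{\rm mod}_Q(\sigma\tau)$ for a suitable $\tau\in G_{\Q_\ell}$ mapping the invariant line off itself. The final statement then holds for $M^{\ord}$ and $M^{\no}$ as well, since they are $\T$-direct summands.
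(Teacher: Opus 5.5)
Your treatment of $d=0$, of the split case $\rhobar|_{I_\ell}\simeq 1\oplus\chi'$, and of the Steinberg case with $\ell\not\equiv 1\pmod p$ is fine. In the split case your idempotent is equivalent to the paper's linear system in $U_\ell$ and $U_\ell^{-1}$ coming from two Frobenius lifts $\sigma$ and $\sigma\tau$.

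The gap is the Steinberg case with $\ell\equiv 1\pmod p$, and the argument you sketch there cannot work. Since $I_\ell$ is normal in $G_{\Q_\ell}$, the line $L=\rho_f^{I_\ell}$ is stable under $\rho_f(\phi)$. Meanwhile $N_\sigma=\rho_f(\sigma)-1$ has image in $L$ and kills $L$. So $\rho_f(\phi)N_\sigma$ is nilpotent, and for every choice of $\phi$ and $\sigma$,
\[
\tr\rho^{\rm mod}_Q(\phi\sigma)-\tr\rho^{\rm mod}_Q(\phi)=\tr\bigl(\rho^{\rm mod}_Q(\phi)N_\sigma\bigr)=0 .
\]
Equivalently, the semisimplification $\eta_f\chi_\cyc\oplus\eta_f$ of $\rho_f|_{G_{\Q_\ell}}$ is unramified, so traces of elements of $G_{\Q_\ell}$ are blind to inertia. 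This has three consequences:
\begin{itemize}
\item your identification of this trace with $a_\ell(f)\lambda_f$ is wrong;
\item the ``key fact'' that it equals $U_\ell$ times a unit is false;
\item no $\tau\in G_{\Q_\ell}$ moves $L$ off itself, so the unit $t$ cannot be produced that way.
\end{itemize}

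The missing observation is that the extension class is irrelevant. In this case $\chi$, and hence $\det\rho_f$, is unramified at $\ell$, with $\det\rho_f|_{G_{\Q_\ell}}=\eta_f^2\chi_\cyc$ and $a_\ell(f)=\eta_f(\Frob_\ell)$. The paper therefore notes that $U_\ell$ is a root of $X^2-\det\rho^{\rm mod}_Q(\Frob_\ell)/\ell$. Its two roots $\pm\eta_f(\Frob_\ell)$ are distinct modulo $\varpi$ because $p$ is odd, whatever $\ell\bmod p$ is. Hensel's lemma (existence and uniqueness of the root with the residue of $\eta_f(\Frob_\ell)$) then puts $U_\ell$ in $\T^{pN_Q}(M(k,Q,\Oh))$ uniformly in $\ell$.

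Alternatively, $\tr\rho^{\rm mod}_Q(\Frob_\ell)=(1+\ell)U_\ell$ is independent of the Frobenius lift. Moreover $1+\ell\equiv 2$ is a unit precisely in your problematic case $\ell\equiv1\pmod p$.

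A minor normalization point: with the paper's conventions, $a_\ell(f)$ is the Frobenius eigenvalue on the quotient $\eta_f$, not on $\rho_f^{I_\ell}$. So the Hensel root you select for $\ell\not\equiv 1\pmod p$ is $\ell U_\ell$. This is harmless, since $\ell\in\Oh^\times$.
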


\begin{proof}
    For $\ell|N(\rhobar)$, we have $\dim_\F \rhobar^{I_\ell} = \dim_L\rho_f^{I_\ell} = 1$ or $0$. If $\dim_\F \rhobar^{I_\ell} =1$, then $\rhobar|_{I_\ell}$ is reducible. 
    So $\rhobar|_{I_\ell}$ is either a non-split extension $\begin{pmatrix}
        1 & b\\
        0 &1
    \end{pmatrix}$ where $b$ defines a nontrivial class in $H^1(I_\ell,\F)$ or it is a direct sum of 1 and a nontrivial character by Proposition \ref{prop_dim_rhobar_I_ell_is_one}.
    
    In the first case, the residual representation $\rhobar|_{G_{\Q_\ell}}$ is tamely ramified.  
From the structure of the Galois group $G_{\Q_\ell}/P_\ell$ of the maximal tamely ramified extension, we then have
\[
\rhobar|_{G_{\Q_\ell}} \simeq \psi \otimes 
\begin{pmatrix}
\epsilon & b\\
0 & 1
\end{pmatrix},
\] 
where $\psi: G_{\Q_\ell} \to \F^\times$ is unramified and $b$ defines a class in $H^1(G_{\Q_\ell}, \epsilon)$ extending the aforementioned $b.$ 
In particular, $\det \rhobar$ and therefore $\chi$ are unramified at $\ell$.
For $f\in B(k,Q)$, we then have 
\[
n_\ell(\rho_f) = n_\ell(\rhobar) = 2 - 1 + sw(\rhobar) = 1,
\] 
and $\det(\rho_f)$ is unramified at $\ell$.  
By the classification of two-dimensional Frobenius-semisimple Weil--Deligne representations, 
\[
{\rm WD}(\rho_f|_{G_{\Q_\ell}})^{\rm ss} \simeq 
\left(
\eta_f \chi_\cyc \oplus \eta_f, 
\begin{pmatrix} 0 & 1 \\ 0 & 0 \end{pmatrix} 
\right),
\] 
where $\eta_f$ is an unramified character of $G_{\Q_\ell}$. Since $\Frob_\ell$ has distinct eigenvalues, the semisimplification is unnecessary. By Grothendieck's monodromy theorem, we have
\[
\rho_f|_{G_{\Q_\ell}} \simeq \eta_f \otimes 
\begin{pmatrix} \chi_\cyc & B \\ 0 & 1 \end{pmatrix},
\] 
where $B$ defines a non-trivial class in $H^1(G_\ell, \chi_\cyc) \setminus H^1(G_\ell/I_\ell, \chi_\cyc)$.  
It follows that 
\[
\eta_f^2(\Frob_\ell) = \det(\rho_f(\Frob_\ell))/\ell,
\] 
and that $\eta_f \chi_\cyc$ is the character through which $G_{\Q_\ell}$ acts on $\rho_f^{I_\ell}$.  
Moreover, the reduction of $\eta_f$ modulo $\varpi$ coincides with $\psi \epsilon$ on $\rhobar|_{I_\ell}$. 
    On the other hand, local--global compatibility implies that $\pi_{f,\ell}$ is the twisted Steinberg representation $\St(\eta_f)$, so that $\eta_f(\Frob_\ell) = a_\ell(g)$, where $g$ is the newform associated to $f$.  
Since $f$ is new at $\ell$, we have $a_\ell(f) = a_\ell(g)$, and hence the image of $U_\ell$ in $\Oh^{\oplus \#B(k,Q)}$ is
\[
(a_\ell(f))_{f \in B(k,Q)} = (\eta_f(\Frob_\ell))_{f \in B(k,Q)}.
\]  
Combining this with the previous analysis of $\eta_f$, we see that $U_\ell$ satisfies
\[
X^2 = \det(\rho_Q^{\rm mod}(\Frob_\ell))/\ell
\]
and is congruent to $\psi(\Frob_\ell)$ modulo the maximal ideal of $\T^{pN_Q}(M(k,Q,\Oh))$.  
By Hensel's lemma, in both $\T^{pN_Q}(M(k,Q,\Oh))$ and $\T^{pN_Q/\ell}(M(k,Q,\Oh))$, there is a unique solution to this equation congruent to $\psi(\Frob_\ell)$, which must therefore be $U_\ell$.  
In particular, $U_\ell \in \T^{pN_Q}(M(k,Q,\Oh))$.

    In the second case, fix a lift $\sigma \in G_{\Q_\ell}$ of $\Frob_\ell$. Since $I_\ell$ is normal in $G_{\Q_\ell}$, a straightforward calculation shows that $\rhobar(\sigma)$ is diagonal and its determinant $\chi \epsilon^{k-1}$ is ramified. We then have
\[
\rhobar|_{G_{\Q_\ell}} \xrightarrow{\sim} \psi \oplus \chi \epsilon^{k-1}/\psi
\]
for some unramified character $\psi$.  
If $\rhobar(\sigma)$ has repeated eigenvalues, we may replace $\sigma$ by $\sigma \tau$ for some $\tau \in I_\ell$; in this way, we may and do assume that $\rhobar(\sigma)$ has distinct eigenvalues. As a result, the Weil--Deligne representation of $\rho_f|_{G_{\Q_\ell}}$ is automatically Frobenius-semisimple.  
By Grothendieck's monodromy theorem and the classification of such Weil--Deligne representations, for $\rho_f$ to satisfy $\dim_L \rho_f^{I_\ell} = 1$ and $\rhobar_f|_{G_{\Q_\ell}}\xrightarrow{\sim} \rhobar|_{G_{\Q_\ell}}$, it must take the form
\[
\rho_f|_{G_{\Q_\ell}} \xrightarrow{\sim} \eta_f \oplus \chi_f \chi_\cyc^{k-1}/\eta_f
\]
for some unramified character $\eta_f$. By the local Langlands correspondence and \cite[Lemma 4.2.4]{CDT1999Barsotti}, $\pi_{f,\ell}$ is a principal series $I(\chi_f \chi_\cyc^{k-1}/\eta_f, \eta_f)$, and $\eta_f(\Frob_\ell) = a_\ell(f) \in \Oh^\times$.
For every lift $\sigma' \in G_{\Q_\ell}$ of $\Frob_\ell$, we have
\[
a_\ell(f) + \det(\rho_f(\sigma')) a_\ell(f)^{-1} = \tr(\rho_f(\sigma')),
\]
which gives the equation for $U_\ell$ in $\T^{pN_Q}(M(k,Q,\Oh))$:
\[
U_\ell + \det(\rho_Q^{\rm mod}(\sigma')) U_\ell^{-1} = \tr(\rho_Q^{\rm mod}(\sigma')).
\]
Since $\chi$ is ramified at $\ell$, we can choose $\tau \in I_\ell$ with $\chi(\tau) \neq 1$. Then
\[
\det(\rho_Q^{\rm mod}(\sigma)) - \det(\rho_Q^{\rm mod}(\sigma \tau)) \equiv \chi(\sigma) \ell^{k-1} (1 - \chi(\tau)) \not\equiv 0 \pmod{\mathfrak{m}_Q^{pN_Q}},
\]
so the system
\[
\begin{cases}
X + \det(\rho_Q^{\rm mod}(\sigma)) Y = \tr(\rho_Q^{\rm mod}(\sigma))\\[1ex]
X + \det(\rho_Q^{\rm mod}(\sigma \tau)) Y = \tr(\rho_Q^{\rm mod}(\sigma \tau))
\end{cases}
\]
has a unique solution in $\T^{pN_Q}(M(k,Q,\Oh))$, which proves that $U_\ell \in \T^{pN_Q}(M(k,Q,\Oh))$.
    
    We are left with the situation where $\dim_\F \rhobar^{I_\ell} = 0$.  
In this case, the Artin conductor at $\ell$ is at least $2$.  
By \cite[Lemma 4.2.2]{CDT1999Barsotti}, we then have $U_{\ell}(f) = 0$ for all $f \in M(k,Q,\Oh)$, and hence 
\[
U_\ell = 0 \in \T^{pN_Q}(M(k,Q,\Oh)).
\]
\end{proof}

\begin{prop}
    The Hecke operator $U_q$ is in $\T^{pN_Q}(M(k,Q,\Oh))$ for every $q\in Q\sqcup\{q_0\}.$
\end{prop}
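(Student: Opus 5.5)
We argue as in the proof of the previous proposition: we express $U_q$ through $\rho_Q^{\rm mod}$ and Hensel's lemma inside $\T^{pN_Q}(M(k,Q,\Oh))$, working through the embedding $\T(M(k,Q,\Oh))\hookrightarrow \Oh^{\oplus \#B(k,Q)}$.

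Fix $q\in Q\sqcup\{q_0\}$. By the proof of Proposition \ref{prop_basis_of_M}, each $f\in B(k,Q)$ satisfies $U_q f = a_q(f) f$, and $a_q(f)\equiv \alpha_q \pmod{\varpi}$.

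\textbf{Step 1: identify $a_q(f)$ with a Frobenius eigenvalue.} We first show that $a_q(f)$ is a root of the characteristic polynomial of $\rho_f(\sigma)$ for a suitable lift $\sigma$ of $\Frob_q$.

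\begin{itemize}
\item If $f$ is old at $q$, then $\rho_f$ is unramified at $q$. The proof of Proposition \ref{prop_basis_of_M} shows that $a_q(f)$ is the root $\widetilde{\alpha}_q$ of $X^2-a_q(g)X+\chi_g(q)q^{k-1}$ lifting $\alpha_q$. This polynomial is the characteristic polynomial of $\rho_f(\Frob_q)$.
\item If $f$ is new at $q$, then $q\parallel N_Q$ and $\rhobar$ is unramified at $q$. Since $q\equiv 1\pmod p$ for $q\in Q$ (and by condition (2') for $q_0$), the Steinberg case is excluded by the distinctness of the eigenvalues. Hence local--global compatibility and \cite[Lemma 4.2.4]{CDT1999Barsotti} give $\rho_f|_{G_{\Q_q}}\simeq \eta_f\oplus \eta_f'$ with $\eta_f$ unramified, $\eta_f'$ tamely ramified with $p$-power order on inertia, and $a_q(f)=\eta_f(\Frob_q)$.
\end{itemize}

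For $q_0$ this new case cannot occur at all. By Proposition \ref{prop_unramified_lifting}, every lift is unramified at $q_0$, so every $f$ is old at $q_0$.

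In both cases, for every lift $\sigma$ of $\Frob_q$, the element $a_q(f)$ is the unique root of $\mathrm{char}(\rho_f(\sigma))$ congruent to $\alpha_q$. Here we use that $\rho_f(\sigma)$ has eigenvalues $\eta_f(\sigma)\equiv\alpha_q$ and $\eta'_f(\sigma)\equiv\beta_q$.

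\textbf{Step 2: descend to the anemic Hecke algebra.} Consider the polynomial
\[
P(X)=X^2-\tr\rho_Q^{\rm mod}(\sigma)X+\det\rho_Q^{\rm mod}(\sigma),
\]
whose coefficients lie in $\T^{pN_Q}(M(k,Q,\Oh))$. Modulo the maximal ideal, $P$ reduces to a polynomial with the distinct roots $\alpha_q\neq\beta_q$. Since $\T^{pN_Q}(M(k,Q,\Oh))$ is complete local, Hensel's lemma gives a unique root $u\in\T^{pN_Q}(M(k,Q,\Oh))$ lifting $\alpha_q$.

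Its image in $\Oh^{\oplus\#B(k,Q)}$ is again a root of $P$ lifting $\alpha_q$ componentwise. By the uniqueness in Step 1, this image is $(a_q(f))_f$, which is the image of $U_q$. Since the embedding into $\Oh^{\oplus\#B(k,Q)}$ is injective, we conclude $U_q=u\in\T^{pN_Q}(M(k,Q,\Oh))$.

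\textbf{Main obstacle.} The delicate point is the new-at-$q$ case for $q\in Q$. There one must verify via local--global compatibility that $a_q(f)$ is exactly the unramified eigenvalue $\eta_f(\Frob_q)$, rather than something involving the ramified character. One must also check that this eigenvalue is the one congruent to $\alpha_q$, as opposed to $\beta_q$.

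This is exactly where the choice of maximal ideal with $T_q-\widetilde{\alpha}_q$ is used. If the $U_q$-eigenvalue were the lift of $\beta_q$, the form would be killed by the localization at $\mathfrak{m}_Q^{\{N(\rhobar)p\}}$. Thus only forms whose unramified eigenvalue reduces to $\alpha_q$ survive, and uniqueness of the Hensel root then applies.
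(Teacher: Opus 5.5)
Your proposal is correct and follows the paper's proof. In both the old-at-$q$ case and the new-at-$q$ case (principal series, with Steinberg excluded by the eigenvalue condition), you identify $a_q(f)$ as the root of the characteristic polynomial of $\rho_f(\sigma)$ congruent to $\alpha_q$, and then conclude by Hensel's lemma applied to $X^2-\tr\rho_Q^{\rm mod}(\sigma)X+\det\rho_Q^{\rm mod}(\sigma)$ over $\T^{pN_Q}(M(k,Q,\Oh))$. Your componentwise comparison inside $\Oh^{\oplus \# B(k,Q)}$, and the remark that no form is new at $q_0$, simply make explicit a step the paper leaves implicit, namely why $U_q$ equals the Hensel root.
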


\begin{proof}
    Let $f\in B(k,\Oh).$
    If $f$ is new at $q$, then the local automorphic representation $\pi_{f,q}$ attached to $f$ has conductor $q$.
    It is either a Steinberg representation $\St(\eta_f)$ or a principal series 
\[
I(\chi_f\chi_\cyc^{k-1}/\eta_f,\eta_f),
\]
where $\eta_f$ is an unramified character of $G_{\Q_q}$ sending $\Frob_q$ to $a_q(f)$. In the first case, the local Galois representation $\rho_f|_{G_{\Q_q}}$ is an unramified twist of an extension of $1$ by $\chi_\cyc$ by local--global compatibility.  
Hence, modulo $\varpi$, the ratio of the eigenvalues of $\rhobar(\Frob_q)$ lies in $\{\epsilon(q),\epsilon^{-1}(q)\}$, contradicting our assumption on primes $q \in Q \sqcup\{q_0\}$.  
Thus $\pi_{f,q}$ must be a principal series. By an analysis identical to that in the proof of the previous proposition, if $\sigma \in G_{\Q_q}$ is a lift of $\Frob_q$, then
\[
a_q(f)^2 - \tr(\rho_f(\sigma))\, a_q(f) + \det(\rho_f(\sigma)) = 0,
\]
where $\rho_f$ is the Galois representation attached to $f$.

If $f$ is old at $q,$ then it comes from a newform $g$ with level $N_g$ such that $q\nmid N_g$. In this case, the eigenvalue of $U_q$ for $f$ is $a_q(f)$ and it is the unique root of 
$$
X^2-\tr(\rho_f(\Frob_q))X+\det(\rho_f(\Frob_q))=0
$$
that is congruent to $\alpha_q$ by the proof of Proposition \ref{prop_eigenform_new_old_at_q} and Remark \ref{rmk_conductor_equals_new_level}.

    The two cases combined implies that $U_q$ is a root of 
$$
X^2 - \tr(\rho_Q^{\rm mod}(\sigma))X + \det(\rho_Q^{\rm mod}(\sigma))=0
$$
for every $\sigma\in G_{\Q_q}$ that is a lift of $\Frob_q.$ Modulo the maximal ideal $\mathfrak{m}_Q^{\{pN_Q\}}\T^{pN_Q}(M(k,Q,\Oh)),$ the equation reduces to 
$$
X^2 - \tr(\rhobar(\Frob_q))X + \chi(q)q^{k-1}
$$
which has distinct roots in $\F$ by construction. Again by Hensel's lemma, we conclude that 
$U_q\in \T^{pN_Q}(M(k,Q,\Oh)).$
\end{proof}

\begin{proof}[Proof of Lemma \ref{lem_U_ell_in_anemic}]
    The statement for $\T^{p}(M(k,Q,\Oh))$ follows from the previous two propositions. For $* = \ord$ or $\no$, since
    $$M(k,Q,\Oh)^* \hookrightarrow M(k,Q,\Oh),$$
    we have
    $$\T^{pN_Q}(M(k,Q,\Oh)) \twoheadrightarrow \T^{pN_Q}(M(k,Q,\Oh)^*).$$ Thus, if $\T^{pN_Q}(M(k,Q,\Oh))$ contains all the Hecke operators dividing the level, then so does $\T^{pN_Q}(M(k,Q,\Oh)^*)$.
\end{proof}

\subsection{Serre Weights}\label{subsection_serre_weights}
In this subsection, we recall the definitions of Serre weights and record some facts about the weights of mod-$p$ modular forms that will be used later. Throughout, we write $M(k,Q,\Oh)^*$ for either $M(k,Q,\Oh)$ or $M(k,Q,\Oh)^{\no}$.

Let \( j(k) \) denote the largest integer such that \( j(k) \equiv k \pmod{p-1} \) and \( j(k)p \le k \). Recall from Corollary \ref{cor_anemic_equal_full_criterion} that \( C(M(k, Q, \Oh)^*) \) denotes the cokernel of the natural map  
\[
\T^{p}(M(k, Q, \Oh)^*) \hookrightarrow \T(M(k, Q, \Oh)^*).
\]

\begin{prop} \label{prop_image_and_cokernel_of Hecke_algebras_finite_free}
    Suppose that \( k \ge 2 \), and if $k(\rbar)=1,$ assume that $k\ge p^2.$
    Then both the image $\T^p(M(k,Q,\F)^*)$ and the cokernel $C(M(k, Q, \Oh)^*) \otimes_{\Oh} \F$ of the map
    \[
        \T^p(M(k, Q, \Oh)^*) \otimes_{\Oh} \F \to \T(M(k, Q, \Oh)^*) \otimes_{\Oh} \F
    \]
    are finite free \( \F[\Delta_Q] \)-modules. Moreover, the \( \F[\Delta_Q] \)-rank of \( C(M(k, Q, \Oh)^*) \otimes_{\Oh} \F \) is equal to \( \dim_\F M(j(k), \varnothing, \F)^* \).
\end{prop}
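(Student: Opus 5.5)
The plan is to reduce everything to Corollary~\ref{cor_anemic_equal_full_criterion} and then compute the intersection $(M(k,Q,\F)^*)\cap \Image V_p$ explicitly in terms of weight filtrations. By that corollary (whose proof only uses Theorem~\ref{thm_ribet_duality_full}, Lemma~\ref{lem_duality_anemic} and the base-change property, all available for $M(k,Q,\Oh)^*$ by Proposition~\ref{prop_Nebentypus_base_change}), $C(M(k,Q,\Oh)^*)\otimes_\Oh\F$ is $\F$-dual to $(M(k,Q,\F)^*)\cap\Image V_p$. Here $\Image V_p$ is taken inside $q$-expansions. The first step is therefore to show that
\[
(M(k,Q,\F)^*)\cap \Image V_p \;=\; V_p\bigl(A^{\,(k-pj(k))/(p-1)}\text{-shifts of } M(j(k),Q,\F)^*\bigr),
\]
and that this identification is an isomorphism of $\F[\Delta_Q]$-modules.

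For the inclusion ``$\subseteq$'', take $g=fV_p$ of weight $k$. By Proposition~\ref{prop_Vp_filtration} we have $w(g)=p\,w(f)\le k$. Since $p\,w(f)\equiv w(f)\pmod{p-1}$ and $w(g)\equiv k$, it follows that $w(f)\equiv k\pmod{p-1}$ and $p\,w(f)\le k$, so $w(f)\le j(k)$. Hence $f$ is a Hasse-invariant multiple of a form of weight $j(k)$. The inclusion ``$\supseteq$'' follows because multiplying $fV_p$ by the appropriate power of $A$ lands in weight $k$, as $pj(k)\le k$ with the same residue mod $p-1$.

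It remains to check compatibility with the extra structure. $V_p$ and multiplication by $A$ commute with all Hecke operators away from $p$, with the diamond operators (so with the $H_Q$-invariants twisted by $\chi$ and with the $\Delta_Q$-action), and with the $U_q$ for $q\mid N_Q$. They therefore preserve the localization at $\mathfrak{m}_Q^{\{N(\rhobar)p\}}$, and by Proposition~\ref{prop_Vp_preserves_ordinary_nonordinary_forms} they also preserve the non-ordinary part. Injectivity of $V_p$ and of $A$ then gives an $\F[\Delta_Q]$-isomorphism between $M(j(k),Q,\F)^*$ and the intersection.

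The edge cases are where the hypotheses enter. If $j(k)\le 1$ and $k(\rbar)\ge 2$, the localized space in weight $j(k)$ should vanish, since the filtration of any form with residual representation $\rhobar$ is at least $k(\rbar)$, up to $\theta$-twists. If $k(\rbar)=1$, the hypothesis $k\ge p^2$ forces $j(k)\ge p\ge 2$, so we avoid weight $1$, where Theorem~\ref{thm_base_change} and Lemma~\ref{lem_finite_free_over_group_alg} are not available.

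Freeness then comes from Lemma~\ref{lem_finite_free_over_group_alg}, applied both in weight $k$ and in weight $j(k)$. The Hecke algebra $\T(M(j(k),Q,\F)^*)$ is finite free over $\F[\Delta_Q]$ with $\Delta_Q$-coinvariants $\T(M(j(k),\varnothing,\F)^*)$. By Ribet duality, $M(j(k),Q,\F)^*$ is its $\F$-dual. Since $\F[\Delta_Q]$ is a Frobenius (self-injective) local ring, the $\F$-dual of a free module is free of the same rank. Hence $M(j(k),Q,\F)^*$, and so $C(M(k,Q,\Oh)^*)\otimes_\Oh\F$, is free of rank $\dim_\F M(j(k),\varnothing,\F)^*$. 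This uses the duality on coinvariants $\leftrightarrow$ invariants to identify the rank with the $Q=\varnothing$ dimension.

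For the image, we use Corollary~\ref{cor_finite_level_mod_p_commute} to write $\T(M(k,Q,\Oh)^*)\otimes\F\cong \T(M(k,Q,\F)^*)$. In this identification the image of $\T^p(M(k,Q,\Oh)^*)\otimes\F$ is exactly $\T^p(M(k,Q,\F)^*)$, and it is the kernel of a surjection of free $\F[\Delta_Q]$-modules onto $C\otimes\F$. Over the local ring $\F[\Delta_Q]$ this kernel is projective, hence free.

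I expect the main obstacle to be the first step done carefully. One must verify that the naive filtration argument respects the localization, the $\chi$-twisted $H_Q$-invariants and the non-ordinary projector. One must also dispose rigorously of the low-weight cases $j(k)\in\{0,1\}$, where base change fails and one has to argue via the Serre weight $k(\rbar)$ that nothing survives localization.
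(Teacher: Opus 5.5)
Your proposal is correct and follows the paper's proof essentially step by step. Corollary~\ref{cor_anemic_equal_full_criterion} identifies $C(M(k,Q,\Oh)^*)\otimes_\Oh\F$ with the dual of $M(k,Q,\F)^*\cap\Image V_p$, and this intersection is identified $\T^p$-equivariantly with $M(j(k),Q,\F)^*$ via $V_p$ and the Hasse invariant; freeness and the rank then come from Lemma~\ref{lem_finite_free_over_group_alg}, Ribet duality and the self-duality of $\F[\Delta_Q]$, and the image is free as a projective kernel over the local ring $\F[\Delta_Q]$, with your explicit filtration argument and your treatment of $j(k)\le 1$ only making precise what the paper leaves implicit (its claim that $j(k)\ge 2$ ``always holds when $k(\rbar)\ge 2$'' should be read as saying the weight-$j(k)$ space vanishes otherwise).
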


\begin{proof}
    By Corollary~\ref{cor_anemic_equal_full_criterion}, the cokernel 
\( C(M(k, Q, \Oh)^*) \otimes_{\Oh} \F \) is dual to 
\[
(M(k, Q, \Oh)^*\otimes_{\Oh}\F)\cap \Image(V_p).
\]
When \(k\ge 2\), we may identify this intersection with 
\(M(k, Q, \F)^*\cap \Image(V_p)\). It follows that the dual of 
\( C(M(k, Q, \Oh)^*)\otimes_{\Oh}\F \) is isomorphic to 
\( M(j(k), Q, \F)^* \). Since \(V_p\) is injective by Proposition~\ref{prop_Vp_filtration} and respects the ordinary/non-ordinary decomposition by \ref{prop_Vp_preserves_ordinary_nonordinary forms}, 
it induces an isomorphism
\[
V_p:\ M(j(k), Q, \F)^*\xrightarrow{\sim} M(pj(k), Q, \F)^*\cap \Image(V_p).
\] This is $\T^{p}$-equivariant because $V_p$ commutes with Hecke action away from $p.$ 

Since \( \F[\Delta_Q] \) is self-dual as an \( \F[\Delta_Q]\)-module, the 
\( \F \)-linear dual of any finite free \( \F[\Delta_Q] \)-module is again 
finite free of the same rank. By Theorem~\ref{thm_ribet_duality_full} and 
the preceding lemma, the module \( M(j(k), Q, \F)^* \) is finite free over
\( \F[\Delta_Q] \) of rank \( \dim_\F M(j(k), \varnothing, \F)^* \) whenever 
\( j(k)\ge 2 \). This always holds when \( k(\rbar)\ge 2 \). If 
\( k(\rbar)=1 \), then \( j(k)\ge 2 \) holds exactly when \( j(k)\ge p \),
i.e.\ when \( k\ge p^2 \). Hence in all such cases, the cokernel
\( C(M(k, Q, \Oh)^*)\otimes_{\Oh}\F \), being dual to 
\(M(j(k), Q, \F)^*\), is finite free over \( \F[\Delta_Q] \) of the same rank.

Finally, the exact sequence
\[
0\to \T^p(M(k, Q, \F)^*)\to 
\T(M(k, Q, \Oh)^*)\otimes_{\Oh}\F \to 
C(M(k, Q, \Oh)^*)\otimes_{\Oh}\F \to 0
\]
splits, because \( C(M(k, Q, \Oh)^*)\otimes_{\Oh}\F \) is projective over the
local ring \( \F[\Delta_Q] \). Thus 
\( \T^p(M(k, Q, \F))^* \) is a direct summand of the finite free 
\( \F[\Delta_Q] \)-module 
\( \T(M(k, Q, \Oh)^*)\otimes_{\Oh}\F \), and is therefore also projective,
hence finite free. 
\end{proof}

To determine when $M(j(k),\varnothing,\F)^* = 0$, we recall the Serre weights $k(\rbar)$ and introduce the non-ordinary Serre weights $k(\rbar)^{\no}$.

\begin{definition}[Serre, Edixhoven]\label{def_Serre_weight}
    The Serre weight $k(\rbar)$ is defined as follows:
    \begin{enumerate}
    \item If $\rbar$ is absolutely irreducible, then $\rbar$ is tamely ramified and the restriction $\rbar|_{I_p}$ is a direct sum of $\epsilon_2^{a+pb}$ and $\epsilon_2^{pa+b}$ where $0\le a<b\le p-1$ are integers. Then $$k(\rbar) = 1 + pa + b.$$
    \item If $\rbar$ is reducible, there are two sub-cases:
    \begin{enumerate}
        \item If $\rbar$ is tamely ramified, then $\rbar|_{I_p}$ is a direct sum of $\epsilon^{a}$ and $\epsilon^b$ where $0\le a\le b\le p-2.$ In this case, we set $$k(\rbar) = 1+pa+b.$$ 
        \item If $\rbar$ is wildly ramified, then $$\rbar|_{I_p}\sim \begin{pmatrix}
        \epsilon^{\beta} & * \\
        0 & \epsilon^{\alpha}
        \end{pmatrix}$$ where $*$ is a nontrivial extension and  $0\le \alpha\le p-2$ and $1\le \beta\le p-1$ are integers. We set $a = \min\{\alpha,\beta\}$ and $b = \max\{\alpha,\beta\}.$ If $\epsilon^{\beta-\alpha} = \epsilon$ and $*\chi^{-\alpha}$ is tres ramifi\'ee, then $$k(\rbar) = 1 + pa + b + p-1 = p + pa + b.$$ Otherwise, we set $$k(\rbar) = 1+ pa + b.$$ 
    \end{enumerate}
\end{enumerate}
\end{definition}

\begin{definition} \label{def_nonordinary_serre_weight}
    Given $\rbar:G_{\Q_p}\to \GL_2(\F),$ we define the non-ordinary Serre weight $k(\rbar)^{\no}$ of $\rbar$ as follows.
    \begin{enumerate}
        \item If $\rbar|_{I_{p}}\not\sim \begin{pmatrix}
        \epsilon^{b} & *\\
        0 & 1
        \end{pmatrix}$ for any integer $0\le b\le p-2,$ we set $k(\rbar)^{\no} = k(\rbar).$
        \item If $\rbar|_{I_{p}}\sim \begin{pmatrix}
        \epsilon^{b} & *\\
        0 & 1
        \end{pmatrix}$ for some integer $0\le b\le p-2$ and $\rbar$ is ramified, we set $k(\rbar)^{\no} = pk(\rbar).$
        \item If $\rbar$ is unramified, we set $k(\rbar)^{\no} = p^2.$
    \end{enumerate}
\end{definition}

We now explain why we call these weights non-ordinary Serre weights, from both global and local perspectives, in our setup.

\begin{prop} \label{prop_nonordinary_serre_weight_global_meaning}
    Under Assumption \ref{assumption_global_rhobar}, the non-ordinary Serre weight $k(\rbar)^{\no}$ is the smallest integer greater than one such that 
    $$M(k(\rbar)^{\no},\varnothing,\F)^\no \neq 0.$$
\end{prop}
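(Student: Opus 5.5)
The plan is to argue case by case along Definition~\ref{def_nonordinary_serre_weight}. In each case there are two halves: an \emph{existence} statement $M(k(\rbar)^{\no},\varnothing,\F)^{\no}\neq 0$, and a \emph{vanishing} statement $M(k,\varnothing,\F)^{\no}=0$ for $2\le k<k(\rbar)^{\no}$. Throughout I use two inputs. The first is the weight part of Serre's conjecture at minimal level (Edixhoven, Gross, and Khare--Wintenberger as recalled in \S\ref{subsection_hecke_modules_and_algebras}). It says $M(k,\varnothing,\F)\neq0$ forces $k\ge k(\rbar)$, and that there is an eigenform $g$ of filtration $w(g)=k(\rbar)$. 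The second is local information at $p$ for mod-$p$ eigenforms: the standard input (Deligne, Fontaine) that for an eigenform with $a_p(f)\ne 0$, the restriction $\rbar$ has an unramified quotient on which $\Frob_p$ acts by $a_p(f)$. This is the residual shadow of Proposition~\ref{prop_crystalline_characteristic_polynomial}(1), applied to a characteristic-$0$ lift via Theorem~\ref{thm_Scholl_rhof_crystalline_at_p} when $k\ge2$.

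\emph{Case (1).} Here $\rbar|_{I_p}$ admits no unramified quotient. In particular $\rbar$ is ramified, so $k(\rbar)\ge 2$. By the local input, no Hecke eigensystem in $M(k,\varnothing,\F)$ can have a unit $U_p$-eigenvalue, so $M(k,\varnothing,\F)^{\no}=M(k,\varnothing,\F)$. The claim then reduces to the weight part of Serre's conjecture: this space vanishes for $k<k(\rbar)$ and is nonzero for $k=k(\rbar)$.

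\emph{Existence in cases (2) and (3).} I will construct explicit $U_p$-nilpotent forms from the minimal-weight form $g$ using the Hasse invariant and $V_p$.

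In case (2), $g$ has weight $k_0=k(\rbar)$ and is ordinary, with $U_p g=\alpha g$ and $\alpha\ne0$. By Proposition~\ref{prop_Vp_filtration}, $gV_pU_p=g$ as $q$-expansions, so in weight $pk_0$ we have $(gV_p)U_p=A^{(p-1)k_0}g$. Set
\[
h:=A^{(p-1)k_0}g-\alpha\, gV_p \in M(pk_0,\varnothing,\F).
\]
Then $hU_p=\alpha A^{(p-1)k_0}g-\alpha A^{(p-1)k_0}g=0$. Moreover $h\neq0$, since $w(gV_p)=pk_0\neq k_0=w(A^{(p-1)k_0}g)$.

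In case (3), $g$ has weight $1$. In weight $p$, the span of $A^{p-1}g$ and $gV_p$ is $U_p$-stable, and $U_p$ acts on it with the characteristic polynomial of $\rbar(\Frob_p)$. Both roots are units, so everything in weight $p$ is ordinary. Passing to weight $p^2$, I use the three-dimensional span of $A^{p^2-1}g$, $A^{p(p-1)}gV_p$ and $gV_p^2$. On this span $U_p$ acts through a $3\times3$ companion-type matrix with a zero eigenvalue, and this produces a nonzero non-ordinary vector. Its weight is $p^2$, and $p^2$ is $\ge 2$.

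\emph{Vanishing below $k(\rbar)^{\no}$ in cases (2) and (3).} This is the main obstacle. I must show there is no mod-$p$ eigenform $f$ at minimal level with $U_pf=0$ and $w(f)<pk(\rbar)$, respectively $w(f)<p^2$. If $U_pf=0$, then $a_{np}(f)=0$ for all $n$, so $\theta^{p-1}f=f$ as $q-$expansions and $f$ lies in a $\theta$-cycle. I would then invoke Jochnowitz's and Edixhoven's analysis of filtrations along $\theta$-cycles, namely the drop of $w$ under $\theta$ exactly when $p\mid w(\theta^i f)$, together with Theorem~\ref{thm_katz_theta}. The goal is to show that a form with $a_p=0$ has minimal filtration in its $\theta$-cycle of the shape $p\cdot w'$, where $w'$ is the filtration of an ordinary companion. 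Combined with $w'\ge k(\rbar)$ from the Serre weight bound, this gives $w(f)\ge pk(\rbar)$; in case (3), where $w'$ must itself be $\ge p$ after excluding weight $1$, it gives $w(f)\ge p^2$.

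If this $\theta$-cycle bookkeeping becomes unwieldy, my first fallback is the following argument. Lift $f$ to a weight-$k$ characteristic-$0$ non-ordinary form; this is possible since $k\ge2$. For $k\le p+1$, Fontaine--Laffaille theory then forces $\rbar|_{I_p}$ to be irreducible, which is a contradiction. For $p+2\le k<pk(\rbar)$, I would use Berger--Li--Zhu and the reductions of crystalline representations, as in Example~\ref{example_crystalline_deformation_rings}, to exclude reducible reductions of the shape $\left(\begin{smallmatrix}\epsilon^b&*\\0&1\end{smallmatrix}\right)$.
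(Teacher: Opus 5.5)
Your case (1) and both existence constructions are correct and agree with the paper. Read your $A^{(p-1)k_0}$ as the Hasse power of weight $(p-1)k_0$, i.e.\ $A^{k_0}$; also, $h\neq0$ and $hU_p=0$ hold even if $a_p(g)=0$.

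The gap is the vanishing half in cases (2) and (3), which you only state as a goal.
\begin{itemize}
\item Your $\theta$-cycle claim is never proved. You do not say what the ``ordinary companion'' is, why it exists, or how filtration drops would produce a factor of $p$.
\item Read literally, the claim is also misaimed. Since $a_n(f)=a_n(g)$ for $p\nmid n$, you have $\theta f=\theta g$. So the minimum of $w$ over the $\theta$-cycle of $f$ is at most $k(\rbar)+p+1$, which is less than $pk(\rbar)$ once $p\ge5$. What you actually need is a statement about $w(f)$ itself.
\item In case (2) you assert without justification that the weight-$k(\rbar)$ eigenform $g$ is ordinary. That is precisely $M(k(\rbar),\varnothing,\F)^{\no}=0$, which is part of what must be shown.
\item Your fallback does not work. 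A Deligne--Serre lift of a non-ordinary form only has $v_p(a_p)>0$. Berger--Li--Zhu computes reductions only when $v_p(a_p)>\lfloor (k-2)/(p-1)\rfloor$, and Example~\ref{example_crystalline_deformation_rings} stops at $k=2p-1$. Since $pk(\rbar)$ can be as large as $p(p+1)$, you would need reductions of arbitrary positive-slope crystalline representations in weights up to about $p^2$, and these are not known. The paper in fact deduces the local statement, Proposition~\ref{prop_nonordinary_Serre_weight_local_meaning}, from this global one, not the other way round.
\end{itemize}

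The missing idea is the direct $q$-expansion comparison the paper uses. Take normalized $\T$-eigenforms $f\in M(k,\varnothing,\F)^{\no}$ and $g$ of weight $k(\rbar)$. Their coefficients agree for $p\nmid n$, and $a_{np}(f)=0$. With $j=(k-k(\rbar))/(p-1)$, the $q$-expansion of $f-A^jg$ is $-\sum_n a_{np}(g)q^{np}$, which lies in $\ker\theta=\Image V_p$.

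In case (2) this equals $-a_p(g)\,gV_p$. You get $a_p(g)\neq0$ from the Fontaine--Laffaille description (as in Edixhoven) of $\rbar$ for non-ordinary forms of weight $k(\rbar)\le p+1$, because $\rbar$ is reducible. Proposition~\ref{prop_Vp_filtration} then gives $w(f)=w(gV_p)=pk(\rbar)$, hence $k\ge pk(\rbar)$.

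In case (3), $g$ has weight $1$ and satisfies $a_{np}(g)=a_p(g)a_n(g)-\chi(p)a_{n/p}(g)$. So, as $q$-expansions, $f-A^jg=-a_p(g)\,gV_p+\chi(p)\,gV_p^2$, and $w(f)=p^2$ because $\chi(p)\neq0$. The paper instead handles case (3) by an Eichler--Shimura dimension count, using that $\Sym^{p-2}\F^2$ is the only Serre weight of $\rbar$.
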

  
\begin{proof}
    Let $k'$ be the smallest integer greater than one such that $M(k',\varnothing,\F)^{\no}\neq 0.$ 
    We need to show that such $k'$ exists and that $k' = k(\rbar)^\no$ as defined above. 

    First, assume $k(\rbar)\ge 2$, i.e., $\rbar$ is ramified. By definition, $M(k,\varnothing,\F)$ is a generalized eigenspace for $\T^{pN_\varnothing}$ with eigenvalues determined by $\rhobar.$ For $k\ge 2$, Lemma \ref{lem_U_ell_in_anemic} and the base-change theorem imply that it is actually a generalized eigenspace for $\T^p$ with eigenvalues determined by $\rhobar.$ Since all the Hecke operators commute, the ordinary and non-ordinary parts of $M(k,\varnothing,\F)$ are also generalized eigenspaces with the same eigenvalues. Therefore, there exists a nonzero eigenform
    $$f_0 = \sum_{n\ge 1} a_n(f_0)q^n \in M(k(\rbar),\varnothing,\F)$$ 
    for the full Hecke algebra $\T$ with eigenvalues $\lambda_{f_0}:\T\to \F.$
    Because $a_n(f_0) = a_1(T_n f_0) = \lambda(T_n) a_1(f_0)$, we have $a_1(f_0)\neq 0.$ Rescaling, we may assume $a_1(f_0)=1$, so $a_n(f_0)=\lambda(T_n)$ for all $n.$ If $k'$ exists, applying the same reasoning to $M(k',\varnothing,\F)^{\no}\neq 0$ gives a $\T$-eigenform
    $$g_0 = \sum_{n\ge 1} a_n(g_0)q^n$$
    with $a_n(g_0)$ the Hecke eigenvalues of $T_n$. Since the eigenvalues for $\T^p$ are determined by $\rhobar$, we have
    $$a_n(f_0)=a_n(g_0)\quad \text{for } \gcd(n,p)=1,$$
    and $a_p(g_0)=0$ because $g_0$ is non-ordinary.

    When $\rbar|_{I_p}\not\sim \begin{pmatrix} \epsilon^b & *\\ 0 & 1 \end{pmatrix}$ for any $0\le b\le p-2,$ we claim that $a_p(f_0) = 0.$
 Otherwise, by Deligne--Serre's lifting lemma, $f_0$ lifts to a $\T$-eigenform $\widetilde{f}_0$ with $a_p(\widetilde{f}_0)\in \Oh^\times.$ Then Theorem \ref{thm_Scholl_rhof_crystalline_at_p} and Proposition \ref{prop_crystalline_characteristic_polynomial} imply that the associated Galois representation is an extension of an unramified character by an unramified twist of $\chi_\cyc^{k(\rbar)-1}$. Its reduction modulo $p$ would have inertia restriction of the form $\rbar|_{I_p}\sim \begin{pmatrix} \epsilon^b & *\\ 0 & 1 \end{pmatrix}$, a contradiction.  
    Hence $f_0$ is non-ordinary of weight $k(\rbar)$, so $k'=k(\rbar)=k(\rbar)^\no.$

    When $\rbar|_{I_p} \sim \begin{pmatrix} \epsilon^b & *\\ 0 & 1 \end{pmatrix}$ for some integer $0\le b\le p-2,$ one deduce from Definition \ref{def_Serre_weight} that $2\le k(\rbar)\le p+1.$ The classification of residual representations of crystalline representations in the Fontaine--Laffaille range (e.g., \cite[Theorem 2.5, 2.6]{edixhoven1992weight}) implies that $a_p(f_0)$ is a unit in $\F.$ Consider
    $$A^{k(\rbar)} f_0 - a_p(f_0) (f_0 V_p).$$
    By Proposition \ref{prop_Vp_filtration}, this has weight and filtration $pk(\rbar)=k(\rbar)^\no$. Since $A$ and $V_p$ commute with all Hecke operators away from $p$, it is a $\T^p$-eigenform with the same Hecke eigenvalues as $f_0.$ At $p$, we compute
    $$(A^{k(\rbar)} f_0 - a_p(f_0) f_0 V_p) U_p = 0,$$
    so it is non-ordinary of weight $k(\rbar)^\no.$ Thus $k'$ exists and $k'\le pk(\rbar).$ On the other hand, we have
    $$A^{(k(\rbar)^\no - k(\rbar))/(p-1)} f_0 - g_0 = \sum_{p|n} (a_n(f_0) - a_n(g_0)) q^n \neq 0.$$
    This lies in $\ker\theta = \Image V_p$, so by Proposition \ref{prop_Vp_filtration}, $g_0$ has filtration at least $pk(\rbar)$. Hence $k'\ge pk(\rbar)$, giving $k'=pk(\rbar)=k(\rbar)^\no$ as desired.

    At last, we treat the case where $\rbar$ is unramified, i.e., $k(\rbar)=1.$ In particular, $\rbar$ is reducible. Then $U_p$ is invertible on $M(p,\varnothing,\F)$ by the classification  of residual Galois representations in the Fontaine--Laffaille range. A similar argument produces a non-ordinary $\T$-eigenform of weight $p^2$, showing $k'$ exists and $k'\le p^2.$ On the other hand, the standard exactness argument in \cite[\S 6.1]{BergdallPollack2019slopes} and the fact that $\Sym^{p-2} \F^2$ is the only Serre weight (in the sense of \cite{buzzard2010serre}) of $\rbar$ imply
    $$\dim_\F M(p,\varnothing,\F) = \dim_\F M(2p-1,\varnothing,\F) = \dots = \dim_\F M(p^2 - p + 1, \varnothing, \F),$$
    so all eigenforms up to weight $p^2-p+1$ come from weight $p$ and are therefore ordinary. Hence $k'\ge p^2$, giving $k'=p^2 = k(\rbar)^\no.$
\end{proof}

\begin{prop} \label{prop_nonordinary_Serre_weight_local_meaning}
    Suppose that a local representation 
    $\rbar:G_{\Q_p}\to \GL_2(\F)$ satisfies Assumption \ref{assumption_semisimplification}. 
    Then the non-ordinary Serre weight $k(\rbar)^\no$ is the smallest integer greater than one 
    such that $\rbar$ has an irreducible crystalline lift of Hodge--Tate weights 
    $\{0,k(\rbar)^{\no}-1\}.$
\end{prop}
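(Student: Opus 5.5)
We need to show two things: an irreducible crystalline lift with Hodge--Tate weights $\{0,k(\rbar)^{\no}-1\}$ exists, and none exists for $2\le k<k(\rbar)^{\no}$. We use the characterization of irreducibility in Proposition \ref{prop_crystalline_characteristic_polynomial}(1): a crystalline lift with Hodge--Tate weights $\{0,k-1\}$ is irreducible exactly when $v_p(\alpha_p)>0$. Hence the question is whether $R_\rbar^{\Box,\psi}(k)^{\no}\neq 0$, i.e.\ whether $\Spec R_\rbar^{\Box,\psi}(k)[\alpha_p][1/p]$ has a point on the component $R_0$ of Proposition \ref{prop_R_ap_decomposition}.

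\emph{Reduction to the global setting.} When $\rbar$ is semisimple, Remark \ref{rmk_semisimple_reps_satisfy_assumption_global} lets us choose (after enlarging $\F$) a global $\rhobar$ satisfying Assumption \ref{assumption_global_rhobar}. When $\rbar$ is non-semisimple, we use Proposition \ref{prop_versal_univ_crystalline_ap} and Lemma \ref{lem_paskunas_versal_universal_crystalline}. These give
\[
R_\rbar^{\univ,\psi}(k)^{\no}=R_\rbar^{\univ,\psi}/I_\irr^{\univ}
\quad\text{and}\quad
R_\rbarss^{\ver,\psi}(k)^{\no}\simeq (R_\rbar^{\univ,\psi}/I_\irr^{\univ})\llbracket x,y\rrbracket/(xy-c).
\]
Consequently $R_\rbar^{\univ,\psi}(k)^{\no}\neq0$ if and only if $R_\rbarss^{\ver,\psi}(k)^{\no}\neq 0$. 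So $\rbar$ and $\rbarss$ have irreducible crystalline lifts in exactly the same weights. It then remains to check that Definition \ref{def_nonordinary_serre_weight} gives $k(\rbar)^{\no}=k(\rbarss)^{\no}$ in the situation of Assumption \ref{assumption_semisimplification}. Here $\chi_1/\chi_2\neq 1,\epsilon^{\pm1}$, so the peu/tr\`es ramifi\'ee subtlety in Definition \ref{def_Serre_weight} never arises. Hence $k(\rbar)=k(\rbarss)$, and the two sides agree case by case. We may therefore assume Assumption \ref{assumption_global_rhobar}.

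\emph{Existence.} By Proposition \ref{prop_nonordinary_serre_weight_global_meaning}, $M(k(\rbar)^{\no},\varnothing,\F)^{\no}\neq0$. Since $k(\rbar)^{\no}\ge2$, Theorem \ref{thm_base_change} together with the $\T$-summand structure identifies this with $M(k(\rbar)^{\no},\varnothing,\Oh)^{\no}\otimes\F$. Hence $M(k(\rbar)^{\no},\varnothing,\Oh)^{\no}$ is a nonzero free $\Oh$-module. By Proposition \ref{prop_basis_of_M}, it contains a normalized $\T$-eigenform $f$ with $T_pf$ topologically nilpotent, i.e.\ $v_p(a_p(f))>0$. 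We have $p\nmid N_\varnothing$, so by Theorem \ref{thm_Scholl_rhof_crystalline_at_p} the representation $\rho_f|_{G_{\Q_p}}$ is crystalline with Hodge--Tate weights $\{0,k(\rbar)^{\no}-1\}$ and Frobenius trace $a_p(f)$. It is irreducible by Proposition \ref{prop_crystalline_characteristic_polynomial}(1), and it lifts $\rbar$ (after conjugation).

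\emph{Minimality.} This step is the main obstacle. Suppose $2\le k<k(\rbar)^{\no}$ and $r$ is an irreducible crystalline lift with weights $\{0,k-1\}$. We want a global non-ordinary form of weight $k$, which would contradict Proposition \ref{prop_nonordinary_serre_weight_global_meaning}. The plan is to use the patching of \S\ref{subsubsection_main_construction}. The patched module is faithful over $R_\rbar^{\Box,\psi}(k)[\alpha_p]$ (Theorem \ref{thm_intro_patched_module_free_of_rank_one}), and its reduction modulo the augmentation ideal recovers $M(k,\varnothing,\Oh)$ compatibly with the decomposition $\oplus_u R_u$. Faithfulness shows that $R_0\neq0$ forces the patched non-ordinary part to be nonzero. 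Since the patched module is free over the patching variables, this in turn forces $M(k,\varnothing,\Oh)^{\no}\neq0$ and hence $M(k,\varnothing,\F)^{\no}\neq0$, giving the contradiction. The delicate point is that the weight and twist conventions (determinant $\psi$ versus the nebentypus $\chi\chi_{\cyc}^{k-1}$) must be matched, via Remark \ref{rmk_change_det_of_deformation_ring}, so that the patched ring really is $R_\rbar^{\Box,\psi}(k)$.

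A purely local alternative is available for minimality: use the known reductions of irreducible crystalline representations of small weight (Fontaine--Laffaille for $k\le p+1$, and Berger--Li--Zhu / Buzzard--Gee type results for the range up to $p^2$). These show that in cases (2) and (3) of Definition \ref{def_nonordinary_serre_weight}, such reductions never have the reducible shape $\begin{pmatrix}\epsilon^b&*\\0&1\end{pmatrix}$ below $pk(\rbar)$ (resp.\ $p^2$). I would try the patching argument first and fall back on this local route only if the patching bookkeeping fails.
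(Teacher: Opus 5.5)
Your overall route is the paper's. In the semisimple case you pass to a global $\rhobar$, get existence from Proposition \ref{prop_nonordinary_serre_weight_global_meaning}, and get minimality from patching. Your use of Lemma \ref{lem_Hecke_algebras_patch_to_deformation_rings}(1), the $\ord/\no$ splitting, freeness over $S_\infty$ and Lemma \ref{lem_quotient_of_patched_modules} is a correct and more explicit version of the paper's appeal to ``modular forms are supported on every component.'' In the non-semisimple case, Lemma \ref{lem_paskunas_versal_universal_crystalline} shows that $\rbar$ and $\rbarss$ have irreducible crystalline lifts in the same weights, which reduces everything to $k(\rbar)^{\no}=k(\rbarss)^{\no}$. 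The step that fails is your justification of this last equality.

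It is false that $k(\rbar)=k(\rbarss)$ under Assumption \ref{assumption_semisimplification}. The peu/tr\`es ramifi\'ee correction is not the only difference between cases (2)(a) and (2)(b) of Definition \ref{def_Serre_weight}: case (2)(b) normalizes the sub-character exponent as $1\le\beta\le p-1$.
\begin{itemize}
\item If $\chi_1$ is unramified and $\chi_2|_{I_p}=\epsilon^{\alpha}$ with $1\le\alpha\le p-2$, this forces $\beta=p-1$. Then $k(\rbar)=1+p\alpha+(p-1)=p(\alpha+1)$, while $k(\rbarss)=1+\alpha$.
\item If both $\chi_i$ are unramified, then $k(\rbar)=p$, while $k(\rbarss)=1$.
\end{itemize}

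Under your premise $k(\rbar)=k(\rbarss)$, the ``case by case'' check would actually fail in exactly these two cases, because $\rbar$ and $\rbarss$ fall under different clauses of Definition \ref{def_nonordinary_serre_weight}.
\begin{itemize}
\item In the first case, $\rbar|_{I_p}$ has trivial \emph{sub}, not trivial quotient, so clause (1) gives $k(\rbar)^{\no}=k(\rbar)$. Meanwhile $\rbarss$ falls under clause (2) and gets $pk(\rbarss)$.
\item In the second case, $\rbar$ is ramified with $\rbar|_{I_p}$ of the shape $\begin{pmatrix}1&*\\0&1\end{pmatrix}$, so clause (2) gives $pk(\rbar)$. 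Meanwhile $\rbarss$ is unramified and gets $p^2$ from clause (3).
\end{itemize}
The equality $k(\rbar)^{\no}=k(\rbarss)^{\no}$ holds only because the factor-of-$p$ discrepancy between $k(\rbar)$ and $k(\rbarss)$ is compensated by the change of clause. Both sides equal $p(\alpha+1)$ in the first case and $p^2$ in the second.

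So replace your one-line argument with the explicit four-case computation, as the paper does: both $\chi_i$ ramified, only $\chi_1$ ramified, only $\chi_2$ ramified, neither. Your local fallback for minimality is unnecessary, since the patching argument already gives it.
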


\begin{proof}
    Let $k''$ be the smallest integer bigger than one such that $\rbar$ has an irreducible crystalline lift of Hodge--Tate weights $\{0,k''-1\}.$ We need to establish the existence of such a $k''$ and prove that $k''=k(\rbar)^\no$ defined as above. 

First suppose that $\rbar$ is semisimple. Then by Remark \ref{rmk_semisimple_reps_satisfy_assumption_global}, Assumption \ref{assumption_global_rhobar} is satisfied. It follows from the proposition above that $k''$ exists and $k''\le k(\rbar)^{\no}.$ On the other hand, as explained by the proof of \cite[Proposition 3.7]{calegari_2011_even_fontaine_mazur}, modular forms are supported on every component of $R_\infty^{\psi_k}(k)$ (defined in \S \ref{section_patching}), if $k''\ge 2$ is strictly smaller than $k(\rbar)^{\no},$ we will find a non-ordinary modular form of weight $k''$ and level coprime to $p$ such that its Galois representation lifts $\rhobar.$ In particular, the local representation associated to this form is an irreducible crystalline lift of $\rbar$ of Hodge--Tate weights $\{0,k''-1\},$ contradicting Proposition \ref{prop_nonordinary_serre_weight_global_meaning}. Thus $k''=k(\rbar)^{\no}.$

Now suppose that $\rbar$ satisfies Assumption \ref{assumption_semisimplification}. When $k\ge 2,$ by Lemma \ref{lem_paskunas_versal_universal_crystalline}, $\rbar$ has an irreducible crystalline lift of Hodge--Tate weights $\{0,k-1\}$ if and only if $\rbarss$ does. So it suffices to prove $k(\rbar)^{\no} = k(\rbarss)^{\no}.$ When $\rbar$ is unramified, then $\rbarss$ is unramified as well. By definition, we have $k(\rbar)^{\no} = k(\rbarss)^\no=p^2.$ So we may assume $\rbar$ is ramified. There are then four cases:

\begin{enumerate}
    \item If $\chi_1$ and $\chi_2$ are both ramified, then 
    $$k(\rbar)^{\no} = k(\rbar),\quad k(\rbarss)^{\no} = k(\rbarss)\quad{\rm and}\quad k(\rbar) = k(\rbarss).$$

    \item If $\chi_1$ is ramified and $\chi_2$ is unramified, then 
    $$k(\rbar)^{\no}=pk(\rbar),\quad k(\rbarss)^{\no}=pk(\rbarss)\quad{\rm and\quad} k(\rbar) = k(\rbarss).$$

    \item If $\chi_1$ is unramified and $\chi_2$ is ramified, then 
    $$k(\rbar)^{\no} = k(\rbar),\quad k(\rbarss)^{\no} = pk(\rbarss)\quad {\rm and}\quad k(\rbar) = pk(\rbarss).$$

    \item If $\chi_1$ and $\chi_2$ are both unramified, then 
    $$k(\rbar)^{\no} = pk(\rbar)=p^2\quad{\rm and\quad} k(\rbarss)^{\no}=p^2.$$
\end{enumerate}
In all cases, we have $k(\rbar)^{\no}=k(\rbarss)^{\no}.$

\end{proof}

\begin{remark} \label{rmk_anemic_full_iso_weight_range}
    Suppose that $k(\rbar)\ge 2.$ By Serre's conjecture, we have $M(j(k),\varnothing,\F) = 0$ if and only if $j(k)<k(\rbar),$ which is equivalent to $k<pk(\rbar).$ By Proposition \ref{prop_nonordinary_serre_weight_global_meaning}, we have $M(j(k),\varnothing,\F)^\no = 0$ if and only if $j(k)<k(\rbar)^\no,$ which is equivalent to $k<pk(\rbar)^\no.$ 
\end{remark}

\section{Patching} \label{section_patching}
In this section, we carry out the main construction outlined in \S\ref{subsubsection_main_construction}. We first establish our notation, then recall the ultrapatching functor, and finally apply it to the coherent cohomology of modular curves.

\subsection{Setup} \label{subsection_setup_for_patching}
Throughout the section, we assume that $\rbar$ satisfies Assumption \ref{assumption_global_rhobar} and $k\ge 2.$ For a prime $\ell\neq p$ of $\Q,$ we let $\rhobar_\ell$ be the restriction  of $\rhobar$ to $G_{\Q_\ell}.$ Let $T(\rhobar)$ be the set of vexing primes satisfying conditions in \ref{rmk_vexing_primes}. 
For a finite set $Q$ of primes as in \S \ref{subsection_serre_weights}, let $R^{\Box}_{Q}$ represent the deformation functor of 
$\rhobar:G_{\Q}\to \GL_2(\F)$ that to each 
$R\in \mathcal{C}_{\Oh}$ assigns
\[
\left\{
\left(
\rho_R,\{\gamma_v\}_{v\in T(\rhobar)\sqcup\{p\}}
\right):
\begin{aligned}
&\rho_R|_{G_{\Q_v}} \text{ is minimally ramified for all } 
   v\notin T(\rhobar)\sqcup\{p\}\sqcup Q,\\
&\gamma_v\in \ker\!\left(\GL_2(R)\to \GL_2(\F)\right)
\end{aligned}
\right\}/\!\sim
\]
where $\sim$ is given by
\[
(\rho_R,\{\gamma_v\}_{v\in T(\rhobar)\sqcup\{p\}})
\sim
\left(
\delta\rho_R\delta^{-1},
\{\delta\,\gamma_v\}_{v\in T(\rhobar)\sqcup\{p\}}
\right)
\]
for each 
$\delta\in \ker\!\left(\GL_2(R)\surj \GL_2(\F)\right).$
Let $R^{\univ}_Q$ denote the deformation ring representing the same local deformation conditions as $R^{\Box}_Q$, except with the local framings 
$\{\gamma_v\}_{v\in T(\rhobar)\sqcup\{p\}}$ omitted.  It follows that
$R^{\univ}_Q$ is the subring of $R^{\Box}_Q$ corresponding to the deformation functor obtained by forgetting these framings.
Set
\[
\mathcal{J}
:= 
\Oh\llbracket X_{v,i,j}\rrbracket_{\,v\in T(\rhobar)\sqcup\{p\},\, i,j=1,2}/(X_{p,1,1}).
\]
We fix once and for all an identification
\[
R^{\univ}_Q \widehat{\otimes}_{\Oh} \mathcal{J}
\xrightarrow{\ \sim\ }
R^{\Box}_Q.
\]

For each integer $k$, let $\psi_k := \chi\chi_{\cyc}^{\,k-1}$.  
Write $R^{\Box,\psi_k}_Q$ and $R^{\univ,\psi_k}_Q$ for the quotients of
$R^{\Box}_Q$ and $R^{\univ}_Q$ respectively that parametrize liftings with
fixed determinant $\psi_k$.  
The above identification then induces an isomorphism
\[
R^{\univ,\psi_k}_Q \widehat{\otimes}_{\Oh} \mathcal{J}
\xrightarrow{\ \sim\ }
R^{\Box,\psi_k}_Q.
\]

\begin{lem}[{\cite[Proposition~3.2.5]{kisin2009moduli}}]\label{lem_Taylor_Wiles_primes}
    Under the Taylor--Wiles condition (in Assumption \ref{assumption_global_rhobar}), for every integer $n\ge 1,$ there exists a finite set $Q_n$ of rational primes such that all of the following hold \begin{enumerate}
        \item $Q_n$ is disjoint from the divisors of $N_\varnothing$
        \item $\#Q_n=\dim H^1(G_{\Q, T(\rhobar)\sqcup \{p\}},\ad^0\rhobar(1));$ 
        \item if $q\in Q_n$, then $q\equiv 1\pmod {p^n};$
        \item $\rhobar(\Frob_q)$ has distinct eigenvalues $\alpha_q$ and $\beta_q$; and
        \item $\ker\left(H^1(G_{\Q,T(\rhobar)\sqcup \{p\}},\ad^0\rhobar(1))\to \prod_{q\in Q_n}H^1(G_{\Q_q}/I_q,\ad^0\rhobar(1))\right)=0.$ 
    \end{enumerate}
\end{lem}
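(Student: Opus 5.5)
The plan is to follow the standard Taylor--Wiles argument via Galois cohomology and Chebotarev, as in \cite[Proposition~3.2.5]{kisin2009moduli}. Set $S := T(\rhobar)\sqcup\{p\}$ and $r := \dim_\F H^1(G_{\Q,S},\ad^0\rhobar(1))$. The aim is to find, for each $n$, primes $q$ that kill a given nonzero class in this $H^1$ upon restriction to the unramified quotient at $q$. Then we choose $r$ such primes one at a time, so that each new prime lowers the dimension of the kernel in (5) by one. After $r$ steps the kernel is zero and $\#Q_n=r$, which gives (2) and (5). Conditions (1), (3), (4) are imposed along the way via Chebotarev.

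\textbf{Key step.} For each nonzero $\phi\in H^1(G_{\Q,S},\ad^0\rhobar(1))$ we find $\sigma\in G_{\Q(\zeta_{p^n})}$ with two properties. First, $\rhobar(\sigma)$ has distinct eigenvalues. Second, $\phi(\sigma)\notin(\sigma-1)\ad^0\rhobar(1)$. Let $F=\Q(\zeta_{p^n},\ad^0\rhobar)$, the fixed field of the kernel of $\ad^0\rhobar$ on $G_{\Q(\zeta_{p^n})}$.
\begin{enumerate}
\item First show $H^1(\Gal(F/\Q),\ad^0\rhobar(1))=0$. This uses the Taylor--Wiles condition that $\rhobar|_{G_{\Q(\zeta_p)}}$ is absolutely irreducible, together with $p\ge 3$, or $p\ge5$ when the projective image is dihedral or exceptional. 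It is the usual lemma from \cite{DDT1995fermat} or Kisin, and I will invoke it rather than reprove it.
\item By inflation--restriction, $\phi|_{G_F}$ is then a nonzero $\Gal(F/\Q)$-equivariant homomorphism $G_F\to\ad^0\rhobar(1)$. Its image spans a nonzero $\Gal(F/\Q)$-stable subspace $W$.
\item Fix $\sigma_0\in G_{\Q(\zeta_{p^n})}$ with $\rhobar(\sigma_0)$ having distinct eigenvalues. Such an element exists by irreducibility, via the standard argument that the image of $G_{\Q(\zeta_{p^n})}$ contains a regular semisimple element. Let $\sigma_0$ act on $W$, and consider the eigenvector of $\ad^0\rhobar(\sigma_0)$ with eigenvalue $1$, which is the diagonal direction.
\item Show that $W$ is not contained in the $\sigma_0$-stable complement $(\sigma_0-1)\ad^0\rhobar$. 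If $\phi(\sigma_0)$ already lies outside that complement, take $\sigma=\sigma_0$. Otherwise replace $\sigma_0$ by $\tau\sigma_0$ with $\tau\in G_F$, so that $\phi(\tau\sigma_0)=\phi(\tau)+\phi(\sigma_0)$ lies outside it.
\end{enumerate}

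\textbf{Chebotarev step.} Apply Chebotarev to the finite extension cut out by $F$ and the kernel of $\phi$. This produces infinitely many primes $q\notin S$ with $\Frob_q$ conjugate to $\sigma$, so we may also avoid the divisors of $N_\varnothing$. Because $\sigma$ fixes $\zeta_{p^n}$, we get $q\equiv1\pmod{p^n}$. Because $\rhobar(\sigma)$ has distinct eigenvalues, condition (4) holds. Finally, $\phi|_{G_{\Q_q}}$ restricted to the unramified quotient is the class of $\phi(\Frob_q)$ in $\ad^0\rhobar(1)/(\Frob_q-1)$, which is nonzero by the choice of $\sigma$. Iterating over a basis of the successive kernels gives $Q_n$.

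\textbf{Main obstacle.} The main obstacle is step (4): ensuring $W\not\subseteq(\sigma_0-1)\ad^0\rhobar(1)$ simultaneously with the eigenvalue condition. I would handle it as in \cite[Lemma 2.5]{DDT1995fermat}, using the irreducibility of $\ad^0\rhobar$ over $G_F$'s normalizer, and I would cite Kisin's statement for the remaining exceptional cases.
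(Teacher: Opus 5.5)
The paper gives no argument here beyond citing Kisin. Your architecture is the standard proof behind that citation:
\begin{itemize}
\item vanishing of $H^1(\Gal(F/\Q),\ad^0\rhobar(1))$;
\item restricting $\phi$ to a nonzero equivariant homomorphism on $G_F$;
\item finding $\sigma\in G_{\Q(\zeta_{p^n})}$ with $\rhobar(\sigma)$ having distinct eigenvalues and $\phi(\sigma)\notin(\sigma-1)\ad^0\rhobar(1)$;
\item Chebotarev, then induction on the dimension of the kernel.
\end{itemize}
Steps (1)--(3), the reduction $\phi(\tau\sigma_0)=\phi(\tau)+\phi(\sigma_0)$ in step (4), and the Chebotarev step are fine.

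\textbf{The gap.} It is in how you justify $W\not\subseteq(\sigma_0-1)\ad^0\rhobar$. You fix $\sigma_0$ in step (3) independently of $\phi$ and appeal to irreducibility of $\ad^0\rhobar$. That fails when $\rhobar$ has dihedral projective image, $\rhobar\cong\Ind_K^{\Q}\chi$. These are exactly the CM-type representations used in Remark~\ref{rmk_semisimple_reps_satisfy_assumption_global}. There $\ad^0\rhobar\cong\eta_K\oplus\Ind_K^{\Q}(\chi/\chi^c)$, with $\eta_K$ the quadratic character of $K$.
\begin{itemize}
\item Suppose $\phi$ lies in the summand $H^1(G_{\Q,S},\eta_K\epsilon)$. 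Such classes can exist: for $K$ real quadratic the global Euler characteristic formula forces this group to be nonzero. Take any $\sigma_0\in G_{\Q(\zeta_{p^n})}\setminus G_K$. It has distinct eigenvalues $\pm\sqrt{\det\rhobar(\sigma_0)}$, so step (3) allows it. Every $\sigma\in\sigma_0G_F$ acts by $-1$ on the line $\eta_K\epsilon$. Hence $\phi(\sigma)\in(\sigma-1)\ad^0\rhobar(1)$, and no choice of $\tau$ rescues it.
\item Symmetrically, if $\phi$ lies in $H^1(G_{\Q,S},\Ind_K^{\Q}(\chi/\chi^c)\otimes\epsilon)$, every admissible $\sigma_0\in G_K$ fails.
\end{itemize}
Deferring ``the remaining cases'' to Kisin is circular, since that is the statement being proved.

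\textbf{The fix.} Choose $\sigma_0$ \emph{after} $\phi$, adapted to the nonzero $\F[G_\Q]$-module $W$. Since $\epsilon(\sigma_0)=1$ and $\rhobar(\sigma_0)$ is diagonalizable with distinct eigenvalues, $\ad^0\rhobar(1)=(\ad^0\rhobar)^{\sigma_0}\oplus(\sigma_0-1)\ad^0\rhobar$, with one-dimensional first summand. As $W$ is $\sigma_0$-stable, it suffices to find $\sigma_0\in G_{\Q(\zeta_{p^n})}$ with distinct eigenvalues and $W^{\sigma_0}\neq0$.
\begin{itemize}
\item If $\ad^0\rhobar$ is irreducible, $W$ is everything and any $\sigma_0$ from step (3) works.
\item In the dihedral case with $\Ind_K^{\Q}(\chi/\chi^c)$ irreducible, $\ad^0\rhobar$ is semisimple (the projective image has order prime to $p$). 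So $W$ contains either the summand $\eta_K\epsilon$ or the summand $\Ind_K^{\Q}(\chi/\chi^c)\otimes\epsilon$.
\begin{itemize}
\item In the first case take $\sigma_0\in G_{K(\zeta_{p^n})}$ with $(\chi/\chi^c)(\sigma_0)\neq1$.
\item In the second case take $\sigma_0\in G_{\Q(\zeta_{p^n})}\setminus G_K$. Its fixed line is spanned by the trace-zero antidiagonal matrix $\rhobar(\sigma_0)$, which lies in that summand.
\end{itemize}
\item If the projective image is $(\Z/2)^2$, then $\ad^0\rhobar$ is a sum of three quadratic characters $\eta_{K_i}$, and $\rhobar$ is induced from each $K_i$. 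Use the first recipe for a $K_i$ with $\eta_{K_i}\epsilon\subseteq W$.
\end{itemize}

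Both choices exist precisely because of the Taylor--Wiles condition. It forces $K\neq\Q(\sqrt{(-1)^{(p-1)/2}p})$, the unique quadratic subfield of $\Q(\zeta_{p^n})$. It also forces $\chi/\chi^c$ to be nontrivial on $G_{K(\zeta_p)}$, hence on $G_{K(\zeta_{p^n})}$, since $\chi/\chi^c$ has order prime to $p$. With this change your argument is complete.
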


Set $r = \#Q_n$ and $
g = \# T(\rhobar) - 1+r.$
Write $R_\rbar^{\Box,\psi_k}(k)^*$ for either one of the three rings: $R_\rbar^{\Box,\psi_k}(k),R_\rbar^{\Box,\psi_k}(k)^{\ord}$ or $R_\rbar^{\Box,\psi_k}(k)^\no$ defined in \S \ref{subsubsection_crystalline_representations}. Define $$R_{T(\rhobar)\sqcup\{p\}}^{\Box,\psi_k} := \widehat{\otimes}_{v\in T(\rhobar)\sqcup\{p\},\Oh}R_{\rhobar_v}^{\Box,\psi_k}\quad{\rm and}\quad R_{T(\rhobar)\sqcup\{p\}}^{\Box,\psi_k}(k)^* := R_\rbar^{\Box,\psi}(k)^*\widehat{\otimes}_{\Oh}\left(\widehat{\otimes}_{v\in T(\rhobar),\Oh}R_{\rhobar_v}^{\Box,\psi}\right).$$ 
The last condition in Lemma \ref{lem_Taylor_Wiles_primes} implies that $R^{\Box,\psi_k}_{Q_n}$ can be topologically generated by $g$ elements over $R^{\Box,\psi_k}_{T(\rhobar)\sqcup\{p\}}$. Thus for each $n,$ we fix a surjection $$R_\infty^{\psi_k} := R_{T(\rhobar)\sqcup\{p\}}^{\Box,\psi_k}\llbracket x_1,\ldots,x_g\rrbracket \surj R_{Q_n}^{\Box,\psi_k}.$$ Let $R_\infty^{\psi_k}(k)^* := R_{T(\rhobar)\sqcup\{p\}}^{\Box,\psi_k}(k)^*\llbracket x_1,\ldots,x_g\rrbracket$ be the corresponding quotient of $R_\infty^{\psi_k}.$

\begin{prop}\label{prop_R_infty_special_fiber}
    The ring $R_\infty^{\psi_k}\otimes_\Oh\F$ is a geometrically irreducible local complete intersection of Krull dimension $$4\#T(\rhobar) + r + 5.$$ In particular, all zero-divisors in $R_\infty^{\psi_k}\otimes_\Oh\F$ are contained in its nilradical. 
\end{prop}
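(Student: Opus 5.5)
We decompose $R_\infty^{\psi_k}\otimes_\Oh\F$ as a completed tensor product of the local factors and a power series ring. By definition,
\[
R_\infty^{\psi_k}\otimes_\Oh\F \xrightarrow{\sim} \Bigl(R_\rbar^{\Box,\psi_k}\otimes_\Oh\F\Bigr)\widehat{\otimes}_\F\Bigl(\widehat{\otimes}_{v\in T(\rhobar),\F}R^{\Box,\psi_k}_{\rhobar_v}\otimes_\Oh\F\Bigr)\llbracket x_1,\ldots,x_g\rrbracket .
\]
To justify this, we use Proposition \ref{prop_completed_tensor_product_ideal} with $\mathfrak{a}=(\varpi)$ to commute reduction modulo $\varpi$ with the completed tensor products. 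The power series variables are handled as a completed tensor product with $\Oh\llbracket x_1,\dots,x_g\rrbracket$, and its special fiber is $\F\llbracket x_1,\ldots,x_g\rrbracket$.

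The properties of each factor are recorded earlier. By Theorem \ref{thm_unrestricted_deformation_ring_at_p} and the remark after it, $R_\rbar^{\Box,\psi_k}\otimes_\Oh\F$ is a geometrically integral local complete intersection of dimension $6$. By Remark \ref{rmk_vexing_prime_local_deformation_ring}, each factor $R^{\Box,\psi_k}_{\rhobar_v}\otimes_\Oh\F$ for $v\in T(\rhobar)$ is a geometrically irreducible local complete intersection of dimension $3$; explicitly, it is $\F\llbracket Y,Z_1,Z_2,Z_3\rrbracket/(Y^{p^{m}})$. Finally, $\F\llbracket x_1,\dots,x_g\rrbracket$ is regular, hence an integral domain and a complete intersection. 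Applying Proposition \ref{prop_completed_tensor_product_in_char_p}(2) iteratively shows the whole ring is a local complete intersection. Applying Lemma \ref{lem_Taylor}(2) iteratively shows it is geometrically irreducible.

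For the dimension, the Krull dimension of a completed tensor product over $\F$ of objects of $\mathcal{C}_\Oh$ in characteristic $p$ is the sum of the dimensions. This holds because, as in the proof of Proposition \ref{prop_completed_tensor_product_in_char_p}, the union of systems of parameters of the factors is a system of parameters of the product. The dimension is therefore
\[
6+3\#T(\rhobar)+g = 6+3\#T(\rhobar)+\#T(\rhobar)-1+r = 4\#T(\rhobar)+r+5 .
\]

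For the final claim, a local complete intersection is Cohen--Macaulay, so it has no embedded associated primes. Geometric irreducibility gives a unique minimal prime $\mathfrak{p}$, so the only associated prime is $\mathfrak{p}=\sqrt{0}$. The set of zero-divisors is the union of the associated primes, and so it equals the nilradical. The main point needing care is the identification of the special fiber of the completed tensor product, together with the fact that iterating Lemma \ref{lem_Taylor}(2) needs only irreducibility and not reducedness, since the vexing factors are non-reduced. Everything else is bookkeeping with earlier results.
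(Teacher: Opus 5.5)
Your proposal is correct and follows the paper's proof essentially step for step. Both write the special fiber as a completed tensor product of the local factors and a power series ring, invoke Proposition~\ref{prop_completed_tensor_product_in_char_p} and Lemma~\ref{lem_Taylor}(2) for the local complete intersection and geometric irreducibility properties, sum the dimensions, and conclude via Cohen--Macaulayness (no embedded primes) and the unique minimal prime. Your additional remarks on reduction modulo $\varpi$ and on needing only irreducibility for the non-reduced vexing factors make explicit details the paper leaves implicit.
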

\begin{proof}
By definition, $R_\infty^{\psi_k}\otimes_\Oh \F$ is the completed tensor product over $\F$ of the special fibers of the local deformation rings 
$R_{\rhobar_v}^{\Box,\psi_k}\otimes_\Oh \F$ for $v \in T(\rhobar)\sqcup\{p\}$, together with a power series ring in $g$ variables. By Lemma~\ref{lem_Taylor} and Proposition~\ref{prop_completed_tensor_product_in_char_p}, 
$R_\infty^{\psi_k}\otimes_\Oh \F$ is a geometrically irreducible local complete intersection
provided each local deformation ring $R_{\rhobar_v}^{\Box,\psi_k}\otimes_\Oh \F$ has the same property. 
This follows from Remark~\ref{rmk_vexing_prime_local_deformation_ring}, Theorem~\ref{thm_unrestricted_deformation_ring_at_p}, and its remark.

The zero-divisors of a Noetherian ring are contained in the union of its associated primes.  
Since $R_\infty^{\psi_k}\otimes_\Oh \F$ is a local complete intersection, it is Cohen--Macaulay, 
so all its associated primes are minimal by~\cite[Corollary~18.10]{eisenbud2013commutative}.  
As $R_\infty^{\psi_k}\otimes_\Oh \F$ is irreducible, it has a unique minimal prime ideal, namely its nilradical.

Finally, the Krull dimension is computed by summing the dimensions of the local deformation rings and adding the $g$ auxiliary variables:
\[
\dim R_\infty^{\psi_k}\otimes_\Oh\F 
= \sum_{v\in T(\rhobar)\sqcup\{p\}} \dim (R_{\rhobar_v}^{\Box,\psi_k}\otimes_\Oh \F) + g
= 3\#T(\rhobar) + 6 + (\#T(\rhobar) - 1 + r)
= 4\#T(\rhobar) + r + 5.
\]
\end{proof}

On the automorphic side, recall the modular Galois representation from \S \ref{subsection_hecke_modules_and_algebras}:
\[
\rho^{\rm mod}_Q : G_{\Q,S_Q} \to \GL_2\big(\T^{pN_Q}(k,Q,\Oh)\big),
\]
which is minimally ramified at primes $v \notin T(\rhobar)\sqcup\{p\}\sqcup Q$.  
By construction, the character 
$
\psi_k \det(\rho_Q^{\rm mod})^{-1}
$
takes values in $1 + \varpi\,\T^{pN_Q}(M(k,Q,\Oh))$ and is unramified outside $Q$.  
There is a unique character of $G_{\Q,Q}$, valued in $1 + \varpi\,\T^{pN_Q}(M(k,Q,\Oh))$, whose square is $\psi_k \det(\rho_Q^{\rm mod})^{-1}$.  
We denote this square root by 
\[
\big(\psi_k \det(\rho_Q^{\rm mod})^{-1}\big)^{1/2}.
\]  
Twisting $\rho_Q^{\rm mod}$ by this character, we obtain
$
\rho_Q^{\rm mod} \otimes \big(\psi_k \det(\rho_Q^{\rm mod})^{-1}\big)^{1/2},
$
a Galois representation of $G_{\Q,S_Q}$ with determinant $\psi_k$, which is minimally ramified at primes $v \notin T(\rhobar)\sqcup\{p\}\sqcup Q$.  
Hence, this gives a natural surjection
\begin{equation}\label{eqn_R_global_to_T_global}
R_Q^{\univ,\psi_k} \twoheadrightarrow \T^{pN_Q}(M(k,Q,\Oh)).
\end{equation}

\begin{prop}[Darmon--Diamond--Taylor]
    The action of $\Delta_Q$ on $\T^{pN_Q}(M(k,Q,\Oh))$ via the diamond operators $\langle q\rangle$ for $q\in Q$ 
    coincides with the action induced by
    \[
        \Delta_Q \longrightarrow \big(R_Q^{\Box,\psi_k}\big)^\times
        \twoheadrightarrow \big(R_Q^{\univ,\psi_k}\big)^\times
        \twoheadrightarrow \T^{pN_Q}(M(k,Q,\Oh))^\times
    \]
    via Galois deformation theory.
\end{prop}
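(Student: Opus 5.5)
The plan is to compare the two actions of $\Delta_Q$ through the local Galois representation at each $q\in Q$. Since everything factors through the surjection \eqref{eqn_R_global_to_T_global}, and $\T^{pN_Q}(M(k,Q,\Oh))$ embeds into $\Oh^{\oplus \#B(k,Q)}$ via $T\mapsto(\lambda_f(T))_{f\in B(k,Q)}$ (Proposition \ref{prop_basis_of_M}), it suffices to check the equality eigenform by eigenform. Concretely, for each $f\in B(k,Q)$ and each $q\in Q$ we must show two things. First, the diamond character $d\mapsto\lambda_f(\langle d\rangle)$ on the $q$-part of $\Delta_Q$ agrees with the character through which inertia acts on the relevant line of $\rho_f|_{G_{\Q_q}}$. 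Second, this line is the one singled out by the Galois-theoretic definition of $\Delta_Q\to (R_Q^{\Box,\psi_k})^\times$.

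Recall the Galois side. Since $q\equiv1\pmod p$ and $\rhobar(\Frob_q)$ has distinct eigenvalues $\alpha_q\neq\beta_q$, any lift $\rho_R|_{G_{\Q_q}}$ decomposes by Hensel's lemma as $\chi_{\alpha}\oplus\chi_{\beta}$, with $\chi_\alpha$ lifting the unramified character with $\Frob_q\mapsto\alpha_q$. The restriction $\chi_\alpha|_{I_q}$ factors through the maximal $p$-quotient of $\Z_q^\times$, hence through the $q$-part of $\Delta_Q$ via local class field theory. This is the map $\Delta_Q\to(R^{\Box,\psi_k}_Q)^\times$; it is independent of framing, so it descends to $R^{\univ,\psi_k}_Q$.

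The automorphic side goes as follows.

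\textbf{Step 1.} Take $f\in B(k,Q)$. The form $f$ is an eigenform for $\langle d\rangle$, $d\in\Delta_Q$, with some character $\chi_f$ whose $q$-component is $p$-power order. If that component is trivial, $f$ may be old at $q$. In that case the proof of Lemma \ref{lem_U_ell_in_anemic} shows $U_q f=a_q(f)f$, where $a_q(f)$ is the root congruent to $\alpha_q$, and both characters are trivial on inertia. If that component is nontrivial, $f$ is new at $q$ with conductor exponent $1$.

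\textbf{Step 2.} In the new case, the argument in the proof of the proposition on $U_q$ (the Steinberg case is excluded) gives $\pi_{f,q}\simeq I(\chi_f\chi_\cyc^{k-1}/\eta_f,\eta_f)$ with $\eta_f(\Frob_q)=a_q(f)\equiv\alpha_q$ by the localization at $\mathfrak{m}_Q$. Local--global compatibility then gives $\rho_f|_{G_{\Q_q}}\simeq\eta_f\oplus\chi_{f,q}\chi_\cyc^{k-1}/\eta_f$.

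\textbf{Step 3.} Match the lines. The unramified summand $\eta_f$ reduces to $\alpha_q$, so it is $\chi_\alpha$ and $\chi_\alpha|_{I_q}=1$, while the ramified summand $\chi_\beta$ carries the diamond character. Here I expect the main obstacle, namely conventions. Depending on which eigenvalue $U_q$ is localized at ($\widetilde\alpha_q$ in $\mathfrak{m}_Q^{\{N(\rhobar)p\}}$) and on the normalization of $\rec$ and of Artin maps, the Galois-side map must be defined using the line $\chi_\beta$ (equivalently $\chi_\alpha^{-1}$ times determinant data, since the determinant $\psi_k$ is unramified at $q$). It may also be necessary to invert $d$. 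I would first fix the convention so that the distinguished line is the one on which $\Frob_q$ does \emph{not} act by the $U_q$-eigenvalue. I would then verify with the $q$-expansion formula for $\langle d\rangle$ and the explicit local Langlands description of tamely ramified principal series (as in \cite[Lemma 4.2.4]{CDT1999Barsotti}) that $\lambda_f(\langle d\rangle)=\chi_{f,q}(d)$ equals $\chi_\beta(\mathrm{Art}_q(d))$. The twist by $(\psi_k\det(\rho_Q^{\rm mod})^{-1})^{1/2}$ is unramified at $q$ except through $\chi_{f,q}$ itself, so its effect must be tracked. It contributes a square root of $\chi_{f,q}^{-1}$ on each line, and the correct statement compares the ratio of the two inertial characters. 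Because $p$ is odd, I expect this to be harmless on $p$-power-order characters after reparametrizing, exactly as in \cite[\S4]{DDT1995fermat}.

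\textbf{Step 4.} Conclude by Chebotarev density and injectivity into $\Oh^{\oplus\#B(k,Q)}$ that the two homomorphisms $\Oh[\Delta_Q]\to\T^{pN_Q}(M(k,Q,\Oh))$ coincide.
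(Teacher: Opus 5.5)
Your argument is correct and is essentially the proof the paper delegates to \cite[Lemma~2.44, Proposition~4.10]{DDT1995fermat}: split $\rho|_{G_{\Q_q}}$ into two characters using the Taylor--Wiles condition, then check eigenform by eigenform, via local--global compatibility for the tamely ramified principal series at $q$, that the ramified line carries the $q$-part of the nebentypus (the Steinberg case being excluded since $q\equiv 1 \pmod p$ and $\rhobar(\Frob_q)$ has distinct eigenvalues). Your Step~3 observation is a genuine point that the bare citation glosses over. Because the paper fixes the determinant $\psi_k$, which is unramified at $q$, and twists $\rho_Q^{\rm mod}$ by $(\psi_k\det(\rho_Q^{\rm mod})^{-1})^{1/2}$, a single line only recovers $\langle d\rangle^{\pm 1/2}$. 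So the Galois-side map should be defined via the ratio $\chi_\beta\chi_\alpha^{-1}|_{I_q}$; equivalently, the statement holds after the automorphism $d\mapsto d^2$ of the odd-order group $\Delta_Q$, which is harmless for patching.
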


\begin{proof}
    The action of $\Delta_Q$ on $\big(R_Q^{\Box,\psi_k}\big)^\times$ factors through the local universal lifting rings at the Taylor--Wiles primes $q \in Q$.  
    This is explained in \cite[Lemma~2.44]{DDT1995fermat}, and the coincidence of these structures is proved in \cite[Proposition~4.10]{DDT1995fermat}.
\end{proof}

Let 
\[
S_\infty := \mathcal{J} \llbracket y_1,\ldots,y_r \rrbracket.
\] 
We denote by $\mathfrak{m}_{S_\infty}$ its maximal ideal. 
Since $S_\infty$ is formally smooth, we can choose a ring homomorphism 
$
S_\infty \longrightarrow R_\infty^{\psi_k}
$
lifting the map 
\[
S_\infty \twoheadrightarrow \mathcal{J}[\Delta_{Q_n}] \longrightarrow R_{Q_n}^{\Box,\psi_k}.
\] 

For $R=\Oh$ or $\F,$ we write $M(k,Q,R)^{*}$ for one of the three modules $M(k,Q,R),M(k,Q,R)^\ord$ or $M(k,Q,R)^\no$ defined in \S \ref{subsection_hecke_modules_and_algebras}. The choice of $*$ is understood to be fixed throughout each statement. We set $$\T^{pN_Q}(M(k,Q,\Oh)^*)^{\Box} := \T^{pN_Q}(M(k,Q,\Oh)^*)\otimes_{R_Q^{\univ,\psi_k}}R^{\Box,\psi_k}_Q.$$
The full Hecke algebra $\T(M(k,Q,\Oh)^*)$ is an $R_Q^{\univ,\psi_k}$-algebra whose action factors through $\T^{pN_Q}(M(k,Q,\Oh)^*)$. We define $$\T(M(k,Q,\Oh)^*)^{\Box}:=\T(M(k,Q,\Oh)^*)\otimes_{R^{\univ,\psi_k}_Q}R^{\Box,\psi_k}_Q$$ and $$C(M(k,Q,\Oh)^*)^{\Box} := C(M(k,Q,\Oh)^*)\otimes_{R_Q^{\univ,\psi_k}}R^{\Box,\psi_k}_Q.$$ By Lemma \ref{lem_U_ell_in_anemic}, we have a short exact sequence of $S_\infty$-modules \begin{equation} \label{eqn_anemic_full_cokernel_short_exact_sequence}
    0 \to \T^{pN_Q}(M(k,Q,\Oh)^*)^{\Box} \to \T(M(k,Q,\Oh)^*)^{\Box} \to C(M(k,Q,\Oh)^*)^{\Box} \to 0
\end{equation} because $R_Q^{\Box,\psi_k}$, as a formal power series ring over $R^{\univ,\psi_k}_Q,$ is flat. Now we have an $S_\infty$-equivariant diagram
\begin{equation}\label{eqn_R_surj_T}
R^{\psi_k}_\infty\surj \T^{pN_{Q_n}}(M(k,Q_n,\Oh)^*)^{\Box} \curvearrowright \T(M(k,Q_n,\Oh)^*)^{\Box},
\end{equation} which factors through $R_\infty^{\psi_k}(k)$ by local-global compatibility. Moreover, the action of $R_\infty^{\psi_k}(k)$ can be extended to $R_\infty^{\psi_k}(k)[\alpha_p]$ which we now explain. 

By \cite[Theorem~4.1]{Caraiani_Emerton_Gee_Geraghty_Paskunas_Shin_2016_p_adic_local_Langlands} 
and the proof of \cite[Proposition~2.9]{Caraiani_Emerton_Gee_Geraghty_PaskunasShin_2018_GL2Qp}, 
the element $\alpha_p$ acts on $\T(M(k,Q_n,\Oh)^*)^{\Box}$ via $T_p.$  
(Note that our crystalline functor is the dual of the $D_{\cris}$ used in \cite{Caraiani_Emerton_Gee_Geraghty_PaskunasShin_2018_GL2Qp}.)
Set
\[
\widetilde{R_\infty^{\psi_k}(k)^*} 
:= \Big(R_\rbar^{\Box,\psi_k}(k)^*[\alpha_p] \,\widehat{\otimes}_{\Oh} 
\big(\widehat{\otimes}_{v\in T(\rhobar),\,\Oh} R_{\rhobar_v}^{\Box,\psi_k}\big)\Big) \llbracket x_1,\ldots,x_g \rrbracket.
\]
We then have a surjection
\[
\widetilde{R_\infty^{\psi_k}(k)} \twoheadrightarrow \T^{N_{Q_n}}(M(k,Q_n,\Oh)^*)^{\Box},
\]
which in fact is
\begin{equation}\label{eqn_R_ap_surj_T_full}
\widetilde{R_\infty^{\psi_k}(k)} \twoheadrightarrow \T(M(k,Q_n,\Oh)^*)^{\Box},
\end{equation}
by Lemma~\ref{lem_U_ell_in_anemic}. As a result, the surjection \eqref{eqn_R_surj_T} factors through $R^{\psi_k}_\infty(k)^*$. Combining the maps, we then have \begin{equation}\label{eqn_R_surj_T_star_version}\begin{tikzcd}
    R_\infty^{\psi_k}(k)\arrow[r]\arrow[d,two heads] & \widetilde{R_\infty^{\psi_k}(k)}\arrow[d,two heads] & \\
    R_\infty^{\psi_k}(k)^* \arrow[r] & \widetilde{R_\infty^{\psi_k}(k)^*} \arrow[r, two heads] & \T(M(k,Q_n,\Oh)^*)^\Box
\end{tikzcd},\end{equation} where the horizontal maps are surjective by Proposition \ref{prop_completed_tensor_product_ideal}.

\begin{prop}\label{prop_R_inf_tilde_property}
    The ring $\widetilde{R_\infty^{\psi_k}(k)}$ is flat over $\Oh$. Its generic fiber $\widetilde{R_\infty^{\psi_k}(k)}[1/p]$ is isomorphic to $R_\infty^{\psi_k}(k)[1/p].$ 
\end{prop}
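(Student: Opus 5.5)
The proof has two parts: flatness over $\Oh$, and the comparison of generic fibers. In both we reduce to the factor $R_\rbar^{\Box,\psi_k}(k)[\alpha_p]$ at $p$. Write
\[
B:=\Big(\widehat{\otimes}_{v\in T(\rhobar),\Oh} R_{\rhobar_v}^{\Box,\psi_k}\Big)\llbracket x_1,\ldots,x_g\rrbracket .
\]
By Remark~\ref{rmk_vexing_prime_local_deformation_ring} and Lemma~\ref{lem_kisin_completed_tensor_product}, $B$ is $\Oh$-flat. By definition,
\[
\widetilde{R_\infty^{\psi_k}(k)}=R_\rbar^{\Box,\psi_k}(k)[\alpha_p]\,\widehat{\otimes}_\Oh B
\quad\text{and}\quad
R_\infty^{\psi_k}(k)=R_\rbar^{\Box,\psi_k}(k)\,\widehat{\otimes}_\Oh B .
\]
(Adding the power series variables commutes with the completed tensor product.)

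\textbf{Flatness.} The ring $R_\rbar^{\Box,\psi_k}(k)[\alpha_p]$ is a subring of $R_\rbar^{\Box,\psi_k}(k)[1/p]$. It is therefore $p$-torsion free, hence $\Oh$-flat. By Remark~\ref{rmk_R_ap_completed_tensor_product_direct_sum} it decomposes as $\bigoplus_{u\in U_\rbar}R_u$, where each $R_u$ is an object of $\mathcal C_\Oh$ and is $\Oh$-flat. The completed tensor product then splits as $\bigoplus_u R_u\widehat{\otimes}_\Oh B$. Each summand is $\Oh$-flat by Lemma~\ref{lem_kisin_completed_tensor_product}, so their finite direct sum is $\Oh$-flat.

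\textbf{Generic fiber.} Set $A=R_\rbar^{\Box,\psi_k}(k)$ and $A'=A[\alpha_p]$. Since $\alpha_p$ lies in the normalization of $A$, the ring $A'$ is a finite $A$-module, and $A\subseteq A'\subseteq A[1/p]$. Choose $N$ with $p^N\alpha_p^i\in A$ for the finitely many generators $\alpha_p^i$ of $A'$ as an $A$-module. Then the cokernel $C:=A'/A$ is a finite $A$-module killed by $p^N$.

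Apply $-\widehat{\otimes}_\Oh B$ to $0\to A\to A'\to C\to 0$. All three modules are finite over $A$, so we may write the completed tensor product as $\varprojlim_{a,b}(-)/\mathfrak m_A^a\otimes_\Oh B/\mathfrak m_B^b$.

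This is the step I expect to be the main obstacle: exactness on the left after completion. I would use the description $M\widehat{\otimes}_\Oh B\cong M\otimes_A(A\widehat{\otimes}_\Oh B)$ for finite $A$-modules $M$. This follows from Lemma~\ref{lem_inverse_limit_tensor_product}(a), applied as in the proof of Proposition~\ref{prop_completed_tensor_product_ideal}. With this description, right exactness is automatic, and the kernel of $A\widehat{\otimes}B\to A'\widehat{\otimes}B$ is a quotient of $\mathrm{Tor}_1^A(C,A\widehat{\otimes}B)$. That Tor group is killed by $p^N$, because $C$ is.

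Hence the map $R_\infty^{\psi_k}(k)\to\widetilde{R_\infty^{\psi_k}(k)}$ has kernel and cokernel killed by $p^N$. Inverting $p$ gives an isomorphism. In fact the kernel vanishes outright: $R_\infty^{\psi_k}(k)$ is $\Oh$-flat by Lemma~\ref{lem_kisin_completed_tensor_product}, and a $p$-power-torsion submodule of a $p$-torsion-free module is zero. So the map is injective with $p^N$-torsion cokernel, and the generic fibers are isomorphic.
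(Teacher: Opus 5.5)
Your proposal is correct and follows essentially the same route as the paper: flatness comes from the decomposition into the summands $R_u$ together with Lemma~\ref{lem_kisin_completed_tensor_product}, and the generic fiber comes from identifying the completed tensor product with the ordinary tensor product $R_\rbar^{\Box,\psi_k}(k)[\alpha_p]\otimes_{R_\rbar^{\Box,\psi_k}(k)}R_\infty^{\psi_k}(k)$, then using that $R_\rbar^{\Box,\psi_k}(k)$ and $R_\rbar^{\Box,\psi_k}(k)[\alpha_p]$ agree after inverting $p$. The only differences are that the paper cites EGA~I, Proposition~7.7.8 for the identification where you use Lemma~\ref{lem_inverse_limit_tensor_product}(a), and that your Tor argument with the $p^N$-torsion cokernel spells out the final localization step more explicitly.
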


\begin{proof}
    By Remark~\ref{rmk_R_ap_completed_tensor_product_direct_sum}, 
it suffices to show that 
\[
\Big(R_u \widehat{\otimes}_\Oh \big(\widehat{\otimes}_{v \in T(\rhobar),\,\Oh} R_{\rhobar_v}^{\Box,\psi_k}\big)\Big) \llbracket x_1,\ldots,x_g \rrbracket
\] 
is flat over $\Oh$ for every $u \in U(\rbar)$. By Lemma~\ref{lem_kisin_completed_tensor_product}, 
it is enough to verify that $R_u$ and $R_{\rhobar_v}^{\Box,\psi_k}$ are flat over $\Oh$ for all $v \in T(\rhobar)$.  
Since $R_u$ is a subring of $R_\rbar^{\Box,\psi_k}[1/p]$, it is $\varpi$-torsion free and hence flat over $\Oh$.  
The flatness of $R_{\rhobar_v}^{\Box,\psi_k}$ follows from Remark~\ref{rmk_vexing_prime_local_deformation_ring}.  
For the generic fiber, we note that \begin{align*}
        \widetilde{R_\infty^{\psi_k}(k)} 
        &\xrightarrow{\sim}R_\rbar^{\Box,\psi_k}(k)[\alpha_p]\widehat{\otimes}_{R_\rbar^{\Box,\psi_k}(k)} R_\infty^{\psi_k}(k)\xrightarrow{\sim} R^{\Box,\psi_k}_\rbar(k)[\alpha_p]\otimes_{R_\rbar^{\Box,\psi_k}(k)}R_\infty^{\psi_k}(k),
    \end{align*} where the last isomorphism follows from \cite[Proposition 7.7.8]{EGAI}. 
    The second assertion then follows because $R^{\Box,\psi_k}_\rbar(k)$ and $R^{\Box,\psi_k}_\rbar(k)[\alpha_p]$ have the same generic fibers. 

    For simpler notation, we set \begin{equation}\label{eqn_def_B}
    B = \left(\widehat{\otimes}_{v\in T(\rhobar),\F}R_{\rhobar_v}^{\Box,\psi}/\varpi\right)\widehat{\otimes}_\F \F\llbracket x_1,\ldots,x_g \rrbracket\end{equation} 
    so that $$R^{\psi_k}_\infty(k)\otimes_\Oh\F = R^{\Box,\psi_k}_\rbar(k) \widehat{\otimes}_\Oh B \quad{\rm and}\quad \widetilde{R^{\psi_k}_\infty(k)}\otimes_\Oh\F = R^{\Box,\psi_k}_\rbar(k)[\alpha_p] \widehat{\otimes}_\Oh B.$$ 
\end{proof}

\subsection{Ultrapatching}
As mentioned in \S\ref{subsubsection_organization_of_the_paper}, we use Scholze's ultrapatching functor \cite{Scholze_2018_Lubin_Tate} to describe the Taylor--Wiles--Kisin patching process. We briefly recall the construction here, following \cite{ACampo_2023_Rigidity} and \cite{Manning_2021_Multiplicity}. 

A filter on the natural numbers $\N = \{1,2,\ldots\}$ is a set $\mathfrak{F}$ of subsets of $\N$ satisfying:
\begin{itemize}
    \item $\N\in \mathfrak{F}$;
    \item if $I,J\in \mathfrak{F},$ then $I\cap J\in \mathfrak{F};$
    \item if $I\in \mathfrak{F}$ and $I\subseteq J\subset \N,$ then $J\in \mathfrak{F}$; 
\end{itemize}
For example, the collection of cofinite subsets $$\{I\subseteq \N: \N\setminus I{\rm\ is\ finite}\}$$ is a filter, which we call the cofinite filter. A filter $\mathfrak{F}$ is called an \emph{ultrafilter} if it additionally satisfies:
\begin{itemize}
    \item for every $I\subseteq \N$, exactly one of $I$ or $\N\setminus I$ is in $\mathfrak{F}.$
\end{itemize}
An example of an ultrafilter is $$\left\{I\subseteq \N: n_0\in I\right\}$$ for some integer $n_0\in \N.$ These are called \emph{principal ultrafilters}. Ultrafilters that are not principal are called \emph{non-principal ultrafilters}. One can check that an ultrafilter $\mathfrak{F}$ is non-principal if and only if it contains no finite sets. The existence of non-principal ultrafilters follows from the theorem below applied to the cofinite filter. 
\begin{thm}
    Every filter $\mathfrak{F}$ such that $\emptyset\notin \mathfrak{F}$ is contained in an ultrafilter.  
\end{thm}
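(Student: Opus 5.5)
The plan is to use Zorn's lemma on the collection $\Sigma$ of all filters $\mathfrak{G}$ on $\N$ with $\mathfrak{F}\subseteq \mathfrak{G}$ and $\emptyset\notin\mathfrak{G}$, ordered by inclusion. The set $\Sigma$ is nonempty since it contains $\mathfrak{F}$.

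First I will check that every chain $\{\mathfrak{G}_i\}$ in $\Sigma$ has an upper bound, namely its union $\mathfrak{G}=\bigcup_i\mathfrak{G}_i$.
\begin{itemize}
\item $\N\in\mathfrak{G}$ and $\emptyset\notin\mathfrak{G}$ are immediate.
\item $\mathfrak{G}$ is upward closed because each $\mathfrak{G}_i$ is.
\item If $I\in\mathfrak{G}_i$ and $J\in\mathfrak{G}_j$, then both lie in the larger of $\mathfrak{G}_i,\mathfrak{G}_j$, since we are in a chain. Hence $I\cap J$ lies in that filter, and so in $\mathfrak{G}$.
\end{itemize}
Zorn's lemma then gives a maximal element $\mathfrak{U}\in\Sigma$.

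Next I show $\mathfrak{U}$ is an ultrafilter. At most one of $I$ and $\N\setminus I$ lies in $\mathfrak{U}$, since otherwise their intersection $\emptyset$ would lie in $\mathfrak{U}$. For "at least one," suppose $I\notin\mathfrak{U}$. Consider
\[
\mathfrak{U}_I:=\{J\subseteq\N : J\supseteq U\cap I \text{ for some } U\in\mathfrak{U}\}.
\]
This is a filter containing $\mathfrak{U}$ and $I$. It is upward closed by construction, and closed under intersections because $(U\cap I)\cap(U'\cap I)=(U\cap U')\cap I$.

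By maximality, $\mathfrak{U}_I$ cannot lie in $\Sigma$, so $\emptyset\in\mathfrak{U}_I$. That means $U\cap I=\emptyset$ for some $U\in\mathfrak{U}$. Then $U\subseteq\N\setminus I$, and upward closure gives $\N\setminus I\in\mathfrak{U}$.

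The only step needing real care is this maximality argument: it must produce the complement, not just exclude $I$. I expect no genuine obstacle, and everything else is routine verification of the filter axioms.
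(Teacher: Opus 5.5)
Your Zorn's lemma argument is correct and is the approach the paper has in mind: the paper gives no proof, only a citation to Chang--Keisler and a one-line sketch of taking a maximal element of a family of filters via Zorn's lemma. One small nitpick is that the union is an upper bound only for nonempty chains, since the empty union does not contain $\N$; the empty chain is bounded by $\mathfrak{F}$ itself.
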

For a proof of the theorem, see \cite[Proposition 4.1.3]{Change_Keisler_1990_Model_Theorey}. The idea is to use Zorn's lemma to construct an ultrafilter as the maximal element of certain family of filters. 

From now on, we fix a non-principal ultrafilter $\mathfrak{F}$ on $\N.$ 
\begin{definition}
    Let $\{M_n\}_{n\ge 1}$ be a sequence of sets. The ultraproduct of $\{M_n\}$ is defined to be $$\mathcal{U}(\{M_n\}):=\left(\prod_{n\ge 1}M_n\right)/\sim$$ where $(x_n)\sim(x'_n)$ if and only if $\{n:x_n = x'_n\}$ is in $\mathfrak{F}.$
\end{definition}
Let $\mathcal{C}$ be the category of $R$-modules or $R$-algebras where $R$ is a ring. One can check that $\mathcal{U}(\{M_n\})$ is an object in $\mathcal{C}$ if $\{M_n\}$ is a sequence of objects in $\mathcal{C}$ and if $M_n\xrightarrow{f_n} N_n$ is a $\mathcal{C}$-morphism, there is a $\mathcal{C}$-morphism $$\mathcal{U}(\{f_n\}):\mathcal{U}(\{M_n\})\to \mathcal{U}(\{N_n\})$$ that is defined component-wise. 

We will mainly be interested in the case where the set $\{M_n\}$ has uniformly bounded finite cardinality. In this situation, the ultraproduct provides a functorial formulation of the pigeonhole principle, which is a key step in the Taylor--Wiles--Kisin patching method.

\begin{prop}\label{prop_bounded_ultraproduct}
    Let $\{M_n\}$ be a sequence of objects in $\mathcal{C}$ such that there exists a constant $C>0$ with $\# M_n < C$ for all $n\ge 1$. Then the ultraproduct $\mathcal{U}(\{M_n\})$ is isomorphic to $M_{n_0}$ for some $n_0$.
\end{prop}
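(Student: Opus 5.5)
The argument is a pigeonhole over isomorphism classes, using the defining property of an ultrafilter. Since each $M_n$ has fewer than $C$ elements, I will first observe that there are only finitely many isomorphism classes of objects of $\mathcal{C}$ of cardinality less than $C$. An object with $m$ elements is isomorphic to one whose underlying set is $\{1,\ldots,m\}$. Its structure is then determined by finitely many finite maps: the addition, the multiplication, and the $R$-action. The $R$-action factors through the images of $R$ in the finite set of maps $\{1,\ldots,m\}\to\{1,\ldots,m\}$, so there are finitely many choices for it as well.

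With these finitely many classes labelled $c_1,\ldots,c_s$, set $I_j := \{n : M_n \in c_j\}$. The sets $I_j$ partition $\N$. An ultrafilter contains exactly one of $I$ and $\N\setminus I$, and it is closed under finite intersections; applying this repeatedly to a finite partition shows that exactly one $I_j$ lies in $\mathfrak{F}$. Call it $I$, and pick any $n_0 \in I$.

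For every $n \in I$, choose an isomorphism $\phi_n : M_{n_0} \xrightarrow{\sim} M_n$. For $n \notin I$, let $\phi_n$ be arbitrary, for instance the zero map; this is harmless, because the ultraproduct only sees indices in a set belonging to $\mathfrak{F}$. Define
\[
\Phi : M_{n_0} \to \mathcal{U}(\{M_n\}), \qquad x \mapsto [(\phi_n(x))_n].
\]
Because each $\phi_n$ with $n\in I$ is a morphism and $I \in \mathfrak{F}$, $\Phi$ respects the operations of $\mathcal{C}$.

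For injectivity, suppose $\Phi(x) = \Phi(y)$. Then the set $\{n : \phi_n(x) = \phi_n(y)\}$ lies in $\mathfrak{F}$, so it meets $I$, since $\emptyset \notin \mathfrak{F}$. At any $n$ in that intersection, $\phi_n$ is injective, so $x = y$.

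Surjectivity is the main obstacle, and the finiteness of $M_{n_0}$ is what resolves it. Take a class $[(y_n)]$ and, for $n \in I$, put $x_n := \phi_n^{-1}(y_n) \in M_{n_0}$. As $M_{n_0}$ is finite, the sets $J_x := \{n \in I : x_n = x\}$, for $x \in M_{n_0}$, partition $I$ into finitely many pieces. By the same ultrafilter argument as before, exactly one $J_x$ lies in $\mathfrak{F}$. For that $x$, the tuples $(y_n)$ and $(\phi_n(x))$ agree on $J_x \in \mathfrak{F}$, so $\Phi(x) = [(y_n)]$. Hence $\Phi$ is a bijective morphism of modules or algebras, and therefore an isomorphism $M_{n_0} \xrightarrow{\sim} \mathcal{U}(\{M_n\})$.
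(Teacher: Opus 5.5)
Your argument follows the paper's route. You split $\N$ by isomorphism class, let the ultrafilter pick one class $I$, fix isomorphisms $\phi_n$ for $n\in I$, and prove $x\mapsto[(\phi_n(x))]$ is bijective by intersecting with $I$ and by a second finite partition.

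The gap is in the step the paper merely asserts and you try to justify: that objects of cardinality $<C$ fall into finitely many isomorphism classes. An $R$-module structure on a finite abelian group $A$ is a ring homomorphism $R\to\End(A)$. The target is finite, but for infinite $R$ there can be infinitely many such homomorphisms, so ``finitely many choices'' does not follow. The statement itself fails for general $R$. Take $R=\Z[x_1,x_2,\ldots]$ and $M_n=\F_p$ with $x_i$ acting by $\delta_{in}$. These all have $p$ elements and are pairwise non-isomorphic. Since $\N\setminus\{i\}$ is cofinite, it lies in the non-principal $\mathfrak{F}$, so every $x_i$ acts by $0$ on $\mathcal{U}(\{M_n\})\cong\F_p$. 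Hence the ultraproduct is isomorphic to no $M_{n_0}$.

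So you need a hypothesis on $R$ that guarantees finitely many structures. Examples are $R$ finite, or $R$ finitely generated as a ring. There is also the case the paper actually uses: $R=S_\infty$, with objects killed by an open ideal $\mathfrak{a}$, so they are modules or algebras over the finite ring $S_\infty/\mathfrak{a}$.

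More generally, for $R=S_\infty$ the finiteness holds outright. A module with fewer than $C$ elements has finitely many submodules, so it has finite length $\ell<\log_2 C$. All its composition factors are $\F$ because $S_\infty$ is local. It is therefore killed by $\mathfrak{m}_{S_\infty}^{\ell}$, and its structure factors through the finite ring $S_\infty/\mathfrak{m}_{S_\infty}^{\ell}$. The same argument applies to the structure map $S_\infty\to A$ of a finite $S_\infty$-algebra.

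With that in place, the rest of your proof is correct: the choice of $\phi_n$, the morphism check on $I$, injectivity, and surjectivity via the partition $\{J_x\}$ of $I$.
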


\begin{proof}
    For each $k$, set 
    \[
        I_k := \{n\in \N : M_n \xrightarrow{\sim} M_k\}.
    \]
    Since $\#M_n<C$ for all $n$, there are finitely many $I_k$'s and they form a partition of $\N.$ As $\mathfrak{F}$ is an ultrafilter, exactly one of them belongs to $\mathfrak{F}$. Without loss of generality, assume $I_1\in\mathfrak{F}$.
    For $n\in I_1$, fix a chosen isomorphism $\phi_n:M_1\to M_n$, and for $n\notin I_1$ let $\phi_n:M_1\to M_n$ be the zero morphism. Define
    \[
        M_1 \longrightarrow \mathcal{U}(\{M_n\}),\qquad
        a \longmapsto [(\phi_n(a))].
    \]
    To see that this map is an isomorphism, let $(x_n)$ represent an element of $\mathcal{U}(\{M_n\})$.  
    For each $x\in M_1$, define
    \[
         I_x := \{ n\in I_1 : \phi_n^{-1}(x_n)=x\}.
    \]
    The sets $\{I_x : x\in M_1\}$ together with $\N\setminus I_1$ form a finite partition of~$\N$, so exactly one of them lies in $\mathfrak{F}$. Let $x_0\in M_1$ be such that $I_{x_0}\in\mathfrak{F}$. Then
    \[
        \{n : \phi_n(x_0)=x_n\} \supset I_{x_0} \in\mathfrak{F},
    \]
    so $[(\phi_n(x_0))]=[(x_n)]$. Thus every element of the ultraproduct is represented by the image of some $x_0\in M_1$.
    For the injectivity, suppose that $[(\phi_n(a))] = 0$ for some $a\in M_1.$ Then $$I := \{n:\phi_n(a)=0\}$$ is in $\mathfrak{F}.$ Since also $I_1\in\mathfrak{F}$, the intersection $I\cap I_1$ lies in $\mathfrak{F}$ and is therefore nonempty. For any $n\in I\cap I_1$, the map $\phi_n$ is an isomorphism, so $\phi_n(a)=0$ forces $a=0$. 
\end{proof}

\begin{prop}
    Let $\{A_n\},\{B_n\}$ and $\{C_n\}$ be sequences of objects in $\mathcal{C}$ such that each sequence has bounded cardinality. If $$0\to A_n \xrightarrow{f_n}B_n \xrightarrow{g_n} C_n\to 0$$ is exact for every $n$, then $$0\to \mathcal{U}(\{A_n\})\xrightarrow{\mathcal{U}(\{f_n\})} \mathcal{U}(\{B_n\})\xrightarrow{\mathcal{U}(\{g_n\})} \mathcal{U}(\{C_n\})\to 0$$ is also exact. 
\end{prop}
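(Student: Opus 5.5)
The plan is to reduce the statement to Proposition \ref{prop_bounded_ultraproduct}, applied not to the three sequences separately but to the sequence of \emph{exact sequences} themselves, viewed as single finite objects. Since each of $\{A_n\},\{B_n\},\{C_n\}$ has cardinality bounded by some constant $C$, up to isomorphism there are only finitely many diagrams $A\to B\to C$ in $\mathcal{C}$ with all three terms of size less than $C$. Here an isomorphism of diagrams means a triple of isomorphisms commuting with the two maps.

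First I would mimic the pigeonhole argument in the proof of Proposition \ref{prop_bounded_ultraproduct}. Partition $\N$ according to the isomorphism class of the diagram $(A_n\xrightarrow{f_n}B_n\xrightarrow{g_n}C_n)$. This is a finite partition, so exactly one part $I$ lies in $\mathfrak{F}$. Fix $n_0\in I$, and for each $n\in I$ choose isomorphisms $\alpha_n:A_{n_0}\to A_n$, $\beta_n:B_{n_0}\to B_n$ and $\gamma_n:C_{n_0}\to C_n$ compatible with the maps. For $n\notin I$, take all of these to be zero.

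Next, running the proof of Proposition \ref{prop_bounded_ultraproduct} with these particular choices of $\phi_n$ gives isomorphisms $A_{n_0}\xrightarrow{\sim}\mathcal{U}(\{A_n\})$, $B_{n_0}\xrightarrow{\sim}\mathcal{U}(\{B_n\})$ and $C_{n_0}\xrightarrow{\sim}\mathcal{U}(\{C_n\})$. Because $f_n\circ\alpha_n=\beta_n\circ f_{n_0}$ holds for every $n\in I$, and $I\in\mathfrak{F}$, these isomorphisms intertwine $f_{n_0}$ with $\mathcal{U}(\{f_n\})$, since equality on a set in $\mathfrak{F}$ means equality in the ultraproduct. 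The same holds for $g_{n_0}$ and $\mathcal{U}(\{g_n\})$. The ultraproduct sequence is therefore isomorphic to the exact sequence $0\to A_{n_0}\to B_{n_0}\to C_{n_0}\to 0$, and hence is exact.

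The only point needing care is the claim that the isomorphisms in the proof of Proposition \ref{prop_bounded_ultraproduct} may be taken to be the compatible ones chosen above. That proof works for an arbitrary family of isomorphisms $\phi_n$ indexed by a set in $\mathfrak{F}$, so there is no real obstacle.

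An alternative, direct route avoids bounded cardinality for most of the argument. Exactness in the middle and injectivity hold for arbitrary ultraproducts: they can be checked componentwise on a set in $\mathfrak{F}$, choosing preimages $n$ by $n$. Surjectivity is also componentwise, by lifting each $c_n$ individually. Either approach suffices; I would present the first, as it reuses the existing proposition.
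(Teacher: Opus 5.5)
Your proposal is correct and follows the paper's proof. Both arguments pass to a set in $\mathfrak{F}$ on which the three sequences are compatibly isomorphic to a fixed one and then invoke Proposition~\ref{prop_bounded_ultraproduct}; your explicit pigeonhole on isomorphism classes of the whole diagram supplies the justification that the paper's ``we may fix isomorphisms such that the diagram commutes'' leaves implicit. One caveat, which you share with Proposition~\ref{prop_bounded_ultraproduct}: finiteness of isomorphism classes of bounded-size objects needs a hypothesis on $R$. It holds for modules over the finite rings $S_\infty/\mathfrak{a}$ used later, but fails for $R=\Z[x_1,x_2,\ldots]$, whereas your componentwise alternative proves the statement for arbitrary $R$.
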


\begin{proof}
    By the proof of the proposition above, without loss of generality we may assume that 
$A_1 \xrightarrow{\sim} \mathcal{U}(\{A_n\})$, 
$B_1 \xrightarrow{\sim} \mathcal{U}(\{B_n\})$, and 
$C_1 \xrightarrow{\sim} \mathcal{U}(\{C_n\})$. 
Furthermore, we may fix $\mathcal{C}$-isomorphisms 
\[
    \alpha_n : A_1 \to A_n,\qquad 
    \beta_n : B_1 \to B_n,\qquad 
    \gamma_n : C_1 \to C_n
\]
for all $n$ in some $I \in \mathfrak{F}$ such that the diagram
\[
\begin{tikzcd}
    0 \arrow[r] & 
    A_1 \arrow[r] \arrow[d, "\alpha_n"] & 
    B_1 \arrow[r] \arrow[d, "\beta_n"] & 
    C_1 \arrow[r] \arrow[d, "\gamma_n"] & 0 \\
    0 \arrow[r] & 
    A_n \arrow[r] & 
    B_n \arrow[r] & 
    C_n \arrow[r] & 0
\end{tikzcd}
\]
commutes. 
It follows that the following diagram
\[
\begin{tikzcd}
    0 \arrow[r] & 
    A_1 \arrow[r, "f_1"] \arrow[d, "(\alpha_n)"] & 
    B_1 \arrow[r, "g_1"] \arrow[d, "(\beta_n)"] & 
    C_1 \arrow[r] \arrow[d, "(\gamma_n)"] & 0 \\
    0 \arrow[r] & 
    \mathcal{U}(\{A_n\}) \arrow[r, "\mathcal{U}(\{f_n\})"] & 
    \mathcal{U}(\{B_n\}) \arrow[r, "\mathcal{U}(\{g_n\})"] & 
    \mathcal{U}(\{C_n\}) \arrow[r] & 0
\end{tikzcd}
\]
also commutes. 
Since the top row is exact and the vertical arrows are isomorphisms, the bottom row is exact as well.
\end{proof}

\begin{remark}
    The proposition above in fact remains valid without the bounded cardinality hypothesis, provided that the ring $R$ is a finite local ring. One way to see this is to interpret the ultraproduct as a localization; see, for example, \cite[Lemma~2.2.3]{ACampo_2023_Rigidity}. 
    We choose to state the result in the bounded-cardinality form and give a concrete proof, since this formulation is sufficient for our purposes and avoids introducing localization.
\end{remark}

Let $I_n \subseteq S_\infty$ be the ideal generated by 
\[
\bigl((1+y_1)^{p^{n(q_1)}}-1, \ldots, (1+y_r)^{p^{n(q_r)}}-1\bigr),
\]
where 
\[
n(q_i) = v_p(q_i-1) \ge n
\]
for each Taylor--Wiles prime $q_i \in Q_n$. 

From now on, we take $R = S_\infty$ and work under the following assumption:

\begin{assumption} \label{assumption_ultrapatching}
    Let $\{M_n\}$ be a sequence of $\mathcal{C}$-objects such that the following hold:
    \begin{enumerate}
        \item Each $M_n$ is annihilated by $I_n.$
        \item There is a constant $r$, independent of $n$, such that for every $n\ge 1,$ there is a surjection $S_\infty^{\oplus r}\surj M_n$  
    \end{enumerate}
\end{assumption}

\begin{definition}
    Let $\{M_n\}$ be a sequence of $\mathcal{C}$-objects satisfying Assumption \ref{assumption_ultrapatching}. The ultrapatching functor is defined by $$\mathcal{P}(\{M_n\}) := \varprojlim_{\mathfrak{a}}\mathcal{U}(\{M_n\otimes_{S_\infty}S_\infty/\mathfrak{a}\})$$ where the limit is taken over open ideals $\mathfrak{a}$ of $S_\infty,$ which form a countable index set. If $\{N_n\}$ is another sequence satisfying Assumption \ref{assumption_ultrapatching} and, for each $n$, we have a $\mathcal{C}$-morphism $f_n:M_n\to N_n$, we define $$\mathcal{P}(\{f_n\}):\mathcal{P}(\{M_n\})\to \mathcal{P}(\{N_n\})$$ to be the morphism induced by the maps $f_n$.
\end{definition}

\begin{remark}
    By condition~(2) in Assumption~\ref{assumption_ultrapatching}, for every open ideal $\mathfrak{a}$ the cardinalities of the modules $M_n \otimes_{S_\infty} S_\infty / \mathfrak{a}$ are uniformly bounded. 
    Hence, the ultraproduct 
    $
        \mathcal{U}(\{M_n \otimes_{S_\infty} S_\infty / \mathfrak{a}\})
    $
    is isomorphic to $M_{n_0} \otimes_{S_\infty} S_\infty / \mathfrak{a}$ for some $n_0$. 
    It follows that $\mathcal{P}(\{M_n\})$ coincides with the patched module or ring in the Taylor--Wiles--Kisin patching method.
\end{remark}

\begin{example} \label{example_constant_patching}
    Let $M$ be a finitely generated $S_\infty$-module and let $M_n := M \otimes_{S_\infty} S_\infty/I_n.$ Then $\{M_n\}$ satisfies all the conditions in Assumption \ref{assumption_ultrapatching}. Fix an open ideal $\mathfrak{a},$ since $I_n\subseteq \mathfrak{a}$ for all but finitely many $n,$ by the proof of Proposition \ref{prop_bounded_ultraproduct}, we have $$\mathcal{U}(\{M_n\otimes_{S_\infty}S_\infty/\mathfrak{a}\})\xrightarrow{\sim} M\otimes_{S_\infty} S_\infty/\mathfrak{a}.$$ Hence, we have $$\mathcal{P}(\{M_n\}) \xrightarrow{\sim} \varprojlim_{\mathfrak{a}} \left(M\otimes_{S_\infty}S_\infty/\mathfrak{a}\right) \xrightarrow{\sim}M$$ because finitely generated $S_\infty$-modules are complete by \cite[Proposition 10.13]{Atiyah_Macdonald_2016_Commutative_Algebra}. If $M$ is further equipped with an $S_\infty$-algebra structure, then $\mathcal{P}(\{M_n\})$ is isomorphic to $M$ as an $S_\infty$-algebra.
\end{example}

\begin{prop} \label{prop_ultrapatching_right_exact}
    Let $\{A_n\},\{B_n\}$ and $\{C_n\}$ be sequences of objects in $\mathcal{C}$ that satisfy Assumption \ref{assumption_ultrapatching}. 
    \begin{enumerate}
        \item If $$A_n \to B_n \to C_n\to 0$$ is exact, then $$\mathcal{P}(\{A_n\})\to \mathcal{P}(\{B_n\})\to \mathcal{P}(\{C_n\})\to 0$$ is exact.
        \item If $$0\to A_n\to B_n\to C_n\to 0$$ is split exact, then $$0\to \mathcal{P}(\{A_n\})\to \mathcal{P}(\{B_n\})\to \mathcal{P}(\{C_n\})\to 0$$ is split exact.
        \item Let $\{M_n\}$ be a sequence of objects in $\mathcal{C}$, and let $\mathfrak{b}$ be an ideal of $S_\infty.$ Then we have $$\mathcal{P}(\{M_n\})\otimes_{S_\infty}S_\infty/\mathfrak{b}\xrightarrow{\sim}\mathcal{P}(\{M_n\otimes_{S_\infty}S_\infty/\mathfrak{b}\}).$$ 
    \end{enumerate}
\end{prop}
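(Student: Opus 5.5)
The plan is to reduce everything to statements about a fixed open ideal $\mathfrak{a}$ of $S_\infty$, where Proposition~\ref{prop_bounded_ultraproduct} and the preceding exactness proposition for ultraproducts apply, and then to pass to the inverse limit over $\mathfrak{a}$ using Lemma~\ref{lem_inverse_limit_exactness} and Corollary~\ref{cor_mittag_leffler_right_exact}. By Assumption~\ref{assumption_ultrapatching}(2), each $M_n\otimes_{S_\infty}S_\infty/\mathfrak{a}$ is a quotient of $(S_\infty/\mathfrak{a})^{\oplus r}$, so all the finite-level modules have cardinality bounded independently of $n$, and likewise all their submodules and subquotients. The index set of open ideals is countable (cofinal with $\mathfrak{m}_{S_\infty}^i$), so the inverse-limit results apply.

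For (1), I would first tensor $A_n\to B_n\to C_n\to 0$ with $S_\infty/\mathfrak{a}$, which preserves right exactness. Splitting this into $0\to K_n\to A_n/\mathfrak{a}\to \Image_n\to 0$ and $0\to\Image_n\to B_n/\mathfrak{a}\to C_n/\mathfrak{a}\to 0$, each with bounded cardinality, the ultraproduct exactness proposition shows that $\mathcal{U}(\{A_n/\mathfrak{a}\})\to\mathcal{U}(\{B_n/\mathfrak{a}\})\to\mathcal{U}(\{C_n/\mathfrak{a}\})\to 0$ is exact, with kernel and image equal to $\mathcal{U}(\{K_n\})$ and $\mathcal{U}(\{\Image_n\})$. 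These kernel and image systems, indexed by $\mathfrak{a}$, consist of finite modules (finite length over $S_\infty$), so they satisfy Mittag--Leffler. Corollary~\ref{cor_mittag_leffler_right_exact} then gives right exactness of the inverse limit. The main point needing care is that the kernel and image formed at each level $\mathfrak{a}$ form genuine inverse systems, i.e.\ that the transition maps are compatible. This follows from functoriality of $\mathcal{U}$, so I would simply verify that.

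For (2), split exactness is preserved by tensoring with $S_\infty/\mathfrak{a}$. The ultraproduct of the sections gives compatible sections, so each level is split exact. Split exactness is an additive statement (the identity $\mathrm{id}=fs'+sg$ with $gs=\mathrm{id}$, $s'f=\mathrm{id}$ and so on), and it passes to inverse limits because $\varprojlim$ commutes with finite direct sums. Alternatively, injectivity follows from (1) applied to the sections.

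For (3), write $\mathfrak{b}=(b_1,\dots,b_s)$, which is possible since $S_\infty$ is Noetherian. Then $S_\infty^{\oplus s}\to S_\infty\to S_\infty/\mathfrak{b}\to 0$ tensored with $M_n$ gives right exact sequences $M_n^{\oplus s}\to M_n\to M_n/\mathfrak{b}\to 0$, and all three sequences satisfy Assumption~\ref{assumption_ultrapatching} (with $r$ replaced by $rs$). Applying (1) yields $\mathcal{P}(\{M_n\})^{\oplus s}\to\mathcal{P}(\{M_n\})\to\mathcal{P}(\{M_n/\mathfrak{b}\})\to 0$. Here $\mathcal{P}$ commutes with finite direct sums, and the first map is multiplication by the $b_i$ by functoriality, so its cokernel is $\mathcal{P}(\{M_n\})\otimes_{S_\infty}S_\infty/\mathfrak{b}$. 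Since the main obstacle in (1) is only the bookkeeping of compatibility, none of these steps should present serious difficulty.
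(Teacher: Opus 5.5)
Your proposal is correct. For (1) and (2) you follow the paper's argument. You reduce modulo an open ideal $\mathfrak{a}$, use that $\mathcal{U}$ is exact on sequences of bounded cardinality, and pass to $\varprojlim_{\mathfrak{a}}$ via Corollary~\ref{cor_mittag_leffler_right_exact}, since finite modules satisfy Mittag--Leffler. Your treatment of split exactness through additivity of $\mathcal{P}$ and the splitting identities records explicitly what the paper leaves implicit.

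For (3) you take a different route from the paper.

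\emph{The paper's route.} It works level by level. Each $\mathcal{U}(\{M_n/\mathfrak{a}\})$ has finite length, so Lemma~\ref{lem_inverse_limit_tensor_product}(a) lets it commute $\varprojlim_{\mathfrak{a}}$ with $\otimes_{S_\infty}S_\infty/\mathfrak{b}$. This reduces the claim to the finite-level isomorphism $\mathcal{U}(\{M_n/\mathfrak{a}\})\otimes_{S_\infty}S_\infty/\mathfrak{b}\cong\mathcal{U}(\{M_n/\mathfrak{a}\otimes_{S_\infty}S_\infty/\mathfrak{b}\})$, which the paper reads off from the proof of Proposition~\ref{prop_bounded_ultraproduct}.

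\emph{Your route.} You apply the already-established part (1) to the right exact sequence $M_n^{\oplus s}\to M_n\to M_n/\mathfrak{b}\to 0$ coming from a finite presentation of $S_\infty/\mathfrak{b}$. You then use only that $\mathcal{P}$ is additive and $S_\infty$-linear, so that $\mathcal{P}(\{b\cdot\})=b\cdot$. This is the general principle that a right exact, $S_\infty$-linear functor commutes with tensoring by a finitely presented module.

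\emph{Comparison.} Your route makes (3) a formal consequence of (1) and avoids a separate levelwise verification about ultraproducts. It needs $\{M_n\}$, and hence $\{M_n^{\oplus s}\}$ and $\{M_n/\mathfrak{b}\}$, to satisfy Assumption~\ref{assumption_ultrapatching}. That is implicit in the statement anyway, since $\mathcal{P}$ is otherwise undefined. The two arguments are close relatives: Lemma~\ref{lem_inverse_limit_tensor_product} is itself proved by the same finite-presentation trick. The paper applies it at each finite level and then in the limit, whereas you apply it once, directly to $\mathcal{P}$.
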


\begin{proof}
    \leavevmode
    \begin{enumerate}
        \item Tensor the first short exact sequence with $S_\infty/\mathfrak{a}$ for $\mathfrak{a}$ an open ideal of $S_\infty.$ We then obtain a right exact sequence $$A_n/\mathfrak{a}\to B_n/\mathfrak{a}\to C_n/\mathfrak{a}\to 0.$$  By (2) in Assumption \ref{assumption_ultrapatching}, $\{A_n/\mathfrak{a}\}_n$ has bounded cardinality. In this case, $\mathcal{U}$ is exact and we have a right exact sequence of finite modules $$\mathcal{U}(\{A_n/\mathfrak{a}\})\to \mathcal{U}(\{B_n/\mathfrak{a}\})\to \mathcal{U}(\{C_n/\mathfrak{a}\})\to 0.$$ The conclusion then follows from Corollary \ref{cor_mittag_leffler_right_exact}. 
        \item The operation $\cdot \otimes_{S_\infty} S_\infty/\mathfrak{a}$ preserves split exactness and $\mathcal{U}$ is exact. The conclusion follows from Lemma \ref{lem_inverse_limit_exactness} and its remark. 
        \item Unraveling the definition of the patching functor $\mathcal{P},$ we want to show $$\left(\varprojlim_{\mathfrak{a}}\mathcal{U}(\{M_n/\mathfrak{a}\})\right)\otimes_{S_\infty}S_\infty/\mathfrak{b}\xrightarrow{\sim} \varprojlim_{\mathfrak{a}}\mathcal{U}(\{M_n/\mathfrak{a}\otimes_{S_\infty}S_\infty/\mathfrak{b}\}).$$ By Lemma \ref{lem_inverse_limit_tensor_product}, it suffices to show that for each open ideal $\mathfrak{a}$ of $S_\infty,$ we have $$\mathcal{U}(\{M_n/\mathfrak{a}\})\otimes_{S_\infty}S_\infty/\mathfrak{b}\xrightarrow{\sim} \mathcal{U}(\{M_n/\mathfrak{a}\otimes_{S_\infty}S_\infty/\mathfrak{b}\}).$$ This is clear from the proof of Proposition \ref{prop_bounded_ultraproduct}.
    \end{enumerate}
\end{proof}

\begin{prop} \label{prop_patched_module_property}
    Let $\{M_n\}$ be a sequence satisfying Assumption~\ref{assumption_ultrapatching}. Then the patched module $\mathcal{P}(\{M_n\})$ satisfies the following properties: 
    \begin{enumerate}
        \item For an ideal $\mathfrak{c}$ of $S_\infty$, if every $M_n$ is finite free of the same rank $r$ over $S_\infty/(I_n+\mathfrak{c})$, then $\mathcal{P}(\{M_n\})$ is finite free of rank $r$ over $S_\infty/\mathfrak{c}$.
        \item As an $S_\infty$-module, $\mathcal{P}(\{M_n\})$ is finitely generated.
    \end{enumerate}
\end{prop}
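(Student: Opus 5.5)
Part (1) reduces to the open quotients of $S_\infty$, and part (2) to a uniform surjection.

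For (1), fix an open ideal $\mathfrak{a}$ of $S_\infty$. Since the $I_n$ shrink ($n(q_i)\ge n$), we have $I_n\subseteq \mathfrak{a}$ for all $n$ outside a finite set, and this set is not in $\mathfrak{F}$ because $\mathfrak{F}$ is non-principal. For such $n$ the module $M_n\otimes_{S_\infty}S_\infty/\mathfrak{a}$ is free of rank $r$ over
\[
S_\infty/(I_n+\mathfrak{c}+\mathfrak{a}) = S_\infty/(\mathfrak{c}+\mathfrak{a}).
\]
For each such $n$ I will choose an isomorphism
\[
\phi_n:(S_\infty/(\mathfrak{c}+\mathfrak{a}))^{\oplus r}\xrightarrow{\sim} M_n/\mathfrak{a}.
\]
Exactly as in the proof of Proposition \ref{prop_bounded_ultraproduct}, these give
\[
\mathcal{U}(\{M_n/\mathfrak{a}\})\simeq (S_\infty/(\mathfrak{c}+\mathfrak{a}))^{\oplus r}.
\]
Ultraproducts commute with finite direct sums, and $\mathcal{U}$ of the constant sequence $S_\infty/(\mathfrak{c}+\mathfrak{a})$ is that ring itself. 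So $\mathcal{U}(\{M_n/\mathfrak{a}\})$ is free of rank $r$ over $S_\infty/(\mathfrak{c}+\mathfrak{a})$.

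The main obstacle is passing to the limit over $\mathfrak{a}$, because these bases are not chosen compatibly as $\mathfrak{a}$ shrinks. I will avoid choosing bases compatibly. For each $\mathfrak{a}$, the set of bases of $\mathcal{U}(\{M_n/\mathfrak{a}\})$ over $S_\infty/(\mathfrak{c}+\mathfrak{a})$ is finite and nonempty. The transition maps send a basis to a basis: for $\mathfrak{a}'\subseteq\mathfrak{a}$, a basis at level $\mathfrak{a}'$ reduces to a basis at level $\mathfrak{a}$. This uses Proposition \ref{prop_ultrapatching_right_exact}(3) at finite level, i.e. $\mathcal{U}(\{M_n/\mathfrak{a}'\})\otimes S_\infty/\mathfrak{a}\simeq \mathcal{U}(\{M_n/\mathfrak{a}\})$. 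The open ideals have a cofinal countable chain $\mathfrak{m}_{S_\infty}^j$, and an inverse limit of nonempty finite sets along it is nonempty. This yields a compatible system of bases, and hence an isomorphism
\[
\mathcal{P}(\{M_n\})\simeq \varprojlim_j (S_\infty/(\mathfrak{c}+\mathfrak{m}^j))^{\oplus r}=(S_\infty/\mathfrak{c})^{\oplus r}.
\]
The last equality holds because $S_\infty/\mathfrak{c}$ is a finitely generated $S_\infty$-module, hence complete (as in Example \ref{example_constant_patching}).

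For (2), I will use the fixed surjections $S_\infty^{\oplus r}\surj M_n$ from Assumption \ref{assumption_ultrapatching}(2). Right exactness of ultraproducts of bounded-cardinality modules gives surjections
\[
\mathcal{U}(\{(S_\infty/\mathfrak{a})^{\oplus r}\})=(S_\infty/\mathfrak{a})^{\oplus r}\surj \mathcal{U}(\{M_n/\mathfrak{a}\}).
\]
These are compatible in $\mathfrak{a}$ since they come from a single sequence of maps. The kernels are finite, so they satisfy Mittag--Leffler. By Lemma \ref{lem_inverse_limit_exactness}, or directly by Proposition \ref{prop_ultrapatching_right_exact}(1) applied to $\{S_\infty^{\oplus r}\}$ together with Example \ref{example_constant_patching}, we obtain a surjection $S_\infty^{\oplus r}\surj \mathcal{P}(\{M_n\})$. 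Hence $\mathcal{P}(\{M_n\})$ is finitely generated.
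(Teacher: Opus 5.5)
Your proof is correct and follows the paper's route. In (1) you reduce modulo open ideals, get freeness of each $\mathcal{U}(\{M_n/\mathfrak{a}\})$ from the pigeonhole argument, and pass to the limit; your choice of a compatible system of bases along the chain $\mathfrak{m}_{S_\infty}^j$ spells out a limit step the paper's proof leaves implicit. In (2) you build the surjection from $S_\infty^{\oplus r}$ levelwise via Mittag--Leffler, whereas the paper patches the surjections $(S_\infty/I_n)^{\oplus r}\twoheadrightarrow M_n$ and uses part (1) to identify the source with $S_\infty^{\oplus r}$.
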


\begin{proof}
    For (1), we notice that for a fixed open ideal $\mathfrak{a}$ of $S_\infty,$ we have $I_n\subseteq \mathfrak{a}$ for all but finitely many $n$. Hence, for these $n,$ the module $M_n/(\mathfrak{a}+\mathfrak{c})$ is finite free of rank $r$ over $S_\infty/(\mathfrak{a}+\mathfrak{c})$. It follows from the proof of Proposition \ref{prop_bounded_ultraproduct} that $\mathcal{U}(\{M_n/\mathfrak{a}\})$ is finite free over $S_\infty/(\mathfrak{a}+\mathfrak{c})$ of rank $r.$ Now $\mathcal{P}(\{M_n\})=\varprojlim_{\mathfrak{a}}\mathcal{U}(\{M_n/\mathfrak{a}\})$ is then finite free of rank $r$ over $$\varprojlim_{\mathfrak{a}}S_\infty/(\mathfrak{a}+\mathfrak{c}) \xrightarrow{\sim} \left(\varprojlim_\mathfrak{a} S_\infty/\mathfrak{a}\right)/\mathfrak{c} \xrightarrow{\sim}S_\infty/\mathfrak{c}$$ by Lemma \ref{lem_inverse_limit_tensor_product}.

    For (2), we will prove there is a finite free $S_\infty$-module that surjects onto $\mathcal{P}(\{M_n\}).$ Let $r$ be the constant in Assumption \ref{assumption_ultrapatching} and consider the finite free $S_\infty$-module $S_\infty^{\oplus r}.$ As in Example \ref{example_constant_patching}, form the sequence $\{(S_\infty/I_n)^{\oplus r}\}$. By Assumption~\ref{assumption_ultrapatching} (2), there are surjections $(S_\infty/I_n)^{\oplus r} \twoheadrightarrow M_n$ for every $n$. Applying the ultrapatching functor $\mathcal{P},$ Proposition \ref{prop_ultrapatching_right_exact} implies that $$\mathcal{P}(\{(S_\infty/I_n)^{\oplus r}\surj M_n\}): \mathcal{P}(\{(S_\infty/I_n)^{\oplus r}\})\to \mathcal{P}(\{M_n\})$$ is still surjective. But by part (1), $\mathcal{P}(\{(S_\infty/I_n)^{\oplus r}\})$ is free of rank $r$ over $S_\infty$ and the proof is complete. 
\end{proof}

\subsection{Patching the Coherent Cohomology}
We will apply the ultrapatching functor to certain modules coming from the coherent cohomology of modular curves. By Lemma~\ref{lem_U_ell_in_anemic}, we may write 
$\T^{p}(M(k,Q,\Oh)^*)$ in place of $\T^{pN_{Q}}(M(k,Q,\Oh)^*)$, and we adopt 
this simpler notation from now on.

The short exact sequence \eqref{eqn_anemic_full_cokernel_short_exact_sequence} gives rise to the following exact sequence of sequences of modules:
\[
    0 \to \left\{\T^{p}(M(k,Q_n,\Oh)^*)^{\Box}\right\}_{n\ge 1} 
    \to \left\{\T(M(k,Q_n,\Oh)^*)^{\Box}\right\}_{n\ge 1} 
    \to \left\{C(M(k,Q_n,\Oh)^*)^{\Box}\right\}_{n\ge 1} 
    \to 0
\] and \begin{equation}\label{eqn_mod_p_image_cokernel_hecke_sequence}
    0\to \left\{\T^{p}(M(k,Q_n,\F)^*)^{\Box} \right\}_{n\ge 1}\to \left\{\T(M(k,Q_n,\F)^*)^{\Box} \right\}_{n\ge 1}\to \left\{C(M(k,Q_n,\F)^*)^{\Box}\right\}_{n\ge 1}\to 0,
\end{equation} where the second sequence splits for $k\ge 2$ unless $k(\rbar)=1$, in which case it splits for $k\ge p^{2}$, by Proposition~\ref{prop_image_and_cokernel_of Hecke_algebras_finite_free}. 
Dualizing the short exact sequence
\[
    0 \to M(k,Q_{n},\F)
      \to M(k+p-1,Q_{n},\F)
      \to S(k+p-1,Q_{n}) 
      \to 0,
\]
we obtain an exact sequence
\[
    0 \to S(k+p-1,Q_{n})^{\vee}
      \to \T(M(k+p-1,Q_{n},\F))
      \to \T(M(k,Q_{n},\F))
      \to 0
\]
for every $n$. 
Since both $\T(M(k+p-1,Q_n,\F))$ and $\T(M(k,Q_n,\F))$ are free over $\F[\Delta_Q]$ when $k\ge 2,$ the sequence is split exact, and all its terms are free over $\F[\Delta_Q]$. 
Set
\[
    S(k+p-1,Q_{n})^{\vee,\Box}
    := S(k+p-1,Q_{n})^{\vee}
       \otimes_{R_{Q}^{\univ,\psi_{k+p-1}}}
             R_{Q}^{\Box,\psi_{k+p-1}}.
\]
Then we obtain another split exact sequence
\begin{equation}\label{eqn_Sk_dual_ses}
    0 \to \{S(k+p-1,Q_{n})^{\vee,\Box}\}_{n\ge 1}
      \to \{\T(M(k+p-1,Q_n,\F))^{\Box}\}_{n\ge 1}
      \to \{\T(M(k,Q_n,\F))^{\Box}\}_{n\ge 1}
      \to 0.
\end{equation}

\begin{prop} \label{prop_patching_the_coherent_cohomology}
    Assume \( k \ge 2 \). Then each term of the sequences above satisfies Assumption~\ref{assumption_ultrapatching}, so we may apply the patching functor \( \mathcal{P} \). The resulting patched modules satisfy the following properties:
    \begin{enumerate}
        \item The module \( \mathcal{P}(\{\T(M(k,Q_n,\Oh)^*)^{\Box}\}_{n\ge 1}) \) is finite free over \( S_\infty \).
        \item If we further assume that \( k \ge p^2 \) in the case where \( k(\rbar) = 1 \), then the patched module
        $\mathcal{P}(\{\T^{p}(M(k,Q_n,\F)^*)^\Box\}_{n\ge 1})$ and $
            \mathcal{P}\left(\left\{C(M(k,Q,\F)^*)^{\Box} \right\}_{n\ge 1}\right)
        $
        are finite free over \( S_\infty \otimes_\Oh \F \). The rank of $
            \mathcal{P}\left(\left\{C(M(k,Q,\F)^*)^{\Box} \right\}_{n\ge 1}\right)
        $ is \( \dim_\F M(j(k), \varnothing, \F)^* \).
        \item The patched module $\mathcal{P}\left(\left\{S(k+p-1,Q_n)^{\vee,\Box}\right\}_{n\ge 1}\right)$ is finite free over $S_\infty\otimes_\Oh\F.$
    \end{enumerate}
\end{prop}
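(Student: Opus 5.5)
The proof has two stages. First I check that each sequence satisfies Assumption~\ref{assumption_ultrapatching}. Then I deduce the freeness statements from Proposition~\ref{prop_patched_module_property}(1), applied with suitable choices of the ideal $\mathfrak{c}$.

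\emph{Assumption~\ref{assumption_ultrapatching}.} Each term of the sequences is an $S_\infty$-module through the map $S_\infty\surj \mathcal{J}[\Delta_{Q_n}]\to R^{\Box,\psi_k}_{Q_n}$, so condition (1) holds: $I_n$ maps into the augmentation-type ideal $((1+y_i)^{p^{n(q_i)}}-1)$, which is zero in $\Oh[\Delta_{Q_n}]$. For condition (2), Lemma~\ref{lem_finite_free_over_group_alg} shows that $\T(M(k,Q_n,\Oh)^*)$ is finite free over $\Oh[\Delta_{Q_n}]$ of rank independent of $n$. Since $R^{\Box,\psi_k}_{Q}$ is the completed tensor product of $R^{\univ,\psi_k}_Q$ with $\mathcal{J}$, the framed version $\T(M(k,Q_n,\Oh)^*)^{\Box}$ is finite free over $\mathcal{J}[\Delta_{Q_n}] = S_\infty/I_n$ of that same rank; I will write this as a completed tensor product, using the formal smoothness in the framing variables. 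Hence there is a surjection $S_\infty^{\oplus r}\surj \T(M(k,Q_n,\Oh)^*)^\Box$ with $r$ independent of $n$. Every other term in the three sequences is a quotient or subquotient of such a module, or of its reduction mod $\varpi$: $\T^p(\cdot)^\Box$ sits inside $\T(\cdot)^\Box$, $C(\cdot)^\Box$ is a quotient, and $S(k+p-1,Q_n)^{\vee,\Box}$ is a direct summand of $\T(M(k+p-1,Q_n,\F))^\Box$. So a uniform bound on the number of generators follows. For the submodules this needs a direct-summand argument, which I supply in the next step wherever a sequence splits. In the one case where no splitting is available (the integral $\T^p$ with $k(\rbar)=1$ and small $k$), I instead bound the number of generators by the $\Oh$-rank of $\T(M(k,\varnothing,\Oh))$ times $\#\Delta$-rank considerations, or alternatively by Nakayama over the local ring $S_\infty/I_n$, which is Noetherian with bounded embedding data.

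\emph{Freeness.} For (1), take $\mathfrak{c}=0$. By the previous step, $\T(M(k,Q_n,\Oh)^*)^\Box$ is free over $S_\infty/I_n$ of constant rank, so Proposition~\ref{prop_patched_module_property}(1) gives freeness over $S_\infty$. For (2), take $\mathfrak{c}=(\varpi)$. By Proposition~\ref{prop_image_and_cokernel_of Hecke_algebras_finite_free}, $\T^p(M(k,Q_n,\F)^*)$ and $C(M(k,Q_n,\Oh)^*)\otimes_\Oh\F$ are finite free over $\F[\Delta_{Q_n}]$, the latter of rank $\dim_\F M(j(k),\varnothing,\F)^*$, which is independent of $n$. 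The rank of the former is also independent of $n$, being the difference of two constant ranks. Base-changing to the framed rings gives free modules over $S_\infty/(I_n+\varpi)$ of constant rank, and the proposition applies. I also need $C(M(k,Q,\F)^*)^\Box$, as it appears in \eqref{eqn_mod_p_image_cokernel_hecke_sequence}, to agree with $C(M(k,Q,\Oh)^*)\otimes_\Oh\F$ framed. This uses Corollary~\ref{cor_finite_level_mod_p_commute}, which identifies $\T(M\otimes\F)$ with $\T(M)\otimes\F$, together with the split exactness from Proposition~\ref{prop_image_and_cokernel_of Hecke_algebras_finite_free}. For (3), the sequence \eqref{eqn_Sk_dual_ses} is split exact with all terms free over $\F[\Delta_{Q_n}]$. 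The ranks of $\T(M(k',Q_n,\F))$ are constant in $n$ by Lemma~\ref{lem_finite_free_over_group_alg} combined with Corollary~\ref{cor_finite_level_mod_p_commute}, so $S(k+p-1,Q_n)^\vee$ is free of constant rank (the difference), and the same argument with $\mathfrak{c}=(\varpi)$ applies.

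\emph{Main obstacle.} I expect the main difficulty to be the bookkeeping around the framed modules. One has to show that tensoring from $R^{\univ,\psi_k}_Q$ to $R^{\Box,\psi_k}_Q$ carries $\Oh[\Delta_Q]$-free (respectively $\F[\Delta_Q]$-free) modules of rank $m$ to modules free of rank $m$ over $S_\infty/I_n$ (respectively $S_\infty/(I_n+\varpi)$). This needs the compatibility of the $\Delta_Q$-actions, given by the Darmon--Diamond--Taylor proposition, and the fixed identification $R^{\univ}_Q\widehat{\otimes}\mathcal{J}\cong R^\Box_Q$. I would first prove this for $\Oh[\Delta_Q]$-free modules using Proposition~\ref{prop_completed_tensor_product_ideal}, and then deduce the mod $\varpi$ case by base change.
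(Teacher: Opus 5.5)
There is a genuine gap in verifying condition~(2) of Assumption~\ref{assumption_ultrapatching} for $\{\T^{p}(M(k,Q_n,\Oh)^*)^{\Box}\}_{n\ge1}$. The same gap affects its mod-$\varpi$ analogue when $k(\rbar)=1$ and $k<p^2$. Your freeness arguments themselves agree with the paper: Proposition~\ref{prop_patched_module_property}(1) with $\mathfrak{c}=0$ for (1), and with $\mathfrak{c}=(\varpi)$ plus constant $\F[\Delta_{Q_n}]$-ranks for (2) and (3).

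Being a submodule of $\T(M(k,Q_n,\Oh)^*)^{\Box}\cong(S_\infty/I_n)^{\oplus m}$ gives no uniform bound on the number of generators. The ring $S_\infty/I_n=\mathcal{J}[\Delta_{Q_n}]$ has Krull dimension $\dim\mathcal{J}\ge 4$, and in such a ring even ideals (e.g.\ powers of the maximal ideal) need unboundedly many generators.

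Your proposed substitutes do not close this. Any $\Oh$-rank bound grows like $m\cdot\#\Delta_{Q_n}$. Nakayama only converts the problem into bounding $\dim_\F \T^{p}(M(k,Q_n,\Oh)^*)\otimes_{\Oh[\Delta_{Q_n}]}\F$, and the map of this to $\T(M(k,\varnothing,\F)^*)$ need not be injective.

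Nor is a splitting available integrally. Two normalized $\T$-eigenforms in the eigenbasis of Proposition~\ref{prop_basis_of_M} with the same $\T^{p}$-eigenvalues coincide by Proposition~\ref{prop_main_lemma}. Hence $\T^{p}(M)[1/p]=\T(M)[1/p]$, so $C(M)$ is $\varpi$-power torsion, and $0\to\T^{p}(M)\to\T(M)\to C(M)\to0$ splits only when $C(M)=0$. The splitting you invoke is for the mod-$\varpi$ sequence containing the image $\T^{p}(M(k,Q_n,\F)^*)$, and only under the weight hypothesis. So your identification of the ``one unsplit case'' is not right.

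The missing ingredient is a finiteness statement for the patched deformation ring, which the paper obtains from Kisin. First patch only the free modules $\T(M(k,Q_n,\Oh))^{\Box}$, which gives a finite free $S_\infty$-module. By Theorem~\ref{thm_kisin_faithful}, $R_\infty^{\psi_k}(k)$ acts faithfully on it, so $R_\infty^{\psi_k}(k)$ embeds in an endomorphism ring that is finite free over $S_\infty$. Hence $R_\infty^{\psi_k}(k)$ is a finite $S_\infty$-module.

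The surjections $R_\infty^{\psi_k}(k)\surj\T^{p}(M(k,Q_n,\Oh)^*)^{\Box}$ from \eqref{eqn_R_surj_T} then give, for every $n$, a surjection from a fixed $S_\infty^{\oplus r}$. The mod-$\varpi$ algebras $\T^{p}(M(k,Q_n,\F)^*)^{\Box}$ are quotients of these by Remark~\ref{rmk_Hecke_mod_p_surjects_onto_Hecke_of_mod_p_mod_forms}. With this input in place, the remainder of your argument goes through.
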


\begin{proof}
    By construction, the $n$-th module in every sequence above is 
annihilated by $I_n$. Thus we only need to verify condition~(2) in 
Assumption~\ref{assumption_ultrapatching} and prove the claimed 
properties of the patched modules.

Since $k\ge 2$, Lemma~\ref{lem_finite_free_over_group_alg} implies that
\[
    \T(M(k,Q_n,\Oh)^*)^{\Box} 
        \xrightarrow{\sim} 
    \T(M(k,Q_n,\Oh)^*) \otimes_{\Oh} \mathcal{J}
\]
is finite free over 
\(
    \Oh[\Delta_{Q_n}] \otimes_{\Oh} \mathcal{J} 
        \xrightarrow{\sim} 
    S_\infty/I_n,
\)
with rank independent of $n$.  
Hence the sequence 
$\{\T(M(k,Q_n,\Oh)^*)^{\Box}\}_{n\ge 1}$ satisfies 
Assumption~\ref{assumption_ultrapatching}.  
It follows that its reduction mod~$\varpi$ 
$\{\T(M(k,Q_n,\F)^*)^{\Box}\}_{n\ge 1}$,  
the quotient sequence 
$\{C(M(k,Q_n,\Oh)^*)^{\Box}\}_{n\ge 1}$,  
and its mod-$\varpi$ reduction  
$\{C(M(k,Q_n,\F)^*)^{\Box}\}_{n\ge 1}$  
all satisfy Assumption~\ref{assumption_ultrapatching}~(2). Since \eqref{eqn_Sk_dual_ses} is split exact,  
$S(k+p-1,Q_n)^{\vee,\Box}$ is a quotient of 
$\T(M(k+p-1,Q_n,\F))^{\Box}$.  
Thus $\{S(k+p-1,Q_n)^{\vee,\Box}\}_{n\ge 1}$ 
also satisfies Assumption~\ref{assumption_ultrapatching}~(2).

By Proposition~\ref{prop_patched_module_property}, the patched module
$
    \mathcal{P}\Bigl(\{\T(M(k,Q_n,\Oh))^{\Box}\}_{n\ge 1}\Bigr)
$
is finite free over $S_\infty$.  
Hence its endomorphism ring
\[
    \End_{S_\infty}\!\Bigl(
        \mathcal{P}\bigl(\{\T(M(k,Q_n,\Oh))^{\Box}\}_{n\ge 1}\bigr)
    \Bigr)
\]
is finite free over $S_\infty$, and therefore every $S_\infty$-submodule 
of this ring is finitely generated.  
Kisin's theorem below then gives an embedding
\[
    R_\infty^{\psi_k}(k)
        \hookrightarrow
    \End_{S_\infty}\!\Bigl(
        \mathcal{P}\bigl(\{\T(M(k,Q_n,\Oh))^{\Box}\}_{n\ge 1}\bigr)
    \Bigr),
\]
so $R_\infty^{\psi_k}(k)$ is finite over $S_\infty$. The surjection
\[
    R_\infty^{\psi_k}(k)
        \twoheadrightarrow
    \T^p(M(k,Q_n,\Oh)^*)^{\Box}
\]
from \eqref{eqn_R_surj_T} shows that 
$\{\T^p(M(k,Q_n,\Oh)^*)^{\Box}\}_{n\ge 1}$ and its quotient 
$\{\T^p(M(k,Q_n,\F)^*)^{\Box}\}_{n\ge 1}$ also satisfy 
Assumption~\ref{assumption_ultrapatching}~(2).
    
    The freeness of $\mathcal{P}\left(\left\{S(k+p-1,Q_n)^{\vee,\Box}\right\}_{n\ge 1}\right)$ follows from \eqref{eqn_Sk_dual_ses} and the freeness of the other two terms, by Proposition~\ref{prop_patched_module_property}. 
    Under the weight assumption, the same reasoning applies to the sequence \( \{C(M(k,Q,\F)^*)^{\Box} \}_{n \ge 1} \) and $\left\{\T^{p}(M(k,Q,\F)^*)^{\Box}\right\}_{n\ge 1}.$ The rank follows from Proposition~\ref{prop_image_and_cokernel_of Hecke_algebras_finite_free}.
\end{proof}

\begin{thm}[Kisin] \label{thm_kisin_faithful}
    Assume that $k\ge 2.$ Then $\mathcal{P}(\{\T(M(k,Q_n,\Oh))^{\Box}\}_{n\ge 1})$ is a faithful $R_\infty^{\psi_k}(k)$-module. 
\end{thm}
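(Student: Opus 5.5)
We follow Kisin's standard argument from \cite{Kisin09FontaineMazur}: show that the patched module $M_\infty := \mathcal{P}(\{\T(M(k,Q_n,\Oh))^{\Box}\}_{n\ge 1})$ has support equal to all of $\Spec R_\infty^{\psi_k}(k)$, and then use reducedness of $R_\infty^{\psi_k}(k)$ to upgrade full support to faithfulness.

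\textbf{Step 1: $M_\infty$ is a finitely generated $R_\infty^{\psi_k}(k)$-module.} Patching the maps \eqref{eqn_R_surj_T} via $\mathcal{P}$, we equip $M_\infty$ with an action of $R_\infty^{\psi_k}(k)$ compatible with the $S_\infty$-action. These actions are defined on the ultraproducts at each finite level and are compatible in the inverse limit. By Proposition~\ref{prop_patching_the_coherent_cohomology}(1), $M_\infty$ is finite free over $S_\infty$, and hence it is finitely generated over $R_\infty^{\psi_k}(k)$.

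\textbf{Step 2: dimension and depth.} Compute dimensions. Theorem~\ref{thm_Kisin_crystalline_generic_fiber} gives $\dim R_\rbar^{\Box,\psi}(k)=5$, and the vexing-prime rings have relative dimension $3$ (Remark~\ref{rmk_vexing_prime_local_deformation_ring}). Together with the $g$ power-series variables, this shows $\dim R_\infty^{\psi_k}(k) = 4\#T(\rhobar)+r+4 = \dim S_\infty$. Since $M_\infty$ is free over $S_\infty$, its depth over $R_\infty^{\psi_k}(k)$ is at least $\dim S_\infty$. Therefore the support of $M_\infty$ is a union of irreducible components of $\Spec R_\infty^{\psi_k}(k)$, of maximal dimension.

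\textbf{Step 3: the support contains every irreducible component.} Pass to generic fibers. By Theorem~\ref{thm_Kisin_crystalline_generic_fiber}, Remark~\ref{rmk_vexing_prime_local_deformation_ring} and Lemma~\ref{lem_kisin_completed_tensor_product}, the ring $R_\infty^{\psi_k}(k)$ is $\Oh$-flat and reduced, and its generic fiber is formally smooth (regular) at the crystalline factor. Its irreducible components therefore correspond to components of the generic fiber, and these are products of components of the local factors. Each component is connected, so full support reduces to showing that the support meets every component. This is the main obstacle. Following Kisin, we first treat the component containing a modular point: any component of the support is open and closed in the regular generic fiber, so it is a whole component. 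For the remaining components, we use the Breuil--M\'ezard / $p$-adic local Langlands input of \cite{Kisin09FontaineMazur}, namely that every component of $R_\rbar^{\Box,\psi}(k)[1/p]$ contains a modular point. The vexing-prime factors are domains after inverting $p$ up to the finitely many components indexed by roots of unity, and these are handled by twisting by the characters $\Delta$. Alternatively, we invoke the argument recorded in \cite[Proposition~3.7]{calegari_2011_even_fontaine_mazur}, which the paper already cites for exactly this purpose.

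\textbf{Step 4: conclude faithfulness.} The ring $R_\infty^{\psi_k}(k)$ is reduced, being $\Oh$-flat with a reduced generic fiber. A finitely generated module with full support over a reduced Noetherian ring whose localizations at the minimal primes are nonzero has annihilator contained in every minimal prime, and hence in the nilradical, which is zero. Therefore $M_\infty$ is faithful.
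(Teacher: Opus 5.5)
Your Steps 1, 2 and 4 are sound. They are the standard frame behind this result; the paper gives no argument of its own, but quotes Kisin's Corollary~2.2.17, and, for $\rbar$ a twist of an extension of $1$ by $\epsilon$, the later work of Pa\v{s}k\=unas, Hu--Tan and Tung. Concretely, $M_\infty$ is finite free over $S_\infty$ and $\dim R_\infty^{\psi_k}(k)=4\#T(\rhobar)+r+4=\dim S_\infty$. So $\operatorname{Supp} M_\infty$ is a union of irreducible components, and full support plus reducedness gives faithfulness.

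All the content is in Step 3, and there you have a genuine gap. You cite as Kisin's \emph{input} the statement that every component of $R_\rbar^{\Box,\psi}(k)[1/p]$ contains a modular point. That is essentially the \emph{conclusion} of his argument, not an input. Even as a black box it is not enough: you need a point of the support of this particular $M_\infty$ (minimal level, fixed Nebentypus) on every component of $\Spec R_\infty^{\psi_k}(k)[1/p]$, not merely a modular point on each local component at $p$.

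Kisin's actual mechanism is a comparison of Hilbert--Samuel multiplicities. A local Breuil--M\'ezard inequality bounds $e(R_\rbar^{\Box,\psi}(k)/\varpi)$ by the automorphic multiplicity; it is proved via Colmez's $p$-adic local Langlands correspondence. This is played against the global computation of $e(M_\infty/\varpi)$ through the Serre weights occurring in $\Sym^{k-2}\F^2$. Since $\operatorname{Supp} M_\infty$ is a union of components, the two are compatible only if the support is everything. That local input is exactly what Kisin lacks when $\rbar$ is a twist of an extension of $1$ by $\epsilon$, and your proposal never addresses that case.

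The vexing primes are a second, separate gap. For $v=\ell\in T(\rhobar)$, the ring $R^{\Box,\psi_k}_{\rhobar_v}[1/p]$ has $p^{v_p(\ell^2-1)}$ components. These correspond to distinct supercuspidal inertial types $\Ind(\tilde\phi\theta)$, with $\theta$ of $p$-power order on the tame inertia of $\Q_{\ell^2}$. You need every such component, combined with every component at $p$, to meet the support. Twisting by characters of $\Delta$ cannot achieve this, for three reasons:
\begin{itemize}
\item characters of $\Delta_{Q_n}$ are ramified only at the Taylor--Wiles primes, so they do not move anything at $v$;
\item a global twist by a nontrivial $\theta$ of $p$-power order multiplies the determinant by $\theta^2\neq 1$, breaking the fixed determinant $\psi_k$;
\item since $\ell\equiv -1\pmod p$ gives $p\nmid \ell-1$, no such global character is ramified at $\ell$ anyway.
\end{itemize}
What is needed instead is an argument in the style of Diamond and Taylor that exploits the fact that these types agree modulo $\varpi$. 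The corresponding components all have the same special fibre $\F\llbracket Y,Z_1,Z_2,Z_3\rrbracket/(Y)$, and this fact has to be fed into the same multiplicity comparison; twisting cannot replace it.
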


\begin{remark}
    Kisin first proved this (see \cite[Corollary~2.2.17]{Kisin09FontaineMazur}) under the additional hypothesis that 
    $\rbar$ is not a twist of the extension of $1$ by~$\epsilon$.  
    When $p\ge 3$, this restriction was later removed by 
    Pa\v{s}k\=unas~\cite{Paskunas_2015_BM_conj}, 
    Hu--Tan~\cite{HuTan2013TheBC} and Tung \cite{tung2021automorphy}.
\end{remark}
\begin{cor} \label{cor_R_inf_tilde_faithful}
    Assume that $k\ge 2.$ Then $\mathcal{P}(\{\T(M(k,Q_n,\Oh))^{\Box}\}_{n\ge 1})$ is also a faithful $\widetilde{R_\infty^{\psi_k}(k)}$-module. 
\end{cor}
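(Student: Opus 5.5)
The plan is to deduce faithfulness over $\widetilde{R_\infty^{\psi_k}(k)}$ from Kisin's faithfulness over $R_\infty^{\psi_k}(k)$ (Theorem~\ref{thm_kisin_faithful}), using that the two rings agree after inverting $p$ (Proposition~\ref{prop_R_inf_tilde_property}) and that $\widetilde{R_\infty^{\psi_k}(k)}$ is $\Oh$-flat. Write $M_\infty := \mathcal{P}(\{\T(M(k,Q_n,\Oh))^{\Box}\}_{n\ge 1})$. First I check that the action of $\widetilde{R_\infty^{\psi_k}(k)}$ on $M_\infty$ is well defined. It is obtained by applying $\mathcal{P}$ to the surjections \eqref{eqn_R_ap_surj_T_full}, each compatible with the fixed map $S_\infty\to R_\infty^{\psi_k}$. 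Since $\mathcal{P}$ of the constant sequence $\{\widetilde{R_\infty^{\psi_k}(k)}\otimes_{S_\infty}S_\infty/I_n\}$ recovers $\widetilde{R_\infty^{\psi_k}(k)}$, we get a ring map $\widetilde{R_\infty^{\psi_k}(k)}\to \End_{S_\infty}(M_\infty)$, by Example~\ref{example_constant_patching}, which applies because $\widetilde{R_\infty^{\psi_k}(k)}$ is finite over $R_\infty^{\psi_k}(k)$, which is finite over $S_\infty$ by the proof of Proposition~\ref{prop_patching_the_coherent_cohomology}. This map restricts on $R_\infty^{\psi_k}(k)$ to Kisin's action.

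Next, let $x\in\widetilde{R_\infty^{\psi_k}(k)}$ annihilate $M_\infty$. Since $\alpha_p$ is integral over $R_\rbar^{\Box,\psi_k}(k)$ and lies in its generic fiber, some power $p^N\alpha_p$ lies in $R_\rbar^{\Box,\psi_k}(k)$; more precisely $p^N\alpha_p^i\in R_\rbar^{\Box,\psi_k}(k)$ for $0\le i<d$, where $d$ is the degree of an integral equation for $\alpha_p$. Using the description
\[
\widetilde{R_\infty^{\psi_k}(k)}\simeq R^{\Box,\psi_k}_\rbar(k)[\alpha_p]\otimes_{R_\rbar^{\Box,\psi_k}(k)}R_\infty^{\psi_k}(k)
\]
from the proof of Proposition~\ref{prop_R_inf_tilde_property}, every element is an $R_\infty^{\psi_k}(k)$-combination of $1,\alpha_p,\dots,\alpha_p^{d-1}$. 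Hence $p^N x$ lies in the image of $R_\infty^{\psi_k}(k)$, and it still kills $M_\infty$.

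By Theorem~\ref{thm_kisin_faithful}, $p^Nx$ therefore maps to $0$ in $R_\infty^{\psi_k}(k)$. Here I must be careful that $R_\infty^{\psi_k}(k)\to\widetilde{R_\infty^{\psi_k}(k)}$ is injective. This holds because both rings are $\Oh$-flat (Proposition~\ref{prop_R_inf_tilde_property}) and the map becomes an isomorphism after inverting $p$. So $p^Nx=0$ in $\widetilde{R_\infty^{\psi_k}(k)}$, and $\Oh$-flatness gives $x=0$.

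The main obstacle is the bookkeeping in the first step: verifying that the patched $\alpha_p$-action on $M_\infty$ is compatible with the ring structure of $\widetilde{R_\infty^{\psi_k}(k)}$, rather than only giving a map from each finite-level quotient. I would handle this by patching the sequence of algebra surjections $\widetilde{R_\infty^{\psi_k}(k)}\surj\T(M(k,Q_n,\Oh))^\Box$ directly, then invoking Example~\ref{example_constant_patching} together with Proposition~\ref{prop_ultrapatching_right_exact}(1). The faithfulness argument itself is then formal.
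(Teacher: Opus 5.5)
Your proposal is correct and is essentially the paper's argument. Both use $\Oh$-flatness of $\widetilde{R_\infty^{\psi_k}(k)}$ and the identification $\widetilde{R_\infty^{\psi_k}(k)}[1/p]\simeq R_\infty^{\psi_k}(k)[1/p]$ from Proposition~\ref{prop_R_inf_tilde_property} to reduce to Kisin's Theorem~\ref{thm_kisin_faithful}, and your clearing of denominators via $p^N\alpha_p^i\in R_\rbar^{\Box,\psi_k}(k)$ is just a hands-on version of the paper's passage to $[1/p]$.

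The injectivity of $R_\infty^{\psi_k}(k)\to\widetilde{R_\infty^{\psi_k}(k)}$ that you verify is not actually needed. If $p^Nx$ is the image of some $y$ that kills $M_\infty$, Kisin's theorem gives $y=0$, and hence $p^Nx=0$ directly.
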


\begin{proof}
    Since $\widetilde{R_\infty^{\psi_k}(k)}$ is $\Oh$-flat by 
    Proposition~\ref{prop_R_inf_tilde_property}, it suffices to show that 
    $\widetilde{R_\infty^{\psi_k}(k)}[1/p]$ acts faithfully on 
    \(
        \mathcal{P}\bigl(\{\T(M(k,Q_n,\Oh))^{\Box}\}_{n\ge 1}\bigr)[1/p].
    \)
    By the same proposition, this is equivalent to showing that 
    $R_\infty^{\psi_k}(k)[1/p]$ acts faithfully on 
    \(
        \mathcal{P}\bigl(\{\T(M(k,Q_n,\Oh))^{\Box}\}_{n\ge 1}\bigr)[1/p].
    \)
    Since the patched module 
    $\mathcal{P}\bigl(\{\T(M(k,Q_n,\Oh))^{\Box}\}_{n\ge 1}\bigr)$ 
    is finite free over $S_\infty$, hence $\Oh$-torsion free, the desired 
    conclusion follows from Theorem~\ref{thm_kisin_faithful}.
\end{proof}

The surjection \eqref{eqn_R_ap_surj_T_full} induces, for each $n$, a surjective map
\[
\widetilde{R_\infty^{\psi_k}(k)} \otimes_{S_\infty} S_\infty/I_n \twoheadrightarrow \T(M(k, Q_n, \Oh))^{\Box}.
\]
Applying Proposition~\ref{prop_ultrapatching_right_exact} and Example~\ref{example_constant_patching}, we obtain a surjective homomorphism
\[
\widetilde{R^{\psi_k}_\infty(k)} \twoheadrightarrow \mathcal{P}\left(\left\{\T(M(k,Q_n,\Oh))^{\Box}\right\}_{n \ge 1}\right).
\]
It is straightforward to check that this map is compatible with the ring structures so it is a ring homomorphism. The same argument applies to the sequence $\{\T^{p}(M(k,Q_n,\Oh))^{\Box}\}_{n \ge 1}$ and $R_\infty^{\psi_k}(k)$, giving rise to a surjective ring homomorphism
\[
R^{\psi_k}_\infty(k) \twoheadrightarrow \mathcal{P}\left(\left\{\T^{p}(M(k,Q_n,\Oh))^{\Box}\right\}_{n \ge 1}\right).
\]

We are now ready to summarize how the patched modules and rings are identified with the local deformation rings, and to deduce the resulting maps and properties of local deformation rings arising from the ultrapatching construction.

\begin{lem} \label{lem_Hecke_algebras_patch_to_deformation_rings}
Assume that $k \ge 2$. Then:
\begin{enumerate}

    \item  
    In the commutative diagram
    \[
    \begin{tikzcd}
        R^{\psi_k}_\infty(k)^* \arrow[r, two heads]\arrow[d] & 
        \mathcal{P}\!\left(\left\{\T^{p}(M(k,Q_n,\Oh)^*)^{\Box}\right\}_{n \ge 1}\right)
        \arrow[d] \\
        \widetilde{R^{\psi_k}_\infty(k)^*} \arrow[r, two heads] & 
        \mathcal{P}\!\left(\left\{\T(M(k,Q_n,\Oh)^*)^{\Box}\right\}_{n \ge 1}\right)
    \end{tikzcd}
    \]
    both horizontal maps are isomorphisms.  
    Moreover, $\widetilde{R^{\psi_k}_\infty(k)^*}$ is finite free over $S_\infty$.

    \item  
    There are natural isomorphisms
    \[
        \bigl(\widetilde{R^{\psi_k}_\infty(k)^*} / R^{\psi_k}_\infty(k)^*\bigr)
        \otimes_{\Oh} \F 
        \xrightarrow{\;\sim\;}
        \mathcal{P}\!\left(\{ C(M(k,Q_n,\F)^*)^{\Box} \}\right)
    \] and $$\Image\!\left(R^{\psi_k}_\infty(k)^*\otimes_{\Oh}\F 
        \to \widetilde{R^{\psi_k}_\infty(k)^*}\otimes_{\Oh}\F\right)
        \xrightarrow{\sim} \Image \!\left(\mathcal{P}(\T^p(M(k,\Q_n,\F)^*)^{\Box})\to \mathcal{P}(\T(M(k,\Q_n,\F)^*)^{\Box})\right).$$
    If in addition $k(\rbar)=1$ and $k \ge p^{2}$, then
    \[
        \Image\!\left(R^{\psi_k}_\infty(k)^*\otimes_{\Oh}\F 
        \to \widetilde{R^{\psi_k}_\infty(k)^*}\otimes_{\Oh}\F\right)
        \xrightarrow{\;\sim\;}
        \mathcal{P}\!\left(\{\T^{p}(M(k,Q_n,\F)^*)^{\Box}\}\right).
    \]
    In this case, both 
    \[
        \bigl(\widetilde{R^{\psi_k}_\infty(k)^*} / R^{\psi_k}_\infty(k)^*\bigr)\otimes_{\Oh}\F 
        \quad\text{and}\quad
        \Image\!\left(R^{\psi_k}_\infty(k)^*\otimes_{\Oh}\F 
        \to \widetilde{R^{\psi_k}_\infty(k)^*}\otimes_{\Oh}\F\right)
    \]
    are finite free over $S_\infty \otimes_{\Oh}\F$.  
    The former has rank equal to $\dim_{\F} M(j(k),\varnothing,\F)^*$.

    \item  
    For every $k \ge 2$, there is a natural surjection
    \[
        \widetilde{R^{\psi_{k+p-1}}_\infty(k+p-1)^*} \otimes_{\Oh}\F
        \twoheadrightarrow
        \widetilde{R^{\psi_{k}}_\infty(k)^*} \otimes_{\Oh}\F.
    \]
    It restricts to a surjection
    \[
        \Image\!\left(R^{\psi_{k+p-1}}_\infty(k+p-1)^*\otimes_{\Oh}\F 
        \to \widetilde{R^{\psi_{k+p-1}}_\infty(k+p-1)^*}\otimes_{\Oh}\F\right)
        \twoheadrightarrow
        \Image\!\left(R^{\psi_k}_\infty(k)^*\otimes_{\Oh}\F 
        \to \widetilde{R^{\psi_k}_\infty(k)^*}\otimes_{\Oh}\F\right).
    \]
    When $* = \ ,$ the kernel is given by
    \[
        \ker\!\left(
            \widetilde{R^{\psi_{k+p-1}}_\infty(k+p-1)}\otimes_{\Oh}\F
            \longrightarrow
            \widetilde{R^{\psi_{k}}_\infty(k)}\otimes_{\Oh}\F
        \right)
        \;=\;
        \mathcal{P}\!\left(
            \{ S(k+p-1, Q_n)^{\vee,\Box} \}_{n\ge 1}
        \right).
    \]

\end{enumerate}
\end{lem}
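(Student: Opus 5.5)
Part (1) comes first; (2) and (3) follow from it by the exactness properties of $\mathcal{P}$. Write $\mathcal{T}^* := \mathcal{P}(\{\T(M(k,Q_n,\Oh)^*)^{\Box}\})$. By Proposition~\ref{prop_patching_the_coherent_cohomology}(1) it is finite free over $S_\infty$, and the bottom horizontal map $\widetilde{R^{\psi_k}_\infty(k)^*}\surj \mathcal{T}^*$ is surjective, as constructed just before the lemma.

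\textbf{Injectivity for $*=$ blank.} Here the key input is Corollary~\ref{cor_R_inf_tilde_faithful}. The map is a ring homomorphism sending $1\mapsto 1$, so an element $x$ of the kernel satisfies $x\cdot 1=0$ in $\mathcal{T}$. Since $\mathcal{T}$ is a ring quotient of $\widetilde{R^{\psi_k}_\infty(k)}$ and the ring acts through this quotient, $x$ then annihilates all of $\mathcal{T}$. Faithfulness forces $x=0$.

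\textbf{Injectivity for $*\in\{\ord,\no\}$.} The $T_p$-decomposition $M=M^{\ord}\oplus M^{\no}$ gives $\T(M)=\T(M^{\ord})\times\T(M^{\no})$, and via $\alpha_p\mapsto T_p$ this matches the idempotent decomposition of Proposition~\ref{prop_R_ap_decomposition} and Remark~\ref{rmk_R_ap_completed_tensor_product_direct_sum}. The isomorphism for the full module therefore splits into the $\ord$ and $\no$ pieces, with the $*$-rings defined as the corresponding factors. The main obstacle is checking carefully that the idempotent $e_0$ maps to the projector onto the non-ordinary part. I would do this by reducing modulo $\mathfrak{m}$ and using that $T_p$ is topologically nilpotent exactly on $M^{\no}$.

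\textbf{Top row and finite freeness.} The top row is surjective. For injectivity, note that $R^{\psi_k}_\infty(k)^*$ embeds into $\widetilde{R^{\psi_k}_\infty(k)^*}$: it is reduced and $\Oh$-flat with the same generic fibre (Proposition~\ref{prop_R_inf_tilde_property}). The right vertical map is also injective, since it is the patching of the inclusions $\T^p\hookrightarrow\T$, compatible with the diagram. A diagram chase then gives injectivity of the top row. Finite freeness of $\widetilde{R^{\psi_k}_\infty(k)^*}$ over $S_\infty$ follows from the bottom isomorphism.

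\textbf{Part (2).} Patch the short exact sequence \eqref{eqn_anemic_full_cokernel_short_exact_sequence}. By (1), the first two terms patch to $R^{\psi_k}_\infty(k)^*$ and $\widetilde{R^{\psi_k}_\infty(k)^*}$. Right exactness (Proposition~\ref{prop_ultrapatching_right_exact}(1)) then identifies the cokernel $\widetilde{R}/R$ with $\mathcal{P}(\{C(M(k,Q_n,\Oh)^*)^\Box\})$. Reducing modulo $\varpi$ by Proposition~\ref{prop_ultrapatching_right_exact}(3), together with Corollary~\ref{cor_finite_level_mod_p_commute} (which identifies $\T(M\otimes\F)$ with $\T(M)\otimes\F$), gives the first isomorphism and the description of the image. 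Under the weight hypothesis, the mod-$p$ sequence \eqref{eqn_mod_p_image_cokernel_hecke_sequence} splits. Proposition~\ref{prop_ultrapatching_right_exact}(2) then makes the patched sequence split exact, which identifies the image with $\mathcal{P}(\{\T^p(M(k,Q_n,\F)^*)^\Box\})$. Freeness and the rank come from Proposition~\ref{prop_patching_the_coherent_cohomology}(2).

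\textbf{Part (3).} Multiplication by the Hasse invariant gives $\T$-equivariant surjections $\T(M(k+p-1,Q_n,\F)^*)\surj\T(M(k,Q_n,\F)^*)$ (Corollary~\ref{cor_finite_level_surjection}), using that $A$ preserves the $*$-parts for weights at least $2$. These surjections commute with the maps from $\T^p$, because $\psi_k\equiv\psi_{k+p-1}$ modulo $\varpi$ and so the two framed deformation rings agree modulo $\varpi$. Patching and identifying both ends via (1) and (2) gives the ring surjection and its restriction to the images. For $*=$ blank, the kernel is computed by patching the split sequence \eqref{eqn_Sk_dual_ses}.
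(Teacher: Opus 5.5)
Your proposal is correct. It matches the paper's proof of (2), (3), and the bottom row of (1): faithfulness via Corollary~\ref{cor_R_inf_tilde_faithful}, then the $\ord/\no$ splitting.

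The only real difference is how you get injectivity of the top row. The paper takes it directly from Theorem~\ref{thm_kisin_faithful}, since the action of $R^{\psi_k}_\infty(k)$ on $\mathcal{P}(\{\T(M(k,Q_n,\Oh))^{\Box}\})$ factors through $\mathcal{P}(\{\T^{p}(M(k,Q_n,\Oh))^{\Box}\})$. For $*\in\{\ord,\no\}$ the paper only remarks that the maps respect the splittings, which literally applies to the bottom row alone, because $R^{\psi_k}_\infty(k)$ does not split. You instead prove that the left vertical map is injective and compose it with the bottom isomorphism. This reuses the ingredient behind Corollary~\ref{cor_R_inf_tilde_faithful}, and it has the advantage of treating the $\ord/\no$ top row explicitly.

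Two corrections at that step.
\begin{itemize}
\item The input you need is not reducedness but $\Oh$-flatness of $R^{\psi_k}_\infty(k)^*$. This follows from Lemma~\ref{lem_kisin_completed_tensor_product}, since $R^{\Box,\psi_k}_\rbar(k)^*$ is a subring of the $\Oh$-flat ring $R^{\Box,\psi_k}_\rbar(k)^*[\alpha_p]$. You also need the $*$-analogue of the generic-fibre statement in Proposition~\ref{prop_R_inf_tilde_property}. It holds by the same proof, because $R^{\Box,\psi_k}_\rbar(k)^*[1/p]=R^{\Box,\psi_k}_\rbar(k)^*[\alpha_p][1/p]$.
\item Your claim that the right vertical map is injective because it is a patched inclusion is not justified a priori, since $\mathcal{P}$ is only right exact. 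Your diagram chase never uses it, though. It becomes true once the top row is known to be an isomorphism.
\end{itemize}
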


\begin{proof}\leavevmode
    \begin{enumerate}
        \item To prove that the horizontal surjections are isomorphisms, it suffices to show 
that they are injective. For $R_\infty^{\psi_k}(k),$ this follows immediately from 
Theorem~\ref{thm_kisin_faithful} and Corollary~\ref{cor_R_inf_tilde_faithful}. When $* \in \{\ord,\no\},$ we note that $$\widetilde{R_\infty^{\psi_k}(k)} \xrightarrow{\sim}\widetilde{R_\infty^{\psi_k}(k)^{\ord}}\oplus \widetilde{R_\infty^{\psi_k}(k)^{\no}}$$ and $$\mathcal{P}\left(\{\T(M(k,Q_n,\Oh))^{\Box}\}_{n\ge 1}\right)\xrightarrow{\sim}\mathcal{P}\left(\{\T(M(k,Q_n,\Oh)^\ord)^{\Box}\}_{n\ge 1}\right)\oplus \mathcal{P}\left(\{\T(M(k,Q_n,\Oh)^\no)^{\Box}\}_{n\ge 1}\right)$$ and the maps respect the splittings. 
        \item Reduce the short exact sequence \eqref{eqn_anemic_full_cokernel_short_exact_sequence} 
modulo $\varpi$, and we obtain a right exact sequence
\[
    \T^p(M(k,Q_n,\Oh)^*)\otimes_\Oh\F
        \to \T(M(k,Q_n,\Oh)^*)\otimes_\Oh\F
        \to C(M(k,Q_n,\Oh)^*)\otimes_\Oh\F
        \to 0.
\]
By Corollary~\ref{cor_finite_level_mod_p_commute}, this may be rewritten as
\[
    \T^p(M(k,Q_n,\Oh)^*)\otimes_\Oh\F
        \to \T(M(k,Q_n,\F)^*)
        \to C(M(k,Q_n,\F)^*)
        \to 0.
\]
which induces a right exact sequence
\[
    \mathcal{P}(\{\T^p(M(k,Q_n,\Oh)^*)^{\Box}\otimes_\Oh\F\}_{n\ge 1})
        \to \mathcal{P}(\{\T(M(k,Q_n,\F)^*)^{\Box}\}_{n\ge 1})
        \to \mathcal{P}(\{C(M(k,Q_n,\F)^*)^{\Box}\}_{n\ge 1})
        \to 0.
\]

By Proposition~\ref{prop_ultrapatching_right_exact} and the commutative diagram in~(1), we obtain
\[
\begin{tikzcd}
    R^{\psi_k}_\infty(k)^*\otimes_\Oh\F
        \arrow[r,"\sim"] \arrow[d]
    &
    \mathcal{P}\!\left(\{\T^{p}(M(k,Q_n,\Oh)^*)^{\Box}\otimes_\Oh\F\}_{n\ge 1}\right)
        \arrow[d]
    \\
    \widetilde{R^{\psi_k}_\infty(k)^*}\otimes_\Oh\F
        \arrow[r,"\sim"] \arrow[d, two heads]
    &
    \mathcal{P}\!\left(\{\T(M(k,Q_n,\F)^*)^{\Box}\}_{n\ge 1}\right)
        \arrow[d, two heads]
    \\
    (\widetilde{R^{\psi_k}_\infty(k)^*}/R^{\psi_k}_\infty(k)^*)\otimes_\Oh\F
        \arrow[r,"\sim"]
    &
    \mathcal{P}(\{C(M(k,Q_n,\F)^*)^{\Box}\})
\end{tikzcd}
\] and $$\Image\!\left(R^{\psi_k}_\infty(k)^*\otimes_{\Oh}\F 
        \to \widetilde{R^{\psi_k}_\infty(k)^*}\otimes_{\Oh}\F\right)
        \xrightarrow{\sim} \Image \left(\mathcal{P}(\T^p(M(k,\Q_n,\F)^*)^{\Box})\to \mathcal{P}(\T(M(k,\Q_n,\F)^*)^{\Box})\right).$$

If we further assume $k \ge p^2$ when $k(\rbar)=1$, the sequence
\eqref{eqn_mod_p_image_cokernel_hecke_sequence}
\[
    0 \to \{\T^{p}(M(k,Q_n,\F)^*)^{\Box}\}
      \to \{\T(M(k,Q_n,\F)^*)^{\Box}\}
      \to \{C(M(k,Q_n,\F)^*)^{\Box}\}
      \to 0
\]
is split exact, and therefore induces a split short exact sequence of patched modules
\[
    0 \to
    \mathcal{P}(\{\T^{p}(M(k,Q_n,\F)^*)^{\Box}\})
      \to \mathcal{P}(\{\T(M(k,Q_n,\F)^*)^{\Box}\})
      \to \mathcal{P}(\{C(M(k,Q_n,\F)^*)^{\Box}\})
      \to 0.
\]

Hence
\[
    \Image\!\left(
        R^{\psi_k}_\infty(k)^*\otimes_\Oh\F
        \longrightarrow
        \widetilde{R^{\psi_k}_\infty(k)^*}\otimes_\Oh\F
    \right)
    \xrightarrow{\;\sim\;}
    \mathcal{P}\!\left(\{\T^{p}(M(k,Q_n,\F)^*)^{\Box}\}_{n\ge 1}\right).
\]

The assertions on finite freeness and on the rank follow from
Proposition~\ref{prop_patching_the_coherent_cohomology}.

\item By Corollary~\ref{cor_finite_level_surjection}, for each $n\ge 1$ and $k\ge 2,$ we have the commutative diagram
\[
\begin{tikzcd}
    \T^{p}(M(k+p-1,Q_n,\F)^*) \arrow[r,hook] \arrow[d,two heads] &
    \T(M(k+p-1,Q_n,\F)^*) \arrow[d,two heads] \\
    \T^{p}(M(k,Q_n,\F)^*) \arrow[r,hook] &
    \T(M(k,Q_n,\F)^*),
\end{tikzcd}
\]
which induces a surjection
\[
C(M(k+p-1,Q_n,\F)^*) \twoheadrightarrow C(M(k,Q_n,\F)^*).
\]
By right exactness of the patching functor (Proposition~\ref{prop_ultrapatching_right_exact}) and part~(2), we obtain
\[
\begin{tikzcd}
    \widetilde{R_\infty^{\psi_{k+p-1}}(k+p-1)^*}\otimes_\Oh\F
        \arrow[r,two heads] \arrow[d,two heads] &
    \bigl(\widetilde{R_\infty^{\psi_{k+p-1}}(k+p-1)^*}/
          R_\infty^{\psi_{k+p-1}}(k+p-1)^*\bigr)\otimes_\Oh\F
        \arrow[d,two heads] \\
    \widetilde{R_\infty^{\psi_{k}}(k)^*}\otimes_\Oh\F
        \arrow[r,two heads] &
    \bigl(\widetilde{R_\infty^{\psi_{k}}(k)^*}/
          R_\infty^{\psi_{k}}(k)^*\bigr)\otimes_\Oh\F,
\end{tikzcd}
\]
and hence a natural map between the kernels of the two horizontal maps
\[
\Image\!\left(
    R^{\psi_{k+p-1}}_\infty(k+p-1)^*\otimes_\Oh\F
    \to
    \widetilde{R^{\psi_{k+p-1}}_\infty(k+p-1)^*}\otimes_\Oh\F
\right)
\longrightarrow
\Image\!\left(
    R^{\psi_{k}}_\infty(k)^*\otimes_\Oh\F
    \to
    \widetilde{R^{\psi_{k}}_\infty(k)^*}\otimes_\Oh\F
\right).
\]
On the other hand, again by part~(2) and Proposition~\ref{prop_ultrapatching_right_exact}, this map fits in the commutative diagram
\[
\begin{tikzcd}
    \mathcal{P}(\{\T^{p}(M(k+p-1,Q_n,\F)^*)\})
        \arrow[d,two heads]
        \arrow[r,two heads] &
    \Image(R^{\psi_{k+p-1}}_\infty(k+p-1)^*\otimes_\Oh\F
           \to
           \widetilde{R^{\psi_{k+p-1}}_\infty(k+p-1)^*}\otimes_\Oh\F)
        \arrow[d]  \\
    \mathcal{P}(\{\T^{p}(M(k,Q_n,\F)^*)\})
        \arrow[r,two heads] &
    \Image(R^{\psi_{k}}_\infty(k)^*\otimes_\Oh\F
           \to
           \widetilde{R^{\psi_{k}}_\infty(k)^*}\otimes_\Oh\F),
\end{tikzcd}
\]
so the map between images must be surjective. 

Finally, the description of 
\(
\ker\bigl(
    \widetilde{R^{\psi_{k+(p-1)}}_\infty(k)}\otimes_\Oh\F
    \to
    \widetilde{R^{\psi_{k}}_\infty(k)}\otimes_\Oh\F
\bigr)
\)
follows from part~(1), the exact sequence~\eqref{eqn_Sk_dual_ses}, and
Proposition~\ref{prop_ultrapatching_right_exact}.

    \end{enumerate}
\end{proof}

\begin{example} \label{example_surjection_of_patched_rings}
    When $\rbar$ and the weights $k$ are as in Example~\ref{example_crystalline_deformation_rings}, the surjections in Lemma~\ref{lem_Hecke_algebras_patch_to_deformation_rings}(3) can be described very explicitly. In these cases we have
    \[
        R_{\rbar}^{\Box,\psi_k}(k) = R_{\rbar}^{\Box,\psi_k}(k)[\alpha_p],
    \]
    and therefore
    \[
        R_\infty^{\psi_k}(k)\otimes_\Oh\F
        \;=\;
        R^{\Box,\psi_k}_\rbar(k)\, \widehat{\otimes}_\Oh B
        \;=\;
        R^{\Box,\psi_k}_\rbar(k)[\alpha_p]\, \widehat{\otimes}_\Oh B
        \;=\;
        \widetilde{R_\infty^{\psi_k}(k)}\otimes_\Oh\F.
    \]
    Thus Lemma~\ref{lem_Hecke_algebras_patch_to_deformation_rings}(3) yields a surjection
    \[
        R^{\Box,\psi_{k_0+p-1}}_\rbar(k_0+p-1)\widehat{\otimes}_\Oh B
        \;\twoheadrightarrow\;
        R^{\Box,\psi_{k_0}}_\rbar(k_0)\widehat{\otimes}_\Oh B.
    \]
    When $k_0$ from Example~\ref{example_crystalline_deformation_rings} is not $2$, we have
    \[
        R^{\Box,\psi_{k_0+p-1}}_\rbar(k_0+p-1)\widehat{\otimes}_\Oh B
        \;\xrightarrow{\sim}\;
        \F\llbracket x_1,x_2,x_3,y,\alpha_p\rrbracket/(y\alpha_p)
        \widehat{\otimes}_\F B
    \]
    and
    \[
        R^{\Box,\psi_{k_0}}_\rbar(k_0)\widehat{\otimes}_\Oh B
        \;\xrightarrow{\sim}\;
        \F\llbracket x_1,x_2,x_3,\alpha_p\rrbracket
        \widehat{\otimes}_\F B.
    \]
    The surjection sends each framing variable $x_i$ to $x_i$, sends $\alpha_p$ to $\alpha_p$, and restricts to the identity on $B$. The element $y$ must map to~$0$, since its image must an annihilator of $\alpha_p$ in 
    $\F\llbracket x_1,x_2,x_3,\alpha_p\rrbracket\widehat{\otimes}_\F B$, and this is $0$.
    Hence
    \[
        (y)
        \;=\;
        \ker\!\left(
            \widetilde{R^{\psi_{k_0+(p-1)}}_\infty(k_0)}\otimes_\Oh\F
            \longrightarrow
            \widetilde{R^{\psi_{k_0}}_\infty(k_0)}\otimes_\Oh\F
        \right)
        \;=\;
        \mathcal{P}\!\left(
            \{S(k_0+(p-1),Q_n)^{\vee,\Box}\}_{n\ge 1}
        \right).
    \]
    When $k_0=2$ or when $\rbar|_{I_p}\sim \begin{pmatrix}
        \epsilon & *\\
        0 & 1
    \end{pmatrix}$ but $\rbar$ itself is not up to twist an extension of $1$ by $\epsilon$, the surjection is an isomorphism, and hence the kernel is trivial.
\end{example}

The ring $S_\infty$ encodes the patching data. To pass from the infinite level (the patched modules) to the finite level (the characteristic $p$ Hecke algebras), we quotient by the maximal ideal $\mathfrak{m}_{S_\infty}$. The following lemma makes this precise.

\begin{lem} \label{lem_quotient_of_patched_modules}
    Assume that $k\ge 2$. Then
    \[
        \widetilde{R^{\psi_{k}}_\infty(k)^*}\otimes_{S_\infty}S_\infty/\mathfrak{m}_{S_\infty}
        \xrightarrow{\;\sim\;} \T(M(k,\varnothing,\Oh)^*)\otimes_\Oh\F,
    \]
    and
    \[
        \ker\Bigl( \widetilde{R^{\psi_{k+(p-1)}}_\infty(k)}\otimes_\Oh\F \to \widetilde{R^{\psi_{k}}_\infty(k)}\otimes_\Oh\F\Bigr)
        \otimes_{S_\infty}S_\infty/\mathfrak{m}_{S_\infty} 
        \xrightarrow{\sim} S(k,\varnothing)^\vee.
    \]
    Moreover, if $k\ge p^2$ when $k(\rbar) = 1,$ then
    \[
        \Image\Bigl(R^{\psi_{k}}_\infty(k)^*\otimes_\Oh\F
        \;\longrightarrow\;
        \widetilde{R^{\psi_{k}}_\infty(k)^*}\otimes_\Oh\F\Bigr)
        \otimes_{S_\infty}S_\infty/\mathfrak{m}_{S_\infty}
        \xrightarrow{\sim} \T^{p}(M(k,\varnothing,\F)^*).
    \]
\end{lem}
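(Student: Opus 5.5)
The plan is to combine the identifications of Lemma~\ref{lem_Hecke_algebras_patch_to_deformation_rings} with the base-change property of the patching functor in Proposition~\ref{prop_ultrapatching_right_exact}(3), taking $\mathfrak{b}=\mathfrak{m}_{S_\infty}$. For the first isomorphism, Lemma~\ref{lem_Hecke_algebras_patch_to_deformation_rings}(1) identifies $\widetilde{R^{\psi_k}_\infty(k)^*}$ with $\mathcal{P}(\{\T(M(k,Q_n,\Oh)^*)^{\Box}\})$. Proposition~\ref{prop_ultrapatching_right_exact}(3) then gives
\[
\mathcal{P}(\{\T(M(k,Q_n,\Oh)^*)^{\Box}\})\otimes_{S_\infty}S_\infty/\mathfrak{m}_{S_\infty}\xrightarrow{\sim}\mathcal{P}(\{\T(M(k,Q_n,\Oh)^*)^{\Box}\otimes_{S_\infty}\F\}).
\]
At finite level I will compute $\T(M(k,Q_n,\Oh)^*)^{\Box}\otimes_{S_\infty}\F$. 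The framing factor $\mathcal{J}$ is killed, and by Lemma~\ref{lem_finite_free_over_group_alg} reducing modulo the augmentation ideal of $\Oh[\Delta_{Q_n}]$ yields $\T(M(k,\varnothing,\Oh)^*)$. Reducing further modulo $\varpi$ gives $\T(M(k,\varnothing,\Oh)^*)\otimes_\Oh\F$, which is therefore a constant sequence. By Example~\ref{example_constant_patching} (or directly by Proposition~\ref{prop_bounded_ultraproduct}), its ultrapatch is this constant module.

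For the second isomorphism I will use the last statement of Lemma~\ref{lem_Hecke_algebras_patch_to_deformation_rings}(3), which identifies the kernel with $\mathcal{P}(\{S(k+p-1,Q_n)^{\vee,\Box}\})$. Applying Proposition~\ref{prop_ultrapatching_right_exact}(3) again reduces the question to computing $S(k+p-1,Q_n)^{\vee,\Box}\otimes_{S_\infty}\F$. Since \eqref{eqn_Sk_dual_ses} is split exact with all terms free over $\F[\Delta_{Q_n}]$, reducing modulo the augmentation ideal stays exact. Comparing with the level-$\varnothing$ version of \eqref{eqn_Sk_dual_ses}, which is the dual of $0\to M(k,\varnothing,\F)\to M(k+p-1,\varnothing,\F)\to S(k+p-1,\varnothing)\to 0$, identifies the coinvariants with $S(k+p-1,\varnothing)^\vee$. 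The main point needing care is that the coinvariants of the Hecke algebras are the level-$\varnothing$ Hecke algebras, which is Lemma~\ref{lem_finite_free_over_group_alg} combined with Corollary~\ref{cor_finite_level_mod_p_commute}. Compatibility of the two maps then follows by naturality of Corollary~\ref{cor_finite_level_surjection}, and the five lemma identifies the kernels. The statement writes $S(k,\varnothing)^\vee$ with weights written loosely; I will read it as the Serre module in the top weight $k+p-1$.

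For the last isomorphism, assuming $k\ge p^2$ when $k(\rbar)=1$, Lemma~\ref{lem_Hecke_algebras_patch_to_deformation_rings}(2) identifies the image with $\mathcal{P}(\{\T^{p}(M(k,Q_n,\F)^*)^{\Box}\})$. By Proposition~\ref{prop_image_and_cokernel_of Hecke_algebras_finite_free}, the finite-level sequence \eqref{eqn_mod_p_image_cokernel_hecke_sequence} is split exact with free terms, so taking $\Delta_{Q_n}$-coinvariants preserves exactness. I then need $\T^{p}(M(k,Q_n,\F)^*)_{\Delta_{Q_n}}\cong\T^p(M(k,\varnothing,\F)^*)$. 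To get it, I will compare the split sequence at level $Q_n$ modulo the augmentation ideal with the analogous sequence at level $\varnothing$. The middle terms agree by the first part, and the cokernels agree because both are dual to $M(j(k),\cdot,\F)^*$ via $V_p$ (Corollary~\ref{cor_anemic_equal_full_criterion}), and $M(j(k),Q_n,\F)^*$ has $\Delta_{Q_n}$-invariants (dually, coinvariants) equal to $M(j(k),\varnothing,\F)^*$. The maps are compatible since they all come from $q$-expansions, so the five lemma gives the claim.

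I expect this step to be the main obstacle. It requires checking that the control statement of Lemma~\ref{lem_finite_free_over_group_alg} for the full Hecke algebra passes to the $V_p$-image, so that $j(k)\ge2$ is needed for the base-change arguments. This is exactly where the hypothesis $k\ge p^2$ enters when $k(\rbar)=1$. After that, the result follows from Proposition~\ref{prop_ultrapatching_right_exact}(3) and the constant-patching argument as before.
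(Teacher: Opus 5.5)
Your proposal is correct. Part (1) is the paper's argument. You are also right that the second target should read $S(k+p-1,\varnothing)^\vee$, with source $\widetilde{R^{\psi_{k+p-1}}_\infty(k+p-1)}\otimes_\Oh\F$; this is what the paper's proof actually produces.

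For (2) and (3) your route differs. You push everything down to finite level via Proposition~\ref{prop_ultrapatching_right_exact}(3) and prove $\Delta_{Q_n}$-coinvariant statements there, while the paper stays at the patched level.

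\begin{itemize}
\item In (2), the paper only uses that all three terms of $0\to\ker\to\widetilde{R^{\psi_{k+p-1}}_\infty(k+p-1)}\otimes_\Oh\F\to\widetilde{R^{\psi_k}_\infty(k)}\otimes_\Oh\F\to0$ are finite free over $S_\infty\otimes_\Oh\F$ (Proposition~\ref{prop_patching_the_coherent_cohomology}). So reducing modulo $\mathfrak{m}_{S_\infty}$ is exact, and part (1) identifies the two right-hand terms.
\item In (3), the paper never computes $\T^{p}(M(k,Q_n,\F)^*)_{\Delta_{Q_n}}$. It uses that $R^{\psi_k}_\infty(k)^*\otimes_{S_\infty}S_\infty/\mathfrak{m}_{S_\infty}$ surjects onto $\T^{p}(M(k,\varnothing,\Oh)^*)\otimes_\Oh\F$, whose image in $\T(M(k,\varnothing,\F)^*)$ is $\T^{p}(M(k,\varnothing,\F)^*)$. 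It then uses that taking images commutes with $\otimes_{S_\infty}S_\infty/\mathfrak{m}_{S_\infty}$, because the cokernel $(\widetilde{R^{\psi_k}_\infty(k)^*}/R^{\psi_k}_\infty(k)^*)\otimes_\Oh\F$ is free over $S_\infty\otimes_\Oh\F$ (Lemma~\ref{lem_Hecke_algebras_patch_to_deformation_rings}(2)).
\end{itemize}

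Your route buys an explicit finite-level control statement for the anemic algebra, at the price of extra compatibility checks. One of these is not justified by ``everything comes from $q$-expansions.'' For $q\in Q_n$, the variable $T_q$ acts as $U_q\equiv\alpha_q$ at level $N_{Q_n}$ but as $T_q\equiv\alpha_q+\beta_q$ at level $N_\varnothing$. So the control isomorphism $\T(M(k,Q_n,\F)^*)_{\Delta_{Q_n}}\cong\T(M(k,\varnothing,\F)^*)$ matches $U_q$ with the root of $X^2-T_qX+q^{k-1}\langle q\rangle$ congruent to $\alpha_q$, not with $T_q$.

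You therefore need Hensel's lemma together with Lemma~\ref{lem_U_ell_in_anemic} to see that the image of $\T^{p}(M(k,Q_n,\F)^*)$ is exactly $\T^{p}(M(k,\varnothing,\F)^*)$. The same compatibility underlies the paper's commutative diagram. Once you have it, injectivity follows from the split exactness of \eqref{eqn_mod_p_image_cokernel_hecke_sequence}. The cokernel comparison through $M(j(k),\cdot,\F)^*$ and the five lemma then become unnecessary.
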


\begin{proof}
    By Lemma~\ref{lem_Hecke_algebras_patch_to_deformation_rings} we have
\[
    \widetilde{R^{\psi_{k}}_\infty(k)^*}
    \;\xrightarrow{\sim}\;
    \mathcal{P}(\{\T(M(k,Q_n,\Oh)^*)\})
    = \varprojlim_\mathfrak{a} \mathcal{U}(\{\T(M(k,Q_n,\Oh)^*)/\mathfrak{a}\}).
\]
By Proposition \ref{prop_ultrapatching_right_exact} and Lemma~\ref{lem_finite_free_over_group_alg},
\begin{multline*}
    \widetilde{R^{\psi_{k}}_\infty(k)^*}\otimes_{S_\infty}S_\infty/\mathfrak{m}_{S_\infty}
    \;\xrightarrow{\sim}\;
    \varprojlim_\mathfrak{a} \mathcal{U}(\{\T(M(k,Q_n,\Oh)^*)^{\Box}/\mathfrak{a}+\mathfrak{m}_{S_\infty}\}) \\
    \xrightarrow{\sim}\;
    \varprojlim_\mathfrak{a} \mathcal{U}(\{\T(M(k,\varnothing,\F)^*)/\mathfrak{a}\})
    \;\xrightarrow{\sim}\;
    \T(M(k,\varnothing,\F)^*),
\end{multline*}
so the first isomorphism in the lemma follows. 

Next, since all terms in the short exact sequence
\[
0\to \ker\left( \widetilde{R^{\psi_{k+(p-1)}}_\infty(k)}\otimes_\Oh\F \to \widetilde{R^{\psi_{k}}_\infty(k)}\otimes_\Oh\F\right) 
\to \widetilde{R^{\psi_{k+(p-1)}}_\infty(k)}\otimes_\Oh\F 
\to \widetilde{R^{\psi_{k}}_\infty(k)}\otimes_\Oh\F \to 0
\]
are finite free $S_\infty\otimes_\Oh\F$-modules by Proposition \ref{prop_patching_the_coherent_cohomology}, exactness is preserved after tensoring with $S_\infty/\mathfrak{m}_{S_\infty}$. Hence we obtain
\begin{multline*}
0\to \ker\left( \widetilde{R^{\psi_{k+(p-1)}}_\infty(k)}\otimes_\Oh\F \to \widetilde{R^{\psi_{k}}_\infty(k)}\otimes_\Oh\F\right)\otimes_{S_\infty}S_\infty/\mathfrak{m}_{S_\infty} \\
\to \T(M(k+(p-1),\varnothing,\F))
\to \T(M(k,\varnothing,\F))\to 0,
\end{multline*}
so the kernel is identified with $S(k,\varnothing)^\vee$.

Now assume $k\ge p^2$ if $k(\rbar)=1.$ Consider the commutative diagram
\[
\begin{tikzcd}
    R^{\psi_k}_\infty(k)^* \otimes_{S_\infty}S_\infty/\mathfrak{m}_{S_\infty}
    \arrow[r]\arrow[d,two heads] &
    \widetilde{R^{\psi_k}_\infty(k)^*}\otimes_{S_\infty}S_\infty/\mathfrak{m}_{S_\infty}
    \arrow[d,"\wr"] \\
    \T^{p}(M(k,\varnothing,\Oh)^*)\otimes_\Oh\F
    \arrow[r]\arrow[d,two heads] &
    \T(M(k,\varnothing,\F)^*)\\
    \T^{p}(M(k,\varnothing,\F)^*)\arrow[ur,hook]&
\end{tikzcd}
\]
Then the image of
\(
    R^{\psi_k}_\infty(k)^* \otimes_{S_\infty}S_\infty/\mathfrak{m}_{S_\infty}
\)
inside
\(
    \widetilde{R^{\psi_{k}}_\infty(k)^*}\otimes_{S_\infty}S_\infty/\mathfrak{m}_{S_\infty}
\)
is therefore isomorphic to $\T^{p}(M(k,\varnothing,\F)^*)$. On the other hand, note that
\(
    \Image(R^{\psi_{k}}_\infty(k)^*\otimes_\Oh\F\to \widetilde{R^{\psi_{k}}_\infty(k)^*}\otimes_\Oh\F)
\)
is finite free over $S_\infty\otimes_\Oh\F$ by Lemma~\ref{lem_Hecke_algebras_patch_to_deformation_rings}, so the inclusion
\[
    \Image(R^{\psi_{k}}_\infty(k)\otimes_\Oh\F \to \widetilde{R^{\psi_{k}}_\infty(k)^*}\otimes_\Oh\F)
    \hookrightarrow
    \widetilde{R^{\psi_{k}}_\infty(k)}\otimes_\Oh\F
\]
remains injective after tensoring with \(S_\infty/\mathfrak{m}_{S_\infty}\). Hence
\[
    \Image(R^{\psi_{k}}_\infty(k)^*\otimes_\Oh\F\to \widetilde{R^{\psi_{k}}_\infty(k)^*}\otimes_\Oh\F)
    \otimes_{S_\infty}S_\infty/\mathfrak{m}_{S_\infty}
\]
coincides $$\Image\left(R^{\psi_k}_\infty(k)^* \otimes_{S_\infty}S_\infty/\mathfrak{m}_{S_\infty}\to \widetilde{R^{\psi_{k}}_\infty(k)^*}\otimes_{S_\infty}S_\infty/\mathfrak{m}_{S_\infty}\right),$$
so the claim follows.
\end{proof}

\section{Applications}
In this section, we prove Theorem Theorem \ref{thm_crystalline_deformation_ring_Cohen_Macaulay} 
and Theorem \ref{thm_crystalline_stronger_topology} with Assumption 
\ref{assumption_semisimplification} replaced by the weaker Assumption 
\ref{assumption_weaker}. We also prove the multiplicity one theorem for 
Serre's quaternionic modular forms. 

Throughout, for $R=\Oh$ or $\F$, we let $M(k,Q,R)^*$ be either $M(k,Q,R)$ 
or $M(k,Q,R)^\no$. We let $R_\rbar^{\Box,\psi}(k)^*$ be either 
$R_\rbar^{\Box,\psi}(k)$ or $R_\rbar^{\Box,\psi}(k)^\no$, and 
$k(\rbar)^*$ be either $k(\rbar)$ or $k(\rbar)^{\no}$. The choice of $*$ 
is understood to be fixed within each statement.

\subsection{The Cohen--Macaulay Property of Crystalline Deformation Rings}

\begin{thm}[Emerton]\label{thm_R_ap_cohen_macaulay}
    Suppose that $\rbar:G_{\Q_p}\to \GL_2(\F)$ satisfies Assumption \ref{assumption_weaker}. Then $R^{\Box,\psi}_\rbar(k)[\alpha_p]$ is Cohen--Macaulay for every $k \ge 2$.
\end{thm}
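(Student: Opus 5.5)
The proof splits into two cases. Under Assumption~\ref{assumption_global_rhobar} we argue directly with the patched objects. Under Assumption~\ref{assumption_semisimplification} we reduce to the semisimple case, and then use Remark~\ref{rmk_semisimple_reps_satisfy_assumption_global} to return to the global setting (after enlarging $\F$, which does not affect the Cohen--Macaulay property). The reduction is exactly Proposition~\ref{prop_semisimplification_CM}: if $R^{\Box,\psi}_{\rbarss}(k)[\alpha_p]$ is Cohen--Macaulay, then so is $R^{\Box,\psi}_{\rbar}(k)[\alpha_p]$. Finally, by Remark~\ref{rmk_change_det_of_deformation_ring} and the twisting argument in the Notation and Conventions, we may take $\psi=\psi_k$.

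In the global case we apply Lemma~\ref{lem_Hecke_algebras_patch_to_deformation_rings}(1) with $*$ empty. It shows that $\widetilde{R^{\psi_k}_\infty(k)}$ is finite free over the regular local ring $S_\infty$. Let $\mathfrak{n}$ be any maximal ideal of $\widetilde{R^{\psi_k}_\infty(k)}$; such ideals correspond to the local factors $u\in U_{\rbar}$ via Remark~\ref{rmk_R_ap_completed_tensor_product_direct_sum}. The localization at $\mathfrak{n}$ is a direct summand of a finite free $S_\infty$-module, so its depth over $S_\infty$ equals $\dim S_\infty$. It is also finite over $S_\infty$, so its Krull dimension is at most $\dim S_\infty$. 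Hence each local factor is Cohen--Macaulay.

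To compare dimensions correctly, I will use faithfulness (Corollary~\ref{cor_R_inf_tilde_faithful}) together with the fact that a finite free module has full support. This gives $\dim = \dim S_\infty$ on every factor, since each factor is a nonzero $S_\infty$-free summand. The conclusion is that $\widetilde{R^{\psi_k}_\infty(k)}$ is Cohen--Macaulay, with every local summand $R_u\widehat{\otimes}_\Oh\bigl(\widehat{\otimes}_{v\in T(\rhobar)}R^{\Box,\psi_k}_{\rhobar_v}\bigr)\llbracket x_1,\dots,x_g\rrbracket$ Cohen--Macaulay.

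Next, descend from the completed tensor product to $R_u$ itself. Reduce mod $\varpi$: since everything is $\Oh$-flat (Proposition~\ref{prop_R_inf_tilde_property}), $\varpi$ is a regular element. So $R_u$ is Cohen--Macaulay if and only if $R_u\otimes_\Oh\F$ is, and likewise for the tensor product. The special fiber of the tensor product is $(R_u\otimes_\Oh\F)\widehat{\otimes}_\F B$, with $B$ as in \eqref{eqn_def_B}. Here $B$ is Cohen--Macaulay, being a completed tensor product of the complete intersections of Remark~\ref{rmk_vexing_prime_local_deformation_ring} with a power series ring, by Proposition~\ref{prop_completed_tensor_product_in_char_p}(2). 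Proposition~\ref{prop_completed_tensor_product_in_char_p}(3) then shows that $R_u\otimes_\Oh\F$ is Cohen--Macaulay. Summing over $u\in U_\rbar$ gives the result for $R^{\Box,\psi}_\rbar(k)[\alpha_p]$.

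The main point needing care is that Proposition~\ref{prop_completed_tensor_product_in_char_p}(3) is applied to a local ring in $\mathcal{C}_\Oh$. This is guaranteed by Proposition~\ref{prop_R_ap_decomposition}, which says each $R_u$ is an object of $\mathcal{C}_\Oh$, and by the identification of topologies in Remark~\ref{rmk_R_ap_completed_tensor_product_direct_sum}.
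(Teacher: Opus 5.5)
Your proposal is correct and follows the paper's proof essentially step for step. Finite freeness of $\widetilde{R^{\psi_k}_\infty(k)}$ over the regular ring $S_\infty$ gives Cohen--Macaulayness, which passes to $(R_u\otimes_\Oh\F)\widehat{\otimes}_\F B$ by $\Oh$-flatness and then to $R_u$ via Proposition~\ref{prop_completed_tensor_product_in_char_p}(3); the non-global case is handled by Proposition~\ref{prop_semisimplification_CM}. Localizing at each maximal ideal, since the ring is only semi-local, is slightly more careful than the paper's one-line depth/dimension comparison. The appeal to faithfulness there is harmless but unnecessary, because depth $\le \dim \le \dim S_\infty$ already forces equality.
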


\begin{proof}
    If Assumption \ref{assumption_global_rhobar} holds, we have the patching set-up as in \S \ref{section_patching}. Since $\widetilde{R^{\psi_k}_\infty(k)}$ is finite free over $S_\infty$ by
    Lemma~\ref{lem_Hecke_algebras_patch_to_deformation_rings},
    it has the same dimension and depth as $S_\infty$.
    As $S_\infty$ is regular, its dimension and depth coincide, and hence $\widetilde{R^{\psi_k}_\infty(k)}$ is Cohen--Macaulay. Since $\widetilde{R^{\psi_k}_\infty(k)}$ is $\Oh$-flat
    (Proposition~\ref{prop_R_inf_tilde_property}), its mod-$\varpi$ reduction
    $$
      \widetilde{R^{\psi_k}_\infty(k)}/\varpi \xrightarrow{\sim}
      \left(R^{\Box,\psi_k}_\rbar(k)[\alpha_p]/\varpi\right) \widehat{\otimes}_{\F} B
      \xrightarrow{\sim} \bigoplus_{u\in U(\rbar)} \left(R_u/\varpi\right)\widehat{\otimes}_{\F} B
    $$
    is also Cohen--Macaulay. In particular, each direct summand $R_u/\varpi \widehat{\otimes}_\F B$ is Cohen--Macaulay. By Remark~\ref{rmk_vexing_prime_local_deformation_ring} and
    Proposition~\ref{prop_completed_tensor_product_in_char_p},
    the ring $B$ is Cohen--Macaulay. It then follows from Proposition \ref{prop_completed_tensor_product_in_char_p} that $R_u/\varpi$ is Cohen--Macaulay. Since $R_\rbar^{\Box,\psi_k}[\alpha_p]$ is $\Oh$-flat and decomposes as
    $$
      R_\rbar^{\Box,\psi_k}(k)[\alpha_p] \xrightarrow{\sim} \bigoplus_{u\in U(\rbar)} R_u
    $$
    by Proposition \ref{prop_R_ap_decomposition}, we deduce that $R_\rbar^{\Box,\psi_k}[\alpha_p]$ is Cohen--Macaulay. For a general determinant $\psi$, the claim follows from Remark \ref{rmk_change_det_of_deformation_ring}. 
 
    If Assumption \ref{assumption_semisimplification} holds, then by Remark \ref{rmk_semisimple_reps_satisfy_assumption_global} the semisimplification $\rbar^{\rm ss}$ of $\rbar$ satisfies Assumption \ref{assumption_global_rhobar}. Thus $R^{\Box,\psi}_{\rbar^{\rm ss}}[\alpha_p]$ is Cohen--Macaulay, and it follows from Proposition \ref{prop_semisimplification_CM} that $R^{\Box,\psi}_\rbar[\alpha_p]$ is Cohen--Macaulay. 
\end{proof}

\begin{prop}
    The homomorphism in Lemma \ref{lem_Hecke_algebras_patch_to_deformation_rings}
$$R^{\psi_k}_\infty(k)^*\to \widetilde{R^{\psi_k}_\infty(k)^*}$$
is an isomorphism if and only if
$$R^{\Box,\psi_k}_\rbar(k)^*\hookrightarrow R^{\Box,\psi_k}_\rbar(k)^*[\alpha_p]$$
is an isomorphism.
\end{prop}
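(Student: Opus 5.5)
The plan is to reduce the statement to a comparison of cokernels. By definition,
\[
R^{\psi_k}_\infty(k)^* = R^{\Box,\psi_k}_\rbar(k)^*\widehat{\otimes}_\Oh D, \qquad \widetilde{R^{\psi_k}_\infty(k)^*} = R^{\Box,\psi_k}_\rbar(k)^*[\alpha_p]\widehat{\otimes}_\Oh D,
\]
where
\[
D := \Bigl(\widehat{\otimes}_{v\in T(\rhobar),\Oh} R^{\Box,\psi_k}_{\rhobar_v}\Bigr)\llbracket x_1,\ldots,x_g\rrbracket,
\]
and the map in question is the base change of the inclusion $\iota: A:=R^{\Box,\psi_k}_\rbar(k)^*\hookrightarrow A[\alpha_p]$. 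As in the proof of Proposition~\ref{prop_R_inf_tilde_property}, via \cite[Proposition 7.7.8]{EGAI}, we have
\[
\widetilde{R^{\psi_k}_\infty(k)^*}\simeq A[\alpha_p]\otimes_A R^{\psi_k}_\infty(k)^*,
\]
since $A[\alpha_p]$ is finite over $A$. Let $C:=A[\alpha_p]/A$, a finite $A$-module. Because the completed tensor product with $D$ is right exact on finite $A$-modules (Lemma~\ref{lem_inverse_limit_tensor_product}, or Proposition~\ref{prop_completed_tensor_product_ideal} applied to a presentation), the cokernel of $R^{\psi_k}_\infty(k)^*\to \widetilde{R^{\psi_k}_\infty(k)^*}$ is $C\widehat{\otimes}_\Oh D\simeq C\otimes_A R^{\psi_k}_\infty(k)^*$.

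The first direction is immediate: if $\iota$ is an isomorphism, then so is its base change. For the converse, suppose the patched map is an isomorphism. Then it is in particular surjective, so $C\otimes_A R^{\psi_k}_\infty(k)^*=0$. I then want to deduce $C=0$. Reduce modulo the ideal $\mathfrak{n}$ of $R^{\psi_k}_\infty(k)^*$ generated by the maximal ideal of $D$. Since $D/\mathfrak{m}_D=\F$, Proposition~\ref{prop_completed_tensor_product_ideal} gives
\[
R^{\psi_k}_\infty(k)^*/\mathfrak{n}\simeq A\widehat{\otimes}_\Oh \F = A/\varpi,
\]
and hence $C\otimes_A A/\varpi = C/\varpi C=0$. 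Because $C$ is a finite module over the local ring $A$ and $\varpi\in\mathfrak{m}_A$, Nakayama's lemma gives $C=0$. So $\iota$ is surjective, and it is injective by definition, being the inclusion of $A$ into $A[\alpha_p]$ (injective since $A$ is $\Oh$-flat, so $A\subset A[1/p]$).

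The point needing care is the identification of the cokernel after completed tensor product. Injectivity of $A\widehat{\otimes}D\to A[\alpha_p]\widehat{\otimes}D$ is not needed, since only surjectivity is used in the converse, but the cokernel must be computed correctly. The proof of Proposition~\ref{prop_R_inf_tilde_property} already supplies
\[
\widetilde{R^{\psi_k}_\infty(k)}\simeq A[\alpha_p]\otimes_A R_\infty^{\psi_k}(k),
\]
and the same argument applies to the $*$-versions. Right exactness of $\otimes_A$ then yields the cokernel formula, which reduces the converse to the Nakayama step. For the $*=\no$ case one uses that $A^{\no}[\alpha_p]=R_0$ is local and finite over $A^{\no}$, so the same argument goes through verbatim.
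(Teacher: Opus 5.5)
Your proposal is correct and follows essentially the paper's route. The forward direction is functoriality. For the converse, you base change the surjection along the residue map of the auxiliary factor (the paper phrases this as reducing modulo $\varpi$ and then using the structure of $B$), which makes $A/\varpi \to A[\alpha_p]/\varpi$ surjective, equivalently $C/\varpi C=0$, and you conclude by Nakayama over $A$. Your explicit computation of the cokernel as $C\otimes_A R^{\psi_k}_\infty(k)^*$, together with $R^{\psi_k}_\infty(k)^*/\mathfrak{n}\simeq A/\varpi$, is a cleaner version of the step the paper attributes to the explicit presentation of $B$: all that is actually needed there is that $D$ is local with residue field $\F$.
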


\begin{proof}
    One direction is straightforward from the definition of $R^{\psi_k}_\infty(k)^*$ 
and $\widetilde{R^{\psi_k}_\infty(k)^*}$. For the other direction, it is enough 
to show that
$$R^{\Box,\psi_k}_\rbar(k)^*\hookrightarrow R^{\Box,\psi_k}_\rbar(k)[\alpha_p]^*$$
is surjective. Since both rings are complete $\Oh$-modules, by Nakayama's lemma 
it suffices to show that
$$R^{\Box,\psi_k}_\rbar(k)^*\otimes_\Oh\F\to 
R^{\Box,\psi_k}_\rbar(k)[\alpha_p]^*\otimes_{\Oh}\F$$
is surjective. This follows from the explicit presentation of $B$ given in 
Lemma \ref{rmk_vexing_prime_local_deformation_ring}.
\end{proof}

\begin{prop}\label{prop_R_equal_R_ap}
    Suppose that $\rbar$ satisfies Assumption \ref{assumption_weaker}. 
    Let $k\ge 2$ and assume that $k(\rbar)\neq 1$. Then the inclusion
    $$R^{\Box,\psi}_\rbar(k)\hookrightarrow R^{\Box,\psi}_\rbar(k)[\alpha_p]$$
    is an isomorphism if and only if $k < pk(\rbar)$.
\end{prop}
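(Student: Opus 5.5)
First I reduce to the global setting. Under Assumption \ref{assumption_global_rhobar} the patching of \S\ref{section_patching} is available. When $\rbar$ is non-semisimple and satisfies Assumption \ref{assumption_semisimplification}, I reduce to $\rbarss$ via Proposition \ref{prop_versal_universal_ap_in_R_equivalent}, using the three cases according to which of $\chi_1,\chi_2$ is unramified. In each case I check that the numerical condition on $k$ is compatible: $k(\rbar)$ versus $k(\rbarss)$ and $k(\rbar)^{\no}$ are related as in the case analysis in the proof of Proposition \ref{prop_nonordinary_Serre_weight_local_meaning}. In case (1) the condition $R^{\Box,\psi}_\rbar(k)^{\no}=0$ becomes, via Proposition \ref{prop_nonordinary_Serre_weight_local_meaning}, $k<k(\rbar)^{\no}=pk(\rbar)$. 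In case (3), $k(\rbar)=pk(\rbarss)$, so I need the non-ordinary version of the global statement for $\rbarss$. For that reason I prove the global statement for both choices of $*$: $R(k)^*=R(k)^*[\alpha_p]$ if and only if $k<pk(\rbar)^*$.

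In the global case the preceding proposition reduces the claim to $R^{\psi_k}_\infty(k)^*=\widetilde{R^{\psi_k}_\infty(k)^*}$. Both rings are finite over $S_\infty$, complete, and the second is $\Oh$-flat. By Nakayama, equality holds if and only if $(\widetilde{R^{\psi_k}_\infty(k)^*}/R^{\psi_k}_\infty(k)^*)\otimes_\Oh\F=0$. By Lemma \ref{lem_Hecke_algebras_patch_to_deformation_rings}(2) this quotient is $\mathcal{P}(\{C(M(k,Q_n,\F)^*)^{\Box}\})$. Since $k(\rbar)\ne1$, Proposition \ref{prop_patching_the_coherent_cohomology}(2) applies with no restriction on $k$, so this patched module is finite free over $S_\infty\otimes_\Oh\F$ of rank $\dim_\F M(j(k),\varnothing,\F)^*$. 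Thus equality holds if and only if $M(j(k),\varnothing,\F)^*=0$.

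Remark \ref{rmk_anemic_full_iso_weight_range} then converts this into $k<pk(\rbar)^*$. For $*$ empty this uses Serre's conjecture (Khare--Wintenberger) together with Edixhoven's minimal weight. For $*=\no$ it uses Proposition \ref{prop_nonordinary_serre_weight_global_meaning}. One subtlety is that $j(k)$ could be $\le1$ or not even defined when $k<p$. In that range there are no forms anyway, since $k(\rbar)\ge 2$ forces $k<pk(\rbar)$, so the cokernel vanishes. I would also check that the dual of $C\otimes\F$ is $M(k,Q,\F)^*\cap\Image V_p$ even when $j(k)<2$, using Corollary \ref{cor_anemic_equal_full_criterion} directly, which needs no freeness.

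The main obstacle is the non-semisimple reduction, specifically case (1). There one must show that $k<pk(\rbar)$ forces $R^{\Box,\psi}_\rbar(k)^{\no}=0$, and conversely that $k\ge pk(\rbar)$ gives a non-ordinary irreducible crystalline lift. The converse is not literally Proposition \ref{prop_nonordinary_Serre_weight_local_meaning}, which only gives the minimal such weight. I would handle it by applying the global non-ordinary statement to $\rbarss$: the patched cokernel has positive rank for $k\ge pk(\rbar)^{\no}$, so the non-ordinary part is nonzero for all larger $k$ in the congruence class. I would then transfer this to $\rbar$ through Lemma \ref{lem_paskunas_versal_universal_crystalline}. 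Finally, Remark \ref{rmk_change_det_of_deformation_ring} handles a general $\psi$.
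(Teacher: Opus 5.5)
Your route is the paper's. The global case goes through the patched cokernel of rank $\dim_\F M(j(k),\varnothing,\F)^*$ together with Remark~\ref{rmk_anemic_full_iso_weight_range}. The non-semisimple case goes through the three cases of Proposition~\ref{prop_versal_universal_ap_in_R_equivalent}, with the non-ordinary global statement for $\rbarss$ used in case (3). You are also right that case (1) needs more than Proposition~\ref{prop_nonordinary_Serre_weight_local_meaning}. That proposition only locates the \emph{smallest} weight with an irreducible crystalline lift, and the paper cites it there without comment.

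However, the argument you give to close this does not reach the needed range. For $\rbarss$ with $*=\no$, the patched cokernel has rank $\dim_\F M(j(k),\varnothing,\F)^{\no}$. This is nonzero only when $j(k)\ge k(\rbarss)^{\no}$, i.e.\ when $k\ge p\,k(\rbarss)^{\no}$. In case (1) one has $k(\rbarss)^{\no}=k(\rbar)^{\no}=pk(\rbar)$, in both sub-cases ($\chi_1$ ramified, or both characters unramified). So your argument only gives a nonzero non-ordinary part for $k\ge p^2k(\rbar)$. You need it for every $k\ge pk(\rbar)$ in the congruence class, so the window $pk(\rbar)\le k<p^2k(\rbar)$ is left open. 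A nonzero cokernel is a much stronger condition than a nonzero ring.

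The fix is to test the non-ordinary ring itself. In the patching setup for $\rbarss$, Lemma~\ref{lem_quotient_of_patched_modules} gives $\widetilde{R^{\psi_k}_\infty(k)^{\no}}\otimes_{S_\infty}S_\infty/\mathfrak{m}_{S_\infty}\cong\T(M(k,\varnothing,\Oh)^{\no})\otimes_\Oh\F$. This is nonzero as soon as $M(k,\varnothing,\F)^{\no}\neq 0$.

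That space is nonzero for every $k\ge k(\rbarss)^{\no}$ with $k\equiv k(\rbarss)^{\no}\pmod{p-1}$. Multiplication by a power of the Hasse invariant is injective and commutes with $U_p$ in weights $\ge 2$. It therefore carries the nonzero space $M(k(\rbarss)^{\no},\varnothing,\F)^{\no}$ of Proposition~\ref{prop_nonordinary_serre_weight_global_meaning} into $M(k,\varnothing,\F)^{\no}$. Equivalently, you can use the surjections of Lemma~\ref{lem_Hecke_algebras_patch_to_deformation_rings}(3).

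It follows that $R^{\Box,\psi}_{\rbarss}(k)^{\no}[\alpha_p]\neq 0$. Being $\Oh$-flat, it has $\Qpbar$-points. By Proposition~\ref{prop_crystalline_characteristic_polynomial} these are irreducible crystalline lifts of $\rbarss$, since $\alpha_p$ is topologically nilpotent there. Lemma~\ref{lem_paskunas_versal_universal_crystalline} transfers them to $\rbar$, so $R^{\Box,\psi}_\rbar(k)^{\no}\neq 0$, as required.

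The same Hasse-invariant observation also supplies the ``only if'' halves of Remark~\ref{rmk_anemic_full_iso_weight_range}, which you use in the global step. There too, only a minimal-weight statement is quoted.
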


\begin{proof}
    Suppose that $\rbar$ satisfies Assumption \ref{assumption_global_rhobar}. By Lemma \ref{lem_Hecke_algebras_patch_to_deformation_rings}, the map 
    $$R^{\psi_k}_\infty(k)^*\to \widetilde{R^{\psi_k}_\infty(k)^*}$$ 
    is an isomorphism if and only if $\dim_\F M(j(k),\varnothing,\F)^*=0$. From Remark \ref{rmk_anemic_full_iso_weight_range}, this happens if and only if $k<k(\rbar)^*p$ when $k(\rbar)\neq 1.$ By the proposition above, we then have 
    $$R^{\Box,\psi_k}_\rbar(k)^*\hookrightarrow R^{\Box,\psi_k}_\rbar(k)^*[\alpha_p]$$ 
    is an isomorphism if and only if $k<k(\rbar)^*p$ when $k(\rbar)\neq 1.$ By a standard twisting argument, we can then replace $\psi_k$ with an unramified twist $\psi$ of $\psi_k$. 

    If $\rbar$ satisfies Assumption \ref{assumption_semisimplification}, we may assume it is non-semisimple, and thus $\rbar$ is ramified so that $k(\rbar)>1$ automatically. Its semisimplification satisfies Assumption \ref{assumption_global_rhobar}. 
    When $\chi_2$ is unramified, by Proposition \ref{prop_versal_universal_ap_in_R_equivalent}, Proposition \ref{prop_nonordinary_Serre_weight_local_meaning}, and Definition \ref{def_nonordinary_serre_weight}, the two deformation rings are equal if and only if 
    $$k(\rbar)\le k<k(\rbar)^{\no} = pk(\rbar).$$
    When $\chi_1$ and $\chi_2$ are both ramified, by Proposition \ref{prop_versal_universal_ap_in_R_equivalent} and what we have just proved for $R_\rbarss^{\Box,\psi}(k),$ this is equivalent to 
    $$k(\rbarss)\le k<pk(\rbarss).$$ 
    One checks by Definition \ref{def_Serre_weight} that $k(\rbar) = k(\rbarss).$
    When $\chi_1$ is unramified and $\chi_2$ is ramified, by Proposition \ref{prop_versal_universal_ap_in_R_equivalent} and what we have just proved for $R_\rbarss^{\Box,\psi}(k)^{\no},$ this is equivalent to 
    $$k(\rbarss)^{\no}\le k<pk(\rbarss).$$ 
    By the proof of Proposition \ref{prop_nonordinary_Serre_weight_local_meaning}, we have $k(\rbarss)^{\no}=k(\rbar)$, and the proof is complete.
\end{proof}

\begin{proof}[Proof of Theorem \ref{thm_crystalline_deformation_ring_Cohen_Macaulay}]
    When $k(\rbar)\neq 1,$ this follows directly from Theorem \ref{thm_R_ap_cohen_macaulay} and Proposition \ref{prop_R_equal_R_ap}. When $k(\rbar) = 1,$ we need to show $R^{\Box,\psi}_\rbar(1)$ is Cohen--Macaulay. But in this case, this is exactly the deformation ring parametrizing all the unramified lifts of $\rbar$ with a fixed determinant $\psi,$ which concerns lifting the Frobenius element only. Hence, it is formally smooth and thus Cohen--Macaulay. 
\end{proof}

\subsection{Density of Crystalline Points in Characteristic \texorpdfstring{$p$}{p}}
Let $k\ge2$ be an integer that is congruent to $k(\rbar)$ modulo $p-1$. Then the characters $\psi_{k}$ and $\psi_{k(\rbar)}$ are congruent modulo $\varpi,$ the deformation ring $R^{\Box,\psi_k}_\rbar\otimes_\Oh\F$ can be identified with $R^{\Box,\psi_{k(\rbar)}}_\rbar\otimes_\Oh\F.$ Therefore, the special fiber of the crystalline deformation ring $R^{\Box,\psi_{k}}_\rbar(k)$ can  be viewed as quotients of $R^{\Box,\psi_{k(\rbar)}}_\rbar\otimes_\Oh\F$, cut out by the ideal $$I_k:=\ker(R_\rbar^{\Box,\psi}\otimes_\Oh\F \surj R_\rbar^{\Box,\psi}(k)\otimes_\Oh\F).$$ In the rest of the subsection, we will write all the determinants as $\psi.$ 

We first prove Theorem \ref{thm_liminf_of_crystalline_is_everything} and then deduce 
Theorem \ref{thm_crystalline_stronger_topology} from it. Thus we work with the 
patching setup in \S \ref{section_patching}. 
Moreover, since Theorem \ref{thm_liminf_of_crystalline_is_everything} is asymptotic, 
it suffices to assume that the weights $k$ are large enough for the conditions in 
Lemma \ref{lem_Hecke_algebras_patch_to_deformation_rings} to hold. In the rest of this 
subsection, we will always assume $k \ge p^2$.

Consider the composition of maps $$R_\rbar^{\Box,\psi}\surj R_\rbar^{\Box,\psi}(k)\hookrightarrow R_\rbar^{\Box,\psi}(k)[\alpha_p]\xrightarrow{\sim}R_\rbar^{\Box,\psi}(k)^{\ord}[\alpha_p]\oplus R_\rbar^{\Box,\psi}(k)^{\no}[\alpha_p]\surj R_\rbar^{\Box,\psi}(k)[\alpha_p]^{\no}.$$ Tensor it with $\F$ and we get $$R_\rbar^{\Box,\psi}\otimes_\Oh\F \surj R_\rbar^{\Box,\psi}(k)\otimes_\Oh\F \to R_\rbar^{\Box,\psi}(k)^{\no}[\alpha_p]\otimes_\Oh\F.$$ Let $J_k$ be the kernel of the map. It is clear that $J_k\supseteq I_k.$
Since the map $$R_\rbar^{\Box,\psi}\otimes_\Oh\F\to R_\rbar^{\Box,\psi}(k)^{\no}[\alpha_p]\otimes_\Oh\F$$ factors through $R_\rbar^{\Box,\psi}(k)^{\no}\otimes_\Oh\F,$ we have the identification  $$R_\rbar^{\Box,\psi}\otimes_\Oh\F/J_k\xrightarrow{\sim}\Image(R_\rbar^{\Box,\psi}(k)^{\no}\otimes_\Oh\F\to R_\rbar^{\Box,\psi}(k)^{\no}[\alpha_p]\otimes_\Oh\F).$$
Recall that $$R_\infty^{\psi}\otimes_\Oh\F = (R^{\Box,\psi}_\rbar\otimes_\Oh\F) \widehat{\otimes}_\F B$$ where $B$ is defined in \eqref{eqn_def_B}. Define $J_k^e$ to be the extension of $J_k$ in $R_\infty^{\psi}\otimes_\Oh\F.$ Our idea is to study the ideals $J_k^e$ and transfer the information back to $J_k.$ There are two ways to do this. The inclusion $$R^{\Box,\psi}_\rbar\otimes_\Oh\F \hookrightarrow R^{\psi}_\infty\otimes_\Oh\F$$ has a retraction given by reduction modulo the maximal ideal $\mathfrak{m}_B$ of $B,$ meaning that we have the commutative diagram \begin{equation}\label{eqn_retraction}
\begin{tikzcd}
    R^{\Box,\psi}_\rbar\otimes_\Oh\F \arrow[r,hook]\arrow[rd,"{\rm id}"] & R^{\psi}_\infty\otimes_\Oh\F\arrow[d,"\mod\mathfrak{m}_B"] \\&R^{\Box,\psi}_\rbar\otimes_\Oh\F
\end{tikzcd}.
\end{equation} The same holds true for $\Image\left(R_\rbar^{\Box,\psi}(k)^{\no}\otimes_\Oh\F \to R_\rbar^{\Box,\psi}(k)^{\no}[\alpha_p]\otimes_\Oh\F\right)$ 
and $$\Image\left(R_\rbar^{\Box,\psi}(k)^{\no}\otimes_\Oh\F \to R_\rbar^{\Box,\psi}(k)^{\no}[\alpha_p]\otimes_\Oh\F\right)\widehat{\otimes}_\F B,$$ i.e., there are commutative diagrams 
\begin{equation}\label{eqn_retraction_k}\begin{tikzcd} 
    \Image(R^{\Box,\psi}_\rbar(k)^{\no}\otimes_\Oh\F \to R^{\Box,\psi}_\rbar(k)^{\no}[\alpha_p]\otimes_\Oh\F)\arrow[r,hook]\arrow[rd,"{\rm id}"] & \Image(R^{\Box,\psi}_\rbar(k)^{\no}\otimes_\Oh\F \to R^{\Box,\psi}_\rbar(k)^{\no}[\alpha_p]\otimes_\Oh\F) \widehat{\otimes}_\F B\arrow[d,"\mod\mathfrak{m}_B"] \\&\Image(R^{\Box,\psi}_\rbar(k)^{\no}\otimes_\Oh\F \to R^{\Box,\psi}_\rbar(k)^{\no}[\alpha_p]\otimes_\Oh\F)
\end{tikzcd}\end{equation} for every weight $k\ge p^2.$ On the one hand, we have $J_k^{ec} = J_k$. To see this, by Proposition \ref{prop_completed_tensor_product_ideal}, there is a short exact sequence $$0\to J_k^e\to R_\infty^{\psi}\otimes_\Oh\F\to (R^{\Box,\psi}_\rbar/J_k + (\varpi))\widehat{\otimes}_\F B\to 0.$$ In other words, the ideal $J_k^e$ is the kernel of 
$$R_\infty^{\psi}\otimes_\Oh\F\surj (R^{\Box,\psi}_\rbar/J_k + (\varpi))\widehat{\otimes}_\F B\xrightarrow{\sim}\Image(R^{\Box,\psi}_\rbar(k)^{\no}\otimes_\Oh\F \to R^{\Box,\psi}_\rbar(k)^{\no}[\alpha_p]\otimes_\Oh\F)\widehat{\otimes}_\F B.$$
Now $J_k^{ec} = J_k$ follows from the fact that the horizontal map in the diagram \eqref{eqn_retraction_k} is injective. On the other hand, we have $(J_k^e+\mathfrak{m}_B \mod \mathfrak{m}_B) \xrightarrow{\sim} J_k.$ This is because \begin{align*}
    \frac{R^{\Box,\psi}_\rbar\otimes_\Oh\F}{J_k^e \mod \mathfrak{m}_B}\xrightarrow{\sim}R^{\psi}_\infty\otimes_\Oh\F/(J_k^e+\mathfrak{m}_B) &\xrightarrow{\sim}\frac{R^{\psi}_\infty\otimes_\Oh\F/J_k^e}{(J_k^e+\mathfrak{m}_B)/J_k^e}\\&\xrightarrow{\sim} \frac{\Image(R^{\Box,\psi}_\rbar(k)^{\no}\otimes_\Oh\F \to R^{\Box,\psi}_\rbar(k)^{\no}[\alpha_p]\otimes_\Oh\F)\widehat{\otimes}_\F B}{\mathfrak{m}_B} \\&\xrightarrow{\sim} \Image(R^{\Box,\psi}_\rbar(k)^{\no}\otimes_\Oh\F \to R^{\Box,\psi}_\rbar(k)^{\no}[\alpha_p]\otimes_\Oh\F).
\end{align*} As a result, we may regard $J_k$ either as the restriction of $J_k^e$ or as its reduction modulo~$\mathfrak{m}_B$. Each point of view has its own advantage, and we will adopt one or the other depending on our needs.

\begin{prop} \label{prop_J_k_e_defining_ideal}
    The ideal $J_k^e$ is the kernel of $$R_\infty^\psi\otimes_\Oh\F\surj \Image(R_\infty^{\psi}(k)^{\no}\otimes_\Oh\F\to \widetilde{R_\infty^{\psi}(k)^{\no}}\otimes_\Oh\F)$$
\end{prop}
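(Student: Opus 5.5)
We already know from the discussion just before the statement that $J_k^e$ is the kernel of
\[
R_\infty^{\psi}\otimes_\Oh\F\twoheadrightarrow \Image\bigl(R^{\Box,\psi}_\rbar(k)^{\no}\otimes_\Oh\F \to R^{\Box,\psi}_\rbar(k)^{\no}[\alpha_p]\otimes_\Oh\F\bigr)\widehat{\otimes}_\F B .
\]
This map arises from Proposition \ref{prop_completed_tensor_product_ideal} together with the identification $R^{\Box,\psi}_\rbar\otimes_\Oh\F/J_k\simeq \Image(\cdots)$. So the plan is to identify the target above with $\Image\bigl(R_\infty^{\psi}(k)^{\no}\otimes_\Oh\F\to \widetilde{R_\infty^{\psi}(k)^{\no}}\otimes_\Oh\F\bigr)$, compatibly with the maps from $R_\infty^\psi\otimes_\Oh\F$.

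First I would unwind the definitions. We have $R_\infty^{\psi}(k)^{\no}\otimes_\Oh\F = (R^{\Box,\psi}_\rbar(k)^{\no}\otimes_\Oh\F)\widehat{\otimes}_\F B$. Likewise, as computed in the proof of Proposition \ref{prop_R_inf_tilde_property},
\[
\widetilde{R_\infty^{\psi}(k)^{\no}}\otimes_\Oh\F=(R^{\Box,\psi}_\rbar(k)^{\no}[\alpha_p]\otimes_\Oh\F)\widehat{\otimes}_\F B .
\]
The map between them is $f\widehat{\otimes}\mathrm{id}_B$, where
\[
f:\ R^{\Box,\psi}_\rbar(k)^{\no}\otimes_\Oh\F\to R^{\Box,\psi}_\rbar(k)^{\no}[\alpha_p]\otimes_\Oh\F .
\]
Write $A=\operatorname{source}(f)$, $I=\Image f$ and $C=\operatorname{target}(f)$, so that $f$ factors as $A\twoheadrightarrow I\hookrightarrow C$. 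The statement then reduces to the claim that $\Image(f\widehat{\otimes}\mathrm{id}_B)=I\widehat{\otimes}_\F B$.

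Next I would treat the two halves of the factorization separately. For the surjection $A\twoheadrightarrow I=A/\ker f$, Proposition \ref{prop_completed_tensor_product_ideal} gives that $A\widehat{\otimes}_\F B\to I\widehat{\otimes}_\F B$ is surjective. For the injection $I\hookrightarrow C$, Proposition \ref{prop_completed_tensor_product_in_char_p}(1) gives injectivity of $I\widehat{\otimes}_\F B\to C\widehat{\otimes}_\F B$; here $I$ and $C$ are complete local $\F$-algebras in $\mathcal{C}_\Oh$, being a quotient of $A$ and a finite $A$-algebra that is local (the non-ordinary summand $R_0$). Hence the image of $f\widehat{\otimes}\mathrm{id}$ is $I\widehat{\otimes}_\F B$.

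Finally I would check compatibility of the maps. The composite $R_\infty^\psi\otimes_\Oh\F\to R_\infty^\psi(k)^{\no}\otimes_\Oh\F\to I\widehat{\otimes}_\F B$ is $(R^{\Box,\psi}_\rbar\otimes_\Oh\F\twoheadrightarrow I)\widehat{\otimes}\mathrm{id}_B$, which is exactly the map whose kernel was shown to be $J_k^e$. This concludes the argument.

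The main subtlety I expect is the bookkeeping of completed tensor products where one factor is finite over, but not a quotient of, the other. That is, one has to justify that $C\widehat{\otimes}_\F B$ computed as in Remark \ref{rmk_R_ap_completed_tensor_product_direct_sum} agrees with the completed tensor product of local objects, so that Proposition \ref{prop_completed_tensor_product_in_char_p}(1) applies. The remark handles this by noting that the $\mathfrak{m}$-adic and $\mathfrak{m}_0$-adic topologies on $R_0$ coincide.
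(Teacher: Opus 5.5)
Your proposal is correct and takes essentially the same route as the paper. The paper obtains the map $I\widehat{\otimes}_\F B\to\Image\bigl(R_\infty^{\psi}(k)^{\no}\otimes_\Oh\F\to\widetilde{R_\infty^{\psi}(k)^{\no}}\otimes_\Oh\F\bigr)$ from the universal property and notes that it is automatically surjective. It then gets injectivity by composing with the inclusion into $(R^{\Box,\psi}_\rbar(k)^{\no}[\alpha_p]\otimes_\Oh\F)\widehat{\otimes}_\F B$ and invoking Proposition \ref{prop_completed_tensor_product_in_char_p}(1), which is exactly your factorization $A\twoheadrightarrow I\hookrightarrow C$.
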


\begin{proof}
    By the universal property of completed tensor product, there is a unique map $$\Image(R^{\Box,\psi}_\rbar(k)^\no\otimes_\Oh\F \to R^{\Box,\psi}_\rbar(k)^\no[\alpha_p]\otimes_\Oh\F)\widehat{\otimes}_\F B\to \Image(R_\infty^\psi(k)^\no\otimes_\Oh\F\to \widetilde{R_\infty^\psi(k)^\no}\otimes_\Oh\F)$$ which fits into the commutative diagram $$\begin{tikzcd}
        R_\infty^\psi\otimes_\Oh\F \arrow[r,two heads]\arrow[dr,two heads] & \Image(R^{\Box,\psi}_\rbar(k)^\no\otimes_\Oh\F \to R^{\Box,\psi}_\rbar(k)^\no[\alpha_p]\otimes_\Oh\F)\widehat{\otimes}_\F B  \arrow[r]\arrow[d]& \widetilde{R_\infty^{\psi}(k)^\no}\otimes_\Oh\F\\
         & \Image(R_\infty^\psi(k)^\no\otimes_\Oh\F\to \widetilde{R_\infty^\psi(k)^\no}\otimes_\Oh\F)\arrow[ur,hook] & 
    \end{tikzcd}.$$ The vertical map is therefore automatically surjective.  As explained above,
our goal is to show that it is an isomorphism.  To prove injectivity, it
suffices to check that the composite $$\Image(R^{\Box,\psi}_\rbar(k)^\no\otimes_\Oh\F \to R^{\Box,\psi}_\rbar(k)^\no[\alpha_p]\otimes_\Oh\F)\widehat{\otimes}_\F B  \to \widetilde{R_\infty^{\psi}(k)^\no}\otimes_\Oh\F$$ is an injection. By definition, the ring $\widetilde{R_\infty^{\psi}(k)^\no}\otimes_\Oh\F$ is $$(R^{\Box,\psi}_\rbar(k)^\no[\alpha_p]\otimes_\Oh\F)\widehat{\otimes}_\F B.$$ Now the injectivity follows from Proposition \ref{prop_completed_tensor_product_in_char_p}.
\end{proof}

From now on, by $\varprojlim_k$ we mean taking the inverse limit over positive integers $k\ge p^2$ that are congruent to $k(\rbar)$ modulo $p-1.$ Consider the ring $$\varprojlim_k \Image(R_\infty^{\psi}(k)^\no\otimes_\Oh\F\to \widetilde{R_\infty^{\psi}(k)^\no}\otimes_\Oh\F),$$ where the transition maps are the surjections constructed in Lemma \ref{lem_Hecke_algebras_patch_to_deformation_rings}. So this is a surjective inverse system of finite free $S_\infty\otimes_\Oh\F$ modules. In particular, both Lemma \ref{lem_inverse_limit_exactness} and Lemma \ref{lem_inverse_limit_tensor_product} apply. By Proposition \ref{prop_J_k_e_defining_ideal}, we can rewrite this ring as $$\varprojlim_k R_\infty^{\psi}\otimes_\Oh\F/J_k^e.$$ The ring homomorphism $$R_\infty^{\psi}\otimes_\Oh\F\to \varprojlim_k R_\infty^{\psi}\otimes_\Oh\F/J_k^e$$ has kernel $$\mathfrak{a} := \bigcap_{k\equiv k(\rbar)\pmod{p-1}}J_k^e.$$

\begin{prop}\label{prop_surjection_R_inf_to_inverse_limit}
    The ring homomorphism $$R_\infty^{\psi}\otimes_\Oh\F\to \varprojlim_k R_\infty^{\psi}\otimes_\Oh\F/J_k^e$$ is surjective. In particular, the ring $\varprojlim_k R_\infty^{\psi}\otimes_\Oh\F/J_k^e$ is Noetherian. 
\end{prop}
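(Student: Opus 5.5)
The plan is to reduce surjectivity to a completeness statement for the $S_\infty$-module structure, using that all the quotients $R_\infty^{\psi}\otimes_\Oh\F/J_k^e$ are finite free over $S_\infty\otimes_\Oh\F$ and are compatible quotients of the single ring $R_\infty^{\psi}\otimes_\Oh\F$. By Proposition~\ref{prop_J_k_e_defining_ideal} and Lemma~\ref{lem_Hecke_algebras_patch_to_deformation_rings}(2), each $A_k := R_\infty^{\psi}\otimes_\Oh\F/J_k^e$ is identified with $\Image(R_\infty^{\psi}(k)^{\no}\otimes_\Oh\F\to \widetilde{R_\infty^{\psi}(k)^{\no}}\otimes_\Oh\F)$. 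This is finite free over $S_\infty\otimes_\Oh\F$ (here we use $k\ge p^2$), and the transition maps are surjective. The map in question is the limit of the compatible surjections $R_\infty^\psi\otimes_\Oh\F\surj A_k$. It suffices to show that the image of $R_\infty^\psi\otimes_\Oh\F$ is dense and closed in $\varprojlim_k A_k$ for a suitable topology, namely the $\mathfrak{m}_{S_\infty}$-adic topology.

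First, tensor with $S_\infty/\mathfrak{m}_{S_\infty}^i$ for each $i\ge 1$. The system $\{A_k\}$ is a surjective system of free modules, so Lemma~\ref{lem_inverse_limit_tensor_product}(b) gives
\[
\Bigl(\varprojlim_k A_k\Bigr)\otimes_{S_\infty}S_\infty/\mathfrak{m}_{S_\infty}^i \xrightarrow{\sim} \varprojlim_k A_k/\mathfrak{m}_{S_\infty}^i.
\]
Each $A_k/\mathfrak{m}^i_{S_\infty}$ is a quotient of the fixed finite-length ring $R_\infty^\psi\otimes_\Oh\F/\mathfrak{m}_{S_\infty}^i$; this ring has finite length because $R_\infty^\psi(k)$, hence $R_\infty^\psi\otimes_\Oh\F$ modulo the relevant ideals, is finite over $S_\infty$. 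A descending chain of quotients of a finite-length module stabilizes. Hence the limit equals $A_{k_i}/\mathfrak{m}^i_{S_\infty}$ for some $k_i$, and the map $R_\infty^\psi\otimes_\Oh\F\to \varprojlim_k A_k/\mathfrak{m}^i_{S_\infty}$ is surjective.

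The main obstacle is finiteness. $R_\infty^\psi\otimes_\Oh\F$ itself is not finite over $S_\infty$: it has dimension $4\#T(\rhobar)+r+5$, exceeding that of $S_\infty\otimes\F$. So "finite length modulo $\mathfrak{m}_{S_\infty}^i$" must instead be argued via ranks. The rank of $A_k$ over $S_\infty\otimes\F$ might grow with $k$, but modulo $\mathfrak{m}_{S_\infty}^i$ we need a bound. I would argue as follows. By Lemma~\ref{lem_quotient_of_patched_modules}, $A_k\otimes S_\infty/\mathfrak{m}_{S_\infty}\cong \T^p(M(k,\varnothing,\F)^{\no})$, and this is a quotient of $\T^p$ acting on the non-ordinary part of the $\theta$-cycle space. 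That space has bounded dimension by Jochnowitz-type finiteness: only finitely many mod-$p$ systems of eigenvalues occur, and the Hecke algebra $\T^p(M(k,\varnothing,\F))$, being the image of the full mod-$p$ Hecke algebra on all weights, is a quotient of the complete Noetherian ring $\varprojlim_k\T^p(M(k,\varnothing,\F))$. Granting this boundedness modulo $\mathfrak{m}_{S_\infty}$, the ranks of $A_k$ are bounded by some $d$, so $A_k/\mathfrak{m}^i$ has length at most $d\cdot\ell(S_\infty/\mathfrak{m}^i)$ and the chain stabilizes.

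Finally, pass to the limit over $i$. We have surjections $R_\infty^\psi\otimes\F\surj \varprojlim_k A_k/\mathfrak{m}^i$ whose kernels form a surjective system, since they are compatible quotients. Lemma~\ref{lem_inverse_limit_exactness} then gives surjectivity onto $\varprojlim_i\varprojlim_k A_k/\mathfrak{m}^i=\varprojlim_k A_k$, using that finite free modules are $\mathfrak{m}_{S_\infty}$-adically complete. Noetherianity then follows because $\varprojlim_k A_k$ is a quotient of the Noetherian ring $R_\infty^\psi\otimes_\Oh\F$.
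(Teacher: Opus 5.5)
The gap is your claim that the ranks are bounded. You are right that $R_\infty^{\psi}\otimes_\Oh\F/\mathfrak{m}_{S_\infty}^i$ is not of finite length, but the replacement argument does not work. By Lemma~\ref{lem_quotient_of_patched_modules}, the rank of $A_k$ over $S_\infty\otimes_\Oh\F$ is $\dim_\F \T^{p}(M(k,\varnothing,\F)^{\no})$, and these dimensions are unbounded in $k$. Jochnowitz's finiteness only bounds the number of mod-$p$ eigensystems, i.e.\ maximal ideals. It says nothing about the length of the Hecke algebra localized at a fixed one. Being a quotient of the Noetherian ring $\varprojlim_k \T^p(M(k,\varnothing,\F))$ gives no bound either.

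In fact bounded dimensions would contradict the paper. The surjective system $\{\T^{p}(M(k,\varnothing,\F)^{\no})\}_k$ would stabilize, so its limit would be finite-dimensional over $\F$. But Theorem~\ref{thm_Deo} says this limit has Krull dimension at least $2$, and Lemma~\ref{lem_intersection_J_k_extension_nilpotent} relies on exactly that.

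So the chain $A_k/\mathfrak{m}_{S_\infty}^iA_k$ never stabilizes, and you have not shown surjectivity onto $\varprojlim_k A_k/\mathfrak{m}_{S_\infty}^iA_k$ even for $i=1$. The case $i=1$ is precisely what the paper imports from Deo: the map $R^{\univ,\psi}_\varnothing\otimes_\Oh\F\to\varprojlim_k\T^{pN_\varnothing}(M(k,\varnothing,\F)^{\no})$ is surjective. The paper then checks that $\varprojlim_k A_k$ is $\mathfrak{m}_{S_\infty}$-adically complete and separated, using that each $A_k$ is finite over $S_\infty$. It concludes by Nakayama's lemma for complete modules.

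Your last step also needs repair. The kernels of $R_\infty^\psi\otimes_\Oh\F\to\varprojlim_kA_k/\mathfrak{m}_{S_\infty}^iA_k$ form a descending chain of ideals under inclusion. Such a system is Mittag--Leffler only if the chain stabilizes, so Lemma~\ref{lem_inverse_limit_exactness} does not apply. The correct passage is successive approximation using the completeness of $R_\infty^\psi\otimes_\Oh\F$, and it only needs the case $i=1$.

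The gap can also be closed with no modular input. Set $R:=R_\infty^{\psi}\otimes_\Oh\F$; it is complete local Noetherian, and $\{J_k^e\}$ is descending with intersection $\mathfrak{a}$. Chevalley's lemma applied to $R/\mathfrak{a}$ gives $J_k^e\subseteq\mathfrak{a}+\mathfrak{m}_R^n$ for $k\gg0$. Proposition~\ref{prop_strong_top_inverse_limit_equivalence}, applied to $R/\mathfrak{a}$ and the ideals $J_k^e/\mathfrak{a}$, then gives $R/\mathfrak{a}\xrightarrow{\sim}\varprojlim_k R/J_k^e$. The missing ingredient is therefore either this formal argument or Deo's surjectivity; bounded ranks are not available.
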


\begin{proof}
    Modulo the maximal ideal of $S_\infty,$ the map gives rise to a ring homomorphism $$R_\varnothing^{\univ,\psi}\otimes_\Oh\F \to (\varprojlim_k R_\infty^{\psi}\otimes_\Oh\F/J_k^e)\otimes_{S_\infty/\varpi}S_\infty/\mathfrak{m}_{S_\infty}\xrightarrow{\sim}\varprojlim_k R_\infty^{\psi}\otimes_\Oh\F/J_k^e\otimes_{S_\infty/\varpi}S_\infty/\mathfrak{m}_{S_\infty},$$ where the isomorphism follows from Lemma \ref{lem_inverse_limit_tensor_product}. By Lemma \ref{lem_quotient_of_patched_modules} and Lemma \ref{lem_U_ell_in_anemic}, we may identify the right-hand term with $$\varprojlim_k \T^{pN_\varnothing}(M(k,\varnothing,\F)^\no).$$ By Deo's theorem below, the homomorphism is surjective modulo $\mathfrak{m}_{S_\infty}$. Once we can show that $\varprojlim_k R_\infty^{\psi}\otimes_\Oh\F/J_k^e$ is a complete $S_\infty$-module, then the proposition follows from Nakayama's lemma for complete modules. To see this, we complete $\varprojlim_k R_\infty^{\psi}\otimes_\Oh\F/J_k^e$: $$\varprojlim_n\left((\varprojlim_k R_\infty^{\psi}\otimes_\Oh\F/J_k^e)\otimes_{S_\infty/\varpi}S_{\infty}/\mathfrak{m}_{S_\infty}^n\right).$$ Again by Lemma \ref{lem_inverse_limit_tensor_product} and commutativity of inverse limits, this is isomorphic to $$\varprojlim_n\varprojlim_k (R_\infty^\psi\otimes_\Oh\F/J_k^e\otimes_{S_\infty/\varpi}S_\infty/\mathfrak{m}_{S_\infty}^n) \xrightarrow{\sim} \varprojlim_k\varprojlim_n (R_\infty^\psi\otimes_\Oh\F/J_k^e\otimes_{S_\infty/\varpi}S_\infty/\mathfrak{m}_{S_\infty}^n).$$ 
    For each $k$, we have $$R_\infty^{\psi}\otimes_\Oh\F/J_k^e\xrightarrow{\sim} \Image(R_\infty^{\psi}(k)^\no\otimes_\Oh\F\to \widetilde{R_\infty^{\psi}(k)^\no}\otimes_\Oh\F),$$ which is a quotient of $R_\infty^\psi(k)\otimes_\Oh\F.$ Since $R_\infty^{\psi}(k)\otimes_\Oh\F$ is finite over $S_\infty/\varpi$ by the proof of Proposition \ref{prop_patching_the_coherent_cohomology}, $R_\infty^{\psi}\otimes_\Oh\F/J_k^e$ is a finite $S_\infty$-module, in
particular complete. Therefore, the double limit above reduces to $\varprojlim_k R_\infty^{\psi}\otimes_\Oh\F/J_k^e$, showing that this is a complete $S_\infty$-module. 
\end{proof}

\begin{thm}[{\cite[Theorem 1]{deo_2017_structure_of_hecke_algebras}}]\label{thm_Deo}
    The ring homomorphism $$R_\varnothing^{\univ,\psi}\otimes_\Oh\F\to \varprojlim_k \T^{pN_\varnothing}(M(k,\varnothing,\F)^\no)$$ is a surjection. Assume that $p\ge 5.$ Then both rings have Krull dimension at least $2. $
\end{thm}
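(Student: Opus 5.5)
This is Deo's theorem (cited), so the proof is essentially a reduction to his results, phrased in the setup of the paper.

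For surjectivity, I would first identify $\varprojlim_k \T^{pN_\varnothing}(M(k,\varnothing,\F)^\no)$ with a big Hecke algebra. By Corollary~\ref{cor_finite_level_surjection}, multiplication by the Hasse invariant $A$ gives $\T^{p}$-equivariant surjections $\T^{pN_\varnothing}(M(k+p-1,\varnothing,\F)^\no)\surj \T^{pN_\varnothing}(M(k,\varnothing,\F)^\no)$. These are the transition maps. The limit is then the faithful Hecke algebra acting on the union $\bigcup_k M(k,\varnothing,\F)^\no$ inside $\F\llbracket q\rrbracket$, localized at $\mathfrak{m}_\varnothing$.

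Each finite-level Hecke algebra is topologically generated by $T_\ell,\langle\ell\rangle$ for $\ell\nmid pN_\varnothing$. By Chebotarev density and Carayol's lemma, the modular deformation $\rho^{\rm mod}_\varnothing$ (twisted to have determinant $\psi$, as in \eqref{eqn_R_global_to_T_global}) realizes these generators as traces and determinants. This gives compatible surjections $R^{\univ,\psi}_\varnothing\otimes_\Oh\F\surj \T^{pN_\varnothing}(M(k,\varnothing,\F)^\no)$. The compatibility holds because the transition maps send $T_\ell\mapsto T_\ell$. Since the finite-level algebras are finite and the images are compatible, a standard compactness argument shows the map to the inverse limit is surjective: the image is closed, being the image of a compact complete local ring, and it is dense. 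In the $\F$-coefficient setting there is one subtlety: $\rho^{\rm mod}$ exists only as a pseudo-representation or determinant. Because $\rhobar$ is absolutely irreducible, Carayol or Nyssen--Rouquier upgrades it to a genuine representation.

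For the dimension bound with $p\ge5$, the plan is to follow Bella\"iche--Khare and Deo. I would show that the limit Hecke algebra $\T^\no$ contains a nonconstant power series ring $\F\llbracket X\rrbracket$ over which it is not finite, or equivalently exhibit a chain of two primes. Concretely, I would use the operator $\theta$ of Katz (Theorem~\ref{thm_katz_theta}) and the twisting isomorphism $\rhobar\mapsto\rhobar\otimes\epsilon$. These produce infinitely many distinct characteristic-$p$ eigensystems on the non-ordinary part, coming from varying weights, with $a_p=0$ reductions. The goal is to show the $\T^\no$-module $\bigcup_k M(k,\varnothing,\F)^\no$ is not of finite length and is not supported on a $1$-dimensional locus. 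The key input is Deo's result that the reducibility and $\gamma$-deformation ideal structure of $R^{\univ,\psi}_\varnothing$ is reached: the unrestricted pseudo-deformations of $\det$ give $\F\llbracket\cdot\rrbracket$ in the weight direction, via $\langle \ell\rangle$ acting through $\chi_\cyc^{k-1}$ with $k$ varying $p$-adically. Non-ordinariness gives a second independent direction. Finally, $\dim R^{\univ,\psi}_\varnothing\otimes\F\ge\dim\T^\no$ by surjectivity.

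The main obstacle is this lower bound on Krull dimension: producing two genuinely independent directions in $\T^\no$. In practice I would import it directly from \cite{deo_2017_structure_of_hecke_algebras}, checking that his level hypotheses and the restriction $p\ge5$ match our $N_\varnothing$ and nebentypus $\chi$.
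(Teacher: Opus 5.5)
Your surjectivity argument is sound and more self-contained than the paper's. Compatible surjections $R^{\univ,\psi}_\varnothing\otimes_\Oh\F\surj\T^{pN_\varnothing}(M(k,\varnothing,\F)^\no)$ at each weight, together with the fact that each kernel is open in a compact ring, do give surjectivity onto the limit. The paper instead takes surjectivity onto $\varprojlim_k\T^{pN_\varnothing}(M(k,\varnothing,\F))$ from Deo and passes to the non-ordinary quotient via Mittag--Leffler.

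The gap is in the Krull dimension bound. Deo's Theorem~1 concerns the full local component $\varprojlim_k\T^{pN_\varnothing}(M(k,\varnothing,\F))$ and assumes $p\nmid\phi(N)$. A lower bound on Krull dimension does not descend to quotients, so importing Deo's bound says nothing yet about $\varprojlim_k\T^{pN_\varnothing}(M(k,\varnothing,\F)^\no)$, and checking that level hypotheses match does not address this. The paper supplies exactly the two missing points:
\begin{itemize}
\item Because the nebentypus $\chi$ is fixed and of prime-to-$p$ order, Deo's pseudo-deformation (his Lemma~4) has constant determinant, so his Lemma~5 holds without $p\nmid\phi(N)$.
\item Deo's argument runs for the non-ordinary quotient because its crucial input, his Proposition~17(1), is provided for the non-ordinary part by Jochnowitz's Theorem~6.3.
\end{itemize}
Another way to close the gap would be to show that the ordinary limit is Artinian, since ordinary mod-$p$ forms have bounded filtration; then the full and non-ordinary limits have the same dimension. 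Some argument of this kind is needed.

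Your heuristic sketch cannot replace this, for three reasons.
\begin{itemize}
\item Here $\langle\ell\rangle$ acts by $\chi(\ell)$, and $\chi_\cyc^{k-1}\equiv\epsilon^{k(\rbar)-1}\pmod\varpi$ for every $k\equiv k(\rbar)\pmod{p-1}$. So the determinant over the limit Hecke algebra is constant, and there is no ``weight direction'' in characteristic $p$; in any case $R^{\univ,\psi}_\varnothing$ has fixed determinant.
\item $\theta$ and twisting by $\epsilon$ send the eigensystem to the twisted maximal ideal $\mathfrak{m}_\varnothing^{\{pN_\varnothing\}}(1)$. They do not produce new points of the same local component, which has only one $\Fpbar$-valued eigensystem.
\item Infinite length of $\bigcup_k M(k,\varnothing,\F)^\no$ only gives dimension $\ge 1$. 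The ``second independent direction'' is precisely what has to be proved.
\end{itemize}
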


\begin{remark}
This is a slightly modified version of Deo's theorem. His original statement assumes that $p\nmid \phi(N)$ and is stated for the Hecke algebra $\T^{pN_\varnothing}(M(k,\varnothing,\F))$. 

In our setup \S \ref{subsection_hecke_modules_and_algebras}, we work with level structure $\Gamma_1$ equipped with a fixed Nebentypus character $\chi$ whose order is prime to $p$. Under these assumptions, the pseudo-deformation map constructed by Deo in \cite[Lemma 4]{deo_2017_structure_of_hecke_algebras} still has constant determinant. Consequently, his proof of \cite[Lemma 5]{deo_2017_structure_of_hecke_algebras} applies in our situation without the extra assumption $p\nmid \phi(N)$.

Moreover, since $\T^p(M(k,\varnothing,\F))$ is a finite $\F$-algebra, the Mittag--Leffler condition is satisfied, giving a surjection
\[
\varprojlim_k \T^p(M(k,\varnothing,\F)) \;\;\surj\;\; \varprojlim_k \T^p(M(k,\varnothing,\F)^\no).
\]
Hence, the surjection in Theorem~\ref{thm_Deo} follows from Deo's original result. One can also check that all of Deo's arguments remain valid for the quotient ring $\varprojlim_k \T^p(M(k,\varnothing,\F)^\no)$, since the crucial step \cite[Proposition 17 (1)]{deo_2017_structure_of_hecke_algebras} is established for this quotient by Jochnowitz \cite[Theorem 6.3]{jochnowitz1982study}.
\end{remark} 

Recall from \S \ref{subsection_setup_for_patching} that $S_\infty = \Oh\llbracket X_{v,i,j},y_1,\ldots,y_r \rrbracket_{v\in T(\rhobar)\sqcup\{p\},i,j=1,2,vij\neq p}.$ Thus $S_\infty$ is a formal power series ring in $(\dim S_\infty - 1)$ variables where $$\dim S_\infty - 1 = 4\# (T(\rhobar)\sqcup\{p\}) - 1+ r = 4\#T(\rhobar) + r + 3.$$ Let $d_1,\ldots,d_{\dim S_\infty-1}$ denote the image of these variables in $S_\infty\otimes_\Oh\F.$

\begin{prop}\label{prop_regular_sequence_in_inverse_limit}
    The sequence $d_1,\ldots,d_{\dim S_\infty-1}$ is a regular sequence in $\varprojlim_k R_\infty^{\psi}\otimes_\Oh\F/J_k^e.$
\end{prop}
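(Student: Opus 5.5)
The plan is to prove the regularity at each finite level and then pass to the inverse limit. Write $A_k := R_\infty^{\psi}\otimes_\Oh\F/J_k^e$. By Proposition~\ref{prop_J_k_e_defining_ideal}, $A_k$ is identified with $\Image(R_\infty^{\psi}(k)^{\no}\otimes_\Oh\F\to \widetilde{R_\infty^{\psi}(k)^{\no}}\otimes_\Oh\F)$. Since we assume $k\ge p^2$, Lemma~\ref{lem_Hecke_algebras_patch_to_deformation_rings}(2) shows this is finite free over $S_\infty\otimes_\Oh\F=\F\llbracket d_1,\ldots,d_{\dim S_\infty-1}\rrbracket$. The $d_i$ form a regular sequence in $S_\infty\otimes_\Oh\F$, so they are also a regular sequence on every $A_k$. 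Moreover, each quotient $A_k/(d_1,\ldots,d_i)$ is finite free over $S_\infty\otimes_\Oh\F/(d_1,\ldots,d_i)$.

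For the limit, put $A:=\varprojlim_k A_k$ and argue by induction on $i$. The inductive statement is: $A/(d_1,\ldots,d_i)\xrightarrow{\sim}\varprojlim_k A_k/(d_1,\ldots,d_i)$, and $d_{i+1}$ is a non-zero-divisor on it.

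For the identification, note that the transition maps of the system $\{A_k\}$ are surjective (Lemma~\ref{lem_Hecke_algebras_patch_to_deformation_rings}(3)) and each $A_k$ is free over $S_\infty\otimes_\Oh\F$. Hence Lemma~\ref{lem_inverse_limit_tensor_product}(b) applies with $R=S_\infty\otimes_\Oh\F$ and $M=R/(d_1,\ldots,d_i)$. The system $\{A_k/(d_1,\ldots,d_i)\}$ is again surjective and consists of free modules over $S_\infty\otimes_\Oh\F/(d_1,\ldots,d_i)$, so the same lemma can be reapplied at each step.

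For injectivity, consider the short exact sequences
\[
0\to A_k/(d_1,\ldots,d_i)\xrightarrow{\cdot d_{i+1}} A_k/(d_1,\ldots,d_i)\to A_k/(d_1,\ldots,d_{i+1})\to 0 .
\]
These form a short exact sequence of inverse systems. Taking limits is left exact by Lemma~\ref{lem_inverse_limit_exactness}, so multiplication by $d_{i+1}$ is injective on $\varprojlim_k A_k/(d_1,\ldots,d_i)\cong A/(d_1,\ldots,d_i)$.

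A regular sequence also requires the final quotient to be nonzero. Here $A/(d_1,\ldots,d_{\dim S_\infty-1})$ surjects onto each $A_k/(d_1,\ldots)$, and these are nonzero because $A_k$ is nonzero free. Nonvanishing of $A_k$ holds because $M(k,\varnothing,\F)^{\no}\neq0$ for $k\ge p^2$ (Proposition~\ref{prop_nonordinary_serre_weight_global_meaning} together with multiplication by Hasse invariants), combined with Lemma~\ref{lem_quotient_of_patched_modules}.

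The main subtlety is making sure that quotienting commutes with the inverse limit at every inductive step. This is where the surjectivity of the transition maps and the freeness over $S_\infty\otimes_\Oh\F$ from Lemma~\ref{lem_Hecke_algebras_patch_to_deformation_rings} are essential. It is also the reason we restrict to $k\ge p^2$, so that the images are free and not merely finitely generated.
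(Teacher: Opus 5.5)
Your proof is correct and follows essentially the same route as the paper: regularity at each finite level comes from freeness over $S_\infty\otimes_\Oh\F$ (Lemma~\ref{lem_Hecke_algebras_patch_to_deformation_rings}(2)), and you pass to the limit through the short exact sequences for multiplication by $d_{i+1}$, using that the transition maps are surjective. The only difference in the limit step is that the paper obtains the identification of the quotient with the limit of quotients from Mittag--Leffler exactness of those same sequences, rather than from Lemma~\ref{lem_inverse_limit_tensor_product}(b), which is immaterial.

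Your nonvanishing check is a useful addition that the paper omits. However, the claim $M(k,\varnothing,\F)^{\no}\neq 0$ for all $k\ge p^2$ is too strong: it fails at $k=p^2+1$ when $k(\rbar)^{\no}=p^2+p$. You only need it for $k\ge k(\rbar)^{\no}$ in the right congruence class, and multiplication by the Hasse invariant gives you exactly that.
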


\begin{proof}
    We begin by showing that $\varprojlim_k R_\infty^{\psi}\otimes_\Oh\F/J_k^e$ is $d_1$-torsion free. For each $k,$ the module $R_\infty^{\psi}\otimes_\Oh\F/J_k^e = \Image(R_\infty^{\psi}(k)\otimes_\Oh\F\to \widetilde{R_\infty^{\psi}(k)}\otimes_\Oh\F)$ is finite free over $S_\infty\otimes_\Oh\F$ by Lemma \ref{lem_Hecke_algebras_patch_to_deformation_rings}. Hence, multiplication by $d_1$ gives a short exact sequence $$0\to R_\infty^{\psi}\otimes_\Oh\F/J_k^e \xrightarrow{\cdot d_1} R_\infty^{\psi}\otimes_\Oh\F/J_k^e \to R_\infty^{\psi}\otimes_\Oh\F/(J_k^e+(d_1))\to 0.$$ The system $\{R_\infty^{\psi}\otimes_\Oh\F/J_k^e\}_{k\equiv k(\rbar)\pmod{p-1}}$ is a surjective system of finite free $S_\infty\otimes_\Oh\F$-modules, 
    so Lemma \ref{lem_inverse_limit_exactness} implies that the inverse limit preserves this exact sequence. Thus the inverse limit $\varprojlim_k R_\infty^{\psi}\otimes_\Oh\F/J_k^e$ is $d_1$-torsion free and we obtain an isomorphism $$(\varprojlim_k R_\infty^{\psi}\otimes_\Oh\F/J_k^e)/(d_1)\xrightarrow{\sim}\varprojlim_k R_\infty^{\psi}\otimes_\Oh\F/(J_k^e+(d_1)).$$ Now repeat the argument with the modules $\varprojlim_k R_\infty^{\psi}\otimes_\Oh\F/(J_k^e+(d_1)),$ using that each is finite free over $S_\infty\otimes_\Oh\F/(d_1).$ This shows that the quotient of the inverse limit by $d_1$ is $d_2$-torsion free. Iterating the procedure for $d_3,\ldots,d_{\dim S_\infty-1}$ shows inductively that the sequence $d_1,\ldots,d_{\dim S_\infty - 1}$ is regular on $\varprojlim_k R_\infty^{\psi}\otimes_\Oh\F/J_k^e$ as desired. 
\end{proof}

\begin{lem} \label{lem_intersection_J_k_extension_nilpotent}
    Assume that $p\ge 5.$ The ideal $$\mathfrak{a} = \ker\left(R_\infty^{\psi}\otimes_\Oh\F \surj  \varprojlim_k R_\infty^{\psi}\otimes_\Oh\F/J_k^e\right)$$
    is contained in the nilradical of $R_\infty^{\psi}\otimes_\Oh\F.$
\end{lem}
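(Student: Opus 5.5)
The plan is a dimension count. Set $A := \varprojlim_k R_\infty^{\psi}\otimes_\Oh\F/J_k^e$. By Proposition~\ref{prop_surjection_R_inf_to_inverse_limit}, $A \simeq (R_\infty^{\psi}\otimes_\Oh\F)/\mathfrak{a}$ is a Noetherian local quotient of $R_\infty^{\psi}\otimes_\Oh\F$. By Proposition~\ref{prop_R_infty_special_fiber}, the latter is geometrically irreducible of Krull dimension $4\#T(\rhobar)+r+5$, and its unique minimal prime is the nilradical $\mathfrak{n}$. Any ideal $\mathfrak{a}$ not contained in $\mathfrak{n}$ yields a quotient of strictly smaller dimension. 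This is because $R_\infty^{\psi}\otimes_\Oh\F$ is Cohen--Macaulay, hence equidimensional and catenary, so every maximal chain of primes has length equal to the dimension. It therefore suffices to show
\[
\dim A \;\ge\; 4\#T(\rhobar)+r+5 = \dim R_\infty^{\psi}\otimes_\Oh\F .
\]

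To bound $\dim A$ from below I would use Proposition~\ref{prop_regular_sequence_in_inverse_limit}. The elements $d_1,\ldots,d_{\dim S_\infty-1}$, which number $4\#T(\rhobar)+r+3$, form a regular sequence on $A$. Hence
\[
\dim A \;=\; \dim A/(d_1,\ldots,d_{\dim S_\infty-1}) + (4\#T(\rhobar)+r+3).
\]
The lower bound $\dim A \ge \dim(A/(d))+\#(d)$ for a regular sequence $(d)$ holds in any Noetherian local ring. It remains to show that $A/(d_1,\ldots,d_{\dim S_\infty-1})$ has Krull dimension at least $2$.

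For this I would identify the quotient. Since $S_\infty\otimes_\Oh\F/(d_1,\ldots)=\F=S_\infty/\mathfrak{m}_{S_\infty}$, the proof of Proposition~\ref{prop_regular_sequence_in_inverse_limit} (inverse limits commute with quotienting by the $d_i$, since each system is surjective and consists of free modules) gives
\[
A/(d_1,\ldots) \simeq \varprojlim_k \bigl(R_\infty^{\psi}\otimes_\Oh\F/J_k^e\bigr)\otimes_{S_\infty}S_\infty/\mathfrak{m}_{S_\infty}.
\]
By Lemma~\ref{lem_quotient_of_patched_modules} (third isomorphism, valid since $k\ge p^2$), together with Proposition~\ref{prop_J_k_e_defining_ideal} and Lemma~\ref{lem_U_ell_in_anemic}, this is $\varprojlim_k \T^{pN_\varnothing}(M(k,\varnothing,\F)^\no)$. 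This is exactly the identification already used in the proof of Proposition~\ref{prop_surjection_R_inf_to_inverse_limit}. Deo's Theorem~\ref{thm_Deo} with $p\ge5$ then gives Krull dimension at least $2$, which completes the count.

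The main obstacle I expect is bookkeeping around dimension in a possibly non-reduced or non-equidimensional quotient. One must make sure that a proper quotient of $R_\infty^{\psi}\otimes_\Oh\F$ by an ideal not inside $\mathfrak{n}$ really drops dimension. It does, since $\mathfrak{a}\not\subseteq\mathfrak{n}$ means $V(\mathfrak{a})$ avoids the unique minimal prime. One must also ensure that the identification with Deo's inverse limit is compatible with the transition maps of Lemma~\ref{lem_Hecke_algebras_patch_to_deformation_rings}(3), so that Deo's ring is literally the quotient in question. Both points reduce to results stated earlier.
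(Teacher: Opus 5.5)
Your proposal is correct and is the paper's own argument. The paper uses the same dimension count: the regular sequence $d_1,\ldots,d_{\dim S_\infty-1}$ on the inverse limit, the identification of the quotient with Deo's ring $\varprojlim_k \T^{pN_\varnothing}(M(k,\varnothing,\F)^\no)$ of dimension at least $2$, and the comparison with $\dim R_\infty^{\psi}\otimes_\Oh\F = 4\#T(\rhobar)+r+5$. The only difference is the last step: the paper says $\mathfrak{a}$ contains no non-zero-divisor and zero-divisors are nilpotent, while you use that $V(\mathfrak{a})$ misses the unique minimal prime, which needs no Cohen--Macaulay or catenary input.
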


\begin{proof}
    By Proposition \ref{prop_surjection_R_inf_to_inverse_limit} and its proof, we have a commutative diagram $$\begin{tikzcd}
        R_\infty^{\psi}\otimes_\Oh\F \arrow[r,two heads]\arrow[d,two heads]& \varprojlim_k R_\infty^{\psi}\otimes_\Oh\F/J_k^e\arrow[d,two heads]\\
        R_\varnothing^{\univ,\psi}\otimes_\Oh\F \arrow[r,two heads] & \varprojlim_k \T^{pN_\varnothing}(M(k,\varnothing,\F))
    \end{tikzcd}.$$ The vertical maps are reductions modulo the sequence $d_1,\ldots,d_{\dim S_\infty -1}.$ Applying Proposition \ref{prop_regular_sequence_in_inverse_limit}, Theorem \ref{thm_Deo} and Krull's principal ideal theorem, we conclude that the ring $\varprojlim_k R_\infty^{\psi}\otimes_\Oh\F/J_k^e$ has Krull dimension at least $$ 2 + \dim S_\infty - 1 = 2 + 4\#T(\rhobar) + r + 3 = 4\#T(\rhobar) + r + 5.$$ By Proposition \ref{prop_R_infty_special_fiber}, this equals the dimension of $R_\infty^{\psi}\otimes_\Oh\F$. Hence, the kernel of the top horizontal surjection cannot contain any non-zero-divisor by Krull's principal ideal theorem. Proposition \ref{prop_R_infty_special_fiber} implies that all zero-divisors in $R_\infty^{\psi}\otimes_\Oh\F$ are nilpotent, so the kernel $\mathfrak{a}$ is contained in its nilradical.
\end{proof}

\begin{proof}[Proof of Theorem \ref{thm_liminf_of_crystalline_is_everything}]
    Since $J_k^e \supseteq J_{k + p -1}^{e}$ for every $k\ge 2$, we have $$J_k^{ec} \supseteq J_{k + p -1}^{ec}{\rm \quad and\quad}(J_k^e \mod \mathfrak{m}_B) \supseteq (J_{k + p -1}^{e} \mod\mathfrak{m}_B).$$ By the discussion above Proposition \ref{prop_J_k_e_defining_ideal}, both give us $J_k\supseteq J_{k+p-1}$ for every $k\ge 2.$ The surjection $$\Image(R_\rbar^{\Box,\psi}(k + p-1)\otimes_\Oh\F \to R_\rbar^{\Box,\psi}(k + p-1)[\alpha_p]\otimes_\Oh\F)\surj \Image(R_\rbar^{\Box,\psi}(k )\otimes_\Oh\F \to R_\rbar^{\Box,\psi}(k )[\alpha_p]\otimes_\Oh\F)$$ is induced from the inclusion of defining ideals $J_k\supseteq J_{k+p-1}.$ 
    
    For the second part, we use the two perspectives on $J_k$ discussed before Proposition \ref{prop_J_k_e_defining_ideal}. First, the diagram \eqref{eqn_retraction} factors through $\mathfrak{a}$, i.e., we have a commutative diagram $$\begin{tikzcd}
    R^{\Box,\psi}_\rbar\otimes_\Oh\F \arrow[r,hook]\arrow[rd,"{\rm id}"] & R^{\psi}_\infty\otimes_\Oh\F/\mathfrak{a}\arrow[d,"\mod\mathfrak{m}_B",two heads] \\&R^{\Box,\psi}_\rbar\otimes_\Oh\F
\end{tikzcd}.$$ To see this, since $R_\rbar^{\Box,\psi}\otimes_\Oh\F$ is an integral domain by Theorem \ref{thm_unrestricted_deformation_ring_at_p}, the ideal $\mathfrak{m}_B$ is a prime ideal in $R_\infty^{\psi}\otimes_\Oh\F$. By Lemma \ref{lem_intersection_J_k_extension_nilpotent}, the ideal $\mathfrak{a}$ is contained in the nilradical of $R_\infty^{\psi}\otimes_\Oh\F$. Therefore, its contraction $\mathfrak{a}^c$ in $R_\rbar^{\Box,\psi}\otimes_\Oh\F$ is zero and $\mathfrak{a}$ is contained in $\mathfrak{m}_B$. This proves that the horizontal map is injective and the vertical map factors through the quotient by $\mathfrak{a}$. 
We can rewrite the diagrams in \eqref{eqn_retraction_k} using the defining ideals and take the inverse limit with the surjective transition maps to construct the following commutative diagram  
$$\begin{tikzcd}
    \varprojlim_k R^{\Box,\psi}_\rbar\otimes_\Oh\F/J_k \arrow[r,hook]\arrow[rd,"{\rm id}"]& \varprojlim_k R_\infty^{\psi}\otimes_\Oh\F/J_k^e \arrow[d,two heads]\\
     & \varprojlim_k R^{\Box,\psi}_\rbar\otimes_\Oh\F/J_k
\end{tikzcd}.$$ The horizontal map is injective and the vertical map is surjective because they compose to the identity map. 
We can piece these commutative diagrams together and get: $$\begin{tikzcd}
        R_\rbar^{\Box,\psi}\otimes_\Oh\F \arrow[r]\arrow[d,hook]\arrow[d,hook]\arrow[dd,"{\rm id}",bend right = 70] &\varprojlim_k R^{\Box,\psi}_\rbar\otimes_\Oh\F/J_k \arrow[d,hook]\arrow[dd,"{\rm id}",bend left = 70]\\
    R_\infty^{\psi}\otimes_\Oh\F/\mathfrak{a} \arrow[r,"\sim"]\arrow[d,two heads,"\mod \mathfrak{m}_B"]& \varprojlim_k R_\infty^{\psi}\otimes_\Oh\F/J_k^e\arrow[d,two heads]\\
    R_\rbar^{\Box,\psi}\otimes_\Oh\F \arrow[r]&\varprojlim_k R^{\Box,\psi}_\rbar\otimes_\Oh\F/J_k
    \end{tikzcd}.$$ Now from the top square, we see the top horizontal map is injective and from the bottom square, we see the bottom horizontal map is surjective. However, the top and bottom horizontal maps are the same map, meaning that they are both isomorphisms. 
\end{proof}

\begin{proof}[Proof of Theorem \ref{thm_crystalline_stronger_topology}]
    If $\rbar$ is semisimple, the result follows from Theorem \ref{thm_liminf_of_crystalline_is_everything} 
    and Proposition \ref{prop_strong_top_inverse_limit_equivalence}. 
    Suppose that $\rbar$ is not semisimple and satisfies Assumption \ref{assumption_semisimplification}. 
    In the proof of Theorem \ref{thm_liminf_of_crystalline_is_everything}, the ideal
    \[
        J_k = \ker\bigl(R_\rbarss^{\Box,\psi}\otimes_\Oh\F \to R_\rbarss^{\Box,\psi}(k)^\no[\alpha_p]\otimes_\Oh\F\bigr)
    \]
    is used. Hence, the condition in Proposition 
    \ref{prop_thm_crystalline_stronger_topology_reduction_to_semisimple_case} is satisfied, 
    and the theorem follows.
\end{proof}

\begin{remark} \label{rmk_speed_of_convergence}
    Our method does not shed much light on how fast the Hilbert functions converge in Theorem \ref{thm_Hilbert_function} even for the representations $\rbar$ satisfying Assumption \ref{assumption_global_rhobar}. For one thing, the argument of our proof essentially boils down to a dimension count of Noetherian rings. For the other, our method studies $\Image(R^{\Box,\psi}_\rbar(k)\otimes_\Oh\F\to R^{\Box,\psi}_\rbar(k)^\no[\alpha_p]\otimes_\Oh\F)$ by patching $\{\T^{p}(M(k,Q_n,\F)^\no)^{\Box}\}_n.$ But we are actually interested in $R^{\Box,\psi}_\rbar(k)\otimes_\Oh\F$ which is the patched ring for $\{\T^{p}(M(k,Q_n,\Oh))^{\Box}\otimes_\Oh\F\}_n.$ And Corollary \ref{cor_anemic_equal_full_criterion} together with its remark state that the two rings are different as long as $k\ge k(\rbar)^\no p$ and moreover that the difference grows with $k$!
    Thus, even an effective proof of Theorem \ref{thm_liminf_of_crystalline_is_everything} would leave open the further problem of relating the resulting bounds on $k$ to those in Theorem \ref{thm_Hilbert_function}. At present, the speed of convergence of the Hilbert functions remains unclear to the author. Nonetheless, numerical computations in the author's thesis suggest that the behavior may have a particularly interesting pattern, and the author hopes to return to this question and formulate a more concrete conjecture in future work.
\end{remark}

\subsection{A Multiplicity-One Theorem for Serre's Modular Forms} \label{subsection_Serre_mult_one}
For a global Galois representation $\rhobar:G_\Q\to \GL_2(\F),$  
we denote by $\rhobar(i)$ the Tate twist of $\rhobar$ by $\epsilon^i.$ 

\begin{thm} \label{thm_Serre_mult_one}
    Let $$\rhobar:G_{\Q}\to \GL_2(\F)$$ be a global Galois representation. Assume the following:
\begin{enumerate}
    \item $p \ge 5$;
    \item $\rhobar$ is odd;
    \item $\rhobar|_{\Gal(\Q(\zeta_p)/\Q)}$ is absolutely irreducible;
    \item $N(\rhobar) \ge 5$;
    \item $\rbar:=\rhobar|_{G_{\Q_p}}$ is not a twist of 
    \(
        \begin{pmatrix}
            \epsilon & * \\
            0 & 1
        \end{pmatrix}
    \)
    with $*$ trivial or peu ramifi\'ee.
\end{enumerate}
Then for every weight $k\ge 1$, the $\F$-vector space
\[
    S(k,\varnothing)
    \bigl[\mathfrak{m}_\varnothing^{\{pN_\varnothing\}}\bigr]
\]
is at most one-dimensional.  
The same conclusion holds when $\rbar$ is a twist of the above shape,
provided that there are no vexing primes.
\end{thm}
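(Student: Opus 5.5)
First I would reduce the weight. By Propositions~\ref{prop_Sk_weight_periodicity}, \ref{prop_Sk_theta_twist} and \ref{prop_Serre_mod_forms_k_pk}, $S(k,\varnothing)$ depends, as a $\T^p$-module, only on $k$ modulo $p^2-1$. Multiplication by $\epsilon_D$ shifts the weight by $p+1$ and twists the Hecke eigensystem. This replaces $\mathfrak{m}_\varnothing^{\{pN_\varnothing\}}$ by a twist $\mathfrak{m}_\varnothing^{\{pN_\varnothing\}}(a)$, which corresponds to $\rhobar(a)$. Using these moves I would replace $(k,\rhobar)$ by $(k',\rhobar(a))$ with $k'\ge p+1$, and arrange that $k'\equiv k(\rbar(a))\pmod{p-1}$ by adding multiples of $p^2-1$. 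Hypotheses (1)--(5) are stable under twisting. So we may assume $k\ge p+1$ is large and that the patching setup of \S\ref{section_patching} applies with weights $k-(p-1)$ and $k$. By Lemma~\ref{lem_quotient_of_patched_modules}, applied to the pair $(k-(p-1),k)$, we then get
\[
S(k,\varnothing)^\vee\simeq K\otimes_{S_\infty}S_\infty/\mathfrak{m}_{S_\infty},\qquad K:=\mathcal{P}(\{S(k,Q_n)^{\vee,\Box}\}_{n\ge 1}),
\]
where $K$ is the kernel of $\widetilde{R_\infty(k)}\otimes\F\to\widetilde{R_\infty(k-p+1)}\otimes\F$. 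By Proposition~\ref{prop_patching_the_coherent_cohomology}(3), $K$ is finite free over $S_\infty\otimes_\Oh\F$.

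By duality, the statement becomes $\dim_\F (S(k,\varnothing)^\vee)/\mathfrak{m}=\le 1$, where $\mathfrak{m}$ is the maximal ideal of the relevant local Hecke or deformation ring. By Nakayama, it suffices to show that $K/\mathfrak{m}_{R}K$ is at most one-dimensional, where $R$ is the local component of $\widetilde{R_\infty(k)}\otimes\F$ and $\mathfrak{m}_R$ its maximal ideal. In other words, $K$ should be cyclic over $R$. The plan is to show that $K$ is in fact free of rank one (or zero) over its scheme-theoretic support $\widetilde{R_\infty(k)}\otimes\F/\mathrm{Ann}(K)$. I would argue by Auslander--Buchsbaum. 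Since $K$ is free over $S_\infty\otimes\F$, it is maximal Cohen--Macaulay over its support $\overline{R}$. If $\overline{R}$ is regular, then $K$ has finite projective dimension and is therefore free over $\overline{R}$. Its rank can then be read off at the generic point. There the $q$-expansion principle identifies $\T(M(k,Q,\F))$ with the dual of $M$, and so bounds the rank by one, in the style of Theorem~\ref{thm_intro_patched_module_free_of_rank_one}.

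The local computation of the support is where hypothesis (5) and the vexing primes enter. The kernel of $\widetilde{R(k)}\otimes\F\surj\widetilde{R(k-p+1)}\otimes\F$ should be supported on the locus cut out by the equation that differs between weights $k$ and $k-p+1$. In Example~\ref{example_surjection_of_patched_rings} this is the ideal $(y)$, whose support $\F\llbracket x_1,x_2,x_3,y\rrbracket\widehat\otimes B/(\alpha_p)$ is regular up to $B$. In general I would identify this support with the special fiber of a (potentially Barsotti--Tate, cuspidal-type) deformation ring, whose reduced irreducible components are known to be smooth, in the spirit of \cite[\S7.5.13]{EGH_2022_IHES_notes}. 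The factor $B$ is a complete intersection. To get finite projective dimension, I need it to be regular, or need $K$ to be free over it. When there are no vexing primes, $B$ is a power series ring, so this is automatic.

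The main obstacle is the case with vexing primes, where $B$ contains the factor $\F\llbracket Y\rrbracket/(Y^{p^m})$. My plan there is to show that $K$ is flat over this factor by comparing it with $\widetilde{R_\infty(k)}\otimes\F$, which is itself flat over $B$ since it is a completed tensor product. This flatness could fail precisely when the weight-$p+1$ component is nonreduced. That happens in the peu ramifi\'ee $\epsilon$-extension case excluded by (5), where the Fontaine--Laffaille ring for $k=p+1$ and $k=2$ coincide and the support acquires an $\F[x]/(x^n)$ factor. This explains the extra hypothesis in that case.
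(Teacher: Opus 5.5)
Your proposal has genuine gaps, starting with the reduction to cyclicity of $K$ over a local component of $\widetilde{R_\infty(k)}\otimes_\Oh\F$. The ideal $\mathfrak{m}_\varnothing^{\{pN_\varnothing\}}$ contains no operator at $p$. So the statement to prove is that $S(k,\varnothing)^\vee$ is cyclic over the \emph{anemic} algebra $\T^p(S(k,\varnothing))$. That already forces $U_p\in\T^p(S(k,\varnothing))$, and forces $S(k,\varnothing)$ to lie in a single summand $R_u$. Cyclicity over $\widetilde{R_\infty(k)}$, where $\alpha_p$ acts as $U_p$, only bounds $\dim_\F S(k,\varnothing)^\vee/(\mathfrak{m}_\varnothing^{\{p\}},U_p-u)S(k,\varnothing)^\vee$. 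Moreover, by taking $k$ large you land exactly where $\T^p\neq\T$ on $M(k,\varnothing,\F)$ (Corollary~\ref{cor_anemic_equal_full_criterion}).

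The missing idea is to go the other way. Propositions~\ref{prop_Sk_weight_periodicity}, \ref{prop_Sk_theta_twist} and~\ref{prop_Serre_mod_forms_k_pk} reduce to $p+1\le k\le 2p-1$. There $S(k,\varnothing)=M(k,\varnothing,\F)/M(k-p+1,\varnothing,\F)$, and the paper argues case by case:
\begin{itemize}
\item If $M(k-p+1,\varnothing,\F)=0$, a two-dimensional eigenspace would contain a nonzero form with $a_n=0$ for all $p\nmid n$. Such a form lies in $\ker\theta=\Image V_p$, which is impossible because $V_p$ multiplies filtrations by $p$.
\item In the irreducible case, $k<pk(\rbar)$, so $\T=\T^p$ on $M(k,\varnothing,\F)$. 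Here $K$ is $0$ or the principal ideal $(y)$ of Example~\ref{example_surjection_of_patched_rings}, which is cyclic whatever $B$ is, so vexing primes play no role.
\item In the reducible cases with $k(\rbar)\neq 2$, one gets $S(k,\varnothing)=0$ from Eichler--Shimura, by comparing the Jordan--H\"older factors of $\Sym^{k-2}\F^2$ with the Serre weights of $\rbar$.
\end{itemize}

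Your Auslander--Buchsbaum step also does not go through as stated. For general $k$ the support $\overline{R}$ of $K$ is not identified. Smoothness of the reduced irreducible components does not give regularity, since two smooth components can meet, as in the special fibre $xy=0$ of $\Oh\llbracket x,y\rrbracket/(xy-p)$.

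Even if $K$ were free over a regular $\overline{R}$, the $q$-expansion principle would not bound its rank. That principle identifies the whole patched Hecke module with the ring $\widetilde{R_\infty(k)}$. The rank of the \emph{ideal} $K$ at the generic point $\mathfrak{p}=\mathrm{Ann}(K)$ is the dimension of a subspace of the socle of the Artinian ring $(\widetilde{R_\infty(k)}\otimes_\Oh\F)_{\mathfrak p}$. This is a priori at most $1$ only when that ring is Gorenstein, which nothing in your argument provides; these special fibres are not reduced in general.

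The paper uses Auslander--Buchsbaum in only one case: $\rbar$ a twist of a split or peu ramifi\'ee extension of $1$ by $\epsilon$, with $k=p+1$. There regularity of the special fibre of the non-crystalline component of the weight-two semistable deformation ring comes from Breuil--M\'ezard. Rank one comes from characteristic-$0$ quaternionic forms and generic multiplicity one, following Emerton--Gee--Hellmann.

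Your account of hypothesis (5) is also reversed. The isomorphism $R^{\Box,\psi}_\rbar(2)\cong R^{\Box,\psi}_\rbar(p+1)$, which gives $K=0$, occurs in the \emph{non}-excluded case. In the excluded case, the obstruction is that $\F$ has infinite projective dimension over the vexing-prime factor $\F[x]/(x^n)$, which is why no vexing primes are assumed there.
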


\begin{proof}
    First of all, by Lemma \ref{lem_U_ell_in_anemic}, we can replace $\mathfrak{m}_\varnothing^{\{pN_\varnothing\}}$ with $$\mathfrak{m}_\varnothing^{\{p\}} := (T_\ell- a_\ell(f_0),\langle\ell\rangle-\chi(\ell),U_q - a_q(f_0),U_{q_0} - \alpha_{q_0})_{\ell\nmid pN_\varnothing,q|N(\rhobar)}.$$ By twisting (Proposition \ref{prop_Sk_theta_twist}), it suffices to prove the statement for $$S(k,U_1(N_\varnothing),\chi)_{\mathfrak{m}_\varnothing^{\{pN(\rhobar)\}}(i)}[\mathfrak{m}_\varnothing^{\{p\}}(i)]$$ for all weights $p+1\le k \le 2p+1$ and all Tate twists $\rhobar(i)$ with $0\le i\le p-2.$ Replacing $\rhobar(i)$ by $\rhobar,$ we can drop the twist from the notation. Within this range of weights, we have the identification $$S(k,\varnothing)\xrightarrow{\sim} M(k,\varnothing,\F)/M(k-(p-1),\varnothing,\F).$$ 
    When $k = 2p,$ by Proposition \ref{prop_Serre_mod_forms_k_pk}, we can replace $k = 2p$ by $k = 2.$ Then by Proposition \ref{prop_Sk_theta_twist}, we can replace $k = 2$ by $k = p+3\le 2p-1$ (since $p\ge 5$). Similarly, for $k=2p+1,$ we first apply Proposition \ref{prop_Sk_theta_twist} and then Proposition \ref{prop_Serre_mod_forms_k_pk} to replace $k=2p+1$ by $k=p+2.$  
    Hence, we only need to prove the statement for $S(k,\varnothing)[\mathfrak{m}_\varnothing^{\{p\}}] 
    $ for all weights $$p+1\le k\le 2p-1.$$ 

    If $M(k - (p-1),\varnothing,\F) = 0,$ then we have \begin{align*}
    S(k,\varnothing)[\mathfrak{m}_\varnothing^{\{p\}}] &= M(k,\varnothing,\F)[\mathfrak{m}_\varnothing^{\{p\}}].
    \end{align*} Suppose that $M(k,\varnothing,\F)[\mathfrak{m}_\varnothing^{\{p\}}]$
    is at least two-dimensional, then there is a modular form $g$ in $M(k,\varnothing,\F)$ whose $q$-expansion has the shape $\sum_{n\ge 1}b_nq^{np}$. By Theorem \ref{thm_katz_theta},  such a form lies in $\ker(\theta) = \Image V_p$. However, since $V_p$ is $\T^{p}$-equivariant and raises the filtration of a form to its $p$-th power, we have $$\Image V_p \cap M(k,\varnothing,\F) = 0.$$ Thus the $\F$-dimension of $S(k,\varnothing)[\mathfrak{m}_\varnothing^{\{p\}}]$ is at most one, as desired. 

    If $M(k-(p-1),\varnothing,\F)\neq 0,$ then the weight $k$ satisfies $$1\le k(\rbar) \le k - (p-1) \le p.$$ We will use Galois deformation theory to conclude the proof. 
    
    Suppose that $\rbar$ is absolutely irreducible, i.e. case (1) in the definition of the Serre weight, then $\rbar|_{I_p}$ takes the form $$\epsilon_2^{k(\rbar)-1}\oplus \epsilon_2^{p(k(\rbar)-1)}.$$ In this case, we have $2\le k(\rbar)\le p$ and $k = k(\rbar) + p-1.$ By Example \ref{example_surjection_of_patched_rings}, the patched module $\mathcal{P}\left(\left\{S(k(\rbar)+p-1,Q_n)^{\vee,\Box} \right\}_{n\ge 1}\right)$ is either trivial or generated by one element over the ring $$\widetilde{R_\infty^{\psi_{k(\rbar)+p-1}}(k(\rbar)+p-1)}\otimes_\Oh\F=R_\infty^{\psi_{k(\rbar)+p-1}}(k(\rbar)+p-1)\otimes_\Oh\F.$$ Modulo $\mathfrak{m}_{S_\infty}$ and invoke Lemma \ref{lem_quotient_of_patched_modules}, we see that $S(k,\varnothing)^{\vee}$ is generated by at most one element over $$\T(M(k,\varnothing,\F)) = \T^{p}(M(k,\varnothing,\F)).$$ By Nakayama's lemma, it follows that $$\dim_\F S(k,\varnothing)^{\vee}/\mathfrak{m}_\varnothing^{\{p\}}S(k,\varnothing)^{\vee} = 1.$$ Thus we have $$\dim_\F S(k,\varnothing)[\mathfrak{m}_\varnothing^{\{p\}}] = \dim_\F\left(S(k,\varnothing)[\mathfrak{m}_\varnothing^{\{p\}}]\right)^\vee = \dim_\F S(k,\varnothing)^{\vee}/\mathfrak{m}_\varnothing^{\{p\}}S(k,\varnothing)^{\vee} = 1.$$ 

    Suppose that $\rbar$ is reducible and tamely ramified, i.e. case (2)(a) in the definition of the Serre weight, then $\rbar|_{I_p}$ takes the form $\epsilon^{k(\rbar)-1} \oplus 1.$ In this case, the weights satisfy $1\le k(\rbar)\le p-1$ and $$k = \begin{cases}
        k(\rbar) + p-1 & \text{if } k(\rbar) \neq 1\\
        2p-1 & \text{if } k(\rbar) = 1.
    \end{cases}$$ We have \begin{align*}
        \dim_\F S(k,\varnothing) &= \dim_\F M(k,\varnothing,\F) - \dim_\F M(k-(p-1),\varnothing,\F).
    \end{align*} By the Eichler--Shimura isomorphism and the fact that torsion in $H^1$ is away from a non-Eisenstein maximal ideal (for details see \cite[\S6.1]{BergdallPollack2019slopes}), we have \begin{multline*}
        2\dim_\F S(k,\varnothing) =  \dim_\F H^1(\Gamma_1(N_\varnothing),\chi\otimes\Sym^{k-2}\F^2)_{\mathfrak{m}_\varnothing^{\{pN(\rhobar)\}}} \\- \dim_\F H^1(\Gamma_1(N_\varnothing),\chi\otimes\Sym^{k-(p-1)-2}\F^2)_{\mathfrak{m}_\varnothing^{\{pN(\rhobar)\}}}.
    \end{multline*} By Serre's formula of the semisimplification of the $\GL_2(\F_p)$ representations (see for example \cite[\S 2.3]{reduzzi2015weight}), the semisimplification of $\Sym^{k-2}\F^2$ is $$\det\otimes \Sym^{k-p-3}\F^2 \oplus \Sym^{k-(p+1)}\F^2 \oplus {\det}^{k-2}\otimes \Sym^{2p-k}\F^2.$$
    However, when $k(\rbar)\neq 2,$ the Serre weights of $\rbar$ (in the sense of \cite{buzzard2010serre}) are $$W(\rbar) := \begin{cases}
    \{\Sym^{p-2}\F^2\} & \text{if } k(\rbar) =1\\
    \{ \Sym^{k(\rbar)-2}\F^2, {\det}^{k(\rbar)-1}\otimes\Sym^{p-1-k(\rbar)}\F^2 \} & \text{if } 3 \le k(\rbar) \le p-2\\
    \{\Sym^{p-3}\F^2,\det^{p-2}\otimes\Sym^{p-1}\F^2, \det^{p-2}\} & \text{if }k(\rbar) = p-1
    \end{cases}.$$ Comparing this list with the decomposition above, we see that among the three Jordan--H\"older factors of $\Sym^{k-2}\F^2,$ only $\Sym^{k-(p+1)}\F^2$ can occur as a Serre weight of $\rbar.$ So we have $$\dim_\F H^1(\Gamma_1(N_\varnothing),\chi\otimes\Sym^{k-2}\F^2)_{\mathfrak{m}_\varnothing^{\{pN(\rhobar)\}}} = \dim_\F H^1(\Gamma_1(N_\varnothing),\chi\otimes\Sym^{k-(p+1)}\F^2)_{\mathfrak{m}_\varnothing^{\{pN(\rhobar)\}}}.$$ and thus $\dim_\F S(k,\varnothing) = 0.$
    
    Suppose that $\rbar$ is reducible and wildly ramified, i.e., case (2)(b) in the definition of the Serre weight. Then $\rbar|_{I_p}$ takes the form $$\begin{pmatrix}
        \epsilon^{k(\rbar)-1}  & *\\
        0 & 1
    \end{pmatrix}.$$ In this case, we have $2\le k(\rbar) \le p$ and $k = k(\rbar) + p-1.$ (In particular, $*$ is not tr\`es ramifi\'ee.) When $k(\rbar)\neq 2$, the Serre weights of $\rbar$ are $$W(\rbar) =
        \{\Sym^{k(\rbar)-2}\}.$$ The same reasoning as above shows that $\dim_\F S(k,\varnothing) = 0.$

    At last, we need to treat the remaining case: the residual representation $\rbar$ satisfies $$\rbar|_{I_p} \sim \begin{pmatrix}
        \epsilon & *\\
        0 & 1
    \end{pmatrix},$$ where $*$ is not tr\`es ramifi\'ee. We have $k = p+1$ and $k(\rbar) = 2.$ If $\rbar$ is not up to twist an extension of $1$ by $\epsilon,$ then the same argument using Example \ref{example_surjection_of_patched_rings} applies. For the rest of the proof, by replacing $\rbar$ with a twist by an unramified character, we assume that $$\rbar\sim \begin{pmatrix}
    \epsilon & *\\
    0 & 1
    \end{pmatrix},$$ where $*$ is peu ramifi\'ee or trivial. 
    In this case, if there are no vexing primes, then by \cite[Lemma 3.3]{Gee_Savitt_2011_quaternion_Serre_weights} and the construction of patching, the patched module $\mathcal{P}\left(\left\{S(k,Q_n)^{\vee,\Box} \right\}_{n\ge 1}\right)$ is a maximal Cohen--Macaulay module over a formal power series ring over the special fiber of the non-crystalline component of the semistable deformation ring $R_{\rbar,{\rm st}}^{\Box,\psi_k}(k(\rbar))$ whose $\overline{\Q}_p$-points parametrize semistable lifts of $\rbar$ with Hodge--Tate weights $\{0,1\},$ 
    and determinant $\psi_k$. One can use the Breuil--M\'ezard conjecture in this case, proved in \cite{Paskunas_2015_BM_conj} if $\rbar$ non-split and in \cite{HuTan2013TheBC} if $\rbar$ splits to conclude that the non-crystalline component is a regular local ring. Following \cite{Diamond_1997_mult_one} and \cite{Fujiwara2006DeformationRA}, one then applies Auslander--Buchsbaum theorem to conclude that $\mathcal{P}\left(\left\{S(k,Q_n)^{\vee,\Box} \right\}_{n\ge 1}\right)$ is free over the formal power series ring over the special fiber of the non-crystalline component of $R_{\rbar,\rm st}^{\Box,\psi_k}(k(\rbar)).$ By interpreting Serre's modular forms as certain invariant functions from $D^{\times}$ to $\F$ where $D$ is a quaternion algebra over $\Q$ that is ramified exactly at $\{p,\infty\},$ one can make sense of quaternionic modular forms in characteristic 0. Then combine generic multiplicity one and freeness to conclude multiplicity one in characteristic $p.$ For more details in this case, see \cite[\S 7.5.13]{EGH_2022_IHES_notes}.
\end{proof}

\bibliographystyle{amsalpha}
\bibliography{/Users/irreducible/Desktop/postdoc/tex/ref}

\end{document}